\newcommand{\initlengths}{%
    \setlength{\abovedisplayshortskip}{3pt plus 9pt minus 3pt}%
    \setlength{\belowdisplayshortskip}{9pt plus 9pt minus 9pt}%
    \setlength{\abovedisplayskip}{9pt plus 9pt minus 9pt}%
    \setlength{\belowdisplayskip}{9pt plus 9pt minus 9pt}%
}
\numberwithin{paragraph}{subsection}
\newcommand{\parafont}{\bfseries}
\newcommand{\parasep}{9pt plus 3pt minus 3pt}
\titleformat{\section}[display]{\Large\firabook}{Section \thesection}{0pt}{\LARGE\mdseries}[\titlerule\vspace{24pt}]
\titleformat{\subsection}{\large\firamedium\boldmath}{\thesubsection}{1em}{}
\titleformat{\paragraph}[runin]{\parafont}{\theparagraph.}{.33em}{\normalfont\bfseries\boldmath}
\titlespacing{\paragraph}{0pt}{\parasep}{.5em}
\newcommand{\prepareappendices}{%
    \titleformat{\section}[display]{\Large\firabook}{Appendix \thesection}{0pt}{\LARGE\mdseries}[\titlerule\vspace{20pt}]
}
\renewenvironment{abstract}{%
    \centering\begin{minipage}{.9\textwidth}%
    \setlength{\parindent}{1.5em}%
    \centerline{\large\firamedium\abstractname}%
    \par\vspace{12pt}%
}{\end{minipage}\par}
\def\theorem@optheaderfont{\normalfont}
\hskip\labelsep {\parafont##2.}~##1\theorem@separator]}%
\hskip\labelsep {\parafont##2.}~##1\ {\theorem@optheaderfont(##3)}\theorem@separator]}
\newtheoremstyle{nonumber}%
    {\item[\theorem@headerfont\hskip\labelsep ##1\theorem@separator]}%
    {\item[\theorem@headerfont\hskip\labelsep ##1\ {\theorem@optheaderfont(##3)}\theorem@separator]}
\newtheoremstyle{tagged}%
    {\item[\theorem@headerfont\hskip\labelsep {\parafont##2.}~##1\theorem@separator]}%
    {\item[\theorem@headerfont\hskip\labelsep {\parafont##2.}~##1\ {\theorem@optheaderfont\formattag{##3}}\theorem@separator]\taglabel{##3}}
\newtheoremstyle{taggednonumber}%
    {\item[\theorem@headerfont\hskip\labelsep ##1\theorem@separator]}%
    {\item[\theorem@headerfont\hskip\labelsep ##1\ {\theorem@optheaderfont\formattag{##3}}\theorem@separator]\taglabel{##3}}
\newtheoremstyle{prooflike}%
    {\item[\theorem@headerfont\hskip\labelsep ##1\theorem@separator]}%
    {\item[\theorem@headerfont\hskip\labelsep ##3\theorem@separator]}
\newtheoremstyle{numproof}%
    {\item[\theorem@headerfont\hskip\labelsep {\parafont##2.}~##1\theorem@separator]}%
    {\item[\theorem@headerfont\hskip\labelsep {\parafont##2.}~##3\theorem@separator]}
\newtheorem{theorem}[paragraph]{Theorem}
\newtheorem{lemma}[paragraph]{Lemma}
\newtheorem{proposition}[paragraph]{Proposition}
\newtheorem{conjecture}[paragraph]{Conjecture}
\theoremstyle{nonumber}
\theoremstyle{plain}
\newtheorem{definition}[paragraph]{Definition}
\newtheorem{remark}[paragraph]{Remark}
\newtheorem{example}[paragraph]{Example}
\newtheorem{algorithm}[paragraph]{Algorithm}
\theoremstyle{nonumber}
\theoremstyle{tagged}
\theoremstyle{taggednonumber}
\theoremstyle{prooflike}
\newtheorem{proof}{Proof}
\theoremstyle{numproof}
\newtheorem{numproof}[paragraph]{Proof}
\numberwithin{equation}{section}
\newcommand{\formattag}[1]{\mbox{\upshape\scshape(#1)}}
\newcommand{\taglabel}[1]{\def\@currentlabel{\formattag{#1}}\label{tag-#1}}
\newcommand{\taglabelvar}[2]{\def\@currentlabel{\formattag{#2}}\label{tag-#1}}
\newcommand{\tagref}[1]{\ref{tag-#1}}
\crefname{algorithm}{Algorithm}{Algorithms}
\crefname{assumption}{Assumption}{Assumptions}
\crefname{construction}{Construction}{Constructions}
\crefname{situation}{Situation}{Situations}
\crefname{figure}{Figure}{Figures}
\setlist[enumerate]{label=\textnormal{(\roman*)}}
\numberwithin{figure}{section}
\numberwithin{table}{section}
\newcommand{\Aut}{\mathrm{Aut}}
\newcommand{\bbA}{\mathbb{A}}
\newcommand{\bbK}{\mathbb{K}}
\newcommand{\bbk}{\mathbb{k}}
\newcommand{\bbL}{\mathbb{L}}
\newcommand{\bbN}{\mathbb{N}}
\newcommand{\bbP}{\mathbb{P}}
\newcommand{\bbQ}{\mathbb{Q}}
\newcommand{\bbR}{\mathbb{R}}
\newcommand{\bbZ}{\mathbb{Z}}
\newcommand{\calA}{\mathcal{A}}
\newcommand{\calB}{\mathcal{B}}
\newcommand{\calC}{\mathcal{C}}
\newcommand{\calD}{\mathcal{D}}
\newcommand{\calE}{\mathcal{E}}
\newcommand{\calExt}{\mathcal{E}\mspace{-2mu}\mathit{xt}}
\newcommand{\calF}{\mathcal{F}}
\newcommand{\calG}{\mathcal{G}}
\newcommand{\calHom}{\mathcal{H}\mspace{-2mu}\mathit{om}}
\newcommand{\calI}{\mathcal{I}}
\newcommand{\calM}{\mathcal{M}}
\newcommand{\calN}{\mathcal{N}}
\newcommand{\calO}{\mathcal{O}}
\newcommand{\calP}{\mathcal{P}}
\newcommand{\calQ}{\mathcal{Q}}
\newcommand{\calR}{\mathcal{R}}
\newcommand{\calS}{\mathcal{S}}
\newcommand{\calT}{\mathcal{T}}
\newcommand{\calU}{\mathcal{U}}
\newcommand{\calV}{\mathcal{V}}
\newcommand{\calW}{\mathcal{W}}
\newcommand{\calX}{\mathcal{X}}
\newcommand{\calY}{\mathcal{Y}}
\newcommand{\calZ}{\mathcal{Z}}
\newcommand{\cat}[1]{\smash[b]{\textsf{\textup{#1}}}}
\newcommand{\ch}{\operatorname{ch}}
\newcommand{\chiJ}{\prescript{\chi}{}{\mathrm{J}}}
\newcommand{\citestacks}[1]{\cite[Tag~\href{https://stacks.math.columbia.edu/tag/#1}{\texttt{#1}}]{Stacks}}
\newcommand{\Coh}{\cat{Coh}}
\newcommand{\Db}{\cat{D}^{\smash{\mathrm{b}}}}
\newcommand{\DT}{\mathrm{DT}}
\newcommand{\End}{\mathrm{End}}
\newcommand{\et}{{\smash{\mathrm{\acute{e}t}}}}
\newcommand{\ex}{{\smash{\mathrm{ex}}}}
\newcommand{\Ext}{\mathrm{Ext}}
\newcommand{\frS}{\mathfrak{S}}
\newcommand{\GL}{\mathrm{GL}}
\newcommand{\Ga}{\mathbb{G}_{\mathrm{a}}}
\newcommand{\Gm}{\mathbb{G}_{\mathrm{m}}}
\newcommand{\heart}{\mathbin{\heartsuit}}
\newcommand{\Hom}{\mathrm{Hom}}
\newcommand{\id}{\mathrm{id}}
\newcommand{\iso}{{\smash{\mathrm{iso}}}}
\newcommand{\longsimto}{\mathrel{\overset{\smash{\raisebox{-.8ex}{$\sim$}}\mspace{3mu}}{\longrightarrow}}}
\newcommand{\LSF}{\mathrm{LSF}}
\newcommand{\LSFhat}{\widehat{\mathrm{LSF}}}
\newcommand{\mathhyphen}{\textnormal{-}}
\newcommand{\Mhat}{{\smash{\widehat{\mathrm{M}}}\vphantom{M}}}
\newcommand{\Mtilde}{{\smash{\widetilde{\mathrm{M}}}\vphantom{M}}}
\newcommand{\Mod}{\cat{Mod}}
\newcommand{\Msdss}{\mathcal{M}^\sdss}
\newcommand{\Mss}{\mathcal{M}^\ss}
\newcommand{\nai}{{\smash{\textnormal{na\"{\i}}}}}
\newcommand{\Num}{\mathrm{Num}}
\newcommand{\numberthis}{\addtocounter{equation}{1}\tag{\theequation}}
\newcommand{\oline}[1]{{\smash{\overline{#1}}}}
\newcommand{\op}{{\smash{\mathrm{op}}}}
\newcommand{\pn}[1]{{\smash{(#1)}}}
\newcommand{\poly}{{\smash{\mathrm{P}}}}
\newcommand{\pr}{\operatorname{\smash[b]{\mathrm{pr}}}}
\newcommand{\prestar}{\prescript{*}{}}
\newcommand{\QED}{\hfill\ensuremath{\square}}
\newcommand{\rank}{\operatorname{rank}}
\newcommand{\scalemath}[2]{\scalebox{#1}{\mbox{\ensuremath{\displaystyle #2}}}}
\newcommand{\sd}{{\smash{\mathrm{sd}}}}
\newcommand{\sda}[1]{{\smash{\mathrm{sd},\mspace{2mu}#1}}}
\newcommand{\sdnai}{{\smash{\mathrm{sd},\mspace{2mu}\nai}}}
\newcommand{\sdss}{{\smash{\mathrm{sd},\mspace{2mu}\mathrm{ss}}}}
\newcommand{\setquot}{\mathbin{/}}
\newcommand{\SF}{\mathrm{SF}}
\newcommand{\SFhat}{\smash{\widehat{\mathrm{SF}}}}
\newcommand{\simto}{\mathrel{\overset{\smash{\raisebox{-.8ex}{$\sim$}}\mspace{3mu}}{\to}}}
\newcommand{\simTo}{\mathrel{\overset{\smash{\raisebox{-.7ex}{$\sim$}}\mspace{3mu}}{\Rightarrow}}}
\newcommand{\SL}{\mathrm{SL}}
\newcommand{\Sp}{\mathrm{Sp}}
\newcommand{\Spec}{\operatorname{Spec}}
\renewcommand{\ss}{{\smash[b]{\mathrm{ss}}}}
\newcommand{\Stab}{\mathrm{Stab}}
\newcommand{\stirling}[2]{\genfrac{\{}{\}}{0pt}{}{#1}{#2}}
\newcommand{\stirlingmat}[4]{\left\{\begin{smallmatrix}#1&#2\\#3&#4\end{smallmatrix}\right\}}
\newcommand{\stirlingMat}[4]{\left\{\,\begin{matrix}#1&#2\\#3&#4\end{matrix}\,\right\}}
\newcommand{\Sym}{\operatorname{Sym}}
\newcommand{\td}{\operatorname{td}}
\newcommand{\tria}{{\mathbin{\vartriangle}}}
\newcommand{\upB}{\mathrm{B}}
\newcommand{\upC}{\mathrm{C}}
\newcommand{\upe}{\mathrm{e}}
\newcommand{\upi}{\mathrm{i}}
\newcommand{\upI}{\mathrm{I}}
\newcommand{\upJ}{\mspace{2mu}\mathrm{J}}
\newcommand{\upM}{\mathrm{M}}
\newcommand{\upN}{\mathrm{N}}
\newcommand{\upO}{\mathrm{O}}
\newcommand{\vrp}[1]{\Pi^{\mathrm{vi}}_{#1}}
\newcommand{\+}{\accentset{\mspace{2mu}\smash{\raisebox{-.3ex}{$\scriptscriptstyle+$}}}}
\renewcommand{\>}{\mspace{2mu}}
\renewcommand{\leq}{\leqslant}
\renewcommand{\geq}{\geqslant}
\renewcommand{\preceq}{\preccurlyeq}
\newcommand{\nicesubstack}[1]{\substack{\makebox{\scriptsize$\begin{gathered}#1\end{gathered}$}}}
\newcommand{\leftsubstack}[2][6em]{\substack{\makebox[#1][l]{\scriptsize$\begin{aligned}#2\end{aligned}$}}}
\newcommand{\circleddiamond}{\mathbin{\tikz[baseline=-.6ex]{\draw (0,0) circle (.65ex); \draw (.4ex,0)--(0,.4ex)--(-.4ex,0)--(0,-.4ex)--cycle;}}}
\newcommand{\fixlineheight}{\setlength\lineskiplimit{-\maxdimen}}
\newcommand{\unfixlineheight}{\setlength\lineskiplimit{0pt}}
\DeclareFontFamily{OMX}{MnSymbolE}{}
\DeclareSymbolFont{MnLargeSymbols}{OMX}{MnSymbolE}{m}{n}
\DeclareFontShape{OMX}{MnSymbolE}{m}{n}{
    <-6>  MnSymbolE5
   <6-7>  MnSymbolE6
   <7-8>  MnSymbolE7
   <8-9>  MnSymbolE8
   <9-10> MnSymbolE9
  <10-12> MnSymbolE10
  <12->   MnSymbolE12
}{}
\DeclareFontShape{OMX}{MnSymbolE}{b}{n}{
    <-6>  MnSymbolE-Bold5
   <6-7>  MnSymbolE-Bold6
   <7-8>  MnSymbolE-Bold7
   <8-9>  MnSymbolE-Bold8
   <9-10> MnSymbolE-Bold9
  <10-12> MnSymbolE-Bold10
  <12->   MnSymbolE-Bold12
}{}
\DeclareMathDelimiter{[}    {\mathopen} {MnLargeSymbols}{'000}{MnLargeSymbols}{'000}
\DeclareMathDelimiter{]}    {\mathclose}{MnLargeSymbols}{'005}{MnLargeSymbols}{'005}
\DeclareMathDelimiter{\llbr}{\mathopen} {MnLargeSymbols}{'102}{MnLargeSymbols}{'102}
\DeclareMathDelimiter{\rrbr}{\mathclose}{MnLargeSymbols}{'107}{MnLargeSymbols}{'107}
\def\big#1{{\hbox{$\left#1\vbox to8.5\p@{}\right.\n@space$}}}
\title{Enumerative invariants in self-dual categories.\\I. Motivic invariants}
\author{Chenjing Bu}
\date{January~2024}
\begin{document}

\initlengths

\maketitle

\vspace{12pt}

\begin{abstract}
    This is the first paper in a series where
we propose a theory of enumerative invariants
counting self-dual objects in self-dual categories.
Ordinary enumerative invariants in abelian categories
can be seen as invariants for the structure group $\GL (n)$,
and our theory is an extension of this
to structure groups $\upO (n)$ and $\Sp (2n)$.
Examples of our invariants include
invariants counting principal orthogonal or symplectic bundles,
and invariants counting self-dual quiver representations.

In the present paper, we take the motivic approach,
and define our invariants as
elements in a ring of motives.
We also extract numerical invariants
by taking Euler characteristics of these elements.
We prove wall-crossing formulae relating our invariants
for different stability conditions.
We also provide an explicit algorithm computing invariants for quiver representations,
and we present some numerical results.

\end{abstract}

\vspace{24pt}

{ 
    \hypersetup{urlcolor=black,linkcolor=black}
    \begin{center}
        {\firamedium\large Note}

        \vspace{12pt}

        \begin{minipage}{.9\textwidth}
            \hspace*{1.5em}%
            This paper is superseded by a new paper,
            \emph{Orthosymplectic Donaldson--Thomas theory},
            arXiv:\href{https://arxiv.org/abs/2503.20667}{\texttt{2503.20667}}.
            The new paper significantly simplifies the theory,
            and contains most of the main ideas here.
        \end{minipage}
    \end{center}
}

{\hypersetup{linkcolor=black}
\tableofcontents}

\section{Introduction}

\addtocounter{subsection}{1}

\paragraph{}

A theory of \emph{enumerative invariants}
is a certain way to assign numbers, or other types of data,
to certain types of moduli stacks.
One major branch of the study of such invariants is the study of
moduli stacks of objects in an \emph{abelian category},
such as the category of coherent sheaves on a smooth projective variety,
or the category of representations of a quiver.
Plenty of work has been done in this area,
and different flavours of invariants have been developed.
Notable examples include:

\begin{enumerate}
    \item \sloppy
        Thomas'~\cite{Thomas2000} theory of Donaldson--Thomas invariants
        counting coherent sheaves on Calabi--Yau threefolds.
        
    \item 
        \label{itm-intro-motivic}
        Joyce's series of work
        \cite{Joyce2006I, Joyce2007II, Joyce2007III, Joyce2008IV, Joyce2007Stack}
        on motivic invariants in abelian categories.

    \item
        Mochizuki's~\cite{Mochizuki2009} theory of 
        algebraic Donaldson invariants
        counting coherent sheaves on algebraic surfaces.
        
    \item 
        The theory of Pandharipande--Thomas invariants~\cite{PandharipandeThomas2009}
        counting stable pairs of coherent sheaves on a Calabi--Yau $3$-fold.
        
    \item 
        \label{itm-intro-motivic-dt}
        The theory of generalized Donaldson--Thomas invariants
        in $3$-Calabi--Yau categories, using the motivic approach,
        developed independently by Joyce--Song~\cite{JoyceSong2012}
        and Kontsevich--Soibelman~\cite{KontsevichSoibelman2008}.
        
    \item 
        \label{itm-intro-coha}
        The theory of cohomological Hall algebras,
        initiated by the work of
        Kontsevich--Soibelman~\cite{KontsevichSoibelman2011},
        and the categorification of Donaldson--Thomas invariants,
        notable works on which include Efimov~\cite{Efimov2012},
        Meinhardt--Reineke~\cite{MeinhardtReineke2019},
        Davison--Meinhardt~\cite{DavisonMeinhardt2020},
        and others.

    \item
        The work of Tanaka--Thomas~\cite{TanakaThomasI, TanakaThomasII}
        and Thomas~\cite{Thomas2020}
        on the theory of Vafa--Witten invariants
        counting Higgs pairs on a surface.
        
    \item 
        \label{itm-intro-homological}
        Joyce's recent work~\cite{Joyce2021}
        on homological enumerative invariants,
        and the vertex algebra structure on the homology of moduli stacks.

    \item[]
        \ldots\ldots
\end{enumerate}

\paragraph{}

On the other hand, these are all part of a more general problem,
which is that of counting \emph{principal $G$-bundles} on a variety,
or some coherent sheaf analogue of them, for an algebraic group $G$.
One could also consider quiver analogues of this problem,
using a suitable notion of \emph{$G$-quivers},
such as the notion defined in Derksen--Weyman~\cite{DerksenWeyman2002},
or a modification of it.
We shall refer to such invariants as \emph{$G$-invariants}.

Theories of invariants in abelian categories, as discussed above,
address the cases when $G = \GL (n)$, or sometimes~$\SL (n)$,
which form the A family of algebraic groups.
One advantage in this case is that the objects in question
often form an \emph{abelian category},
which provides rich structures one could work with.

The present series of papers will address the cases when
$G = \upO (n)$~or $\Sp (2n)$,
which cover the B, C, and D families of algebraic groups.
In this case, the objects in question are often
\emph{self-dual objects in a self-dual category},
which is a useful structure.

In particular, the present paper aims to develop
a theory of $G$-invariants for $G = \upO (n)$ or $\Sp (2n)$,
in the flavour of \cref{itm-intro-motivic} above.

\paragraph{}

Previous work on this topic includes
Young~\cite{Young2015,Young2020},
who studied Donaldson--Thomas theory for self-dual quivers,
which are $G$-quivers for $G = \upO (n)$~or $\Sp (2n)$.
The present work is based on a general setting
that is similar to that of Young's work,
and a part of our constructions can also be seen as
generalizations of those of Young.

\paragraph{}

We summarize our main constructions and results as follows.
We start with the following data:

\begin{itemize}
    \item 
        A \emph{self-dual category} $\calA$,
        with moduli stack $\calM$.
        Here, \emph{self-dual} means that there is an equivalence
        $(-)^\vee \colon \calA \simto \calA^\op$,
        which squares to the identity of~$\calA$.
    \item 
        A \emph{groupoid of self-dual objects} $\calA^\sd$,
        with moduli stack $\calM^\sd$.
        Here, a \emph{self-dual object} is an object $E \in \calA$
        with an isomorphism $\phi \colon E \simto E^\vee$
        such that $\phi = \phi^\vee$.
\end{itemize}
For example, one could consider the following situations:

\begin{itemize}
    \item 
        $\calA$ is the category of vector bundles on an algebraic curve $X$,
        and $\calA^\sd$ is the groupoid of orthogonal or symplectic
        principal bundles on $X$.
    \item
        $\calA$ is the category of representations of a self-dual quiver $Q$,
        and $\calA^\sd$ is the groupoid of \emph{self-dual representations}.
\end{itemize}
Joyce~\cite{Joyce2007II} defined \emph{motivic Hall algebras}
associated to the moduli stack $\calM$.
These are algebraic structures
that describe \emph{extensions} in the category $\calA$.
Its elements, called \emph{stack functions},
can be thought of as \emph{motives} relative to $\calM$.
The Hall algebra operation is given by parametrizing
all possible middle terms in a short exact sequence
where the first and third terms are given.

Joyce~\cite{Joyce2008IV} then defined elements
$\delta_\alpha (\tau)$ and $\epsilon_\alpha (\tau)$
in the motivic Hall algebra,
where $\alpha$ is a $K$-theory class, and $\tau$ is a stability condition on $\calA$.
The element $\delta_\alpha (\tau)$
parametrizes the family of $\tau$-semistable objects of class $\alpha$;
the element $\epsilon_\alpha (\tau)$ is similar to $\delta_\alpha (\tau)$,
but assigns rational coefficients to strictly semistable objects,
so that these objects can be `counted with correct multiplicities'.
The relation between them can be written as
\begin{equation}
    \epsilon (\tau; t) = \log \delta (\tau; t) \ ,
\end{equation}
where $t$ is a fixed slope.
See \cref{eq-epsilon-delta-exp-1}
below for the precise meaning of this.

In our case of self-dual categories, we define \emph{motivic Hall modules},
which are modules for motivic Hall algebras,
describing \emph{self-dual extensions} of a self-dual object by an ordinary object.
We define elements $\delta^\sd_\theta (\tau)$ and $\epsilon^\sd_\theta (\tau)$
in the motivic Hall module,
where $\theta$ is a $K$-theory class for self-dual objects.
The element $\delta^\sd_\theta (\tau)$
parametrizes the family of $\tau$-semistable self-dual objects of class $\theta$;
the element $\epsilon^\sd_\theta (\tau)$
assigns rational coefficients to strictly semistable self-dual objects,
which are the correct multiplicities, given by
\begin{equation}
    \epsilon^\sd (\tau) = \delta (\tau; 0)^{-1/2} \diamond \delta^\sd (\tau) \ ,
\end{equation}
where $\diamond$ is the module operation,
as in~\cref{eq-def-epsilon-sd-compact} below.
This is one of the key constructions in this paper.

When we change the stability condition $\tau$,
the changes of the elements $\delta, \epsilon, \delta^\sd, \epsilon^\sd$
are described by \emph{wall-crossing formulae}.
For two stability conditions $\tau, \tilde{\tau}$, 
under certain assumptions,
we prove the wall-crossing formula
\begin{equation}
    \epsilon^\sd_\theta (\tilde{\tau}) =
    \sum_{ \leftsubstack[5em]{
        & n \geq 0; \, \alpha_1, \dotsc, \alpha_n \in C (\calA), \,
        \rho \in C^\sd (\calA) \colon \\[-1.5ex]
        & \theta = \bar{\alpha}_1 + \cdots + \bar{\alpha}_n + \rho 
    } } {}
    U^\sd (\alpha_1, \dotsc, \alpha_n; \tau, \tilde{\tau}) \cdot
    \epsilon_{\alpha_1} (\tau) \diamond \cdots \diamond
    \epsilon_{\alpha_n} (\tau) \diamond
    \epsilon^\sd_{\rho} (\tau) \ ,
\end{equation}
as in \cref{eq-wcf-epsilon-sd} below,
where $U^\sd ({\cdots}) \in \bbQ$ are combinatorial coefficients,
and other notations are explained there.
Note that this involves both self-dual and ordinary invariants.
This is the self-dual analogue of
the wall-crossing formula due to Joyce~\cite{Joyce2008IV},
and can also be used as a tool for computing invariants.

We also show that the wall-crossing formulae
can be rewritten in terms of Lie brackets in the motivic Hall algebra,
together with a Lie bracket-like action on the motivic Hall module,
denoted by $\heart$, as
\begin{multline}
    \epsilon^\sd_\theta (\tilde{\tau}) =
    \sum_{ \leftsubstack[10em]{
        \\[-3ex]
        & n \geq 0; \, m_1, \dotsc, m_n > 0; \\[-.5ex]
        & \alpha_{1,1}, \dotsc, \alpha_{1,m_1}; \dotsc;
        \alpha_{n,1}, \dotsc, \alpha_{n,m_n} \in C (\calA); \,
        \rho \in \smash{C^\sd (\calA)} \colon \\[-.5ex]
        & \theta = (\bar{\alpha}_{1,1} + \cdots + \bar{\alpha}_{1,m_1})
        + \cdots + (\bar{\alpha}_{n,1} + \cdots + \bar{\alpha}_{n,m_n}) + \rho
    } } {}
    \tilde{U}^\sd (\alpha_{1,1}, \dotsc, \alpha_{1,m_1}; \dotsc;
    \alpha_{n,1}, \dotsc, \alpha_{n,m_n}; \tau, \tilde{\tau}) \cdot {} \\[1ex]
    \bigl[ \bigl[ \epsilon_{\alpha_{1,1}} (\tau), \dotsc \bigr] ,
    \epsilon_{\alpha_{1,m_1}} (\tau) \bigr] \heart \cdots \heart
    \bigl[ \bigl[ \epsilon_{\alpha_{n,1}} (\tau), \dotsc \bigr] ,
    \epsilon_{\alpha_{n,m_n}} (\tau) \bigr] \heart 
    \epsilon^\sd_{\rho} (\tau) \ ,
\end{multline}
as in \cref{thm-comb},
where notations are explained there.
This gives hints on a possible
universal wall-crossing theory for type B/C/D invariants,
which can also apply to other situations where
we only have well-behaved Lie algebra and twisted module structures,
but not necessarily associative algebra and module structures.
In particular, this may apply to self-dual analogues
of~\cref{itm-intro-motivic-dt} and~\cref{itm-intro-homological} above.

One can extract invariants
$\upI_\alpha (\tau)$, $\upI^\sd_\theta (\tau)$,
$\upJ_\alpha (\tau)$, and $\upJ^\sd_\theta (\tau)$,
from the elements
$\delta_\alpha (\tau)$, $\delta^\sd_\theta (\tau)$,
$\epsilon_\alpha (\tau)$, and $\epsilon^\sd_\theta (\tau)$
by performing \emph{motivic integration},
where the ordinary case is due to Joyce~\cite{Joyce2008IV}.
These invariants live in a ring of motives,
and satisfy similar relations and wall-crossing formulae
as the ones mentioned above for the stack functions
$\delta_\alpha (\tau)$, $\delta^\sd_\theta (\tau)$,
$\epsilon_\alpha (\tau)$, and $\epsilon^\sd_\theta (\tau)$.

From these, one can then obtain numerical invariants
$\chiJ_\alpha (\tau)$, $\chiJ^\sd_\theta (\tau) \in \bbQ$,
by taking the \emph{Euler characteristics} of the invariants
$\upJ_\alpha (\tau)$, $\upJ^\sd_\theta (\tau)$.
These are well-defined due to the \emph{no-pole theorems},
\cref{thm-no-pole,thm-no-pole-sd},
which state that these Euler characteristics are finite,
which is not true for the invariants $\upI_\alpha (\tau)$ and $\upI^\sd_\theta (\tau)$.
The self-dual no-pole theorem, \cref{thm-no-pole-sd},
is one of the key results of this paper.
We also prove wall-crossing formulae
for these numerical invariants,
under a change of stability condition.

We also define \emph{Donaldson--Thomas invariants} in type~B/C/D,
for self-dual quivers with potentials,
as well as sheaves on a Calabi--Yau threefold,
in \cref{sect-quiv-dt,sect-threefolds},
as a self-dual analogue of the type~A case developed by
Joyce--Song~\cite{JoyceSong2012} and
Kontsevich--Soibelman~\cite{KontsevichSoibelman2008}.
We expect that these invariants satisfy wall-crossing formulae,
but we hope to study them in more detail in future work.

\paragraph{}

One limitation of the motivic method,
in its form in this paper,
is that wall-crossing formulae
are only true in one-dimensional categories,
or slightly more general situations,
such as the category of bundles over curves,
or representations of quivers without relations,
or sheaves on a surface of Kodaira dimension~$\leq 0$.
In more general situations, the invariants should still be well-defined,
but they may no longer satisfy the wall-crossing formulae.

However, in future papers of this series,
we hope to explore other approaches to defining self-dual invariants,
which would possibly be analogues
of~\cref{itm-intro-motivic-dt}, \cref{itm-intro-coha} or~\cref{itm-intro-homological} above.
For \cref{itm-intro-motivic-dt},
this paper defines \emph{Donaldson--Thomas invariants} in type~B/C/D,
for self-dual quivers with potentials,
as well as orthogonal or symplectic sheaves on a Calabi--Yau threefold,
and we hope to study them in more detail in future work.
Some work on~\cref{itm-intro-coha} has already been done by Young~\cites{Young2020}.
For~\cref{itm-intro-homological},
we also hope to construct an analogue in the self-dual setting.

\paragraph{}

This paper is organized as follows.
In \cref{sect-exact-cat},
we present background material on exact categories
and stability conditions,
and we formulate a notion of moduli stacks for exact categories.
These moduli stacks will be \emph{stacks in categories},
and carry extra structure that will be helpful for our theory.
In \cref{sect-sd-cat},
we define \emph{self-dual categories},
and study stability conditions on self-dual categories.
We also define moduli stacks
of self-dual objects in a self-dual category.
In \cref{sect-alg},
we define algebraic structures arising from moduli stacks
of exact categories and self-dual exact categories,
including \emph{motivic Hall algebras} and
\emph{motivic Hall modules}.
In \cref{sect-invariants},
we present most of our main results.
We define self-dual motivic enumerative invariants,
prove their wall-crossing formulae under a change of stability condition,
and we state the \emph{no-pole theorems},
which allows us to take the Euler characteristic of the invariants,
obtaining numerical invariants.

In the sections that follow, \crefrange{sect-quiver}{sect-coh},
we discuss two main applications of our theory,
namely, self-dual representations of self-dual quivers,
and orthogonal or symplectic coherent sheaves
on a smooth, projective variety.

In \cref{sect-motive},
we present background material on rings of motives,
and the theory of stack functions as in Joyce~\cite{Joyce2007Stack}.
In \cref{sect-stacks-in-cat,sect-two-vb},
we study theory of stacks in categories and
$2$-vector bundles,
and prove some basic results about them.
In \cref{sect-proof-comb,sect-proof-no-pole},
we present the proofs of two of our main results,
\cref{thm-comb,thm-no-pole-sd}.

\paragraph{Acknowledgements.}

The author is grateful to his supervisor, Dominic Joyce,
for his guidance throughout this project.
The author would also like to thank
Andr\'es Ib\'a\~nez N\'u\~nez, Tasuki Kinjo, and Henry Liu
for helpful discussions.

This work was done during the author's PhD programme supported by
the Mathematical Institute, University of Oxford.

\paragraph{Notations and conventions.}

Throughout this paper,

\begin{itemize}
    \item 
        $\bbK$ is an algebraically closed field of characteristic $0$.
        
    \item
        A \emph{$\bbK$-variety} is a $\bbK$-scheme
        that is separated and of finite type.
        We do not require a variety to be irreducible or reduced.
        However, a \emph{projective variety}
        is always assumed to be integral, i.e.~irreducible and reduced.
\end{itemize}

\paragraph{List of conditions.}

Some of the conditions that we use in this paper
are labelled with letters in brackets rather than numbers.
For the reader's convenience,
we list all of them here,
together with the section numbers
where they are defined.

\begin{multicols}{3}
    \newcommand{\tagrefbox}[1]{\makebox[3.8em][l]{\tagref{#1}}---}
    \begin{itemize}[nosep]
        \item 
            \tagrefbox{ExCat} \cref{para-excat}
        \item 
            \tagrefbox{Spl} \cref{para-spl}
        \item 
            \tagrefbox{Mod} \cref{para-exmod}
        \item 
            \tagrefbox{Stab} \cref{para-exstab}
        \item 
            \tagrefbox{Fin} \cref{para-exfin}
        \item 
            \tagrefbox{Ext} \cref{para-exext}
        \item 
            \tagrefbox{SdCat} \cref{para-sdcat}
        \item 
            \tagrefbox{SdMod} \cref{para-sdmod}
        \item 
            \tagrefbox{SdExt} \cref{para-sdext}
    \end{itemize}
\end{multicols}

\section{Exact categories and moduli stacks}

\label{sect-exact-cat}

A major branch of enumerative geometry is concerned with
studying moduli spaces of objects in \emph{abelian categories}.
Here, we generalize this slightly to the setting of
\emph{exact categories} or \emph{quasi-abelian categories},
which are weaker than abelian categories,
but should still work reasonably well in the context of enumerative geometry.

The main reason for doing this is that
the abelian categories that we are interested in,
such as the category of coherent sheaves on a variety,
do not always have \emph{self-dual structures},
which we need in order to study enumerative invariants
for type B/C/D structure groups.
However, it is often the case that
we have a quasi-abelian subcategory of
the original abelian category that is self-dual,
so we will choose to work with this quasi-abelian subcategory instead.

\subsection{Exact categories}

\paragraph{}

We start by recalling the definition of an exact category.
The following list of axioms is taken from
Keller~\cite[Appendix~A]{Keller1990},
with some modifications.

\begin{definition*}
    An \emph{exact category} is a pair $(\calA, \cat{Ex})$,
    where $\calA$ is an additive category,
    and $\cat{Ex}$ is a class of sequences of maps
    \begin{equation}
        \label{eq-short-exact-seq}
        0 \to
        E \overset{i}{\longrightarrow}
        G \overset{p}{\longrightarrow}
        F \to 0
    \end{equation}
    in $\calA$, called \emph{short exact sequences},
    satisfying the following axioms.

    We call morphisms that appear as the first (resp.\ second)
    non-trivial arrow in a short exact sequence
    an \emph{inclusion} (resp.\ a \emph{projection}).
    Then,
    \begin{enumerate}
        \item 
            Sequences of the form
            $0 \to E \to E \oplus F \to F \to 0$,
            called \emph{split exact sequences},
            are short exact,
            where the two non-trivial maps are the canonical ones.
        \item 
            All short exact sequences are
            kernel--cokernel pairs.
            That is, in~\cref{eq-short-exact-seq},
            we always have $i = \ker (p)$ and
            $p = \operatorname{coker} (i)$.
        \item 
            Inclusions and projections
            are closed under composition.
        \item
            Pushouts along inclusions exist,
            and inclusions are closed under pushouts.
            Dually, pullbacks along projections exist,
            and projections are closed under pullbacks.
    \end{enumerate}
    In this case,
    we usually denote the exact category just by $\calA$.
\end{definition*}

\paragraph{Relation to abelian categories.}
\label{para-excat-abcat}

Every abelian category is an exact category,
by taking $\cat{Ex}$ to be the class of
short exact sequences in that abelian category.

Moreover, any additive full subcategory of an abelian category
is an exact category,
where the notion of exactness is inherited from the
original abelian category.

Conversely, Quillen~\cite[\S2]{Quillen1973} showed that
any essentially small exact category $\calA$
can be embedded in an abelian category $\hat{\calA}$,
as a full, additive subcategory that is closed under extensions,
such that exact sequences in $\calA$ are precisely
those sequences that are exact in $\hat{\calA}$.

For objects $E, F \in \calA$, one can define the \emph{Ext groups}
$\Ext^n (E, F)$ for $n \geq 0$,
as in Neeman--Retakh~\cite{NeemanRetakh1996},
as the group of isomorphism classes of $n$-extensions.
The embedding $\calA \hookrightarrow \hat{\calA}$ above
induces an isomorphism
$\Ext^n (E, F) \simeq \Ext^n_{\smash{\hat{\calA}}} (E, F)$
at least for $n = 0, 1$.

\paragraph{The Grothendieck monoid.}
\label{para-excat-groth}

Let $\calA$ be an exact category
that is \emph{essentially small},
that is, the class of isomorphism classes of objects of $\calA$ form a set,
then we define the \emph{Grothendieck monoid} of $\calA$
to be the abelian monoid $K_+ (\calA)$,
which is the quotient of the free abelian monoid generated by
isomorphism classes $[E]$ of objects $E \in \calA$,
by the relations
\[
    [0] \sim 0 \ , \qquad
    [G] \sim [E] + [F]
\]
for short exact sequences~\cref{eq-short-exact-seq}.

\paragraph{}
\label[paragraph]{para-excat}

We formulate the following set-up for exact categories,
which will be used in the rest of this paper.

\begin{condition*}[ExCat]
    Let $\calA$ be an essentially small $\bbK$-linear exact category,
    with the following properties and extra data:

    \begin{enumerate}
        \item \label{itm-excat-noetherian}
            We assume that $\calA$ is \emph{noetherian}.
            This means that for any object $E \in \calA$,
            any increasing chain of inclusions
            $E_0 \hookrightarrow E_1 \hookrightarrow \cdots \hookrightarrow E$
            must stabilize.

        \item \label{itm-excat-k-monoid}
            We are given
            a fixed quotient monoid $C^\circ (\calA)$ of $K_+ (\calA)$,
            called the \emph{class monoid} of $\calA$.
            For an object $E \in \calA$,
            write $\llbr E \rrbr \in C^\circ (\calA)$ for the class of $E$ in $C^\circ (\calA)$.
            We assume that $\llbr E \rrbr = 0$ implies $E \simeq 0$.
            Write $C (\calA) = C^\circ (\calA) \setminus \{ 0 \}$,
            which is a sub-semigroup of $C^\circ (\calA)$.
    \end{enumerate}
\end{condition*}

\begin{remark}
    \label{rem-sd-noeth-art}
    In the condition \tagref{Ext},
    if $\calA$ also has a \emph{self-dual structure}
    in the sense of \cref{sect-def-sd-cat} below,
    then the noetherian assumption on $\calA$
    also implies that $\calA$ is \emph{artinian},
    meaning that for any object $E \in \calA$,
    any decreasing chain of subobjects
    $E = E_0 \hookleftarrow E_1 \hookleftarrow \cdots$
    must stabilize.
\end{remark}

\subsection{Quasi-abelian categories}

\paragraph{}

Quasi-abelian categories are exact categories
satisfying certain extra properties.
A good reference for quasi-abelian categories
from the viewpoint of enumerative geometry and
stability conditions is Bridgeland~\cite[\S4]{Bridgeland2007},
who showed that quasi-abelian categories
can be viewed as slices in an abelian or triangulated category
with a stability condition,
consisting of objects with slopes in a certain interval.

Many constructions and results in this paper
work in the generality of exact categories.
However, the \emph{no-pole theorems},
\cref{thm-no-pole,thm-no-pole-sd} below,
only hold for quasi-abelian categories,
as their proofs rely on \cref{lem-qa-splitting} below.

\begin{definition}
    A \emph{quasi-abelian category}
    is an exact category $(\calA, \cat{Ex})$
    that has all kernels and cokernels,
    such that $\cat{Ex}$
    is precisely the class of all kernel--cokernel pairs,
    that is, sequences~\cref{eq-short-exact-seq}
    with $i = \ker (p)$ and $p = \operatorname{coker} (i)$.
\end{definition}

\paragraph{Relation to abelian categories.}

As in Schneiders~\cite[\S1.2.3]{Schneiders1998},
for each quasi-abelian category $\calA$,
there exist abelian categories $\calA^+$ and $\calA^-$,
and exact fully faithful embeddings $\calA \hookrightarrow \calA^+$ and $\calA \hookrightarrow \calA^-$,
such that $\calA$ is closed under extensions in both categories,
closed under taking subobjects in $\calA^+$,
and closed under taking quotient objects in $\calA^-$.

\begin{lemma}
    \label{lem-qa-splitting}
    Let $\calA$ be a $\bbK$-linear quasi-abelian category,
    $E \in \calA$ an object,
    $n \geq 1$ an integer, and
    $\iota \colon \bbK^n \hookrightarrow \End (E)$ an inclusion of\/ $\bbK$-algebras,
    where $\bbK^n$ has component-wise multiplication.
    Then, there exists a unique decomposition
    \begin{equation}
        \label{eq-qa-splitting}
        E \simeq E_1 \oplus \cdots \oplus E_n
    \end{equation}
    up to isomorphisms,
    such that for each $i$,
    the $i$-th basis element $e_i \in \bbK^n$
    acts on $E_i$ as the identity,
    and acts on $E_j$ as zero for $j \neq i$.
\end{lemma}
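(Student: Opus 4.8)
The plan is to use the idempotents $e_1, \dots, e_n \in \bbK^n \subseteq \End(E)$ coming from the algebra map $\iota$. These are orthogonal idempotents summing to $1$, so the first task is to show that each $e_i$ induces a splitting $E \simeq \ker(e_i) \oplus \im(e_i)$, where kernels and images make sense because $\calA$ is quasi-abelian. More precisely, for an idempotent $f \in \End(E)$ in an additive category where the relevant kernels/cokernels exist, one has a canonical decomposition $E \simeq \ker(f) \oplus \ker(1-f)$; this is a standard fact, and I would verify it works in the quasi-abelian setting by noting that $f$ and $1-f$ are complementary idempotents, that $\ker(f) = \im(1-f)$ and $\ker(1-f) = \im(f)$ as subobjects (using that idempotents split in any category with kernels of idempotents, which quasi-abelian categories have since they have all kernels), and that the two inclusions $\ker(f) \hookrightarrow E \hookleftarrow \ker(1-f)$ exhibit $E$ as a biproduct. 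The key point making this an \emph{exact} (hence allowable) decomposition is axiom (i) of an exact category: split sequences are short exact.

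Next I would set $E_i := \ker(1 - e_i) = \im(e_i)$ and iterate, or better, argue directly: since the $e_i$ are pairwise orthogonal and $\sum_i e_i = \id_E$, the inclusions $E_i \hookrightarrow E$ assemble to a map $\bigoplus_i E_i \to E$ which is an isomorphism, with inverse assembled from the $e_i$ viewed as maps $E \to E_i$. One checks $e_i$ acts as the identity on $E_i$ and as zero on $E_j$ for $j \neq i$ directly from orthogonality and $e_i e_i = e_i$. This establishes existence with the claimed action property.

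For uniqueness, suppose $E \simeq E_1' \oplus \cdots \oplus E_n'$ is another decomposition with the stated action property. Then on $E'_i$ the element $e_i$ acts as the identity and $e_j$ ($j \neq i$) as zero, which forces $E'_i = \im(e_i \text{ on } E) = E_i$ as subobjects of $E$ (the subobject on which $e_i$ acts as identity is exactly $\im(e_i)$, since $e_i$ restricted to $\im(e_i)$ is the identity and $E'_i$ being a summand annihilated by all $e_j$, $j\neq i$, must land in $\ker(1-e_i) = \im(e_i)$, and a dimension/biproduct count forces equality). Hence the decomposition is unique up to isomorphism.

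The main obstacle I anticipate is purely categorical bookkeeping: making sure that "idempotents split" and "$\ker(f) \oplus \im(f) \simeq E$" genuinely hold in a quasi-abelian category and that the resulting sequences are in $\cat{Ex}$ — this is where one must invoke that quasi-abelian categories have all kernels and cokernels and that $\cat{Ex}$ is \emph{all} kernel--cokernel pairs, together with the split-exactness axiom. Once the single-idempotent case is cleanly in hand, the passage to $n$ orthogonal idempotents and the uniqueness statement are formal. I do not expect to need the noetherian hypothesis or anything beyond the quasi-abelian axioms here.
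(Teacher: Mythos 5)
Your proof is correct and uses the same decomposition as the paper, namely $E_i = \ker(\iota(1-e_i))$. The only presentational difference is that the paper invokes the Yoneda lemma — observing that $\Hom(-,E_i) \subset \Hom(-,E)$ is the $e_i$-eigenspace and that $\Hom(-,E)$ splits as a direct sum of these eigenspaces functorially — whereas you verify the biproduct data $(\iota_i, p_i)$ by hand from the idempotent relations; these amount to the same argument, and both correctly avoid the noetherian hypothesis. Your concern about exactness is a non-issue, since the lemma only asserts an isomorphism with a direct sum, not membership of any particular sequence in $\cat{Ex}$.
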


\begin{proof}
    Define $E_i = \ker ( \iota (1 - e_i) )$.
    The subfunctor
    $\Hom (-, E_i) \subset \Hom (-, E) \colon \calA^\op \to \bbK \mathhyphen \cat{Vect}$
    is the $1$-eigenspace of the action of $e_i$,
    and $\Hom (-, E)$ is the direct sum of such eigenspaces,
    giving the desired decomposition by the Yoneda lemma.
\end{proof}

\paragraph{}
\label{para-spl}

For convenience of referencing later,
we formulate a condition on exact categories,
which is stronger than \tagref{ExCat},
and requires the exact category to satisfy
the above splitting property,
which are satisfied by quasi-abelian categories.

\begin{condition*}[Spl]
    Let $\calA$ be a $\bbK$-linear exact category,    
    with properties and extra data as in~\tagref{ExCat},
    such that $\calA$ satisfies the property described in
    \cref{lem-qa-splitting} above,
    but $\calA$ is not necessarily quasi-abelian.
\end{condition*}

\subsection{Categories of filtrations}

\begin{definition}
    \label{def-cat-filt}
    Let $\calA$ be an exact category.
    
    For an integer $n \geq 0$,
    the \emph{category of $n$-step filtrations} of $\calA$
    is the exact category $\calA^\pn{n}$
    whose objects are sequences of inclusions
    \begin{equation}
        \label{eq-obj-filt}
        E_\bullet = \Bigl( \ 
            0 = E_0 \hookrightarrow E_1 \hookrightarrow E_2 \hookrightarrow \cdots \hookrightarrow E_n
        \ \Bigr)
    \end{equation}
    in $\calA$, and whose morphisms are commutative diagrams.
    Exact sequences in $\calA^\pn{n}$ 
    are defined degree-wise.
    
    Note that $\calA^\pn{0} \simeq \{ 0 \}$,
    $\calA^\pn{1} \simeq \calA$,
    and $\calA^\pn{2}$ can be identified with
    the category of short exact sequences in $\calA$.
\end{definition}

\paragraph{The class monoid.}
\label{para-filt-class-monoid}

If, moreover, $\calA$ is equipped with extra data as in~\tagref{ExCat},
including a class monoid $C^\circ (\calA)$,
then we can also define
$C^\circ (\calA^\pn{n}) = C^\circ (\calA)^n$,
and set
\begin{equation}
    \llbr E_\bullet \rrbr = (
        \llbr E_1 / E_0 \rrbr, \,
        \llbr E_2 / E_1 \rrbr, \,
        \dotsc, \,
        \llbr E_n / E_{n-1} \rrbr
    )
\end{equation}
for $E_\bullet \in \calA^\pn{n}$ as in \cref{eq-obj-filt},
where quotients denote cokernels of inclusions.
With this extra data, the $\bbK$-linear exact category
$\calA^\pn{n}$ now also satisfies~\tagref{ExCat}.

\paragraph{Projection functors.}
\label{para-filt-proj}

For an integer $n \geq 0$,
let $[n]$ denote the totally ordered set $\{ 0, 1, \dotsc, n \}$.
For an order-preserving map $f \colon [m] \to [n]$,
there is an exact functor
\begin{equation}
    p^f \colon \calA^\pn{n}
    \longrightarrow \calA^\pn{m} \ ,
\end{equation}
sending a sequence~\cref{eq-obj-filt} to the sequence
\begin{equation}
    p^f (E_\bullet) = \Bigl( \ 
        E_{f (1)} / E_{f (0)} \hookrightarrow \cdots \hookrightarrow E_{f (m)} / E_{f (0)}
    \ \Bigr) \ ,
\end{equation}
where quotients denote cokernels of inclusions.

For an integer $1 \leq i \leq n$, we introduce shorthands
\begin{alignat}{2}
    p^\pn{n} & = p^{( 0, \> n ) \colon [1] \hookrightarrow [n]}
    && \colon \calA^\pn{n} \longrightarrow \calA \ , \\
    p_i & = p^{( i - 1, \> i ) \colon [1] \hookrightarrow [n]}
    && \colon \calA^\pn{n} \longrightarrow \calA \ .
\end{alignat}

\subsection{Stability conditions}
\label{sect-exact-cat-stab}

\paragraph{}
\label[definition]{def-stability}

We define a notion of \emph{stability conditions}
on exact categories, based on ideas from
Rudakov~\cite{Rudakov1997},
Bridgeland~\cite{Bridgeland2007},
Joyce~\cite{Joyce2007III},
and many others.

\begin{definition*}
    Let $\calA$ be a $\bbK$-linear exact category, as in \tagref{ExCat}.
    Let $(T, \leq)$ be a totally ordered set, and let
    \[
        \tau \colon C (\calA) \longrightarrow T
    \]
    be a map of sets.
    Write $\tau (E)$ for $\tau ( \llbr E \rrbr )$ 
    for an object $E \in \calA$.
    
    An object $E \in \calA$ is \emph{$\tau$-semistable},
    if for any inclusion $i \colon F \hookrightarrow E$,
    with $0 \neq F \neq E$, we have $\tau (F) \leq \tau (E) \leq \tau (E / F)$,
    where $E / F$ denotes the cokernel of $i$.

    An object $E \in \calA$ is \emph{$\tau$-stable},
    if it is non-zero, and for any inclusion $i \colon F \hookrightarrow E$,
    with $0 \neq F \neq E$, we have $\tau (F) < \tau (E) < \tau (E / F)$.

    We say that $\tau$ is a \emph{weak stability condition} on $\calA$,
    if the following conditions are satisfied:

    \begin{enumerate}
        \item \label{itm-def-stability-1}
            For any $\alpha, \beta, \gamma \in C (\calA)$
            such that $\beta = \alpha + \gamma$, either
            \[
                \tau (\alpha) \leq \tau (\beta) \leq \tau (\gamma) \ ,
                \quad \text{or} \quad
                \tau (\alpha) \geq \tau (\beta) \geq \tau (\gamma) \ .
            \]
        \item \label{itm-def-stability-2}
            If $E, F \in \calA$ are non-zero and $\tau$-semistable,
            and $\tau (E) > \tau (F)$, then
            \[
                \mathrm{Hom}_{\calA} (E, F) = 0 \ .
            \]
        \item \label{itm-def-stability-3}
            Every object $E \in \calA$ has a
            \emph{Harder--Narasimhan filtration}, that is a filtration
            \[ \begin{tikzcd}[sep=small]
                \mathllap{0 = {}} E_0 \ar[r, hook] &
                E_1 \ar[r, hook] \ar[d, two heads] &
                E_2 \ar[r, hook] \ar[d, two heads] &
                \cdots \ar[r, hook] &
                E_k \mathrlap{{} = E} \ar[d, two heads] \\
                & F_1 & F_2 & & F_k \rlap{ ,}
            \end{tikzcd} \]
            where each $E_{i-1} \hookrightarrow E_i \twoheadrightarrow F_i$ is short exact,
            and each $F_i$ is non-zero and $\tau$-semistable, with
            \[
                \tau (F_1) > \tau (F_2) > \cdots > \tau (F_k) \ .
            \]
    \end{enumerate}
    We say that $\tau$ is a \emph{stability condition},
    if it is a weak stability condition,
    and moreover, the following conditions are satisfied:

    \begin{enumerate}[resume]
        \item[(i$'$)]
            For any $\alpha, \beta, \gamma \in C (\calA)$
            such that $\beta = \alpha + \gamma$, either
            \[
                \tau (\alpha) < \tau (\beta) < \tau (\gamma) \ ,
                \quad \text{or} \quad
                \tau (\alpha) = \tau (\beta) = \tau (\gamma) \ ,
                \quad \text{or} \quad
                \tau (\alpha) > \tau (\beta) > \tau (\gamma) \ .
            \]

        \item \label{itm-def-stability-4}
            For each $t \in T$, let $\calA (t) \subset \calA$
            be the full subcategory consisting of
            $\tau$-semistable objects $E \in \calA$
            with $E \simeq 0$ or $\tau (E) = t$.
            Then $\calA (t)$ is an abelian category,
            and the inclusion $\calA (t) \hookrightarrow \calA$ is exact.
            Moreover, $\calA (t)$ is \emph{artinian},
            in that any decreasing sequence of subobjects
            $E = E_0 \supset E_1 \supset \cdots $
            in $\calA (t)$ must stabilize.
    \end{enumerate}
\end{definition*}

\paragraph{Remark.}

\Cref{def-stability} can be simplified in the following two cases:

\begin{itemize}
    \item 
        If $\calA$ is an abelian category,
        then the condition~\cref{itm-def-stability-2} is automatic,
        since one can take images.
        If, moreover, $\calA$ is \emph{$\tau$-artinian}
        in the sense of Joyce~\cite[\S4]{Joyce2007III},
        then the conditions~\crefrange{itm-def-stability-3}{itm-def-stability-4}
        are also satisfied, as discussed there.

    \item 
        If $\calA$ is a \emph{self-dual exact category}
        in the sense of \cref{sect-def-sd-cat},
        then the artinian assumption in condition~\cref{itm-def-stability-4}
        is automatic, as discussed in \cref{rem-sd-noeth-art}.
\end{itemize}

\begin{theorem}
    \label{thm-hn-uniqueness}
    In \cref{def-stability},
    if $\tau$ is a weak stability condition,
    then the Harder--Narasimhan filtration of an object $E \in \calA$
    is unique up to a unique isomorphism.
\end{theorem}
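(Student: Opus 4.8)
The plan is to prove uniqueness by the standard argument for Harder--Narasimhan filtrations, adapted to the exact-category setting using property~\cref{itm-def-stability-2} of a weak stability condition. Suppose $E$ has two Harder--Narasimhan filtrations, with semistable quotients $F_1, \dotsc, F_k$ (slopes strictly decreasing) and $F'_1, \dotsc, F'_l$ (slopes strictly decreasing) respectively. The key lemma I would isolate first is the following: \emph{if $F$ is $\tau$-semistable and $G_\bullet$ is any HN filtration of an object $G$ with $\tau(F) > \tau(G_1/G_0) > \cdots$, then any morphism $F \to G$ is zero} — and dually, if $\tau(F)$ is strictly less than all HN slopes of $G$, then any morphism $G \to F$ is zero. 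The first of these follows by composing $F \to G$ with the projections, writing $G \to G/G_{i-1}$ and using that the image of a semistable object of slope $\tau(F)$ in $G/G_{i-1}$ must land (by an inductive argument on the length of the filtration, using~\cref{itm-def-stability-2} applied to $F$ and the semistable quotients $F_j$) in the kernel of the next projection; since $\calA$ need not be abelian I cannot literally take images, so instead I would phrase the induction purely in terms of $\Hom$-vanishing: $\Hom(F, G_{i}/G_{i-1}) = 0$ for all $i$ forces $\Hom(F, G) = 0$ by the long exact sequences coming from the short exact sequences $G_{i-1} \hookrightarrow G_i \twoheadrightarrow F_i$ (here I use that $\Ext^0 = \Hom$ behaves correctly, as recalled in~\cref{para-excat-abcat}, and that the HN filtration is built from genuine short exact sequences).

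Granting this $\Hom$-vanishing lemma, I would compare the top pieces. The composite $E_1 \hookrightarrow E = E'_l \twoheadrightarrow F'_l \hookrightarrow \cdots$ — more precisely, consider the inclusion $i \colon F_1 = E_1 \hookrightarrow E$ and post-compose with the projections of the second filtration. Since $\tau(F_1) \geq \tau(F_j)$ for all $j$ with equality only possibly for the largest slopes, and $F_1$ is semistable, the vanishing lemma shows $F_1 \to E \to E/E'_{j}$ is zero as soon as $\tau(F_1) > \tau(F'_{j+1})$, i.e.\ $F_1$ factors through $E'_r \hookrightarrow E$ where $r$ is chosen so that $\tau(F'_r) = \tau(F_1)$ (such an $r$ exists, and is the index where the two strictly-decreasing slope sequences first agree — here I would note $\tau(F_1) = \tau(F'_1)$ is forced by a short symmetric argument, since a morphism from the larger-slope top piece to the other filtration whose pieces all have strictly smaller slope would have to vanish, contradicting that $E_1 \hookrightarrow E$ is a nonzero inclusion). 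By symmetry $F'_1 = E'_1$ factors through the first filtration similarly. Then one shows $E_1$ and $E'_1$ are identified as subobjects of $E$: the two factorizations $F_1 \hookrightarrow E'_1$ and $F'_1 \hookrightarrow E_1$ compose to endomorphisms of $F_1$ and of $F'_1$ which, by semistability and slope equality together with the vanishing lemma applied to the quotient, must be isomorphisms (an endomorphism of a semistable object that is an inclusion with semistable cokernel of the same slope and which is "surjective modulo lower pieces" is an isomorphism — again phrased via $\Hom$/$\Ext$ exact sequences). This gives a canonical isomorphism $E_1 \simeq E'_1$ compatible with the inclusions into $E$.

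With $E_1 \simeq E'_1$ identified inside $E$, I would pass to the quotient $E/E_1$, which inherits the truncated filtrations $E_2/E_1 \hookrightarrow \cdots$ and $E'_2/E'_1 \hookrightarrow \cdots$ as HN filtrations (the quotient of a semistable object being semistable of the same slope in this situation, and the short exact sequences remaining short exact after the pushout/quotient by axioms~(iii)--(iv) of an exact category), and conclude by induction on $k$. The base case $k = 0$ ($E = 0$) is trivial. Finally, uniqueness \emph{up to a unique isomorphism} follows by observing that any two isomorphisms of filtrations induce, on each semistable subquotient $F_i$, an automorphism which must be the identity: the difference is a morphism $F_i \to F_i$ that is compatible with everything, and chasing through the rigidity coming from $\Hom$-vanishing between distinct slopes forces it to be $\id$; more directly, an automorphism of the filtered object $E_\bullet$ restricting to the identity on $E$ is the identity, since each $E_i \hookrightarrow E$ is a monomorphism (inclusions in an exact category are monos, being kernels).

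\medskip

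\noindent\textbf{Main obstacle.} The delicate point is that $\calA$ is merely exact, not abelian, so I cannot take images of morphisms. Every step that in the abelian setting would be phrased via "the image of a semistable object has slope between source and target" must instead be recast as a $\Hom$- or $\Ext$-vanishing statement and proved by inducting along the given short exact sequences of the HN filtration. Verifying that the relevant long exact $\Hom$/$\Ext^1$ sequences are available (only $n=0,1$ are needed, and these are fine by~\cref{para-excat-abcat}) and that quotients of semistable objects by the constructed subobject stay semistable of the correct slope — using that $\calA$ is noetherian so HN filtrations exist and behave well, and using exact-category axioms~(iii)--(iv) to push the short exact sequences down to $E/E_1$ — is the technical heart of the argument, though none of it is deep.
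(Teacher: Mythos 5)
Your overall plan matches the standard argument the paper defers to (Rudakov, Joyce), and the key $\Hom$-vanishing lemma together with its inductive proof via left-exactness of $\Hom(F,-)$ applied to the HN short exact sequences is correct. You also rightly flag that images cannot be taken in a merely exact category and that the argument must be phrased via $\Hom$-vanishing; that part of the adaptation is sound, and the third-isomorphism-theorem step you invoke to pass to $E/E_1$ is fine.

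The step establishing $E_1 \simeq E'_1$ is, however, justified in a roundabout and in fact circular way. Having produced $j \colon E_1 \to E'_1$ and $j' \colon E'_1 \to E_1$ with $\iota' \circ j = \iota$ and $\iota \circ j' = \iota'$ (where $\iota, \iota'$ are the inclusions into $E$), you claim the composite endomorphism $j' \circ j$ of $E_1$ is an isomorphism ``by semistability and slope equality together with the vanishing lemma applied to the quotient'', invoking the principle that an endomorphism of a semistable object which is an admissible mono with cokernel of the same class must be an isomorphism. That principle is correct (it follows from the class-monoid axiom in \tagref{ExCat}), but to apply it you must first know that $j' \circ j$ is an admissible monomorphism, which is not automatic in a general exact category and which your sketch does not establish. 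The direct observation that resolves this also renders the semistability detour unnecessary: inclusions are monomorphisms, and $\iota \circ (j' \circ j) = \iota' \circ j = \iota = \iota \circ \id_{E_1}$, so $j' \circ j = \id_{E_1}$; symmetrically $j \circ j' = \id_{E'_1}$. Thus $j$ is an isomorphism of subobjects of $E$ with no stability input at all, and this same mono-cancellation is precisely what makes the resulting isomorphism of filtrations unique, as you note at the end. With the middle step replaced by this, the rest of your induction on $E/E_1$ goes through.
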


\begin{proof}
    This follows from a standard argument.
    See, for example, Rudakov~\cite[Theorem~2]{Rudakov1997}
    and Joyce~\cite[Theorem~4.4]{Joyce2007III}.
\end{proof}

\begin{theorem}
    \label{thm-jh-filtration}
    In \cref{def-stability}, if $\tau$ is a stability condition,
    then every $\tau$-semistable object $E \in \calA$
    has a filtration
    \[ \begin{tikzcd}[sep=small]
        \mathllap{0 = {}} E_0 \ar[r, hook] &
        E_1 \ar[r, hook] \ar[d, two heads] &
        E_2 \ar[r, hook] \ar[d, two heads] &
        \cdots \ar[r, hook] &
        E_\ell \mathrlap{{} = E} \ar[d, two heads] \\
        & F_1 & F_2 & & F_\ell \rlap{ $,$}
    \end{tikzcd} \]
    called a \emph{$\tau$-Jordan--H\"older filtration},
    where each $E_{i-1} \hookrightarrow E_i \twoheadrightarrow F_i$ is short exact,
    and each $F_i$ is $\tau$-stable, with
    $\tau (F_1) = \cdots = \tau (F_\ell)$.
    Moreover, the objects $F_i$ are uniquely determined by $E$
    up to a permutation.
\end{theorem}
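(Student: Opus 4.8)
The plan is to reduce the whole statement to the classical Jordan--Hölder theorem in the abelian category $\calA (t)$, where $t = \tau (E)$, by showing that a $\tau$-Jordan--Hölder filtration of $E$ is \emph{the same thing} as a composition series of $E$ inside $\calA (t)$. We may assume $E \not\simeq 0$ and set $t = \tau (E)$, so that $E \in \calA (t)$. By \cref{itm-def-stability-4}, $\calA (t)$ is abelian and artinian and the inclusion $\calA (t) \hookrightarrow \calA$ is exact; in particular every monomorphism of $\calA (t)$ sits in a short exact sequence there, which maps to a short exact sequence in $\calA$, so it is an inclusion in $\calA$. Hence an ascending chain of subobjects of $E$ in $\calA (t)$ is an ascending chain of inclusions into $E$ in $\calA$, which stabilises by the noetherian hypothesis of \tagref{ExCat}. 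Thus $E$ is both noetherian and artinian as an object of $\calA (t)$, so it admits a composition series $0 = E_0 \subset E_1 \subset \cdots \subset E_\ell = E$ in $\calA (t)$ with simple subquotients $F_i = E_i / E_{i-1}$; applying the exact inclusion back to $\calA$ turns this into a filtration of the required shape, with each $E_{i-1} \hookrightarrow E_i$ an inclusion and $F_i$ its cokernel, and with $\tau (F_i) = t$ for all $i$ since each non-zero $F_i$ lies in $\calA (t)$.

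It then remains to match simplicity in $\calA (t)$ with $\tau$-stability in $\calA$. If $F$ is $\tau$-stable of slope $t$, a subobject $G \hookrightarrow F$ in $\calA (t)$ with $0 \neq G \neq F$ would satisfy $\tau (G) = t$ (membership in $\calA (t)$) and $\tau (G) < t$ ($\tau$-stability), which is absurd, so $F$ is simple in $\calA (t)$. Conversely, let $F$ be simple in $\calA (t)$ and $G \hookrightarrow F$ an inclusion in $\calA$ with $0 \neq G \neq F$. Semistability of $F$ gives $\tau (G) \leq t \leq \tau (F / G)$, and applying the strict seesaw inequality~(i$'$) to $\llbr F \rrbr = \llbr G \rrbr + \llbr F / G \rrbr$ shows that $\tau (F / G) = t$ already forces $\tau (G) = t$; so $\tau$-stability of $F$ follows once we exclude $\tau (G) = t$. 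If $G$ were $\tau$-semistable it would lie in $\calA (t)$, contradicting simplicity; otherwise, its Harder--Narasimhan filtration (\cref{itm-def-stability-3}, \cref{thm-hn-uniqueness}) has a first step $G_1 \hookrightarrow G$ with $\tau (G_1) > \tau (G) = t$ by the standard maximal-slope argument built from~(i$'$), while the composite inclusion $G_1 \hookrightarrow G \hookrightarrow F$ and semistability of $F$ force $\tau (G_1) \leq t$, a contradiction. Hence each $F_i$ above is $\tau$-stable, which completes the proof of existence.

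For uniqueness I would run the correspondence in reverse. Given any filtration of $E$ as in the statement, repeated application of~(i$'$) to $\llbr E \rrbr = \llbr F_1 \rrbr + \cdots + \llbr F_\ell \rrbr$ shows that the common value $\tau (F_1) = \cdots = \tau (F_\ell)$ equals $\tau (E) = t$, so each $F_i$ is $\tau$-stable of slope $t$ and therefore lies in $\calA (t)$. Descending from $E_\ell = E \in \calA (t)$, and using the exactness of $\calA (t) \hookrightarrow \calA$ together with the kernel--cokernel axiom for exact categories (to identify $\ker (E_i \twoheadrightarrow F_i)$ computed in $\calA (t)$ with $E_{i-1}$ computed in $\calA$), one checks inductively that each $E_i \in \calA (t)$ and that $E_{i-1} \hookrightarrow E_i$ is a monomorphism in $\calA (t)$ whose cokernel $F_i$ is simple there. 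Thus every $\tau$-Jordan--Hölder filtration of $E$ is precisely a composition series of $E$ in $\calA (t)$, and the conclusion that the length $\ell$ and the multiset $\{ F_1, \dots, F_\ell \}$ up to isomorphism are determined by $E$ is exactly the Jordan--Hölder theorem in the abelian category $\calA (t)$.

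I expect the main obstacle to be the exact-category bookkeeping rather than the slope estimates: because a general exact category lacks images, intersections and quotient objects, the two "translation" steps — that a monomorphism in $\calA (t)$ is an inclusion in $\calA$, and that a filtration of $E$ with $\tau$-stable subquotients can be recognised inside $\calA (t)$ — must be carried out purely through the exactness of $\calA (t) \hookrightarrow \calA$ and the kernel--cokernel axiom. Once these identifications are in place, the slope computations and the appeal to classical Jordan--Hölder are routine.
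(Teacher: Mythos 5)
Your proof is correct and follows the route the paper points to when it cites Rudakov and Joyce: reduce to the classical Jordan--Hölder theorem in the abelian slope category $\calA(t)$, with the noetherian hypothesis of \tagref{ExCat} supplying the ACC and condition (iv) of \cref{def-stability} supplying the DCC. The exact-category translation you work through --- that admissible monomorphisms of $\calA(t)$ are inclusions in $\calA$, that $\tau$-stable objects of slope $t$ are precisely the simple objects of $\calA(t)$, and that every $\tau$-Jordan--Hölder filtration of $E$ descends to a composition series in $\calA(t)$ --- is exactly the content the paper leaves implicit by deferring to the standard references.
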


\begin{proof}
    This is a standard result in abelian categories,
    and follows from the ascending and descending chain conditions.
    See, for example, Rudakov~\cite[Theorem~3]{Rudakov1997}
    and Joyce~\cite[Theorem~4.5]{Joyce2007III}.
\end{proof}

\subsection{Moduli of objects}
\label{sect-exact-cat-moduli}

\paragraph{}
\label{para-exmod}

We define a notion of
\emph{moduli stacks of objects} in an exact category.
We take an axiomatic approach,
meaning that we do not attempt to construct such moduli stacks
from the category, but rather assume that they exist,
and we formulate the properties that we need from them.

We will use various notions of stacks in categories,
as in \cref{sect-stacks-in-cat}.

\begin{condition*}[Mod]
    Let $\calA$ be a $\bbK$-linear exact category, as in \tagref{ExCat}.

    We assume that there is a $\bbK$-stack in categories $\+{\calM}$,
    called a \emph{categorical moduli stack of objects} in $\calA$,
    satisfying the following properties.
    
    \begin{enumerate}
        \item \label{itm-asn-moduli-loc-ft}
            $\+{\calM}$ is an exact $\bbK$-linear algebraic stack,
            in the sense of \cref{def-linear-stack}.
    \end{enumerate}
    
    Write $\calM = \+{\calM}^\simeq$, and call it
    the \emph{moduli stack of objects} in $\calA$,
    which is an algebraic $\bbK$-stack locally of finite type.
    Write $\calM^I = (\+{\calM}^I)^\simeq$ and
    $\calM^\ex = (\+{\calM}^\ex)^\simeq$,
    which are also algebraic $\bbK$-stacks locally of finite type.
    We further assume the following.
    
    \begin{enumerate}[resume]
        \item \label{itm-asn-moduli-points}
            There is an equivalence of $\bbK$-linear exact categories
            \begin{equation}
                \calA \simeq \+{\calM} (\bbK) \ .
            \end{equation}
            In particular, every object $E \in \calA$
            defines a $\bbK$-point $E \in \calM (\bbK)$.

        \item \label{itm-asn-moduli-k-group}
            There is a decomposition
            \begin{equation}
                \calM = \coprod_{\alpha \in C^\circ (\calA)} \calM_\alpha \ ,
            \end{equation}
            where each $\calM_\alpha \subset \calM$
            is an open and closed substack,
            such that $\calM_\alpha (\bbK) \subset \calM (\bbK)$
            consists precisely of those $\bbK$-points $E$
            with $\llbr E \rrbr = \alpha$.
    \end{enumerate}
\end{condition*}

\paragraph{Geometric functors.}

Suppose we have a $\bbK$-linear exact functor
$F \colon \calA \to \calB$ between $\bbK$-linear exact categories,
and suppose that $\+{\calM}$ and $\+{\calN}$ are categorical moduli stacks
of objects in $\calA$ and $\calB$, respectively, as in \tagref{Mod}.
We say that $F$ is \emph{geometric},
if it extends to a morphism of exact linear algebraic stacks
\[
    F \colon \+{\calM} \longrightarrow \+{\calN}
\]
that is compatible with the identifications
$\calA \simeq \+{\calM} (\bbK)$ and
$\calB \simeq \+{\calN} (\bbK)$.

Similarly, we can define whether a natural isomorphism
between two geometric functors is geometric.

For example, the symmetric monoidal structure on $\calA$ given by $\oplus$
is geometric, since there is a canonical map
\begin{equation}
    \label{eq-def-oplus-moduli}
    \oplus \colon \+{\calM} \times \+{\calM} \longrightarrow \+{\calM} \ ,
\end{equation}
which, together with the unit $0 \colon {\Spec \bbK} \to \+{\calM}$
and various compatible $2$-morphisms,
exhibits $\+{\calM}$ as a commutative monoid
in the $2$-category of $\bbK$-stacks in categories.
In particular, this also exhibits $\calM$
as a commutative monoid in the $2$-category of algebraic $\bbK$-stacks.

\paragraph{Semistable loci.}
\label{para-exstab}

We now formulate assumptions on stability conditions on $\calA$,
such that the semistable locus in the moduli stack $\calM$
is well-behaved.

\begin{condition*}[Stab]
    \taglabelvar{Stab1}{Stab~\upshape i}
    \taglabelvar{Stab2}{Stab~\upshape ii}
    \taglabelvar{Stab3}{Stab~\upshape iii}
    \taglabelvar{Stab12}{Stab~\upshape i--ii}
    \taglabelvar{Stab13}{Stab~\upshape i, iii}
    Let $\calA$ be a $\bbK$-linear exact category,
    with a categorical moduli stack $\+{\calM}$, as in \tagref{Mod}.
    Let $\tau$ be a weak stability condition on $\calA$.
    We assume that the following conditions are satisfied:

    \begin{enumerate}
        \item
            For each $\alpha \in C^\circ (\calA)$,
            there is an open substack
            \[
                \Mss_\alpha (\tau) \subset \calM_\alpha \ ,
            \]
            such that for any $E \in \calA$ with $\llbr E \rrbr = \alpha$,
            we have $E \in \Mss_\alpha (\tau) (\bbK)$
            if and only if $E$ is $\tau$-semistable.

        \item
            For any $\alpha \in C^\circ (\calA)$,
            the substack $\Mss_\alpha (\tau) \subset \calM_\alpha$
            is of finite type.

        \item
            For any $\bbK$-scheme $U$ of finite type
            and any morphism $e \colon U \to \calM$,
            there exists a finite subset $S_e \subset C (\calA)$,
            such that for any $x \in U (\bbK)$,
            the $\tau$-Harder--Narasimhan filtration of the object
            $E_x \in \calA$ corresponding to $e (x)$
            has stepwise quotients whose classes all belong to $S_e$.
    \end{enumerate}
\end{condition*}
The condition~\tagref{Stab3} is very weak:
it is true in all reasonable situations,
and is only used to avoid pathological cases.

\subsection{Moduli of filtrations}

\paragraph{Definition.}
\label[definition]{def-mod-filt}

Let $\calA$ be a $\bbK$-linear exact category,
with a categorical moduli stack $\+{\calM}$, as in \tagref{Mod}.

For an integer $n \geq 0$,
we define a $\bbK$-stack in categories $\+{\calM}^\pn{n}$ by
\begin{equation}
    \label{eq-def-mnplus}
    \+{\calM}^\pn{n} (U) = \+{\calM} (U)^\pn{n}
\end{equation}
for any $\bbK$-scheme~$U$,
where the right-hand side denotes the category of $n$-step filtrations
in the exact category $\+{\calM} (U)$,
as in \cref{def-cat-filt}. 

The stack $\+{\calM}^\pn{n}$
can be seen as a categorical moduli stack of objects in $\calA^\pn{n}$.
It is an exact $\bbK$-linear algebraic stack,
and satisfies~\tagref{Mod} for $\calA^\pn{n}$,
as we will see below.

Define
\[
    \calM^\pn{n} = (\+{\calM}^\pn{n})^\simeq
\]
to be the underlying algebraic $\bbK$-stack of $\+{\calM}^\pn{n}$.

\paragraph{Projection morphisms.}
\label{para-mod-filt-proj}

Recall from~\cref{para-filt-proj} the projections
$p^f \colon \calA^\pn{n} \to \calA^\pn{m}$
for an order-preserving map $f \colon [m] \to [n]$,
and $p_i, p^\pn{n} \colon \calA^\pn{n} \to \calA$ for $1 \leq i \leq n$.
These extend naturally to
morphisms of $\bbK$-stacks in categories
\begin{align}
    \label{eq-pi-f-plus}
    \pi^f \colon & \+{\calM}^\pn{n} \longrightarrow \+{\calM}^\pn{m} \ , \\
    \label{eq-pi-i-plus}
    \pi_i, \pi^\pn{n} \colon & \+{\calM}^\pn{n} \longrightarrow \+{\calM} \ ,
\end{align}
which, in turn, induce morphisms of $\bbK$-stacks
\begin{align}
    \label{eq-pi-f}
    \pi^f \colon & \calM^\pn{n} \longrightarrow \calM^\pn{m} \ , \\
    \label{eq-pi-i}
    \pi_i, \pi^\pn{n} \colon & \calM^\pn{n} \longrightarrow \calM \ .
\end{align}

\paragraph{\tagref{Mod} for filtrations.}

Let us verify that $\+{\calM}^\pn{n}$ satisfies~\tagref{Mod}
for $\calA^\pn{n}$.

We have natural identifications
$\+{\calM}^\pn{0} \simeq \operatorname{Spec} \bbK$, \ 
$\+{\calM}^\pn{1} \simeq \+{\calM}$, \ 
$\+{\calM}^\pn{2} \simeq \+{\calM}^\ex$, 
with the notation $(-)^\ex$ as in \cref{para-exact-stack},
and
\begin{equation}
    \+{\calM}^\pn{n} \simeq
    \+{\calM}^\pn{2} \underset{\pi^\pn{2}, \, \+{\calM}, \, \pi_1 \mspace{18mu}}{\times}
    \cdots \underset{\pi^\pn{2}, \, \+{\calM}, \, \pi_1 \mspace{18mu}}{\times}
    \+{\calM}^\pn{2}
\end{equation}
for $n \geq 2$,
where $\+{\calM}^\pn{2}$ appears $(n-1)$ times.
In particular, this shows that each
$\+{\calM}^\pn{n}$ is an exact $\bbK$-linear algebraic stack.
Moreover, the projection morphisms $\pi^f$, $\pi_i$, and $\pi^\pn{n}$
in \crefrange{eq-pi-f-plus}{eq-pi-i-plus}
are morphisms of exact $\bbK$-linear algebraic stacks.

For $(\alpha_1, \dotsc, \alpha_n) \in C^\circ (\calA)^n \simeq C^\circ (\calA^\pn{n})$,
define an open and closed substack
\[
    \calM^\pn{n}_{\alpha_1 , \, \dotsc, \, \alpha_n} \subset \calM^\pn{n}
\]
to be the preimage of
$\calM_{\alpha_1} \times \cdots \times \calM_{\alpha_n}$
under the morphism
$\pi_1 \times \cdots \times \pi_n \colon \calM^\pn{n} \to \calM^n$.

This verifies that $\+{\calM}^\pn{n}$ satisfies~\tagref{Mod},
so that it qualifies as a categorical moduli stack for $\calA^\pn{n}$.

\paragraph{Semistable loci.}
\label{para-filt-ss}

In the situation of~\tagref{Mod},
suppose that $\tau$ is a weak stability condition on $\calA$,
satisfying~\tagref{Stab1}.
Then, for $\alpha_1, \dotsc, \alpha_n \in C^\circ (\calA)$, we define an open substack
\[
    \calM^{\pn{n}, \> \ss}_{\alpha_1, \, \dotsc, \, \alpha_n} (\tau)
    \subset \calM^\pn{n}_{\alpha_1, \, \dotsc, \, \alpha_n}
\]
to be the preimage of
$\Mss_{\alpha_1} (\tau) \times \cdots \times \Mss_{\alpha_n} (\tau)$
under the morphism
$\pi_1 \times \cdots \times \pi_n \colon \calM^\pn{n} \to \calM^n$.

\paragraph{Finiteness conditions.}
\label{para-exfin}

We also formulate certain finiteness conditions on $\calM^\pn{2}$,
which will be needed later.
Compare also Joyce~\cite[Assumption~8.1]{Joyce2006I}.

\begin{condition*}[Fin]
    \taglabelvar{Fin1}{Fin~\upshape i}
    \taglabelvar{Fin2}{Fin~\upshape ii}
    Suppose that we are in the situation of~\tagref{Mod}.
    We further assume the following:

    \begin{enumerate}
        \item 
            \label{itm-pi1-pi2-ft}
            For any $\alpha_1, \alpha_2 \in C^\circ (\calA)$, the morphism
            $\pi_1 \times \pi_2 \colon \calM^\pn{2}_{\alpha_1, \, \alpha_2} \to \calM_{\alpha_1} \times \calM_{\alpha_2}$
            is of finite type.
        \item 
            \label{itm-pi12-ft}
            For any $\alpha_1, \alpha_2 \in C^\circ (\calA)$, the morphism
            $\pi^\pn{2} \colon \calM^\pn{2}_{\alpha_1, \, \alpha_2} \to \calM_{\alpha_1 + \alpha_2}$
            is of finite type.
    \end{enumerate}
\end{condition*}

\begin{remark}
    The condition~\tagref{Fin1}
    is equivalent to the morphism
    $\pi_1 \times \pi_2 \colon \calM^\pn{2} \to \calM \times \calM$
    being of finite type.
    However, the condition~\tagref{Fin2}
    does not imply that the morphism
    $\pi^\pn{2} \colon \calM^\pn{2} \to \calM$
    is of finite type, since for a given $\alpha \in C^\circ (\calA)$,
    there can be infinitely many pairs $(\alpha_1, \alpha_2)$
    such that $\alpha_1 + \alpha_2 = \alpha$.
\end{remark}

\begin{lemma}
    \label{lem-filt-rep-ft}
    Suppose we are in the situation of \tagref{Mod}.
    
    \begin{enumerate}
        \item \label{itm-filt-rep}
            For any $n \geq 0,$ the morphism
            \begin{equation*}
                \pi^\pn{n} \colon \calM^\pn{n} \longrightarrow \calM
            \end{equation*}
            is representable.

        \item \label{itm-filt-ft}
            Assume that \tagref{Fin1} holds.
            Then, for any $n \geq 0,$ the morphism
            \begin{equation*}
                \pi_1 \times \cdots \times \pi_n \colon \calM^\pn{n} \longrightarrow \calM^n
            \end{equation*}
            is of finite type.

        \item \label{itm-filt-ft-2}
            Assume that \tagref{Fin2} holds.
            Then, for any $n \geq 0,$ the morphism
            \begin{equation*}
                \pi^\pn{n} \colon \calM^\pn{n} \longrightarrow \calM
            \end{equation*}
            is of finite type.
    \end{enumerate}
\end{lemma}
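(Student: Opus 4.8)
The plan is to prove all three parts simultaneously by induction on $n$, using the iterated fibre product presentation
\[
    \calM^\pn{n} \simeq
    \calM^\pn{n-1} \underset{\pi^\pn{n-1}, \, \calM, \, \pi_1}{\times} \calM^\pn{2}
\]
for $n \geq 2$ (the special case of the presentation recalled just before the lemma in which $\calM^\pn{n-1}$ records the bottom filtration $0 \hookrightarrow E_1 \hookrightarrow \cdots \hookrightarrow E_{n-1}$, and $\calM^\pn{2}$ records $0 \hookrightarrow E_{n-1} \hookrightarrow E_n$, glued along $E_{n-1}$). The cases $n = 0$ and $n = 1$ are trivial in all three parts, since $\calM^\pn{0} \simeq \Spec \bbK$ with $\pi^\pn{0}$ the zero object, and $\calM^\pn{1} \simeq \calM$ with $\pi^\pn{1} = \id$.

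For~\ref{itm-filt-rep} I would argue directly that $\pi^\pn{n}$ is \emph{faithful}: any automorphism $\phi_\bullet$ of a filtration $E_\bullet$ with $\phi_n = \id_{E_n}$ must have $\phi_i = \id_{E_i}$ for every $i$, because the composite inclusion $E_i \hookrightarrow E_n$ is a monomorphism (inclusions are closed under composition and are kernels, by the exact-category axioms) and the relevant square commutes. A faithful $1$-morphism of algebraic stacks is representable, so $\pi^\pn{n}$ is representable. (One could instead run the induction: $\pi^\pn{n}$ is the composite of the projection $\calM^\pn{n} \to \calM^\pn{2}$, which is a base change of the representable $\pi^\pn{n-1}$ along $\pi_1$, with $\pi^\pn{2} \colon \calM^\pn{2} \simeq \calM^\ex \to \calM$, which is representable because over a family $E_T$ its fibre is the relative moduli of subobjects of $E_T$.)

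For~\ref{itm-filt-ft}, in the inductive step I factor $\pi_1 \times \cdots \times \pi_n$ as
\[
    \calM^\pn{n} \xrightarrow{\ (\mathrm{pr}_1, \, \pi_n)\ }
    \calM^\pn{n-1} \times \calM
    \xrightarrow{\ (\pi_1 \times \cdots \times \pi_{n-1}) \times \id\ }
    \calM^{n-1} \times \calM = \calM^n \ ,
\]
where $\mathrm{pr}_1 \colon \calM^\pn{n} \to \calM^\pn{n-1}$ is the projection. The second map is of finite type by the inductive hypothesis. The first map is cartesian, as one checks from the presentation above:
\[
    \calM^\pn{n} \simeq (\calM^\pn{n-1} \times \calM)
    \underset{\pi^\pn{n-1} \times \id, \, \calM \times \calM, \, \pi_1 \times \pi_2}{\times}
    \calM^\pn{2} \ ,
\]
so it is a base change of $\pi_1 \times \pi_2 \colon \calM^\pn{2} \to \calM \times \calM$, which is of finite type by~\tagref{Fin1} (in the form noted in the remark after~\tagref{Fin}). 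Hence the composite is of finite type; the base case $n = 2$ is~\tagref{Fin1} itself.

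For~\ref{itm-filt-ft-2} I would run the same induction, but now keeping track of classes, since~\tagref{Fin2} only supplies finite type one pair of classes at a time (as the remark after~\tagref{Fin} stresses). Fixing $\alpha_1, \dotsc, \alpha_n \in C^\circ(\calA)$ and writing $\beta = \alpha_1 + \cdots + \alpha_{n-1}$, the morphism $\pi^\pn{n}$ is the composite of the projection $\calM^\pn{n}_{\alpha_1, \dotsc, \alpha_n} \to \calM^\pn{2}_{\beta, \, \alpha_n}$ — a base change of $\pi^\pn{n-1} \colon \calM^\pn{n-1}_{\alpha_1, \dotsc, \alpha_{n-1}} \to \calM_\beta$ along $\pi_1$, hence of finite type by the inductive hypothesis — with $\pi^\pn{2} \colon \calM^\pn{2}_{\beta, \, \alpha_n} \to \calM_{\beta + \alpha_n}$, which is of finite type by~\tagref{Fin2}. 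The only point that needs care, and the reason the statement is phrased under~\tagref{Fin2} together with the preceding remark, is exactly this class bookkeeping, so that~\tagref{Fin2} is always invoked for a single pair of classes; the rest is the formal stability of representability and of the finite-type property under composition and base change.
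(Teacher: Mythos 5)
Your proof is correct and takes essentially the same approach as the paper: part~(i) via the same Laumon--Moret-Bailly criterion (injectivity on automorphism groups, using that inclusions are monomorphisms), and parts~(ii)--(iii) by the same induction on~$n$ using the iterated fibre-product presentation of $\calM^\pn{n}$ together with base change and composition of finite-type morphisms. The only cosmetic difference is in~(iii), where you peel off the top two filtration steps and factor through $\calM^\pn{2}_{\beta,\,\alpha_n}$, whereas the paper's $\psi_n = \pi^f$ with $f = (0,\dotsc,n-2,n)$ deletes $E_{n-1}$ and factors through $\calM^\pn{n-1}_{\alpha_1,\dotsc,\alpha_{n-2},\,\alpha_{n-1}+\alpha_n}$; both decompositions are equally valid, and your explicit class bookkeeping matches the intended reading of \tagref{Fin2}.
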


\begin{proof}
    For~\cref{itm-filt-rep}, by a criterion in
    Laumon--Moret-Bailly~\cite[Corollary~8.1.1]{LaumonMoret2000},
    it is enough to verify that for any $\bbK$-scheme $U$,
    the functor
    \[
        p^\pn{n} \colon \+{\calM} (U)^\pn{n} \longrightarrow \+{\calM} (U)
    \]
    induces injections of automorphism groups.
    This is true since inclusions in an exact category
    are monomorphisms.
    
    For~\cref{itm-filt-ft}, we use induction on $n$.
    If $n = 0, 1$, the result is trivial.
    If $n > 1$, consider the diagram
    \begin{equation}
    \begin{tikzcd}[column sep=3em]
        \calM^\pn{n} \ar[r, "\varphi_n \times \pi_n"'] \ar[d]
        \ar[dr, phantom, pos=.1, "\ulcorner", shift right=2]
        \ar[rrr, bend left=10, shift left, "\pi_1 \times \cdots \times \pi_n"]
        & \calM^\pn{n-1} \times \calM
        \ar[d, "\pi^\pn{n-1} \times \mathrm{id}"]
        \ar[rr, "(\pi_1 \times \cdots \times \pi_{n-1}) \times \mathrm{id}"']
        && \calM^n 
        \\ \calM^\pn{2} \ar[r, "\pi_1 \times \pi_2"]
        & \calM \times \calM \rlap{ ,}
    \end{tikzcd}
    \end{equation}
    where $\varphi_n$ is the morphism $\pi^f$ with
    $f = (0, \dotsc, n-1) \colon [n-1] \to [n]$.
    By~\tagref{Fin1},
    the bottom arrow is of finite type,
    and so is the map $\varphi_n \times \pi_n$.
    But $(\pi_1 \times \cdots \times \pi_{n-1}) \times \mathrm{id}$
    is of finite type by induction hypothesis.
    Since $\pi_1 \times \cdots \times \pi_n$ is the composition of these two arrows,
    it is of finite type.

    For~\cref{itm-filt-ft-2}, a similar argument works,
    using the diagram
    \begin{equation}
        \begin{tikzcd}[column sep=3em]
            \calM^\pn{n} \ar[r, "\psi_n"'] \ar[d]
            \ar[dr, phantom, pos=.1, "\ulcorner", shift right=2]
            \ar[rr, bend left=10, shift left, "\pi^\pn{n}"]
            & \calM^\pn{n-1}
            \ar[d, "\pi_{n-1}"]
            \ar[r, "\pi^\pn{n-1}\vphantom{^0}"']
            & \calM
            \\ \calM^\pn{2} \ar[r, "\pi^\pn{2}"]
            & \calM \rlap{ ,}
        \end{tikzcd}
    \end{equation}
    where $\psi_n$ is the morphism $\pi^f$ with
    $f = (0, \dotsc, n-2, n) \colon [n-1] \to [n]$.
\end{proof}

\paragraph{}

In particular, if~\tagref{Fin1} holds,
and $\tau$ is a weak stability condition on $\calA$
satisfying~\tagref{Stab12},
then the semistable loci
$\calM^{\pn{n}, \> \ss}_{\alpha_1, \, \dotsc, \, \alpha_n} (\tau)$
defined in \cref{para-filt-ss}
are of finite type.

\subsection{The extension bundle}

\paragraph{}

We formulate another set of conditions on the exact category $\calA$
and its moduli stack $\calM$,
which will be needed to prove wall-crossing formulae
for motivic enumerative invariants
in \crefrange{sect-motivic-inv}{sect-num-inv} below.
Namely, we assume the existence of \emph{extension bundles}.
These conditions are quite restrictive,
and require $\calA$ to have homological dimension at most $1$,
although we can weaken this assumption in some cases,
as discussed in \cref{rem-ext-weaken} below.

\paragraph{The Baer sum.}

In the situation of~\tagref{Mod},
for $\alpha_1, \alpha_2 \in C^\circ (\calA)$, consider the morphism
\begin{equation}
    \label{eq-pi12}
    \pi_1 \times \pi_2 \colon \calM^\pn{2}_{\alpha_1, \, \alpha_2} \longrightarrow
    \calM_{\alpha_1} \times \calM_{\alpha_2} \ .
\end{equation}
The left-hand side is thought of as the stack of
short exact sequences $E_1 \hookrightarrow E \twoheadrightarrow E_2$
with $E_1, E_2 \in \calA$ of classes $\alpha_1, \alpha_2 \in C^\circ (\calA)$.
Taking the Baer sum of two such sequences,
that is, taking the sum in $\Ext^1 (E_2, E_1)$,
as in Weibel~\cite[Definition~3.4.4]{Weibel1994},
gives a morphism
\begin{equation}
    \label{eq-pi12-plus}
    + \colon \calM^\pn{2}_{\alpha_1, \, \alpha_2}
    \underset{\calM_{\alpha_1} \times \calM_{\alpha_2}}{\times}
    \calM^\pn{2}_{\alpha_1, \, \alpha_2} \longrightarrow
    \calM^\pn{2}_{\alpha_1, \, \alpha_2} \ ,
\end{equation}
which is well-defined
since the limits and colimits involved in taking the Baer sum
are preserved by exact functors, as in \cref{para-exact-stack-pf-pb}.
This gives \cref{eq-pi12} the structure of an \emph{abelian stack},
in the sense of \cref{para-ab-stack}.

\paragraph{}
\label{para-exext}

We now formulate the notion of an \emph{extension bundle},
using the language of $2$-vector bundles
and affine $2$-vector bundles,
in the sense of \cref{sect-two-vb}.

\begin{condition*}[Ext]
    \taglabelvar{Ext1}{Ext~\upshape i}
    \taglabelvar{Ext2}{Ext~\upshape ii}
    Suppose that we are in the situation of~\tagref{Mod}.
    We assume that for each $\alpha_1, \alpha_2 \in C^\circ (\calA)$,
    we have a $2$-vector bundle
    \begin{equation}
        \calExt^{1/0}_{\smash{\alpha_1, \alpha_2}} \longrightarrow
        \calM_{\alpha_1} \times \calM_{\alpha_2}
    \end{equation}
    of constant rank, called the \emph{extension bundle}.
    We assume that the following conditions are satisfied:

    \begin{enumerate}
        \item \label{itm-exext-1}
            $\calA$ has homological dimension at most $1$,
            in that for any $E_1, E_2 \in \calA$,
            we have $\Ext^i (E_2, E_1) = 0$ for all $i > 1$.
        
        \item \label{itm-exext-2}
            We have an isomorphism
            $\calExt^{1/0}_{\smash{\alpha_1, \alpha_2}} \simeq \calM^\pn{2}_{\alpha_1, \alpha_2}$
            of abelian stacks over $\calM_{\alpha_1} \times \calM_{\alpha_2}$.
            The fibre of $\calExt^{\smash{1/0}}_{\smash{\alpha_1, \alpha_2}}$
            over $(E_1, E_2) \in (\smash{\calM_{\alpha_1}} \times \smash{\calM_{\alpha_2}}) (\bbK)$
            is isomorphic to the $2$-vector space
            \[
                \Ext^{1/0} (E_2, E_1) = 
                \Ext^1 (E_2, E_1) \times [*/\Ext^0 (E_2, E_1)] \ ,
            \]
            and each short exact sequence
            $E_1 \hookrightarrow E \twoheadrightarrow E_2$ as a $\bbK$-point of 
            $\smash{\calM^\pn{2}_{\alpha_1, \alpha_2}}$
            is identified with its class in $\Ext^{\smash{1/0}} (E_2, E_1)$.
    \end{enumerate}
    Note that by \cref{rem-2vb-ab-stack},
    the $2$-vector bundle structure on $\calM^\pn{2}_{\alpha_1, \alpha_2}$
    is not extra data, but rather a property.
\end{condition*}

\paragraph{Functoriality.}
\label{para-exext-func}

Assume that we are in the situation of~\tagref{Ext}.
Taking pushforwards and pullbacks
as in \cref{para-exact-stack-pf-pb},
we obtain the universal pushforward and pullback morphisms
\begin{align*}
    (s \times \id)^* \, \calExt^{1/0}_{\smash{\alpha_1, \beta}} & \longrightarrow
    (t \times \id)^* \, \calExt^{1/0}_{\smash{\alpha_2, \beta}} \ , \\
    (\id \times t)^* \, \calExt^{1/0}_{\smash{\alpha, \beta_2}} & \longrightarrow
    (\id \times s)^* \, \calExt^{1/0}_{\smash{\alpha, \beta_1}}
\end{align*}
of $2$-vector bundles on
$\calM^I_{\alpha_1, \alpha_2} \times \calM_{\smash{\beta}}$ and
$\calM_{\alpha} \times \calM^I_{\smash{\beta_1, \beta_2}}$, respectively,
where $\smash{\calM^I_{\alpha_1, \alpha_2}}$ is the stack of morphisms
$E_1 \to E_2$ with $E_1, E_2 \in \calA$ of classes
$\alpha_1, \alpha_2 \in C^\circ (\calA)$,
and $s, t$ are the source and target morphisms.
In plain words, this means that $\Ext^{1/0} (-, -)$
is contravariant in the first component,
and covariant in the second component.

These morphisms, in turn,
form short exact sequences of $2$-vector bundles
\begin{gather}
    \label{eq-ext-les-1}
    0 \to (\pi_1 \times \id)^* \, \calExt^{1/0}_{\smash{\alpha_1, \beta}} \longrightarrow
    (\pi^\pn{2} \times \id)^* \, \calExt^{1/0}_{\smash{\alpha_1 + \alpha_2, \beta}} \longrightarrow
    (\pi_2 \times \id)^* \, \calExt^{1/0}_{\smash{\alpha_2, \beta}} \to 0 \rlap{ ,} \\
    \label{eq-ext-les-2}
    0 \to (\id \times \pi_2)^* \, \calExt^{1/0}_{\smash{\alpha, \beta_2}} \longrightarrow
    (\id \times \pi^\pn{2})^* \, \calExt^{1/0}_{\smash{\alpha, \beta_1 + \beta_2}} \longrightarrow
    (\id \times \pi_1)^* \, \calExt^{1/0}_{\smash{\alpha, \beta_1}} \to 0
\end{gather}
over 
$\calM^\pn{2}_{\alpha_1, \alpha_2} \times \calM_\beta$ and
$\calM_\alpha \times \calM^\pn{2}_{\smash{\beta_1, \beta_2}}$, respectively.
In plain words, this means that there are six-term
long exact sequences of $\Ext$ groups in both components.

\paragraph{The Euler form.}
\label{para-euler}

In the situation of~\tagref{Ext},
for $\alpha_1, \alpha_2 \in C^\circ (\calA)$, define an integer
\begin{equation}
    \chi (\alpha_1, \alpha_2) =
    -{\rank \calExt^{1/0}_{\smash{\alpha_1, \alpha_2}}} \ .
\end{equation}
By \crefrange{eq-ext-les-1}{eq-ext-les-2},
this gives an $\bbN$-bilinear form
\begin{equation}
    \label{eq-def-chi}
    \chi \colon C^\circ (\calA) \times C^\circ (\calA) \longrightarrow \bbZ \ ,
\end{equation}
called the \emph{Euler form}.

\begin{remark}
    \label{rem-ext-weaken}
    The condition \tagref{Ext} can be weakened
    by only requiring $\calExt^{1/0}_{\smash{\alpha_1, \alpha_2}}$
    to be a $2$-vector bundle over the semistable locus
    $\Mss_{\alpha_1} (\tau) \times \Mss_{\alpha_2} (\tau)$,
    and only when $\tau (\alpha_1) \leq \tau (\alpha_2)$,
    for a weak stability condition $\tau$ on $\calA$,
    as we will do in \cref{sect-surfaces} below.
    This will allow us to slightly relax the
    homological dimension~$1$ condition,
    and most results that require \tagref{Ext} will still hold in this case.
\end{remark}

\begin{lemma}
    \label{lem-3step-affine-linear}
    In the situation of~\tagref{Ext},
    consider the canonical morphism
    \begin{equation}
        q \colon \calM^\pn{3}_{\alpha_1, \alpha_2, \alpha_3} \longrightarrow
        \calM^\pn{2}_{\alpha_1, \alpha_2}
        \underset{\calM_{\alpha_2}}{\times}
        \calM^\pn{2}_{\alpha_2, \alpha_3} \ .
    \end{equation}
    Then $q$ is an affine $2$-vector bundle,
    as in~\cref{def-affine-2vb},
    which is a torsor for
    the $2$-vector bundle
    $(\pi_1 \times \pi_2)^* \, \calExt^{\smash{1/0}}_{\alpha_1, \> \alpha_3}
    \to \calM^\pn{2}_{\alpha_1, \alpha_2} \times_{\calM_{\alpha_2}} \calM^\pn{2}_{\alpha_2, \alpha_3}$.
\end{lemma}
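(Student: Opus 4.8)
The plan is to recognise $q$ as the surjection appearing in the short exact sequence of $2$-vector bundles \cref{eq-ext-les-1}, specialised to $\beta = \alpha_3$, and then to invoke the general relationship between short exact sequences of $2$-vector bundles and affine $2$-vector bundles.

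First I would rewrite both the source and the target of $q$ as $2$-vector bundles over the common base $\calM^\pn{2}_{\alpha_1, \alpha_2} \times \calM_{\alpha_3}$. Using the iterated fibre-product description $\+{\calM}^\pn{3} \simeq \+{\calM}^\pn{2} \times_{\pi^\pn{2}, \+{\calM}, \pi_1} \+{\calM}^\pn{2}$ from \cref{para-mod-filt-proj}, under which a $3$-step filtration $0 \hookrightarrow E_1 \hookrightarrow E_2 \hookrightarrow E_3$ corresponds to the pair of short exact sequences $(E_1 \hookrightarrow E_2 \twoheadrightarrow E_2/E_1,\; E_2 \hookrightarrow E_3 \twoheadrightarrow E_3/E_2)$, together with the identification $\calM^\pn{2}_{\alpha_1 + \alpha_2, \alpha_3} \simeq \calExt^{1/0}_{\alpha_1 + \alpha_2, \alpha_3}$ of~\tagref{Ext}, one obtains a canonical equivalence $\calM^\pn{3}_{\alpha_1, \alpha_2, \alpha_3} \simeq (\pi^\pn{2} \times \id)^* \calExt^{1/0}_{\alpha_1 + \alpha_2, \alpha_3}$ of $2$-vector bundles over $\calM^\pn{2}_{\alpha_1, \alpha_2} \times \calM_{\alpha_3}$. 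Similarly, applying~\tagref{Ext} to the second factor gives $\calM^\pn{2}_{\alpha_1, \alpha_2} \times_{\calM_{\alpha_2}} \calM^\pn{2}_{\alpha_2, \alpha_3} \simeq (\pi_2 \times \id)^* \calExt^{1/0}_{\alpha_2, \alpha_3}$ over the same base, and one checks that, under this equivalence, pulling the bundle $(\pi_1 \times \id)^* \calExt^{1/0}_{\alpha_1, \alpha_3}$ over $\calM^\pn{2}_{\alpha_1, \alpha_2} \times \calM_{\alpha_3}$ back to this target recovers exactly the $2$-vector bundle $(\pi_1 \times \pi_2)^* \calExt^{1/0}_{\alpha_1, \alpha_3}$ named in the statement.

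The crux is the second step: verifying that, under these identifications, $q$ becomes the morphism of $2$-vector bundles over $\calM^\pn{2}_{\alpha_1, \alpha_2} \times \calM_{\alpha_3}$ given by the surjection in \cref{eq-ext-les-1} with $\beta = \alpha_3$. The first component of $q$, the projection $\pi^f$ with $f = (0, 1, 2) \colon [2] \to [3]$, is simply the bundle projection to $\calM^\pn{2}_{\alpha_1, \alpha_2}$; its second component, the projection $\pi^f$ with $f = (1, 2, 3)$, sends $0 \hookrightarrow E_1 \hookrightarrow E_2 \hookrightarrow E_3$ to the short exact sequence $E_2/E_1 \hookrightarrow E_3/E_1 \twoheadrightarrow E_3/E_2$, that is, it sends the class of $E_3$ in $\Ext^{1/0}(E_3/E_2, E_2)$ to its pushforward along $E_2 \twoheadrightarrow E_2/E_1$. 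This pushforward is precisely the universal covariant functoriality morphism of \cref{para-exext-func} attached to the short exact sequence $E_1 \hookrightarrow E_2 \twoheadrightarrow E_2/E_1$, which by definition is the second arrow of \cref{eq-ext-les-1}. I expect this matching to be the main bookkeeping obstacle, as it requires carefully reconciling the projection functors of \cref{para-filt-proj} with the pushforward and pullback morphisms between extension bundles of \cref{para-exext-func}; and it is here that the hypothesis that $\calA$ has homological dimension at most $1$ in~\tagref{Ext} enters, since that is what makes \cref{eq-ext-les-1} a genuine short exact sequence of $2$-vector bundles rather than merely a long exact sequence of $\Ext$ groups.

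Finally I would conclude from the general fact in the theory of $2$-vector bundles that, for any short exact sequence $0 \to \calV' \to \calV \to \calV'' \to 0$ of $2$-vector bundles over a base, the induced morphism of total stacks $\calV \to \calV''$ is an affine $2$-vector bundle in the sense of \cref{def-affine-2vb}, namely a torsor for the pullback of $\calV'$ along the projection $\calV'' \to \mathrm{base}$. Applying this to \cref{eq-ext-les-1} with $\beta = \alpha_3$, whose kernel term is $(\pi_1 \times \id)^* \calExt^{1/0}_{\alpha_1, \alpha_3}$, and transporting along the identifications of the first step, yields that $q$ is an affine $2$-vector bundle which is a torsor for $(\pi_1 \times \pi_2)^* \calExt^{1/0}_{\alpha_1, \alpha_3}$, as claimed.
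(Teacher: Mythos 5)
Your proof is correct and follows the same approach as the paper's. The paper phrases the argument pointwise: for fixed stepwise quotients $E_1, E_2, E_3$ and extension classes $a, b$, it writes down the short exact sequence $0 \to \Ext^{1/0}(E_3, E_1) \to \Ext^{1/0}(E_3, E_a) \to \Ext^{1/0}(E_3, E_2) \to 0$ (the fibre of \cref{eq-ext-les-1}), identifies the stack $\calF_{a,b}$ of filtrations with those fixed data as the fibre of the surjection over $b$, and concludes with `repeating this argument in families proves the lemma' — which is precisely the family-level bookkeeping you have carried out explicitly.
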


\begin{proof}
    We need to show that
    for objects $E_1, E_2, E_3 \in \calA$ of classes $\alpha_1, \alpha_2, \alpha_3$,
    the stack $\calF_{\smash{a, b}}$
    of $3$-step filtrations with stepwise quotients $E_1, E_2, E_3$,
    such that the induced elements
    $a \in \Ext^{1/0} (E_2, E_1) (\bbK)$ and 
    $b \in \Ext^{1/0} (E_3, E_2) (\bbK)$ are fixed,
    is a torsor for $\Ext^{1/0} (E_3, E_1)$.

    Indeed, consider the short exact sequence
    \begin{equation}
        0 \to \Ext^{1/0} (E_3, E_1) \longrightarrow
        \Ext^{1/0} (E_3, E_a) \longrightarrow
        \Ext^{1/0} (E_3, E_2) \to 0
    \end{equation}
    of $2$-vector spaces,
    where $E_a$ is the extension of $E_2$ by $E_1$ given by $a$.
    Then $\calF_{\smash{a, b}}$ is the fibre
    of the second non-trivial morphism at $b$,
    so the action of $\Ext^{1/0} (E_3, E_1)$ on $\calF_{\smash{a, b}}$
    by translation is a torsor.

    Repeating this argument in families
    proves the lemma.
\end{proof}

\section{Self-dual categories and moduli stacks}

\label{sect-sd-cat}

We introduce the notion of \emph{self-dual categories},
which are categories that are equivalent to their own opposite categories.
We will be particularly interested in
self-dual exact or quasi-abelian categories,
as these will be the main setting for studying
enumerative geometry for type B/C/D structure groups.

Under this setting,
we will be interested in enumerative invariants
counting \emph{self-dual objects} in self-dual categories.
These are objects that are isomorphic to their own duals,
and can be seen as generalized orthogonal and symplectic bundles,
since they are the self-dual objects in the category of vector bundles,
with the usual notion of the dual bundle.

\subsection{Self-dual categories}
\label{sect-def-sd-cat}

\begin{definition}
    \label{def-sd-cat}
    A \emph{self-dual category} is a triple $(\calA, D, \eta)$, where

    \begin{itemize}
        \item
            $\calA$ is a category.

        \item
            $D \colon \calA \simto \calA^\op$
            is an equivalence,
            called the \emph{dual functor}.
            We often denote this functor by $(-)^\vee$.

        \item
            $\eta \colon D^\op \circ D \simTo \id_{\calA}$
            is a natural isomorphism,
            such that for any $E \in \calA$, we have
            \begin{equation}
                \eta_{E^\vee} = (\eta_E^\vee)^{-1} \colon
                E^{\vee\vee\vee} \longsimto E^\vee \ .
            \end{equation}
    \end{itemize}
    By abuse of language, we sometimes say that
    $\calA$ is a self-dual category
    when the extra data $D, \eta$ are clear from the context.
    
    If, in addition, $\calA$ is equipped with
    certain types of additional structure,
    such as the structure of a $\bbK$-linear category,
    an exact category, 
    or a quasi-abelian category, etc.,
    and if such structure is respected by $D$ and $\eta$,
    meaning that $D$ and $\eta$ are $1$- and $2$-morphisms
    in the $2$-category of such categories with additional structure,
    then we say that $(\calA, D, \eta)$
    is, for example, a \emph{self-dual $\bbK$-linear category}, etc.
\end{definition}

\begin{definition}
    \label{def-sd-obj}
    Let $(\calA, D, \eta)$ be a self-dual category.
    A \emph{self-dual object} in $\calA$
    is a pair $(E, \phi)$, where
    \begin{itemize}
        \item 
            $E \in \calA$ is an object.
        \item
            $\phi \colon E \simto E^\vee$ is an isomorphism in $\calA$,
            such that $\phi^\vee = \phi \circ \eta_E$\,:
            \begin{equation} \begin{tikzcd}[row sep={2.2em,between origins}, column sep=3em]
                E \ar[dr, "\phi" {pos=.47},
                    "\textstyle \sim" {pos=.52, anchor=center, rotate=-20, shift={(0, -1ex)}}] \\
                & E^\vee \rlap{ .} \\
                E^{\vee\vee} \ar[uu, "\eta_E", 
                    "\textstyle \sim" {anchor=center, rotate=90, shift={(0, -1ex)}}]
                \ar[ur, "\phi^\vee"' {pos=.42, inner sep=.05em},
                    "\textstyle \sim" {pos=.47, anchor=center, rotate=20, shift={(0, .6ex)}}]
            \end{tikzcd} \end{equation}
    \end{itemize}
    Let $\calA^\sd$ denote the groupoid
    whose objects are self-dual objects in $\calA$,
    and morphisms are isomorphisms of self-dual objects.
\end{definition}

\paragraph{Module structure.}
\label{para-sd-cat-module}

Let $\calA$ be a self-dual additive category.
Then we have a natural action of $\calA$ on $\calA^\sd$,
given by the functor
\begin{equation}
    \label{eq-oplus-sd}
    \oplus^\sd \colon \calA^\simeq \times \calA^\sd \longrightarrow \calA^\sd \ ,
\end{equation}
where $\calA^\simeq$ is the underlying groupoid of $\calA$,
and $\oplus^\sd$ sends $F \in \calA$ and $(E, \phi) \in \calA^\sd$
to the self-dual object $(F \oplus E \oplus F^\vee, \tilde{\phi})$,
with $\tilde{\phi} \colon F \oplus E \oplus F^\vee \simto
F^\vee \oplus E^\vee \oplus F^{\vee\vee}$ given by
\[
    \tilde{\phi} = \left( \, \begin{matrix}
        & & \id_F \\
        & \phi \\ 
        \eta_F^{-1}
    \end{matrix} \, \right) \ .
\]
This establishes $\calA^\sd$ as a module
for the monoidal groupoid $(\calA^\simeq, \oplus)$.

In fact, this construction will still work if we replace
the additive structure on $\calA$ by an arbitrary monoidal structure
compatible with the self-dual structure,
but this will not be needed for our purposes.

\subsection{Examples}

\paragraph{Example. Vector spaces.}
\label[example]{eg-vect-sp}

Let $\calA$ be the $\bbK$-linear abelian category of
$\bbK$-vector spaces of finite dimension,
and let $D = (-)^\vee \colon \calA \simto \calA^\op$
be the operation of taking the dual vector space.
Choose a sign $\varepsilon = \pm 1$,
and define a natural isomorphism
$\eta \colon (-)^{\vee\vee} \simTo \id_{\calA}$ by
\[
    \eta_V = \varepsilon \cdot \mathrm{ev}_V^{-1} \colon
    V^{\vee\vee} \longsimto V
\]
for $V \in \calA$,
where $\mathrm{ev}_V \colon V \simto V^{\vee\vee}$
is the evaluation map.
Then $(\calA, D, \eta)$ is a self-dual $\bbK$-linear abelian category.

By definition, a self-dual object in $\calA$
is a pair $(V, \phi)$, where $V$ is a vector space, and
$\phi \colon V \simto V^\vee$
is an isomorphism, such that $\phi^\vee = \phi \circ \eta_V$\,.
Equivalently, when $\varepsilon = 1$ (resp.~$-1$),
$\phi$ is a non-degenerate
symmetric (resp.~antisymmetric) bilinear form on~$V$.
Therefore, $\calA^\sd$ can be identified with the groupoid of
finite-dimensional orthogonal (resp.~symplectic) vector spaces,
where the morphisms are linear isomorphisms that preserve the bilinear forms.

\paragraph{Example. Self-dual quivers.}
\label[example]{eg-sym-quiver}

Let $Q$ be a \emph{self-dual quiver},
that is, a quiver with an involution $\sigma \colon Q \simto Q^\op$,
where $Q^\op$ is the opposite quiver of $Q$.
For the precise definition,
see \cref{sect-quiver} below.

Let $\calA = \Mod (\bbK Q)$ be the $\bbK$-linear abelian category of
finite-dimensional representations of $Q$ over $\bbK$.
One can define a self-dual structure $(-)^\vee \colon \calA \simto \calA^\op$
by sending each representation of $Q$
to the representation with the dual vector spaces and dual linear maps.
This also involves choosing signs when defining
$\eta \colon (-)^{\vee\vee} \simTo \id_{\calA}$\,, as in the previous example.

Self-dual objects in $\calA$ are called
\emph{self-dual representations} of $Q$,
and our enumerative invariants will be counting
these self-dual objects.

Enumerative invariant theories of quiver representations
can sometimes be seen as a model for
such theories for coherent sheaves on varieties,
which can be thought of as generalized $\GL (n)$-bundles.
We will argue in \cref{sect-quiv-coh-rel} that
self-dual quivers can also be thought of
as a model for enumerative invariant theories of $G$-bundles,
for $G = \upO (n)$ or $\Sp (n)$.

\paragraph{Example. Vector bundles.}
\label[example]{eg-vb}

Let $X$ be a $\bbK$-variety,
and let $\calA = \cat{Vect} (X)$
be the $\bbK$-linear exact category
of vector bundles on $X$ of finite rank,
or equivalently, locally free sheaves on $X$ of finite rank.

Define a self-dual structure $(-)^\vee \colon \calA \simto \calA^\op$
by sending a vector bundle to its dual vector bundle.
Choose a sign $\varepsilon = \pm 1$,
and define a natural isomorphism $\eta \colon (-)^{\vee\vee} \simTo \id_{\calA}$
to be $\varepsilon$ times the usual identification.

A self-dual object in $\calA$ is a pair $(E, \phi)$,
where $E$ is a vector bundle on $X$,
and $\phi \colon E \simto E^\vee$ is an isomorphism,
satisfying $\phi^\vee = \phi \circ \eta_E$\,.
Equivalently, $\phi$ is a symmetric (resp.~antisymmetric) bilinear form on $E$,
when $\varepsilon = +1$ (resp.~$-1$).
This is the same as an orthogonal (resp.~symplectic) vector bundle on $X$,
or a principal bundle on $X$ for the group $\upO (n)$ (resp.~$\Sp (n)$),
where $n$ is the rank of the vector bundle.

See \cref{sect-curves} below for further discussion
of this example in the case when $X$ is a curve.

\paragraph{Non-example. Coherent sheaves.}
\label{para-non-eg-coh}

Let $X$ be a smooth, projective $\bbK$-variety,
and let $\calA = \Coh (X)$ be the abelian category of coherent sheaves on $X$.
Then, when $\dim X > 0$,
$\calA$ is not a self-dual category,
since $\calA$ is noetherian in the sense of \cref{para-excat},
but not artinian, as there is an infinite decreasing chain
$\calO_X \supset \calO_X (-1) \supset \calO_X (-2) \supset \cdots$ in $\calA$
that does not stabilize.

However, the derived category of coherent sheaves on $X$
does have many self-dual structures, as in the next example.

\paragraph{Example. Derived category of coherent sheaves.}

Let $X$ be a smooth, projective $\bbK$-variety,
and let $\calC = \Db \Coh (X)$ be the derived category
of coherent sheaves on $X$.

Define $(-)^\vee \colon \calC \simto \calC^\op$
to be the derived dual functor
\[
    (-)^\vee = \bbR \calHom ( -, L ) \ ,
\]
where $L \to X$ is a line bundle, possibly with a shift.
For example, taking $L \simeq \calO_X$ gives the ordinary dual functor,
and taking $L \simeq \omega_X [n]$, 
where $\omega_X$ is the canonical bundle of $X$, and $n = \dim X$,
gives the Serre/Verdier duality functor.
As before, choose a sign $\varepsilon = \pm 1$
to identify the double dual of a sheaf to itself.

Using Bayer's \cite{Bayer2009} notion of
\emph{polynomial Bridgeland stability conditions},
it is possible to construct quasi-abelian subcategories
$\calA \subset \calC$ that are self-dual under the duality functor above.
Self-dual objects in $\calA$ 
can then be seen as generalized orthogonal or symplectic bundles.
We will discuss this example further in \cref{sect-coh-general} below.

\subsection{Self-dual filtrations}
\label{sect-sd-filt}

\paragraph{Definition.}
\label{para-def-filt-sd}
\label[definition]{def-filt-sd}

Let $\calA$ be a self-dual exact category,
and let $n \geq 0$ be an integer.
Then the category of filtrations $\calA^\pn{n}$,
as in \cref{def-cat-filt},
can be equipped with an induced self-dual structure.
Namely, for a filtration
$E_\bullet = (0 = E_0 \hookrightarrow E_1 \hookrightarrow \cdots \hookrightarrow E_n = E)$,
define its dual filtration $E_\bullet^\vee$ by
\begin{equation}
    \label{eq-dual-filt}
    0 = (E / E_n)^\vee
    \hookrightarrow (E / E_{n-1})^\vee
    \hookrightarrow \cdots
    \hookrightarrow (E / E_0)^\vee = E^\vee \ .
\end{equation}

A \emph{self-dual filtration} of length $n$ is
a self-dual object $(E_\bullet, \phi_\bullet) \in \calA^{\pn{n}, \> \sd}$.
We also say that $E_\bullet$ is a self-dual filtration of
the self-dual object $(E_n, \phi_n) \in \calA^\sd$.

In this case, if we write $F_i = E_i / E_{i-1}$ for $i = 1, \ldots, n$,
then there are induced isomorphisms $F_i \simeq F_{n+1-i}^\vee$,
defined via the isomorphism
$(E / E_{n-i})^\vee / (E / E_{n+1-i})^\vee \simeq (E_{n+1-i} / E_{n-i})^\vee$
given by the third isomorphism theorem,
which holds in an exact category by \cref{para-excat-abcat}.
In particular, if $n = 2 k + 1$ is odd,
then $F_{k+1}$ has an induced self-dual structure,
and becomes a self-dual object in $\calA$.

\paragraph{Projection functors.}
\label{para-filt-proj-sd}

Let $\calA$ be a self-dual exact category.
Recall from \cref{para-filt-proj} the projection functors
$\smash{p^f} \colon \calA^\pn{n} \to \calA^\pn{m}$ for order-preserving maps
$f \colon [m] \to [n]$.

Now, equip each set $[n]$ with the $\bbZ_2$-action given by
$(-)^\vee \colon [n] \simto [n]$, $i \mapsto n-i$.
Then, if an order-preserving map $f \colon [m] \to [n]$ 
is also $\bbZ_2$-equivariant,
then $\smash{p^f}$ is compatible with the self-dual structures
on $\calA^\pn{n}$ and $\calA^\pn{m}$, inducing a functor
\begin{equation}
    \label{eq-filt-proj-sd}
    p^{f, \> \sd} \colon \calA^{\pn{n}, \> \sd} \longrightarrow
    \calA^{\pn{m}, \> \sd} \ .
\end{equation}
In particular, taking $f$ to be the map
$(0, n) \colon [1] \to [n]$ or the map $(i-1, i) \colon [1] \to [2i-1]$,
similarly as in \cref{para-filt-proj},
we obtain the projection functors
\begin{alignat}{2}
    p^{\pn{n}, \> \sd} & \colon &
    \calA^{\pn{n}, \> \sd} & \longrightarrow \calA^\sd \ , \\
    p_i^\sd & \colon &
    \calA^{\pn{2i-1}, \> \sd} & \longrightarrow \calA^\sd \ .
\end{alignat}

\paragraph{Self-dual extensions.}
\label{para-def-sdext}

We are particularly interested in
self-dual filtrations of length $n = 3$, of the form
\begin{equation} \label{eq-def-sdext} \begin{tikzcd}[sep=small]
    \mathllap{0 = {}} E_0 \ar[r, hook] &
    E_1 \ar[r, hook] \ar[d, two heads] &
    E_2 \ar[r, hook] \ar[d, two heads] &
    E_3 \mathrlap{{} = E} \ar[d, two heads] \\
    & F_1 & F_2 & F_3 \rlap{ ,}
\end{tikzcd} \end{equation}
where the vertical arrows are cokernels of the horizontal arrows,
so that $F_i \simeq E_i / E_{i-1}$ for $i = 1, 2, 3$.
As in \cref{para-def-filt-sd},
$E$ has an induced self-dual structure $(E, \phi)$,
$F_2$ has an induced self-dual structure $(F_2, \psi)$,
and we have a natural isomorphism $F_3 \simeq F_1^\vee$.

In this case, we say that the self-dual object $(E, \phi)$
is a \emph{self-dual extension}
of the self-dual object $(F_2, \psi) \in \calA^\sd$
by the object $F_1 \in \calA$.

\paragraph{Isotropic subobjects.}
\label{para-isotropic}

Let $(E, \phi) \in \calA^\sd$, and let $F \in \calA$.
An inclusion $i \colon F \hookrightarrow E$ is said to be \emph{isotropic},
if the composition
\[
    F \overset{i}{\longrightarrow}
    E \overset{\phi}{\longrightarrow}
    E^\vee \overset{i^\vee}{\longrightarrow}
    F^\vee
\]
is zero.
We also say that $F$ is an \emph{isotropic subobject} of $(E, \phi)$.
In this case, writing
\begin{equation}
    \label{eq-def-perp}
    F^\perp = \ker \Bigl( \ 
        E \overset{\phi}{\longrightarrow}
        E^\vee \overset{i^\vee}{\longrightarrow} F^\vee
    \ \Bigr) \ ,
\end{equation}
we have a filtration of length $3$,
\begin{equation} \label{eq-isotr-ind-filt} \begin{tikzcd}[sep=small]
    0 \ar[r, hook] &
    F \ar[r, hook, "j"] \ar[d, two heads] &
    F^\perp \ar[r, hook] \ar[d, two heads] &
    E \ar[d, two heads] \\
    & F & A & F^\vee \rlap{ ,}
\end{tikzcd} \end{equation}
where $j$ is induced by $i$.
A standard argument shows that
$A$ has an induced self-dual structure $(A, \psi)$,
such that the above diagram is a self-dual extension
of $(A, \psi)$ by $F$.

Moreover, all self-dual extensions arise in this way.
In other words, given any self-dual extension~\cref{eq-def-sdext},
the inclusion $E_1 \hookrightarrow E$ is isotropic, $E_2 \simeq E_1^\perp$,
and the induced self-dual filtration~\cref{eq-isotr-ind-filt}
coincides with the original one.

\paragraph{The Grothendieck monoid module.}
\label{para-ksd}

Let $\calA$ be a self-dual exact category,
and recall from \cref{para-excat-groth} the
Grothendieck monoid $K_+ (\calA)$ of $\calA$,
which now has a $\bbZ_2$-action
$(-)^\vee \colon K_+ (\calA) \simto K_+ (\calA)$
given by the self-dual structure of $\calA$.

Define a $K_+ (\calA)$-module $K_+^\sd (\calA)$
to be the free $K_+ (\calA)$-module generated by
isomorphism classes of objects $(E, \phi) \in \calA^\sd$,
modulo the relations
\[
    [(E, \phi)] \sim [F_1] + [(F_2, \psi)]
\]
for self-dual extensions \cref{eq-def-sdext},
where `$+$' denotes the module action.

To avoid confusion, we will denote the action of
$K_+ (\calA)$ on $K_+^\sd (\calA)$ by
\[
    (\alpha, \theta) \longmapsto \bar{\alpha} + \theta \ ,
\]
instead of writing $\alpha + \theta$,
where $\alpha \in K_+ (\calA)$ and $\theta \in K_+^\sd (\calA)$.
This notation hints that the class $\bar{\alpha} + \theta$
should be thought of as a composition of
the classes $\alpha$, $\alpha^\vee$, and $\theta$.

For an object $E_\alpha \in \calA$ of class $\alpha$,
and a self-dual object $(E_\theta, \phi) \in \calA^\sd$ of class $\theta$,
the class $\bar{\alpha} + \theta$ is represented by the self-dual object
$E_\alpha \oplus^\sd (E_\theta, \phi)$ defined in \cref{para-sd-cat-module}.
In particular, we have
\begin{equation}
    \label{eq-ksd-involutive}
    \bar{\alpha} + \theta = \overline{\alpha^\vee} + \theta
\end{equation}
for any $\alpha \in K_+ (\calA)$ and $\theta \in K_+^\sd (\calA)$.

\paragraph{}
\label{para-sdcat}

We now formulate a set-up for
self-dual exact categories, similar to \tagref{ExCat},
which will be used in the rest of this paper.

\begin{condition*}[SdCat]
    Let $\calA$ be a self-dual $\bbK$-linear exact category,
    with properties and extra data as in \tagref{ExCat}.
    We also require the following:

    \begin{enumerate}
        \item 
            The $\bbZ_2$-action on $K_+ (\calA)$
            via the self-dual structure of $\calA$
            descends to a $\bbZ_2$-action
            \begin{equation} 
                (-)^\vee \colon C^\circ (\calA) \longsimto C^\circ (\calA) \ .
            \end{equation}

        \item
            We are given a $C^\circ (\calA)$-module $C^\sd (\calA)$,
            equipped with a surjection of $K_+ (\calA)$-modules
            $K_+^\sd (\calA) \to C^\sd (\calA)$.
            For $(E, \phi) \in \calA^\sd$, write $\llbr E, \phi \rrbr \in C^\sd (\calA)$
            for its class.
            We assume that there is a map of sets
            \begin{equation}
                \label{eq-j-csd}
                j \colon C^\sd (\calA) \longrightarrow C^\circ (\calA) \ ,
            \end{equation}
            such that $j (\llbr E, \phi \rrbr) = \llbr E \rrbr$
            for all $(E, \phi) \in \calA^\sd$.
    \end{enumerate}
    In this case, denote the action of $C^\circ (\calA)$ on $C^\sd (\calA)$ by
    \[
        (\alpha, \theta) \longmapsto \bar{\alpha} + \theta \ ,
    \]
    as in \cref{para-ksd}.
    For $E_\alpha \in \calA$ and $(E_\theta, \phi) \in \calA^\sd$
    with $\llbr E_\alpha \rrbr = \alpha$ and $\llbr E_\theta, \phi \rrbr = \theta$,
    the class $\bar{\alpha} + \theta$ is represented by the self-dual object
    $E_\alpha \oplus^\sd (E_\theta, \phi)$ defined in \cref{para-sd-cat-module}.
    In particular, \cref{eq-ksd-involutive}
    also holds for the $C^\circ (\calA)$-action on $C^\sd (\calA)$.
\end{condition*}

\paragraph{\tagref{SdCat} for filtrations.}
\label{para-filt-sd-class-monoid}

Let $\calA$ be a self-dual exact category, as in \tagref{SdCat}.
Then, the category of filtrations $\calA^\pn{n}$,
equipped with the induced self-dual structure as in \cref{para-def-filt-sd},
also satisfies \tagref{SdCat},
with the extra data $C^\sd (\calA^\pn{n})$ given by
\begin{equation}
    \label{eq-csd-filt}
    C^\sd (\calA^\pn{n}) = \begin{cases}
        C^\circ (\calA)^{n/2} & \text{if $n$ is even ,} \\
        C^\circ (\calA)^{(n-1)/2} \times C^\sd (\calA) & \text{if $n$ is odd ,}
    \end{cases}
\end{equation}
where a self-dual filtration $(E_\bullet, \phi_\bullet) \in \calA^{\pn{n}, \> \sd}$
is assigned the class
$(\llbr E_1 / E_0 \rrbr, \ldots, \break \llbr E_{n/2} / E_{n/2-1} \rrbr)$ if $n$ is even,
or $(\llbr E_1 / E_0 \rrbr, \ldots, \llbr E_{(n-1)/2} / E_{(n-3)/2} \rrbr, 
\llbr E_{(n+1)/2} / E_{(n-1)/2}, \psi \rrbr)$
if $n$ is odd, where $\psi$ is the induced self-dual structure.

\paragraph{}
\label[definition]{def-ext-sd}

We can also define a self-dual version of the Ext functors.
They will be related to self-dual extensions
in \cref{thm-sdext} below.

\begin{definition*}
    Let $(\calA, D, \eta)$ be a self-dual exact category.
    Let $(E, \phi) \in \calA^\sd$, and let $F \in \calA$.
    For every $i \geq 0$, define
    \begin{equation}
        \Ext^\sda{i} (E, F) =
        \Ext^i (E, F) \oplus
        \Ext^i (F^\vee, F)^{\bbZ_2},
    \end{equation}
    where $(-)^{\bbZ_2}$ denotes the $\bbZ_2$-invariant part,
    with the $\bbZ_2$-action on $\Ext^i (F^\vee, F)$ given by
    \begin{equation}
        \Ext^i (F^\vee, F)
        \overset{(-)^\vee}{\longrightarrow}
        \Ext^i (F^\vee, F^{\vee\vee})
        \overset{(\eta_F)_*}{\longrightarrow}
        \Ext^i (F^\vee, F).
    \end{equation}
\end{definition*}

\subsection{Self-dual stability conditions}

\paragraph{}

We define a notion of \emph{self-dual stability conditions} on
self-dual exact categories,
which are stability conditions in the sense of \cref{sect-exact-cat-stab}
that are compatible with the self-dual structure.

\begin{definition*}
    Let $\calA$ be a self-dual exact category,
    as in \tagref{SdCat}.
    Then, a \emph{self-dual \textnormal{(weak)} stability condition} on $\calA$
    consists of the following data:
    \begin{itemize}
        \item 
            A totally ordered set $T$,
            equipped with an order-reversing involution $t \mapsto -t$,
            fixing a unique element $0 \in T$.
        \item
            A (weak) stability condition $\tau \colon C (\calA) \to T$,
            as in \cref{def-stability},
            such that $\tau (\alpha) = -\tau (\alpha^\vee)$
            for all $\alpha \in C (\calA)$.
    \end{itemize}
\end{definition*}
In this case, we have the following basic observations:

\begin{itemize}
    \item 
        A non-zero object $E \in \calA$ is semistable with $\tau (E) = t$,
        if and only if $E^\vee$ is semistable with $\tau (E^\vee) = -t$.
    \item
        If $(E, \phi) \in \calA^\sd$ and $E$ is non-zero,
        then $\tau (E) = 0$.
\end{itemize}

\paragraph{Semistable and stable self-dual objects.}

Let $\tau$ be a self-dual weak stability condition on $\calA$, as above.
Recall from \cref{para-isotropic} the notion of isotropic subobjects.

\begin{itemize}
    \item 
        An object $(E, \phi) \in \calA^\sd$ is \emph{$\tau$-semistable},
        if for any isotropic subobject $F \subset E$
        with $0 \neq F \neq E$, we have $\tau (F) \leq 0 \leq \tau (E/F)$.

    \item
        An object $(E, \phi) \in \calA^\sd$ is \emph{$\tau$-stable},
        if for any isotropic subobject $F \subset E$
        with $0 \neq F \neq E$, we have $\tau (F) < 0 < \tau (E/F)$.
\end{itemize}
We will see in \cref{thm-sd-hn} that
a self-dual object is semistable if and only if
its underlying object is semistable.
However, this is not true for stable self-dual objects,
which are characterized by \cref{thm-st-sd-decomp} below.

Also, note that the object $(0, 0) \in \calA^\sd$ is considered
a stable self-dual object,
although the object $0 \in \calA$ is not stable as an ordinary object.
This might seem strange, but it is justified by
\cref{thm-st-sd-decomp} below.

\paragraph{Self-dual Harder--Narasimhan filtrations.}

Let $\tau$ be a self-dual weak stability condition on $\calA$,
and let $(E, \phi) \in \calA^\sd$.

A \emph{self-dual $\tau$-Harder--Narasimhan filtration} of $(E, \phi)$
is a self-dual filtration of $(E, \phi)$ of the form
\begin{equation}
    \label{eq-hn-sd}
    \hspace{1em} \begin{tikzcd}[sep=small]
        \mathllap{0 = {}} E_0 \ar[r, hook] &
        E_1 \ar[r, hook] \ar[d, two heads] &
        E_2 \ar[r, hook] \ar[d, two heads] &
        \cdots \ar[r, hook] &
        E_k \ar[r, hook] \ar[d, two heads] &
        E_k^\perp \ar[r, hook] \ar[d, two heads] &
        E_{k-1}^\perp \ar[r, hook] \ar[d, two heads] &
        \cdots \ar[r, hook] &
        E_1^\perp \ar[r, hook] \ar[d, two heads] &
        E_0^\perp \mathrlap{{} = E} \ar[d, two heads] \\
        & F_1 & F_2 & & F_k & A
        & F_k^\vee & & F_1^\vee & F_0^\vee \rlap{ ,}
    \end{tikzcd} \hspace{1em}
\end{equation}
where the horizontal maps are inclusions,
the vertical maps are cokernels of the horizontal maps,
each $F_i$ is non-zero and $\tau$-semistable,
$A$ is self-dual $\tau$-semistable with the induced self-dual structure,
and
\begin{equation}
    \tau (F_1) > \cdots > \tau (F_k) > 0 \ .
\end{equation}
Each inclusion $E_i \hookrightarrow E$ is then automatically isotropic,
with each $E_i^\perp$ as in \cref{para-isotropic}.

\begin{theorem}
    \label{thm-sd-hn}
    Let $\tau$ be a self-dual weak stability condition on $\calA$,
    and let $(E, \phi) \in \calA^\sd$.

    Then $(E, \phi)$ has a unique self-dual
    $\tau$-Harder--Narasimhan filtration, up to a unique isomorphism.
    It coincides with the $\tau$-Harder--Narasimhan filtration of $E$
    as an ordinary object,
    upon deleting the middle term $A$ if it is zero.

    In particular,
    a self-dual object $(E, \phi) \in \calA^\sd$ is $\tau$-semistable,
    if and only if the underlying object $E \in \calA$ is $\tau$-semistable.
\end{theorem}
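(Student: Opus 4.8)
The plan is to reduce everything to the ordinary Harder--Narasimhan filtration of the underlying object $E$, which exists by condition~\cref{itm-def-stability-3} of \cref{def-stability} and is unique up to unique isomorphism by \cref{thm-hn-uniqueness}. Throughout I use the two observations recorded just after the definition of a self-dual weak stability condition: $\tau(E^\vee) = -\tau(E)$ for a nonzero semistable $E$, and $\tau(E) = 0$ for a nonzero $(E,\phi) \in \calA^\sd$. The heart of the matter is to show that the HN filtration of $E$ underlies a self-dual filtration of $(E,\phi)$.

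Write the HN filtration of $E$ as $0 = G_0 \hookrightarrow \cdots \hookrightarrow G_\ell = E$, with semistable quotients $H_i = G_i/G_{i-1}$ and $\tau(H_1) > \cdots > \tau(H_\ell)$. Since the duality functor is exact, the dual filtration $G_\bullet^\vee$ of \cref{eq-dual-filt} has stepwise quotients $H_\ell^\vee, \dots, H_1^\vee$ --- using the third isomorphism theorem, valid by \cref{para-excat-abcat} --- which are semistable with slopes $-\tau(H_\ell) > \cdots > -\tau(H_1)$, so $G_\bullet^\vee$ is the HN filtration of $E^\vee$. Pulling it back along $\phi \colon E \simto E^\vee$, the $j$-th term becomes $\phi^{-1}\bigl((E/G_{\ell-j})^\vee\bigr) = G_{\ell-j}^\perp$ by \cref{eq-def-perp}, using that $\phi$ is an isomorphism. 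This is again a HN filtration of $E$, so \cref{thm-hn-uniqueness} forces $G_j = G_{\ell-j}^\perp$ as subobjects of $E$ for every $j$; applying this with $j$ replaced by $\ell-j$ also gives $G_j^\perp = G_{\ell-j}$, so every $G_j$ with $2j \leq \ell$ is isotropic. Composing $\phi$ with the unique isomorphism of \cref{thm-hn-uniqueness} between $G_\bullet$ and the pulled-back filtration yields $\phi_\bullet \colon G_\bullet \simto G_\bullet^\vee$ restricting to $\phi$ on the top term; the relation $\phi_\bullet^\vee = \phi_\bullet \circ \eta_{G_\bullet}$ holds because it holds on the top term and a filtration has no nontrivial endomorphism fixing its largest term. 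So $(G_\bullet, \phi_\bullet)$ is a self-dual filtration, and its quotients satisfy $H_i \simeq H_{\ell+1-i}^\vee$ (as for any self-dual filtration, cf.\ \cref{para-def-filt-sd}), hence $\tau(H_i) = -\tau(H_{\ell+1-i})$.

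The three assertions now follow. \emph{Semistability.} If $E$ is $\tau$-semistable, every isotropic $F$ with $0 \neq F \neq E$ is a subobject, so $\tau(F) \leq \tau(E) = 0 \leq \tau(E/F)$ and $(E,\phi)$ is $\tau$-semistable; conversely, if $E$ is not $\tau$-semistable then $\ell \geq 2$, the object $G_1$ is isotropic with $0 \neq G_1 \neq E$ and $\tau(G_1) = \tau(H_1) = -\tau(H_\ell)$, which is $> 0$ since $\tau(H_1) > \tau(H_\ell)$, contradicting $\tau$-semistability of $(E,\phi)$. \emph{Existence.} Put $k = \lfloor \ell/2 \rfloor$, $E_i = G_i$ and $F_i = H_i$ for $i \leq k$. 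If $\ell = 2k+1$, then $E_k^\perp = G_{k+1}$, the middle quotient $A := H_{k+1}$ carries the self-dual structure induced as in \cref{para-isotropic}, with $\tau(A) = 0$ (hence $\tau(F_k) > 0$) and $A$ semistable, so self-dual semistable by the previous point; if $\ell = 2k$, then $E_k = G_k = G_k^\perp$ and $A := 0$, again with $\tau(F_k) > 0$. In either case this is a self-dual $\tau$-HN filtration \cref{eq-hn-sd} of $(E,\phi)$ whose underlying filtration is the HN filtration of $E$, the degenerate step $E_k \subset E_k^\perp$ being suppressed when $A = 0$; this is the coincidence assertion. \emph{Uniqueness.} Given any self-dual $\tau$-HN filtration \cref{eq-hn-sd} of $(E,\phi)$, its underlying filtration (with the step $E_k \subset E_k^\perp$ deleted if $A = 0$) has semistable stepwise quotients --- for $A$ one applies the semistability point again --- with strictly decreasing slopes: $\tau(F_i) > 0$, then $0$ if $A \neq 0$, then $-\tau(F_i)$. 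By \cref{thm-hn-uniqueness} it is therefore the HN filtration of $E$, which determines $k$, the $E_i$, the $F_i$ and $A$; and then the self-dual structure extending $\phi$ is unique, again because a filtration has no nontrivial automorphism fixing its top term.

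I expect the main obstacle to be the argument of the second paragraph: identifying the pullback along $\phi$ of the dual of the HN filtration with the HN filtration itself, and extracting from \cref{thm-hn-uniqueness} both the equalities $G_j = G_{\ell-j}^\perp$ of subobjects and the self-dual structure $\phi_\bullet$ on $G_\bullet$. The remaining ingredients --- the third isomorphism theorem for the stepwise quotients, the even/odd case split, and the two slope identities --- should be routine bookkeeping.
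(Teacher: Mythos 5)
Your proof is correct and follows essentially the same route as the paper's: dualize the ordinary Harder--Narasimhan filtration, pull back along $\phi$, and invoke uniqueness of the ordinary HN filtration (\cref{thm-hn-uniqueness}) to recognize it as a self-dual filtration, then handle the parity bookkeeping. The paper's version is more terse but makes the same moves; yours usefully spells out the identification $G_j = G_{\ell-j}^\perp$ and the reason the self-dual structure $\phi_\bullet$ is unique (no nontrivial automorphism of a filtration fixing the top term, since the inclusions into $E$ are monomorphisms).
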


\begin{proof}
    Let $E_\bullet$ be the Harder--Narasimhan filtration of $E$
    an ordinary object.
    Then its dual filtration $E^\vee_\bullet$,
    as in \cref{para-def-filt-sd},
    is a Harder--Narasimhan filtration of $E^\vee$.
    Identifying $E$ with $E^\vee$ using $\phi$,
    these two filtrations must coincide,
    giving $E_\bullet$ the structure of a self-dual filtration.
    Inserting an extra middle term $0$ if necessary,
    the self-dual filtration $E_\bullet$ has the form \cref{eq-hn-sd}.
    The middle term $A$ is semistable as an ordinary object,
    and hence, it is also semistable as a self-dual object by definition.
    This proves the existence part of the theorem.
    
    For uniqueness, it suffices to show that
    every self-dual Harder--Narasimhan filtration
    is also an ordinary Harder--Narasimhan filtration,
    upon deleting the middle term if it is zero.
    It is then enough to show that
    any semistable self-dual object
    is semistable as an ordinary object.
    This is because otherwise,
    its ordinary Harder--Narasimhan filtration
    would have the form described above,
    inducing a destabilizing isotropic subobject of $A$.
    This argument also proves the last statement of the theorem.
\end{proof}

\paragraph{Characterizing stable self-dual objects.}
\label[theorem]{thm-st-sd-decomp}

The following result gives a convenient description of
stable self-dual objects.
Note that this requires $\tau$ to be a stability condition,
rather than just a weak stability condition as above.

\begin{theorem*}
    Let $\tau$ be a self-dual stability condition on $\calA$,
    and $(E, \phi) \in \calA^\sd$ a $\tau$-stable self-dual object.
    Then there is a decomposition
    \begin{equation}
        (E, \phi) \simeq
        (E_1, \phi_1) \oplus \cdots \oplus (E_m, \phi_m) \ ,
    \end{equation}
    unique up to permuting the factors,
    where $(E_i, \phi_i) \in \calA^\sd$
    are pairwise non-isomorphic self-dual objects,
    such that each $E_i$ is a $\tau$-stable object in $\calA$.

    In particular, we have
    \begin{equation}
        \Aut (E, \phi) \simeq \bbZ_2^m .
    \end{equation}
\end{theorem*}

\begin{proof}
    Consider a $\tau$-Jordan--Hölder filtration of $E$,
    \[
        0 = F_0 \hookrightarrow F_1 \hookrightarrow \cdots \hookrightarrow F_\ell = E \ ,
    \]
    as in \cref{thm-jh-filtration}.
    If $\ell \leq 1$, then we are already done.
    Otherwise, let $G_1, \dotsc, G_\ell$ denote the stepwise quotients.
    The dual filtration
    \[
        0 = F_\ell^\perp \hookrightarrow F_{\ell-1}^\perp \hookrightarrow \cdots
        \hookrightarrow F_0^\perp = E
    \]
    is also a $\tau$-Jordan--Hölder filtration of $E$,
    where $F_i^\perp \simeq (E/F_i)^\vee$, 
    and we identified $E \simeq E^\vee$ via $\phi$.
    The stepwise quotients of this filtration are $G_\ell^\vee, \dotsc, G_1^\vee$.

    Since $(E, \phi)$ is $\tau$-stable,
    the map $F_1 \hookrightarrow E$ cannot factor through $F_1^\perp$.
    Thus, the composition $F_1 \hookrightarrow E \twoheadrightarrow E/F_1^\perp \simeq F_1^\vee$
    is non-zero, and hence is an isomorphism.
    This defines a self-dual structure on $F_1$\,.
    
    Now, the inclusion $F_1^\vee \simeq F_1 \hookrightarrow E$
    splits the short exact sequence
    $F_1^\perp \hookrightarrow E \twoheadrightarrow F_1^\vee$, so that
    $E \simeq F_1 \oplus F_1^\perp$.
    We can then identify $E^\vee \simeq F_1^\vee \oplus F_1^{\perp \vee}$.
    Since $\phi \colon E \simto E^\vee$ sends $F_1$ into $F_1^\vee$,
    and $F_1^\perp$ into $F_1^{\perp \vee}$, it follows that
    $\smash{\phi|_{F_1}}$ and $\smash{\phi|_{F_1^\perp}}$ must be isomorphisms onto
    $F_1^\vee$ and $F_1^{\perp \vee}$, respectively.
    They define self-dual structures on $F_1$ and $F_1^\perp$, and we have
    \[
        (E, \phi) \simeq (F_1, \phi|_{F_1}) \oplus (F_1^\perp, \phi|_{F_1^\perp}) \ ,
    \]
    with $F_1$ stable as an object of $\calA$.
    Repeating the process with $F_1^\perp$ in place of $E$,
    we will eventually obtain a desired decomposition.

    To see that the $(E_i, \phi_i)$ are pairwise non-isomorphic, 
    suppose the contrary,
    so $\psi \colon (E_i, \phi_i) \simto (E_j, \phi_j)$ for some $i \neq j$.
    Then the map $\id_{E_i} + \sqrt{-1} \cdot \psi \colon E_i \to E$
    gives an isotropic subobject, a contradiction.

    The uniqueness follows from the uniqueness part of
    \cref{thm-jh-filtration}, as any such decomposition
    gives rise to a $\tau$-Jordan--Hölder filtration of $E$.

    Finally, for the last statement,
    suppose that $\psi \colon E \simto E$ is an isomorphism.
    Then we must have $\psi (E_i) = E_i$ for all $i$.
    Since $\Aut (E_i) \simeq \Gm$\,,
    we have $\Aut (E_i, \phi_i) \simeq \bbZ_2$\,,
    as a scalar automorphism of $E_i$ preserves $\phi_i$
    if and only if it squares to the identity.
    This completes the proof.
\end{proof}

\subsection{Moduli of self-dual objects}

\paragraph{}
\label{para-sdmod}

We formulate a notion of the
\emph{moduli stack of self-dual objects}
in a $\bbK$-linear exact category $\calA$,
parallel to the approach of \cref{sect-exact-cat-moduli}.

\begin{condition*}[SdMod]
    Let $\calA$ be a self-dual $\bbK$-linear exact category,
    as in \tagref{SdCat},
    and let $\+{\calM}$ be a categorical moduli stack of $\calA$,
    as in \tagref{Mod}.
    We further assume that:

    \begin{enumerate}
        \item \label{itm-asn-sd-moduli-alg}
            There is a self-dual structure $(\+{\calM}, D, \eta)$ on $\+{\calM}$,
            in the sense of \cref{def-sd-stack},
            extending the self-dual structure of $\calA$.
    \end{enumerate}
    In this case, 
    we define the \emph{moduli stack of self-dual objects} in $\calA$
    to be the $\bbZ_2$-fixed locus
    \begin{equation}
        \calM^\sd = \calM^{\bbZ_2} \ ,
    \end{equation}
    as in \cref{para-sd-stack-z2-action}.
    It is an algebraic stack
    locally of finite type, and has affine stabilizers,
    as follows from \cref{lem-sd-stack-prop}~\cref{itm-sd-stack-lft}.

    We also assume that $\calM^\sd$ satisfies the following condition:

    \begin{enumerate}[resume]
        \item 
            There is a decomposition
            \begin{equation}
                \calM^\sd = \coprod_{\theta \in C^\sd (\calA) \vphantom{^0}}
                \calM^\sd_{\theta} \ ,
            \end{equation}
            where each $\calM^\sd_{\theta} \subset \calM^\sd$
            is an open and closed substack,
            such that $\calM^\sd_{\theta} (\bbK) \subset \calM^\sd (\bbK)$
            consists of $\bbK$-points $(E, \phi) \in \calA^\sd$
            with $\llbr (E, \phi) \rrbr = \theta$.
    \end{enumerate}
\end{condition*}

\paragraph{Module structure.}
\label{para-sd-moduli-module}

Under the condition~\tagref{SdMod},
the functor $\oplus^\sd \colon \calA^\simeq \times \calA^\sd \to \calA^\sd$
in \cref{para-sd-cat-module}
extends naturally to a map
\begin{equation}
    \label{eq-def-oplus-sd-moduli}
    \oplus^\sd \colon \calM \times \calM^\sd \longrightarrow \calM^\sd \ ,
\end{equation}
exhibiting $\calM^\sd$ as a module for the monoid $\calM$.

\paragraph{Semistable loci.}

Under the condition~\tagref{SdMod},
let $\tau$ be a self-dual weak stability condition on $\calA$,
satisfying \tagref{Stab1}.
For each $\theta \in C^\sd (\calA)$, define an open substack
\[
    \calM^\sdss_{\theta} (\tau) \subset \calM^\sd_{\theta}
\]
as the preimage of $\Mss_{j (\theta)} (\tau)$
under the forgetful morphism $\calM^\sd \to \calM$,
with $j$ as in~\cref{eq-j-csd}.
This can be thought of as the locus of
$\tau$-semistable self-dual objects of class~$\theta$,
by \cref{thm-sd-hn}.

If, moreover, $\tau$ satisfies \tagref{Stab2},
then $\calM^\sdss_{\theta} (\tau)$ is of finite type,
as follows from \cref{lem-sd-stack-prop}~\cref{itm-sd-stack-ft}.

\subsection{Moduli of self-dual filtrations}
\label{sect-mod-filt-sd}

\paragraph{Definition.}
\label[definition]{def-mod-filt-sd}

Let $\calA$ be a self-dual $\bbK$-linear exact category,
with a categorical moduli stack $\+{\calM}$, as in \tagref{SdMod}.
Let $n \geq 0$ be an integer.

Recall from \cref{def-mod-filt}
the moduli stack $\+{\calM}^\pn{n}$ of $n$-step filtrations in $\calA$.
It is now equipped with a self-dual structure,
in the sense of \cref{def-sd-stack},
given by the induced self-dual structures on $\+{\calM} (U)^\pn{n}$,
as in \cref{para-def-filt-sd}, for all $\bbK$-schemes $U$.
Therefore, $\+{\calM}^\pn{n}$ satisfies \tagref{SdMod}
as a moduli stack for the self-dual category $\calA^\pn{n}$.

In particular, the $\bbZ_2$-fixed locus
\[
    \calM^{\pn{n}, \> \sd} =
    ( \calM^\pn{n} )^{\bbZ_2}
\]
is an algebraic stack locally of finite type,
called the \emph{moduli stack of self-dual $n$-step filtrations} in $\calA$.

\paragraph{Projection morphisms.}

Recall from \cref{para-filt-proj-sd}
the projection functors
$p^{f, \> \sd} \colon \calA^{\pn{n}, \> \sd} \break \to \calA^{\pn{m}, \> \sd}$
for an order-preserving $\bbZ_2$-equivariant map $f \colon [m] \to [n]$,
and the total projection and middle projection functors
$p^{\pn{n}, \> \sd} \colon \calA^{\pn{n}, \> \sd} \to \calA^\sd$
and $p_i^\sd \colon \calA^{\pn{2i-1}, \> \sd} \to \calA^\sd$.
These extend naturally to
morphisms of algebraic $\bbK$-stacks
\begin{alignat}{2}
    \pi^{f, \> \sd} \colon && \calM^{\pn{n}, \> \sd} &\longrightarrow \calM^{\pn{m}, \> \sd} \ ,
    \\
    \pi^{\pn{n}, \> \sd} \colon && \calM^{\pn{n}, \> \sd} &\longrightarrow \calM^\sd \ ,
    \\
    \pi_i^\sd \colon && \calM^{\pn{2i-1}, \> \sd} &\longrightarrow \calM^\sd \ .
\end{alignat}

On the other hand, recall from \cref{para-mod-filt-proj}
the projection morphisms
$\pi^f \colon \calM^\pn{n} \to \calM^\pn{m}$ and
$\pi^{\pn{n}}, \pi_i \colon \calM^\pn{n} \to \calM$,
with notations explained there.
Composing with the forgetful morphism
$\calM^{\pn{n}, \> \sd} \to \calM^\pn{n}$,
we also obtain projection morphisms
\begin{align}
    \pi^f \colon \calM^{\pn{n}, \> \sd} &\longrightarrow \calM^{\pn{m}} \ ,
    \\
    \pi^{\pn{n}}, \pi_i \colon \calM^{\pn{n}, \> \sd} &\longrightarrow \calM \ .
\end{align}

\paragraph{Semistable loci.}
\label{para-sdss-filt}

In the situation of~\tagref{SdMod},
suppose that $\tau$ is a self-dual weak stability condition on $\calA$,
satisfying~\tagref{Stab1}.
Then, for $\alpha_1, \dotsc, \alpha_n \in C^\circ (\calA)$ and $\theta \in C^\sd (\calA)$,
we define an open substack
\[
    \calM^{\pn{2n+1}, \> \sdss}_{\alpha_1, \, \dotsc, \, \alpha_n, \> \theta} (\tau)
    \subset \calM^\pn{2n+1}_{\alpha_1, \, \dotsc, \, \alpha_n, \> \theta}
\]
to be the preimage of
$\Mss_{\alpha_1} (\tau) \times \cdots \times \Mss_{\alpha_n} (\tau) \times \Msdss_{\theta} (\tau)$
under the morphism
$\pi_1 \times \cdots \times \pi_n \times \pi_{n+1}^\sd \colon \calM^{\pn{2n+1}, \> \sd} \to \calM^n \times \calM^\sd$.

\begin{lemma}
    \label{lem-filt-sd-rep-ft}
    Suppose we are in the situation of \tagref{SdMod}.

    \begin{enumerate}
        \item \label{itm-filt-sd-rep}
            For any $n \geq 0,$ the morphism
            \begin{equation*}
                \pi^{\pn{n}, \> \sd} \colon \calM^{\pn{n}, \> \sd} \longrightarrow \calM^\sd
            \end{equation*}
            is representable.

        \item \label{itm-filt-sd-ft}
            Assume that \tagref{Fin1} holds.
            Then, for any $n \geq 0,$ the morphism
            \begin{equation}
                \label{eq-filt-sd-ft}
                \pi_1 \times \cdots \times \pi_n \times \pi_{n+1}^\sd
                \colon \calM^{\pn{2n+1}, \> \sd}
                \longrightarrow \calM^n \times \calM^\sd
            \end{equation}
            is of finite type.

        \item \label{itm-filt-sd-ft-2}
            Assume that \tagref{Fin2} holds.
            Then, for any $n \geq 0,$ the morphism
            \begin{equation}
                \label{eq-filt-sd-ft-2}
                \pi^{\pn{n}, \> \sd}
                \colon \calM^{\pn{n}, \> \sd}
                \longrightarrow \calM^\sd
            \end{equation}
            is of finite type.
    \end{enumerate}
\end{lemma}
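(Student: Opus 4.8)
The plan is to follow the proof of \cref{lem-filt-rep-ft} closely, reducing each assertion to its ordinary counterpart there via the dictionary of \cref{para-isotropic} between self-dual extensions and isotropic subobjects. For~\cref{itm-filt-sd-rep} I would simply repeat the argument of \cref{lem-filt-rep-ft}~\cref{itm-filt-rep}: by the criterion of Laumon--Moret-Bailly it suffices that the functor $p^{\pn{n}, \> \sd} \colon \+{\calM}(U)^{\pn{n}, \> \sd} \to \+{\calM}(U)^\sd$ induce injections on automorphism groups for every $\bbK$-scheme $U$, and an automorphism of a self-dual filtration that restricts to the identity on the top object is the identity, since inclusions in an exact category are monomorphisms. (Equivalently, $\pi^{\pn{n}, \> \sd}$ is the morphism induced on $\bbZ_2$-fixed loci by the $\bbZ_2$-equivariant representable morphism $\pi^{\pn{n}}$, so that \cref{lem-sd-stack-prop} applies.)

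For~\cref{itm-filt-sd-ft} and~\cref{itm-filt-sd-ft-2}, the structural input is the ``peeling'' equivalence
\[
    \calM^{\pn{n}, \> \sd} \;\simeq\;
    \calM^{\pn{3}, \> \sd}
    \underset{\pi_2^\sd, \> \calM^\sd, \> \pi^{\pn{n-2}, \> \sd}}{\times}
    \calM^{\pn{n-2}, \> \sd}
    \qquad (n \geq 2) \ ,
\]
which I would construct functorially, as in \cref{def-mod-filt}: given a self-dual $n$-step filtration with innermost non-trivial term $E_1$, the inclusion $E_1 \hookrightarrow E$ is isotropic, one has $E_{n-1} = E_1^\perp$, and $A = E_1^\perp/E_1$ inherits a self-dual structure together with the self-dual $(n-2)$-step filtration $E_\bullet / E_1$; conversely these two pieces of data recover the original filtration, using the third isomorphism theorem in exact categories (\cref{para-excat-abcat}) and the fact, recalled in \cref{para-isotropic}, that every self-dual extension arises from an isotropic subobject. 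Writing $a \colon \calM^{\pn{n}, \> \sd} \to \calM^{\pn{3}, \> \sd}$ and $b \colon \calM^{\pn{n}, \> \sd} \to \calM^{\pn{n-2}, \> \sd}$ for the two projections of this fibre product, every projection morphism out of $\calM^{\pn{n}, \> \sd}$ factors through $a$ or through $b$. One then inducts on $n$ (the cases $n = 0, 1$ being trivial) by the same diagram chase as in \cref{lem-filt-rep-ft}~\cref{itm-filt-ft,itm-filt-ft-2}: one uses the induction hypothesis for $\calM^{\pn{n-2}, \> \sd}$, stability of finite type under composition and base change, the fact that the inclusion of the fibre product into $\calM^{\pn{3}, \> \sd} \times \calM^{\pn{n-2}, \> \sd}$ is of finite type (it is a base change of the diagonal of $\calM^\sd$, which is affine by \cref{lem-sd-stack-prop}), and the base building block below.

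The base building block is the finiteness of $\pi_1 \times \pi_2^\sd \colon \calM^{\pn{3}, \> \sd} \to \calM \times \calM^\sd$ under~\tagref{Fin1}, and of $\pi^{\pn{3}, \> \sd} \colon \calM^{\pn{3}, \> \sd} \to \calM^\sd$ under~\tagref{Fin2}. For these I would use that $\calM^{\pn{3}, \> \sd}$ is the $\bbZ_2$-fixed locus of $\calM^{\pn{3}}$ for the filtration-dualising involution; under this involution the morphism $\pi_1 \times \pi_2 \times \pi_3 \colon \calM^{\pn{3}} \to \calM^3$ is equivariant for the action $(X_1, X_2, X_3) \mapsto (X_3^\vee, X_2^\vee, X_1^\vee)$ on $\calM^3$, whose fixed locus is identified with $\calM \times \calM^\sd$ via $(X_1, (X_2, \psi))$, and likewise $\pi^{\pn{3}} \colon \calM^{\pn{3}} \to \calM$ is equivariant with fixed-locus morphism $\pi^{\pn{3}, \> \sd}$. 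The two morphisms in question are therefore obtained by passing to $\bbZ_2$-fixed loci from $\pi_1 \times \pi_2 \times \pi_3$ and $\pi^{\pn{3}}$, which are of finite type by \cref{lem-filt-rep-ft}~\cref{itm-filt-ft,itm-filt-ft-2}, and finite type is preserved under this operation by \cref{lem-sd-stack-prop}.

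I expect the main difficulty to be bookkeeping rather than anything conceptual: making the peeling equivalence precise at the level of $\bbK$-stacks in categories, and checking how it interacts with all of the projection morphisms and with the $C^\circ(\calA)$- and $C^\sd(\calA)$-gradings. In particular, one must verify that in each finiteness statement the chosen quasi-compact open of the target pins down enough classes — through the grading $\theta = \bar\alpha_1 + \cdots + \bar\alpha_n + \rho$ of a self-dual filtration — that only finitely many graded pieces of the source contribute, exactly the point that appears in \cref{lem-filt-rep-ft}~\cref{itm-filt-ft-2}. The other genuine input is that forming $\bbZ_2$-fixed loci preserves representability and finite type, which I would draw from \cref{lem-sd-stack-prop}.
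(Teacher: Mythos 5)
Your proposal is correct, but it takes a longer route than needed. The crucial observation you make for the base case $n=1$ — that the self-dual projection is the $\bbZ_2$-fixed-locus morphism of a $\bbZ_2$-equivariant finite-type morphism, and that passing to fixed loci preserves finite type via \cref{lem-sd-stack-prop}~\cref{itm-sd-stack-ft} — actually applies at once for every $n$, which is exactly what the paper does. Concretely: $\pi_1 \times \cdots \times \pi_{2n+1} \colon \calM^\pn{2n+1} \to \calM^{2n+1}$ is equivariant for the involution $(E_1, \dotsc, E_{2n+1}) \mapsto (E_{2n+1}^\vee, \dotsc, E_1^\vee)$ on the target, whose fixed locus is $\calM^n \times \calM^\sd$; this morphism is of finite type by \cref{lem-filt-rep-ft}~\cref{itm-filt-ft}, and taking $\bbZ_2$-fixed loci yields precisely $\pi_1 \times \cdots \times \pi_n \times \pi_{n+1}^\sd$. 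Likewise $\pi^\pn{n}$ is $\bbZ_2$-equivariant of finite type under \tagref{Fin2}, so \cref{lem-sd-stack-prop}~\cref{itm-sd-stack-ft} gives~\cref{itm-filt-sd-ft-2} immediately. Your peeling equivalence $\calM^{\pn{n}, \sd} \simeq \calM^{\pn{3}, \sd} \times_{\calM^\sd} \calM^{\pn{n-2}, \sd}$ is correct and consistent with \cref{para-isotropic}, but it entails verifying compatibility with the gradings and projections, and requires the extra induction — none of which is necessary once you notice the fixed-locus argument applies uniformly.

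Two smaller points. First, you justify the fibre-product inclusion being of finite type by saying the diagonal of $\calM^\sd$ is ``affine by \cref{lem-sd-stack-prop}'', but that lemma asserts affine \emph{stabilizers}, which is weaker than affine diagonal; what you actually need is that the diagonal is quasi-compact, which holds because $\calM^\sd$ is an algebraic stack in the sense of Laumon--Moret-Bailly. Second, your parenthetical alternative for~\cref{itm-filt-sd-rep}, invoking \cref{lem-sd-stack-prop} to get representability of the fixed-locus morphism, is not supported by that lemma, which only addresses (locally) finite type; your primary argument via the Laumon--Moret-Bailly criterion, echoing \cref{lem-filt-rep-ft}~\cref{itm-filt-rep}, is the right one and matches the paper.
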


\begin{proof}
    Part~\cref{itm-filt-sd-rep}
    follows from an analogous argument as in the proof of
    \cref{lem-filt-rep-ft}~\cref{itm-filt-rep}.
    
    For \cref{itm-filt-sd-ft},
    consider the $\bbZ_2$-action on $\calM^\pn{2n+1}$ as above,
    and the $\bbZ_2$-action on $\calM^{2n+1}$
    sending $(E_1, \dotsc, E_{2n+1})$ to
    $(E_{2n+1}^\vee, \dotsc, E_1^\vee)$.
    Then the morphism
    \begin{equation}
        \label{eq-pf-filt-sd-ft}
        \pi_1 \times \cdots \times \pi_{2n+1} \colon
        \calM^\pn{2n+1} \longrightarrow \calM^{2n+1}
    \end{equation}
    is $\bbZ_2$-equivariant,
    and the morphism~\cref{eq-filt-sd-ft}
    is the $\bbZ_2$-fixed locus in~\cref{eq-pf-filt-sd-ft}.
    But \cref{eq-pf-filt-sd-ft} is of finite type
    by \cref{lem-filt-rep-ft}~\cref{itm-filt-ft}.
    Therefore, by \cref{lem-sd-stack-prop}~\cref{itm-sd-stack-ft},
    the morphism~\cref{eq-filt-sd-ft}
    is of finite type.

    For \cref{itm-filt-sd-ft-2},
    the morphism $\pi^\pn{n} \colon \calM^\pn{n} \to \calM$
    is $\bbZ_2$-equivariant,
    and is of finite type by \cref{lem-filt-rep-ft}~\cref{itm-filt-ft-2},
    and the result follows from
    \cref{lem-sd-stack-prop}~\cref{itm-sd-stack-ft}.
\end{proof}

\paragraph{}

In particular, if~\tagref{Fin1} holds,
and $\tau$ is a self-dual weak stability condition on $\calA$
satisfying~\tagref{Stab12},
then the semistable loci
$\calM^{\pn{2n+1}, \> \sdss}_{\alpha_1, \, \dotsc, \, \alpha_n, \> \theta} (\tau)$
defined in \cref{para-sdss-filt}
are of finite type.

\subsection{The self-dual extension bundle}
\label{sect-sdext-sdext}

\paragraph{}

Let $\calA$ be a self-dual $\bbK$-linear exact category,
with a categorical moduli stack $\+{\calM}$, as in \tagref{SdMod}.
We also assume that $\calA$ has an extension bundle,
as in~\tagref{Ext}, parametrizing extensions of objects in $\calA$.
We aim to study the self-dual analogue of this problem,
and we study the bundle whose fibres
parametrize self-dual extensions of a self-dual object
by another object in $\calA$.

\begin{lemma}
    \label{lem-ext-invol}
    In the situation of \tagref{SdMod},
    suppose that \tagref{Ext} is satisfied.
    Then there is an isomorphism of $2$-vector bundles
    \begin{equation}
        \label{eq-ext-invol}
        \sigma^* \, \calExt^{1/0}_{\alpha_1, \alpha_2} \simeq
        \calExt^{1/0}_{\smash{\alpha_2^\vee, \alpha_1^\vee}} \ ,
    \end{equation}
    where $\sigma \colon \smash{\calM_{\alpha_1}} \times \smash{\calM_{\alpha_2}} \simto 
    \smash{\calM_{\alpha_2^\vee}} \times \smash{\calM_{\alpha_1^\vee}}$
    sends $(E_1, E_2)$ to $(E_2^\vee, E_1^\vee)$.

    In particular, we have isomorphisms of $2$-vector spaces
    $\Ext^{1/0} (E_2, E_1) \simeq \Ext^{1/0} (E_1^\vee, E_2^\vee)$
    for all $E_1, E_2 \in \calA$, and the Euler form satisfies
    \begin{equation}
        \label{eq-chi-invol}
        \chi (\alpha_1, \alpha_2) = \chi (\alpha_2^\vee, \alpha_1^\vee)
    \end{equation}
    for all $\alpha_1, \alpha_2 \in C^\circ (\calA)$.
\end{lemma}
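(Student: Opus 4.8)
The plan is to reduce the statement to the moduli stack of short exact sequences via the identification in~\tagref{Ext}~\cref{itm-exext-2}, and then to produce the isomorphism from the dual-filtration construction of~\cref{para-def-filt-sd}. Since~\tagref{Ext}~\cref{itm-exext-2} gives an isomorphism $\calExt^{1/0}_{\smash{\alpha_1, \alpha_2}} \simeq \calM^\pn{2}_{\alpha_1, \alpha_2}$ of abelian stacks over $\calM_{\alpha_1} \times \calM_{\alpha_2}$, and the $2$-vector bundle structure on $\calM^\pn{2}_{\alpha_1, \alpha_2}$ is a property and not extra data (by~\cref{rem-2vb-ab-stack}), it suffices to exhibit an isomorphism of abelian stacks $\calM^\pn{2}_{\alpha_1, \alpha_2} \simeq \calM^\pn{2}_{\smash{\alpha_2^\vee, \alpha_1^\vee}}$ covering $\sigma$ on the base; this will then automatically be an isomorphism of $2$-vector bundles, which is~\cref{eq-ext-invol}.

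To build this isomorphism, I would apply the dual-filtration formula~\cref{eq-dual-filt} with $n = 2$ to the exact category $\+{\calM}(U)$, for every $\bbK$-scheme $U$: a short exact sequence $E_1 \hookrightarrow E \twoheadrightarrow E_2$ is sent to $(E/E_1)^\vee \hookrightarrow E^\vee \twoheadrightarrow E_1^\vee$, that is, $E_2^\vee \hookrightarrow E^\vee \twoheadrightarrow E_1^\vee$. The dual of a morphism of short exact sequences is again one, in the opposite direction, so on the underlying groupoids — which are equivalent to their own opposites — this is an honest covariant functor, natural in $U$, hence defines a morphism of stacks $\calM^\pn{2} \to \calM^\pn{2}$. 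Reading off classes, it carries $\calM^\pn{2}_{\alpha_1, \alpha_2}$ into $\calM^\pn{2}_{\smash{\alpha_2^\vee, \alpha_1^\vee}}$ and covers $\sigma$; and dualizing twice is naturally isomorphic to the identity via $\eta$ (using the compatibility of $\eta$ from~\cref{def-sd-cat}), so the morphism is an equivalence.

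It then remains to check compatibility with the abelian-stack structure, namely the Baer sum~\cref{eq-pi12-plus}, and to identify the fibres. The Baer sum is assembled from direct sums together with a pullback along a diagonal and a pushout along a codiagonal; the dual functor is exact and additive, so it sends direct sums to direct sums and interchanges this pullback and pushout (the limits and colimits involved being preserved as in~\cref{para-exact-stack-pf-pb}), whence the dual of a Baer sum is the Baer sum of the duals. Passing to the fibre over a $\bbK$-point $(E_1, E_2)$ and unwinding the identification of~\tagref{Ext}~\cref{itm-exext-2}, the isomorphism~\cref{eq-ext-invol} restricts to the bijection $\Ext^{1/0}(E_2, E_1) \simeq \Ext^{1/0}(E_1^\vee, E_2^\vee)$ sending the class of a short exact sequence to the class of its dual, which is a group isomorphism because $D$ is an additive equivalence. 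Finally, isomorphic $2$-vector bundles of constant rank have the same rank, so $\rank \calExt^{1/0}_{\smash{\alpha_1, \alpha_2}} = \rank \calExt^{1/0}_{\smash{\alpha_2^\vee, \alpha_1^\vee}}$, and~\cref{eq-chi-invol} follows from the definition $\chi = -\rank \calExt^{1/0}$ of~\cref{eq-def-chi}.

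The main obstacle I anticipate is the bookkeeping forced by the \emph{contravariance} of the dual functor: one must check that reversing arrows genuinely yields a covariant morphism of the underlying moduli stacks and does not disturb the Baer-sum group law when pushouts and pullbacks are swapped, and the index shuffle $\alpha_1 \leftrightarrow \alpha_2^\vee$ has to be carried through consistently everywhere, including in the fibrewise statement.
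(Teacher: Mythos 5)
Your proposal is correct and takes essentially the same route as the paper: identify $\calExt^{1/0}_{\alpha_1,\alpha_2}$ with $\calM^\pn{2}_{\alpha_1,\alpha_2}$, observe that the dual-filtration $\bbZ_2$-action on $\calM^\pn{2}$ from \cref{def-mod-filt-sd} covers $\sigma$, and check compatibility with the Baer-sum structure (the paper additionally notes compatibility with scaling, whereas you delegate that point to \cref{rem-2vb-ab-stack}, which is consonant with how the remark is already invoked inside \tagref{Ext} itself).
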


\begin{proof}
    The natural $\bbZ_2$-action on $\calM^\pn{2}$
    in \cref{def-mod-filt-sd}
    is compatible with the automorphism $\sigma$.
    It is an automorphism of module stacks,
    because taking the opposite category of a $\bbK$-linear exact category
    is compatible with scaling morphisms
    and taking Baer sums of extensions.
\end{proof}

\paragraph{}
\label{para-sdext}

In particular, for any $\alpha \in C^\circ (\calA)$,
the isomorphism~\cref{eq-ext-invol} induces a $\bbZ_2$-action
on the $2$-vector bundle
$(\id \times D)^* \, \calExt^{\smash{1/0}}_{\smash{\alpha, \alpha^\vee}} \to \calM_\alpha$,
whose fibre at $E$ is the $2$-vector space $\Ext^{\smash{1/0}} (E^\vee, E)$.
Consider the fixed locus of this action,
\[
    (\calExt^{\smash{1/0}}_{\smash{\alpha, \alpha^\vee}})^{\bbZ_2}
    \longrightarrow \calM_\alpha \ ,
\]
which is a $2$-vector bundle over $\calM_\alpha$.

We will need the following compatibility condition
between the extension bundle and the self-dual structure.

\begin{condition*}[SdExt]
    In the situation of \tagref{SdMod},
    suppose that \tagref{Ext} is satisfied.
    We further assume that for all $\alpha \in C^\circ (\calA)$,
    the $2$-vector bundle
    $(\calExt^{\smash{1/0}}_{\smash{\alpha, \alpha^\vee}})^{\bbZ_2} \to \calM_\alpha$
    defined above has constant rank.
\end{condition*}

\paragraph{The self-dual Euler form.}
\label{para-euler-sd}

In the situation of \tagref{SdExt}, write
\begin{equation}
    \chi^\sd (\alpha, 0) =
    - {\rank (\calExt^{\smash{1/0}}_{\smash{\alpha, \alpha^\vee}})^{\bbZ_2}} \ .
\end{equation}
Define a map of sets 
$\chi^\sd \colon C^\circ (\calA) \times C^\sd (\calA) \to \bbZ$,
called the \emph{self-dual Euler form}, by
\begin{equation}
    \label{eq-def-chi-sd}
    \chi^\sd (\alpha, \theta) = 
    \chi (\alpha, j (\theta)) + \chi^\sd (\alpha, 0)
\end{equation}
for $\alpha \in C^\circ (\calA)$ and $\theta \in C^\sd (\calA)$,
where $j$ is the map in~\cref{eq-j-csd}.
One can deduce the relation
\begin{equation}
    \label{eq-chi-sd-assoc}
    \chi (\alpha_1, \alpha_2) + \chi^\sd (\alpha_1 + \alpha_2, \theta) =
    \chi^\sd (\alpha_2, \theta) + \chi^\sd (\alpha_1, \bar{\alpha}_2 + \theta)
\end{equation}
for all $\alpha_1, \alpha_2 \in C^\circ (\calA)$ and $\theta \in C^\sd (\calA)$.

\paragraph{}
\label[theorem]{thm-sdext}

We can now show that the family of self-dual extensions is,
roughly speaking,
parametrized by the self-dual Ext groups in \cref{def-ext-sd}.

\begin{theorem*}
    In the situation of \tagref{SdExt},
    let $(E, \phi) \in \calA^\sd$ be a self-dual object,
    and let $F \in \calA$ be an object.
    
    Then the stack of self-dual extensions of $(E, \phi)$ by $F$
    is an affine $2$-vector bundle over the $2$-vector space
    $\Ext^{1/0} (E, F)$, which is a torsor for the trivial $2$-vector bundle
    \begin{equation}
        \Ext^{1/0} (E, F) \times \Ext^{1/0} (F^\vee, F)^{\bbZ_2}
        \longrightarrow \Ext^{1/0} (E, F) \ .
    \end{equation}
\end{theorem*}

\begin{proof}
    Consider the $2$-vector bundle
    \[
        \calE \longrightarrow \Ext^{1/0} (E, F)
    \]
    whose fibre over $a \in \Ext^{1/0} (E, F) (\bbK)$
    is given by $\Ext^{1/0} (F^\vee, G_a)$,
    where $G_a$ is the extension of $E$ by $F$ given by $a$.
    Precisely, $\calE$ is the moduli stack of
    $3$-step filtrations with stepwise quotients $F, E, F^\vee$.

    Define
    \[
        \calF = \calE
        \underset{\Ext^{1/0} (E, F)^2}{\times}
        \Ext^{1/0} (E, F),
    \]
    where the morphism $\calE \to \Ext^{1/0} (E, F)^2$
    sends a filtration 
    $0 \hookrightarrow F \hookrightarrow G \hookrightarrow H$ to $(a, a')$,
    where $a$ is the class of $G$ as an extension of $E$ by $F$,
    and $a'$ is the image of the class of $H$ in $\Ext^{1/0} (F^\vee, G)$
    under the pushforward map to $\Ext^{1/0} (F^\vee, E)$,
    identified with $\Ext^{1/0} (E, F)$ using the self-dual structure of $E$.
    The morphism $\Ext^{1/0} (E, F) \to \Ext^{1/0} (E, F)^2$ is the diagonal morphism.

    Regard $\Ext^{1/0} (E, F)^2$ as a trivial $2$-vector bundle over $\Ext^{1/0} (E, F)$.
    It follows from \cref{lem-2vb-morphism-affine}
    that $\calF$ is an affine $2$-vector bundle over $\Ext^{1/0} (E, F)$,
    and is a torsor for the trivial $2$-vector bundle with fibre $\Ext^{1/0} (F^\vee, F)$.

    The $\bbZ_2$-action on $\calE$ by taking the dual filtration 
    induces a $\bbZ_2$-action on $\calF$,
    and the morphism $\calF \to \Ext^{1/0} (E, F)$ is $\bbZ_2$-invariant.
    The fixed locus $\calF^{\bbZ_2}$ is
    the moduli stack of self-dual extensions of $E$ by $F$.

    Moreover, this $\bbZ_2$-action on $\calF$ is affine linear.
    Indeed, for any $a \in \Ext^{1/0} (E, F) (\bbK)$,
    the fibre $\calF_a$ is $\Ext^{1/0} (F^\vee, G_a)_{a}$\,,
    where the last subscript $a$ denotes taking the fibre over
    $a \in \Ext^{1/0} (F^\vee, E) (\bbK)$.
    The $\bbZ_2$-action identifies this with
    $\Ext^{1/0} (G_a^\vee, F)_{a}$\,,
    and reinterprets its elements as $3$-step filtrations,
    giving elements of $\Ext^{1/0} (F^\vee, G_a)_{a}$ again,
    but note that now the roles of the two $a$'s have changed.
    These two steps are affine linear by
    \cref{lem-3step-affine-linear,lem-ext-invol}.
    
    Therefore, $\calF^{\bbZ_2}$ is an affine $2$-vector bundle 
    over $\Ext^{1/0} (E, F)$,
    and is a torsor for the trivial $2$-vector bundle
    with fibre $\Ext^{1/0} (F^\vee, F)^{\bbZ_2}$.
\end{proof}

\paragraph{}
\label[theorem]{thm-sdext-strong}

We also have a family version of \cref{thm-sdext}.

\begin{theorem*}
    In the situation of \tagref{SdExt},
    let $\alpha \in C^\circ (\calA)$ and $\theta \in C^\sd (\calA)$.
    Then the morphism
    \begin{equation}
        \pi_1 \times \pi^\sd_2 \colon
        \calM^{\smash{\pn{3}, \> \sd}}_{\alpha, \theta} \longrightarrow
        \calM_\alpha \times \calM^\sd_\theta
    \end{equation}
    factors as an affine $2$-vector bundle of rank $-\chi^\sd (\alpha, 0)$
    followed by a $2$-vector bundle of rank $-\chi (\alpha, j (\theta)),$
    so that the total rank is $-\chi^\sd (\alpha, \theta)$.
\end{theorem*}

\begin{proof}
    \fixlineheight
    Let $u \colon \smash{\calM^\sd_\theta} \to \calM_{\smash{j (\theta)}}$ be the forgetful morphism
    that forgets the self-dual structure.
    Consider the $2$-vector bundle
    $\calV = (\id \times u)^* \calExt^{1/0}_{\smash{\alpha, j (\theta)}}$
    on $\calM_\alpha \times \smash{\calM^\sd_\theta}$,
    whose fibre over $(F, (E, \phi))$ is $\Ext^{1/0} (E, F)$.
    Using $\calV$ in place of $\Ext^{1/0} (E, F)$ in the proof of \cref{thm-sdext},
    a similar argument shows that
    there is an affine $2$-vector bundle $\calF \to \calV$,
    together with an affine linear $\bbZ_2$-action,
    such that the fixed locus $\calF^{\bbZ_2}$
    can be identified with $\calM^{\smash{\pn{3}, \> \sd}}_{\alpha, \theta}$.
    Moreover, $\calF^{\bbZ_2}$ is a torsor for the $2$-vector bundle
    $p^* (\calExt^{1/0}_{\smash{\alpha, \alpha^\vee}})^{\bbZ_2} \to \calV$,
    where $p \colon \calV \to \calM_\alpha$ is the projection.
    This shows that the rank of $\smash{\calF^{\bbZ_2}}$ is as stated.
\end{proof}

\section{Motivic algebraic structures}

\label{sect-alg}

In this \lcnamecref{sect-alg},
we introduce the algebraic structures
that arise in the enumerative geometry in self-dual categories,
including \emph{motivic Hall algebras},
\emph{motivic Hall modules},
and their corresponding Lie algebras and \emph{twisted modules}.
These structures will be closely related to
motivic enumerative invariants
that we will study in \cref{sect-invariants}.

\subsection{Involutive algebraic structures}

\paragraph{}

To fix notation and terminology,
we introduce several types of algebraic structures
that arise in the enumerative geometry in self-dual categories.

Throughout this section,
we assume that $\bbk$ is a field with $\operatorname{char} \bbk \neq 2$.

\paragraph{Involutive algebras.}
\label{para-inv-alg}

Let $A$ be an associative $\bbk$-algebra.
An \emph{involution} of $A$ is a $\bbk$-linear map
\[
    (-)^\vee \colon A \longsimto A \ ,
\]
such that $(-)^{\vee\vee} = \id_A$, and $(x y)^\vee = y^\vee x^\vee$
for all $x, y \in A$.
An algebra $A$ equipped with an involution is called
an \emph{involutive algebra} over $\bbk$.

In this case, define the subgroup of
\emph{isometric elements} in $A$ to be
\begin{equation}
    A^\iso = \{ x \in A \mid x x^\vee = 1 \} \ .
\end{equation}
When $A$ is finite-dimensional,
$A^\iso$ is a linear algebraic group over $\bbk$.

\paragraph{Involutive Lie algebras.}
\label{para-inv-lie-alg}

Let $L$ be a Lie algebra over $\bbk$.
An \emph{involution} of $L$ is a $\bbk$-linear map
\[
    (-)^\vee \colon L \longsimto L \ ,
\]
such that $(-)^{\vee\vee} = \id_L$, and $[x, y]^\vee = -[x^\vee, y^\vee]$
for all $x, y \in L$.
A Lie algebra $L$ equipped with an involution is called
an \emph{involutive Lie algebra} over $\bbk$.

In this case, define the subspace of
\emph{isometric elements} in $L$ to be
\begin{equation}
    L^+ = \{ x \in L \mid x^\vee = -x \} \ ,
\end{equation}
which is a Lie subalgebra of $L$.

\paragraph{Twisted modules.}
\label{para-tw-mod}

Let $L$ be an involutive Lie algebra over $\bbk$.
A \emph{twisted module} for $L$ is a $\bbk$-vector space $M$,
together with a bilinear map $\heart \colon L \otimes M \to M$,
such that for any $x, y \in L$ and $m \in M$, we have
\begin{align}
    x^\vee \heart m & = -x \heart m \ , \\
    \label{eq-tw-mod-jacobi}
    x \heart (y \heart m) - y \heart (x \heart m) & =
    [x, y] \heart m - [x^\vee, y] \heart m \ .
\end{align}
Here, the module structure is \emph{twisted} by the involution,
in the sense that we have an extra term
on the right-hand side of \cref{eq-tw-mod-jacobi},
compared to the usual Jacobi identity.

Equivalently, a twisted module for $L$ is just
a module $M$ for the Lie algebra $L^+$.
Namely, 
given a twisted module as above, for $x \in L^+$ and $m \in M$,
defining
\begin{equation}
    x \cdot m = \frac{1}{2} \, x \heart m
\end{equation}
equips $M$ with the structure of a Lie algebra module for $L^+$.
On the other hand, given an $L^+$-module $M$,
for any $x \in L$, defining
\begin{equation}
    x \heart m = (x - x^\vee) \cdot m
\end{equation}
recovers the twisted module structure.

\subsection{Motivic Hall algebras}

\paragraph{}

We summarize the construction of \emph{motivic Hall algebras}
on the space of stack functions, 
due to Joyce~\cite{Joyce2007II}.
See \cref{sect-motive} for an introduction to stack functions.

Throughout this section,
we assume that $\calA$ is a $\bbK$-linear exact category,
with a moduli stack $\calM$, satisfying
\tagref{Mod} and \tagref{Fin1}.

\begin{definition}
    \label{def-hall-alg}
    In the situation of \tagref{Mod} and \tagref{Fin1},
    define an operation
    \begin{equation}
        {*} =
        (\pi^\pn{2})_! \circ
        (\pi_1 \times \pi_2)^* \colon
        \SF ( \calM ) \otimes
        \SF ( \calM ) \longrightarrow
        \SF ( \calM ) \ ,
    \end{equation}
    where the composition is through $\SF (\calM^\pn{2})$.
    This is well-defined by 
    \cref{thm-sf-func}, \cref{lem-filt-rep-ft}, and~\tagref{Fin1}.
    
    Let $\delta_0 \in \SF ( \calM )$ be the characteristic function
    of the point $0 \in \calM (\bbK)$,
    represented by the morphism $0 \colon \Spec \bbK \hookrightarrow \calM$.

    This defines the \emph{motivic Hall algebra}
    $(\SF ( \calM ), *, \delta_0)$,
    which is an associative $\bbQ$-algebra, as stated below.
\end{definition}

\begin{theorem}
    \label{thm-hall-alg}
    In the situation of \tagref{Mod} and \tagref{Fin1},
    for any $f, g, h \in \SF ( \calM ),$ we have
    \begin{gather}
        \delta_0 * f = f = f * \delta_0 \ , \\
        (f * g) * h = f * (g * h) \ .
    \end{gather}
    Therefore, the operation $*$ defines
    an associative algebra structure on $\SF ( \calM )$.

    Moreover, for any $f_1, \dotsc, f_n \in \SF (\calM),$ we have
    \begin{equation}
        f_1 * \cdots * f_n =
        (\pi^\pn{n})_! \circ (\pi_1 \times \cdots \times \pi_n)^*
        (f_1 \boxtimes \cdots \boxtimes f_n) \ ,
    \end{equation}
    where $f_1 \otimes \cdots \otimes f_n \in \SF (\calM)^{\otimes n}
    \simeq \SF (\calM^n),$
    and the composition is through
    $\SF (\calM^\pn{n}).$
\end{theorem}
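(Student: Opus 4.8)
The proof is the standard one, following Joyce~\cite{Joyce2007II}, and everything is formal once we have the functoriality package for stack functions from \cref{thm-sf-func}: pullback $(-)^*$, pushforward $(-)_!$ along representable morphisms, base change along Cartesian squares, and the projection formula $g_! (g^* a \cdot b) = a \cdot g_! b$; together with the fibre-product descriptions of the stacks $\calM^\pn{n}$ recorded in the verification of \tagref{Mod} for filtrations. Write $m_n = (\pi^\pn{n})_! \circ (\pi_1 \times \cdots \times \pi_n)^* \colon \SF(\calM)^{\otimes n} \to \SF(\calM)$, through $\SF(\calM^\pn{n})$, so that $m_2 = {*}$; each $m_n$ is well-defined, exactly as in \cref{def-hall-alg}, since $\pi_1 \times \cdots \times \pi_n$ is of finite type on the (finite-type) supports involved by \cref{lem-filt-rep-ft}~\cref{itm-filt-ft} and \tagref{Fin1}, while $\pi^\pn{n}$ is representable by \cref{lem-filt-rep-ft}~\cref{itm-filt-rep}. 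For the unit axioms, the plan is a short base-change computation: the preimage of $\{0\} \times \calM$ under $\pi_1 \times \pi_2 \colon \calM^\pn{2} \to \calM \times \calM$ is isomorphic to $\calM$ via $\pi^\pn{2}$ — a filtration $0 \hookrightarrow E_1 \hookrightarrow E_2$ with $E_1$ the zero object being just $E_2$ — and $\pi_2$ restricts to the same isomorphism. Since $\delta_0$ is the characteristic function of $\{0\} \subset \calM$, base change then gives $\delta_0 * f = (\pi^\pn{2})_! (\pi_1 \times \pi_2)^* (\delta_0 \boxtimes f) = f$, and symmetrically $f * \delta_0 = f$.

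\textbf{Associativity and the $n$-fold formula.} The plan is to prove $(f * g) * h = m_3(f, g, h) = f * (g * h)$. For the first equality, expand $(f * g) * h = (\pi^\pn{2})_! \bigl( \pi_1^* (f * g) \cdot \pi_2^* h \bigr)$ and substitute $f * g = (\pi^\pn{2})_! (\pi_1 \times \pi_2)^* (f \boxtimes g)$. Base change applies along the Cartesian square
\[ \begin{tikzcd}[column sep=3em]
    \calM^\pn{3} \ar[r, "\pi^{(0,2,3)}"] \ar[d, "\pi^{(0,1,2)}"'] &
    \calM^\pn{2} \ar[d, "\pi_1"] \\
    \calM^\pn{2} \ar[r, "\pi^\pn{2}"'] & \calM \rlap{ ,}
\end{tikzcd} \]
which is the $n = 3$ instance of $\calM^\pn{n} \simeq \calM^\pn{2} \times_{\pi^\pn{2},\, \calM,\, \pi_1} \cdots \times \calM^\pn{2}$; here the top morphism, being a base change of the representable $\pi^\pn{2}$, is again representable, so the pushforward along it is legitimate. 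One application of the projection formula absorbs $\pi_2^* h$, and identifying the composite morphisms $\calM^\pn{3} \to \calM$ that appear with $\pi_1,\pi_2,\pi_3$ and $\pi^\pn{3}$ collapses the whole expression to $m_3(f,g,h)$. The identity $f * (g * h) = m_3(f,g,h)$ is the same manipulation run through the Cartesian square whose corner morphisms $\calM^\pn{3} \to \calM^\pn{2}$ are $\pi^{(0,1,3)}$ and $\pi^{(1,2,3)}$, lying over $\pi_2, \pi^\pn{2} \colon \calM^\pn{2} \to \calM$, the Cartesian-ness being seen by gluing a two-step filtration $0 \subset E_1 \subset E_3$ to a two-step filtration of the quotient $E_3/E_1$. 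Associativity follows, since both bracketings equal $m_3$. Finally, $f_1 * \cdots * f_n = m_n(f_1, \dotsc, f_n)$ follows by induction on $n$: write $f_1 * \cdots * f_n = (f_1 * \cdots * f_{n-1}) * f_n$, insert $m_{n-1}$ by the inductive hypothesis, and perform one base-change-and-projection-formula step along $\calM^\pn{n} \simeq \calM^\pn{n-1} \times_{\pi^\pn{n-1},\, \calM,\, \pi_1} \calM^\pn{2}$ exactly as above.

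\textbf{Main obstacle.} The computations are routine once set up; the real work is the bookkeeping. One must choose the correct Cartesian squares among the $\calM^\pn{n}$ and verify they are Cartesian, and then track, after each base change and each use of the projection formula, which composite projection $\calM^\pn{n} \to \calM$ a given term represents, so that the separate pushforwards recombine into a single $(\pi^\pn{n})_!$ and the separate pullbacks into a single $(\pi_1 \times \cdots \times \pi_n)^*$. The one structural point to watch throughout is that $(-)_!$ be legitimate at every intermediate stage: all pushforwards occurring are along morphisms that are representable by \cref{lem-filt-rep-ft}~\cref{itm-filt-rep} (and base changes thereof), and the functions being pushed forward always have finite-type support by \tagref{Fin1} and \cref{lem-filt-rep-ft}~\cref{itm-filt-ft} — which is exactly the point already verified in \cref{def-hall-alg}.
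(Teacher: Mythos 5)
Your proof is correct and matches the paper's approach: the paper's proof of this theorem is simply a citation of \cite[Theorem~5.2]{Joyce2007II}, and what you have written out is precisely the base-change-and-projection-formula argument underlying that result, transported to the exact-category setting with the representability and finite-type inputs correctly sourced from \cref{lem-filt-rep-ft} and \tagref{Fin1}.
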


\begin{proof}
    This is essentially
    \cite[Theorem~5.2]{Joyce2007II}.
\end{proof}

\begin{remark}
    \label{rem-hall-alg-lsf}
    As in \cite[\S5.4]{Joyce2007II},
    if \tagref{Fin2} is also satisfied,
    then it is possible to define the motivic Hall algebra structure on the space
    $\dot{\LSF} (\calM) \subset \LSF (\calM)$
    of local stack functions supported on a finite union of the substacks $\calM_\alpha$\,.
\end{remark}

\paragraph{}
\label[theorem]{thm-hall-alg-inv}

Now, we also assume that $\calA$ has a self-dual structure,
with a self-dual moduli stack $\calM$ as in \tagref{SdMod}.
We show that the motivic Hall algebra $\SF ( \calM )$
is an involutive algebra in this case,
in the sense of \cref{para-inv-alg}.

\begin{theorem*}
    In the situation of \tagref{SdMod} and \tagref{Fin1},
    define a map
    \begin{equation}
        \label{eq-def-hall-alg-inv}
        (-)^\vee = D^* \colon \SF (\calM) \longsimto \SF (\calM) \ ,
    \end{equation}
    where $D \colon \calM \simto \calM$ is the self-dual structure.
    Then, for any $f, g \in \SF ( \calM )$, we have
    \begin{equation}
        (f * g)^\vee = g^\vee * f^\vee \ ,
    \end{equation}
    and $(-)^\vee$ defines an involution of
    the motivic Hall algebra $\SF ( \calM )$.
\end{theorem*}

\begin{proof}
    There is a commutative diagram
    \[ \begin{tikzcd}
        \calM \times \calM
        \ar[d, "D \times D"']
        & \calM^\pn{2} \ar[l, "\pi_2 \times \pi_1"']
        \ar[r, "\pi^\pn{2}"] \ar[d, "D"']
        & \calM \ar[d, "D"]
        \\ \calM \times \calM
        & \calM^\pn{2} \ar[l, "\pi_1 \times \pi_2"']
        \ar[r, "\pi^\pn{2}"]
        & \calM \rlap{ ,}
    \end{tikzcd} \]
    where the vertical morphisms are isomorphisms.
    Therefore, by \cref{thm-sf-func}~\cref{itm-thm-sf-func-bct},
    we have
    \begin{align*}
        (f * g)^\vee
        & = D^* \, (\pi^\pn{2})_! \, (\pi_1 \times \pi_2)^* \, (f \otimes g) \\
        & = (\pi^\pn{2})_! \, D^* \, (\pi_1 \times \pi_2)^* \, (f \otimes g) \\
        & = (\pi^\pn{2})_! \, (\pi_2 \times \pi_1)^* \, (D \times D)^* \, (f \otimes g) \\
        & = (\pi^\pn{2})_! \, (\pi_2 \times \pi_1)^* \, (f^\vee \otimes g^\vee) \\
        & = g^\vee * f^\vee.
    \end{align*}
\end{proof}

\paragraph{The Hall Lie algebra.}
\label{para-hall-lie-alg}

In particular,
in the situation of \tagref{Mod} and \tagref{Fin1},
the motivic Hall algebra $\SF ( \calM )$ is also a Lie algebra,
with the Lie bracket given by
\begin{equation}
    [f, g] = f * g - g * f
\end{equation}
for $f, g \in \SF ( \calM )$.

Moreover, in the situation of \tagref{SdMod} and \tagref{Fin1},
this Lie algebra is also an involutive Lie algebra,
with the involution given by \cref{eq-def-hall-alg-inv}.

\subsection{Motivic Hall modules}

\paragraph{}

Now, let $\calA$ be a self-dual $\bbK$-linear exact category,
with a self-dual moduli stack $\calM$ as in \tagref{SdMod},
so that we have a moduli stack $\calM^\sd$ of self-dual objects in $\calA$.
We construct on the space $\SF ( \calM^\sd )$
a structure of a module over the motivic Hall algebra $\SF ( \calM )$,
which we call the \emph{motivic Hall module}.

\begin{definition}
    \label{def-diamond}
    In the situation of \tagref{SdMod} and \tagref{Fin1},
    define an operation
    \begin{equation}
        {\diamond} =
        (\pi^{\pn{3}, \> \sd})_! \circ
        (\pi_1 \times \pi^\sd_2)^* \colon
        \SF ( \calM ) \otimes
        \SF ( \calM^\sd ) \longrightarrow
        \SF ( \calM^\sd ) \ ,
    \end{equation}
    where the composition is through $\SF (\calM^{\pn{3}, \> \sd})$.
    This is well-defined by
    \cref{thm-sf-func}, \cref{lem-filt-sd-rep-ft}, and~\tagref{Fin1}.

    This defines the \emph{motivic Hall module}
    structure on $\SF ( \calM^\sd )$, as stated below.
\end{definition}

\begin{theorem}
    \label{thm-hall-module}
    In the situation of \tagref{SdMod} and \tagref{Fin1},
    for any $f, g \in \SF ( \calM )$
    and $h \in \SF ( \calM^\sd )$, we have
    \begin{align}
        \label{eq-hall-module-1}
        \delta_0 \diamond h & = h \ , \\
        \label{eq-hall-module-2}
        (f * g) \diamond h & =
        f \diamond (g \diamond h) \ .
    \end{align}
    Therefore, the operation $\diamond$
    equips $\SF ( \calM^\sd )$ with the structure of a module
    over the motivic Hall algebra $\SF ( \calM )$.

    Moreover, for any $f_1, \dotsc, f_n \in \SF (\calM)$
    and $h \in \SF (\calM^\sd),$ writing
    \[
        f_1 \diamond \cdots \diamond f_n \diamond h =
        f_1 \diamond ( { \cdots \diamond (f_{n-1} \diamond (f_n \diamond h) ) \cdots } ) \ ,
    \]
    we have
    \begin{equation}
        f_1 \diamond \cdots \diamond f_n \diamond h =
        (\pi^{\pn{2n+1}, \> \sd})_! \circ
        (\pi_1 \times \cdots \times \pi_n \times \pi^\sd_{n+1})^*
        (f_1 \boxtimes \cdots \boxtimes f_n \boxtimes h) \ ,
    \end{equation}
    where $f_1 \boxtimes \cdots \boxtimes f_n \boxtimes h \in \SF (\calM^n \times \calM^\sd),$
    and the composition is through
    $\SF (\calM^{\pn{2n+1}, \> \sd}).$
\end{theorem}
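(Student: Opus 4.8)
The plan is to mirror the proof of the algebra case (\cref{thm-hall-alg}, via \cite[Theorem~5.2]{Joyce2007II}), adapted to the module structure. The two identities in \cref{eq-hall-module-1,eq-hall-module-2} are proved separately, and the final ``iterated'' formula follows by induction from \cref{eq-hall-module-2} together with an identification of the relevant moduli stacks of self-dual filtrations.

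For \cref{eq-hall-module-1}, the key observation is that the stack of self-dual $3$-step filtrations with first quotient $0$, i.e.\ the fibre of $\pi_1 \colon \calM^{\pn{3}, \> \sd} \to \calM$ over $0 \in \calM(\bbK)$, is canonically identified with $\calM^\sd$ via $\pi_2^\sd$, since a self-dual filtration $0 \hookrightarrow 0 \hookrightarrow E \hookrightarrow E$ is the same datum as the self-dual object $(E,\phi)$. Pulling back $\delta_0 \boxtimes h$ and pushing forward then returns $h$; one needs \cref{thm-sf-func} for compatibility of $(-)_!$ and $(-)^*$ with this base change. For \cref{eq-hall-module-2}, I would use the standard ``$3$-step filtration decomposition'' argument: consider the stack $\calM^{\pn{5}, \> \sd}$ of self-dual $5$-step filtrations, with two natural projections to (a) $\calM^\pn{2} \times_{\calM} \calM^{\pn{3}, \> \sd}$, corresponding to grouping the outer two quotients of the $5$-step filtration together with its middle $3$-step part (this computes $f \diamond (g \diamond h)$), and (b) $\calM^{\pn{3}, \> \sd}$ with its first quotient being an extension, i.e.\ using the projection $\calM^\pn{2}_{\alpha_1,\alpha_2} \to \calM_{\alpha_1 + \alpha_2}$ at the level of the first step (this computes $(f*g)\diamond h$). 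Both sides are then identified with a single pushforward--pullback from $\calM^{\pn{5}, \> \sd}$ by a diagram chase using functoriality of $(-)_!$ and $(-)^*$ (Beck--Chevalley / base change, \cref{thm-sf-func}), once one checks the relevant square of projection morphisms is Cartesian. The finite-type and representability hypotheses needed to make all pushforwards legitimate come from \cref{lem-filt-sd-rep-ft} and \tagref{Fin1}.

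For the iterated formula, induct on $n$. The base case $n = 0$ is \cref{eq-hall-module-1}, and $n=1$ is the definition of $\diamond$. For the inductive step, use \cref{eq-hall-module-2} in the form $f_1 \diamond (f_2 \diamond \cdots \diamond f_n \diamond h)$, apply the inductive hypothesis to the inner $(n-1)$-fold expression, and then identify the composite of the two pushforward--pullback operations with a single one through $\calM^{\pn{2n+1}, \> \sd}$. The geometric input is the decomposition of $\calM^{\pn{2n+1}, \> \sd}$ analogous to the iterated fibre-product description of $\calM^\pn{n}$ in \cref{para-mod-filt-proj}: a self-dual $(2n+1)$-step filtration is equivalent to a ``head'' one-step extension glued onto a self-dual $(2n-1)$-step filtration, compatibly with the $\bbZ_2$-action, which gives the Cartesian square
\[
    \calM^{\pn{2n+1}, \> \sd} \simeq
    \calM^\pn{2} \underset{\pi^\pn{2}, \, \calM, \, \pi_1}{\times}
    \calM^{\pn{2n-1}, \> \sd} \ ,
\]
and then repeated base change collapses the iterated operation to the stated single pushforward--pullback along $\pi^{\pn{2n+1}, \> \sd}$ after pulling back along $\pi_1 \times \cdots \times \pi_n \times \pi^\sd_{n+1}$.

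I expect the main obstacle to be \cref{eq-hall-module-2}: verifying that the relevant square of projection morphisms between stacks of self-dual filtrations is genuinely Cartesian, and that the $\bbZ_2$-fixed-locus operation $(-)^{\bbZ_2}$ interacts correctly with the fibre products and with $(-)_!, (-)^*$. The subtlety is that taking $\bbZ_2$-fixed loci does not in general commute with arbitrary fibre products, so one must be careful that the particular squares arising here are of the special form where it does — essentially because the $\bbZ_2$-action on the ``outer'' part of the filtration is free relative to the ``middle'' self-dual part, so the fixed locus only acts on one factor. Everything else (the bookkeeping of $C^\circ(\calA)$- and $C^\sd(\calA)$-gradings, the finite-type bounds) is routine given the earlier lemmas.
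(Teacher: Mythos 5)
Your plan for \cref{eq-hall-module-1} and \cref{eq-hall-module-2} matches the paper's proof essentially exactly: the fibre of $\pi_1 \colon \calM^{\pn{3},\sd} \to \calM$ over $0$ is identified with $\calM^\sd$ via $\pi_2^\sd$, and the module associativity is established via a $3\times 3$ diagram with $\calM^{\pn{5},\sd}$ in the centre, with two Cartesian squares supplying the base-change needed to compare the two composite pushforward--pullbacks. Your remark that the main care required is checking that taking $\bbZ_2$-fixed loci is compatible with the relevant fibre products is well-taken, and is indeed implicit in the paper (the corner symbols in the displayed diagram just assert the Cartesianness of those squares without further comment).

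However, the Cartesian square you propose for the inductive step of the iterated formula is not the one you actually need. You have $f_1 \diamond(\cdots)$ where the inner expression is, by inductive hypothesis, a pushforward along $\pi^{\pn{2n-1},\sd}\colon \calM^{\pn{2n-1},\sd} \to \calM^\sd$, and you want to compute $(\pi_2^\sd)^*$ of it via base change. The relevant Cartesian square therefore glues over $\calM^\sd$, not over $\calM$:
\begin{equation*}
    \calM^{\pn{2n+1},\sd} \simeq
    \calM^{\pn{3},\sd}
    \underset{\pi_2^\sd,\,\calM^\sd,\,\pi^{\pn{2n-1},\sd}}{\times}
    \calM^{\pn{2n-1},\sd},
\end{equation*}
where the left factor records the outermost isotropic step and the right factor records the self-dual $(2n-1)$-step filtration of its middle self-dual quotient. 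Your square
\begin{equation*}
    \calM^{\pn{2n+1},\sd} \simeq
    \calM^\pn{2}
    \underset{\pi^\pn{2},\,\calM,\,\pi_1}{\times}
    \calM^{\pn{2n-1},\sd}
\end{equation*}
does hold as an identification of stacks (it corresponds to refining the first and last steps simultaneously, using $\bbZ_2$-equivariance of the projection $\pi^f$ for $f\colon [2n-1]\to[2n+1]$ omitting $1$ and $2n$), but it is glued over $\calM$ via $\pi_1$ rather than over $\calM^\sd$ via $\pi^{\pn{2n-1},\sd}$, so it does not give the base-change square against $\pi_2^\sd$ that your induction requires. Replacing it with the square over $\calM^\sd$ fixes the argument. (The paper states the iterated formula without proof, so your decision to supply one is fine; it just needs the right square.)
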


\begin{proof}
    Equation \cref{eq-hall-module-1},
    follows from the fact that $\delta_0$
    is supported on $\{ 0 \} \subset \calM$,
    and that $\pi_1 \times \pi_2^\sd$
    restricts to an isomorphism
    $\pi_1^{-1} (\{ 0 \}) \simto \{ 0 \} \times \calM^\sd$.
    
    To prove \cref{eq-hall-module-2}, consider the diagram below,
    where subscripts $\alpha, \beta, \theta$ are added to help with understanding,
    indicating classes in $C^\circ (\calA)$ and $C^\sd (\calA)$,
    but one could also write down the diagram without them.
    \begin{equation} \begin{tikzcd}[background color=none, column sep=4.5em]
        \calM_\alpha \times \calM_\beta \times \calM^\sd_\theta &
        \calM^\pn{2}_{\alpha,\beta} \times \calM^\sd_\theta
        \ar[l, "(\pi_1 \times \pi_2) \times \id"', "\text{rep.}"]
        \ar[r, "\pi^\pn{2} \times \id", "\text{f.t.}"'] &
        \calM_{\alpha+\beta} \times \calM^\sd_\theta \\
        \calM_\alpha \times \calM^{\pn{3}, \> \sd}_{\beta, \theta}
        \ar[d, "\mathllap{\id \times \pi^{\pn{3}, \> \sd}} \enspace \mathrlap{\text{f.t.}}" description] 
        \ar[u, "\mathllap{\id \times (\pi_1 \times \pi^\sd_2)} \enspace \mathrlap{\text{rep.}}" description] &
        \calM^{\pn{5}, \> \sd}_{\alpha,\beta,\theta} 
        \ar[u, "\text{rep.}"] \ar[d, "\text{f.t.}"]
        \ar[l, "\text{rep.}"'] \ar[r, "\text{f.t.}"']
        \ar[ur, phantom, pos=.2, "\llcorner"]
        \ar[dl, phantom, pos=.2, "\urcorner"] &
        \calM^{\pn{3}, \> \sd}_{\alpha+\beta,\theta}
        \ar[d, "\mathllap{\text{f.t.}} \enspace \mathrlap{\pi^{\pn{3}, \> \sd}}" description]
        \ar[u, "\mathllap{\text{rep.}} \enspace \mathrlap{\pi_1 \times \pi^\sd_2}" description] \\ 
        \calM_\alpha \times \calM^\sd_{\bar{\beta}+\theta} &
        \calM^{\pn{3}, \> \sd}_{\alpha, \bar{\beta}+\theta}
        \ar[l, "\pi_1 \times \pi^\sd_2"', "\text{rep.}"]
        \ar[r, "\pi^{\pn{3}, \> \sd}", "\text{f.t.}"'] &
        \calM^\sd_{\bar{\alpha} + \bar{\beta} + \theta}
    \end{tikzcd} \end{equation}
    In the diagram, `rep.'\ means `representable' and `f.t.'\ means `finite type'.
    These properties follow from
    \cref{lem-filt-rep-ft,lem-filt-sd-rep-ft},
    applied to the outer eight morphisms;
    the other four morphisms are pullbacks of them.
    By \cref{thm-sf-func}, this gives rise to a commutative diagram
    \begin{equation} \begin{tikzcd}[column sep=1.5em, row sep={4em,between origins}]
        \begin{matrix}
            \SF ( \calM_\alpha ) \otimes \SF ( \calM_\beta ) \\
            {} \otimes \SF ( \calM^\sd_\theta )
        \end{matrix} \ar[r] \ar[d] &
        \SF ( \calM^\pn{2}_{\alpha,\beta} ) \otimes \SF ( \calM^\sd_\theta ) \ar[r] \ar[d] &
        \SF ( \calM_{\alpha+\beta} ) \otimes \SF ( \calM^\sd_\theta ) \ar[d] \\
        \SF ( \calM_\alpha ) \otimes \SF ( \calM^{\pn{3}, \> \sd}_{\beta, \theta} ) \ar[r] \ar[d] &
        \SF ( \calM^{\pn{5}, \> \sd}_{\alpha,\beta,\theta} ) \ar[r] \ar[d] &
        \SF ( \calM^{\pn{3}, \> \sd}_{\alpha+\beta,\theta} ) \ar[d] \\
        \SF ( \calM_\alpha ) \otimes \SF ( \calM^\sd_{\bar{\beta}+\theta} ) \ar[r] &
        \SF ( \calM^{\pn{3}, \> \sd}_{\alpha, \bar{\beta}+\theta} ) \ar[r] &
        \SF ( \calM^\sd_{\bar{\alpha} + \bar{\beta} + \theta} ) \rlap{ ,}
    \end{tikzcd} \end{equation}
    where the maps are pushforwards and pullbacks,
    depending on the direction of the arrows.
    The commutativity of the outer square is precisely
    the content of \cref{eq-hall-module-2}.
\end{proof}

\begin{remark}
    \label{rem-hall-mod-lsf}
    As in \cref{rem-hall-alg-lsf},
    if \tagref{Fin2} is also satisfied,
    then it is also possible to define
    the motivic Hall module structure on the space
    $\dot{\LSF} (\calM^\sd) \subset \LSF (\calM^\sd)$
    of local stack functions supported on a finite union of the substacks
    $\calM^\sd_{\smash{\theta}}$,
    which is a module over the algebra in \cref{rem-hall-alg-lsf}.
\end{remark}

\paragraph{The Hall twisted module.}
\label{para-hall-tw-mod}

In the situation of \tagref{SdMod} and \tagref{Fin1},
the motivic Hall module $\SF ( \calM^\sd )$
can also be equipped with the structure of a twisted module,
in the sense of \cref{para-tw-mod},
for the Hall Lie algebra $\SF ( \calM )$
defined in \cref{para-hall-lie-alg}, given by
\begin{equation}
    \label{eq-hall-tw-mod}
    f \heart h = f \diamond h - f^\vee \diamond h
\end{equation}
for $f \in \SF ( \calM )$ and $h \in \SF ( \calM^\sd )$.

Note that although the operation $\diamond$
already makes $\SF ( \calM^\sd )$ into an (untwisted) module
for the Hall Lie algebra $\SF ( \calM )$,
the twisted module structure is more analogous
to the Hall Lie bracket,
as one can think of \cref{eq-hall-tw-mod}
as a commutator of~$f$ and~$h$.
In our applications to enumerative invariants,
we will often find that properties enjoyed by the Hall Lie bracket,
but not the associative product,
will have analogues for the twisted module structure,
but not the usual module structure.

\subsection{Motivic integration}
\label{sect-motivic-int-1}

\paragraph{}
\label{para-sit-mot-int}

We study the \emph{motivic integration} of stack functions,
in the sense of \cref{para-mot-int},
and in particular, its relation to the motivic Hall algebra and module.
This will be helpful in studying motivic enumerative invariants,
which will be defined as motivic integrals of certain stack functions.

Throughout, we assume that 
$\calA$ is a self-dual $\bbK$-linear exact category,
with a self-dual moduli stack $\+{\calM}$ as in \tagref{SdMod}.
We also assume the condition \tagref{SdExt},
restricting the homological dimension of $\calA$.

Let $\Mhat_\bbK$ be the completed ring of motives
as in \cref{def-motive-varieties}.

\paragraph{The algebra $\Lambda (\calA)$.}
\label{para-lambda-a}

In the situation of \cref{para-sit-mot-int},
following a similar construction in
\cite[Definition~6.3]{Joyce2007II},
we define an associative $\Mhat_\bbK$-algebra
$\Lambda (\calA)$ as follows.

As an $\Mhat_\bbK$-module, set
\begin{equation}
    \Lambda (\calA) =
    \bigoplus_{\alpha \in C^\circ (\calA)} \Mhat_\bbK \cdot \lambda_\alpha \ ,
\end{equation}
where $\lambda_\alpha$ is a basis element.
Define the multiplication
$* \colon \Lambda (\calA) \otimes_{\smash{\Mhat_\bbK}} \Lambda (\calA) \to \Lambda (\calA)$ by 
\begin{equation}
    \label{eq-lambda-mult}
    \lambda_{\alpha_1} * \lambda_{\alpha_2} =
    \bbL^{-\chi (\alpha_1, \alpha_2)} \cdot \lambda_{\alpha_1 + \alpha_2}
\end{equation}
for all $\alpha_1, \alpha_2 \in C^\circ (\calA)$,
where $\chi (\alpha_1, \alpha_2)$ is the Euler form defined in \cref{para-euler}.
The associativity of this product follows from
the bilinearity of $\chi$.
The element $\lambda_0 \in \Lambda (\calA)$ is the multiplication identity.

The self-dual structure of $\calA$ equips
the algebra $\Lambda (\calA)$ with an involution,
in the sense of \cref{para-inv-alg},
given by $\lambda_\alpha^\vee = \lambda_{\smash{\alpha^\vee}}$ for all $\alpha \in C^\circ (\calA)$.

Note that we have an isomorphism
\begin{equation}
    \label{eq-lambda-iso-sf-disc}
    \Lambda (\calA) \simeq \SFhat (C^\circ (\calA))
\end{equation}
of $\Mhat_\bbK$-modules,
where $C^\circ (\calA)$ is regarded as a discrete space.
It is sometimes helpful to think of $\Lambda (\calA)$ in this way,
as some kind of discretized Hall algebra.

\paragraph{The module $\Lambda^\sd (\calA)$.}
\label{para-lambda-sd-a}

We define a $\Lambda (\calA)$-module $\Lambda^\sd (\calA)$ as follows.
As an $\Mhat_\bbK$-module, set
\begin{equation}
    \Lambda^\sd (\calA) =
    \bigoplus_{\theta \in C^\sd (\calA) \vphantom{^0}}
    \Mhat_\bbK \cdot \lambda^\sd_{\smash{\theta}} \, .
\end{equation}
Define an operation
$\diamond \colon \Lambda (\calA) \otimes_{\smash{\Mhat_\bbK}} \Lambda^\sd (\calA) \to \Lambda^\sd (\calA)$ by
\begin{equation}
    \label{eq-lambda-sd-mult}
    \lambda_\alpha \diamond \lambda^\sd_{\smash{\theta}} =
    \bbL^{-\chi^\sd (\alpha, \theta)} \cdot \lambda^\sd_{\smash{\bar{\alpha} + \theta}}
\end{equation}
for all $\alpha \in C^\circ (\calA)$ and $\theta \in C^\sd (\calA)$,
where $\chi^\sd (\alpha, \theta)$ is the self-dual Euler form
defined in~\cref{para-euler-sd}.
The associativity follows from the relation~\cref{eq-chi-sd-assoc}.

Similarly, we also have an isomorphism
\begin{equation}
    \label{eq-lambda-sd-iso-sf-disc}
    \Lambda^\sd (\calA) \simeq \SFhat (C^\sd (\calA))
\end{equation}
of $\Mhat_\bbK$-modules,
where $C^\sd (\calA)$ is regarded as a discrete space.

\paragraph{Explicit coefficients.}

We can describe the multiplication in
$\Lambda (\calA)$ and its action on $\Lambda^\sd (\calA)$ more explicitly.
For $\alpha_1, \dotsc, \alpha_n \in C^\circ (\calA)$ and $\rho \in C^\sd (\calA)$, write
\begin{align}
    \label{eq-def-multi-chi}
    \chi (\alpha_1, \dotsc, \alpha_n) & =
    \sum_{1 \leq i < j \leq n} \chi (\alpha_i, \alpha_j) \ , \\
    \label{eq-def-multi-chi-sd}
    \chi^\sd (\alpha_1, \dotsc, \alpha_n, \rho) & =
    \sum_{1 \leq i < j \leq n} {}
    \bigl( \chi (\alpha_i, \alpha_j) + \chi (\alpha_i, \alpha_j^\vee) \bigr) +
    \sum_{i=1}^n \chi^\sd (\alpha_i, \rho) \ .
\end{align}
Then we have
\begin{align}
    \label{eq-lem-multi-chi}
    \lambda_{\alpha_1} * \cdots * \lambda_{\alpha_n}
    & = \bbL^{-\chi (\alpha_1, \dotsc, \alpha_n)} \cdot
    \lambda_{\alpha_1 + \cdots + \alpha_n} \ , \\
    \label{eq-lem-multi-chi-sd}
    \lambda_{\alpha_1} \diamond \cdots \diamond \lambda_{\alpha_n} \diamond \lambda^\sd_\rho
    & = \bbL^{-\chi^\sd (\alpha_1, \dotsc, \alpha_n, \rho)} \cdot
    \lambda^\sd_{\bar{\alpha}_1 + \cdots + \bar{\alpha}_n + \rho} \ ,
\end{align}
which can be verified using the bilinearity of~$\chi$
and the relations~\cref{eq-def-chi-sd}--\cref{eq-chi-sd-assoc}
of $\chi^\sd$.

\paragraph{The integration maps.}
\label{para-int-psi}

We now define the motivic integration maps
\begin{alignat*}{2}
    \Psi \colon && \SFhat (\calM) 
    & \longrightarrow \Lambda (\calA) \ , \\
    \Psi^\sd \colon && \SFhat (\calM^\sd)
    & \longrightarrow \Lambda^\sd (\calA)
\end{alignat*}
by setting
\begin{align}
    \Psi (f) & =
    \sum_{\alpha \in C^\circ (\calA)}
    \lambda_\alpha \cdot \int_{\calM_\alpha} f |_{\calM_\alpha} \ , \\
    \Psi^\sd (h) & =
    \sum_{\theta \in C^\sd (\calA) \vphantom{^0}}
    \lambda^\sd_{\smash{\theta}} \cdot \int_{\calM^\sd_\theta} h |_{\calM^\sd_\theta} 
\end{align}
for $f \in \SFhat (\calM)$ and $h \in \SFhat (\calM^\sd)$,
where the integrals are defined in \cref{para-mot-int}.

Using the isomorphisms
\cref{eq-lambda-iso-sf-disc,eq-lambda-sd-iso-sf-disc},
these maps can also be interpreted as pushing forward
along the natural maps
$\calM \to C^\circ (\calA)$ and $\calM^\sd \to C^\sd (\calA)$.

\begin{theorem}
    \label{thm-mot-int-homo}
    In the setting of \cref{para-sit-mot-int},
    let $f, g \in \SFhat (\calM)$
    and $h \in \SFhat (\calM^\sd)$.
    Then
    \begin{align}
        \label{eq-psi-compat-star}
        \Psi (f * g) & =
        \Psi (f) *
        \Psi (g) \ , \\
        \label{eq-psi-compat-diamond}
        \Psi^\sd (f \diamond h) & =
        \Psi (f) \diamond
        \Psi^\sd (h) \ .
    \end{align}
\end{theorem}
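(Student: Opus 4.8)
The plan is to reduce both identities to a single graded piece and then to unwind each side using the geometric descriptions of the Hall operations from \cref{def-hall-alg,def-diamond} together with the behaviour of motivic integration on $2$-vector bundles. Since $*$ on $\Lambda (\calA)$ and $\diamond$ on $\Lambda^\sd (\calA)$ are $\Mhat_\bbK$-bilinear, and since $\Psi$, $\Psi^\sd$ and the Hall operations $*$, $\diamond$ on stack functions all respect the decompositions of $\calM$ and $\calM^\sd$ into the substacks $\calM_\alpha$ and $\calM^\sd_\theta$, it suffices to treat the case where $f$ is supported on $\calM_{\alpha_1}$ and $g$ on $\calM_{\alpha_2}$ for \cref{eq-psi-compat-star}, where $f$ is supported on $\calM_\alpha$ for \cref{eq-psi-compat-diamond}, and where $h$ is supported on $\calM^\sd_\theta$. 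Writing $c_f = \int_{\calM_{\alpha_1}} f$, $c_g = \int_{\calM_{\alpha_2}} g$, $c_h = \int_{\calM^\sd_\theta} h$ (and $c_f = \int_{\calM_\alpha} f$ in the second case), and using the definitions \cref{eq-lambda-mult,eq-lambda-sd-mult} of the operations on $\Lambda (\calA)$ and $\Lambda^\sd (\calA)$ together with the fact that $f * g$ is supported on $\calM_{\alpha_1 + \alpha_2}$ and $f \diamond h$ on $\calM^\sd_{\bar{\alpha} + \theta}$, the two equations reduce to the identities
    \[
        \int_{\calM_{\alpha_1 + \alpha_2}} f * g = \bbL^{-\chi (\alpha_1, \alpha_2)} \cdot c_f \, c_g
        \qquad \text{and} \qquad
        \int_{\calM^\sd_{\bar{\alpha} + \theta}} f \diamond h = \bbL^{-\chi^\sd (\alpha, \theta)} \cdot c_f \, c_h
    \]
    in $\Mhat_\bbK$.

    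For the first identity, expand $f * g = (\pi^{\pn{2}})_! \, (\pi_1 \times \pi_2)^* (f \boxtimes g)$ on $\calM^{\pn{2}}_{\alpha_1, \alpha_2}$ as in \cref{def-hall-alg}. Motivic integration is compatible with $!$-pushforward, so $\int_{\calM_{\alpha_1 + \alpha_2}} (\pi^{\pn{2}})_! (-) = \int_{\calM^{\pn{2}}_{\alpha_1, \alpha_2}} (-)$, and the left-hand side equals $\int_{\calM^{\pn{2}}_{\alpha_1, \alpha_2}} (\pi_1 \times \pi_2)^* (f \boxtimes g)$. By \tagref{Ext}, the morphism $\pi_1 \times \pi_2 \colon \calM^{\pn{2}}_{\alpha_1, \alpha_2} \to \calM_{\alpha_1} \times \calM_{\alpha_2}$ is the extension bundle, a $2$-vector bundle of constant rank $-\chi (\alpha_1, \alpha_2)$. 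Integrating a pullback along a $2$-vector bundle of rank $r$ multiplies the integral by $\bbL^r$: the motive of a $2$-vector space of rank $r$ is $\bbL^r$, and $2$-vector bundles are Zariski-locally trivial, so this follows by stratifying the base and using the multiplicativity of $\int$ over products. Since $\int_{\calM_{\alpha_1} \times \calM_{\alpha_2}} (f \boxtimes g) = c_f \, c_g$ again by multiplicativity, the first identity follows.

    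The second identity is handled in the same way, with \cref{thm-sdext-strong} in place of \tagref{Ext}. Expanding $f \diamond h = (\pi^{\pn{3}, \> \sd})_! \, (\pi_1 \times \pi^\sd_2)^* (f \boxtimes h)$ as in \cref{def-diamond} and using compatibility of $\int$ with $!$-pushforward, the left-hand side becomes $\int_{\calM^{\pn{3}, \> \sd}_{\alpha, \theta}} (\pi_1 \times \pi^\sd_2)^* (f \boxtimes h)$. By \cref{thm-sdext-strong}, the morphism $\pi_1 \times \pi^\sd_2 \colon \calM^{\pn{3}, \> \sd}_{\alpha, \theta} \to \calM_\alpha \times \calM^\sd_\theta$ factors as an affine $2$-vector bundle of rank $-\chi^\sd (\alpha, 0)$ followed by a $2$-vector bundle of rank $-\chi (\alpha, j (\theta))$. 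Since motivic integration of a pullback along an affine $2$-vector bundle of rank $r$ also multiplies by $\bbL^r$ — an affine $2$-vector bundle being a torsor for a $2$-vector bundle, hence Zariski-locally trivial with fibres of motive $\bbL^r$ — applying this to the two factors multiplies the integral by $\bbL^{-\chi^\sd (\alpha, 0)} \cdot \bbL^{-\chi (\alpha, j (\theta))} = \bbL^{-\chi^\sd (\alpha, \theta)}$, where the last equality is \cref{eq-def-chi-sd}. Combined with $\int_{\calM_\alpha \times \calM^\sd_\theta} (f \boxtimes h) = c_f \, c_h$, this gives the second identity.

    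The only ingredient here that is not purely formal is the behaviour of motivic integration on $2$-vector bundles and affine $2$-vector bundles, namely that $\int_{\calE} \pi^* (-) = \bbL^{\rank \pi} \cdot \int_{\calX} (-)$ for such a morphism $\pi \colon \calE \to \calX$. This is a statement about the theory of stack functions and $2$-vector bundles developed in \cref{sect-motive,sect-two-vb}, and reduces, via a stratification of $\calX$ over which the bundle trivialises, to computing the motive of a single (affine) $2$-vector space; I expect the affine case to need slightly more care than the linear one, as one must know that torsors under $2$-vector bundles are Zariski-locally trivial. Everything else — reducing to graded pieces, the compatibility of $\int$ with $!$-pushforward and with products, and the bookkeeping of the Euler forms — is routine, and the completions in $\SFhat$ and $\Mhat_\bbK$ cause no difficulty since the whole argument proceeds one graded piece at a time.
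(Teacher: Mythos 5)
Your proposal is correct and takes essentially the same approach as the paper: reduce to graded pieces by support, unwind the Hall operations geometrically, and use \tagref{Ext} resp.\ \cref{thm-sdext-strong} to recognize $\pi_1 \times \pi_2$ resp.\ $\pi_1 \times \pi_2^\sd$ as an (affine) $2$-vector bundle of the right rank, then apply the multiplicativity of motives over such bundles. The one ingredient you flag as nontrivial — that integration along an affine $2$-vector bundle of rank $r$ multiplies by $\bbL^r$, including Zariski-local triviality of the torsor — is exactly what the paper isolates as \cref{thm-motive-2vb} (with \cref{lem-2vb-morphism-affine} supplying the local triviality, via $H^1_{\et}(U, \Ga^m) = 0$).
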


\begin{proof}
    Equation~\cref{eq-psi-compat-star}
    is essentially~\cite[Theorem~6.4]{Joyce2007II},
    but we provide a more direct proof here,
    as our setting is slightly different,
    and the same strategy will help us prove
    \cref{eq-psi-compat-diamond} as well.
    
    To prove~\cref{eq-psi-compat-star},
    without loss of generality, we may assume that
    $f$ and $g$ are supported in $\calM_{\alpha_1}$ and $\calM_{\alpha_2}$\,,
    respectively,
    for some $\alpha_1, \alpha_2 \in C^\circ (\calA)$.
    By \tagref{Ext}, the morphism
    $\pi_1 \times \pi_2 \colon
    \calM^\pn{2}_{\alpha_1, \alpha_2} \to \calM_{\alpha_1} \times \calM_{\alpha_2}$
    is a $2$-vector bundle of rank $-\chi (\alpha_1, \alpha_2)$.
    Therefore, by \cref{thm-motive-2vb}, we have
    \begin{align*}
        \int_{\calM_{\alpha_1 + \alpha_2}} f * g
        & = \int_{\calM^\pn{2}_{\alpha_1, \alpha_2}}
        (\pi_1 \times \pi_2)^* (f \boxtimes g) \\
        & = \bbL^{-\chi (\alpha_1, \alpha_2)} \cdot
        \int_{\calM_{\alpha_1} \times \calM_{\alpha_2}} ( f \boxtimes g ) \\
        & = \bbL^{-\chi (\alpha_1, \alpha_2)} \cdot
        \int_{\calM_{\alpha_1}} f \cdot \int_{\calM_{\alpha_2}} g \ ,
    \end{align*}
    which implies~\cref{eq-psi-compat-star}.
    
    The proof of~\cref{eq-psi-compat-diamond} is analogous,
    where we use \cref{thm-sdext-strong} in place of \tagref{Ext}.
\end{proof}

\subsection{Motivic integration, Lie algebras and twisted modules}
\label{sect-motivic-int-2}

\paragraph{The setting.}
\label{para-sit-mot-int-lie}

We continue to develop other versions of the structures
presented in \cref{sect-motivic-int-1},
involving Lie algebras and twisted modules,
as in \cref{para-tw-mod}.

These versions will be helpful in defining
numerical enumerative invariants in \cref{sect-num-inv} below.
The idea is that we wish to define numerical enumerative invariants
as Euler characteristics of certain motivic integrals.
For the Euler characteristic to be well-defined, that is, not infinite,
these integrals should live in the subspace $\Mhat_\bbK^\circ \subset \Mhat_\bbK$.
However, the space of stack functions with this property
is not closed under the Hall algebra product or the Hall module action,
but only under the Hall Lie bracket and the twisted module action.
The integration map in this setting is described in 
\cref{thm-mot-int-lie-homo} below.

Throughout,
we assume that we are in the situation of \cref{para-sit-mot-int},
with the conditions \tagref{SdMod} and \tagref{SdExt}.

We also define enlarged versions of the rings of motives,
\begin{equation}
    \prestar \Mhat_\bbK^\circ = \Mhat_\bbK^\circ [\bbL^{1/2}] \ , \qquad
    \prestar \Mhat_\bbK = \Mhat_\bbK [\bbL^{1/2}] \ ,
\end{equation}
and we write
\begin{equation}
    \prestar \Lambda (\calA) =
    \Lambda (\calA) \underset{\phantom{^0} \Mhat_\bbK}{\otimes} 
    \prestar \Mhat_\bbK \ , \qquad
    \prestar \Lambda^\sd (\calA) =
    \Lambda^\sd (\calA) \underset{\phantom{^0} \Mhat_\bbK}{\otimes} 
    \prestar \Mhat_\bbK \ .
\end{equation}

\paragraph{The Lie algebra version.}

We describe a Lie algebra version
of the structures discussed in \cref{sect-motivic-int-1}.

For each $\alpha \in C^\circ (\calA)$, define an element
$\tilde{\lambda}_\alpha \in \prestar \Lambda (\calA)$ by
\begin{equation}
    \label{eq-def-lambda-tilde}
    \tilde{\lambda}_\alpha =
    \frac{\bbL^{-\chi (\alpha, \alpha) / 2}}
    {\bbL^{1/2} - \bbL^{-1/2}} \cdot
    \lambda_\alpha \ .
\end{equation}
Also, for $\alpha_1, \alpha_2 \in C^\circ (\calA)$, write
\begin{equation}
    \label{eq-def-chi-bar}
    \bar{\chi} (\alpha_1, \alpha_2) =
    \chi (\alpha_2, \alpha_1) - \chi (\alpha_1, \alpha_2) \ .
\end{equation}
One can verify using
\cref{eq-lambda-mult,eq-def-lambda-tilde} that
\begin{equation}
    \label{eq-lie-lambda}
    [\tilde{\lambda}_{\alpha_1} \, , \, \tilde{\lambda}_{\alpha_2}] =
    \frac{\bbL^{\bar{\chi} (\alpha_1, \alpha_2)/2} - \bbL^{-\bar{\chi} (\alpha_1, \alpha_2)/2}}
    {\bbL^{1/2} - \bbL^{-1/2}} \cdot 
    \tilde{\lambda}_{\alpha_1 + \alpha_2} \ .
\end{equation}

Note that the coefficient in~\cref{eq-lie-lambda}
has no poles when $\bbL = 1$. As a consequence,
the $\prestar \Mhat_\bbK^\circ$-submodule
\[
    \prestar \Lambda^\circ (\calA) \subset \prestar \Lambda (\calA) \ ,
\]
spanned over $\prestar \Mhat_\bbK^\circ$ by the elements $\tilde{\lambda}_\alpha$
for $\alpha \in C^\circ (\calA)$,
is a Lie subalgebra,
although it need not be an associative subalgebra.

The Lie algebras $\prestar \Lambda (\calA)$ and $\prestar \Lambda^\circ (\calA)$
are equipped with involutions,
in the sense of \cref{para-inv-lie-alg},
given by $\lambda_\alpha \mapsto \lambda_{\alpha^\vee}$ and $\tilde{\lambda}_\alpha \mapsto \tilde{\lambda}_{\alpha^\vee}$,
respectively. These two involutions are compatible,
since $\chi (\alpha, \alpha) = \chi (\alpha^\vee, \alpha^\vee)$
by~\cref{eq-chi-invol}.

\paragraph{The twisted module.}

We also consider the operation
\begin{align*}
    \numberthis
    {\heart} \colon \Lambda (\calA) \times \Lambda^\sd (\calA)
    & \longrightarrow \Lambda^\sd (\calA), \\
    (x, m) & \longmapsto x \diamond m - x^\vee \diamond m \ ,
\end{align*}
exhibiting $\Lambda^\sd (\calA)$
as a twisted module over the involutive Lie algebra $\Lambda (\calA)$,
in the sense of \cref{para-tw-mod}.
This also equips $\prestar \Lambda^\sd (\calA)$
with a twisted module structure over $\prestar \Lambda (\calA)$.

For each $\theta \in C^\sd (\calA)$, define an element
$\tilde{\lambda}^\sd_\theta \in \prestar \Lambda^\sd (\calA)$ by
\begin{equation}
    \label{eq-def-lambda-tilde-sd}
    \tilde{\lambda}^\sd_\theta =
    \bbL^{-\chi^\sd (j (\theta), 0) / 2} \cdot 
    \lambda^\sd_\theta \ ,
\end{equation}
with $j$ as in~\cref{eq-j-csd}.
For $\alpha \in C^\circ (\calA)$ and $\theta \in C^\sd (\calA)$, write
\begin{equation}
    \label{eq-def-chi-bar-sd}
    \bar{\chi}^\sd (\alpha, \theta) =
    \chi^\sd (\alpha^\vee, \theta) - \chi^\sd (\alpha, \theta) \ .
\end{equation}
One can verify using \cref{eq-lambda-sd-mult,eq-def-lambda-tilde-sd} that
\begin{equation}
    \label{eq-heart-lambda}
    \tilde{\lambda}_\alpha \heart
    \tilde{\lambda}^\sd_\theta =
    \frac{\bbL^{\bar{\chi}^\sd (\alpha, \theta)/2}
    - \bbL^{-\bar{\chi}^\sd (\alpha, \theta)/2}}
    {\bbL^{1/2} - \bbL^{-1/2}} \cdot 
    \tilde{\lambda}^\sd_{\bar{\alpha} + \theta} \ .
\end{equation}
Since the coefficient has no poles when $\bbL = 1$,
the $\prestar \Mhat_\bbK^\circ$-submodule
\[
    \prestar \Lambda^{\sd, \> \circ} (\calA) \subset \prestar \Lambda^\sd (\calA) \ ,
\]
spanned over $\prestar \Mhat_\bbK^\circ$ by the elements $\tilde{\lambda}^\sd_\theta$
for $\theta \in C^\sd (\calA)$,
is a twisted module for the Lie algebra $\prestar \Lambda^\circ (\calA)$
with the~$\heart$ action,
although it is not necessarily closed under
the~$\diamond$ action by~$\prestar \Lambda^\circ (\calA)$.

\paragraph{The numeric Lie algebra.}
\label{para-num-lie}

Let $\Mtilde_\bbK$ be the ring of motives as in \cref{def-motive-varieties}.
We define a Lie algebra $\Omega (\calA)$ over $\Mtilde_\bbK$,
as the free $\Mtilde_\bbK$-module
\begin{equation}
    \Omega (\calA) =
    \bigoplus_{\alpha \in C^\circ (\calA)}
    \Mtilde_\bbK \cdot \omega_\alpha \ ,
\end{equation}
equipped with the Lie bracket given by
\begin{equation}
    \label{eq-lie-omega}
    [ \omega_{\alpha_1} \, , \, \omega_{\alpha_2} ] =
    \bar{\chi} (\alpha_1, \alpha_2) \cdot \omega_{\alpha_1 + \alpha_2}
\end{equation}
for all $\alpha_1, \alpha_2 \in C^\circ (\calA)$,
where $\bar{\chi}$ is given by~\cref{eq-def-chi-bar}.
One can verify that this defines a Lie algebra structure on $\Omega (\calA)$.
The Lie algebra $\Omega (\calA)$
is equipped with an involution,
in the sense of \cref{para-inv-lie-alg},
given by $\omega_\alpha \mapsto \omega_{\alpha^\vee}$ for all $\alpha \in C^\circ (\calA)$.

Define a map of Lie algebras over $\Mhat_\bbK^\circ$,
\begin{align}
    \label{eq-def-big-xi}
    \Xi \colon \quad 
    \prestar \Lambda^\circ (\calA)
    & \longrightarrow \Omega (\calA) \ , \\
    \sum_{\alpha \in C^\circ (\calA)}
    a_\alpha \cdot \tilde{\lambda}_\alpha 
    & \longmapsto 
    \sum_{\alpha \in C^\circ (\calA)}
    a_\alpha \cdot \omega_\alpha \ , \notag
\end{align}
where each $a_\alpha \in \Mhat_\bbK^\circ$.
That $\Xi$ respects Lie brackets
can be seen by comparing~\cref{eq-lie-lambda} with~\cref{eq-lie-omega}.

\paragraph{The numeric twisted module.}
\label{para-num-tw-mod}

Define a free $\Mtilde_\bbK$-module
\begin{equation}
    \Omega^\sd (\calA) =
    \bigoplus_{\theta \in C^\sd (\calA) \vphantom{^0}}
    \Mtilde_\bbK \cdot \omega^\sd_\theta \ ,
\end{equation}
and define an action of $\Omega (\calA)$
on $\Omega^\sd (\calA)$ by
\begin{equation}
    \label{eq-heart-omega}
    \omega_\alpha \heart \omega^\sd_\theta =
    \bar{\chi}^\sd (\alpha, \theta) \cdot \omega^\sd_{\bar{\alpha} + \theta}
\end{equation}
for all $\alpha \in C^\circ (\calA)$ and $\theta \in C^\sd (\calA)$,
where $\bar{\chi}^\sd$ is given by~\cref{eq-def-chi-bar-sd}.
One can verify that this defines a twisted module
of the involutive Lie algebra $\Omega (\calA)$.

Define a map of $\Mhat_\bbK^\circ$-modules,
\begin{align}
    \label{eq-def-big-xi-sd}
    \Xi^\sd \colon \quad 
    \prestar \Lambda^{\sd, \> \circ} (\calA)
    & \longrightarrow \Omega^\sd (\calA) \ , \\
    \sum_{\theta \in C^\sd (\calA) \vphantom{^0}}
    a_\theta \cdot \tilde{\lambda}^\sd_\theta 
    & \longmapsto 
    \sum_{\theta \in C^\sd (\calA) \vphantom{^0}}
    a_\theta \cdot \omega^\sd_\theta \ , \notag
\end{align}
where each $a_\theta \in \Mhat_\bbK^\circ$.
The maps $\Xi$ and $\Xi^\sd$ respect the~$\heart$ action,
as can be seen by comparing~\cref{eq-heart-lambda} with~\cref{eq-heart-omega}.

\paragraph{Explicit coefficients.}

We also describe iterated Lie brackets in $\Omega (\calA)$
and their action on $\Omega^\sd (\calA)$ explicitly.
For classes $\alpha_1, \dotsc, \alpha_n \in C^\circ (\calA)$, write
\begin{equation}
    \label{eq-def-chi-tilde}
    \tilde{\chi} (\alpha_1, \dotsc, \alpha_n) =
    \prod_{j=2}^n \sum_{i=1}^{j-1} \bar{\chi} (\alpha_i, \alpha_j) \ .
\end{equation}
Then, in $\Omega (\calA)$, we have
\begin{equation}
    \label{eq-lem-chi-tilde}
    [ [ \dotsc [ \omega_{\alpha_1} \, , \,
    \omega_{\alpha_2} ], \dotsc], \, \omega_{\alpha_n} ]
    = \tilde{\chi} (\alpha_1, \dotsc, \alpha_n) \cdot
    \omega_{\alpha_1 + \cdots + \alpha_n} \ .
\end{equation}

Similarly, for classes $\alpha_{1,1}, \dotsc, \alpha_{1,m_1}; \dotsc;
    \alpha_{n,1}, \dotsc, \alpha_{n,m_n} \in C^\circ (\calA)$
and $\rho \in C^\sd (\calA)$, write
\begin{multline}
    \label{eq-def-chi-tilde-sd}
    \tilde{\chi}^\sd (\alpha_{1,1}, \dotsc, \alpha_{1,m_1}; \dotsc;
        \alpha_{n,1}, \dotsc, \alpha_{n,m_n}; \rho) = 
    \prod_{i=1}^n \tilde{\chi} 
        (\bar{\alpha}_{i,1} , \dotsc, \bar{\alpha}_{i,m_i}) \cdot {} \\*
    \prod_{i=1}^{n} \bar{\chi}^\sd \biggl(
        \alpha_{i,1} + \cdots + \alpha_{i,m_i} \, , \ 
        \sum_{j=i+1}^n {}
        (\bar{\alpha}_{j,1} + \cdots + \bar{\alpha}_{j,m_j})
        + \rho
    \biggr) \ .
\end{multline}
Then, in $\Omega^\sd (\calA)$, we have
\begin{multline}
    \label{eq-lem-chi-tilde-sd}
    \bigl[ \bigl[ \omega_{\alpha_{1,1}}, \dotsc \bigr] ,
    \omega_{\alpha_{1,m_1}} \bigr] \heart \cdots \heart
    \bigl[ \bigl[ \omega_{\alpha_{n,1}}, \dotsc \bigr] ,
    \omega_{\alpha_{n,m_n}} \bigr] \heart 
    \omega^\sd_{\rho} = \\
    \tilde{\chi}^\sd (\alpha_{1,1}, \dotsc, \alpha_{1,m_1}; \dotsc;
        \alpha_{n,1}, \dotsc, \alpha_{n,m_n}; \rho) \cdot
    \omega^\sd_\theta \ ,
\end{multline}
where $\theta = (\bar{\alpha}_{1,1} + \cdots + \bar{\alpha}_{1,m_1}) + \cdots +
(\bar{\alpha}_{n,1} + \cdots + \bar{\alpha}_{n,m_n}) + \rho$.

\paragraph{The integration maps.}
\label{para-int-psi-lie}

Recall the integration maps $\Psi, \Psi^\sd$
from \cref{para-int-psi}. Consider the subspaces
\begin{alignat}{3}
    \SFhat^\circ (\calM) & = {}
    & \Psi^{-1} & \bigl( \Lambda^\circ (\calA) \bigr)
    && \subset \SFhat (\calM) \ ,
    \\
    \SFhat^\circ (\calM^\sd) & = {}
    & (\Psi^\sd)^{-1} & \bigl( \Lambda^{\sd, \> \circ} (\calA) \bigr)
    && \subset \SFhat (\calM^\sd) \ ,
\end{alignat}
where $\Lambda^\circ (\calA) \subset \Lambda (\calA)$ is the
$\Mhat_\bbK^\circ$-subalgebra spanned by
the elements $\lambda_\alpha$ for $\alpha \in C^\circ (\calA)$,
and $\Lambda^{\sd, \> \circ} (\calA) \subset \Lambda^\sd (\calA)$ is the
$\Mhat_\bbK^\circ$-submodule spanned by
the elements $\lambda^\sd_\theta$ for $\theta \in C^\sd (\calA)$.

It then follows from \cref{thm-mot-int-homo}
that $\SFhat^\circ (\calM)$ is an involutive Lie subalgebra
of the Hall Lie algebra $\SFhat (\calM)$ as in \cref{para-hall-lie-alg},
and that $\SFhat^\circ (\calM^\sd)$ is a twisted module
for $\SFhat^\circ (\calM)$ under the action~$\heart$ in \cref{para-hall-tw-mod}.
Moreover, \cref{thm-mot-int-homo} implies the following.

\begin{theorem}
    \label{thm-mot-int-lie-homo}
    In the situation of \cref{para-sit-mot-int-lie},
    consider the integration maps
    \begin{alignat}{2}
        \label{eq-mot-int-lie-homo}
        \Xi \circ \Psi \colon &&
        \SFhat^\circ (\calM) & \longrightarrow
        \Omega (\calA) \ , \\
        \label{eq-mot-int-tw-mod-homo}
        \Xi^\sd \circ \Psi^\sd \colon &&
        \SFhat^\circ (\calM^\sd) & \longrightarrow
        \Omega^\sd (\calA) \ .
    \end{alignat}
    Then \cref{eq-mot-int-lie-homo} is a
    morphism of Lie algebras,
    and \cref{eq-mot-int-tw-mod-homo}
    is compatible with the twisted module structures on both sides,
    in the sense that the diagram
    \[
        \begin{tikzcd}[column sep=6em, row sep=1.5em]
            \SFhat^\circ (\calM) \underset{\bbQ}{\otimes}
            \SFhat^\circ (\calM^\sd)
            \ar[r, "(\Xi \circ \Psi) \otimes (\Xi^\sd \circ \Psi^\sd)"]
            \ar[d, "\heart"']
            & \Omega (\calA) \underset{\bbQ}{\otimes}
            \Omega^\sd (\calA)
            \ar[d, "\heart"]
            \\
            \SFhat^\circ (\calM^\sd)
            \ar[r, "\Xi^\sd \circ \Psi^\sd"]
            & \Omega^\sd (\calA)
        \end{tikzcd} \hspace{1.5em}
    \]
    commutes.
    \QED
\end{theorem}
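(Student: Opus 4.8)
The plan is to factor each of the two maps as the integration map $\Psi$ (resp.\ $\Psi^\sd$) followed by $\Xi$ (resp.\ $\Xi^\sd$), and to observe that each factor already carries the stated compatibility, so that the result is a formal consequence of \cref{thm-mot-int-homo} together with the computations made when $\Xi,\Xi^\sd$ were introduced. No new geometric input is needed.

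For the first factor, \cref{thm-mot-int-homo} gives $\Psi(f*g)=\Psi(f)*\Psi(g)$ and $\Psi^\sd(f\diamond h)=\Psi(f)\diamond\Psi^\sd(h)$. I would additionally check that $\Psi$ intertwines the involutions, i.e.\ $\Psi(f^\vee)=\Psi(f)^\vee$: the self-dual structure $D\colon\calM\simto\calM$ restricts to isomorphisms $\calM_\alpha\simto\calM_{\alpha^\vee}$, motivic integration is an isomorphism invariant, and so the sum defining $\Psi$ is merely reindexed by $\alpha\mapsto\alpha^\vee$. Unwinding $[f,g]=f*g-g*f$ and $f\heart h=f\diamond h-f^\vee\diamond h$ (and the analogous formulas in $\Lambda(\calA)$ and $\Lambda^\sd(\calA)$) then yields $\Psi([f,g])=[\Psi(f),\Psi(g)]$ and $\Psi^\sd(f\heart h)=\Psi(f)\heart\Psi^\sd(h)$. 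Restricting to $\SFhat^\circ(\calM)$ and $\SFhat^\circ(\calM^\sd)$, which by the discussion preceding the theorem are respectively an involutive Lie subalgebra of $\SFhat(\calM)$ and a twisted module over it, this exhibits $\Psi$ as a morphism of involutive Lie algebras into $\Lambda^\circ(\calA)$ and $\Psi^\sd$ as a $\heart$-compatible map into $\Lambda^{\sd, \> \circ}(\calA)$.

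For the second factor, $\Xi$ and $\Xi^\sd$ already have the desired properties: comparing \cref{eq-lie-lambda} with \cref{eq-lie-omega} shows $\Xi$ carries $[\tilde\lambda_{\alpha_1},\tilde\lambda_{\alpha_2}]$ to $[\omega_{\alpha_1},\omega_{\alpha_2}]$, so it is a Lie algebra morphism on basis elements and hence, by $\Mhat_\bbK^\circ$-bilinearity, in general; comparing \cref{eq-heart-lambda} with \cref{eq-heart-omega} gives $\Xi^\sd(\tilde\lambda_\alpha\heart\tilde\lambda^\sd_\theta)=\Xi(\tilde\lambda_\alpha)\heart\Xi^\sd(\tilde\lambda^\sd_\theta)$, which is exactly the commutativity of the square in the statement with $\Xi,\Xi^\sd$ in place of $\Xi\circ\Psi,\Xi^\sd\circ\Psi^\sd$. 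One also records that $\Lambda^\circ(\calA)\subseteq\prestar\Lambda^\circ(\calA)$ and $\Lambda^{\sd, \> \circ}(\calA)\subseteq\prestar\Lambda^{\sd, \> \circ}(\calA)$ — immediate from \cref{eq-def-lambda-tilde,eq-def-lambda-tilde-sd}, since the base changes between the $\lambda$- and $\tilde\lambda$-bases have coefficients in $\prestar\Mhat_\bbK^\circ$ — so that $\Xi$ and $\Xi^\sd$ may be applied to the images of $\Psi$ and $\Psi^\sd$. Composing the two factors then proves that $\Xi\circ\Psi$ is a Lie algebra morphism and concatenates the two squares to give the one in the statement.

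The proof has no genuine obstacle; all the substance is already packed into \cref{thm-mot-int-homo}, which itself rests on \cref{thm-sdext-strong} and \tagref{Ext}. The only point requiring attention is keeping track of the half-integer powers of $\bbL$ and of which of the rings $\Mhat_\bbK^\circ$, $\prestar\Mhat_\bbK^\circ$, $\Mtilde_\bbK$ each coefficient lives in, so that every composite is literally defined; routing the argument through $\prestar\Lambda^\circ(\calA)$ and $\prestar\Lambda^{\sd, \> \circ}(\calA)$, as above, is what makes this transparent.
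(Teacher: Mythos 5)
Your proof is correct and follows the same route as the paper, which states this theorem without further proof precisely because it is a direct consequence of \cref{thm-mot-int-homo} together with the observations in \cref{para-num-lie,para-num-tw-mod} that $\Xi$ and $\Xi^\sd$ respect the Lie bracket and the $\heart$ action. The details you supply — the equivariance $\Psi(f^\vee)=\Psi(f)^\vee$ via reindexing, the factorization through $\prestar\Lambda^\circ(\calA)$ and $\prestar\Lambda^{\sd,\>\circ}(\calA)$, and the comparison of \cref{eq-lie-lambda,eq-lie-omega} and \cref{eq-heart-lambda,eq-heart-omega} — are exactly the content left implicit.
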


\section{Motivic enumerative invariants}

\label{sect-invariants}

In this \lcnamecref{sect-invariants},
we state our main constructions of motivic enumerative invariants,
including the following different versions:

\begin{itemize}
    \item The \emph{stack function} version 
        $\delta_\alpha (\tau)$, $\epsilon_\alpha (\tau) \in \SF (\calM_\alpha)$
        and $\delta^\sd_\theta (\tau)$, $\epsilon^\sd_\theta (\tau) \in \SF (\calM^\sd_\theta)$.

    \item The \emph{motivic} version 
        $\upI_\alpha (\tau)$, $\upI^\sd_{\theta} (\tau)$, 
        $\upJ_\alpha (\tau)$, $\upJ^\sd_{\theta} (\tau) \in \Mhat_{\bbK}$, 
        obtained from the stack function invariants
        via motivic integration.

    \item The \emph{numeric} version 
        $\chiJ_\alpha (\tau)$, $\chiJ^\sd_{\theta} (\tau)$, 
        $\DT^\nai_\alpha (\tau)$, $\DT^\sdnai_{\theta} (\tau) \in \bbQ$, 
        obtained from the motivic invariants
        by taking the Euler characteristic.
\end{itemize}
We also prove \emph{wall-crossing formulae}
for each version of invariants,
under a change of the stability condition.

\subsection{Stack function invariants}
\label{sect-delta-epsilon}

In the following, we use the notion of \emph{stack functions}
due to Joyce~\cite{Joyce2007Stack}.
See \cref{sect-motive} for an introduction to stack functions.

\paragraph{The stack functions $\delta_\alpha (\tau)$, $\epsilon_\alpha (\tau)$.}

Let us recall Joyce's construction
\cite[Definition~8.1]{Joyce2007III}
of the stack functions
$\delta_\alpha (\tau)$, $\epsilon_\alpha (\tau)$,
representing motives that count
$\tau$-semi\-stable objects of a fixed class $\alpha$.

Let $\calA$ be $\bbK$-linear exact category,
with a categorical moduli stack $\+{\calM}$, 
satisfying \tagref{Mod} and~\tagref{Fin1}.
Let $\tau$ be a weak stability condition on $\calA$,
satisfying~\tagref{Stab}.

For each $\alpha \in C (\calA)$, define an element
\begin{equation}
    \delta_\alpha (\tau) \in \SF (\calM)
\end{equation}
to be the characteristic function of the open substack
$\Mss_{\alpha} (\tau) \subset \calM$,
that is, $\delta_\alpha (\tau) = [(\Mss_{\alpha} (\tau), \, i)]$,
where $i \colon \Mss_{\alpha} (\tau) \hookrightarrow \calM$ is the inclusion.

Define an element
$\epsilon_\alpha (\tau) \in \SF (\calM)$ by
\begin{align}
    \label{eq-def-epsilon}
    \epsilon_\alpha (\tau) & =
    \sum_{ \leftsubstack[6em]{
        \\[-1.5ex]
        & n > 0; \, \alpha_1, \dotsc, \alpha_n \in C (\calA) \colon \\[-.5ex]
        & \alpha = \alpha_1 + \cdots + \alpha_n \, , \\[-.5ex]
        & \tau (\alpha_1) = \cdots = \tau (\alpha_n)
    } } \hspace{.5em}
    \frac{(-1)^{n-1}}{n} \cdot
    \delta_{\alpha_1} (\tau) * \cdots *
    \delta_{\alpha_n} (\tau) \ , 
\end{align}
as in \cite[Definition~8.1]{Joyce2007III},
where $*$ is the multiplication in the motivic Hall algebra.
The sum in \cref{eq-def-epsilon} has only finitely many non-zero terms
by \tagref{Stab2} and \cite[Proposition~4.9]{Joyce2007III},
whose proof can be adapted to exact categories.
Formally inverting \cref{eq-def-epsilon},
as in \cite[Theorem~8.2]{Joyce2007III},
we have
\begin{align}
    \label{eq-def-epsilon-inverse}
    \delta_\alpha (\tau) & =
    \sum_{ \leftsubstack[6em]{
        \\[-1.5ex]
        & n > 0; \, \alpha_1, \dotsc, \alpha_n \in C (\calA) \colon \\[-.5ex]
        & \alpha = \alpha_1 + \cdots + \alpha_n \, , \\[-.5ex]
        & \tau (\alpha_1) = \cdots = \tau (\alpha_n)
    } } \hspace{.5em}
    \frac{1}{n!} \cdot
    \epsilon_{\alpha_1} (\tau) * \cdots *
    \epsilon_{\alpha_n} (\tau) \ , 
\end{align}
where the sum has only finitely many non-zero terms.

\paragraph{}

In more compact terms, for any $t \in T$,
if we define $\delta (\tau; t), \epsilon (\tau; t) \in \LSF (\calM)$ by
\begin{align*}
    \delta (\tau; t) =
    \delta_0 +
    \sum_{ \leftsubstack[4em]{
        & \alpha \in C (\calA) \colon \\[-.5ex]
        & \tau (\alpha) = t
    } }
    \delta_\alpha (\tau) \ , \hspace{2em}
    \epsilon (\tau; t) =
    \sum_{ \leftsubstack[4em]{
        & \alpha \in C (\calA) \colon \\[-.5ex]
        & \tau (\alpha) = t
    } }
    \epsilon_\alpha (\tau) \ ,
\end{align*}
where $\delta_0$ is as in \cref{def-hall-alg}, then
\begin{align}
    \label{eq-epsilon-delta-exp-1}
    \epsilon (\tau; t) & = \log \delta (\tau; t) \ , \\
    \label{eq-epsilon-delta-exp-2}
    \delta (\tau; t) & = \exp \epsilon (\tau; t) \ ,
\end{align}
where `$\log$' and `$\exp$' are interpreted as
formal power series using the multiplication~$*$.

\begin{remark}
    \label{rem-delta-lsf}
    In the situation above,
    if we drop the assumption \tagref{Stab2},
    then there are still well-defined elements
    $\delta_\alpha (\tau) \in \LSF (\calM)$,
    defined in the same way as above.
    However, the elements $\epsilon_\alpha (\tau)$
    may not be well-defined, even in $\LSF (\calM)$,
    as the sum in \cref{eq-def-epsilon} may not converge
    in the sense of \cref{def-lsf};
    a counterexample is given by trivial stability
    on the category of coherent sheaves on a curve.
\end{remark}

\paragraph{The stack functions $\delta^\sd_\theta (\tau)$, $\epsilon^\sd_\theta (\tau)$.}

We now define self-dual versions of the
$\delta$ and~$\epsilon$ stack functions,
which represent motives that count
$\tau$-semi\-stable self-dual objects of a fixed class $\theta$.

Let $\calA$ be a self-dual $\bbK$-linear exact category,
with a self-dual moduli stack $\+{\calM}$,
satisfying \tagref{SdMod} and~\tagref{Fin1}.
Let $\tau$ be a self-dual weak stability condition on $\calA$,
satisfying~\tagref{Stab}.

For each $\theta \in C^\sd (\calA)$,
define an element
\begin{equation}
    \delta^\sd_\theta (\tau) \in \SF (\calM^\sd)
\end{equation}
to be the characteristic function
of the open substack $\smash{\Msdss_{\theta} (\tau)} \subset \calM^\sd$.

Define an element
$\epsilon^\sd_\theta (\tau) \in \SF (\calM^\sd)$ by
\begin{align}
    \label{eq-def-epsilon-sd}
    \epsilon^\sd_\theta (\tau) & =
    \sum_{ \leftsubstack[6em]{
        \\[-1.5ex]
        & n \geq 0; \, \alpha_1, \dotsc, \alpha_n \in C (\calA), \,
        \rho \in C^\sd (\calA) \colon \\[-.5ex]
        & \theta = \bar{\alpha}_1 + \cdots + \bar{\alpha}_n + \rho \\[-.5ex]
        & \tau (\alpha_1) = \cdots = \tau (\alpha_n) = 0
    } } {}
    \binom{-1/2}{n} \cdot
    \delta_{\alpha_1} (\tau) \diamond \cdots \diamond
    \delta_{\alpha_n} (\tau) \diamond
    \delta^\sd_{\rho} (\tau) \ ,
\end{align}
where $\diamond$ is the multiplication in the motivic Hall module,
and there are only finitely many non-zero terms
by the proof of \cite[Proposition~4.9]{Joyce2007III}.
Formally inverting \cref{eq-def-epsilon-sd}, we have
\begin{align}
    \label{eq-def-epsilon-sd-inverse}
    \delta^\sd_\theta (\tau) & =
    \sum_{ \leftsubstack[6em]{
        \\[-1.5ex]
        & n \geq 0; \, \alpha_1, \dotsc, \alpha_n \in C (\calA), \,
        \rho \in C^\sd (\calA) \colon \\[-.5ex]
        & \theta = \bar{\alpha}_1 + \cdots + \bar{\alpha}_n + \rho \\[-.5ex]
        & \tau (\alpha_1) = \cdots = \tau (\alpha_n) = 0
    } } {}
    \frac{1}{2^n \, n!} \cdot
    \epsilon_{\alpha_1} (\tau) \diamond \cdots \diamond
    \epsilon_{\alpha_n} (\tau) \diamond
    \epsilon^\sd_{\rho} (\tau) \ ,
\end{align}
where only finitely many terms are non-zero.
Indeed, \cref{eq-def-epsilon-sd-inverse} follows from the following.

\paragraph{}

Define elements $\delta^\sd (\tau), \epsilon^\sd (\tau) \in \LSF (\calM^\sd)$ by
\begin{align*}
    \delta^\sd (\tau) =
    \sum_{ \leftsubstack[4em]{
        & \theta \in C^\sd (\calA) \vphantom{^0}
    } }
    \delta^\sd_\theta (\tau) \ , \hspace{2em}
    \epsilon^\sd (\tau) =
    \sum_{ \leftsubstack[4em]{
        & \theta \in C^\sd (\calA) \vphantom{^0}
    } }
    \epsilon^\sd_\theta (\tau) \ ,
\end{align*}
then \crefrange{eq-def-epsilon-sd}{eq-def-epsilon-sd-inverse}
translate to
\begin{align}
    \label{eq-def-epsilon-sd-compact}
    \epsilon^\sd (\tau) & =
    \delta (\tau; 0)^{-1/2} \diamond \delta^\sd (\tau) \ , \\
    \label{eq-def-epsilon-sd-inv-compact}
    \delta^\sd (\tau) & =
    \exp \Bigl( \frac{1}{2} \, \epsilon (\tau; 0) \Bigr) \diamond \epsilon^\sd (\tau) \ ,
\end{align}
where $(-)^{-1/2}$ and `$\exp$' are interpreted 
as formal power series using the multiplication~$*$.
Now, using \crefrange{eq-epsilon-delta-exp-1}{eq-epsilon-delta-exp-2},
it is straightforward to see that
\cref{eq-def-epsilon-sd-compact,eq-def-epsilon-sd-inv-compact}
are equivalent.

\begin{remark}
    The choice of the coefficients $\binom{-1/2}{n}$
    in the definition of
    $\epsilon^\sd_\theta (\tau)$ in~\cref{eq-def-epsilon-sd}
    is justified by the fact that
    it is the unique choice 
    only depending on~$n$, such that
    the \emph{no-pole theorem},
    \cref{thm-no-pole-sd} below, holds.
\end{remark}

\begin{remark}
    As in \cref{rem-delta-lsf},
    if we drop the assumption \tagref{Stab2},
    then there are still well-defined elements
    $\delta^\sd_\theta (\tau) \in \LSF (\calM^\sd)$,
    but the elements $\epsilon^\sd_\theta (\tau)$
    may not be well-defined in $\LSF (\calM^\sd)$.
\end{remark}

\subsection{Wall-crossing: Preparation}
\label{sect-wcf}

\paragraph{}
\label{para-wcf-intro}

We now study the change of the stack function invariants
$\delta_\alpha (\tau)$, $\epsilon_\alpha (\tau)$,
$\delta^\sd_\theta (\tau)$, $\epsilon^\sd_\theta (\tau)$
when we alter the weak stability condition $\tau$.
We express the change in terms of \emph{wall-crossing formulae}.
This was done for ordinary invariants in
Joyce~\cite[\S5]{Joyce2008IV},
and we now give a parallel theory for self-dual invariants.

Throughout this section,
$\calA$ will be a self-dual $\bbK$-linear exact category,
with a self-dual moduli stack $\+{\calM}$,
satisfying \tagref{SdMod} and~\tagref{Fin},
with the understanding that for results that
do not involve the self-dual structure of~$\calA$,
the category $\calA$ need not be self-dual,
and only needs to satisfy \tagref{Mod} and~\tagref{Fin}.

\begin{definition}
    \label{def-dominate}
    Let $\tau$ and $\tilde{\tau}$ be weak stability conditions on $\calA$.
    As in \cite[Definition~3.16]{Joyce2008IV},
    we say that $\tilde{\tau}$ \emph{dominates} $\tau$,
    if $\tau (\alpha) \leq \tau (\beta)$
    implies $\tilde{\tau} (\alpha) \leq \tilde{\tau} (\beta)$
    for any $\alpha, \beta \in C (\calA)$.
\end{definition}

\paragraph{}
\label[theorem]{thm-wcf-dom}

Here is the first version of wall-crossing formulae,
which is the special case when one stability condition dominates another.
We assume the setting of \cref{para-wcf-intro}.

\begin{theorem*}
    Let $\tau, \tilde{\tau}$ be self-dual weak stability conditions on $\calA$
    satisfying \tagref{Stab13},
    and suppose that $\tilde{\tau}$ dominates $\tau$.
    
    Then for any $\alpha \in C (\calA)$
    and $\theta \in C^\sd (\calA)$, we have
    \begin{align}
        \label{eq-wcf-dom}
        \delta_\alpha (\tilde{\tau}) & =
        \sum_{ \leftsubstack[7em]{
            & n > 0; \, \alpha_1, \dotsc, \alpha_n \in C (\calA) \colon \\[-.5ex]
            & \alpha = \alpha_1 + \cdots + \alpha_n \, , \\[-.5ex]
            & \tilde{\tau} (\alpha_1) = \cdots = \tilde{\tau} (\alpha_n), \\[-.5ex]
            & \tau (\alpha_1) > \cdots > \tau (\alpha_n)
        } }
        \delta_{\alpha_1} (\tau) * \cdots *
        \delta_{\alpha_n} (\tau) \ , \\[1ex]
        \label{eq-wcf-dom-sd}
        \delta^\sd_\theta (\tilde{\tau}) & =
        \sum_{ \leftsubstack[7em]{
            & n \geq 0; \, \alpha_1, \dotsc, \alpha_n \in C (\calA), \,
            \rho \in C^\sd (\calA) \vphantom{^0} \colon \\[-.5ex]
            & \theta = \bar{\alpha}_1 + \cdots + \bar{\alpha}_n + \rho, \\[-.5ex]
            & \tilde{\tau} (\alpha_1) = \cdots = \tilde{\tau} (\alpha_n) = 0, \\[-.5ex]
            & \tau (\alpha_1) > \cdots > \tau (\alpha_n) > 0
        } }
        \delta_{\alpha_1} (\tau) \diamond \cdots \diamond
        \delta_{\alpha_n} (\tau) \diamond
        \delta^\sd_{\rho} (\tau)
    \end{align}
    in $\LSF (\calM)$ and $\LSF (\calM^\sd)$, respectively,
    where the sums may be infinite,
    but they converge in the sense of \cref{def-lsf}.
\end{theorem*}

\begin{proof}
    Equation \cref{eq-wcf-dom}
    was proved in \cite[Theorem~5.11]{Joyce2008IV},
    but we repeat the argument here,
    fixing a gap in the original argument using \cref{thm-almost-bijection}.

    First, the $*$ and $\diamond$ products are well-defined by
    \cref{rem-hall-alg-lsf,rem-hall-mod-lsf}.
    Every finite type substack of $\calM_\alpha$ or $\calM^\sd_\theta$
    only intersects the support of finitely many terms in the sum,
    by \tagref{Stab3}.
    
    By \cref{thm-hn-uniqueness}, the morphism
    \begin{align}
        \coprod_{ \leftsubstack[6em]{
            & n > 0; \, \alpha_1, \dotsc, \alpha_n \in C (\calA) \colon \\[-.5ex]
            & \alpha = \alpha_1 + \cdots + \alpha_n \, , \\[-.5ex]
            & \tilde{\tau} (\alpha_1) = \cdots = \tilde{\tau} (\alpha_n), \\[-.5ex]
            & \tau (\alpha_1) > \cdots > \tau (\alpha_n)
        } }
        \calM^{\pn{n}, \ss}_{\alpha_1, \dotsc, \alpha_n} (\tau)
        \longrightarrow
        \Mss_\alpha (\tilde{\tau}) \ ,
    \end{align}
    given by $\pi^\pn{n}$ on each component,
    is bijective on $\bbK$-points,
    and induces isomorphisms of stabilizer groups at $\bbK$-points.
    Applying \cref{thm-almost-bijection} Zariski locally,
    we have
    \begin{align}
        \label{eq-wcf-dom-1}
        \sum_{ \leftsubstack[6em]{
            & n > 0; \, \alpha_1, \dotsc, \alpha_n \in C (\calA) \colon \\[-.5ex]
            & \alpha = \alpha_1 + \cdots + \alpha_n \, , \\[-.5ex]
            & \tilde{\tau} (\alpha_1) = \cdots = \tilde{\tau} (\alpha_n), \\[-.5ex]
            & \tau (\alpha_1) > \cdots > \tau (\alpha_n)
        } }
        (\pi^\pn{n})_! \, [ \calM^{\pn{n}, \ss}_{\alpha_1, \dotsc, \alpha_n} (\tau) ]
        =
        \delta_\alpha (\tilde{\tau})
    \end{align}
    in $\LSF (\calM_\alpha)$.
    Now, \cref{eq-wcf-dom} follows from
    \cref{eq-wcf-dom-1} and \cref{thm-hall-alg}.
    
    For \cref{eq-wcf-dom-sd},
    we use a similar strategy.
    By \cref{thm-sd-hn}, the morphism
    \begin{align}
        \coprod_{ \leftsubstack[6em]{
            & n \geq 0; \, \alpha_1, \dotsc, \alpha_n \in C (\calA), \,
            \rho \in C^\sd (\calA) \vphantom{^0} \colon \\[-.5ex]
            & \theta = \bar{\alpha}_1 + \cdots + \bar{\alpha}_n + \rho, \\[-.5ex]
            & \tilde{\tau} (\alpha_1) = \cdots = \tilde{\tau} (\alpha_n) = 0, \\[-.5ex]
            & \tau (\alpha_1) > \cdots > \tau (\alpha_n) > 0
        } }
        \calM^{\pn{2n+1}, \> \sdss}_{\alpha_1, \dotsc, \alpha_n, \rho} (\tau) \longrightarrow
        \calM^\sdss_\theta (\tilde{\tau}) \ ,
    \end{align}
    given by $\pi^{\pn{2n+1}, \> \sd}$ on each component,
    is bijective on $\bbK$-points,
    and induces isomorphisms of stabilizer groups at $\bbK$-points.
    Applying \cref{thm-almost-bijection} Zariski locally,
    we have
    \begin{align}
        \label{eq-wcf-dom-1-sd}
        \sum_{ \leftsubstack[6em]{
            & n \geq 0; \, \alpha_1, \dotsc, \alpha_n \in C (\calA), \,
            \rho \in C^\sd (\calA) \vphantom{^0} \colon \\[-.5ex]
            & \theta = \bar{\alpha}_1 + \cdots + \bar{\alpha}_n + \rho, \\[-.5ex]
            & \tilde{\tau} (\alpha_1) = \cdots = \tilde{\tau} (\alpha_n) = 0, \\[-.5ex]
            & \tau (\alpha_1) > \cdots > \tau (\alpha_n) > 0
        } }
        (\pi^{\pn{2n+1}, \> \sd})_! \, 
        [ \calM^{\pn{2n+1}, \> \sdss}_{\alpha_1, \dotsc, \alpha_n, \rho} (\tau) ]
        =
        \delta^\sd_\theta (\tilde{\tau})
    \end{align}
    in $\LSF (\calM^\sd_\theta)$.
    Now, \cref{eq-wcf-dom-sd} follows from
    \cref{eq-wcf-dom-1-sd} and \cref{thm-hall-module}.
\end{proof}

\paragraph{}
\label[theorem]{thm-wcf-anti-dom}

Formally inverting the dominant wall-crossing formulae
\crefrange{eq-wcf-dom}{eq-wcf-dom-sd},
we obtain the following `anti-dominant' wall-crossing formulae,
which is a step towards the general case.

\begin{theorem*}
    Let $\tau, \tilde{\tau}$ be self-dual weak stability conditions on $\calA$
    satisfying \tagref{Stab13},
    and suppose that $\tilde{\tau}$ dominates $\tau$.
    
    Then for any $\alpha \in C (\calA)$
    and $\theta \in C^\sd (\calA)$, the equations
    \upshape
    \begin{align}
        \label{eq-wcf-anti-dom}
        \delta_\alpha (\tau) & =
        \sum_{ \leftsubstack[6em]{
            & n > 0; \, \alpha_1, \dotsc, \alpha_n \in C (\calA) \colon \\[-.5ex]
            & \alpha = \alpha_1 + \cdots + \alpha_n \, , \\[-.5ex]
            & \tilde{\tau} (\alpha_1) = \cdots = \tilde{\tau} (\alpha_n), \\[-.5ex]
            & \tau (\alpha_1 + \cdots + \alpha_i) > \tau (\alpha_{i+1} + \cdots + \alpha_n)
            \text{ for all } i = 1, \dotsc, n - 1
        } } {}
        (-1)^{n-1} \,
        \delta_{\alpha_1} (\tilde{\tau}) * \cdots *
        \delta_{\alpha_n} (\tilde{\tau}) \ , \\[1ex]
        \label{eq-wcf-anti-dom-sd}
        \delta^\sd_\theta (\tau) & =
        \sum_{ \leftsubstack[6em]{
            & n \geq 0; \, \alpha_1, \dotsc, \alpha_n \in C (\calA), \,
            \rho \in C^\sd (\calA) \colon \\[-.5ex]
            & \theta = \bar{\alpha}_1 + \cdots + \bar{\alpha}_n + \rho, \\[-.5ex]
            & \tilde{\tau} (\alpha_1) = \cdots = \tilde{\tau} (\alpha_n) = 0, \\[-.5ex]
            & \tau (\alpha_1 + \cdots + \alpha_i) > 0
            \text{ for all } i = 1, \dotsc, n
        } } {}
        (-1)^n \,
        \delta_{\alpha_1} (\tilde{\tau}) \diamond \cdots \diamond
        \delta_{\alpha_n} (\tilde{\tau}) \diamond
        \delta^\sd_{\rho} (\tilde{\tau}) 
    \end{align}
    \itshape
    hold whenever the sums
    converge in the sense of \cref{def-lsf}.
\end{theorem*}

\begin{proof}
    Equation \cref{eq-wcf-anti-dom}
    was proved in~\cite[Theorem~5.12]{Joyce2008IV},
    by formally inverting \cref{eq-wcf-dom}.
    
    One can also prove \cref{eq-wcf-anti-dom-sd}
    using a similar strategy.
    Let $A_\theta$ denote the right-hand side of~\cref{eq-wcf-anti-dom-sd}.
    Expanding it using \cref{eq-wcf-dom} and~\cref{eq-wcf-dom-sd},
    and collecting like terms, we obtain
    \begin{align}
        \label{eq-pf-wcf-anti-dom-sd}
        A_\theta & =
        \sum_{ \leftsubstack[7em]{
            & n \geq 0; \, \alpha_1, \dotsc, \alpha_n \in C (\calA), \,
            \rho \in C^\sd (\calA) \colon \\[-.5ex]
            & \theta = \bar{\alpha}_1 + \cdots + \bar{\alpha}_n + \rho, \\[-.5ex]
            & \tilde{\tau} (\alpha_1) = \cdots = \tilde{\tau} (\alpha_n) = 0
        } } {}
        C (\alpha_1, \dotsc, \alpha_n) \cdot
        \delta_{\alpha_1} (\tau) \diamond \cdots \diamond
        \delta_{\alpha_n} (\tau) \diamond
        \delta^\sd_{\rho} (\tau) \ ,
    \end{align}
    where
    \begin{align*}
        C (\alpha_1, \dotsc, \alpha_n) & =
        \sum_{ \leftsubstack[10em]{
            & m \geq 0, \ 
            0 = a_0 < \cdots < a_m \leq n \colon \\[-.5ex]
            & \tau (\alpha_j) > \tau (\alpha_{j+1}) \text{ for all }
            j \in \{ 1, \dotsc, n \} \setminus \{ a_1, \dotsc, a_m \}, \\[-.5ex]
            & \tau (\alpha_1 + \cdots + \alpha_{a_i}) > 0
            \text{ for all } i = 1, \dotsc, m
        } } {}
        (-1)^m \ ,
    \end{align*}
    where we define $\tau (\alpha_{n+1}) = 0$ for convenience.

    To prove that this step is valid,
    we have to show that the sum~\cref{eq-pf-wcf-anti-dom-sd}
    is \emph{locally finite}, that is, after restricting to any finite type substack,
    only finitely many terms in the sum should be non-zero,
    even before collecting like terms.
    To prove this, it is enough to show that
    each term in~\cref{eq-wcf-anti-dom-sd}
    becomes a locally finite sum of terms in~\cref{eq-pf-wcf-anti-dom-sd}.
    By \tagref{Stab3} for $\tau$,
    each $\delta_{\alpha_i} (\tilde{\tau})$ and $\delta^\sd_\rho (\tilde{\tau})$
    in~\cref{eq-wcf-anti-dom-sd} split into locally finite sums.
    By \tagref{Fin2} and \cref{lem-filt-sd-rep-ft},
    the sum is locally finite
    after taking the $\diamond$ product.
    
    Now, the remaining task is to show that $C (\alpha_1, \dotsc, \alpha_n) = 0$
    unless $n = 0$.
    Indeed, let us fix the classes $\alpha_1, \dotsc, \alpha_n$\,,
    with $n > 0$. Define sets
    \begin{align*}
        I & = \{ 1, \dotsc, n \} \ , \\
        I_1 & =
        \{ j \in I \mid \tau (\alpha_j) > \tau (\alpha_{j+1}) \} \ , \\
        I_2 & =
        \{ j \in I \mid \tau (\alpha_1 + \cdots + \alpha_j) > 0 \} \ .
    \end{align*}
    Then
    \begin{align*}
        C (\alpha_1, \dotsc, \alpha_n) & =
        \sum_{ \nicesubstack{
            J \subset I \colon \\[-.5ex]
            I \setminus I_1 \subset J \subset I_2
        } } {}
        (-1)^{|J|} \ ,
    \end{align*}
    which is non-zero precisely when $I_2 = I \setminus I_1$\,.
    Assume that this is the case.
    Then if $1 \in I_1$\,, then $1 \notin I_2$\,,
    so $\tau (\alpha_1) \leq 0$, and so $\tau (\alpha_2) < 0$.
    Continuing, we see that $0 \geq \tau (\alpha_1) > \tau (\alpha_2) > \cdots$,
    contradicting with the definition that $\tau (\alpha_{n+1}) = 0$.
    Similarly, if $1 \in I_2$\,, then
    $0 < \tau (\alpha_1) \leq \tau (\alpha_2) \leq \cdots$,
    contradicting again with $\tau (\alpha_{n+1}) = 0$.
    
    Therefore, the only non-zero term in~\cref{eq-pf-wcf-anti-dom-sd}
    is the term with $n = 0$,
    which gives $\delta^\sd_\theta (\tau)$, as desired.
\end{proof}

\subsection{Wall-crossing: The general case}

\paragraph{}

We are now ready to prove the general wall-crossing formulae
for the stack function invariants
$\delta_\alpha (\tau)$, $\epsilon_\alpha (\tau)$,
$\delta^\sd_\theta (\tau)$, $\epsilon^\sd_\theta (\tau)$.
The strategy is to compose the
dominant wall-crossing formulae \crefrange{eq-wcf-dom}{eq-wcf-dom-sd}
with the anti-dominant wall-crossing formulae
\crefrange{eq-wcf-anti-dom}{eq-wcf-anti-dom-sd}.
The ordinary case was done by Joyce~\cite[\S5]{Joyce2008IV},
while the self-dual case is new.
The main result is \cref{thm-wcf-main}.

Throughout, we assume the same setting as in \cref{para-wcf-intro},
that is, we assume \tagref{SdMod} and~\tagref{Fin}.

\begin{definition}
    \allowdisplaybreaks
    In the setting of \cref{para-wcf-intro},
    let $\tau, \tilde{\tau}$ be weak stability conditions on $\calA$.
    For classes $\alpha_1, \dotsc, \alpha_n \in C (\calA)$, define
    \begin{align}
        \label{eq-def-s}
        \hspace{-1em}
        S (\alpha_1, \dotsc, \alpha_n; \tau, \tilde{\tau}) & =
        \prod_{i=1}^{n-1} {} \left\{
            \begin{array}{ll}
                1, & \tau (\alpha_i) > \tau (\alpha_{i+1}) \text{ and } \\
                & \hspace{2em} \tilde{\tau} (\alpha_1 + \cdots + \alpha_i)
                \leq \tilde{\tau} (\alpha_{i+1} + \cdots + \alpha_n) \\
                -1, & \tau (\alpha_i) \leq \tau (\alpha_{i+1}) \text{ and } \\
                & \hspace{2em} \tilde{\tau} (\alpha_1 + \cdots + \alpha_i)
                > \tilde{\tau} (\alpha_{i+1} + \cdots + \alpha_n) \\
                0, & \text{otherwise}
            \end{array}
        \right\}, 
        \hspace{-1em} \\
        \label{eq-def-ssd}
        \hspace{-1em}
        S^\sd (\alpha_1, \dotsc, \alpha_n; \tau, \tilde{\tau}) & =
        \prod_{i=1}^n {} \left\{
            \begin{array}{ll}
                1, & \tau (\alpha_i) > \tau (\alpha_{i+1}) \text{ and } 
                \tilde{\tau} (\alpha_1 + \cdots + \alpha_i) \leq 0 \\
                -1, & \tau (\alpha_i) \leq \tau (\alpha_{i+1}) \text{ and }
                \tilde{\tau} (\alpha_1 + \cdots + \alpha_i) > 0 \\
                0, & \text{otherwise}
            \end{array}
        \right\},
        \hspace{-1em}
    \end{align}
    where we set $\tau (\alpha_{n+1}) = 0$.
    Write $\alpha = \alpha_1 + \cdots + \alpha_n$\,.
    Define
    \begin{multline}
        \label{eq-def-u}
        U (\alpha_1, \dotsc, \alpha_n; \tau, \tilde{\tau}) = \\*
        \shoveright{
        \sum_{ \leftsubstack[7em]{
            \\[-.5ex]
            & 0 = a_0 < \cdots < a_m = n, \ 
            0 = b_0 < \cdots < b_l = m. \\[-.5ex]
            & \text{Define } \beta_1, \dotsc, \beta_m \text{ by }
            \beta_i = \alpha_{a_{i-1}+1} + \cdots + \alpha_{a_i} \, . \\[-.5ex]
            & \text{Define } \gamma_1, \dotsc, \gamma_l \text{ by }
            \gamma_i = \beta_{b_{i-1}+1} + \cdots + \beta_{b_i} \, . \\[-.5ex]
            & \text{We require } \tau (\beta_i) = \tau (\alpha_j)
            \text{ for all } a_{i-1} < j \leq a_i \, , \\[-.5ex]
            & \text{and } \tilde{\tau} (\gamma_i) = \tilde{\tau} (\alpha)
            \text{ for all } i = 1, \dotsc, l
        } } {}
        \frac{(-1)^{l-1}}{l} \cdot \biggl(
            \prod_{i=1}^{l}
            S (\beta_{b_{i-1}+1}, \dotsc, \beta_{b_i}; \tau, \tilde{\tau})
        \biggr) \cdot
        \biggl(
            \prod_{i=1}^m \frac{1}{(a_i - a_{i-1})!} 
        \biggr) \ , \hspace{-2em} } \\*[-5ex]
        \mathstrut
    \end{multline}
    \begin{multline}
        \label{eq-def-usd}
        U^\sd (\alpha_1, \dotsc, \alpha_n; \tau, \tilde{\tau}) = \\*
        \sum_{ \leftsubstack[7em]{
            \\[-.5ex]
            & 0 = a_0 < \cdots < a_m \leq n, \ 
            0 = b_0 < \cdots < b_l \leq m. \\[-.5ex]
            & \text{Define } \beta_1, \dotsc, \beta_m \text{ by }
            \beta_i = \alpha_{a_{i-1}+1} + \cdots + \alpha_{a_i} \, . \\[-.5ex]
            & \text{Define } \gamma_1, \dotsc, \gamma_l \text{ by }
            \gamma_i = \beta_{b_{i-1}+1} + \cdots + \beta_{b_i} \, . \\[-.5ex]
            & \text{We require } \tau (\beta_i) = \tau (\alpha_j)
            \text{ for all } a_{i-1} < j \leq a_i \, , \\[-.5ex]
            & \tau (\alpha_j) = 0
            \text{ for all } j > a_m \, , \\[-.5ex]
            & \text{and } \tilde{\tau} (\gamma_i) = 0
            \text{ for all } i = 1, \dotsc, l
        } } {}
        \binom{-1/2}{l} \cdot \biggl(
            \prod_{i=1}^{l}
            S (\beta_{b_{i-1}+1}, \dotsc, \beta_{b_i}; \tau, \tilde{\tau})
        \biggr) \cdot
        S^\sd (\beta_{b_l+1}, \dotsc, \beta_{m}; \tau, \tilde{\tau})
        \cdot {} \\*[-11ex]
        \shoveright{ \biggl(
            \prod_{i=1}^m \frac{1}{(a_i - a_{i-1})!} 
        \biggr) \cdot \frac{1}{2^{n-a_m} \, (n - a_m)!} \ . \hspace{-2em} } \\*
        \mathstrut
    \end{multline}
    Here, \cref{eq-def-s,eq-def-u} are as in Joyce \cite[\S4.1]{Joyce2008IV};
    \cref{eq-def-ssd,eq-def-usd} are new.
\end{definition}

\begin{definition}
    \label{def-finite-change}
    In the setting of \cref{para-wcf-intro},
    let $\tau, \tilde{\tau}$ be weak stability conditions on $\calA$,
    satisfying \tagref{Stab13}.
    
    As in \cite[Definition~5.1]{Joyce2008IV},
    we say that \emph{the change from $\tau$ to $\tilde{\tau}$ is locally finite},
    if for any $\alpha \in C^\circ (\calA)$,
    any finite type $\bbK$-scheme $U$, and any morphism $f \colon U \to \calM_\alpha$\,,
    there only exist finitely many choices of $\alpha_1, \dotsc, \alpha_n \in C (\calA)$,
    with $\alpha = \alpha_1 + \cdots + \alpha_n$\,,
    $S (\alpha_1, \dotsc, \alpha_n; \tau, \tilde{\tau}) \neq 0$,
    and $f^{-1} (\pi^{\pn{n}}
    (\calM^{\pn{n}, \> \ss}_{\alpha_1, \dotsc, \alpha_n} (\tau))) \neq \varnothing$.

    We say that \emph{the change from $\tau$ to $\tilde{\tau}$ is globally finite},
    if for any $\alpha \in C^\circ (\calA)$,
    there only exist finitely many choices of $\alpha_1, \dotsc, \alpha_n \in C (\calA)$,
    with $\alpha = \alpha_1 + \cdots + \alpha_n$\,,
    $S (\alpha_1, \dotsc, \alpha_n; \tau, \tilde{\tau}) \neq 0$,
    and $\calM^\ss_{\alpha_i} (\tau) \neq \varnothing$ for all $i$.

    In both cases, it is also true that for any $\theta \in C^\sd (\calA)$,
    there only exist finitely many choices of $\alpha_1, \dotsc, \alpha_n \in C (\calA)$
    and $\rho \in C^\sd (\calA)$,
    with $\theta = \bar{\alpha}_1 + \cdots + \bar{\alpha}_n + \rho$,
    $S^\sd (\alpha_1, \dotsc, \alpha_n; \tau, \tilde{\tau}) \neq 0$,
    such that the corresponding semistable moduli stacks
    (pulled back along $f$ in the first case) are non-empty.
    This is because
    $S^\sd (\alpha_1, \dotsc, \alpha_n; \tau, \tilde{\tau}) \neq 0$ if and only if
    $S (\alpha_1, \dotsc, \alpha_n, \rho, \alpha_n^\vee, \dotsc, \alpha_1^\vee; \tau, \tilde{\tau}) \neq 0$, where $\rho$ is omitted if it is zero.
\end{definition}

For example, when considering coherent sheaves on a higher dimensional variety,
the change from the trivial stability condition to Gieseker stability
may fail to be locally finite. But, as in Joyce~\cite[Proposition~5.7]{Joyce2008IV},
the change from \emph{purity} to Gieseker stability is locally finite.
Note, however, that Gieseker stability is not self-dual.

\paragraph{}
\label{para-sit-wcf}

From now on, we assume the following setting.
In the setting of \cref{para-wcf-intro},
including the conditions \tagref{SdMod} and \tagref{Fin},
let $\tau, \tilde{\tau}, \hat{\tau}$ be self-dual weak stability conditions on $\calA$
satisfying \tagref{Stab13}.
Suppose that $\tau$ and $\tilde{\tau}$ are dominated by $\hat{\tau}$,
and that the change from $\hat{\tau}$ to $\tilde{\tau}$ is locally finite,
in the sense of \cref{def-finite-change}.

We state the general wall-crossing formulae
for the stack function invariants as follows.

\begin{theorem}
    \label{thm-wcf-main}
    \allowdisplaybreaks
    In the setting of \cref{para-sit-wcf},
    for any $\alpha \in C (\calA)$ and $\theta \in C^\sd (\calA)$,
    \begin{align}
        \label{eq-wcf-delta}
        \delta_\alpha (\tilde{\tau})
        & =
        \sum_{ \leftsubstack[5em]{
            & n > 0; \, \alpha_1, \dotsc, \alpha_n \in C (\calA) \colon \\[-.5ex]
            & \alpha = \alpha_1 + \cdots + \alpha_n 
        } } {}
        S (\alpha_1, \dotsc, \alpha_n; \tau, \tilde{\tau}) \cdot
        \delta_{\alpha_1} (\tau) * \cdots *
        \delta_{\alpha_n} (\tau) \ , \\[2ex]
        \label{eq-wcf-delta-sd}
        \delta^\sd_\theta (\tilde{\tau})
        & =
        \sum_{ \leftsubstack[5em]{
            & n \geq 0; \, \alpha_1, \dotsc, \alpha_n \in C (\calA), \,
            \rho \in C^\sd (\calA) \colon \\[-.5ex]
            & \theta = \bar{\alpha}_1 + \cdots + \bar{\alpha}_n + \rho
        } } {}
        S^\sd (\alpha_1, \dotsc, \alpha_n; \tau, \tilde{\tau}) \cdot
        \delta_{\alpha_1} (\tau) \diamond \cdots \diamond
        \delta_{\alpha_n} (\tau) \diamond
        \delta^\sd_{\rho} (\tau) \ ,
    \end{align}
    where the sums converge in the sense of \cref{def-lsf}.
    If, moreover, $\tau$ and $\tilde{\tau}$ satisfy \tagref{Stab2}, then
    \begin{align}
        \label{eq-wcf-epsilon}
        \epsilon_\alpha (\tilde{\tau})
        & =
        \sum_{ \leftsubstack[5em]{
            & n > 0; \, \alpha_1, \dotsc, \alpha_n \in C (\calA) \colon \\[-.5ex]
            & \alpha = \alpha_1 + \cdots + \alpha_n 
        } } {}
        U (\alpha_1, \dotsc, \alpha_n; \tau, \tilde{\tau}) \cdot
        \epsilon_{\alpha_1} (\tau) * \cdots *
        \epsilon_{\alpha_n} (\tau) \ , \\[2ex]
        \label{eq-wcf-epsilon-sd}
        \epsilon^\sd_\theta (\tilde{\tau})
        & =
        \sum_{ \leftsubstack[5em]{
            & n \geq 0; \, \alpha_1, \dotsc, \alpha_n \in C (\calA), \,
            \rho \in C^\sd (\calA) \colon \\[-.5ex]
            & \theta = \bar{\alpha}_1 + \cdots + \bar{\alpha}_n + \rho 
        } } {}
        U^\sd (\alpha_1, \dotsc, \alpha_n; \tau, \tilde{\tau}) \cdot
        \epsilon_{\alpha_1} (\tau) \diamond \cdots \diamond
        \epsilon_{\alpha_n} (\tau) \diamond
        \epsilon^\sd_{\rho} (\tau) \ ,
    \end{align}
    where the sums converge.
\end{theorem}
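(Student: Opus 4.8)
The plan is to deduce all four wall-crossing formulae by composing the dominant wall-crossing formulae of \cref{thm-wcf-dom} with the anti-dominant ones of \cref{thm-wcf-anti-dom}, passing through the intermediate stability condition $\hat{\tau}$ and then collecting like terms. This follows Joyce~\cite[\S5]{Joyce2008IV} for the ordinary invariants $\delta_\alpha$ and $\epsilon_\alpha$, and the new content is the parallel treatment of the self-dual invariants $\delta^\sd_\theta$ and $\epsilon^\sd_\theta$, which runs the same machine with the module operation $\diamond$ in place of $*$.

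First I would treat $\delta_\alpha(\tilde{\tau})$. Since $\hat{\tau}$ dominates $\tilde{\tau}$, \cref{eq-wcf-anti-dom} expresses $\delta_\alpha(\tilde{\tau})$ as a sum of products $\pm\,\delta_{\gamma_1}(\hat{\tau})*\cdots*\delta_{\gamma_l}(\hat{\tau})$; since $\hat{\tau}$ dominates $\tau$, \cref{eq-wcf-dom} then rewrites each $\delta_{\gamma_i}(\hat{\tau})$ as a sum of products of the $\delta_\beta(\tau)$. Expanding, using associativity of $*$ from \cref{thm-hall-alg}, and regrouping the factors into a single product indexed by a refinement $\alpha=\alpha_1+\cdots+\alpha_n$, the coefficient of $\delta_{\alpha_1}(\tau)*\cdots*\delta_{\alpha_n}(\tau)$ becomes exactly $S(\alpha_1,\dotsc,\alpha_n;\tau,\tilde{\tau})$ of \cref{eq-def-s} by the combinatorial identity of Joyce~\cite[\S4--5]{Joyce2008IV}; one records that each rearrangement is a locally finite sum in the sense of \cref{def-lsf}, using \tagref{Stab3}, the local finiteness of the change from $\hat{\tau}$ to $\tilde{\tau}$, \cref{rem-hall-alg-lsf}, and \cref{lem-filt-rep-ft}. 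For $\epsilon_\alpha(\tilde{\tau})$, substitute the relations $\delta(\tilde{\tau};t)=\exp\epsilon(\tilde{\tau};t)$ and $\delta(\tau;t)=\exp\epsilon(\tau;t)$ from \cref{eq-epsilon-delta-exp-1,eq-epsilon-delta-exp-2} into \cref{eq-wcf-delta}, take the formal logarithm, and collect; the nested sums over the $a_i$ and $b_i$ together with the factors $\tfrac{(-1)^{l-1}}{l}$ (from $\log$) and $\tfrac{1}{(a_i-a_{i-1})!}$ (from $\exp$) reproduce $U(\alpha_1,\dotsc,\alpha_n;\tau,\tilde{\tau})$ as in \cref{eq-def-u}. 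The hypothesis that $\tau,\tilde{\tau}$ satisfy \tagref{Stab2} is what makes $\epsilon$ well-defined here, by \cref{rem-delta-lsf}.

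Next I would do the self-dual invariants. For $\delta^\sd_\theta(\tilde{\tau})$: \cref{eq-wcf-anti-dom-sd} (valid since $\hat{\tau}$ dominates $\tilde{\tau}$) writes it as a sum of $\pm\,\delta_{\gamma_1}(\hat{\tau})\diamond\cdots\diamond\delta_{\gamma_l}(\hat{\tau})\diamond\delta^\sd_\rho(\hat{\tau})$; then \cref{eq-wcf-dom} expands each $\delta_{\gamma_i}(\hat{\tau})$ and \cref{eq-wcf-dom-sd} expands $\delta^\sd_\rho(\hat{\tau})$ in terms of the $\delta(\tau)$ and $\delta^\sd(\tau)$, and the module associativity \cref{eq-hall-module-2} of \cref{thm-hall-module} lets one regroup into a single term $\delta_{\alpha_1}(\tau)\diamond\cdots\diamond\delta_{\alpha_n}(\tau)\diamond\delta^\sd_\rho(\tau)$. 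After collecting like terms, the coefficient should equal $S^\sd(\alpha_1,\dotsc,\alpha_n;\tau,\tilde{\tau})$ of \cref{eq-def-ssd}; I expect the cleanest way to see this is to reduce it to Joyce's identity for $S$ via the "doubling" observation already exploited in \cref{def-finite-change}, whereby a self-dual configuration with steps $\alpha_1,\dotsc,\alpha_n,\rho$ corresponds to an ordinary one with steps $\alpha_1,\dotsc,\alpha_n,\rho,\alpha_n^\vee,\dotsc,\alpha_1^\vee$ (with $\rho$ deleted if zero), under which $S^\sd$ is matched with $S$. Finally, for $\epsilon^\sd_\theta(\tilde{\tau})$, substitute $\delta^\sd(\tau)=\exp(\tfrac12\epsilon(\tau;0))\diamond\epsilon^\sd(\tau)$ and $\delta(\tau;0)=\exp\epsilon(\tau;0)$ from \cref{eq-def-epsilon-sd-compact,eq-def-epsilon-sd-inv-compact} into \cref{eq-wcf-delta-sd}, apply $\delta(\tilde{\tau};0)^{-1/2}\diamond(-)$, and collect; the $\diamond$-power series for $(-)^{-1/2}$ contributes the binomial coefficients $\binom{-1/2}{l}$, and together with the $S$, $S^\sd$, $\tfrac{1}{(a_i-a_{i-1})!}$ and $\tfrac{1}{2^{n-a_m}(n-a_m)!}$ factors these assemble into $U^\sd(\alpha_1,\dotsc,\alpha_n;\tau,\tilde{\tau})$ of \cref{eq-def-usd}.

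The main obstacle is the bookkeeping in the two self-dual combinatorial identities, especially the one for $U^\sd$: unlike the ordinary case, the sum splits asymmetrically into an "ordinary part" $\beta_1,\dotsc,\beta_{b_l}$ carrying $S$-factors and a "self-dual tail" $\beta_{b_l+1},\dotsc,\beta_m$ carrying a single $S^\sd$-factor, and one must verify that composing $\log$ on the ordinary side with the $(-)^{-1/2}$ expansion on the self-dual side produces exactly the stated coefficients — in particular that all the would-be extra terms cancel, which is the analogue of the vanishing computation at the end of the proof of \cref{thm-wcf-anti-dom}. A secondary point requiring care throughout is convergence: each intermediate rearrangement must be shown to be a locally finite sum in the sense of \cref{def-lsf}, which follows from the local finiteness of the change from $\hat{\tau}$ to $\tilde{\tau}$, \tagref{Stab3}, \tagref{Fin}, and the representability and finite-type statements of \cref{lem-filt-sd-rep-ft}, but has to be invoked at every step.
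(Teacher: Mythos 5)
Your overall architecture matches the paper's: compose the anti-dominant formulae of \cref{thm-wcf-anti-dom} (from $\tilde{\tau}$ to $\hat{\tau}$) with the dominant ones of \cref{thm-wcf-dom} (from $\hat{\tau}$ to $\tau$), regroup using associativity of $*$ and module associativity of $\diamond$, and then pass from $\delta, \delta^\sd$ to $\epsilon, \epsilon^\sd$ by substituting the $\exp/\log$ relations \crefrange{eq-epsilon-delta-exp-1}{eq-epsilon-delta-exp-2} and \crefrange{eq-def-epsilon-sd-compact}{eq-def-epsilon-sd-inv-compact}. The convergence bookkeeping you invoke, using \tagref{Stab3}, \tagref{Fin}, local finiteness of the change from $\hat{\tau}$ to $\tilde{\tau}$, and \cref{lem-filt-rep-ft,lem-filt-sd-rep-ft}, is also the right machinery, and the $\epsilon^\sd$ step is correctly reduced to the $\delta^\sd$ step.

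There is, however, a genuine gap in your treatment of the key combinatorial identity, namely that the coefficient $C(\alpha_1,\dotsc,\alpha_n)$ produced after regrouping equals $S^\sd(\alpha_1,\dotsc,\alpha_n;\tau,\tilde{\tau})$. You propose to deduce this from Joyce's identity for $S$ via the ``doubling'' $(\alpha_1,\dotsc,\alpha_n,\rho) \mapsto (\alpha_1,\dotsc,\alpha_n,\rho,\alpha_n^\vee,\dotsc,\alpha_1^\vee)$, asserting that ``$S^\sd$ is matched with $S$.'' This is false as a statement about values: using $\tau(\alpha^\vee)=-\tau(\alpha)$ and $\tilde{\tau}(\alpha^\vee)=-\tilde{\tau}(\alpha)$ one checks that the $i$-th and $(2n+1-i)$-th factors of $S$ of the doubled sequence (length $2n+1$, $\rho\neq 0$) coincide, and each equals the $i$-th factor of $S^\sd$, whence $S(\alpha_1,\dotsc,\rho,\dotsc,\alpha_1^\vee;\tau,\tilde{\tau}) = (S^\sd(\alpha_1,\dotsc,\alpha_n;\tau,\tilde{\tau}))^2$, not $S^\sd$. (When $\rho=0$ and is deleted, a similar computation leaves an unpaired middle factor, but the result is still not $S^\sd$.) The doubling therefore detects only the vanishing locus — which is exactly how the paper uses it in \cref{def-finite-change} — and cannot recover the sign of $S^\sd$. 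What is actually needed, and what the paper supplies, is a direct evaluation of $C(\alpha_1,\dotsc,\alpha_n)$: after regrouping, $C$ is an alternating sum over subsets $J \subset I = \{1,\dotsc,n\}$ with $I\setminus I_1 \subset J \subset I_2$, where $I_1 = \{j : \tau(\alpha_j)>\tau(\alpha_{j+1})\}$ and $I_2 = \{j : \tilde{\tau}(\alpha_1+\cdots+\alpha_j)>0\}$; this telescoping sum is nonzero iff $I_2 = I\setminus I_1$, in which case it equals $(-1)^{|I_2|}$, and one verifies this agrees with $S^\sd$ case by case. Note also that the self-dual sum runs over $0 = a_0 < \cdots < a_m \leq n$ with $a_m < n$ allowed (the remaining indices falling into the ``self-dual tail''), an asymmetry with no analogue in Joyce's calculation, so there is no straightforward transfer principle to lean on here.
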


\begin{proof}
    Equation \cref{eq-wcf-delta}
    was shown in \cite[Theorem~5.2]{Joyce2008IV},
    as a result of composing
    \cref{eq-wcf-anti-dom} with \cref{eq-wcf-dom}.
    
    For \cref{eq-wcf-delta-sd},
    we use a similar strategy by expanding
    the left-hand side using
    \cref{eq-wcf-anti-dom-sd},
    which expresses everything in terms of $\hat{\tau}$,
    and then expanding everything again using
    \cref{eq-wcf-dom} and \cref{eq-wcf-dom-sd}.
    Collecting like terms, this gives
    \begin{align}
        \label{eq-pf-wcf-delta-sd}
        \delta^\sd_\theta (\tilde{\tau})
        & =
        \sum_{ \leftsubstack[5em]{
            & n \geq 0; \, \alpha_1, \dotsc, \alpha_n \in C (\calA), \,
            \rho \in C^\sd (\calA) \colon \\[-.5ex]
            & \theta = \bar{\alpha}_1 + \cdots + \bar{\alpha}_n + \rho
        } } {}
        C (\alpha_1, \dotsc, \alpha_n) \cdot
        \delta_{\alpha_1} (\tau) \diamond \cdots \diamond
        \delta_{\alpha_n} (\tau) \diamond
        \delta^\sd_{\rho} (\tau) \ ,
    \end{align}
    where
    \begin{align*}
        C (\alpha_1, \dotsc, \alpha_n) =
        \sum_{ \leftsubstack[10em]{
            & m \geq 0, \ 
            0 = a_0 < \cdots < a_m \leq n \colon \\[-.5ex]
            & \tau (\alpha_j) > \tau (\alpha_{j+1}) \text{ for all }
            j \in \{ 1, \dotsc, n \} \setminus \{ a_1, \dotsc, a_m \}, \\[-.5ex]
            & \tilde{\tau} (\alpha_1 + \cdots + \alpha_{a_i}) > 0
            \text{ for all } i = 1, \dotsc, m
        } } {}
        (-1)^m \ ,
    \end{align*}
    where we write $\tau (\alpha_{n+1}) = 0$ as before.
    This step is valid by a similar argument
    as in the proof of \cref{thm-wcf-anti-dom},
    involving the local finiteness condition.
    
    Define sets
    \begin{align*}
        I & = \{ 1, \dotsc, n \} \ , \\
        I_1 & =
        \{ j \in I \mid \tau (\alpha_j) > \tau (\alpha_{j+1}) \} \ , \\
        I_2 & =
        \{ j \in I \mid \tilde{\tau} (\alpha_1 + \cdots + \alpha_j) > 0 \} \ .
    \end{align*}
    Then
    \begin{align*}
        C (\alpha_1, \dotsc, \alpha_n) & =
        \sum_{ \nicesubstack{
            J \subset I \colon \\[-.5ex]
            I \setminus I_1 \subset J \subset I_2
        } } {}
        (-1)^{|J|} \ ,
    \end{align*}
    which is non-zero precisely when $I_2 = I \setminus I_1$\,,
    in which case it is equal to $(-1)^{|I_2|}$.
    This agrees with the definition of\
    $S^\sd (\alpha_1, \dotsc, \alpha_n; \tau, \tilde{\tau})$.
    
    Equation \cref{eq-wcf-epsilon}
    was shown in \cite[Theorem~5.2]{Joyce2008IV},
    as a result of composing
    \cref{eq-wcf-delta} with
    \cref{eq-def-epsilon,eq-def-epsilon-inverse}.
    Similarly, \cref{eq-wcf-epsilon-sd}
    follows from \cref{eq-wcf-delta-sd}
    by composing it with
    \cref{eq-def-epsilon-sd,eq-def-epsilon-inverse,eq-def-epsilon-sd-inverse}.
\end{proof}

\begin{remark}
    In \cref{thm-wcf-main},
    although $\hat{\tau}$ does not appear anywhere
    in the wall-crossing formulae,
    it is important that it exists,
    satisfies \tagref{Stab13},
    and satisfies the condition
    that the change from $\hat{\tau}$ to $\tilde{\tau}$ is locally finite,
    as these conditions ensure that the wall-crossing formulae
    in \cref{thm-wcf-main} converge.
\end{remark}

\subsection{Wall-crossing: Lie algebras and twisted modules}

\paragraph{}
\label{para-sit-wcf-lie}

Having obtained the general wall-crossing formulae,
\cref{thm-wcf-main},
we now show that wall-crossing formulae for the 
$\epsilon$ and $\epsilon^\sd$ functions
can be expressed solely in terms of
the Lie algebra structure on $\SF (\calM)$,
defined in \cref{para-hall-lie-alg},
and the twisted module structure on $\SF (\calM^\sd)$,
defined in \cref{para-hall-tw-mod}.
The main result is \cref{thm-comb}.

Our running assumptions will now include
\tagref{SdMod}, \tagref{Fin},
and self-dual weak stability conditions $\tau, \tilde{\tau}, \hat{\tau}$
satisfying conditions in \cref{para-sit-wcf},
with $\hat{\tau}$ satisfying \tagref{Stab13},
and $\tau, \tilde{\tau}$ satisfying \tagref{Stab}.

\paragraph{}
\label[theorem]{thm-u-tilde}

The ordinary case of this result
was proved in~\cite[Theorem~5.4]{Joyce2008IV},
and we state it below.

\begin{theorem*}
    In the setting of \cref{para-sit-wcf-lie},
    for classes $\alpha_1, \dotsc, \alpha_n \in C (\calA),$
    consider the free Lie algebra $F_n$ over $\bbQ,$ generated by symbols
    $e_1, \dotsc, e_n$\,. Let $U (F_n)$ be its universal enveloping algebra,
    whose multiplication is denoted by $*$.
    Then the element
    \begin{equation}
        \label{eq-u-abstr}
        \sum_{\sigma \in \frS_n}
        U (\alpha_{\sigma(1)}, \dotsc, \alpha_{\sigma(n)}; \tau, \tilde{\tau}) \cdot
        e_{\sigma(1)} * \cdots * e_{\sigma(n)} \in U (F_n)
    \end{equation}
    lies in the subspace $F_n \subset U (F_n),$
    where $\frS_n$ is the symmetric group.
    Consequently, there exist coefficients
    \[
        \tilde{U} (\alpha_1, \dotsc, \alpha_n; \tau, \tilde{\tau}) \in \bbQ \ ,
    \]
    such that~\cref{eq-u-abstr} can be rewritten as
    \begin{equation}
        \sum_{\sigma \in \frS_n}
        \tilde{U} (\alpha_{\sigma(1)}, \dotsc, \alpha_{\sigma(n)}; \tau, \tilde{\tau}) \cdot
        \bigl[ \dotsc \bigl[ \bigl[ e_{\sigma(1)} \, , \, 
        e_{\sigma(2)} \bigr], \dotsc \bigr] ,
        e_{\sigma(n)} \bigr] \ .
    \end{equation}
    In particular, for any $\alpha \in C^\circ (\calA),$
    we have the wall-crossing formula
    \begin{align}
        \epsilon_\alpha (\tilde{\tau}) & =
        \sum_{ \leftsubstack[6em]{
            \\[-3ex]
            & n > 0; \, \alpha_1, \dotsc, \alpha_n \in C (\calA) \colon \\[-.5ex]
            & \alpha = \alpha_1 + \cdots + \alpha_n
        } } {}
        \tilde{U} (\alpha_1, \dotsc, \alpha_n; \tau, \tilde{\tau}) \cdot
        \bigl[ \dotsc \bigl[ \bigl[ \epsilon_{\alpha_1} (\tau), 
        \epsilon_{\alpha_2} (\tau) \bigr], \dotsc \bigr] ,
        \epsilon_{\alpha_n} (\tau) \bigr] \ ,
        \notag \\*[-4ex]
        \label{eq-u-tilde}
    \end{align}
    where the Lie brackets are as in \cref{para-hall-lie-alg},
    and the sum converges in the sense of \cref{def-lsf}.
\end{theorem*}

Note that the coefficients $\tilde{U} ({\cdots})$ above
are not necessarily unique.

\paragraph{}
\label[definition]{def-pre-thm-comb}

We now state the self-dual counterpart of \cref{thm-u-tilde},
as \cref{thm-comb} below,
expressing wall-crossing formulae for the $\epsilon^\sd$ functions
in terms of the Lie algebra structure on $\SF (\calM)$
and the twisted module structure on $\SF (\calM^\sd)$.

\begin{definition*}
    Let $n \geq 0$ be an integer,
    and let $L_n$ be the free Lie algebra over $\bbQ$, generated by symbols
    $e_1, e_1^\vee, \dotsc, e_n, e_n^\vee$.
    Define an involution $(-)^\vee$ on $L_n$
    by exchanging $e_i$ and $e_i^\vee$ for all $i,$
    and sending $[x, y]$ to $[y^\vee, x^\vee]$ for all $x, y \in L_n$\,.
    
    Let $L_n^+ \subset L_n$ be the subalgebra as in \cref{para-inv-lie-alg}.
    Define a map $(-)^+ \colon L_n \to L_n^+$ by $x \mapsto x - x^\vee$.

    Let $P_n$ be the set of sequences $x = (x_1, \dotsc, x_n)$
    of elements of $L_n$,
    such that for some $\sigma \in \frS_n$, we have
    $x_{\sigma (i)} \in \{ e_i, e_i^\vee \}$ for all $i$.

    Let $Q_n$ be the set of lists $y = (y_{i,j})_{1 \leq i \leq k, \, 1 \leq j \leq m_i}$\,,
    where $k \geq 0$ and $m_1, \dotsc, m_k \geq 1$ are integers,
    with $m_1 + \cdots + m_k = n$,
    such that each $y_{i,j}$ is equal to some $e_l$ or $e_l^\vee$,
    where $l \in \{ 1, \dotsc, n \}$,
    and for each such $l$, there is a unique $y_{i,j}$ that is equal to $e_l$ or $e_l^\vee$.
\end{definition*}

\begin{theorem}
    \label{thm-comb}
    In the setting of \cref{para-sit-wcf-lie},
    for any $\alpha_1, \dotsc, \alpha_n \in C (\calA),$
    the element
    \begin{equation}
        \label{eq-usd-abstr}
        \sum_{x \in P_n}
        U^\sd (\alpha_{\sigma(1)}, \dotsc, \alpha_{\sigma(n)}; \tau, \tilde{\tau}) \cdot
        x_1 * \cdots * x_n \in U (L_n)
    \end{equation}
    lies in $U (L_n^+),$
    where notations are as in \cref{def-pre-thm-comb}.
    Consequently, 
    there exist coefficients
    \[
        \tilde{U}^\sd (\alpha_{1,1}, \dotsc, \alpha_{1,m_1}; \dotsc;
        \alpha_{k,1}, \dotsc, \alpha_{k,m_k}; \tau, \tilde{\tau}) \in \bbQ \ ,
    \]
    for all lists of classes $\alpha_{1,1}, \dotsc, \alpha_{1,m_1}; \dotsc;
        \alpha_{n,1}, \dotsc, \alpha_{n,m_n} \in C (\calA),$
    such that~\cref{eq-usd-abstr} can be rewritten as
    \begin{equation}
        \sum_{y \in Q_n}
        \tilde{U}^\sd (\alpha_y; \tau, \tilde{\tau}) \cdot
        \bigl[ \bigl[ y_{1,1}, \dotsc \bigr] ,
        y_{1,m_1} \bigr]^+ * \cdots *
        \bigl[ \bigl[ y_{k,1}, \dotsc \bigr] ,
        y_{k,m_k} \bigr]^+ \ ,
    \end{equation}
    where $\alpha_y$ is the list obtained from $y$
    by replacing each $e_l, e^\vee_l$ with $\alpha_l, \alpha^\vee_l,$ respectively.
    
    In particular, for any $\theta \in C^\sd (\calA),$
    we have the wall-crossing formula
    \begin{multline}
        \label{eq-comb-main}
        \epsilon^\sd_\theta (\tilde{\tau}) =
        \sum_{ \leftsubstack[10em]{
            \\[-3ex]
            & n \geq 0; \, m_1, \dotsc, m_n > 0; \\[-.5ex]
            & \alpha_{1,1}, \dotsc, \alpha_{1,m_1}; \dotsc;
            \alpha_{n,1}, \dotsc, \alpha_{n,m_n} \in C (\calA); \,
            \rho \in \smash{C^\sd (\calA)} \colon \\[-.5ex]
            & \theta = (\bar{\alpha}_{1,1} + \cdots + \bar{\alpha}_{1,m_1})
            + \cdots + (\bar{\alpha}_{n,1} + \cdots + \bar{\alpha}_{n,m_n}) + \rho
        } } {}
        \tilde{U}^\sd (\alpha_{1,1}, \dotsc, \alpha_{1,m_1}; \dotsc;
        \alpha_{n,1}, \dotsc, \alpha_{n,m_n}; \tau, \tilde{\tau}) \cdot {} \\[1ex]
        \bigl[ \bigl[ \epsilon_{\alpha_{1,1}} (\tau), \dotsc \bigr] ,
        \epsilon_{\alpha_{1,m_1}} (\tau) \bigr] \heart \cdots \heart
        \bigl[ \bigl[ \epsilon_{\alpha_{n,1}} (\tau), \dotsc \bigr] ,
        \epsilon_{\alpha_{n,m_n}} (\tau) \bigr] \heart 
        \epsilon^\sd_{\rho} (\tau) \ ,
    \end{multline}
    where the Lie brackets and the operation $\heart$
    are defined in \cref{para-hall-lie-alg,para-hall-tw-mod},
    and the sum converges in the sense of \cref{def-lsf}.
\end{theorem}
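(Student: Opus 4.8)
The plan is to follow the architecture of Joyce's proof of the ordinary statement \cref{thm-u-tilde}, transferred to the twisted module of \cref{para-hall-tw-mod}. The argument splits into a purely combinatorial assertion --- that the element \cref{eq-usd-abstr} of $U(L_n)$ lies in the subalgebra $U(L_n^+)$, where the coefficient attached to $x \in P_n$ is understood as $U^\sd$ evaluated on the classes obtained from $x$ by the substitution $e_l \mapsto \alpha_l$, $e_l^\vee \mapsto \alpha_l^\vee$, consistent with the later notation $\alpha_y$ --- and a formal deduction of the wall-crossing formula \cref{eq-comb-main} from that assertion together with \cref{thm-wcf-main}. I expect the combinatorial assertion to be the real content, and the deduction to be a routine transport of structure.

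For the deduction, I would first record that $\delta_\alpha(\tau)^\vee = \delta_{\alpha^\vee}(\tau)$: this holds because $\tau$ is self-dual, so dualising restricts to an isomorphism $\Mss_\alpha(\tau) \simeq \Mss_{\alpha^\vee}(\tau)$ (using \cref{thm-sd-hn}), and hence $\epsilon_\alpha(\tau)^\vee = \epsilon_{\alpha^\vee}(\tau)$ by \cref{thm-hall-alg-inv} applied to the defining formula \cref{eq-def-epsilon}. Since $L_n$ is the free involutive Lie algebra on $e_1, \dotsc, e_n$, there is then a unique involution-preserving Lie algebra homomorphism $L_n \to \SF(\calM)$ sending $e_i$ to $\epsilon_{\alpha_i}(\tau)$, which extends to a homomorphism of involutive associative algebras $\Phi \colon U(L_n) \to \SF(\calM)$ by \cref{thm-hall-alg,thm-hall-alg-inv}. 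Applying $\Phi$ to \cref{eq-usd-abstr} shows, using \cref{thm-hall-alg,thm-hall-module}, the identity \cref{eq-ksd-involutive}, and the relation $(f*g)\diamond h = f \diamond (g \diamond h)$, that the right-hand side of \cref{eq-wcf-epsilon-sd} is a sum, over $\rho \in C^\sd(\calA)$ and over the multiset of classes $\alpha_i$, of terms $\Phi(W) \diamond \epsilon^\sd_\rho(\tau)$, where $W$ denotes the element \cref{eq-usd-abstr}. Next, $L_n$ is spanned by left-nested brackets of the generators $e_i, e_i^\vee$, so $L_n^+$ is spanned by the images of such brackets under $(-)^+$, and hence by the Poincar\'e--Birkhoff--Witt theorem $U(L_n^+)$ is spanned by the $*$-products indexed by $Q_n$ in \cref{def-pre-thm-comb}; expressing $W \in U(L_n^+)$ in this spanning set produces the coefficients $\tilde U^\sd$. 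Finally, under $\Phi$, each $*$-product of $(-)^+$-brackets acting on $\epsilon^\sd_\rho(\tau)$ through $\diamond$ becomes an iterated $\heart$ of Lie brackets applied to $\epsilon^\sd_\rho(\tau)$, since $f^+ \diamond h = f \diamond h - f^\vee \diamond h = f \heart h$ and $\diamond$ is associative over $*$; this is exactly the shape of \cref{eq-comb-main}. That the rearranged sum still converges in $\LSF(\calM^\sd)$ is checked as in \cref{thm-wcf-main}, using \tagref{Stab3}, \tagref{Fin2} and \cref{lem-filt-sd-rep-ft}.

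It remains to prove $W \in U(L_n^+)$, a statement involving only the combinatorial coefficients $U^\sd$ and so amenable to a purely formal treatment. My preferred route is to reduce it to Joyce's ordinary result \cref{thm-u-tilde}: a self-dual filtration with stepwise quotients $F_1, \dotsc, F_n, A, F_n^\vee, \dotsc, F_1^\vee$ as in \cref{eq-hn-sd} is in particular an ordinary filtration of the type appearing there, and the observation at the end of \cref{def-finite-change} shows that $S^\sd$ coincides with the ordinary sign $S$ of the doubled sequence. Tracing this through the definitions \cref{eq-def-u,eq-def-usd} --- keeping track of the self-dual middle class, and of the replacement of $\log(1+x) = \sum_l \frac{(-1)^{l-1}}{l} x^l$ by $(1+x)^{-1/2} = \sum_l \binom{-1/2}{l} x^l$, which records the passage from $\epsilon = \log\delta$ to the relation \cref{eq-def-epsilon-sd-compact} --- I expect to obtain an identity expressing $W$ as the antisymmetrisation with respect to $(-)^\vee$ of the image, in a larger enveloping algebra, of the ordinary Lie element of \cref{thm-u-tilde}. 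Since the free Lie algebra is stable under the projection onto the $(-1)$-eigenspace of $(-)^\vee$, this would yield $W \in U(L_n^+)$. For $n = 1$ this reduction already recovers the basic identity $U^\sd(\alpha^\vee; \tau, \tilde\tau) = -U^\sd(\alpha; \tau, \tilde\tau)$.

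The hard part will be making this reduction precise at the level of coefficients: reconciling the two generating functions, bookkeeping the self-dual middle term, and verifying that the $(-)^\vee$-antisymmetrised ordinary element is literally $W$ rather than merely congruent to it modulo correction terms. Should this prove unwieldy, the fallback is to establish $W \in U(L_n^+)$ by induction on $n$ directly from the recursion satisfied by $U^\sd$ under composed wall-crossings $\tau \to \hat\tau \to \tilde\tau$ --- mirroring Joyce's self-contained combinatorial treatment of $U$ in \cite[\S4]{Joyce2008IV}, but now carrying along the $\bbZ_2$-equivariance --- which I expect to be longer but more robust.
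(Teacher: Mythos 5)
Your split into a purely combinatorial assertion (\(W \in U(L_n^+)\)) plus a formal transport to the Hall twisted module is the right architecture, and the transport step you sketch is essentially what the paper does, though you should note that after symmetrising over \(P_n\) each term of \cref{eq-wcf-epsilon-sd} is repeated \(2^n n!\) times and must be divided out --- the paper records this explicitly when rewriting \cref{eq-wcf-epsilon-sd} before applying the homomorphism.

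The genuine gap is in the combinatorial core, and your preferred route rests on a misreading. The observation at the end of \cref{def-finite-change} says only that \(S^\sd(\alpha_1,\dotsc,\alpha_n;\tau,\tilde\tau)\ne 0\) \emph{iff} \(S(\alpha_1,\dotsc,\alpha_n,\rho,\alpha_n^\vee,\dotsc,\alpha_1^\vee;\tau,\tilde\tau)\ne 0\); it does not assert the values coincide, and in fact they do not. Because \(\tau\) and \(\tilde\tau\) are self-dual, each factor of \(S\) for the doubled sequence indexed by position \(i>n\) mirrors the factor at position \(2n+1-i\), so the doubled-sequence \(S\) equals \((S^\sd)^2 \in \{0,1\}\), which differs from \(S^\sd\) precisely when \(S^\sd = -1\). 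The antisymmetrisation heuristic therefore starts from a false identity, and there is no evident way to extract \(W\) as the \((-)^\vee\)-antisymmetrisation of Joyce's ordinary Lie element. Your fallback (induction on \(n\) via the recursion of \(U^\sd\) under composed wall-crossings \(\tau \to \hat\tau \to \tilde\tau\)) is the right general direction, but it is far from a proof sketch: the paper's argument requires (i) reducing to the trivial stability condition by an equivalence-relation argument on self-dual stability conditions, whose individual moves are controlled by correction elements \(F(J), \bar F(J), G(J)\) built from Bernoulli numbers and verified via nontrivial Bernoulli-polynomial identities; (ii) the formal inversion of that result; and (iii) the composition identity for \(U^\sd\) under \(\tau \to \hat\tau \to \tilde\tau\), which the paper establishes not combinatorially but geometrically, via linear independence of \(\delta^\sd\)-stack functions on a purpose-built self-dual quiver. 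None of these ingredients is anticipated in your sketch, so the core assertion remains unproved.
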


The proof will be given in \cref{sect-proof-comb}.
Again, note that the coefficients $\tilde{U}^\sd ({\cdots})$ above
are not necessarily unique.

\subsection{Motivic invariants}
\label{sect-motivic-inv}

\paragraph{}
\label{para-sit-motivic-inv}

We now define \emph{motivic enumerative invariants}
in a self-dual category, which is one of the main goals of this paper.
This is done by applying \emph{motivic integration},
as in \cref{para-mot-int}, to the stack function invariants.
We will define the following types of motivic invariants:

\begin{itemize}
    \item 
        Motivic invariants $\upI_\alpha (\tau), \upI^\sd_{\theta} (\tau) \in \Mhat_\bbK$,
        defined as the integral of the $\delta$ and $\delta^\sd$ functions,
        representing motives of the semistable moduli stacks.
    \item 
        Motivic invariants $\upJ_\alpha (\tau), \upJ^\sd_{\theta} (\tau) \in \Mhat_\bbK$,
        defined as the integral of the $\epsilon$ and $\epsilon^\sd$ functions,
        representing weighted motives of the semistable moduli stacks.
        They have the additional property that they
        satisfy the \emph{no-pole theorems},
        \cref{thm-no-pole,thm-no-pole-sd},
        which allow us to define their Euler characteristics,
        as we will do in \cref{sect-num-inv} below.
\end{itemize}
The invariants $\upI_\alpha (\tau)$ and~$\upJ_\alpha (\tau)$ were defined in
Joyce~\cite[Definitions~6.1 and~6.7]{Joyce2008IV},
while the invariants $\upI^\sd_{\theta} (\tau)$ and~$\upJ^\sd_{\theta} (\tau)$ are new.

Throughout, we assume the conditions
\tagref{SdMod} and \tagref{Fin1}.

\paragraph{Definition.}

Let $\tau$ be a self-dual weak stability condition on $\calA$,
satisfying \tagref{Stab}.
For classes $\alpha \in C (\calA)$ and $\theta \in C^\sd (\calA)$,
define invariants 
\[
    \upI_\alpha (\tau) \ , \quad
    \upI^\sd_{\theta} (\tau) \ , \quad
    \upJ_\alpha (\tau) \ , \quad
    \upJ^\sd_{\theta} (\tau) \ \in \ \Mhat_\bbK
\]
by the motivic integrals
\begin{alignat}{2}
    \label{eq-def-i}
    \upI_\alpha (\tau) & =
    \int_{\calM_\alpha} \delta_\alpha (\tau) \ ,
    & \qquad
    \upJ_\alpha (\tau) & =
    \int_{\calM_\alpha} \epsilon_\alpha (\tau) \ , \\
    \label{eq-def-isd}
    \upI^\sd_\theta (\tau) & =
    \int_{\calM^\sd_\theta} \delta^\sd_\theta (\tau) \ ,
    & \qquad
    \upJ^\sd_\theta (\tau) & =
    \int_{\calM^\sd_\theta} \epsilon^\sd_\theta (\tau) \ ,
\end{alignat}
as in \cref{para-mot-int},
where the integrands were defined in \cref{sect-delta-epsilon}.

\paragraph{}

One can deduce the relations
\begin{align}
    \label{eq-inv-def-epsilon}
    \upJ_\alpha (\tau) & =
    \sum_{ \leftsubstack[6em]{
        \\[-1.5ex]
        & n > 0; \, \alpha_1, \dotsc, \alpha_n \in C (\calA) \colon \\[-.5ex]
        & \alpha = \alpha_1 + \cdots + \alpha_n \, , \\[-.5ex]
        & \tau (\alpha_1) = \cdots = \tau (\alpha_n)
    } }
    \frac{(-1)^{n-1}}{n} \cdot
    \bbL^{-\chi (\alpha_1, \dotsc, \alpha_n)} \cdot
    \upI_{\alpha_1} (\tau) \cdots
    \upI_{\alpha_n} (\tau) \ , \\
    \label{eq-inv-def-epsilon-inverse}
    \upI_\alpha (\tau) & =
    \sum_{ \leftsubstack[6em]{
        \\[-1.5ex]
        & n > 0; \, \alpha_1, \dotsc, \alpha_n \in C (\calA) \colon \\[-.5ex]
        & \alpha = \alpha_1 + \cdots + \alpha_n \, , \\[-.5ex]
        & \tau (\alpha_1) = \cdots = \tau (\alpha_n)
    } }
    \frac{1}{n!} \cdot
    \bbL^{-\chi (\alpha_1, \dotsc, \alpha_n)} \cdot
    \upJ_{\alpha_1} (\tau) \cdots
    \upJ_{\alpha_n} (\tau) \ , \\
    \label{eq-inv-def-epsilon-sd}
    \upJ^\sd_\theta (\tau) & =
    \sum_{ \leftsubstack[6em]{
        \\[-1.5ex]
        & n \geq 0; \, \alpha_1, \dotsc, \alpha_n \in C (\calA), \,
        \rho \in C^\sd (\calA) \colon \\[-.5ex]
        & \theta = \bar{\alpha}_1 + \cdots + \bar{\alpha}_n + \rho \\[-.5ex]
        & \tau (\alpha_1) = \cdots = \tau (\alpha_n) = 0
    } } {}
    \binom{-1/2}{n} \cdot
    \bbL^{-\chi^\sd (\alpha_1, \dotsc, \alpha_n, \rho)} \cdot
    \upI_{\alpha_1} (\tau) \cdots 
    \upI_{\alpha_n} (\tau) \cdot
    \upI^\sd_{\rho} (\tau) \ , \\
    \label{eq-inv-def-epsilon-sd-inverse}
    \upI^\sd_\theta (\tau) & =
    \sum_{ \leftsubstack[6em]{
        \\[-1.5ex]
        & n \geq 0; \, \alpha_1, \dotsc, \alpha_n \in C (\calA), \,
        \rho \in C^\sd (\calA) \colon \\[-.5ex]
        & \theta = \bar{\alpha}_1 + \cdots + \bar{\alpha}_n + \rho \\[-.5ex]
        & \tau (\alpha_1) = \cdots = \tau (\alpha_n) = 0
    } } {}
    \frac{1}{2^n \, n!} \cdot
    \bbL^{-\chi^\sd (\alpha_1, \dotsc, \alpha_n, \rho)} \cdot
    \upJ_{\alpha_1} (\tau) \cdots
    \upJ_{\alpha_n} (\tau) \cdot
    \upJ^\sd_{\rho} (\tau) \ ,
\end{align}
where $\chi (\alpha_1, \dotsc, \alpha_n)$ and
$\chi^\sd (\alpha_1, \dotsc, \alpha_n, \rho)$
are given by \cref{eq-def-multi-chi,eq-def-multi-chi-sd}.
These relations follow from~\cref{eq-def-epsilon},
\cref{eq-def-epsilon-inverse},
\cref{eq-def-epsilon-sd}, and
\cref{eq-def-epsilon-sd-inverse}, respectively,
using \cref{thm-mot-int-homo} and
\crefrange{eq-lem-multi-chi}{eq-lem-multi-chi-sd}.

\paragraph{Wall-crossing.}
\label{para-sit-mot-wcf}

We formulate \emph{wall-crossing formulae}
for the invariants defined above,
relating invariants of different stability conditions.

From now on, we assume the conditions \tagref{Ext} and \tagref{SdExt},
restricting the homological dimension of $\calA$,
so that results in \cref{sect-motivic-int-1}
can be applied.
We also assume \tagref{Fin}.

As in \cref{para-sit-wcf},
let $\tau, \tilde{\tau}, \hat{\tau}$ be self-dual weak stability condition on $\calA$,
such that $\tau$ and $\tilde{\tau}$ are dominated by $\hat{\tau}$
in the sense of \cref{def-dominate},
with $\hat{\tau}$ satisfying \tagref{Stab13},
and $\tau, \tilde{\tau}$ satisfying \tagref{Stab}.
We also assume that the change from $\hat{\tau}$ to $\tilde{\tau}$ is locally finite,
and that the change from $\tau$ to $\tilde{\tau}$
is globally finite, in the sense of \cref{def-finite-change}.

Applying the integration map to
\cref{eq-wcf-delta,eq-wcf-epsilon,eq-wcf-delta-sd,eq-wcf-epsilon-sd},
and using \cref{thm-mot-int-homo} and
\crefrange{eq-lem-multi-chi}{eq-lem-multi-chi-sd},
we obtain the following wall-crossing formulae
for the motivic invariants.

\begin{theorem}
    \label{thm-wcf-mot}
    \allowdisplaybreaks
    In the setting of \cref{para-sit-mot-wcf},
    for any $\alpha \in C^\circ (\calA)$ and $\theta \in C^\sd (\calA),$
    we have the following wall-crossing formulae:
    \begin{align}
        \label{eq-wcf-i}
        \upI_\alpha (\tilde{\tau}) & =
        \sum_{ \leftsubstack[5em]{
            & n > 0; \, \alpha_1, \dotsc, \alpha_n \in C (\calA) \colon \\[-.5ex]
            & \alpha = \alpha_1 + \cdots + \alpha_n
        } } {}
        S (\alpha_1, \dotsc, \alpha_n; \tau, \tilde{\tau}) \cdot
        \bbL^{-\chi (\alpha_1, \dotsc, \alpha_n)} \cdot
        \upI_{\alpha_1} (\tau) \cdots
        \upI_{\alpha_n} (\tau) \ , \\[1ex]
        \label{eq-wcf-j}
        \upJ_\alpha (\tilde{\tau}) & =
        \sum_{ \leftsubstack[5em]{
            & n > 0; \, \alpha_1, \dotsc, \alpha_n \in C (\calA) \colon \\[-.5ex]
            & \alpha = \alpha_1 + \cdots + \alpha_n
        } } {}
        U (\alpha_1, \dotsc, \alpha_n; \tau, \tilde{\tau}) \cdot
        \bbL^{-\chi (\alpha_1, \dotsc, \alpha_n)} \cdot
        \upJ_{\alpha_1} (\tau) \cdots
        \upJ_{\alpha_n} (\tau) \ , \\[1ex]
        \upI^\sd_\theta (\tilde{\tau}) & =
        \sum_{ \leftsubstack[7em]{
            & n \geq 0; \, \alpha_1, \dotsc, \alpha_n \in C (\calA), \,
            \rho \in C^\sd (\calA) \colon \\[-.5ex]
            & \theta = \bar{\alpha}_1 + \cdots + \bar{\alpha}_n + \rho 
        } } {}
        S^\sd (\alpha_1, \dotsc, \alpha_n; \tau, \tilde{\tau}) \cdot
        \bbL^{-\chi^\sd (\alpha_1, \dotsc, \alpha_n, \rho)} \cdot {} 
        \notag \\*[-4ex]
        \label{eq-wcf-isd}
        && \mathllap{ \upI_{\alpha_1} (\tau) \cdots
        \upI_{\alpha_n} (\tau) \cdot
        \upI^\sd_{\rho} (\tau) \ , } & \hspace{.1em} \\[2ex]
        \upJ^\sd_\theta (\tilde{\tau}) & =
        \sum_{ \leftsubstack[7em]{
            & n \geq 0; \, \alpha_1, \dotsc, \alpha_n \in C (\calA), \,
            \rho \in C^\sd (\calA) \colon \\[-.5ex]
            & \theta = \bar{\alpha}_1 + \cdots + \bar{\alpha}_n + \rho 
        } } {}
        U^\sd (\alpha_1, \dotsc, \alpha_n; \tau, \tilde{\tau}) \cdot
        \bbL^{-\chi^\sd (\alpha_1, \dotsc, \alpha_n, \rho)} \cdot {} 
        \notag \\*[-4ex]
        \label{eq-wcf-jsd}
        && \mathllap{ \upJ_{\alpha_1} (\tau) \cdots
        \upJ_{\alpha_n} (\tau) \cdot
        \upJ^\sd_{\rho} (\tau) \ , }
    \end{align}
    where $\chi (\alpha_1, \dotsc, \alpha_n)$ and
    $\chi^\sd (\alpha_1, \dotsc, \alpha_n, \rho)$
    are given by \crefrange{eq-def-multi-chi}{eq-def-multi-chi-sd}.
    \QED
\end{theorem}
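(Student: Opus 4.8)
The plan is to obtain \cref{eq-wcf-i,eq-wcf-j,eq-wcf-isd,eq-wcf-jsd} by applying the motivic integration maps $\Psi$ and $\Psi^\sd$ of \cref{para-int-psi} to the stack function wall-crossing formulae \cref{eq-wcf-delta,eq-wcf-epsilon,eq-wcf-delta-sd,eq-wcf-epsilon-sd} of \cref{thm-wcf-main}, term by term. The two structural ingredients are: first, \cref{thm-mot-int-homo}, which says that $\Psi$ is an algebra homomorphism for the Hall product $*$ and that $\Psi^\sd$ is a module map over $\Psi$ for the Hall action $\diamond$; and second, the explicit multiplication formulae \cref{eq-lem-multi-chi,eq-lem-multi-chi-sd} in $\Lambda (\calA)$ and $\Lambda^\sd (\calA)$, which is where the powers $\bbL^{-\chi (\alpha_1, \dotsc, \alpha_n)}$ and $\bbL^{-\chi^\sd (\alpha_1, \dotsc, \alpha_n, \rho)}$ are produced. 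The combinatorial coefficients $S, S^\sd, U, U^\sd$ are rational numbers independent of any geometry, so they pass through the integration untouched.

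First I would settle well-definedness. The identities of \cref{thm-wcf-main} live in $\LSF (\calM)$ and $\LSF (\calM^\sd)$, so one must check that they may be integrated and that the resulting right-hand sides make sense in $\Mhat_\bbK$. Here the hypothesis that the change from $\tau$ to $\tilde{\tau}$ is \emph{globally} finite, rather than merely locally finite (see \cref{def-finite-change}), is exactly what is needed: for each fixed $\alpha \in C^\circ (\calA)$ only finitely many tuples $(\alpha_1, \dotsc, \alpha_n)$ give a non-zero term with non-empty support in \cref{eq-wcf-delta,eq-wcf-epsilon}, and likewise for each fixed $\theta \in C^\sd (\calA)$ in \cref{eq-wcf-delta-sd,eq-wcf-epsilon-sd}; together with \tagref{Stab2}, which makes each $\Mss_{\alpha_i} (\tau)$ and $\Msdss_\rho (\tau)$ of finite type, each such term has a well-defined motivic integral, and the sums in \cref{eq-wcf-i,eq-wcf-j,eq-wcf-isd,eq-wcf-jsd} are honestly finite. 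So there is no limit to take, and term-by-term integration is legitimate.

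Next I would carry out the integration for the ordinary invariants. Integrating \cref{eq-wcf-delta} over $\calM_\alpha$, each term $\delta_{\alpha_1} (\tau) * \cdots * \delta_{\alpha_n} (\tau)$ is supported on $\calM_\alpha$, and applying $\Psi$ (an algebra map by \cref{eq-psi-compat-star}) together with \cref{eq-lem-multi-chi} gives $\int_{\calM_\alpha} \delta_{\alpha_1} (\tau) * \cdots * \delta_{\alpha_n} (\tau) = \bbL^{-\chi (\alpha_1, \dotsc, \alpha_n)} \cdot \upI_{\alpha_1} (\tau) \cdots \upI_{\alpha_n} (\tau)$, using \cref{eq-def-i}; summing over the contributing tuples with weights $S (\alpha_1, \dotsc, \alpha_n; \tau, \tilde{\tau})$ yields \cref{eq-wcf-i}. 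The derivation of \cref{eq-wcf-j} from \cref{eq-wcf-epsilon} is identical, replacing $\delta$'s by $\epsilon$'s and reading off $\upJ_{\alpha_i} (\tau) = \int_{\calM_{\alpha_i}} \epsilon_{\alpha_i} (\tau)$, with weights $U (\alpha_1, \dotsc, \alpha_n; \tau, \tilde{\tau})$.

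Finally, for the self-dual formulae I would integrate \cref{eq-wcf-delta-sd,eq-wcf-epsilon-sd} over $\calM^\sd_\theta$, now using that $\Psi^\sd$ is a module map over $\Psi$ (\cref{eq-psi-compat-diamond}), applied iteratively, together with \cref{eq-lem-multi-chi-sd}, to get $\int_{\calM^\sd_\theta} \delta_{\alpha_1} (\tau) \diamond \cdots \diamond \delta_{\alpha_n} (\tau) \diamond \delta^\sd_\rho (\tau) = \bbL^{-\chi^\sd (\alpha_1, \dotsc, \alpha_n, \rho)} \cdot \upI_{\alpha_1} (\tau) \cdots \upI_{\alpha_n} (\tau) \cdot \upI^\sd_\rho (\tau)$, and similarly with $\epsilon$'s and $\upJ$'s; this produces \cref{eq-wcf-isd,eq-wcf-jsd}. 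The main obstacle is not any of these formal manipulations but the convergence bookkeeping of the first step, together with the need for the homological-dimension hypotheses \tagref{Ext} and \tagref{SdExt}: these are required for \cref{thm-mot-int-homo}, since its proof relies on $\pi_1 \times \pi_2$ and $\pi_1 \times \pi^\sd_2$ being (affine) $2$-vector bundles of the expected ranks. With those inputs in hand, the argument is a routine term-by-term integration.
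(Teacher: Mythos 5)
Your proposal matches the paper's approach exactly: the paper proves \cref{thm-wcf-mot} by applying the integration maps $\Psi$ and $\Psi^\sd$ term by term to the stack-function wall-crossing identities of \cref{thm-wcf-main}, invoking \cref{thm-mot-int-homo} for the multiplicativity of $\Psi$ and $\Psi^\sd$ and \cref{eq-lem-multi-chi,eq-lem-multi-chi-sd} for the powers of $\bbL$, with global finiteness from \cref{def-finite-change} ensuring the sums are finite. Your expanded discussion of why global finiteness and \tagref{Ext}, \tagref{SdExt} are needed is correct and consistent with the paper's hypotheses.
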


The ordinary cases, \crefrange{eq-wcf-i}{eq-wcf-j},
were obtained in \cite[Theorem~6.8]{Joyce2008IV}.

\subsection{Numerical invariants}
\label{sect-num-inv}

\paragraph{}
\label{para-sit-num-inv}

Next, we define \emph{numerical enumerative invariants}
by taking the Euler characteristics 
of the $\upJ$ and $\upJ^\sd$ invariants in \cref{sect-motivic-inv},
coming from the $\epsilon$ and $\epsilon^\sd$ functions
defined in \cref{sect-delta-epsilon}.
The fact that this is well-defined relies on the
\emph{no-pole theorems}, \cref{thm-no-pole,thm-no-pole-sd},
which do not hold for the $\delta, \delta^\sd$ functions.
We will define the following invariants:

\begin{itemize}
    \item
        Numerical invariants
        $\chiJ_\alpha (\tau), \chiJ^\sd_{\theta} (\tau) \in \bbQ$,
        defined as the Euler characteristics of the
        $\upJ$ and $\upJ^\sd$ invariants.
    \item 
        Numerical invariants
        $\DT^\nai_\alpha (\tau), \DT^\sdnai_\theta (\tau) \in \bbQ$,
        which only differ from the $\chiJ$ and $\chiJ^\sd$ invariants
        by a sign.
\end{itemize}

Throughout, we assume the conditions
\tagref{SdMod} and \tagref{Fin1},
as in \cref{para-sit-motivic-inv}.
We also assume that $\calA$ satisfies \tagref{Spl},
which is important for the no-pole theorems.

\paragraph{The no-pole theorem, ordinary case.}
\label[theorem]{thm-no-pole}

Joyce \cite[Theorem~8.7]{Joyce2007III}
proved the following \emph{no-pole theorem} for the
stack function $\epsilon_\alpha (\tau)$ and the invariant $\upJ_\alpha (\tau)$,
stating that the motive $(\bbL - 1) \cdot \upJ_\alpha (\tau)$
has no poles at $\bbL = 1$.

\begin{theorem*}
    In the situation of \cref{para-sit-num-inv},
    let $\tau$ be a weak stability condition on $\calA$
    satisfying \tagref{Stab}.
    Then for any $\alpha \in C (\calA)$, the element
    \[ \epsilon_\alpha (\tau) \in \SF (\calM_\alpha) \]
    has pure virtual rank~$1$, in the sense of \cref{sect-vrp}.
    In particular, we have
    \begin{equation}
        (\bbL - 1) \cdot \upJ_\alpha (\tau) \in \Mhat_\bbK^\circ \ .
    \end{equation}
\end{theorem*}

\paragraph{The no-pole theorem, self-dual case.}
\label[theorem]{thm-no-pole-sd}

We state the following no-pole theorem
for the stack function $\epsilon^\sd_\theta (\tau)$
and the invariant $\upJ^\sd_\theta (\tau)$.
This is one of the main results in this paper,
and the proof will be given in \cref{sect-proof-no-pole}.

\begin{theorem*}
    In the situation of \cref{para-sit-num-inv},
    let $\tau$ be a self-dual weak stability condition on $\calA$
    satisfying \tagref{Stab}.
    Then for any $\theta \in C^\sd (\calA)$, the element
    \[ \epsilon^\sd_\theta (\tau) \in \SF (\calM^\sd_\theta) \]
    has pure virtual rank~$0$, in the sense of \cref{sect-vrp}.
    In particular, we have
    \begin{equation}
        \upJ^\sd_\theta (\tau) \in \Mhat_\bbK^\circ \ .
    \end{equation}
\end{theorem*}

\paragraph{The Euler characteristic.}
\label{para-euler-char}

Let $\chi \colon \Mhat_\bbK^\circ \to \bbQ$ be the Euler characteristic map,
which can be defined first on $K_0 (\mathrm{Var}_{\bbK})$,
then extended to $\Mhat_\bbK^\circ$ by setting $\chi (\bbL) = 1$.
The map $\chi$ factors through the ring $\Mtilde_\bbK$
in \cref{def-motive-varieties}.

The symbol $\chi$ here is not to be confused with
the Euler form $\chi (-, -)$ defined in \cref{para-euler},
which will also appear in the following.

Alternatively, $\chi$ can be taken to be any map
with $\chi (\bbL) = 1$,
as discussed in Joyce~\cite[Examples~6.2--6.4]{Joyce2007Stack},
and the theory should still work,
but we do not make this generalization here.

\paragraph{Numerical invariants.}
\label{para-num-inv}

In the situation of \cref{para-sit-num-inv},
let $\tau$ be a self-dual weak stability condition on $\calA$
satisfying \tagref{Stab}.

For classes $\alpha \in C (\calA)$ and $\theta \in C^\sd (\calA)$,
define the numerical enumerative invariants
$\chiJ_\alpha (\tau) \in \bbQ$ and
$\chiJ^\sd_\theta (\tau) \in \bbQ$
as the Euler characteristics,
\begin{align}
    \label{eq-def-j}
    \chiJ_\alpha (\tau) & = \chi ((\bbL - 1) \cdot \upJ_\alpha (\tau))
    = \int_{\calM_\alpha} (\bbL - 1) \, \epsilon_\alpha (\tau) \, d \chi \ ,
    \\
    \label{eq-def-jsd}
    \chiJ^\sd_\theta (\tau) & = \chi (\upJ^\sd_\theta (\tau))
    = \int_{\calM^\sd_\theta} \epsilon^\sd_\theta (\tau) \, d \chi \ .
\end{align}
These are well-defined by the no-pole theorems,
\cref{thm-no-pole,thm-no-pole-sd}.

Define the \emph{na\"{i}ve Donaldson--Thomas invariants}
$\DT^\nai_\alpha (\tau)$,
$\DT^\sdnai_\theta (\tau) \in \bbQ$ by
\begin{align}
    \label{eq-def-dt-nai}
    \DT^\nai_\alpha (\tau) & =
    -(-1)^{\chi (\alpha, \alpha)} \cdot \chiJ_\alpha (\tau) \ , \\
    \label{eq-def-dt-sd-nai}
    \DT^\sdnai_\theta (\tau) & =
    (-1)^{\chi^\sd (j (\theta), 0)} \cdot \chiJ^\sd_\theta (\tau) \ .
\end{align}

\begin{remark}
    \label{rem-dt-nai}
    We use the name \emph{na\"{i}ve Donaldson--Thomas invariants},
    because it is only conceptually correct for smooth moduli stacks.
    In general, one needs to introduce a weighting by the
    \emph{Behrend function},
    as in Joyce--Song~\cite{JoyceSong2012},
    to obtain the correct definition of the Donaldson--Thomas invariants.
    In the smooth case, the Behrend function is a constant function
    equal to $\pm 1$, hence the signs in 
    \crefrange{eq-def-dt-nai}{eq-def-dt-sd-nai}.
    We hope to explore the weighted version in the future.
\end{remark}

\paragraph{Wall-crossing.}
\label[theorem]{thm-wcf-num}

We also write down wall-crossing formulae
for the numeric invariants.

\begin{theorem*}
    \allowdisplaybreaks
    In the situation of\/ \cref{para-sit-num-inv},
    with the extra assumptions \tagref{Fin}, \tagref{Ext}, and \tagref{SdExt},
    let $\tau, \tilde{\tau}, \hat{\tau}$ be self-dual weak stability conditions
    satisfying conditions in \cref{para-sit-mot-wcf}.
    
    Then, for classes $\alpha \in C (\calA)$ and $\theta \in C^\sd (\calA),$
    we have the wall-crossing formulae
    \begin{align}
        \chiJ_\alpha (\tilde{\tau})
        & = 
        \sum_{ \leftsubstack[5em]{
            & n > 0; \, \alpha_1, \dotsc, \alpha_n \in C (\calA) \colon \\[-.5ex]
            & \alpha = \alpha_1 + \cdots + \alpha_n
        } } {}
        \tilde{U} (\alpha_1, \dotsc, \alpha_n; \tau, \tilde{\tau}) \cdot
        \tilde{\chi} (\alpha_1, \dotsc, \alpha_n) \cdot {}
        \hspace{-3em} \notag \\*[-4ex]
        \label{eq-wcf-j-hash}
        && \mathllap{
        \chiJ_{\alpha_1} (\tau) \cdots
        \chiJ_{\alpha_n} (\tau) \ ,}
        \\[2ex]
        \DT^\nai_\alpha (\tilde{\tau})
        & = 
        \sum_{ \leftsubstack[5em]{
            & n \geq 0; \, \alpha_1, \dotsc, \alpha_n \in C (\calA) \colon \\[-.5ex]
            & \alpha = \alpha_1 + \cdots + \alpha_n
        } } {}
        \tilde{U} (\alpha_1, \dotsc, \alpha_n; \tau, \tilde{\tau}) \cdot
        (-1)^{\bar{\chi} (\alpha_1, \dotsc, \alpha_n)} \cdot
        \tilde{\chi} (\alpha_1, \dotsc, \alpha_n) \cdot {}
        \hspace{-3em} \notag \\*[-4ex]
        \label{eq-wcf-dt}
        && \mathllap{
        \DT^\nai_{\alpha_1} (\tau) \cdots
        \DT^\nai_{\alpha_n} (\tau) \ , }
        & \hspace{.1em} \\[2ex]
        \chiJ^\sd_\theta (\tilde{\tau})
        & = 
        \sum_{ \leftsubstack[7.5em]{
            \\[-3ex]
            & n \geq 0; \, m_1, \dotsc, m_n > 0; \\[-.5ex]
            & \alpha_{1,1}, \dotsc, \alpha_{1,m_1}; \dotsc;
            \alpha_{n,1}, \dotsc, \alpha_{n,m_n} \in C (\calA); \,
            \rho \in \smash{C^\sd (\calA)} \colon \\[-.5ex]
            & \theta = (\bar{\alpha}_{1,1} + \cdots + \bar{\alpha}_{1,m_1})
            + \cdots + (\bar{\alpha}_{n,1} + \cdots + \bar{\alpha}_{n,m_n}) + \rho
        } } {}
        \tilde{U}^\sd (\alpha_{1,1}, \dotsc, \alpha_{1,m_1}; \dotsc;
        \alpha_{n,1}, \dotsc, \alpha_{n,m_n}; \tau, \tilde{\tau}) \cdot {}
        \hspace{-2em} \notag \\*[1ex]
        & \hspace{1em}
        \tilde{\chi}^\sd (\alpha_{1,1}, \dotsc, \alpha_{1,m_1}; \dotsc;
        \alpha_{n,1}, \dotsc, \alpha_{n,m_n}; \rho) \cdot {}
        \hspace{-3em} \notag \\*[1ex] 
        \label{eq-wcf-j-sd-hash}
        && \mathllap {
        \bigl( \chiJ_{\alpha_{1,1}} (\tau) \cdots
        \chiJ_{\alpha_{1,m_1}} (\tau) \bigr) \cdots
        \bigl( \chiJ_{\alpha_{n,1}} (\tau) \cdots
        \chiJ_{\alpha_{n,m_n}} (\tau) \bigr) \cdot
        \chiJ^\sd_\rho (\tau) \ , }
        & \hspace{.1em} \\[2ex]
        \DT^\sdnai_\theta (\tilde{\tau})
        & = 
        \sum_{ \leftsubstack[7.5em]{
            \\[-3ex]
            & n \geq 0; \, m_1, \dotsc, m_n > 0; \\[-.5ex]
            & \alpha_{1,1}, \dotsc, \alpha_{1,m_1}; \dotsc;
            \alpha_{n,1}, \dotsc, \alpha_{n,m_n} \in C (\calA); \,
            \rho \in \smash{C^\sd (\calA)} \colon \\[-.5ex]
            & \theta = (\bar{\alpha}_{1,1} + \cdots + \bar{\alpha}_{1,m_1})
            + \cdots + (\bar{\alpha}_{n,1} + \cdots + \bar{\alpha}_{n,m_n}) + \rho
        } } {}
        \tilde{U}^\sd (\alpha_{1,1}, \dotsc, \alpha_{1,m_1}; \dotsc;
        \alpha_{n,1}, \dotsc, \alpha_{n,m_n}; \tau, \tilde{\tau}) \cdot {}
        \hspace{-3em} \notag \\*[1ex]
        & 
        \mathrlap{ (-1)^{\bar{\chi}^\sd (\alpha_{1,1}, \dotsc, \alpha_{n,m_n}, \rho)} \cdot
        \tilde{\chi}^\sd (\alpha_{1,1}, \dotsc, \alpha_{1,m_1}; \dotsc;
        \alpha_{n,1}, \dotsc, \alpha_{n,m_n}; \rho) \cdot {} }
        \notag \\*[1ex] 
        \label{eq-wcf-dt-sd}
        && \mathllap {
        \bigl( \DT^\nai_{\alpha_{1,1}} (\tau) \cdots
        \DT^\nai_{\alpha_{1,m_1}} (\tau) \bigr) \cdots
        \bigl( \DT^\nai_{\alpha_{n,1}} (\tau) \cdots
        \DT^\nai_{\alpha_{n,m_n}} (\tau) \bigr) \cdot
        \DT^\sdnai_\rho (\tau) \ , } 
        & \hspace{.1em}
    \end{align}
    where $\bar{\chi} (\alpha_1, \dotsc, \alpha_n)$
    and $\bar{\chi}^\sd (\alpha_1, \dotsc, \alpha_n, \rho)$
    are as in \cref{eq-def-multi-chi,eq-def-multi-chi-sd},
    but with $\bar{\chi}$ in place of~$\chi$.
    The coefficients $\tilde{\chi} ({\cdots})$ and $\tilde{\chi}^\sd ({\cdots})$ 
    are given by \cref{eq-def-chi-tilde,eq-def-chi-tilde-sd}.
\end{theorem*}

\begin{proof}
    Note that we have the relations
    \begin{align}
        \label{eq-xi-psi-epsilon}
        ((\bbL - 1) \cdot \upJ_\alpha (\tau)) \cdot \omega_\alpha & \sim
        \Xi \circ \Psi (\epsilon_\alpha (\tau)) \ , \\
        \label{eq-xi-psi-epsilon-sd}
        \upJ^\sd_\theta (\tau) \cdot \omega^\sd_\theta & \sim
        \Xi^\sd \circ \Psi^\sd (\epsilon^\sd_\theta (\tau)) \ ,
    \end{align}
    in $\Omega (\calA)$ and $\Omega^\sd (\calA)$, respectively,
    where $\sim$ denotes equality up to multiplying by a power of $\bbL^{1/2}$,
    and $\Xi$, $\Psi$, $\Xi^\sd$, $\Psi^\sd$
    are maps defined in~\cref{sect-motivic-int-2}.
    The extra factor $(\bbL - 1)$ comes from the difference
    between the elements $\lambda_\alpha$ and $\tilde{\lambda}_\alpha$, as in \cref{eq-def-lambda-tilde}.
    The invariants $\chiJ_\alpha (\tau)$ and $\chiJ^\sd_\theta (\tau)$
    were defined by applying the Euler characteristic map
    to the left-hand sides of \cref{eq-xi-psi-epsilon,eq-xi-psi-epsilon-sd},
    respectively.

    Now, equations
    \cref{eq-wcf-j-hash,eq-wcf-j-sd-hash}
    follow from applying \cref{thm-mot-int-lie-homo}
    to~\cref{eq-u-tilde,eq-comb-main}, respectively,
    and then using \cref{eq-lem-chi-tilde,eq-lem-chi-tilde-sd}
    to compute the coefficients.
    Equations \cref{eq-wcf-dt,eq-wcf-dt-sd} then follow immediately.
\end{proof}

\section{Self-dual quivers}

\label{sect-quiver}

In this \lcnamecref{sect-quiver}, we study enumerative invariants counting
\emph{self-dual representations} of a \emph{self-dual quiver},
which is one of the main examples of our theory.

The notion of a self-dual quiver was introduced by
Derksen and Weyman~\cite{DerksenWeyman2002},
and studied by Young~\cite{Young2015, Young2020}
in the context of enumerative geometry.
They are a reasonable notion of a \emph{$G$-quiver},
where $G$ is a type B/C/D algebraic group.
Self-dual representations of a self-dual quiver
are thus analogous to principal $G$-bundles for such~$G$.

We start by giving some basic definitions,
and then we verify the various assumptions needed for our theory.
We also provide an algorithm to compute our invariants,
and we carry out the computation in some examples.

\subsection{Self-dual quivers}

\paragraph{Quivers.}

A \emph{quiver} is a quadruple $(Q_0, Q_1, s, t)$,
where $Q_0\,, Q_1$ are finite sets,
thought of as the sets of vertices and arrows,
and $s, t \colon Q_1 \to Q_0$ are maps,
sending each arrow to its source and target.

A \emph{representation} of $Q$ over $\bbK$ is the data
$E = ((E_i)_{i \in Q_0} \, , (e_a)_{a \in Q_1})$,
where each $E_i$ is a finite-dimensional $\bbK$-vector space,
and each $e_a \colon E_{s (a)} \to E_{t (a)}$ is a linear map.
A \emph{morphism} of representations $h \colon E \to F$
is a collection of maps $(h_i \colon E_i \to F_i)_{i \in Q_0}$\,,
such that $f_a \circ h_{s (a)} = h_{t (a)} \circ e_a$
for all $a \in Q_1$\,.

Let $\Mod (\bbK Q)$ denote the $\bbK$-linear
abelian category of $Q$-representations over $\bbK$.
Here, $\bbK Q$ is the \emph{path algebra} of $Q$,
whose finite-dimensional representations
are the same as $Q$-representations over $\bbK$.

For the extra data as in \tagref{ExCat}, define
\[
    C^\circ (Q) = \bbN^{Q_0} \ ,
\]
and for each representation $E$ of $Q$,
define $\llbr E \rrbr = \smash{(\dim E_i)_{i \in Q_0}} \in C^\circ (Q)$.
Let $C (Q) = C^\circ (Q) \setminus \{ 0 \}$.

The \emph{opposite quiver} of $Q$ is the quiver
$Q^\op = (Q_0, Q_1, t, s)$,
which is the quiver obtained from $Q$
by inverting the directions of all arrows.

\paragraph{Quivers with relations.}
\label{para-quiver-rel}

We take the following notion of a \emph{quiver with relations}
from Joyce~\cite[\S6]{Joyce2021}.

A \emph{quiver with relations} over $\bbK$ is a tuple
$\smash{\breve{Q}} = (Q_0, Q_1, Q_2, s, t, p, q, r)$,
where $Q = (Q_0, Q_1, s, t)$ is a quiver;
$Q_2$ is a finite set, interpreted as the set of relations;
$p, q \colon Q_2 \to Q_0$ are maps,
indicating the source and target of paths on which the relations are to be imposed;
$r \colon Q_2 \to \bbK Q$ is a map,
such that for any $b \in Q_2$\,, we have $r (b) \in \bbK Q_{p (b), q (b)}$,
the linear subspace in $\bbK Q$ spanned by paths
from $p (b)$ to $q (b)$.

Let $I \subset \bbK Q$ denote the two-sided ideal
generated by $r (b)$ for $b \in Q_2$\,.

A \emph{representation} of such a quiver with relations
is a finite-dimensional representation
of the $\bbK$-algebra $\bbK \breve{Q} = \bbK Q / I$.
Let $\Mod (\bbK \breve{Q})$ denote the
$\bbK$-linear abelian category of representations of $\breve{Q}$.
It can be seen as a full subcategory of $\Mod (\bbK Q)$.

The \emph{opposite quiver with relations} of $\breve{Q}$
is $\breve{Q}^\op = (Q_0, Q_1, Q_2, t, s, q, p, r^\op)$,
where $r^\op \colon Q_2 \to \bbK Q^\op$
is the composition of $r$ with the isomorphism of vector spaces $\bbK Q \simeq \bbK Q^\op$
sending a path to its reverse path.

\paragraph{Self-dual quivers.}
\label{para-sd-quiver}

A \emph{self-dual quiver}
is a quadruple $(Q, \sigma, u, v)$, where
\begin{itemize}
    \item 
        $Q = (Q_0, Q_1, s, t)$ is a quiver.
    \item 
        $\sigma = (\sigma_i \colon Q_i \to Q_i)_{i=0,1}$
        consists of two maps,
        such that $\sigma_i^2 = \id_{\smash{Q_i}}$ for $i = 0, 1$,
        \,$\sigma_0 \circ s = t \circ \sigma_1$\,,
        and $\sigma_0 \circ t = s \circ \sigma_1$\,.
        This is called a \emph{contravariant involution} of~$Q$,
        and can be seen as an isomorphism $\sigma \colon Q \simto Q^\op$.
        We also write these maps as $(-)^\vee \colon Q_i \to Q_i$ for $i = 0, 1$.
    \item
        $u \colon Q_0 \to \{ \pm1 \}$ and $v \colon Q_1 \to \{ \pm1 \}$ are functions,
        such that $u_i = u_{\smash{i^\vee}}$ for all $i \in Q_0$\,,
        and $v_a \, v_{\smash{a^\vee}} = u_{s (a)} \, u_{t (a)}$
        for all $a \in Q_1$\,.
\end{itemize}

\paragraph{Self-dual quivers with relations.}
\label{para-sd-quiver-rel}

A \emph{self-dual quiver with relations}
is a quintuple $(\breve{Q}, \sigma, u, v, w)$, where
\begin{itemize}
    \item 
        $\breve{Q} = (Q_0, Q_1, Q_2, s, t, p, q, r)$ is a quiver with relations.
    \item 
        $\sigma = (\sigma_i \colon Q_i \to Q_i)_{i=0,1,2}$
        consists of three maps,
        such that $\sigma_i^2 = \id_{\smash{Q_i}}$ for all $i$,
        \,$\sigma_0 \circ s = t \circ \sigma_1$\,,
        \,$\sigma_0 \circ t = s \circ \sigma_1$\,,
        \,$\sigma_0 \circ p = q \circ \sigma_2$\,,
        and $\sigma_0 \circ q = p \circ \sigma_2$\,.
        These are also written as $(-)^\vee \colon Q_i \to Q_i$ for $i = 0, 1, 2$.
    \item
        $u \colon Q_0 \to \{ \pm1 \}$, $v \colon Q_1 \to \{ \pm1 \}$,
        and $w \colon Q_2 \to \{ \pm1 \}$ are functions,
        such that $u, v$ satisfy the conditions above,
        and $w_b \, w_{b^\vee} = u_{p (b)} \, u_{q (b)}$
        for all $b \in Q_2$\,.
        Furthermore, if $(-)^\vee \colon \bbK Q \simto \bbK Q^\op$
        is the map that sends a path to its image under $\sigma$,
        multiplied by the product of $v_a$ for all edges $a$ in that path,
        then we require that $r (b)^\vee = w_b \cdot r^\op (b^\vee)$
        for all $b \in Q_2$\,.
\end{itemize}
In particular, a self-dual quiver is also
a self-dual quiver with relations with $Q_2 = \varnothing$.

\paragraph{The self-dual structure.}
\label{para-quiver-sd-str}

Let $(\breve{Q}, \sigma, u, v, w)$ be a self-dual quiver with relations.
Define a self-dual structure
on the $\bbK$-linear abelian category $\smash{\Mod (\bbK \breve{Q})}$ as follows.

For a $\breve{Q}$-representation
$E = (\smash{(E_i)_{i \in Q_0}}, \smash{(e_a)_{a \in Q_1}})$,
define its dual representation
$E^\vee = (\smash{(E'_i)_{i \in Q_0}}, \smash{(e'_a)_{a \in Q_1}})$
by
\begin{equation}
    E'_i = (E_{i^\vee})^\vee \ , \qquad
    e'_a = v_a \cdot (e_{a^\vee})^\vee \ ,
\end{equation}
where the duals $(-)^\vee$ outside the brackets
are the usual vector space duals.
Define the natural isomorphism $\eta_E \colon E^{\vee\vee} \simto E$ by
\begin{equation}
    (\eta_E)_i = u_i \cdot \mathrm{ev}_i^{-1}
\end{equation}
for all $i \in Q_0$\,,
where $\mathrm{ev}_i \colon E_i \simto E_i^{\vee\vee}$
is the evaluation isomorphism.
This defines a self-dual structure on $\Mod (\bbK \breve{Q})$.

A \emph{self-dual representation} of $\breve{Q}$
is a self-dual object in $\Mod (\bbK \breve{Q})$.
If $(E, \phi)$ is such a self-dual object,
then $E_{i^\vee} \simeq (E_i)^\vee$ for all $i \in Q_0$\,.
In particular, if $i = i^\vee$,
this isomorphism equips $E_i$ with
an orthogonal or symplectic structure,
depending on the sign $u_i$\,.

Define 
\begin{equation}
    C^\sd (Q) = \biggl\{ \,
        \theta \in C^\circ (Q) \biggm|
        \begin{aligned}[c]
            & \theta_i = \theta_{\smash{i^\vee}} \text{ for all } i, \\[-.5ex]
            & 2 \mid \theta_i \text{ if } i = i^\vee \text{ and } u_i = -1
        \end{aligned}
    \, \biggr\} \ .
\end{equation}
This is the set of all self-dual dimension vectors,
where the second line of the condition is due to the 
non-existence of odd-dimensional symplectic vector spaces.

\paragraph{Stability functions.}

We recall the notion of \emph{stability functions}
on a quiver, which is a class of stability conditions,
although not all stability conditions arise in this way.

Let $Q$ be a quiver, possibly with relations.
A \emph{stability function} on $Q$ is a function
\[
    \tau \colon Q_0 \longrightarrow \bbQ \ .
\]
Given such a stability function, we can define
a stability condition on $\cat{Mod} (\bbK Q)$,
denoted also by $\tau \colon C (Q) \to \bbQ$,
by setting
\begin{equation}
    \tau (\alpha) =
    \frac{\sum_{i \in Q_0} \tau (i) \, \alpha_i}{\sum_{i \in Q_0} \alpha_i}
\end{equation}
for all $\alpha \in C (Q)$.

Similarly, if $\breve{Q}$ is a quiver with relations,
a \emph{stability function} on $\breve{Q}$
is a stability function on its underlying quiver,
and this defines a stability condition on $\cat{Mod} (\bbK \breve{Q})$.

Now, suppose that $(Q, \sigma, u, v)$ is a self-dual quiver.
We say that a stability function $\tau$ on $Q$ is \emph{self-dual}, if
\begin{equation}
    \tau (i^\vee) = -\tau (i)
\end{equation}
for all $i \in Q_0$\,.
In this case, the corresponding stability condition on $\cat{Mod} (\bbK Q)$
is a self-dual stability condition.

\subsection{Verifying the assumptions}

\paragraph{}

Next, we verify the various assumptions
needed for our enumerative invariant theory.
These include the following:
\begin{itemize}
    \item 
        \tagref{Mod} and \tagref{Fin}
        for $\calA = \Mod (\bbK \breve{Q})$,
        where $\breve{Q}$ is a quiver with relations.
    \item 
        \tagref{SdMod} for $\calA = \Mod (\bbK \breve{Q})$,
        where $\breve{Q}$ is a self-dual quiver with relations.
    \item 
        \tagref{Ext} for $\calA = \Mod (\bbK Q)$,
        where $Q$ is a quiver, with no relations.
    \item 
        \tagref{SdExt} for $\calA = \Mod (\bbK Q)$,
        where $Q$ is a self-dual quiver, with no relations.
\end{itemize}
We also verify
\begin{itemize}
    \item 
        \tagref{Stab} for any weak stability condition $\tau$
        on $\calA = \Mod (\bbK \breve{Q})$, where $\breve{Q}$ is a quiver with relations.
\end{itemize}
In particular, in the case of self-dual quivers with no relations,
all assumptions mentioned above are satisfied,
and hence, all the results in \crefrange{sect-alg}{sect-invariants} apply.

When there are relations, the condition \tagref{Ext},
and hence \tagref{SdExt}, fails,
but all the invariants in \cref{sect-invariants} are still well-defined,
as their definitions do not depend on this.
However, in this case,
the wall-crossing formulae may no longer hold.

\paragraph{Defining the moduli stacks.}
\label{para-moduli-quiver}

Let $\breve{Q}$ be a quiver with relations over $\bbK$,
and let $\calA = \smash{\Mod (\bbK \breve{Q})}$.
We verify the conditions \tagref{Mod} and \tagref{Fin}.

For \tagref{Mod}, a categorical moduli stack $\+{\calM}$ of objects in $\calA$
was constructed in \cite[\S10.2]{Joyce2006I},
and can be described as follows.
For a $\bbK$-scheme $U$, the exact category $\+{\calM} (U)$
is the category of pairs $(E, \rho)$,
with $E$ a locally free sheaf on $U$ of finite rank,
and $\rho \colon \bbK \breve{Q} \to \mathrm{End} (E)$
is a map of $\bbK$-algebras.
It was shown in \cite[\S\S10.2--10.4]{Joyce2006I} that $\+{\calM}$ is
an exact $\bbK$-stack,
and that $\calM$ and $\calM^\ex$ 
are algebraic $\bbK$-stacks locally of finite type.
To see that $\calM^I$ is algebraic, note that it
can be identified with the moduli stack for $\Mod (\bbK \breve{Q}')$
for a certain quiver with relations $\breve{Q}'$.
This verifies \tagref{Mod}.

The condition \tagref{Fin} is automatically true,
since all moduli stacks in question are already of finite type.

Now, assume that $\breve{Q}$ is equipped with a self-dual quiver structure
$(\breve{Q}, \sigma, u, v, w)$,
and we verify \tagref{SdMod}.

We have constructed in \cref{para-quiver-sd-str}
an induced self-dual structure on $\calA$.
One can define a self-dual structure on $\+{\calM} (U)$
for any $\bbK$-scheme $U$ analogously,
using the description of $\+{\calM} (U)$ as above.
This defines a self-dual structure on $\+{\calM}$
and verifies \tagref{SdMod}.

Note that by the above construction, there are \emph{universal families}
of representations of $\breve{Q}$ over $\calM$,
consisting of universal vector bundles $\calU_i \to \calM$
for $i \in Q_0$\,,
and morphisms $\calU_{s(a)} \to \calU_{t(a)}$
for $a \in Q_1$\,,
satisfying the relations specified by $Q_2$\,.

\paragraph{Explicit description, ordinary case.}
    
The moduli stack $\calM$ of representations
of a quiver $Q$ without relations
has the following explicit description.
For each $\alpha \in C^\circ (Q)$, we have a canonical isomorphism
\begin{equation}
    \label{eq-moduli-quiver-explicit}
    \calM_\alpha \simeq [V_\alpha / G_\alpha] \ ,
\end{equation}
where
\begin{align}
    V_\alpha & = \prod_{a \in Q_1} \Hom (\bbK^{\alpha_{s (a)}}, \bbK^{\alpha_{t (a)}}) \ , \\
    G_\alpha & = \prod_{i \in Q_0} \GL (\bbK^{\alpha_i}) \ ,
\end{align}
and the (right) action of $G_\alpha$ on $V_\alpha$ is given as follows.
For $g = (g_i)_{i \in Q_0} \in G_\alpha$ and $e = (e_a)_{a \in Q_1} \in V_\alpha$\,, define
$g \cdot e = \smash{(e'_a)_{a \in Q_1}}$\,, where
$\smash{e'_a} = g_{\smash{t (a)}}^{\smash{-1}} \circ e_a \circ g_{s (a)}$\,.

\paragraph{Explicit description, self-dual case.}

Now, let $(Q, \sigma, u, v)$ be a self-dual quiver with no relations.
Similarly, there is an explicit description of $\calM^\sd$, as follows.

For convenience in notation,
we choose total orders $\leq$ on $Q_0$ and $Q_1$\,,
so that we can write
\begin{align}
    Q_0 & = Q_0^+ \sqcup Q_0^- \sqcup Q_0^\tria \sqcup Q_0^{\tria\vee} \ , \\
    Q_1 & = Q_1^+ \sqcup Q_1^- \sqcup Q_1^\tria \sqcup Q_1^{\tria\vee} \ , 
\end{align}
where
\begin{align}
    Q_0^\pm & = \{ i \in Q_0 \mid i = i^\vee, \, u_i = \pm 1 \} \ , \\
    Q_0^\tria & = \{ i \in Q_0 \mid i < i^\vee \} \ , \\
    Q_1^\pm & = \{ a \in Q_1 \mid a = a^\vee, \, u_{t (a)} \, v_a = \pm 1 \} \ , \\
    Q_1^\tria & = \{ a \in Q_1 \mid a < a^\vee \} \ .
\end{align}
This is purely for notational reasons,
and the underlying theory will not depend on this choice.

For each $\theta \in C^\sd (Q)$, we have a canonical isomorphism
\begin{equation}
    \label{eq-msd-quiver-explicit}
    \calM^\sd_\theta \simeq [V^\sd_\theta / G_\theta^\sd] \ ,
\end{equation}
where
\begin{align}
    \label{eq-quiv-vsd}
    V^\sd_\theta & =
    \prod_{a \in Q_1^\tria} {}
    \Hom (\bbK^{\theta_{s (a)}}, \bbK^{\theta_{t (a)}}) \times
    \prod_{a \in Q_1^+} {}
    \Sym^2 (\bbK^{\theta_{t (a)}}) \times
    \prod_{a \in Q_1^-} {}
    {\wedge}^2 (\bbK^{\theta_{t (a)}}), 
    \\
    \label{eq-quiv-gsd}
    G^\sd_\theta & =
    \prod_{i \in Q_0^\tria} {}
    \GL (\theta_i) \times
    \prod_{i \in Q_0^+} {}
    \upO (\theta_i) \times
    \prod_{i \in Q_0^-} {}
    \Sp (\theta_i),
\end{align}
and it is not difficult to write down
the action of $G^\sd_\theta$ on $V^\sd_\theta$,
so that the quotient produces the expected moduli stack.

\paragraph{Extension bundles.}
\label{cons-a0-a1}

We now verify \tagref{Ext}
for quivers without relations,
and \tagref{SdExt}
for self-dual quivers without relations.

Let $Q = (Q_0, Q_1, s, t)$ be a quiver,
and let $E, F \in \Mod (\bbK Q)$.
Define $\bbK$-vector spaces
\begin{align}
    \label{eq-def-a0}
    A^0 (E, F) & =
    \bigoplus_{i \in Q_0}
    \Hom (E_i \, , F_i) \ , \\
    \label{eq-def-a1}
    A^1 (E, F) & =
    \bigoplus_{a \in Q_1}
    \Hom (E_{s (a)} \, , F_{t (a)}) \ .
\end{align}
Define a map $d \colon A^0 (E, F) \to A^1 (E, F)$ as follows.
For each $\gamma = (\gamma_i)_{i \in Q_0} \in A^0 (E, F)$,
define $d \gamma \in A^1 (E, F)$ by
\begin{equation}
    \label{eq-def-d-gamma}
    (d \gamma)_a = \gamma_{t(a)} \circ e_a - f_a \circ \gamma_{s(a)}
\end{equation}
for all $a \in Q_1$\,,
where $e_a\,, f_a$ are structure maps of $E, F$.

As in, for example, Crawley-Boevey~\cite[p.\,8]{CrawleyBoevey1992},
the cohomology of the two-term complex
\begin{equation}
    \cdots \to 0 \to
    A^0 (E, F) \overset{d}{\longrightarrow}
    A^1 (E, F) \to 0 \to \cdots
\end{equation}
computes $\Ext^0 (E, F)$ and $\Ext^1 (E, F)$,
and this can be described as follows.

For each $\alpha = (\alpha_a)_{a \in Q_1} \in A^1 (E, F)$,
we define a representation $G_\alpha \in \Mod (\bbK Q)$ as follows.
For each $i \in Q_0$, let $(G_\alpha)_i = F_i \oplus E_i$\,.
For each $(i \xrightarrow{\smash{a}} j) \in Q_1$, define the structure map
$(g_\alpha)_a \colon F_i \oplus E_i \to F_j \oplus E_j$ by
\begin{equation}
    (g_\alpha)_a = \begin{pmatrix}
        f_a & \alpha_a \\
        0 & e_a
    \end{pmatrix} ,
\end{equation}
where $e_a \, , f_a$ are structure maps of $E$ and $F$.
We have a short exact sequence
\begin{equation}
    0 \to F \longrightarrow G_\alpha \longrightarrow E \to 0
\end{equation}
in $\Mod (\bbK Q)$,
where the maps are defined in the obvious way.
This defines an extension of $E$ by $F$.
Moreover, two elements $\alpha, \alpha' \in A^1 (E, F)$
give rise to isomorphic extensions,
if and only if $\alpha' - \alpha = d \gamma$ for some $\gamma \in A^0 (E, F)$.
Indeed, if this is the case, then the map
$\eta \colon G_\alpha \to G_{\alpha'}$ given by
\begin{equation}
    \eta_i = \begin{pmatrix}
        \id_{F_i} & \gamma_i \\
        0 & \id_{E_i}
    \end{pmatrix}
\end{equation}
is an isomorphism that is compatible with the inclusions from $F$
and the projections to $E$. The condition that $d \gamma = \alpha' - \alpha$
ensures that
\begin{equation}
    \eta_{t (a)} \circ (g_\alpha)_a =
    (g_{\alpha'})_a \circ \eta_{s (a)}
\end{equation}
for all $a \in Q_1$\,.
Conversely, all such isomorphisms must be of this form.
Therefore, the cokernel of $d$ parametrizes all extensions of $E$ by $F$,
and the kernel of $d$ parametrizes all automorphisms of such extensions
that induce identities on $E$ and $F$.

From the above argument, we conclude that the $2$-vector space
\begin{equation}
    [A^1 (E, F) / A^0 (E, F)] \simeq
    \Ext^1 (E, F) \times [*/\Ext^0 (E, F)]
\end{equation}
is the moduli stack of extensions of $E$ by $F$,
where we used the map $d \colon A^0 (E, F) \to A^1 (E, F)$.

All the above is true in families as well.
Indeed, we can define vector bundles $A^0, A^1 \to \calM \times \calM$
whose fibres at $(F, E)$ are $A^0 (E, F)$ and $A^1 (E, F)$,
which can be constructed from the universal families.
Then, define a morphism $d \colon A^0 \to A^1$
similarly as above.
The $2$-vector bundle
\begin{equation}
    \calExt^{1/0} = [A^1 / A^0]
    \longrightarrow \calM \times \calM
\end{equation}
is isomorphic to $\pi_1 \times \pi_2 \colon \calM^\pn{2} \to \calM \times \calM$,
the moduli stack of all extensions.
This verifies \tagref{Ext}.
The Euler form $\chi (-, -)$ is given
for dimension vectors $\alpha, \beta \in C^\circ (Q)$ by
\begin{align}
    \label{eq-chi-explicit}
    \chi (\alpha, \beta) & =
    \sum_{i \in Q_0} \alpha_i \, \beta_i -
    \sum_{a \in Q_1} \alpha_{s(a)} \, \beta_{t(a)} \ .
\end{align}

\paragraph{Self-dual extension bundles.}

Now, let $(Q, \sigma, u, v)$ be a self-dual quiver without relations.
The condition \tagref{SdExt} follows from the fact that
each $\calM_\alpha$ is connected.

The self-dual Euler form, as in \cref{para-euler-sd},
is given by
\begin{align*}
    \chi^\sd (\alpha, \theta) & = \chi (\alpha, j (\theta)) +
    \frac{1}{2} \sum_{i \in Q_0^\pm}
    \alpha_i \, ( \alpha_i - u_i ) +
    \sum_{i \in Q_0^\tria}
    \alpha_{i^\vee} \, \alpha_i \\
    & \hspace{2em} {} -
    \frac{1}{2} \sum_{a \in Q_1^\pm}
    \alpha_{t(a)} \, ( \alpha_{t(a)} + u_{t(a)} \, v_a ) -
    \frac{1}{2} \sum_{a \in Q_1^\tria}
    \alpha_{s(a)^\vee} \, \alpha_{t(a)}
    \numberthis
\end{align*}
for $\alpha \in C^\circ (Q)$ and $\theta \in C^\sd (Q)$,
with $j$ as in~\cref{eq-j-csd}.
See also Young~\cite[Eq.~(7)]{Young2020} for a similar expression.

\begin{proposition}
    Let $\breve{Q}$ be a quiver with relations,
    and let $\tau$ be any weak stability condition on $\Mod (\bbK \breve{Q})$.
    Then $\tau$ satisfies \tagref{Stab}.
\end{proposition}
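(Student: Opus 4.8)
The plan is to check the three clauses \tagref{Stab1}, \tagref{Stab2}, \tagref{Stab3} in turn. The heart of the matter is the openness clause \tagref{Stab1}; once it is in place, \tagref{Stab2} and \tagref{Stab3} follow formally from the facts that the class monoid $C^\circ (\breve{Q}) = \bbN^{Q_0}$ is graded by total dimension and the moduli stacks $\calM_\alpha$ are of finite type, both recorded in \cref{para-moduli-quiver}.

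For \tagref{Stab1}, fix $\alpha \in C (\breve{Q})$ and use the atlas $\calM_\alpha \simeq [V_\alpha / G_\alpha]$: here $V_\alpha$ is the $G_\alpha$-invariant affine closed subscheme of $\prod_{a \in Q_1} \Hom (\bbK^{\alpha_{s(a)}}, \bbK^{\alpha_{t(a)}})$ cut out by the relations of $\breve{Q}$, following the description of $\+{\calM}$ in \cref{para-moduli-quiver}; write $\mathcal{E}$ for the tautological family of $\alpha$-dimensional representations over $V_\alpha$. Since $\tau$ is a weak stability condition on the abelian category $\Mod (\bbK \breve{Q})$, the seesaw property \cref{itm-def-stability-1} of \cref{def-stability}, applied to $F \hookrightarrow E \twoheadrightarrow E/F$, gives $\tau (F) \leq \tau (E) \iff \tau (E) \leq \tau (E/F)$; hence $E$ with $\llbr E \rrbr = \alpha$ fails to be $\tau$-semistable precisely when it has a subrepresentation $F$ with $0 \neq F \neq E$ and $\tau (\llbr F \rrbr) > \tau (\alpha)$. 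The class $\beta = \llbr F \rrbr$ satisfies $\beta \leq \alpha$ componentwise, $\beta \neq 0$, $\beta \neq \alpha$, so it ranges over the finite set $B = \{ \beta \in C (\breve{Q}) : \beta \leq \alpha, \ \beta \neq \alpha, \ \tau (\beta) > \tau (\alpha) \}$.

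For each $\beta \in B$, the subscheme of $\bigl( \prod_{i \in Q_0} \mathrm{Gr} (\beta_i, \alpha_i) \bigr) \times V_\alpha$ consisting of pairs $\bigl( (F_i)_{i \in Q_0}, v \bigr)$ for which $(F_i)_i$ is a subrepresentation of $\mathcal{E}_v$ is closed, since compatibility with the structure maps is a closed condition and the relations then hold automatically; its projection to $V_\alpha$ is therefore proper, so the image $W_\beta \subset V_\alpha$ is closed and $G_\alpha$-invariant, and equals the locus of $v$ for which $\mathcal{E}_v$ has a subrepresentation of class $\beta$. Consequently $V_\alpha \setminus \bigcup_{\beta \in B} W_\beta$ is $G_\alpha$-invariant and open, and descends to an open substack $\Mss_\alpha (\tau) \subset \calM_\alpha$ whose $\bbK$-points are exactly the $\tau$-semistable representations of class $\alpha$, as required.

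Clause \tagref{Stab2} is then immediate, because $\calM_\alpha = [V_\alpha / G_\alpha]$ is Noetherian, so its open substack $\Mss_\alpha (\tau)$ is quasi-compact and hence of finite type. For \tagref{Stab3}, given a finite type $\bbK$-scheme $U$ and a morphism $e \colon U \to \calM$, quasi-compactness of $U$ together with the open-and-closed decomposition $\calM = \coprod_\alpha \calM_\alpha$ forces $e$ to factor through finitely many $\calM_{\alpha_1}, \dotsc, \calM_{\alpha_r}$; every stepwise Harder--Narasimhan quotient of an $\alpha_l$-dimensional representation is a subquotient, so its class lies in the finite set $\{ \beta \in C (\breve{Q}) : \beta \leq \alpha_l \}$, and one takes $S_e$ to be the union of these sets over $l$. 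The one step that needs genuine care is the production of $\Mss_\alpha (\tau)$ in \tagref{Stab1} — namely, identifying the non-semistable locus as the \emph{finite} union of the closed images of the relative quiver Grassmannians — which relies on the properness of those projections and on the finiteness of $B$, itself coming from the grading of $C (\breve{Q})$ by total dimension; everything else is bookkeeping.
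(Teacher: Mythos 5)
Your overall strategy is the same as the paper's: exhibit the non-semistable locus in $\calM_\alpha$ as a finite union of closed substacks indexed by destabilizing classes $\beta$, deduce openness of $\Mss_\alpha(\tau)$, and then observe that \tagref{Stab2} and \tagref{Stab3} are automatic because the stacks are of finite type and each class in $\bbN^{Q_0}$ has only finitely many subclasses. The paper proves the closedness by reducing to the case of no relations and citing the proof of \cite[Lemma~3.2.2]{BelmansEtal2022}, then noting that semistability for $\breve{Q}$-representations is the same as for $Q$-representations; you instead work directly with $\breve{Q}$ and spell out the closedness via the relative quiver Grassmannian and properness. That route is more self-contained and is a perfectly acceptable alternative.

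However, there is a genuine error in your reduction to the set $B$. You assert that the seesaw property gives $\tau(F) \leq \tau(E) \iff \tau(E) \leq \tau(E/F)$, and conclude that $E$ fails to be $\tau$-semistable precisely when it has a subobject $F$ with $\tau(\llbr F\rrbr) > \tau(\alpha)$. For a \emph{weak} stability condition this biconditional is false: condition~\cref{itm-def-stability-1} of \cref{def-stability} only says that one of the two (non-strict) monotone chains holds, and these chains overlap when equalities occur. In particular, it is consistent to have $\tau(F) = \tau(E) > \tau(E/F)$, in which case $E$ is destabilized by $F$ even though $\tau(\llbr F\rrbr) = \tau(\alpha)$ is not strictly larger; only for a stability condition in the strict sense of condition~(i$'$) would your equivalence hold. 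Consequently your set $B = \{\beta : 0 < \beta < \alpha,\ \tau(\beta) > \tau(\alpha)\}$ may be too small, and the resulting $\Mss_\alpha(\tau)$ may contain non-semistable points. The correct characterization (and the one the paper implicitly uses) is: $E$ fails to be semistable iff there is a subobject of class $\beta$ with $0 < \beta < \alpha$ and $\tau(\beta) > \tau(\alpha - \beta)$ — this \emph{is} equivalent to the negation of $\tau(F) \leq \tau(E) \leq \tau(E/F)$ by the weak seesaw property. Replacing $B$ by $\{\beta : 0 < \beta < \alpha,\ \tau(\beta) > \tau(\alpha - \beta)\}$ (still finite, since $\beta \leq \alpha$ componentwise) repairs the argument, and the remainder of your proof goes through unchanged.
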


\begin{proof}
    First, let us verify \tagref{Stab1}
    for a quiver $Q$ with no relations.
    It was shown in the proof of \cite[Lemma~3.2.2]{BelmansEtal2022}
    that for any $\alpha, \beta \in C (Q)$,
    there is a closed substack of $\calM_\alpha$
    consisting of representations of class $\alpha$
    that have sub-representations of class $\beta$.
    The semistable locus can be obtained from $\calM_\alpha$
    by removing these substacks for all $\beta$ satisfying
    $\alpha - \beta \in C (Q)$ and $\tau (\beta) > \tau (\alpha - \beta)$.
    Therefore, we see that the semistable locus is open.

    For a quiver with relations $\breve{Q}$,
    a representation of $\breve{Q}$ is $\tau$-semistable
    if and only if it is $\tau$-semistable as a representation of $Q$.
    This verifies \tagref{Stab1} in general.
    
    \tagref{Stab2} is automatically true,
    as each $\calM_\alpha$ is already of finite type.
    \tagref{Stab3}
    is also automatically true, because each element of $C (Q)$
    can only be written as a sum of elements of $C (Q)$ in finitely many ways.
\end{proof}

\paragraph{Conclusion.}
\label{para-quiv-inv}

By the above discussion,
for any self-dual quiver with relations $\breve{Q}$,
and any self-dual weak stability condition $\tau$ on $\calA = \smash{\Mod (\bbK \breve{Q})}$,
we have well-defined invariants
\begin{alignat*}{6}
    \delta_\alpha (\tau) , && \ 
    \epsilon_\alpha (\tau) & \in \SF (\calM_\alpha) \ , & \quad
    \upI_\alpha (\tau) , && \ 
    \upJ_\alpha (\tau) & \in \Mhat_\bbK \ , & \quad
    \chiJ_\alpha (\tau) , && \ 
    \DT^\nai_\alpha (\tau) & \in \bbQ \ ,
    \\
    \delta^\sd_\theta (\tau) , && \ 
    \epsilon^\sd_\theta (\tau) & \in \SF (\calM^\sd_\theta) \ , & \quad
    \upI^\sd_\theta (\tau) , && \ 
    \upJ^\sd_\theta (\tau) & \in \Mhat_\bbK \ , & \quad
    \chiJ^\sd_\theta (\tau) , && \ 
    \DT^\sdnai_\theta (\tau) & \in \bbQ \ ,
\end{alignat*}
for all $\alpha \in C^\circ (\breve{Q})$ and $\theta \in C^\sd (\breve{Q})$.

If, moreover, $\breve{Q}$ has no relations,
then the above invariants satisfy wall-crossing formulae
under a change of the self-dual weak stability condition $\tau$,
as in \cref{thm-wcf-main,thm-wcf-mot,thm-wcf-num}.

\subsection{An algorithm for computing invariants}

\paragraph{}

For a self-dual quiver $Q$ without relations,
we provide an algorithm to compute the invariants 
    $\upI_{\alpha} (\tau)$,
    $\upI^\sd_{\theta} (\tau)$,
    $\upJ_{\alpha} (\tau)$,
    $\upJ^\sd_{\theta} (\tau)$,
    $\chiJ_{\alpha} (\tau)$,
    $\chiJ^\sd_{\theta} (\tau)$,
    $\DT_{\alpha} (\tau)$, and
    $\DT^\sd_{\theta} (\tau)$,
defined above.
This will allow us to explicitly compute these invariants
for any given self-dual quiver,
and any given self-dual weak stability condition.

Note that we have dropped the superscript `$\nai$'
for the DT invariants, since when there are no relations,
the $\DT^\nai$ and $\DT^\sdnai$ invariants
coincide with the correct definition of DT invariants
in \cref{sect-quiv-dt} below.

\begin{algorithm}
    \label{alg-quiver}
    Let $Q$ be a self-dual quiver,
    and $\tau$ be a self-dual weak stability condition
    on $\calA = \Mod (\bbK Q)$.
    Let $\alpha \in C (Q)$ be a dimension vector,
    and $\theta \in C^\sd (Q)$ a self-dual dimension vector.
    
    One can compute the invariants
    $\upI_{\alpha} (\tau)$,
    $\upI^\sd_{\theta} (\tau)$,
    $\upJ_{\alpha} (\tau)$,
    $\upJ^\sd_{\theta} (\tau)$,
    and hence the invariants
    $\chiJ_{\alpha} (\tau)$,
    $\chiJ^\sd_{\theta} (\tau)$,
    $\DT_{\alpha} (\tau)$,
    $\DT^\sd_{\theta} (\tau)$, as follows.
    
    \begin{enumerate}
        \allowdisplaybreaks
        \item 
        \emph{Compute the invariants
        $\smash{\upI_{\alpha} (0)}$
        and $\smash{\upI^\sd_{\theta} (0)}$,
        for the trivial stability condition $0$.}
        
        This is done using the explicit descriptions
        \cref{eq-moduli-quiver-explicit,eq-msd-quiver-explicit},
        together with \cref{thm-motive-linear-action-quotient}.
        Explicitly, using the notations of~\cref{eq-moduli-quiver-explicit},
        and using~\cref{eq-motive-bgl}, we have
        \begin{align*}
            \upI_{\alpha} (0)
            & = \mu (V_\alpha) \cdot \mu ([*/G_\alpha]) \\*
            & = \prod_{a \in Q_1} \bbL^{\alpha_{s (a)} \, \alpha_{t (a)}} \cdot
            \prod_{i \in Q_0} \ 
            \prod_{j=0}^{\alpha_i-1} \frac{1}{\bbL^{\alpha_i} - \bbL^j} \ .
            \numberthis 
        \end{align*}
        Similarly, using the notations of~\cref{eq-msd-quiver-explicit},
        and using \cref{eq-motive-bgl,eq-motive-bo-even,eq-motive-bo-odd},
        we have
        \begin{align*}
            \upI^\sd_{\theta} (0)
            & = \mu (V^\sd_\theta) \cdot \mu ([*/G^\sd_\theta]) \\[1ex]
            & = \prod_{a \in Q_1^\tria}
            \bbL^{\theta_{s (a)} \, \theta_{t (a)}} \cdot
            \prod_{a \in Q_1^\pm}
            \bbL^{\theta_{t (a)} \, (\theta_{t (a)} \, + \, u_{t(a)} \, v_a) / 2} \cdot {} \\*
            & \hspace{2em} \biggl( {}
                \prod_{i \in Q_0^\tria} \ 
                \prod_{j=0}^{\theta_i-1} \frac{1}{\bbL^{\theta_i} - \bbL^j} 
            \biggr) \cdot \biggl( {}
                \prod_{\substack{ i \in Q_0^\pm : \xmathstrut[0]{.3} \\ \theta_i \text{ even} }} {}
                \bbL^{u_i \, \theta_i/2} \cdot
                \prod_{j=0}^{\theta_i/2-1} \frac{1}{\bbL^{\theta_i} - \bbL^{2j}}
            \biggr) \cdot {} \\*
            & \hspace{2em} \biggl( {}
                \prod_{\substack{ i \in Q_0^+ : \xmathstrut[0]{.3} \\ \theta_i \text{ odd} }} {}
                \bbL^{-(\theta_i - 1)/2} \cdot
                \prod_{j=0}^{(\theta_i-3)/2} \frac{1}{\bbL^{\theta_i-1} - \bbL^{2j}}
            \biggr) \ .
            \numberthis
        \end{align*}
    
        \item 
        \emph{Use wall-crossing to compute the invariants
        $\upI_{\alpha} (\tau)$
        and $\upI^\sd_{\theta} (\tau)$.}

        This can be done using~\cref{eq-wcf-i,eq-wcf-isd}.
        Since the trivial stability condition~$0$ dominates~$\tau$,
        these formulae can be simplified,
        similarly to \cref{thm-wcf-anti-dom}.
        Explicitly, for $\alpha \in C (\calA)$ and
        $\theta \in C^\sd (\calA)$, we have
        \begin{align}
            \upI_\alpha (\tau) & =
            \sum_{ \leftsubstack[6em]{
                & n > 0; \, \alpha_1, \dotsc, \alpha_n \in C (\calA) \colon \\[-.5ex]
                & \alpha = \alpha_1 + \cdots + \alpha_n \, , \\[-.5ex]
                & \tau (\alpha_1 + \cdots + \alpha_i) > \tau (\alpha_{i+1} + \cdots + \alpha_n)
                \text{ for all } i = 1, \dotsc, n - 1
            } } {}
            (-1)^{n-1} \cdot
            \bbL^{-\chi (\alpha_1, \dotsc, \alpha_n)} \cdot
            \upI_{\alpha_1} (0) \cdots
            \upI_{\alpha_n} (0) \ , \\[1ex]
            \upI^\sd_\theta (\tau) & =
            \sum_{ \leftsubstack[6em]{
                & n \geq 0; \, \alpha_1, \dotsc, \alpha_n \in C (\calA), \,
                \rho \in C^\sd (\calA) \colon \\[-.5ex]
                & \theta = \bar{\alpha}_1 + \cdots + \bar{\alpha}_n + \rho, \\[-.5ex]
                & \tau (\alpha_1 + \cdots + \alpha_i) > 0
                \text{ for all } i = 1, \dotsc, n
            } } {}
            (-1)^n \cdot
            \bbL^{-\chi (\alpha_1, \dotsc, \alpha_n, \rho)} \cdot
            \upI_{\alpha_1} (0) \cdots
            \upI_{\alpha_n} (0) \cdot
            \upI^\sd_{\rho} (0) \ .
            \raisetag{3ex}
        \end{align}
    
        \item 
        \emph{Compute the invariants
        $\smash{\upJ_{\alpha} (\tau)}$
        and $\smash{\upJ^\sd_{\theta} (\tau)}$.}

        This can be done using the relations
        \cref{eq-inv-def-epsilon,eq-inv-def-epsilon-sd},
        as we now have explicit expressions for the invariants
        $\upI_{\alpha} (\tau)$ and $\upI^\sd_{\theta} (\tau)$.
    
        \item 
        \emph{Compute the invariants}
        $\chiJ_{\alpha} (\tau)$,
        $\chiJ^\sd_{\theta} (\tau)$,
        $\DT_{\alpha} (\tau)$,
        $\DT^\sd_{\theta} (\tau)$.

        We have now obtained explicit expressions for
        $(\bbL - 1) \cdot \smash{\upJ_{\alpha} (\tau)}$
        and $\smash{\upJ^\sd_{\theta} (\tau)}$,
        which are rational functions in the variable~$\bbL$
        that have no poles at $\bbL = 1$.
        Setting $\bbL = 1$ gives the invariants
        $\chiJ_{\alpha} (\tau)$ and $\chiJ^\sd_{\theta} (\tau)$,
        and adding appropriate signs gives the corresponding
        na\"ive DT invariants.
    \end{enumerate}
\end{algorithm}

\subsection{Examples}
\label{sect-quiv-eg}

We now apply \cref{alg-quiver}
to several examples of self-dual quivers,
and compute the invariants explicitly.

\paragraph{The point quiver.}
\label{para-sit-point-quiver}

Let $Q = (\bullet)$ be the point quiver,
equipped with the unique contravariant involution.
Let $\varepsilon = \pm 1$ be a sign,
and let $u_1 = \varepsilon$,
where $1$ is the label of the unique vertex of $Q$.
Consider the self-dual quiver $(Q, \sigma, u, v)$,
where $v$ is trivial.

Then, self-dual representations of $Q$
can be seen as orthogonal (or symplectic) vector spaces,
when $\varepsilon = +1$ (or $-1$).

We use the trivial stability condition $0$,
and we will omit the stability condition in all notations.

\begin{remark}
    \label{rem-vect-equiv-p1}
    This setup in \cref{para-sit-point-quiver}
    is equivalent to the following situations.
    
    \begin{enumerate}
        \item
            As in \cref{eg-vect-sp},
            let $\calA$ be the self-dual abelian category of
            finite-dimensional $\bbK$-vector spaces,
            with a sign $\varepsilon = \pm 1$
            introduced into the natural isomorphism
            $\eta_V \colon V^{\vee\vee} \simto V$.
            Then $\calA \simeq \cat{Mod} (\bbK Q)$.
        
        \item 
            Let $Q' = (\bullet \to \bullet)$ be the $A_2$ quiver,
            equipped with a contravariant involution
            that exchanges the two vertices.
            We assign the sign $+1$ to both vertices,
            and the sign $\varepsilon = \pm 1$ to the edge.
            We use the stability function $(1, -1)$.
            Then, a self-dual representation of $Q$ is of the form
            \[
                V \overset{f}{\longrightarrow} V^\vee,
            \]
            where $V$ is a vector space, and $f = \varepsilon f^\vee$.
            Such a representation is semistable if and only if
            $f$ is an isomorphism, since otherwise,
            it would be destabilized by the sub-representation $(\ker f \to 0)$.
            Therefore, we see that semistable self-dual representations
            are equivalent to orthogonal or symplectic vector spaces.
            
        \item 
            Consider the $\bbK$-linear
            quasi-abelian category $\cat{Vect} (\bbP^1)$
            of vector bundles on $\bbP^1$,
            as in \cref{eg-vb}.
            Choose a sign $\varepsilon = \pm 1$
            as the way to identify a vector bundle and its double dual.
            Consider the usual slope stability on $\cat{Vect} (\bbP^1)$.
            Then, a self-dual vector bundle of rank $r$
            is the same as a vector bundle equipped with a
            non-degenerate, (anti-)symmetric bilinear form,
            which can then be identified with
            a principal $\upO (r)$- or $\Sp (r)$-bundle.
            
            It is well-known that every vector bundle over $\bbP^1$
            can be decomposed into a direct sum of line bundles
            of the form $\calO (d)$, for $d \in \bbZ$.
            Therefore, a semistable vector bundle of slope $0$
            must have the form $\calO \otimes V$,
            where $V$ is a finite-dimensional vector space.
            Since the underlying vector bundle of a
            semistable self-dual vector bundle is semistable of slope $0$
            (\cref{thm-sd-hn}),
            it must be of this form, and its self-dual structure
            gives rise to a self-dual structure on $V$.
            This shows that orthogonal or symplectic vector bundles on $\bbP^1$ 
            are equivalent to orthogonal or symplectic vector spaces.
    \end{enumerate}
\end{remark}

\paragraph{Orthogonal vector spaces.}

In the situation of \cref{para-sit-point-quiver},
set $\varepsilon = +1$.
Using \cref{alg-quiver}
to compute the self-dual invariants,
we obtain \cref{tab-vect}.
Here, the numerical invariants $\chiJ^\sd_d$
are obtained from $\upJ^\sd_d$ by setting $\bbL = 1$,
and the invariants $\DT^\sd_d$ are given by
$\smash{(-1)^{\chi^\sd (d, 0)}} \cdot \chiJ^\sd_d$\,,
where $\chi^\sd (d, 0) = d (d-1) / 2$.

\begin{table}[ht]
    \centering
    \begin{tabular}{c|c|c|c}
        $d$ & $\upJ^\sd_d$ &
        $\chiJ^\sd_d$ & $\DT^\sd_d$ \\[2pt] \hline
        $0 \xmathstrut[0]{.5}$ & $1$ & $1$ & $1$ \\
        $1 \xmathstrut[.5]{.5}$ & $1$ & $1$ & $1$ \\
        $2 \xmathstrut[1]{1}$ &
        $\displaystyle \frac{1}{2(\bbL + 1)}$ &
        $\dfrac{1}{4}$ & $-\dfrac{1}{4}$ \\
        $3 \xmathstrut[1]{1}$ &
        $\displaystyle -\frac{1}{2 \, \bbL (\bbL + 1)}$ &
        $-\dfrac{1}{4}$ & $\dfrac{1}{4}$ \\
        $4 \xmathstrut[1]{1.5}$ &
        $\displaystyle
        -\frac{\bbL^2 + 4 \, \bbL + 1}{8 \, \bbL^2 (\bbL + 1)^2 (\bbL^2 + 1)}$ &
        $-\dfrac{3}{32}$ & $-\dfrac{3}{32}$ \\
        $5 \xmathstrut[1]{1.5}$ &
        $\displaystyle
        \frac{3 \, \bbL^2 + 4 \, \bbL + 3}{8 \, \bbL^4 (\bbL + 1)^2 (\bbL^2 + 1)}$ &
        $\dfrac{5}{32}$ & $\dfrac{5}{32}$ \\
        $6 \xmathstrut[1]{1.5}$ &
        $\displaystyle
        \frac{\bbL^4 + 3 \, \bbL^3 + 6 \, \bbL^2 + 3 \, \bbL + 1}
        {16 \, \bbL^6 (\bbL + 1)^3 (\bbL^2 + 1) (\bbL^2 - \bbL + 1)}$ &
        $\dfrac{7}{128}$ & $-\dfrac{7}{128}$ \\
        $7 \xmathstrut[1]{1.5}$ &
        $\displaystyle
        -\frac{5 \, \bbL^4 + 7 \, \bbL^3 + 6 \, \bbL^2 + 7 \, \bbL + 5}
        {16 \, \bbL^9 (\bbL + 1)^3 (\bbL^2 + 1) (\bbL^2 - \bbL + 1)}$ &
        $-\dfrac{15}{128}$ & $\dfrac{15}{128}$ \\
        \dots & \dots & \dots & \dots
    \end{tabular}
    \caption{Invariants for orthogonal vector spaces}
    \label{tab-vect}
\end{table}

From the numerical evidence, we make the following conjecture.

\begin{conjecture}
    \label{conj-eg-vect}
    In the situation of \cref{para-sit-point-quiver}, with $\varepsilon = 1$,
    for any integer $n \geq 0$, we have
    \begin{equation} \begin{aligned}
        \chiJ^\sd_{2n} & = \binom{\,1/4\,}{n} \ , & 
        \DT^\sd_{2n} & = (-1)^n \, \binom{\,1/4\,}{n} \ , \\
        \chiJ^\sd_{2n+1} & = \binom{-1/4}{n} \ , & 
        \DT^\sd_{2n+1} & = (-1)^n \, \binom{-1/4}{n} \ .
    \end{aligned} \end{equation}
    Equivalently, we have the generating series
    \begin{equation} \begin{aligned}
        \sum_{n = 0}^\infty \chiJ^\sd_{\cat{D}_n} \, q^n & =
        (1+q)^{1/4} \ , &
        \sum_{n = 0}^\infty \DT^\sd_{\cat{D}_n} \, q^n & =
        (1-q)^{1/4} \ , \\
        \sum_{n = 0}^\infty \chiJ^\sd_{\cat{B}_n} \, q^n & =
        (1+q)^{-1/4} \ , &
        \sum_{n = 0}^\infty \DT^\sd_{\cat{B}_n} \, q^n & =
        (1-q)^{-1/4} \ ,
    \end{aligned} \end{equation}
    where we replaced the dimension vectors by their
    Dynkin types $\cat{B}_n$ and $\cat{D}_n$\,.
\end{conjecture}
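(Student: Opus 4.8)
The plan is to run \cref{alg-quiver} for the self-dual point quiver of \cref{para-sit-point-quiver} with $\varepsilon=+1$ and the trivial stability condition, and to repackage the output as generating functions. Here the explicit descriptions \cref{eq-moduli-quiver-explicit,eq-msd-quiver-explicit} degenerate completely: $V_\alpha$ and $V^\sd_\theta$ are points, so $\calM_n\simeq[*/\GL(n)]$ and $\calM^\sd_d\simeq[*/\upO(d)]$, and Step~1 of the algorithm gives $\upI_n(0)=\mu([*/\GL(n)])$ and $\upI^\sd_d(0)=\mu([*/\upO(d)])$ as the explicit rational functions of $\bbL$ recorded there. Since $Q$ has a single slope, Step~2 is vacuous ($\upI_n(\tau)=\upI_n(0)$, $\upI^\sd_d(\tau)=\upI^\sd_d(0)$), so all the content lies in the passage from the $\delta$'s to the $\epsilon$'s and in motivic integration.

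I would first reformulate things via generating functions. Writing out the Euler forms of \cref{para-euler-sd} for this quiver gives $\chi(a,b)=ab$ and $\chi^\sd(a,\theta)=a\theta+\binom{a}{2}$. Hence the substitution $\lambda_a=\bbL^{\binom{a}{2}}x^a$ identifies the discretised Hall algebra $\Lambda(\calA)$ with the commutative ring $\Mhat_\bbK[[x]]$, and $\lambda^\sd_\theta=\bbL^{\theta(\theta-2)/4}y^\theta$ identifies $\Lambda^\sd(\calA)$ with $\Mhat_\bbK[[y]]$, on which the $\diamond$-action of $x$ becomes multiplication by $y^2$. Under these substitutions $\sum_n\upI_n(0)\lambda_n=\sum_n x^n/\prod_{i=1}^n(\bbL^i-1)$, which by Euler's $q$-series identities equals $\prod_{k\geq1}(1+\bbL^{-k}x)$; and $\sum_d\upI^\sd_d(0)\lambda^\sd_d$ splits over its even and odd parts as $P_{\mathrm o}+\bbL^{-1/4}\,y\,P_{\mathrm e}$, where $P_{\mathrm o}=\prod_{j\geq1\ \mathrm{odd}}(1+\bbL^{-j}y^2)$ and $P_{\mathrm e}=\prod_{j\geq2\ \mathrm{even}}(1+\bbL^{-j}y^2)$, so that the image of $\sum_n\upI_n(0)\lambda_n$ under $x\mapsto y^2$ is $P_{\mathrm o}P_{\mathrm e}$. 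Feeding this into the integrated form \cref{eq-inv-def-epsilon-sd} of \cref{eq-def-epsilon-sd-compact} — valid since $\calA$ has homological dimension $0$, so \cref{thm-mot-int-homo} applies — gives
\begin{equation*}
    \sum_{d\geq0}\upJ^\sd_d\,\bbL^{d(d-2)/4}\,y^d
    = P_{\mathrm o}^{1/2}P_{\mathrm e}^{-1/2} + \bbL^{-1/4}\,y\,P_{\mathrm o}^{-1/2}P_{\mathrm e}^{1/2}\ .
\end{equation*}

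The next step is to apply the Euler characteristic $\bbL\mapsto1$. Each infinite product $P_{\mathrm o}$, $P_{\mathrm e}$ diverges at $\bbL=1$, but \cref{thm-no-pole-sd} guarantees that every $\upJ^\sd_d$ lies in $\Mhat_\bbK^\circ$; hence the only combination that actually appears, the ratio $P_{\mathrm o}/P_{\mathrm e}=\prod_{k\geq0}(1+\bbL^{-(2k+1)}y^2)/(1+\bbL^{-(2k+2)}y^2)$, is regular at $\bbL=1$, and the $q$-binomial theorem evaluates its value there to $(1+y^2)^{1/2}$. Therefore, at $\bbL=1$,
\begin{equation*}
    \sum_{d\geq0}\chiJ^\sd_d\,y^d = (1+y^2)^{1/4} + y\,(1+y^2)^{-1/4}\ ,
\end{equation*}
and comparing even and odd powers of $y$ with the binomial series yields $\chiJ^\sd_{2n}=\binom{1/4}{n}$ and $\chiJ^\sd_{2n+1}=\binom{-1/4}{n}$. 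The naive DT invariants follow from \cref{eq-def-dt-sd-nai}: as $\chi^\sd(d,0)=\binom{d}{2}$ has the same parity as $n$ for both $d=2n$ and $d=2n+1$, this gives $\DT^\sd_{2n}=(-1)^n\binom{1/4}{n}$ and $\DT^\sd_{2n+1}=(-1)^n\binom{-1/4}{n}$; the four generating-series identities for $(1\pm q)^{\pm1/4}$ (with $q=y^2$, and $\cat{D}_n,\cat{B}_n$ the dimension vectors $2n,2n+1$) are then immediate rewritings.

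The main obstacle is the limit $\bbL\to1$: one must show rigorously that the divergent factors of $P_{\mathrm o}$ and $P_{\mathrm e}$ cancel, identify the finite limit $(1+y^2)^{1/2}$ of their ratio via the $q$-binomial theorem, and verify that the intermediate manipulations — the $q$-series identities, the formal square root, and the $\diamond$-product — are all legitimate in the completed settings ($\SFhat$, $\prestar\Mhat_\bbK$) in which they take place. The final bookkeeping, matching the double-factorial coefficients produced by the $q$-binomial theorem with $\binom{\pm1/4}{n}$, is routine once that limit is in hand, and is consistent with the values tabulated in \cref{tab-vect}.
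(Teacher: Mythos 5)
The paper does not prove this statement: it is stated as a \emph{conjecture}, supported only by the numerical evidence in the table for $d\leq 7$. So there is no paper proof to compare against; what you have produced is a candidate proof of an open assertion, and should be judged on its own.

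Your argument is, in outline and (as far as I can check) in detail, correct, and it does establish the conjecture. The setup is right: for the point quiver both Steps~1 and~2 of the algorithm collapse as you say, $\chi(a,b)=ab$ and $\chi^\sd(a,\theta)=a\theta+\binom{a}{2}$, and the substitutions $\lambda_a\mapsto\bbL^{\binom{a}{2}}x^a$, $\lambda^\sd_\theta\mapsto\bbL^{\theta(\theta-2)/4}y^\theta$ turn $*$ and $\diamond$ into power-series multiplication (with $x$ acting by $y^2$), which I verified against~\cref{eq-lambda-mult} and~\cref{eq-lambda-sd-mult}. The three $q$-series identifications
\[
    \sum_n \frac{x^n}{\prod_{j=1}^n(\bbL^j-1)}=\prod_{k\geq 1}(1+\bbL^{-k}x),
    \quad
    \sum_n \frac{\bbL^n w^n}{\prod_{j=1}^n(\bbL^{2j}-1)}=P_{\mathrm o},
    \quad
    \sum_n \frac{w^n}{\prod_{j=1}^n(\bbL^{2j}-1)}=P_{\mathrm e}
\]
(with $w=y^2$) are all instances of Euler's identity $\sum_n \frac{q^{\binom{n}{2}}z^n}{(q;q)_n}=(-z;q)_\infty$, and they give exactly your $P_{\mathrm o}+\bbL^{-1/4}yP_{\mathrm e}$ and $P_{\mathrm o}P_{\mathrm e}$. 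Pushing through \cref{eq-inv-def-epsilon-sd} (valid here since \tagref{Ext} and \tagref{SdExt} hold for quivers with no relations) yields your formula, and the limit $\bbL\to 1$ of $P_{\mathrm o}/P_{\mathrm e}$ is computed by the $q$-binomial theorem with base $Q=\bbL^{-2}$ and parameter $a=Q^{-1/2}$: coefficient-by-coefficient one finds $\lim_{Q\to 1}\frac{(Q^{-1/2};Q)_n}{(Q;Q)_n}=\prod_{k=0}^{n-1}\frac{k-1/2}{k+1}$, which matches $(-1)^n\binom{1/2}{n}$, so the limit is indeed $(1+y^2)^{1/2}$. Taking formal square roots commutes with $\bbL\mapsto 1$ since each coefficient of $(P_{\mathrm o}/P_{\mathrm e})^{\pm 1/2}$ is a polynomial in finitely many coefficients of $P_{\mathrm o}/P_{\mathrm e}$. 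The resulting identities $\chiJ^\sd_{2n}=\binom{1/4}{n}$, $\chiJ^\sd_{2n+1}=\binom{-1/4}{n}$ agree with all entries in the table, and the sign computation for $\DT^\sd_d$ via $\chi^\sd(d,0)=\binom{d}{2}$ is also right.

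Two remarks, neither of which affects correctness. First, the cosmetic factor $\bbL^{\pm 1/4}$ does not live in the ring $\prestar\Mhat_\bbK=\Mhat_\bbK[\bbL^{1/2}]$ of the paper; it only appears as a bookkeeping device in the substitution for odd $\theta$ and cancels in every coefficient $\upJ^\sd_d$, but you should say so rather than work in an unadjoined ring. Second, the appeal to \cref{thm-no-pole-sd} is not logically needed: once you have the closed form for $P_{\mathrm o}/P_{\mathrm e}$ from the $q$-binomial theorem, regularity at $\bbL=1$ is manifest, so the no-pole theorem serves only as a consistency check rather than as an input. With these small clarifications spelled out, this is a complete proof and in fact upgrades the conjecture to a theorem.
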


\paragraph{Symplectic vector spaces.}

In the situation of \cref{para-sit-point-quiver},
now set $\varepsilon = -1$.
By \cref{eq-motive-bo-odd}, for any $n \geq 0$, we have
\[
    \mu ([*/\Sp (2n)])
    = \mu ([*/\upO (2n+1)]) \ .
\]
Using \cref{alg-quiver}, in each step,
we obtain identical expressions as in the odd orthogonal case.
Therefore, if we write $\smash{\upJ_{\cat{C}_n}}$\,, etc.,
for the symplectic invariants for $\Sp (2n)$, then
\[
    \upJ^\sd_{\cat{C}_n} = \upJ^\sd_{\cat{B}_n} \ , \qquad
    \chiJ^\sd_{\cat{C}_n} = \chiJ^\sd_{\cat{B}_n} \ , \qquad
    \DT^\sd_{\cat{C}_n} = \DT^\sd_{\cat{B}_n}
\]
for all $n \geq 0$.
In particular, if \cref{conj-eg-vect} is true,
then we have the generating series
\begin{equation}
    \sum_{n = 0}^\infty \chiJ^\sd_{\cat{C}_n} \, q^n =
    (1+q)^{-1/4} \ , \qquad
    \sum_{n = 0}^\infty \DT^\sd_{\cat{C}_n} \, q^n =
    (1-q)^{-1/4} \ .
\end{equation}
We make the remark that the types~B and~C
invariants being equal is probably related to the fact that
these types are Langlands dual to each other.

\paragraph{Loop quivers.}
\label{para-sit-loop-quiver}

We now consider the more general situation of loop quivers.

Let $m \geq 0$ be an integer,
and let $Q$ be the \emph{$m$-loop quiver},
the quiver with a unique vertex and $m$ edges.
Equip $Q$ with the contravariant involution $\sigma$
that fixes all the edges.

For convenience, we denote the self-dual quiver~$(Q, \sigma, u, v)$
using the notation~$m^u_v$\,,
where $m$ is the number of loops,
$u$ is a sign, and $v$ is a list of $m$ signs.
For example, $1^{+}_{-}$~is the $1$-loop quiver
whose vertex sign is $+1$ and edge sign is $-1$.

Consider the trivial stability condition $0$,
and omit it from all notations.

\begin{sidewaystable}
    \centering
    \scalebox{.9}{
    \begin{tabular}{c||c|c|c||c|c|c|c||c|c|c|c|c}
        $d$ & $1^{+}_{+}$ & $1^{+}_{-}$\,, $1^{-}_{-}$ & $1^{-}_{+}$
        & $2^{+}_{++}$ & $2^{+}_{+-}$\,, $2^{-}_{--}$
        & $2^{+}_{--}$\,, $2^{-}_{+-}$ & $2^{-}_{++}$
        & $3^{+}_{+++}$ & $3^{+}_{++-}$\,, $3^{-}_{---}$
        & $3^{+}_{+--}$\,, $3^{-}_{+--}$ & $3^{+}_{---}$\,, $3^{-}_{++-}$
        & $3^{-}_{+++}$
        \\[2pt] \hline
        $0 \xmathstrut[0]{.5}$
        & $1$ & $1$ & $1$
        & $1$ & $1$ & $1$ & $1$
        & $1$ & $1$ & $1$ & $1$ & $1$
        \\
        $1 \xmathstrut{.5}$
        & $1$ & $1$ &
        & $1$ & $1$ & $1$ &
        & $1$ & $1$ & $1$ & $1$ &
        \\
        $2 \xmathstrut{1.1}$
        & $\dfrac{3}{4}$
        & $\dfrac{1}{4}$
        & $-\dfrac{1}{4}$
        & $\dfrac{5}{4}$
        & $\dfrac{3}{4}$
        & $\dfrac{1}{4}$
        & $-\dfrac{1}{4}$
        & $\dfrac{7}{4}$
        & $\dfrac{5}{4}$
        & $\dfrac{3}{4}$
        & $\dfrac{1}{4}$
        & $-\dfrac{1}{4}$
        \\
        $3 \xmathstrut{1.1}$
        & $\dfrac{3}{4}$
        & $\dfrac{1}{4}$ &
        & $\dfrac{7}{4}$
        & $\dfrac{5}{4}$
        & $\dfrac{3}{4}$ &
        & $\dfrac{11}{4}$
        & $\dfrac{9}{4}$
        & $\dfrac{7}{4}$
        & $\dfrac{5}{4}$ &
        \\
        $4 \xmathstrut{1.1}$
        & $\dfrac{21}{32}$
        & $\dfrac{5}{32}$
        & $-\dfrac{3}{32}$
        & $\dfrac{93}{32}$
        & $\dfrac{53}{32}$
        & $\dfrac{21}{32}$
        & $-\dfrac{3}{32}$
        & $\dfrac{213}{32}$
        & $\dfrac{149}{32}$
        & $\dfrac{93}{32}$
        & $\dfrac{45}{32}$
        & $\dfrac{5}{32}$
        \\
        $5 \xmathstrut{1.1}$
        & $\dfrac{21}{32}$
        & $\dfrac{5}{32}$ &
        & $\dfrac{141}{32}$
        & $\dfrac{93}{32}$
        & $\dfrac{53}{32}$ &
        & $\dfrac{365}{32}$
        & $\dfrac{285}{32}$
        & $\dfrac{213}{32}$
        & $\dfrac{149}{32}$ &
        \\
        $6 \xmathstrut{1.1}$
        & $\dfrac{77}{128}$
        & $\dfrac{15}{128}$
        & $-\dfrac{7}{128}$
        & $\dfrac{1043}{128}$
        & $\dfrac{589}{128}$
        & $\dfrac{255}{128}$
        & $\dfrac{25}{128}$
        & $\dfrac{3993}{128}$
        & $\dfrac{2795}{128}$
        & $\dfrac{1797}{128}$
        & $\dfrac{983}{128}$
        & $\dfrac{337}{128}$
        \\
        $7 \xmathstrut{1.1}$
        & $\dfrac{77}{128}$
        & $\dfrac{15}{128}$ &
        & $\dfrac{1633}{128}$
        & $\dfrac{1043}{128}$
        & $\dfrac{589}{128}$ &
        & $\dfrac{7053}{128}$
        & $\dfrac{5407}{128}$
        & $\dfrac{3993}{128}$
        & $\dfrac{2795}{128}$ &
        \\
        $8 \xmathstrut{1.1}$
        & $\dfrac{1155}{2048}$
        & $\dfrac{195}{2048}$
        & $-\dfrac{77}{2048}$
        & $\dfrac{51283}{2048}$
        & $\dfrac{28995}{2048}$
        & $\dfrac{13283}{2048}$
        & $\dfrac{2867}{2048}$
        & $\dfrac{330275}{2048}$
        & $\dfrac{232259}{2048}$
        & $\dfrac{152595}{2048}$
        & $\dfrac{89107}{2048}$
        & $\dfrac{39747}{2048}$
        \\
        $9 \xmathstrut{1.1}$
        & $\dfrac{1155}{2048}$
        & $\dfrac{195}{2048}$ &
        & $\dfrac{81555}{2048}$
        & $\dfrac{51283}{2048}$
        & $\dfrac{28995}{2048}$ &
        & $\dfrac{590707}{2048}$
        & $\dfrac{448947}{2048}$
        & $\dfrac{330275}{2048}$
        & $\dfrac{232259}{2048}$ &
        \\
        $10 \xmathstrut{1.1}$
        & $\dfrac{4389}{8192}$
        & $\dfrac{663}{8192}$
        & $-\dfrac{231}{8192}$
        & $\dfrac{665667}{8192}$
        & $\dfrac{378021}{8192}$
        & $\dfrac{180279}{8192}$
        & $\dfrac{51801}{8192}$
        & $\dfrac{7229697}{8192}$
        & $\dfrac{5111379}{8192}$
        & $\dfrac{3414261}{8192}$
        & $\dfrac{2078919}{8192}$
        & $\dfrac{1051433}{8192}$
        \\
        $11 \xmathstrut{1.1}$
        & $\dfrac{4389}{8192}$
        & $\dfrac{663}{8192}$ &
        & $\dfrac{1067313}{8192}$
        & $\dfrac{665667}{8192}$
        & $\dfrac{378021}{8192}$ &
        & $\dfrac{12996685}{8192}$
        & $\dfrac{9834399}{8192}$
        & $\dfrac{7229697}{8192}$
        & $\dfrac{5111379}{8192}$ &
        \\
        \dots & \dots & \dots & \dots 
        & \dots & \dots & \dots & \dots
        & \dots & \dots & \dots & \dots & \dots
    \end{tabular}
    }
    \caption{The invariants $\chiJ^\sd_d$ for loop quivers}
    \label{tab-loop}
\end{sidewaystable}

\paragraph{}

Applying \cref{alg-quiver},
we obtain numerical results listed in \cref{tab-loop}.
The table only shows the invariants $\chiJ^\sd_d$\,,
as they only differ from $\DT^\sd_d$ by a sign.
Also, we have placed some of the orthogonal and symplectic invariants
in the same column when they coincide,
but it is understood that
the symplectic invariants are undefined when $d$ is odd.

\begin{remark}
    In \cref{tab-loop},
    one can see clear patterns in the first few rows,
    but the numbers become more chaotic further down.
    For one-loop quivers, these numbers are,
    up to signs, binomial coefficients
    with $-3/4$, $-1/4$ or $1/4$ on the top.
    However, in general,
    we have not been able to find a pattern
    when there are at least two loops.
    
    For stability conditions satisfying some \emph{genericity} conditions,
    we expect that there are integral invariants
    lying behind these rational invariants,
    which are integers and are sometimes called \emph{BPS invariants},
    which encode the dimension of the space of BPS states.
    In the ordinary (non-self-dual) case,
    one has a \emph{multiple cover formula}
    \begin{equation}
        \DT_d = \sum_{n \mid d} \frac{1}{n^2} \, \mathrm{BPS}_{d/n} \ ,
    \end{equation}
    as in Joyce--Song~\cite[\S6.2]{JoyceSong2012},
    where $\mathrm{BPS}_{d/n}$ denotes the integral invariant.
    We expect a similar relation to be true for self-dual invariants as well,
    where the right-hand side should involve
    both self-dual and ordinary versions of integral invariants.
    Such self-dual integral invariants are
    possibly related to discussions in Young~\cite{Young2020}.
    The zero-loop and one-loop invariants having a simple pattern
    is possibly due to the vanishing of these integral invariants in those cases.
\end{remark}

\paragraph{The $\tilde{A}_1$ quiver.}
\label{para-sit-a1-quiver}

Finally, we consider the $\tilde{A}_1$ quiver,
which is related to coherent sheaves on \smash{$\bbP^1$},
as will be discussed in \cref{sect-quiv-coh-rel}.

Let $Q = (\bullet \rightrightarrows \bullet)$
be the quiver with two vertices and
two arrows pointing in the same direction,
called the \emph{$\tilde{A}_1$ quiver}.
Let $\sigma$ be the contravariant involution of $Q$
that exchanges the two vertices but fixes the edges.

We use the simplified notation $\tilde{A}_1^{u,v}$
to denote the self-dual quiver $(Q, \sigma, u, v)$.
For example, $\tilde{A}_1^{+,++}$ means that we assign the sign $+1$
to all vertices and edges.
Note that both vertices must have the same sign.

We use the self-dual stability function $\tau = (1, -1)$,
and exclude it from the notations.

\paragraph{}

Applying \cref{alg-quiver},
we obtain numerical results that suggest the following.

\begin{conjecture*}
    In the situation of \cref{para-sit-a1-quiver},
    we should have the following.

    \begin{enumerate}
        \item 
            For $\tilde{A}_1^{+,++}$ and $\tilde{A}_1^{-,--}$,
            we have
            \begin{align}
                \sum_{n = 0}^\infty \upJ^\sd_{{(n,n)}} \, q^{n/2}
                & = \frac{(1 - q \, \bbL)^{1/2}}{(1 - q^{1/2}) (1 - q^{1/2} \, \bbL)} \ , \\
                \sum_{n = 0}^\infty \chiJ^\sd_{(n,n)} \, q^{n/2}
                & = \frac{(1+q^{1/2})^{1/2}} {(1-q^{1/2})^{3/2}} \ , \\
                \sum_{n = 0}^\infty \DT^\sd_{(n,n)} \, q^{n/2}
                & = \frac{(1-q^{1/2})^{1/2}} {(1+q^{1/2})^{3/2}} \ .
            \end{align}
            
        \item 
            For $\tilde{A}_1^{+,+-}$ and $\tilde{A}_1^{-,+-}$,
            we have
            \begin{align*}
                \sum_{n = 0}^\infty \upJ^\sd_{(n,n)} \, q^{n/2} =
                \sum_{n = 0}^\infty \chiJ^\sd_{(n,n)} \, q^{n/2}
                = \sum_{n = 0}^\infty \DT^\sd_{(n,n)} \, q^{n/2}
                = \frac{(1+q^{1/2})^{1/2}}{(1-q^{1/2})^{1/2}} \ .
                \numberthis
            \end{align*}
            
        \item 
            For $\tilde{A}_1^{+,--}$ and $\tilde{A}_1^{-,++}$,
            we have
            \begin{align}
                \sum_{n = 0}^\infty \upJ^\sd_{(n,n)} \, q^{n/2}
                & = (1-q \, \bbL^{-1})^{1/2} \ , 
                \hspace{3em} \\
                \sum_{n = 0}^\infty \chiJ^\sd_{(n,n)} \, q^{n/2} =
                \sum_{n = 0}^\infty \DT^\sd_{(n,n)} \, q^{n/2}
                & = (1-q)^{1/2} \ .
            \end{align}
    \end{enumerate}
\end{conjecture*}

\subsection{Donaldson--Thomas invariants}
\label{sect-quiv-dt}

\paragraph{}

We discuss a self-dual version of the theory of
\emph{Donaldson--Thomas invariants}
for \emph{quivers with potentials},
as a self-dual analogue of the ordinary case studied in
Joyce--Song~\cite[Chapter~7]{JoyceSong2012}.

\paragraph{Quivers with potentials.}

For a quiver~$Q$, a \emph{potential} for~$Q$
is a $\bbK$-linear combination of oriented cycles in~$Q$.
Given such a potential~$W$,
we define a set $Q_2$ of relations on~$Q$
(see \cref{para-quiver-rel}),
by setting $Q_2 = Q_1$, and for each $b \in Q_2$,
we set $p (b) = t (b)$, $q (b) = s (b)$,
and $r (b) = \partial_b W$,
where for an oriented cycle~$C$ in~$Q$,
$\partial_b C$ is the sum over all occurrences of~$b$ in~$C$
of the path obtained by removing that occurrence of~$b$ from~$C$.
Such a pair $(Q, W)$ is called a \emph{quiver with potential},
and $\breve{Q} = (Q, Q_2, p, q, r)$ is its
associated quiver with relations.
We also refer to representations of $\breve{Q}$
as \emph{representations of $(Q, W)$}.

If, moreover, $Q$ is equipped with a self-dual structure
(\cref{para-sd-quiver}),
then a potential~$W$ for~$Q$ is called \emph{self-dual}
if $W = W^\vee$, where $W^\vee$ is the potential obtained
from $W$ by replacing each cycle~$C$ with~$C^\vee$.
In this case, we have a self-dual quiver with relations~$\breve{Q}$
(\cref{para-sd-quiver-rel}),
with the data $Q_2, p, q, r$ as above,
the involution on~$Q_2$ given by that on~$Q_1$,
and the sign function $w \colon Q_2 \to \{ \pm 1 \}$ given by $w = v$.
Such a pair $(Q, W)$ is called a \emph{self-dual quiver with potential},
and $\breve{Q}$ is its associated self-dual quiver with relations.
In particular, this defines a self-dual structure
on the $\bbK$-linear abelian category of representations of $(Q, W)$.

\paragraph{Definition.}

Let $(Q, W)$ be a self-dual quiver with potential,
and write $\calA$ for the self-dual $\bbK$-linear abelian category
of representations of its associated
self-dual quiver with relations~$\breve{Q}$.

For a self-dual weak stability condition $\tau$ on $\calA$,
and classes $\alpha \in C (Q)$ and $\theta \in C^\sd (Q)$,
define \emph{Donaldson--Thomas invariants}
$\DT_\alpha (\tau)$, $\DT^\sd_\theta (\tau) \in \bbQ$ by
\begin{align}
    \label{eq-dt-quiv-pot}
    \DT_\alpha (\tau) & =
    - \int_{\calM} {} (\bbL - 1) \cdot \epsilon_\alpha (\tau) \cdot \nu_{\calM} \, d \chi \ ,
    \\
    \label{eq-dt-quiv-pot-sd}
    \DT^\sd_\theta (\tau) & =
    \int_{\calM^\sd} {} \epsilon^\sd_\theta (\tau) \cdot \nu_{\calM^\sd} \, d \chi \ ,
\end{align}
where $\nu_{\calM}$ and $\nu_{\calM^\sd}$ are the \emph{Behrend functions}
for $\calM$ and $\calM^\sd$, as in Behrend~\cite{Behrend2009}
or Joyce--Song~\cite[Chapter~4]{JoyceSong2012}.
The ordinary case~\cref{eq-dt-quiv-pot}
is due to \cite[Definition~7.15]{JoyceSong2012}.
These invariants are well-defined by the no-pole theorems,
\cref{thm-no-pole,thm-no-pole-sd}.

Note that when $W = 0$,
the theory is reduced to that of quivers with no relations,
and these invariants coincide with the
na\"ive Donaldson--Thomas invariants
defined in \cref{para-quiv-inv},
since the Behrend functions are constant in this case,
due to the stacks $\calM, \calM^\sd$ being smooth.

\paragraph{Wall-crossing.}

Let $(Q, W)$ be a self-dual quiver with potential,
and let $\tau, \tilde{\tau}$ be self-dual weak stability conditions
on the category $\calA$ above.
Joyce--Song~\cite[Theorem~7.17]{JoyceSong2012}
showed that the invariants $\DT_\alpha (\tau)$ and $\DT_\alpha (\tilde{\tau})$
satisfy the wall-crossing formula~\cref{eq-wcf-dt}
with $\DT$ in place of $\DT^\nai$,
with the coefficients $\bar{\chi} ({\cdots})$, $\tilde{\chi} ({\cdots})$
given by those of $Q$ as a quiver without relations.
A key ingredient in their proof is a set of identities
\cite[Theorem~7.11]{JoyceSong2012}
involving the Behrend function $\nu_\calM$.

We expect that similar identities and wall-crossing formulae
should hold for the self-dual invariants $\DT^\sd_\theta (\tau)$,
and we state the expected properties as follows.

\begin{conjecture*}
    Let $(Q, W)$ be a self-dual quiver with potential.
    Then the Donaldson--Thomas invariants defined above satisfy
    the wall-crossing formula~\cref{eq-wcf-dt-sd}
    for any two self-dual weak stability conditions,
    with the coefficients
    $\bar{\chi} ({\cdots})$, $\tilde{\chi} ({\cdots})$,
    $\bar{\chi}^\sd ({\cdots})$, $\tilde{\chi}^\sd ({\cdots})$
    given by those of $Q$ as a self-dual quiver without relations.
\end{conjecture*}

Results in \cref{para-quiv-inv} imply that
the conjecture is true when there are no relations,
that is, when $W = 0$.

\subsection{Relation to coherent sheaves}
\label{sect-quiv-coh-rel}

\paragraph{}

The theory of enumerative invariants of coherent sheaves
is closely related to that of quiver representations.
For a smooth, projective variety $X$ over $\bbK$,
consider the bounded derived category $\Db \Coh (X)$
of coherent sheaves on $X$.
A result of Bondal~\cite{Bondal1990} states that
for some special choices of $X$,
there is an equivalence of triangulated categories
\begin{equation}
    \label{eq-dbcoh-equiv-dbmod}
    \Phi \colon \Db \Coh (X) \longsimto \Db \Mod (\bbK \breve{Q}) \ ,
\end{equation}
where $\breve{Q}$ is a quiver with relations.
When $X = \bbP^n$, one may take
\begin{equation} 
\label{eq-quiver-pn}
\begin{tikzpicture}[baseline={(0,-.5ex)}, line width=.5, >={Straight Barb[scale=0.8]}]
    \node [anchor=east] at (-.25, 0) {$Q = \Biggl($};
    \node [anchor=west] at (5.25, 0) {$\Biggr) \ ,$};
    \fill (0, 0) circle (.05) node [anchor=north] {$\scriptstyle 0$};
    \fill (1.5, 0) circle (.05) node [anchor=north] {$\scriptstyle 1$};
    \fill (3.5, 0) circle (.05) node [anchor=north] {$\scriptstyle n-1$};
    \fill (5, 0) circle (.05) node [anchor=north] {$\scriptstyle n$};
    \draw [->, shorten <=10, shorten >=10] (0, .45) -- (1.5, .45); 
    \draw [->, shorten <=10, shorten >=10] (0, .1) -- (1.5, .1); 
    \draw [->, shorten <=10, shorten >=10] (0, -.5) -- (1.5, -.5); 
    \draw [->, shorten <=10, shorten >=10] (3.5, .45) -- (5, .45); 
    \draw [->, shorten <=10, shorten >=10] (3.5, .1) -- (5, .1); 
    \draw [->, shorten <=10, shorten >=10] (3.5, -.5) -- (5, -.5); 
    \node [inner sep=0] at (.75, .575) {$\scriptstyle \varphi_{1,0}$};
    \node [inner sep=0] at (.75, .225) {$\scriptstyle \varphi_{1,1}$};
    \node [inner sep=0] at (.75, -.1) {$\scriptstyle \vdots$};
    \node [inner sep=0] at (.75, -.675) {$\scriptstyle \varphi_{1,n}$};
    \node at (2.5, 0) {$\cdots$};
    \node [inner sep=0] at (4.25, .575) {$\scriptstyle \varphi_{n,0}$};
    \node [inner sep=0] at (4.25, .225) {$\scriptstyle \varphi_{n,1}$};
    \node [inner sep=0] at (4.25, -.1) {$\scriptstyle \vdots$};
    \node [inner sep=0] at (4.25, -.675) {$\scriptstyle \varphi_{n,n}$};
\end{tikzpicture} \end{equation}
with $(n + 1)$ vertices and $n (n+1)$ arrows,
with the relations
$\smash{\varphi_{i+1, j} \, \varphi_{i, k}} = \smash{\varphi_{i+1, k} \, \varphi_{i, j}}$
for $i \in \{ 1, \dotsc, n-1 \}$ and $j, k \in \{ 0, \dotsc, n \}$.
In particular, when $X = \bbP^1$, we have
\[ \begin{tikzpicture}[line width=.5, >={Straight Barb[scale=0.8]}]
    \node [anchor=east] at (-.25, 0) {$Q = \Bigl($};
    \node [anchor=west] at (1.75, 0) {$\Bigr) \ ,$};
    \fill (0, 0) circle (.05) node [anchor=north] {$\scriptstyle 0$};
    \fill (1.5, 0) circle (.05) node [anchor=north] {$\scriptstyle 1$};
    \draw [->, shorten < = 10, shorten > = 10] (0, .1) -- (1.5, .1); 
    \draw [->, shorten < = 10, shorten > = 10] (0, -.1) -- (1.5, -.1); 
\end{tikzpicture} \]
with no relations.
This is the $\tilde{A}_1$ quiver discussed in \cref{para-sit-a1-quiver}.

\paragraph{}

The functor $\Phi$ in~\cref{eq-dbcoh-equiv-dbmod}
is constructed using a \emph{strong exceptional collection}
in the triangulated category $\Db \Coh (X)$, as in~\cite{Bondal1990}.
When $X = \bbP^n$, such a collection may be taken to be
\[
    \bigl( \calO, \ \calO (1), \ \dotsc, \ \calO (n) \bigr) \ ,
\]
and for a complex $E \in \Db \Coh (X)$,
its image under $\Phi$ is given by
\begin{equation}
    \Phi (E)_i =
    \bbR \Hom_X (\calO (n-i), E)
\end{equation}
for $i = 0, \dotsc, n$,
where for an object $V \in \Db \Mod (\bbK \breve{Q})$,
we denote by $V_i$ the complex assigned to the vertex $i$.

For example, when $X = \bbP^1$,
using Serre duality to compute the $\bbR \Hom$ functor,
one obtains
\begin{equation} \begin{aligned}
    & \quad \mathclap{\, \dotsc \dotsc \mathrlap{\ ,}} \\
    \Phi (\calO (-2)) & \quad \mathclap{=} &
    \bigl( \, \bbK^2 [-1] & \longrightarrow \bbK [-1] \, \bigr) \ , \\
    \Phi (\calO (-1)) & \quad \mathclap{=} &
    \bigl( \, \bbK [-1] & \longrightarrow 0 \, \bigr) \ , \\
    \Phi (\calO (0)) & \quad \mathclap{=} &
    \bigl( \, 0 & \longrightarrow \bbK \, \bigr) \ , \\
    \Phi (\calO (1)) & \quad \mathclap{=} &
    \bigl( \, \bbK & \longrightarrow \bbK^2 \, \bigr) \ , \\
    \Phi (\calO (2)) & \quad \mathclap{=} &
    \bigl( \, \bbK^2 & \longrightarrow \bbK^3 \, \bigr) \ , \\
    & \quad \mathclap{\, \dotsc \dotsc }
\end{aligned} \end{equation}

\paragraph{Self-dual structures.}

We now define and compare
self-dual structures on the categories
$\smash{\Db \Coh (X)}$ and $\smash{\Db \Mod (\bbK \breve{Q})}$.

For coherent sheaves, we take an object $L \in \Db \Coh (X)$
such that 
\[
    \bbR \calHom _X (L, L) \simeq \calO_X \ .
\]
For example, $L$ may be a line bundle, or a shift of a line bundle.
We define the dual operation on the triangulated category $\Db \Coh (X)$ by
\begin{equation}
    E^\vee = \bbR \calHom_X (E, L)
\end{equation}
for a complex of sheaves $E$.
Then $E^{\vee\vee}$ is canonically isomorphic to $E$.
We may insert a sign $\pm 1$ into this identification,
which will correspond to the orthogonal and symplectic cases.

On the other hand, when $\breve{Q}$ is the quiver with relations
given by~\cref{eq-quiver-pn},
we define a contravariant involution of $\breve{Q}$
by sending the vertex $i$ to the vertex $n - i$ for $i = 0, \dotsc, n$,
and sending the arrow $\smash{\varphi_{i,j}}$
to the arrow $\smash{\varphi_{n+1-i,j}}$
for $i = 1, \dotsc, n$ and $j = 0, \dotsc, n$.
We define a dual operation on the triangulated category $\Db \Mod (\bbK \breve{Q})$
to be induced by the dual operation on the abelian category $\Mod (\bbK \breve{Q})$
induced by the contravariant involution of $\breve{Q}$ described above.

Using the functor $\Phi$,
the quiver-induced dual operation on $\Db \Mod (\bbK \breve{Q})$
corresponds to the choice
\[
    L = \calO (-1) [1]
\]
for the dual operation on $\Db \Coh (\bbP^n)$.

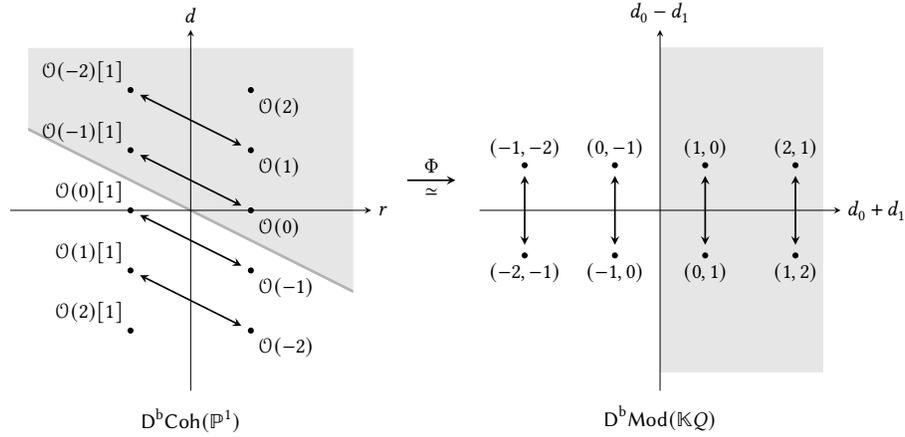
\begin{figure}[ht]
    \centering
    \scalebox{.8}{%
    \begin{tikzpicture}
        \begin{scope}[shift={(-4, 0)}]
            \fill [black!10] (-2.7, 1.35) -- (2.7, -1.35) -- (2.7, 2.7) -- (-2.7, 2.7);
            \fill [black!30] (-2.7, 1.325) -- (2.7, -1.375) -- (2.7, -1.325) -- (-2.7, 1.375);
            \draw [-stealth] (0, -3) -- (0, 3) node [anchor=south] {$d$};
            \draw [-stealth] (-3, 0) -- (3, 0) node [anchor=west] {$r$};
            \foreach \i in {-2, ..., 2} {
                \fill (1, \i) circle (.05)
                    node [anchor=north west] {$\calO (\i)$};
            }
            \foreach \i in {2, 1, 0, -1, -2} {
                \fill (-1, -\i) circle (.05)
                    node [anchor=south east] {$\calO (\i) [1]$};
            }
            \foreach \i in {-1, ..., 2} {
                \draw [thick, stealth-stealth, shorten > = 5, shorten < = 5] (-1, \i) -- (1, \i-1);
            }
            \node at (0, -3.5) {$\Db \Coh (\bbP^1)$};
        \end{scope}
        \begin{scope}[shift={(0, .5)}]
            \node [anchor=south] at (0, 0) {$\Phi$};
            \node [anchor=north] at (0, 0) {$\simeq$};
            \draw [thick, -stealth] (-.4, 0) -- (.4, 0);
        \end{scope}
        \begin{scope}[shift={(3.8, 0)}]
            \fill [black!10] (0, -2.7) -- (2.7, -2.7) -- (2.7, 2.7) -- (0, 2.7);
            \draw [-stealth] (0, -3) -- (0, 3) node [anchor=south] {$d_0 - d_1$};
            \draw [-stealth] (-3, 0) -- (3, 0) node [anchor=west] {$d_0 + d_1$};
            \foreach \i in {-2, ..., 1} {
                \fill (1.5 * \i + .75, .75) circle (.05);
                \fill (1.5 * \i + .75, -.75) circle (.05);
            }
            \foreach \i in {-2, ..., 1} {
                \draw [thick, stealth-stealth, shorten > = 5, shorten < = 5] (1.5 * \i + .75, -.75) -- (1.5 * \i + .75, .75);
            }
            \node [anchor=south] at (-2.25, .75) {$(-1, -2)$};
            \node [anchor=south] at (-.75, .75) {$(0, -1)$};
            \node [anchor=south] at (.75, .75) {$(1, 0)$};
            \node [anchor=south] at (2.25, .75) {$(2, 1)$};
            \node [anchor=north] at (-2.25, -.75) {$(-2, -1)$};
            \node [anchor=north] at (-.75, -.75) {$(-1, 0)$};
            \node [anchor=north] at (.75, -.75) {$(0, 1)$};
            \node [anchor=north] at (2.25, -.75) {$(1, 2)$};
            \node at (0, -3.5) {$\Db \Mod (\bbK Q)$};
        \end{scope}
    \end{tikzpicture}
    }
    \caption{The quiver-induced dual operation}
    \label{fig-coh-p1-duality}
\end{figure}

When $X = \bbP^1$,
the quiver-induced dual operation is shown in \cref{fig-coh-p1-duality}.
The left-hand side plots complexes of sheaves on $\bbP^1$
using their rank $r$ and degree $d$,
while the right-hand side plots complexes of representations of $Q = \tilde{A}_1$
using the dimension vector $(d_0, d_1)$.
The two-way arrows indicate points that are exchanged by the dual operation.
The shaded region indicates the usual heart of $\Db \Mod (\bbK Q)$,
given by $\Mod (\bbK Q)$, and the corresponding region in $\Db \Coh (\bbP^1)$.
The coordinate systems are arranged so that the phase angles
give rise to Bridgeland stability conditions in both categories,
namely, slope stability for coherent sheaves,
and the stability function $(1, -1)$ for quiver representations.

\section{Orthogonal and symplectic sheaves}

\label{sect-coh}

We now apply our theory of enumerative invariants
to the following two cases.

\begin{itemize}
    \item 
        We consider the self-dual category $\cat{Vect} (X)$
        of vector bundles on a smooth algebraic curve $X$,
        whose self-dual objects will be
        principal bundles on the curve
        for the orthogonal or symplectic groups.

    \item
        More generally, we also study self-dual
        quasi-abelian subcategories in the derived category
        $\Db \Coh (X)$ of coherent sheaves on
        a smooth, projective variety $X$,
        where the moduli of self-dual objects
        can be seen as a compactification
        of the moduli of orthogonal or symplectic bundles on the variety.
\end{itemize}
These two cases will be discussed in \cref{sect-curves,sect-coh-general},
respectively.
Then, in \cref{sect-surfaces},
we discuss a modified version of our theory
that works for surfaces of Kodaira dimension~$\leq 0$,
and in \cref{sect-threefolds},
we discuss a possible generalization to
Calabi--Yau threefolds,
for which we can define self-dual Donaldson--Thomas invariants.

\subsection{Orthogonal and symplectic bundles on curves}
\label{sect-curves}

\paragraph{Principal bundles.}
\label{para-osp-bundles}

Recall that for a $\bbK$-variety $X$
and a linear algebraic group~$G$ over~$\bbK$,
a \emph{principal $G$-bundle} on $X$
is a morphism $\pi \colon P \to X$ of $\bbK$-schemes
with a right $G$-action on $P$,
such that $\pi$ is $G$-invariant and \'etale locally isomorphic to
the trivial $G$-bundle.

As in Serre~\cite[\S5.6]{Serre1958},
a principal $G$-bundle on $X$,
where $G = \upO (n)$ with $n \geq 0$,
or $\Sp (n)$ with $n \geq 0$ even,
is equivalent to a vector bundle $E \to X$ of rank $n$,
equipped with a non-degenerate bilinear form
$\phi \in H^0 (X; \Sym^2 E^\vee)$ when $G = \upO (n)$,
or $\phi \in H^0 (X; {\wedge^2} \, E^\vee)$ when $G = \Sp (n)$.

In other words, orthogonal or symplectic bundles
are self-dual objects in the self-dual category of vector bundles on $X$,
as in \cref{eg-vb}.

\paragraph{The setting.}
\label{para-sit-curves}

We now specialize ourselves to the following situation.

Let $X$ be a smooth, projective curve over $\bbK$,
and let $\calA = \cat{Vect} (X)$
be the $\bbK$-linear quasi-abelian category of
vector bundles on $X$ of finite rank.

Fix a line bundle $L \to X$.
Define a self-dual structure $(-)^\vee \colon \calA \simeq \calA^\op$
by sending a vector bundle $E$ to the vector bundle
\[
    E^\vee = \calHom (E, L) \ ,
\]
and choosing a sign $\varepsilon = \pm 1$,
so that the natural isomorphism $\eta \colon (-)^{\vee\vee} \simeq \id$
is given by $\varepsilon$ times the usual identification.
In particular, when $L = \calO_X$, the self-dual objects of $\calA$
can be identified with principal orthogonal or symplectic bundles,
depending on the sign $\varepsilon$, as in \cref{para-osp-bundles}.

For the extra data in \tagref{ExCat}, we take
\begin{equation*}
    C (X) = \bbZ_{>0} \times \bbZ \ , \qquad
    C^\circ (X) = C (X) \cup \{ (0, 0) \} \ , 
\end{equation*}
and for $E \in \calA$, define $\llbr E \rrbr = (r, d)$,
where $r = \rank (E)$ and $d = \deg (E)$.

For the extra data in \tagref{SdCat}, we may take
\begin{equation}
    \label{eq-curves-csd}
    C^\sd (X) = \begin{cases}
        \bbN & \text{if } \varepsilon = +1 \ , \\
        2 \bbN & \text{if } \varepsilon = -1 \ ,
    \end{cases}
\end{equation}
recording the rank of a self-dual vector bundle.
The degree of a self-dual vector bundle $(E, \phi)$
is determined by its rank, $\deg E = \rank E \cdot \deg L / 2$.

\paragraph{Remark.}

In the special case when $L = \calO_X$ and $\varepsilon = +1$,
one can use a more refined version,
\begin{equation}
    \begin{aligned}
        C'^\sd (X) & = \bigl\{
            (\rank (E), w_1 (E), w_2 (E)) \bigm|
            (E, \phi) \in \calA^\sd
        \bigr\} \\
        & \subset \bbZ \oplus H^1 (X; \bbZ_2) \oplus H^2 (X; \bbZ_2) \ ,
    \end{aligned}
\end{equation}
where $H^i (X; \bbZ_2)$ denotes \'etale cohomology,
isomorphic to $\bbZ_2^{\smash{2g}}$ when $i = 1$ and $\bbZ_2$ when $i = 2$,
where $g$ is the genus of $X$,
and $w_i (E)$ denotes the $i$-th Stiefel--Whitney class of $(E, \phi)$,
as in Laborde~\cite{Laborde1976}.
However, we will not use this in the following,
as the simpler and coarser version \cref{eq-curves-csd}
is sufficient for our theory to work.

\paragraph{Verifying the assumptions.}
\label{para-curves-assumptions}

In the setting of \cref{para-sit-curves},
we verify the conditions
\tagref{Mod}, \tagref{SdMod}, \tagref{Fin},
\tagref{Ext}, and \tagref{SdExt}.

Define a categorical moduli stack $\+{\calM}$ of objects in $\calA$ as follows.
For a $\bbK$-scheme~$U$,
let $\+{\calM} (U)$ be the exact category of locally free sheaves on $X \times U$
of finite rank.
By faithfully flat descent of locally free sheaves
\cite[Proposition~1.10]{SGA1VIII},
$\+{\calM}$ satisfies the descent property,
and is an exact stack.
To show that $\+{\calM}$ is algebraic,
we notice that $\calM$ and $\calM^\ex$
are open substacks of the moduli stacks of coherent sheaves on $X$
and of exact sequences of coherent sheaves on $X$, respectively,
as in Joyce~\cite[\S9]{Joyce2006I}.
It was shown there that the latter two moduli stacks are algebraic.
That $\calM^I$ is algebraic follows from
García-Prada~\emph{et al.}~\cite[\S4.1]{GarciaPradaEtal2014}.
This verifies \tagref{Mod}.

For \tagref{SdMod},
we need to define a self-dual structure on the exact category $\+{\calM} (U)$
for any $\bbK$-scheme $U$.
Define $(-)^\vee \colon \+{\calM} (U) \simto \+{\calM} (U)^\op$
sending a locally free sheaf $E \to X \times U$
to $\calHom (E, L \boxtimes \calO_U)$,
and identify $E^{\vee\vee}$ with $E$ using the sign $\varepsilon$.

The condition \tagref{Fin} follows from Joyce~\cite[Theorem~9.7]{Joyce2006I}.

For \tagref{Ext},
the condition \tagref{Ext1} holds since
$X$ is a smooth, projective curve.
To construct the extension bundle,
let $\calU \to X \times \calM$ be the universal locally free sheaf,
and define a perfect complex $\calE^\bullet \to \calM \times \calM$ by
\begin{equation}
    \calE^\bullet = (\pr_{2,3})_* \, \calHom 
    \bigl( \pr_{1,3}^* (\calU), \pr_{1,2}^* (\calU) \bigr) \ ,
\end{equation}
where the $\pr_{i,j}$ are projections from $X \times \calM \times \calM$
to the product of its $i$-th and $j$-th factors,
and we are using the derived pushforward functor.
Then $\calE^\bullet$ is concentrated in degrees $0$ and $1$.
By \cref{thm-2vb-morphism},
we obtain a $2$-vector bundle
\[
    \calExt^{1/0} \longrightarrow \calM \times \calM \ ,
\]
as the realization of $\calE^\bullet [1]$,
which can be identified with $\pi_1 \times \pi_2 \colon \calM^\pn{2} \to \calM \times \calM$.
Indeed, for any affine $\bbK$-scheme $U$,
and any morphism $U \to \calM \times \calM$ classifying locally free sheaves
$E, F \to X \times U$,
the groupoid $\calExt^{1/0} (U)$
is the groupoid of sections of
$(\pr_2)_* \, \calHom_{X \times U} (F, E) [1]$ over $U$,
which is the groupoid of objects of the $2$-vector space
$\pi_* \, \calHom_{X \times U} (F, E) [1]$,
where $\pi \colon X \times U \to {*}$ is the projection,
since the derived pushforward respects composition.
On the other hand, $\calM^\pn{2} (U)$ is the groupoid of extensions
of $F$ by $E$ on $X \times U$,
which is the groupoid of objects of the same $2$-vector space.
This shows that $\calExt^{1/0} \simeq \calM^\pn{2}$
as $2$-vector bundles over $\calM \times \calM$.
We have the Euler pairing
\begin{equation}
    \chi (\alpha_1, \alpha_2) =
    (1 - g) \, r_1 r_2 - r_1 d_2 + r_1 d_2
\end{equation}
for classes $\alpha_i = (r_i, d_i) \in C^\circ (X)$ for $i = 1, 2$,
which follows from the Riemann--Roch formula
\begin{equation}
    \chi (\alpha_1, \alpha_2) = \int_X \ch (E_2^\vee) \ch (E_1) \td (X),
\end{equation}
for objects $E_i \in \calA$ of classes $\alpha_i$ for $i = 1, 2$,
where the integrand is an element of the Chow ring of $X$;
$\td (X) = 1 + (1/2) \, c_1 (T_X)$ is the Todd class of $X$,
where $T_X$ is the tangent bundle of $X$;
$\ch (E_1) = r_1 + c_1 (E_1)$, and
$\ch (E_2^\vee) = r_2 - c_1 (E_2)$.

For the condition \tagref{SdExt},
one computes the self-dual Euler pairing
\begin{equation} 
    \chi^\sd (\alpha, \theta) = \begin{cases}
        \displaystyle \chi (\alpha, j (\theta)) +
        (1 - g) \, \binom{r + 1}{2} + (r + 1) \, d
        & \text{if } \varepsilon = +1 \ , \\[1ex]
        \displaystyle \chi (\alpha, j (\theta)) +
        (1 - g) \, \binom{r}{\, 2 \,} + (r - 1) \, d
        & \text{if } \varepsilon = -1 \ ,
    \end{cases}
\end{equation}
for classes $\alpha = (r, d) \in C^\circ (X)$
and $\theta \in C^\sd (X)$.
Indeed, when $\varepsilon = +1$, we have
\begin{equation}
    \chi^\sd (\alpha, \theta) =
    \chi (\alpha, j (\theta)) +
    \int_X {} \ch (\mathrm{Sym}^2 (E)) \td (X) \ ,
\end{equation}
for $E \in \calA$ of class $\alpha$.
By the splitting principle,
we may assume that $E \simeq L_1 \oplus \cdots \oplus L_r$
is a sum of line bundles, so that
\begin{align}
    \mathrm{rank} (\mathrm{Sym}^2 (E)) & =
    \binom{r + 1}{2} \ , \\
    c_1 (\mathrm{Sym}^2 (E)) & =
    \sum_{1 \leq i \leq j \leq r} {}
    (c_1 (L_i) + c_1 (L_j))
    = (r + 1) {} \sum_{i=1}^n c_1 (L_i) \ .
\end{align}
These give the desired expression.
A similar technique works when $\varepsilon = -1$,
where we use $\wedge^2 (E)$ instead of $\mathrm{Sym}^2 (E)$.

\paragraph{Stability conditions.}

In the setting of \cref{para-sit-curves},
define a self-dual stability condition $\tau \colon C (X) \to \bbQ$,
called the \emph{slope stability condition}, by
\begin{equation}
    \tau ((r, d)) = d/r - \deg L / 2
\end{equation}
for $(r, d) \in C (X)$.
We verify the condition \tagref{Stab} for this stability condition.

Indeed, we may assume that $\deg L = 0$,
since shifting $\tau$ by a constant does not affect \tagref{Stab}.
Now, \tagref{Stab12} follow from Huybrechts--Lehn
\cite[Proposition~2.3.1 and Theorem~3.3.7]{HuybrechtsLehn2010}.
For \tagref{Stab3}, let $E \in \calM (U)$ be such a family.
We may assume that $U$ is connected,
so there exists $(r, d) \in C (\calA)$ such that
$\llbr E_x \rrbr = (r, d)$ for all $x$.
By \cite[Lemma~1.7.6]{HuybrechtsLehn2010},
there exists $m \in \bbN$ such that
$E_x$ is $m$-regular for all $x \in U$.
By \cite[Lemma~1.7.2~(i)]{HuybrechtsLehn2010},
each $E_x$ admits a surjection from $\calO (-m)^{\oplus n}$ for some $n$.
This means that the slopes of the Harder--Narasimhan factors of $E_x$
are bounded below by $t = \tau ( \llbr \calO (-m) \rrbr )$.
Hence, they are also bounded above by $d - (r - 1) t$.
Therefore, we can take $S_E$ to be the set of classes with rank $\leq r$
and with slope between $t$ and $d - (r - 1) t$.

\paragraph{Conclusion.}

By the above arguments,
all results in \crefrange{sect-alg}{sect-invariants}
apply to the situation of \cref{para-sit-curves},
with the stability condition $\tau$ defined above.
However, there are no meaningful wall-crossing formulae in this case,
as we do not have other interesting stability conditions
to compare with.

\subsection{Orthogonal and symplectic sheaves}
\label{sect-coh-general}

\paragraph{}

Let $X$ be a smooth, projective variety over $\bbK$.
As was mentioned in \cref{para-non-eg-coh},
the category $\Coh (X)$ of coherent sheaves on $X$
does not admit self-dual structures in general.

On the other hand, the derived category $\Db \Coh (X)$
does admit self-dual quasi-abelian subcategories
for which our theory can apply.
We will call the self-dual objects in these self-dual categories
\emph{orthogonal sheaves} or \emph{symplectic sheaves}.
For curves, one example of such a quasi-abelian subcategory
is the category of vector bundles on the curve,
and we recover the situation of \cref{sect-curves}.
However, in general, such quasi-abelian subcategories
are not contained in the usual heart $\Coh (X)$,
and self-dual objects will be complexes of coherent sheaves in general.

Note that this notion of orthogonal or symplectic sheaves
is different from the approach of
Fernandez Herrero \emph{et al.}~\cite{FHGZ2021},
who defined a different compactification
of the moduli of principal $G$-bundles for
connected reductive groups~$G$.

We discuss two versions of this construction,
using \emph{polynomial Bridgeland stability}
and \emph{Bridgeland stability}, respectively, in
\cref{para-polynomial-bridgeland,para-bridgeland-stability} below.
We will show that the assumptions \tagref{SdMod} and \tagref{Fin1}
are satisfied in both cases,
so that our invariants in \cref{sect-invariants} are defined
for self-dual weak stability conditions satisfying \tagref{Stab}.
However, since \tagref{Ext} fails when $\dim X > 1$,
we do not expect the wall-crossing formulae to hold for these invariants,
except in the case of certain surfaces,
which will be discussed in \cref{sect-surfaces} below.

\paragraph{Polynomial Bridgeland stability conditions.}
\label{para-polynomial-bridgeland}

Let $X$ be a smooth, projective variety over $\bbK$, of dimension $n$.
Let $A^* (X)$ be the Chow ring of $X$,
and $\Num (X)$ the group of cycles in $A^* (X)$ modulo numerical equivalence.

Let $\calC = \Db \Coh (X)$ be the bounded derived category
of coherent sheaves on $X$.
Choose an object $L \in \calC$ such that
$L [-\ell]$ is a line bundle for some $\ell \in \bbZ$.
Define a dual operation $(-)^\vee \colon \calC \simto \calC^\op$ by
\begin{equation}
    \label{eq-dbcoh-dual}
    (-)^\vee = \bbR \calHom ( -, L ) \ .
\end{equation}
Typical choices of $L$ include $\calO_X$ and $\omega_X [n]$,
giving the usual duality and Serre/Verdier duality, respectively.
Choose a sign $\varepsilon = \pm 1$, and use it to identify
$E^{\vee\vee}$ with $E$ for all $E \in \calC$.

Write $\beta = c_1 (L [-\ell]) / 2 \in A^1 (X)_{\bbR}$,
and let $\omega \in A^1 (X)_{\bbR}$ be an ample class.
Consider the \emph{polynomial Bridgeland stability condition}
on $\calC$, in the sense of Bayer~\cite{Bayer2009},
given by the polynomial central charge
\begin{equation}
    \label{eq-polynomial-bridgeland}
    Z^\poly_{L, \omega} (E, m) = \upi^{n - \ell} \cdot \int_X
        \exp (-\beta - \upi m \omega) \cdot \ch (E) \ ,
\end{equation}
where $E \in \calC$, and $m$ is a real parameter.
We will consider the limit $m \gg 0$.
Here, we introduced the factor $\upi^n$
so that the central charge can be self-dual.
Indeed, we have
\begin{equation}
    \label{eq-polynomial-bridgeland-sd}
    Z^\poly_{L, \omega} (E^\vee, m) =
    \overline{Z^\poly_{L, \omega} (E, m) \vphantom{^0}}
\end{equation}
for all $E \in \calC$, where $(-)^\vee$ is as in \cref{eq-dbcoh-dual},
and the bar denotes complex conjugation.
As in Bayer~\cite[Theorem~3.2.2]{Bayer2009},
choose the \emph{perversity function}
$p \colon \{ 0, \dotsc, n \} \to \bbZ$ to be given by
$p (d) = \lfloor (n - \ell - d - 1) / 2 \rfloor$ for all $d$.
Then we have a phase function
$\phi^\poly_{L, \omega} (E, m)$ for $Z^\poly_{L, \omega} (E^\vee, m)$
in the sense of \cite[\S\S2.2--2.3]{Bayer2009},
defined for $m \gg 0$ and all $E \in \calC$ with $\ch E \neq 0$,
such that $Z^\poly_{L, \omega} (E, m) \in \bbR_{>0} \cdot 
\smash{\upe^{\upi \phi^\poly_{L, \omega} (E, m)}}$
for $m \gg 0$, and
\begin{equation}
    \label{eq-polynomial-bridgeland-phase-sd}
    \phi^\poly_{L, \omega} (E^\vee, m) = - \phi^\poly_{L, \omega} (E, m)
\end{equation}
for all such $E$. 
We have quasi-abelian full subcategories $\calP^\poly_{L, \omega} (I) \subset \calC$,
closed under extensions,
for all intervals $I$ in the set of phase functions
such that $I \cap (I + 1) = \varnothing$,
and we have $\calP^\poly_{L, \omega} (I)^\vee = \calP^\poly_{L, \omega} (-I)$ for all such intervals~$I$.
In particular, the quasi-abelian subcategory
\begin{equation}
    \calA_L = \calP^\poly_{L, \omega} \bigl(
        \mathopen{]} -1/2, 1/2 \mathclose{[}
    \bigr) \subset \calC
\end{equation}
is self-dual, and contains the self-dual abelian category $\calP^\poly_{L, \omega} (0)$.

Alternatively, the category $\calA_L$ can be described as follows.
Let $\smash{\calA^{p'}}$ be the abelian category of
\emph{perverse coherent sheaves},
as in Bayer~\cite[\S3.1]{Bayer2009},
with the perversity function $p' \colon \{ 0, \dotsc, n \} \to \bbZ$ given by
$p (d) = \lfloor (n - \ell - d) / 2 \rfloor$ for all $d$.
Then $\smash{\calA^{p'}} = \calP^\poly_{L, \omega} (\mathopen{]} -1/2, 1/2 ]) \subset \calC$,
and we have
\begin{equation}
    \label{eq-perverse-cap-dual}
    \calA_L = \smash{\calA^{p'}} \cap (\smash{\calA^{p'}})^\vee
\end{equation}
in $\calC$. In particular, this shows that
$\calA_L$ only depends on $L$, and justifies its notation.

There is a self-dual stability condition 
\[
    \tau^\poly_{L, \omega} \colon C (\calA_L) \longrightarrow T
\]
induced by the polynomial Bridgeland stability condition $Z^\poly_{L, \omega}$\,,
where $T$ is the totally ordered set of phase functions.

\paragraph{Verifying the assumptions.}
\label{para-polynomial-bridgeland-moduli}

We sketch how to verify the conditions \tagref{SdMod} and \tagref{Fin1}
for the self-dual $\bbK$-linear quasi-abelian category
$\calA_L$ defined above.

For the extra data in \tagref{ExCat} and \tagref{SdCat}, take
\begin{alignat}{2}
    C^\circ (\calA_L) & = \{
        \ch (E) \mid E \in \calA_L
    \}
    && \subset \Num (X)_{\bbQ} \ ,
    \\
    C^\sd (\calA_L) & = \{
        \ch (E) \mid (E, \phi) \in (\calA_L)^\sd
    \}
    && \subset \Num (X)_{\bbQ} \ ,
\end{alignat}
with $\llbr E \rrbr = \ch (E) \in C^\circ (\calA_L)$
for $E \in \calA_L$,
and $\llbr E, \phi \rrbr = \ch (E) \in \smash{C^\sd (\calA_L)}$
for $(E, \phi) \in \smash{(\calA_L)^\sd}$.

For \tagref{Mod}, we define
a categorical moduli stack $\+{\calM}$ of objects in $\calA_L$ as follows.
By Lieblich~\cite{Lieblich2006}
or To\"en--Vaqui\'e \cite[Corollary~3.21]{ToenVaquie2007},
there is an algebraic stack $\oline{\calM}$, locally of finite type,
parametrizing objects $E \in \calC$ with $\Ext^{<0} (E, E) = 0$.
We have an open substack $\calM \subset \oline{\calM}$
of objects in $\calA_L$\,,
which can be defined via~\cref{eq-perverse-cap-dual},
since being perverse coherent is an open condition.
For any $\bbK$-scheme $U$, now define $\+{\calM} (U)$
to be the full subcategory of $\cat{Perf} (X \times U)$
spanned by objects in $\calM (U)$,
with the induced exact structure.
Then $\calM^I$ and $\calM^\ex$
are also algebraic stacks locally of finite type,
since the former admits a representable, finite type morphism
to $\calM \times \calM$, and the latter an open immersion into $\calM^I$.

For \tagref{SdMod},
we need to define a self-dual structure on $\+{\calM} (U)$
for any $\bbK$-scheme~$U$.
Define $(-)^\vee \colon \+{\calM} (U) \simto \+{\calM} (U)^\op$
sending an object $E \in \+{\calM} (U) \subset \cat{Perf} (X \times U)$ to
$\bbR \calHom (E, L \boxtimes \calO_U)$,
and identify $E^{\vee\vee}$ with $E$ using the sign $\varepsilon$.

For \tagref{Fin1},
the morphism $\pi_1 \times \pi_2 \colon \calM^\pn{2} \to \calM \times \calM$
is the classical truncation of a \emph{derived vector bundle},
by a similar argument as in \cref{para-curves-assumptions},
and hence is of finite type.

\paragraph{Invariants.}
\label{para-polynomial-bridgeland-inv}

In the situation of \cref{para-polynomial-bridgeland},
for any self-dual weak stability condition $\tau$
on $\calA_L$ satisfying \tagref{Stab},
by results in
\cref{sect-delta-epsilon,sect-motivic-inv,sect-num-inv},
we have well-defined invariants
\begin{alignat*}{5}
    \delta_\alpha (\tau) , && \ 
    \epsilon_\alpha (\tau) & \in \SF (\calM_\alpha) \ , & \quad
    \upI_\alpha (\tau) , && \ 
    \upJ_\alpha (\tau) & \in \Mhat_\bbK \ , & \quad
    \chiJ_\alpha (\tau) & \in \bbQ \ ,
    \\
    \delta^\sd_\theta (\tau) , && \ 
    \epsilon^\sd_\theta (\tau) & \in \SF (\calM^\sd_\theta) \ , & \quad
    \upI^\sd_\theta (\tau) , && \ 
    \upJ^\sd_\theta (\tau) & \in \Mhat_\bbK \ , & \quad
    \chiJ^\sd_\theta (\tau) & \in \bbQ \ ,
\end{alignat*}
for all $\alpha \in C^\circ (\calA_L)$
and $\theta \in C^\sd (\calA_L)$.
Also, when changing between weak stability conditions on $\calA_L$,
the $\delta, \epsilon, \delta^\sd, \epsilon^\sd$ invariants
satisfy wall-crossing formulae in \cref{thm-wcf-main},
as long as the involved weak stability conditions
satisfy the assumptions given there.
However, the other invariants are not expected to satisfy
wall-crossing formulae discussed in
\crefrange{sect-motivic-inv}{sect-num-inv},
due to the failure of \tagref{Ext} and \tagref{SdExt}.

In particular, if one can verify \tagref{Stab}
for the self-dual stability condition
$\tau^\poly_{L, \omega}$ in \cref{para-polynomial-bridgeland},
then these invariants will be defined for $\tau^\poly_{L, \omega}$.
This is known when $\dim X \leq 2$,
where the case of surfaces follows from
Bayer~\cite[Lemma~4.2]{Bayer2009}.

\paragraph{Bridgeland stability conditions.}
\label{para-bridgeland-stability}

We continue to use the notations
$X$, $\calC$, $L$, $\beta$, $\omega$, $\varepsilon$
from \cref{para-polynomial-bridgeland},
and we mention \emph{Bridgeland stability conditions}
on $\calC$, as in Bridgeland~\cite{Bridgeland2007},
that are self-dual with respect to the dual operation $(-)^\vee$
in \cref{eq-dbcoh-dual}, depending on $L$ and~$\varepsilon$.
Bridgeland stability conditions are
typically difficult to construct,
and are not known to exist for general $X$ with $\dim X > 2$.

We follow Bayer--Macr\`i--Toda~\cite{BayerMacriToda2014},
who constructed Bridgeland stability conditions
for $X$ a curve, a surface, or a threefold
satisfying the conjectural Bogomolov--Gieseker inequality,
\cite[Conjecture~3.2.7]{BayerMacriToda2014}.

Consider the central charge
\begin{equation}
    Z_{L, \omega} (E) = \upi^{n - \ell} \cdot \int_X
        \exp (-\beta - \upi \omega) \cdot \ch (E) \ ,
\end{equation}
where $E \in \calC$.
For $X$ as above,
Bayer--Macr\`i--Toda~\cite{BayerMacriToda2014}
showed that $Z_{L, \omega}$ defines a Bridgeland stability condition on $\calC$.
Arguing as in \cref{para-polynomial-bridgeland},
with a suitable choice of phase function,
this Bridgeland stability condition is self-dual, in that
\begin{equation}
    \calP_{L, \omega} (t)^\vee = \calP_{L, \omega} (-t)
\end{equation}
for all $t \in \bbR$,
where $\calP_{L, \omega} (t) \subset \calC$ is the full subcategory
of semistable objects of phase~$t$.
In particular, the quasi-abelian subcategory
\begin{equation}
    \calA_{L, \omega} = \calP_{L, \omega} \bigl(
        \mathopen{]} -1/2, 1/2 \mathclose{[}
    \bigr) \subset \calC
\end{equation}
is self-dual, and contains the self-dual abelian category $\calP_{L, \omega} (0)$.
Moreover, $Z_{L, \omega}$ induces a self-dual stability condition
$\tau_{L, \omega}$ on $\calA_{L, \omega}$.

\paragraph{Verifying the assumptions.}

We now sketch the proof of \tagref{SdMod} and \tagref{Fin1}
for $\calA_{L, \omega}$, as well as \tagref{Stab} for $\tau_{L, \omega}$,
in the setting of \cref{para-bridgeland-stability},
where $X$ is a curve, a surface, or a threefold
satisfying \cite[Conjecture~3.2.7]{BayerMacriToda2014}.

Consider the stack $\oline{\calM}$ in
\cref{para-polynomial-bridgeland-moduli}.
There is an open substack $\calM \subset \oline{\calM}$
of objects of $\calA_{L, \omega}$,
and open substacks
$\calM^\ss_\alpha (\tau_{L, \omega}) \subset \calM$ of finite type,
consisting of $\tau_{L, \omega}$-semistable objects
with Chern character $\alpha$.
The case of surfaces is by
Toda~\cite[\S4]{Toda2008} and Piyaratne--Toda~\cite[\S1.1]{PiyaratneToda2019},
and the case of threefolds
by Piyaratne--Toda~\cite[Corollary~4.21]{PiyaratneToda2019};
see also Alper--Halpern-Leistner--Heinloth
\cite[Examples~7.22 and~7.29]{AHLH2023},
Halpern-Leistner \cite[Example~6.2.8]{HalpernLeistner2022}.
Then, arguing as in \cref{para-polynomial-bridgeland-moduli}
proves \tagref{SdMod} and \tagref{Fin1},
and \tagref{Stab} for $\tau_{L, \omega}$
also follows from this description.

Therefore, similarly to \cref{para-polynomial-bridgeland-inv},
we have well-defined enumerative invariants for
any self-dual weak stability condition $\tau$
on $\calA_{L, \omega}$ satisfying \tagref{Stab},
and in particular, for the self-dual stability condition
$\tau_{L, \omega}$.

\subsection{Surfaces of Kodaira dimension \texorpdfstring{$\leq$}{≤} 0}
\label{sect-surfaces}

\paragraph{}

We now consider a special case of
orthogonal and symplectic sheaves
discussed in \cref{sect-coh-general} above,
where $X$ is an algebraic surface with Kodaira dimension~$\leq 0$.
In this case, the invariants defined in \cref{sect-coh-general}
above will satisfy wall-crossing formulae,
similarly to the case of curves in \cref{sect-curves},
since we have a weaker version of the assumptions
\tagref{Ext} and \tagref{SdExt}.
This is a self-dual analogue of results in
Joyce~\cite[\S6.4]{Joyce2008IV}.

By standard results on classification of algebraic surfaces,
as described in, for example,
Iskovskikh--Shafarevich~\cite{IskovskikhShafarevich},
such a surface $X$ either has Kodaira dimension~$-\infty$,
so it is a rational surface or a ruled surface,
or it has Kodaira dimension~$0$,
so it is a K3 surface, an abelian surface, an Enriques surface,
or a hyperelliptic surface.

\paragraph{The setting.}
\label{para-surf-setting}

Let $X$ be an algebraic surface
of Kodaira dimension~$\leq 0$.
Choose a line bundle $L$ on $X$,
a sign $\varepsilon = \pm 1$,
and an ample class $\omega \in \smash{A^1} (X)_{\bbR}$
as in \cref{para-polynomial-bridgeland}.
Consider the self-dual full subcategory
\begin{equation}
    \label{eq-def-al-plus}
    \calA_L^+ = \calP^\poly_{L, \omega} \bigl(
        \mathopen{]} -1/4, 1/4 \mathclose{[}
    \bigr) \subset \calA_L \ ,
\end{equation}
which consists of all objects $E \in \calA_L$
with $\rank E > 0$,
so in particular, it is independent of $\omega$.
Choose the data $C^\circ (\calA_L^+)$ and $C^\sd (\calA_L^+)$
as the corresponding subsets of $C (\calA_L)$ and $C^\sd (\calA_L)$,
respectively, as in \cref{para-polynomial-bridgeland-moduli},
and we use the moduli stacks constructed there.

\paragraph{}
\label{para-surf-ext}
\label[proposition]{prop-surf-ext}

The main property of this class of surfaces that we will use
is the following weaker version of \tagref{Ext}.

\begin{proposition*}
    In the setting of \cref{para-surf-setting},
    let $\alpha_1, \alpha_2 \in C (\calA_L^+)$,
    and let $\calM^>_{\alpha_1, \alpha_2} (\tau^\poly_{L, \omega}) \subset \calM_{\alpha_1} \times \calM_{\alpha_2}$
    be the open substack of pairs $(E_1, E_2)$ such that
    the minimal slope of~$E_1$ with respect to $\tau^\poly_{L, \omega}$
    is greater than the maximal slope of~$E_2$.

    Then, for any $(E_1, E_2) \in \calM^>_{\alpha_1, \alpha_2} (\tau^\poly_{L, \omega})$,
    we have $\Ext^2 (E_2, E_1) = 0$.
    Moreover, the morphism
    \begin{equation}
        \pi_1 \times \pi_2 \colon \calM^\pn{2}_{\alpha_1, \alpha_2}
        \longrightarrow
        \calM_{\alpha_1} \times \calM_{\alpha_2} \ ,
    \end{equation}
    restricted to the preimage of
    $\calM^>_{\alpha_1, \alpha_2} (\tau^\poly_{L, \omega})$
    is a $2$-vector bundle of rank $-\chi (\alpha_1, \alpha_2)$,
    where
    \begin{equation}
        \label{eq-surf-chi}
        \chi (\alpha_1, \alpha_2) =
        \int_X \alpha_1 \, \alpha_2^\vee \, \td (X) \ .
    \end{equation}
\end{proposition*}

\begin{proof}
    As in Joyce~\cite[Lemma~6.18]{Joyce2008IV},
    we have either $c_1 (K_X) \cdot \omega < 0$ or $c_1 (K_X) = 0$,
    where $K_X$ is the canonical bundle of~$X$.
    One can use this to verify that
    the operation $- \otimes K_X$ on $\calA_L^+$
    does not increase the $\tau^\poly_{L, \omega}$-slope,
    and an argument as in
    \cite[Propositions~6.19 and~6.20]{Joyce2008IV}
    shows that $\Ext^2 (E_2, E_1) = 0$
    for such $(E_1, E_2)$.
    As in \cref{para-polynomial-bridgeland-moduli},
    the morphism $\pi_1 \times \pi_2$ is the classical truncation
    of a derived vector bundle,
    and the vanishing of $\Ext^2$ implies that
    it is a $2$-vector bundle over $\calM^>_{\alpha_1, \alpha_2} (\tau^\poly_{L, \omega})$.
\end{proof}

\paragraph{}
\label{para-surf-sdext}

Similarly, we have a weaker version of \tagref{SdExt}.
For $\alpha \in C (\calA_L^+)$,
let $\calM^>_\alpha (\tau^\poly_{L, \omega}) \subset \calM_\alpha$ be the open substack of objects $E$
such that the minimal slope of $E$ is greater than $0$.
Then $\Ext^2 (E^\vee, E) = 0$ for such $E$,
and the morphism $\pi_1 \colon (\calM^\pn{2}_{\smash{\alpha, \alpha^\vee}})^{\bbZ_2} \to \calM_\alpha$
restricted to the preimage of $\calM^>_\alpha (\tau^\poly_{L, \omega})$
is a $2$-vector bundle of rank $-\chi^\sd (\alpha, 0)$, 
with the notation~$\chi^\sd$ as in \cref{para-euler-sd}, where
\begin{equation}
    \label{eq-surf-chi-sd}
    \chi^\sd (\alpha, \theta) = \chi (\alpha, j (\theta)) +
    \int_X \Sym^2 (\alpha) \, \td (X) \ ,
\end{equation}
where $\Sym^2 (\alpha)$ is the Chern character of either
$\Sym^2 (E)$ or ${\wedge^2} (E)$, depending on the sign $\varepsilon$,
for any object $E$ with Chern character $\alpha$.

\paragraph{Wall-crossing for $\upI$ and $\upI^\sd$.}
\label{para-surf-wcf-i}

Using the properties in \crefrange{para-surf-ext}{para-surf-sdext}
in place of the conditions \tagref{Ext} and \tagref{SdExt},
the relation \cref{eq-psi-compat-star}
still holds if $f \boxtimes g$ is supported on 
$\calM^>_{\alpha_1, \alpha_2} (\tau^\poly_{L, \omega})$ for some $\alpha_1, \alpha_2 \in C (\calA_L^+)$,
and \cref{eq-psi-compat-diamond} holds if
$f \boxtimes h$ is supported on the preimage of
$\calM^>_{\smash{\alpha, j (\theta)}} (\tau^\poly_{L, \omega})$ in $\calM_\alpha \times \calM^\sd_\theta$
for some $\alpha \in C (\calA_L^+)$
and $\theta \in C^\sd (\calA_L^+) \setminus \{ 0 \}$,
or on $\calM^>_\alpha (\tau^\poly_{L, \omega}) \times \calM^\sd_0 \subset \calM_\alpha \times \calM^\sd_0$.

We argue that these properties are enough for the wall-crossing formulae
in \cref{thm-wcf-main} to hold for the invariants
$\upI_\alpha (\tau^\poly_{L, \omega})$ and $\upI^\sd_\theta (\tau^\poly_{L, \omega})$,
similarly to the arguments in Joyce~\cite[\S6.4]{Joyce2008IV}.

Let $\omega_1, \omega_2 \in A^1 (X)_{\bbR}$ be ample classes.
We claim that the change from
$\tau^\poly_{L, \omega_1}$ to $\tau^\poly_{L, \omega_2}$
is globally finite, in the sense of \cref{def-finite-change}.
Indeed, this was shown in Joyce~\cite[Theorem~5.16]{Joyce2008IV}
for slope stability and Gieseker stability,
a key ingredient being the \emph{Bogomolov inequality}
$\Delta (\alpha) \geq 0$ for classes $\alpha$ admitting semistable objects,
where $\Delta (\alpha) = \int_X {} \bigl( \ch_1 (\alpha)^2 - 2 \ch_0 (\alpha) \ch_2 (\alpha) \bigr)$.
The same inequality holds for polynomial Bridgeland stability
by Li--Qin~\cite[Theorem~1.1]{LiQin2011},
and a similar argument works in our case as well.

Joyce~\cite[Theorem~6.21]{Joyce2008IV} then showed that
the wall-crossing formulae for the $\upI$ and $\upI^\sd$ invariants
hold for Gieseker stability, when changing
between ample classes $\omega_1, \omega_2 \in A^1 (X)_{\bbR}$.
The strategy of the proof is to consider the family
$\omega (t) = (1 - t) \omega_1 + t \omega_2$ for $t \in [0, 1]$,
and then identify a finite set of walls $W \subset [0, 1]$,
so that all the involved invariants
are unchanged when no wall is crossed.
When crossing each wall $t \in W$,
the $\upI$ invariant at $t$ can be expressed in terms of
the $\upI$ invariants at $t - \epsilon$ and $t + \epsilon$ for small $\epsilon > 0$,
as in \cref{eq-wcf-i}, despite the failure of \tagref{Ext},
since all the Hall products involved
are covered by \cref{para-surf-ext},
so that \cref{eq-psi-compat-star} holds for them.
Consequently, the wall-crossing formula \cref{eq-wcf-i}
is valid when crossing each wall, and hence globally.

The same strategy also works for
polynomial Bridgeland stability,
and for the self-dual invariants as well.
Note that there is no extra global finiteness requirements
for the self-dual invariants.
We have thus obtained the following result.

\begin{theorem*}
    In the situation of \cref{para-surf-setting},
    let $\omega_1, \omega_2 \in A^1 (X)_{\bbR}$ be ample classes.
    Then the wall-crossing formulae
    \cref{eq-wcf-i,eq-wcf-isd} hold
    with $\tau^\poly_{\smash{L, \omega_1}}, \tau^\poly_{\smash{L, \omega_2}}$
    in place of $\tau, \tilde{\tau}$.
    \QED
\end{theorem*}

\paragraph{Wall-crossing for $\upJ$ and $\upJ^\sd$.}
\label{para-surf-wcf-j}

The methods in \cref{para-surf-wcf-i}
do not work for the $\upJ$ and $\upJ^\sd$ invariants directly,
since the Hall algebra and module operations
involved in defining the $\epsilon, \epsilon^\sd$ functions
are not covered by \crefrange{para-surf-ext}{para-surf-sdext}.

We can work around this problem following
Joyce~\cite[Definition~6.22]{Joyce2008IV}.
Namely, we redefine the invariants
$\upJ_\alpha (\tau^\poly_{L, \omega})$ and $\upJ^\sd_\theta (\tau^\poly_{L, \omega})$
by the relations \cref{eq-inv-def-epsilon,eq-inv-def-epsilon-sd},
instead of integrating the $\epsilon$ and $\epsilon^\sd$ functions.
Using this alternative definition, it immediately follows that
the wall-crossing formulae hold for these invariants.

\begin{theorem*}
    In the situation above, 
    let $\omega_1, \omega_2 \in A^1 (X)_{\bbR}$ be ample classes.
    Then the wall-crossing formulae
    \cref{eq-wcf-j,eq-wcf-jsd} hold
    for the $\upJ$ and $\upJ^\sd$ invariants redefined above,
    with $\tau^\poly_{\smash{L, \omega_1}}, \tau^\poly_{\smash{L, \omega_2}}$
    in place of $\tau, \tilde{\tau}$.
    \QED
\end{theorem*}

However, since we have changed the definition of
$\upJ_\alpha (\tau^\poly_{L, \omega})$ and $\upJ^\sd_\theta (\tau^\poly_{L, \omega})$,
the no-pole theorems may no longer hold,
except when $X$ is a del~Pezzo surface, as discussed below.

\paragraph{Del~Pezzo surfaces.}

Now, assume that $X$ is a \emph{del~Pezzo surface},
also known as a $2$-dimensional Fano variety,
which means that the anti-canonical bundle $\smash{K_X^{-1}}$ is ample.
By standard results on classification of algebraic surfaces,
$X$ is either $\bbP^1 \times \bbP^1$,
or a blow-up of $\bbP^2$ at $d$ points, where $0 \leq d \leq 9$.

In this case, results in \crefrange{para-surf-ext}{para-surf-sdext}
can be strengthened to hold for
$\calM^{\smash{\geq}}_{\alpha_1, \alpha_2} (\tau^\poly_{L, \omega})$ in place of
$\calM^{\smash{>}}_{\alpha_1, \alpha_2} (\tau^\poly_{L, \omega})$,
where $\calM^{\smash{\geq}}_{\alpha_1, \alpha_2} (\tau^\poly_{L, \omega})$
consists of pairs $(E_1, E_2)$ such that
the minimal slope of~$E_1$
is greater than or equal to the maximal slope of~$E_2$.
This is because the operation $- \otimes K_X$ on $\calC$
strictly increases the (polynomial) slope,
by the proof of \cref{prop-surf-ext}.
Consequently, the relation \cref{eq-psi-compat-star}
holds if $f \boxtimes g$ is supported on
$\calM^{\smash{\geq}}_{\alpha_1, \alpha_2} (\tau^\poly_{L, \omega})$,
and \cref{eq-psi-compat-diamond} holds if
$f \boxtimes h$ is supported on the preimage of
$\calM^{\smash{\geq}}_{\smash{\alpha, j (\theta)}} (\tau^\poly_{L, \omega})$ in $\calM_\alpha \times \calM^\sd_\theta$
or on $\calM^{\smash{\geq}}_\alpha (\tau^\poly_{L, \omega}) \times \calM^\sd_0 \subset \calM_\alpha \times \calM^\sd_0$.
These relations cover all the Hall algebra and Hall module operations
involved in defining the $\epsilon, \epsilon^\sd$ functions,
and as a result, the modified invariants
$\upJ_\alpha (\tau^\poly_{L, \omega})$ and $\upJ^\sd_\theta (\tau^\poly_{L, \omega})$
coincide with the unmodified version,
defined via integrating the $\epsilon, \epsilon^\sd$ functions.
See also Joyce~\cite[Proposition~6.23]{Joyce2008IV}
for the case of Gieseker stability.
We have thus obtained the following:

\begin{theorem*}
    For $X$ a del~Pezzo surface,
    the no-pole theorems, \cref{thm-no-pole,thm-no-pole-sd},
    hold for the $\upJ$ and $\upJ^\sd$ invariants
    defined above, and they
    satisfy the wall-crossing formulae \cref{eq-wcf-j,eq-wcf-jsd}
    with $\tau^\poly_{\smash{L, \omega_1}}, \tau^\poly_{\smash{L, \omega_2}}$
    in place of $\tau, \tilde{\tau}$, for ample classes $\omega_1, \omega_2 \in A^1 (X)_{\bbR}$.

    The numerical invariants
    $\chiJ_\alpha (\tau^\poly_{L, \omega})$ and $\chiJ^\sd_\theta (\tau^\poly_{L, \omega})$
    are well-defined, and satisfy the wall-crossing formulae
    \cref{eq-wcf-j-hash,eq-wcf-j-sd-hash}
    with $\tau^\poly_{\smash{L, \omega_1}}, \tau^\poly_{\smash{L, \omega_2}}$
    in place of $\tau, \tilde{\tau}$.
    \QED
\end{theorem*}

\paragraph{Surfaces of Kodaira dimension 0.}

On the other hand, if $X$ is a surface of Kodaira dimension~$0$,
that is, a K3 surface, an abelian surface, an Enriques surface,
or a hyperelliptic surface,
we have $c_1 (K_X) = 0$,
so that $\td_1 (X) = -c_1 (K_X) / 2 = 0$.
It follows from \cref{eq-surf-chi,eq-surf-chi-sd}
that we have the relations
\begin{equation}
    \bar{\chi} (\alpha_1, \alpha_2) = 0 \ , \qquad
    \bar{\chi}^\sd (\alpha, \theta) = 0
\end{equation}
for all $\alpha_1, \alpha_2, \alpha \in C^\circ (\calA_L^+)$
and $\theta \in C^\sd (\calA_L^+)$,
where $\bar{\chi}$ and $\bar{\chi}^\sd$ are as in
\cref{eq-def-chi-bar,eq-def-chi-bar-sd}.
This means that the algebra $\Lambda (\calA_L^+)$ in \cref{para-lambda-a}
is commutative, and the module $\Lambda^\sd (\calA_L^+)$ in \cref{para-lambda-sd-a}
satisfies $x \cdot m = x^\vee \cdot m$
for all $x \in \Lambda (\calA_L^+)$ and $m \in \Lambda^\sd (\calA_L^+)$.
Consequently, the Lie bracket on $\Lambda (\calA_L^+)$
and the $\heart$ action on $\Lambda^\sd (\calA_L^+)$ vanish.
The wall-crossing formulae~\cref{eq-wcf-j,eq-wcf-jsd}
together with \cref{thm-u-tilde,thm-comb}
imply that the invariants
$\upJ_\alpha (\tau^\poly_{L, \omega})$ and $\upJ^\sd_\theta (\tau^\poly_{L, \omega})$
are independent of $\omega$. 
See also Joyce~\cite[Theorem~6.24]{Joyce2008IV}
for the case of Gieseker stability.
We state this as the following:

\begin{theorem*}
    For $X$ a surface of Kodaira dimension~$0$,
    the invariants $\upJ_\alpha (\tau^\poly_{L, \omega})$ and $\upJ^\sd_\theta (\tau^\poly_{L, \omega})$
    defined in \cref{para-surf-wcf-j} are independent of
    the ample class~$\omega$.
    \QED
\end{theorem*}

\subsection{Donaldson--Thomas invariants}
\label{sect-threefolds}

\paragraph{}

Let $X$ be a \emph{Calabi--Yau threefold},
that is, a smooth, projective threefold over $\bbK$
with trivial canonical bundle, $K_X \simeq \calO_X$.
Joyce--Song \cite{JoyceSong2012} and
Kontsevich--Soibelman \cite{KontsevichSoibelman2008}
defined \emph{Donaldson--Thomas invariants}
$\DT_\alpha (\tau) \in \bbQ$, where $\tau$ is a Gieseker stability condition
or a slope stability condition on
$\Coh (X)$, and $\alpha$ is a Chern character.
These invariants are defined by
\begin{equation}
    \label{eq-def-dt-js}
    \DT_\alpha (\tau) = - \int_{\calM} {}
    (\bbL - 1) \cdot \epsilon_\alpha (\tau) \cdot \nu_{\calM} \, d \chi \ ,
\end{equation}
where $\nu_{\calM}$ is the \emph{Behrend function},
as in Behrend~\cite{Behrend2009},
and $\int (-) \, d \chi$ denotes applying the Euler characteristic map,
as in \cref{para-euler-char}, to the motivic integral
defined in \cref{para-mot-int}.
Joyce--Song showed that these invariants satisfy nice properties,
including wall-crossing formulae analogous to~\cref{eq-wcf-dt},
and \emph{deformation invariance},
meaning the invariants are unchanged under deformations of~$X$.

In this setting,
the Euler form $\bar{\chi} (-, -)$ in \cref{eq-def-chi-bar}
is replaced by
\begin{equation}
    \bar{\chi} (\alpha_1, \alpha_2) = \int_X \alpha_1 \, \alpha_2^\vee \, \td (X) \ ,
\end{equation}
so that for objects $E_1, E_2 \in \Coh (X)$
of Chern character $\alpha_1, \alpha_2$, respectively, we have
\begin{align*}
    \bar{\chi} (\alpha_1, \alpha_2) & =
    \sum_{i=0}^{3} {} (-1)^i \dim \Ext^i (E_2, E_1)
    \\ & =
    \bigl( \dim \Ext^1 (E_1, E_2) - \dim \Ext^0 (E_1, E_2) \bigr) 
    \\ & \hspace{4em} {} -
    \bigl( \dim \Ext^1 (E_2, E_1) - \dim \Ext^0 (E_2, E_1) \bigr) \ ,
\end{align*}
where $\Ext^i (E_1, E_2) \simeq \Ext^{3-i} (E_2, E_1)^\vee$
by Serre duality, and since $X$ is Calabi--Yau.

In the situation of \cref{sect-coh-general} above,
for $X$ a Calabi--Yau threefold, we can also define
Donaldson--Thomas invariants counting
both ordinary and self-dual objects in $\calA_L$,
for the self-dual polynomial Bridgeland stability conditions
$\tau^\poly_{L, \omega}$ in \cref{para-polynomial-bridgeland},
as well as invariants counting ordinary and self-dual objects
in $\calA_{L, \omega}$, for the self-dual Bridgeland stability condition
$\tau_{L, \omega}$ in \cref{para-bridgeland-stability}.
We expect that these invariants should satisfy
wall-crossing formulae and deformation invariance,
and we hope to study these properties in future work.

\paragraph{Definition.}

Let $X$ be a Calabi--Yau threefold,
and use the notations in
\cref{para-polynomial-bridgeland,para-polynomial-bridgeland-moduli,para-polynomial-bridgeland-inv}.
Fix a choice of $L$, and an ample class $\omega \in \smash{A^1} (X)_{\bbR}$,
and we assume that the polynomial Bridgeland stability condition
$\tau^\poly_{L, \omega}$ satisfies
the openness and boundedness conditions~\tagref{Stab}.

Then, for classes $\alpha \in C (\calA_L)$ and $\theta \in C^\sd (\calA_L)$,
define \emph{Donaldson--Thomas invariants}
\begin{align}
    \DT_\alpha (\tau^\poly_{L, \omega}) & =
    -\int_{\calM} {} (\bbL - 1) \cdot \epsilon_\alpha (\tau^\poly_{L, \omega}) \cdot \nu_{\calM} \, d \chi \ ,
    \\
    \DT^\sd_\theta (\tau^\poly_{L, \omega}) & =
    \int_{\calM^\sd} {} \epsilon^\sd_\theta (\tau^\poly_{L, \omega}) \cdot \nu_{\calM^\sd} \, d \chi \ ,
\end{align}
where $\nu_{\calM}$ and $\nu_{\calM^\sd}$ are the Behrend functions
for $\calM$ and $\calM^\sd$, as in \cite{Behrend2009}.
These are well-defined by the no-pole theorems,
\cref{thm-no-pole,thm-no-pole-sd}.

Similarly, for $X$ a threefold satisfying the
Bogomolov--Gieseker inequality in \cite{BayerMacriToda2014},
choose the data $L, \omega$,
and for classes $\alpha \in C (\calA_{L, \omega})$ and $\theta \in C^\sd (\calA_{L, \omega})$,
define
\begin{align}
    \label{eq-dt-3fold-bridgeland}
    \DT_\alpha (\tau_{L, \omega}) & =
    -\int_{\calM} {} (\bbL - 1) \cdot \epsilon_\alpha (\tau_{L, \omega}) \cdot \nu_{\calM} \, d \chi \ ,
    \\
    \DT^\sd_\theta (\tau_{L, \omega}) & =
    \int_{\calM^\sd} {} \epsilon^\sd_\theta (\tau_{L, \omega}) \cdot \nu_{\calM^\sd} \, d \chi \ .
\end{align}
Note that $\calM, \calM^\sd$ now denote moduli stacks for
$\calA_{L, \omega}$ and $\calA_{L, \omega}^\sd$, instead of
$\calA_L$ and $\calA_L^\sd$.
Invariants similar to \cref{eq-dt-3fold-bridgeland}
for Bridgeland stability conditions
were already studied in the work of
Feyzbakhsh--Thomas~\cite{FeyzbakhshThomas1,FeyzbakhshThomas2}.

We formulate some expected properties of these invariants
in the following conjecture.

\begin{conjecture}
    Let $X$ be a Calabi--Yau threefold,
    and fix a choice of $L$ as above.

    \begin{enumerate}
        \item 
            For any ample class $\omega \in \smash{A^1} (X)_{\bbR}$,
            the polynomial Bridgeland stability condition
            $\tau^\poly_{L, \omega}$ on $\calA_L$ satisfies
            the openness and boundedness conditions~\tagref{Stab},
            so that the Donaldson--Thomas invariants
            $\DT_\alpha (\tau^\poly_{L, \omega})$ and $\DT^\sd_\theta (\tau^\poly_{L, \omega})$
            are defined.

        \item
            For ample classes $\omega_1, \omega_2 \in \smash{A^1} (X)_{\bbR}$
            that are sufficiently close,
            the Donaldson--Thomas invariants
            defined above satisfy wall-crossing formulae
            analogous to \cref{eq-wcf-dt,eq-wcf-dt-sd},
            with $\tau^\poly_{\smash{L, \omega_1}}, \tau^\poly_{\smash{L, \omega_2}}$
            in place of $\tau, \tilde{\tau}$.
            If, moreover, $X$ satisfies the Bogomolov--Gieseker inequality,
            then the same holds for the Bridgeland stability conditions
            $\tau_{L, \omega_1}, \tau_{L, \omega_2}$.

        \item
            If $H^1 (X, \calO_X) = 0$,
            then the Donaldson--Thomas invariants
            defined above are unchanged under deformations of~$X$,
            which also induce unique deformations of $L$.
    \end{enumerate}
\end{conjecture}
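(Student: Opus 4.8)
We outline an approach to this conjecture, treating the three parts separately and in order. For part~(i), the plan is to adapt Bayer's argument~\cite{Bayer2009} for the surface case, now feeding in the threefold results on moduli of Bridgeland-type semistable objects. Generic flatness --- openness of the semistable locus --- is the easier half: one checks that $\tau^\poly_{L, \omega}$-semistability is an open condition on any finite-type family of objects of $\calA_L$, using that $\calA_L$ sits inside a Noetherian abelian category of perverse coherent sheaves and that, for a fixed total class, the possible destabilizing sub-objects form a bounded family. Boundedness, that is, \tagref{Stab2}, is the substantive point: one must show $\Mss_\alpha (\tau^\poly_{L, \omega})$ is of finite type for each $\alpha$, for which I would invoke the boundedness inputs of Toda~\cite{Toda2008} and Piyaratne--Toda~\cite{PiyaratneToda2019}, together with a Bogomolov--Gieseker-type inequality bounding the Chern characters of semistable objects --- the inequality underlying Bayer--Macr\`i--Toda~\cite{BayerMacriToda2014}. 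Once \tagref{Stab} holds, \cref{thm-no-pole,thm-no-pole-sd} apply, since only \tagref{Spl} is needed there and $\calA_L$ is quasi-abelian, so the Behrend-weighted Euler characteristics defining $\DT_\alpha (\tau^\poly_{L, \omega})$ and $\DT^\sd_\theta (\tau^\poly_{L, \omega})$ are finite; the Bridgeland-stability case is identical given the moduli-theoretic inputs already cited in \cref{para-bridgeland-stability}.

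For part~(ii), the ordinary wall-crossing of Joyce--Song~\cite{JoyceSong2012} rests on their Behrend-function identities, which describe how $\nu_\calM$ transforms along the Hall-algebra morphisms $\pi_1 \times \pi_2$ and $\pi^\pn{2}$. The missing ingredient in the self-dual setting is a further identity describing how $\nu_{\calM^\sd}$ transforms along the module morphisms $\pi_1 \times \pi^\sd_2$ and $\pi^{\pn{3}, \> \sd}$ used to define $\diamond$. The plan is to establish this by a local model computation near a $\bbK$-point $(E, \phi) \in \calM^\sd$: using $\calM^\sd = \calM^{\bbZ_2}$ and the description of the self-dual extension bundle in \cref{thm-sdext,thm-sdext-strong}, present a neighbourhood of $(E, \phi)$ as the $\bbZ_2$-fixed locus of a $\bbZ_2$-equivariant critical locus, and relate the Behrend function of the fixed locus to that of the ambient stack; the sign should be governed by the self-dual Euler form $\chi^\sd$ of \cref{para-euler-sd}, matching the power of $(-1)$ in \cref{eq-def-dt-sd-nai}. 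Granting this identity, the formulae \cref{eq-wcf-dt,eq-wcf-dt-sd} follow by the same formal manipulation as in the numerical case: one transports \cref{thm-wcf-main} and \cref{thm-comb} through the Behrend-weighted integration map, as in the proof of \cref{thm-wcf-num}, now with the Behrend weightings inserted into the Hall-algebra and twisted-module operations. This argument would also prove the conjecture on self-dual quivers with potentials in \cref{sect-quiv-dt}.

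For part~(iii), deformation invariance, I would mirror the Joyce--Song proof for ordinary DT invariants, which realizes $\DT_\alpha$ as a Behrend-weighted count on a moduli space carrying a symmetric obstruction theory and shows this count is locally constant in families via a relative symmetric obstruction theory together with properness of the semistable moduli spaces, where $H^1 (X, \calO_X) = 0$ is used to rigidify determinants. The self-dual analogue requires three things: a relative version of $\calM^\sd$ over a base of deformations of $(X, L)$, the hypothesis $H^1 (X, \calO_X) = 0$ ensuring that a deformation of $X$ carries $L$ along uniquely as in the ordinary case; a $\bbZ_2$-equivariant relative obstruction theory on $\calM^\sd$, obtained as the fixed part under the duality involution of the obstruction theory on $\calM$, with Serre duality on the Calabi--Yau $X$ showing it is symmetric; and constancy of the Behrend-weighted count for this relative theory, by the same argument as in the ordinary case.

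The main obstacle, in my view, is the self-dual Behrend-function identity in part~(ii). The ordinary identities of Joyce--Song are already delicate, requiring careful local models of Hall-algebra configurations, and the self-dual version must in addition track the $\bbZ_2$-action and the orthogonal/symplectic refinement of the Euler form; the only precedent is the partial analysis in Young~\cite{Young2020}. Parts~(i) and~(iii), while technical, are essentially self-dual adaptations of arguments whose ordinary counterparts are available in the literature.
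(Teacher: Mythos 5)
This statement is labelled \emph{Conjecture} in the paper, and the paper offers no proof of it; the surrounding text explicitly says ``we expect that these invariants should satisfy wall-crossing formulae and deformation invariance, and we hope to study these properties in future work.'' So there is no paper argument against which to compare your proposal.

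That said, your outline is a sensible assessment of what would be needed, and it correctly isolates the central open difficulty: a self-dual analogue of the Joyce--Song Behrend-function identities, describing how $\nu_{\calM^\sd}$ transforms along the morphisms $\pi_1 \times \pi^\sd_2$ and $\pi^{\pn{3}, \> \sd}$ defining the module operation $\diamond$. This is indeed the point the paper does not address, and your framing --- presenting a neighbourhood of $(E,\phi) \in \calM^\sd$ as a $\bbZ_2$-fixed locus of a $\bbZ_2$-equivariant critical locus and comparing Behrend functions --- is a plausible line of attack, though you should be aware that Behrend functions do not restrict to fixed loci in any naive way, so the comparison you want will require genuinely new analysis of the $\bbZ_2$-equivariant d-critical structure, not merely the Euler-form bookkeeping you sketch. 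Your treatments of parts~(i) and~(iii) are routine-in-principle adaptations of known arguments, as you say, and the dependencies you cite (Toda, Piyaratne--Toda, Bayer--Macr\`i--Toda for boundedness; Joyce--Song and the hypothesis $H^1 (X, \calO_X) = 0$ for rigidity of determinants under deformation) are the right ones. In short, this is a correct identification of the obstacles to proving a statement that the paper itself leaves open; it is not, and could not be, a proof.
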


\clearpage
\appendix
\addtocontents{toc}{\protect\contentsline{section}{Appendices}{}{}}
\prepareappendices

\section{Rings of motives}

\label{sect-motive}

\subsection{Motives of algebraic stacks}

\paragraph{}

We briefly summarize the formalism of motives
(in the na\"ive sense of the word) of algebraic stacks,
following Joyce~\cite[\S4]{Joyce2007Stack}.

In the following, an \emph{algebraic stack} over $\bbK$
will refer to a stack on the site of $\bbK$-schemes
with the \'etale topology, with a smooth atlas.

\begin{definition}
    \label{def-motive-varieties}
    Let $\mathrm{Var}_{\bbK}$ be the set of isomorphism classes of $\bbK$-varieties.
    Define an abelian group $K_0 (\mathrm{Var}_{\bbK})$
    as the quotient of the free abelian group
    generated by symbols $\mu (X)$ for
    $X \in \mathrm{Var}_{\bbK}$, by the relations
    \begin{equation}
        \mu (X) \sim \mu (Z) + \mu (X \setminus Z)
    \end{equation}
    for closed subvarieties $Z \subset X$.

    Define multiplication on $K_0 (\mathrm{Var}_{\bbK})$ by
    \begin{equation}
        \mu (X) \cdot \mu (Y) = \mu (X \times Y) \ .
    \end{equation}
    This gives $K_0 (\mathrm{Var}_{\bbK})$ the structure of a commutative ring.

    Denote $\bbL = \mu (\bbA_{\bbK}^1) \in K_0 (\mathrm{Var}_{\bbK})$.
    We also define the following versions of rings of motives. Define
    \begin{align}
        \label{eq-def-mot-ring}
        \upM_{\bbK} & =
        K_0 (\mathrm{Var}_{\bbK}) \,
        [ \bbL^{-1} ] \otimes \bbQ \ ,
        \\
        \label{eq-def-mot-ring-circ}
        \Mhat_{\bbK}^\circ & =
        K_0 (\mathrm{Var}_{\bbK}) \,
        [ \bbL^{-1}, \, (\bbL^n + \bbL^{n-1} + \cdots + 1)^{-1} ]
        \otimes \bbQ \ ,
        \\
        \label{eq-def-mot-ring-hat}
        \Mhat_{\bbK} & =
        K_0 (\mathrm{Var}_{\bbK}) \,
        [ \bbL^{-1}, \, (\bbL - 1)^{-1}, \, (\bbL^n + \bbL^{n-1} + \cdots + 1)^{-1} ]
        \otimes \bbQ \ ,
        \\
        \label{eq-def-mot-ring-tilde}
        \Mtilde_\bbK & =
        \Mhat_\bbK^\circ / (\bbL - 1) \ ,
    \end{align}
    with $\bbL^n + \bbL^{n-1} + \cdots + 1$ inverted for all $n \geq 0$
    in \crefrange{eq-def-mot-ring-circ}{eq-def-mot-ring-hat}.
\end{definition}

\paragraph{}

For example, one can deduce that
\begin{align}
    \mu (\bbA^n)
    & = \bbL^n \ , \\
    \mu (\bbP^n)
    & = \bbL^n + \bbL^{n-1} + \cdots + 1 \ , \\
    \mu (\GL (n))
    & = \prod_{i=0}^{n-1} {} (\bbL^n - \bbL^i) \ ,
\end{align}
etc., in $K_0 (\mathrm{Var}_{\bbK})$.

\begin{definition}
    \label{def-aff-stab}
    Let $\calX$ be an algebraic $\bbK$-stack, locally of finite type.
    We say that $\calX$ \emph{has affine stabilizers},
    if the stabilizer groups of all $\bbK$-points of $\calX$
    are affine algebraic groups over $\bbK$.
\end{definition}

\begin{theorem}
    \label{def-motive-stacks}
    Let $\mathrm{St}_{\bbK}$ denote the set of
    isomorphism classes of algebraic $\bbK$-stacks
    of finite type with affine stabilizers.
    There is a unique map
    \[
        \mu \colon \mathrm{St}_{\bbK} \longrightarrow \Mhat_{\bbK}
    \]
    extending the map $\mu \colon \mathrm{Var}_{\bbK} \to K_0 (\mathrm{Var}_{\bbK})$
    in \cref{def-motive-varieties},
    satisfying the following properties:
    \begin{enumerate}
        \item 
            If $\calX \in \mathrm{St}_{\bbK}$ and
            $\calZ \subset \calX$ is a closed substack, then
            \begin{equation}
                \mu (\calX) = \mu (\calZ) + \mu (\calX \setminus \calZ) \ .
            \end{equation}
        \item
            For any two $\bbK$-stacks $\calX, \calY \in \mathrm{St}_{\bbK}$\,,
            we have
            \begin{equation}
                \mu (\calX \times \calY) =
                \mu (\calX) \cdot \mu (\calY) \ .
            \end{equation}
        \item
            For any special $\bbK$-group $G$ acting on a $\bbK$-variety $X$,
            we have
            \begin{equation}
                \mu (X) =
                \mu ([X / G]) \cdot \mu (G) \ ,
            \end{equation}
            where $[X/G]$ is the quotient stack.
    \end{enumerate}
\end{theorem}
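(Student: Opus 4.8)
The statement to prove is the existence and uniqueness of the motive map $\mu \colon \mathrm{St}_{\bbK} \to \Mhat_{\bbK}$ for finite type algebraic stacks with affine stabilizers, satisfying the scissor, multiplicativity, and special-group-torsor properties. This is Theorem 4.10 in Joyce's "Motivic invariants of Artin stacks and stack functions" — let me write a plan.

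The proof has two halves: uniqueness and existence.

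For uniqueness: An algebraic stack $\calX$ of finite type with affine stabilizers admits a stratification by global quotient stacks $[U_i/\GL(n_i)]$ (or by $[U_i/G_i]$ for special groups). By the scissor relation (i), $\mu(\calX) = \sum_i \mu([U_i/\GL(n_i)])$, and by the torsor relation (iii), $\mu([U_i/\GL(n_i)]) = \mu(U_i)/\mu(\GL(n_i))$, where $\mu(\GL(n_i))$ is the invertible element $\prod_{j=0}^{n_i-1}(\bbL^{n_i} - \bbL^j)$ of $\Mhat_{\bbK}$. This pins down $\mu(\calX)$ uniquely in terms of the values on varieties.

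For existence: The key steps are, first, to recall the stratification result (Kresch, or Joyce) that every finite type algebraic $\bbK$-stack with affine stabilizers can be stratified into locally closed substacks each isomorphic to a quotient $[U/\GL(n)]$ with $U$ a quasi-projective variety; second, to define $\mu(\calX)$ via such a stratification by the formula above; third, to prove this is well-defined, i.e.\ independent of the chosen stratification — this is the technical heart. Independence is handled by showing any two stratifications have a common refinement, and then checking that refining a single stratum $[U/\GL(n)]$ further doesn't change the sum: if $[U/\GL(n)] = \coprod [V_k/\GL(m_k)]$, one reduces to comparing $\mu(U)/\mu(\GL(n))$ with $\sum_k \mu(V_k)/\mu(\GL(m_k))$, which follows from the scissor and torsor relations on the varieties $U$, $V_k$ themselves, together with the fact that a $\GL(n)$-torsor over a variety is Zariski-locally trivial (hence the torsor relation for $\GL(n)$ reduces to scissor-type relations on varieties, where it is tautological by the very definition of $K_0(\mathrm{Var}_{\bbK})$). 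Then one verifies properties (i)--(iii) for the defined map: (i) by concatenating stratifications of $\calZ$ and $\calX \setminus \calZ$; (ii) by taking products of quotient presentations, using $[U/\GL(n)]\times[U'/\GL(n')] = [(U\times U')/\GL(n)\times\GL(n')]$ and that $\GL(n)\times\GL(n')$ is special with $\mu$ multiplicative on special groups; (iii) for a general special $G$ one reduces, via the structure theory of special groups (extensions of $\GL$'s, $\SL$'s, $\Sp$'s by vector groups), to the $\GL(n)$ case, or argues directly that $G$-torsors over varieties are Zariski-locally trivial so that $\mu(X) = \mu([X/G])\mu(G)$ holds by stratifying $X$ into pieces over which the torsor trivializes.

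The main obstacle is the well-definedness of $\mu(\calX)$ under change of stratification — in particular establishing that any two stratifications by quotient stacks admit a common refinement by quotient stacks, and that the "invertible denominator" $\mu(\GL(n))^{-1}$ behaves correctly under refinement. This rests on Kresch's stratification theorem and on the Zariski-local triviality of $\GL(n)$-torsors over schemes (Hilbert 90 / the fact that $\GL(n)$ is a special group in Serre's sense); I would cite these rather than reprove them. A secondary point requiring care is checking that the target ring $\Mhat_{\bbK}$ has enough denominators inverted — namely $\bbL$, $\bbL-1$, and all $\bbL^n + \cdots + 1$ — so that $\mu(\GL(n)) = \bbL^{n(n-1)/2}\prod_{j=1}^{n}(\bbL^j - 1) = \bbL^{\binom n2}(\bbL-1)^n\prod_{j=1}^{n}(\bbL^{j-1}+\cdots+1)$ is genuinely invertible there; this is exactly why $\Mhat_{\bbK}$ is defined with those localizations in \cref{def-motive-varieties}. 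With these inputs in hand, properties (i)--(iii) are then routine verifications from the construction.
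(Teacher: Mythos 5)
The paper gives no proof of its own here; it simply cites Joyce's Theorem~4.10 in \cite{Joyce2007Stack}. Your outline correctly identifies that reference and faithfully reproduces the structure of Joyce's argument (Kresch stratification, uniqueness from scissor plus torsor relations, well-definedness under refinement using Zariski-local triviality of $\GL(n)$-torsors, and the invertibility of $\mu(\GL(n))$ in $\Mhat_{\bbK}$), so it is essentially the same approach as the one the paper intends the reader to supply from that source.
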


See Joyce~\cite[Theorem~4.10]{Joyce2007Stack}.
Here, a \emph{special group} is an algebraic group $G$
such that every principal $G$-bundle on a scheme is Zariski locally trivial,
as in Serre~\cite[\S4.1]{Serre1958}.
For example, the groups $\Ga$, $\GL (n)$, and $\Sp (2n)$ are special;
finite products of special groups are special.
The group $\upO (n)$ is not special when $n > 0$.

\begin{example}
    \label{eg-motive-bg}
    Here are some examples which will be useful later.
    For $n \geq 0$, we have
    \begin{align}
        \label{eq-motive-bgl}
        \mu ([*/\GL(n)]) & = \prod_{i=0}^{n-1}
        \frac{1}{\bbL^n - \bbL^i} \ , \\
        \label{eq-motive-bo-even}
        \mu ([*/\upO(2n)]) & = \bbL^n \cdot \prod_{i=0}^{n-1}
        \frac{1}{\bbL^{2n} - \bbL^{2i}} \ , \\[1ex]
        \label{eq-motive-bo-odd}
        \mathllap{\mu ([*/\upO(2n+1)]) = {}}
        \mu ([*/\Sp(2n)])
        & = \bbL^{-n} \cdot \prod_{i=0}^{n-1}
        \frac{1}{\bbL^{2n} - \bbL^{2i}} \ ,
    \end{align}
    where the cases for orthogonal groups are due to
    Dhillon and Young~\cite[Theorem~3.7]{DhillonYoung2016}.
    Notice that when $r > 0$, the group $\upO (r)$ is not special, and we have
    \[
        \mu ([*/\upO(r)]) \neq \mu (\upO(r))^{-1} \ .
    \]
    In particular, when $r = 1$, we have
    $\mu ([*/\bbZ_2]) = 1 \neq 1/2$.
\end{example}

Next, we prove some properties of the motive of a stack.

\begin{theorem}
    \label{thm-motive-bundle}
    Let $\calX$ be an algebraic stack of finite type
    with affine stabilizers.
    \begin{enumerate}
        \item 
            \label{itm-motive-bundle-vb}
            Let $\calY \to \calX$ be a vector bundle of rank $n$. Then
            \begin{equation}
                \mu (\calY) = \mu (\calX) \cdot \bbL^n \ .
            \end{equation}
        \item 
            \label{itm-motive-bundle-pb}
            Let $\calY \to \calX$ be a principal $G$-bundle,
            where $G$ is a special $\bbK$-group. Then
            \begin{equation}
                \mu (\calY) = \mu (\calX) \cdot \mu (G) \ .
            \end{equation}
    \end{enumerate}
\end{theorem}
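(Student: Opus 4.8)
The plan is to prove \cref{itm-motive-bundle-pb} first, and then deduce \cref{itm-motive-bundle-vb} from it by passing to the frame bundle. The one substantive reduction, used for both parts, is to the case in which the base $\calX$ is an honest $\bbK$-variety; once there, the quotient-stack relation~(iii) of \cref{def-motive-stacks} does essentially all the work.

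\textbf{Reduction to a variety base.} As in the construction of $\mu$ in Joyce~\cite[\S4]{Joyce2007Stack}, any algebraic $\bbK$-stack of finite type with affine stabilizers admits a finite stratification $\calX = \coprod_i \calX_i$ by locally closed substacks, each of the form $\calX_i \simeq [U_i/\GL(m_i)]$ with $U_i$ a $\bbK$-variety. Iterating additivity~(i) gives $\mu(\calX) = \sum_i \mu(\calX_i)$ and $\mu(\calY) = \sum_i \mu(\calY_i)$, where $\calY_i$ is the restriction of the bundle to $\calX_i$, so it suffices to treat each index $i$. Pulling $\calY_i \to \calX_i$ back along the $\GL(m_i)$-torsor $U_i \to [U_i/\GL(m_i)] = \calX_i$ produces a principal $G$-bundle $\tilde{\calY}_i \to U_i$ over a variety; being the base change of $U_i \to \calX_i$, the map $\tilde{\calY}_i \to \calY_i$ is a principal $\GL(m_i)$-bundle. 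Since $G$ and $\GL(m_i)$ are special and $U_i$ is a variety, $\tilde{\calY}_i$ is again a $\bbK$-variety (a Zariski-locally trivial bundle with affine, hence separated and finite-type, total space). Applying the relation~(iii) to $U_i$ and to $\tilde{\calY}_i$ gives $\mu(U_i) = \mu(\calX_i)\,\mu(\GL(m_i))$ and $\mu(\tilde{\calY}_i) = \mu(\calY_i)\,\mu(\GL(m_i))$, and $\mu(\GL(m_i)) = \prod_{j=0}^{m_i-1}(\bbL^{m_i}-\bbL^{j})$ is invertible in $\Mhat_\bbK$, because each factor equals $\bbL^{j}(\bbL-1)(\bbL^{m_i-j-1}+\cdots+1)$ and all of $\bbL$, $\bbL-1$ and $\bbL^{k}+\cdots+1$ are inverted in $\Mhat_\bbK$ by \cref{def-motive-varieties}. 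Thus the identity for $\calY_i \to \calX_i$ follows once we know $\mu(\tilde{\calY}_i) = \mu(U_i)\,\mu(G)$, i.e.\ the case of a variety base.

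\textbf{The variety base case and part~\cref{itm-motive-bundle-vb}.} If $\calX$ is a $\bbK$-variety and $\calY \to \calX$ is a principal $G$-bundle with $G$ special, then $\calY \to \calX$ is Zariski-locally trivial, so $\calY$ is a $\bbK$-variety and the projection identifies $\calX$ with the quotient stack $[\calY/G]$; relation~(iii) applied to $X = \calY$ then gives $\mu(\calY) = \mu([\calY/G])\,\mu(G) = \mu(\calX)\,\mu(G)$. (Alternatively one may stratify $\calX$ into locally closed pieces on which $\calY$ is trivial and use additivity and multiplicativity; this variant also proves part~\cref{itm-motive-bundle-vb} directly.) For part~\cref{itm-motive-bundle-vb}, let $\calP \to \calX$ be the frame bundle of a rank-$n$ vector bundle $\calY \to \calX$; it is a principal $\GL(n)$-bundle, $\GL(n)$ is special, and part~\cref{itm-motive-bundle-pb} gives $\mu(\calP) = \mu(\calX)\,\mu(\GL(n))$. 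Since $\calY = \calP \times^{\GL(n)} \bbA^n$ with $\GL(n)$ acting freely on $\calP \times \bbA^n$, the map $\calP \times \bbA^n \to \calY$ is a principal $\GL(n)$-bundle, so part~\cref{itm-motive-bundle-pb} gives $\mu(\calP)\,\bbL^n = \mu(\calP \times \bbA^n) = \mu(\calY)\,\mu(\GL(n))$ (using multiplicativity and $\mu(\bbA^n) = \mu(\bbA^1)^n = \bbL^n$); cancelling the invertible factor $\mu(\GL(n))$ yields $\mu(\calY) = \mu(\calX)\,\bbL^n$.

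\textbf{Main obstacle.} The only point that is not a routine manipulation of the defining properties of $\mu$ is the reduction to a variety base: it relies on the stratification of $\calX$ by quotient substacks $[U_i/\GL(m_i)]$ with $U_i$ a variety (available precisely because $\calX$ has affine stabilizers), and on checking that every pulled-back total space $\tilde{\calY}_i$ is genuinely a $\bbK$-variety, so that relation~(iii) of \cref{def-motive-stacks} — stated only for group actions on varieties — may legitimately be applied; everything after that is bookkeeping with the invertible elements $\mu(\GL(m_i))$.
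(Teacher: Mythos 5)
Your proof is correct. The central reduction — stratifying $\calX$ into pieces $[U_i/\GL(m_i)]$ with $U_i$ a variety (Kresch), pulling back along the $\GL(m_i)$-torsor $U_i \to \calX_i$, and applying relation~(iii) of \cref{def-motive-stacks} twice to cancel $\mu(\GL(m_i))$ — is exactly the paper's argument, and your treatment of part~\cref{itm-motive-bundle-pb} matches the paper's (which it calls ``an analogous argument''). For part~\cref{itm-motive-bundle-vb} you bootstrap from part~\cref{itm-motive-bundle-pb} via the frame bundle $\calP \to \calX$ and the $\GL(n)$-torsor $\calP \times \bbA^n \to \calY$; the paper instead proves~\cref{itm-motive-bundle-vb} directly on the stratum $U_i$ from Zariski-local triviality of vector bundles over schemes, which is slightly shorter but amounts to the same specialness input since $\GL(n)$ being special is precisely what makes the frame bundle Zariski-locally trivial. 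Your parenthetical ``alternatively'' route is the paper's route, so you have both in hand; either is fine.
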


\begin{proof}
    By a result of Kresch \cite[Proposition~3.5.9]{Kresch1999},
    we may decompose $\calX$ into a finite disjoint union of
    quotient stacks of the form $[X / \GL (m)]$,
    where $X$ is a $\bbK$-variety.
    So we may assume that $\calX = [X / \GL (m)]$.
    
    For \cref{itm-motive-bundle-vb},
    let $Y = X \times_{\calX} \calY$, so that $\calY \simeq [Y / \GL (m)]$,
    and $Y \to X$ is a vector bundle of rank $n$.
    Since all vector bundles are Zariski trivial,
    we can deduce that $\mu (Y) = \mu (X) \cdot \bbL^n$.
    Dividing both sides by $\mu (\GL (m))$,
    we obtain that $\mu (\calY) = \mu (\calX) \cdot \bbL^n$.
    
    For \cref{itm-motive-bundle-pb},
    an analogous argument will work.
\end{proof}

\begin{theorem}
    \label{thm-motive-linear-action-quotient}
    Let $G$ be an affine algebraic group over $\bbK$,
    acting linearly on a $\bbK$-vector space $\bbA^n$.
    Then the motive of the quotient $[\bbA^n / G]$ can be expressed as
    \begin{equation}
        \mu ([\bbA^n / G]) = \bbL^n \cdot \mu([* / G]) \ .
    \end{equation}
\end{theorem}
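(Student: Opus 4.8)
The statement is Theorem \ref{thm-motive-linear-action-quotient}: for $G$ an affine algebraic group acting linearly on $\bbA^n$, we have $\mu([\bbA^n/G]) = \bbL^n \cdot \mu([*/G])$.

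The plan is to reduce this to \cref{thm-motive-bundle}\cref{itm-motive-bundle-vb}, which says the motive of a vector bundle of rank $n$ over a finite-type stack with affine stabilizers equals $\bbL^n$ times the motive of the base. The key observation is that the projection $[\bbA^n/G] \to [*/G]$ is a vector bundle of rank $n$: since $G$ acts linearly on $\bbA^n$, the quotient $[\bbA^n/G]$ is precisely the total space of the vector bundle on $[*/G]$ associated to the $G$-representation $\bbA^n$. Concretely, pulling back along the atlas $\Spec\bbK \to [*/G]$ gives $\bbA^n \to \Spec\bbK$, the trivial rank-$n$ bundle, and the descent data is $G$-linear, so $[\bbA^n/G] \to [*/G]$ is a rank-$n$ vector bundle in the sense required.

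So the steps I would carry out are: (i) note that $[*/G]$ is an algebraic $\bbK$-stack of finite type with affine stabilizers — it has finite type since $G$ is affine hence of finite type, and its unique $\bbK$-point has stabilizer $G$, which is affine by hypothesis; (ii) identify $[\bbA^n/G] \to [*/G]$ as a vector bundle of rank $n$, using that the $G$-action on $\bbA^n$ is linear, so the quotient is the relative $\Spec$ of the symmetric algebra on the associated locally free sheaf of rank $n$ on $[*/G]$; (iii) apply \cref{thm-motive-bundle}\cref{itm-motive-bundle-vb} with $\calX = [*/G]$ and $\calY = [\bbA^n/G]$ to conclude $\mu([\bbA^n/G]) = \mu([*/G]) \cdot \bbL^n$.

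The main obstacle, such as it is, is step (ii): one must be slightly careful that "vector bundle over a stack" in the sense used by \cref{thm-motive-bundle} is exactly what $[\bbA^n/G] \to [*/G]$ is, and that a linear $G$-action does give descent data for a genuine vector bundle rather than merely an affine bundle or a torsor under one. This is essentially formal — a linear representation $G \to \GL(\bbA^n)$ is the same as a rank-$n$ vector bundle on $[*/G]$ with total space $[\bbA^n/G]$ — but it is the only nontrivial input. (One could alternatively mimic the proof of \cref{thm-motive-bundle} directly: decompose $[*/G]$, or rather work with an atlas, reduce to $\GL(m)$-quotients, use Zariski-local triviality of vector bundles, and divide by $\mu(\GL(m))$; but invoking \cref{thm-motive-bundle} is cleaner.) Everything else is a direct citation.
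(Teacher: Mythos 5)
Your proposal is correct and matches the paper's proof exactly: the paper also regards $[\bbA^n/G] \to [*/G]$ as a rank-$n$ vector bundle and cites \cref{thm-motive-bundle}~\cref{itm-motive-bundle-vb}. Your additional remarks about why a linear $G$-action yields a genuine vector bundle on $[*/G]$ simply spell out what the paper leaves implicit.
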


\begin{proof}
    We can regard $[\bbA^n / G]$ as a vector bundle over $[* / G]$
    of rank $n$.
    The theorem then follows from
    \cref{thm-motive-bundle}~\cref{itm-motive-bundle-vb}.
\end{proof}

Note that, as shown in \cref{eg-motive-bg},
the motive $\mu ([*/G])$ is not always equal to $\mu (G)^{-1}$,
unless $G$ is special.

\begin{theorem}
    \label{thm-almost-bijection}
    Let $\calX, \calY$ be algebraic $\bbK$-stacks of finite type,
    and let $f \colon \calX \to \calY$ be a morphism.
    Suppose that $f$ induces a bijection of $\bbK$-points
    and induces isomorphisms of stabilizer groups at all $\bbK$-points.
    
    Then, there are decompositions into disjoint reduced locally closed substacks
    \[
        \calX = \bigcup_{i=1}^n \calX_i \ , \qquad
        \calY = \bigcup_{i=1}^n \calY_i \ ,
    \]
    such that for each $i = 1, \dotsc, n,$
    the restriction $f|_{\calX_i} \colon \calX_i \to \calY_i$
    is an isomorphism.
    
    In particular,
    if $\calX$ and $\calY$ have affine stabilizers, then
    \[
        \mu (\calX) = \mu (\calY) \ .
    \]
\end{theorem}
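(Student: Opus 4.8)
The plan is to reduce the statement to a Noetherian-induction argument on the target $\calY$, at each stage finding a common open substack on which $f$ is an isomorphism. First I would invoke Kresch's stratification result (as in the proof of \cref{thm-motive-bundle}) to reduce to the case where $\calY = [Y/\GL(m)]$ is a quotient stack with $Y$ a $\bbK$-variety; since $f$ is representable (being a monomorphism on stabilizers and a bijection on points, it is in particular quasi-finite and representable), the fibre product $\calX \times_{\calY} Y$ is an algebraic space $X'$ of finite type, and after a further stratification of $X'$ we may assume $X'$ is a scheme, so that $\calX$ is itself a quotient stack $[X'/\GL(m)]$ compatibly with $f$. It then suffices to show that a $\GL(m)$-equivariant morphism $g \colon X' \to Y$ of finite-type $\bbK$-schemes which is a bijection on $\bbK$-points is an isomorphism over a dense open of $Y$; we can even forget equivariance and work with the bare schemes, provided the open we extract is $\GL(m)$-invariant, which we arrange by shrinking to the complement of the (invariant, closed) locus where $g$ fails to be an isomorphism.

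Next I would prove this scheme-level statement. Since $\bbK$ is algebraically closed of characteristic $0$, a bijection on $\bbK$-points between finite-type $\bbK$-schemes, restricted to the reductions, is a bijection on closed points hence a universal homeomorphism onto its image after passing to reduced structures; more usefully, by generic flatness and generic smoothness one can find a dense open $V \subset Y_{\red}$ over which $g_{\red}$ is flat, and a bijective flat morphism of finite type between reduced schemes over an algebraically closed field of characteristic $0$ is an isomorphism (it is quasi-finite, flat, radicial, hence an open immersion, and surjective). This handles a dense open $\calY_1 \subset \calY$; removing it and iterating, Noetherian induction on $\calY$ terminates after finitely many steps, yielding the finite decompositions $\calX = \bigcup \calX_i$, $\calY = \bigcup \calY_i$ with $f|_{\calX_i}$ an isomorphism. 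Throughout, one must be careful that the substacks are reduced and constructible, and that at each stage the relevant loci are genuinely $\GL(m)$-invariant so the descent to stacks is legitimate.

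Finally, the motivic conclusion is immediate: all $\calX_i$ and $\calY_i$ are finite-type algebraic $\bbK$-stacks with affine stabilizers (affineness of stabilizers is inherited by substacks, and on $\calX_i$ it is inherited from $\calX$, or transported from $\calY_i$ via the isomorphism), so $\mu(\calX_i) = \mu(\calY_i)$ for each $i$, and additivity of $\mu$ under the disjoint (constructible) stratifications — \cref{def-motive-stacks}~(i) — gives $\mu(\calX) = \sum_i \mu(\calX_i) = \sum_i \mu(\calY_i) = \mu(\calY)$.

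The main obstacle I expect is the passage from "bijective on $\bbK$-points and iso on stabilizers" to "isomorphism over a dense open": one has to be somewhat careful, because a bijective finite-type morphism need not be an isomorphism (e.g.\ normalisation of a cusp, or a bijective non-flat map), so the argument genuinely needs generic flatness plus the characteristic-$0$ hypothesis to rule out inseparability, and needs the representability of $f$ (which I would justify carefully from the hypothesis on stabilizers) in order to reduce to schemes at all. A secondary technical point is ensuring the strata can be chosen $\GL(m)$-invariant and reduced simultaneously, which follows because the bad locus is cut out by a $\GL(m)$-invariant coherent condition, but should be stated explicitly.
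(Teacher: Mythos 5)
Your overall strategy (Noetherian induction, reduce to a dense open where $f$ is an isomorphism) matches the paper's in spirit, but the execution has two genuine gaps, the first of which is serious.

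\textbf{Representability of $f$ is not free.} You assert early on that ``$f$ is representable (being a monomorphism on stabilizers and a bijection on points)'' and use this to form the algebraic space $X' = \calX \times_\calY Y$ and to write $\calX$ as a quotient stack. But representability by algebraic spaces is equivalent to the morphism of inertia stacks $\calI_\calX \to f^* \calI_\calY$ being a monomorphism over \emph{all} base schemes, whereas the hypothesis only gives isomorphisms of stabilizer groups \emph{at $\bbK$-points}. These are not equivalent: stabilizers at $\bbK$-points can all agree while the inertia morphism fails to be a monomorphism over a non-reduced or non-closed point. Bridging this gap is precisely the content of the paper's inertia-stack argument: they pick a smooth cover $U \to \calX$, compare $V = U \times_\calX \calI_\calX$ and $W = U \times_\calY \calI_\calY$, use generic flatness to shrink $U$ so that $V$ and $W$ are flat over $U$, and then argue that $V \to W$ is an isomorphism (being étale and bijective on $\bbK$-points). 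Only after this shrinking step do they invoke \citestacks{04YY} to get representability. Your proposal skips this entirely, and without it the base change to $X'$ and the reduction to schemes do not get off the ground.

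\textbf{Kresch requires affine stabilizers; the first assertion does not assume them.} Your reduction to $\calY \simeq [Y/\GL(m)]$ invokes Kresch's stratification, which needs $\calY$ to have affine stabilizers (or quasi-affine diagonal). The first paragraph of the statement — the existence of matching decompositions $\calX = \bigcup \calX_i$, $\calY = \bigcup \calY_i$ — is asserted without any affine-stabilizer hypothesis; only the motivic ``in particular'' adds it. The paper's proof avoids Kresch altogether (it never passes through global quotient stacks) and hence proves the first part in the stated generality; your route would only prove it under the extra hypothesis.

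Your scheme-level endgame is essentially sound: once one is reduced to a flat, finite-type, bijective-on-$\bbK$-points morphism of reduced schemes over an algebraically closed field of characteristic $0$, generic étaleness and the ``flat $+$ locally of finite presentation $+$ universally injective $\Rightarrow$ open immersion'' lemma (\citestacks{05W5}) do finish, though universal injectivity deserves a sentence (bijectivity on $\bbK$-points plus flatness plus shrinking gives injectivity on the underlying topological space, and characteristic $0$ kills inseparability). This is close in spirit to the paper's final step, which after establishing representability and flatness shows the morphism is étale (fibres over $\bbK$-points are $\Spec \bbK$ or empty) and universally injective (\citestacks{0DUC}), hence an open immersion. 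The real difference, and the real gap in your proposal, is the missing inertia-stack argument that lets you use representability at all.
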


\begin{proof}
    Taking reductions,
    we may assume that $\calX$ and $\calY$ are reduced.

    Let $|\calX|$ be the underlying topological space of $\calX$,
    as in \cite[\S5]{LaumonMoret2000}.
    By \cite[Theorem~5.9.4]{LaumonMoret2000},
    the image $f (|\calX|) \subset |\calY|$ is constructible.
    But since it contains all $\bbK$-points,
    it must be the whole of $|\calY|$,
    so $f$ is surjective.
    
    Therefore, by noetherian induction, it is enough to show that
    there is an open substack $\calU \subset \calX$,
    such that $f|_{\calU}$ is an isomorphism onto an open substack of $\calY$.

    Choose an open morphism $U \to \calX$
    from an irreducible $\bbK$-variety $U$, with dense image.
    For example, this could be a smooth covering of a dense, open substack of $\calX$.
    Shrinking $\calX$ if necessary, we assume that
    $U \to \calX$ is surjective.
    
    Let us show that $f$ induces isomorphisms of all stabilizer groups,
    including at points other than $\bbK$-points.
    Let $\calI_{\calX} \to \calX$ and $\calI_{\calY} \to \calY$
    be the inertia stacks.
    These morphisms are representable and of finite type.
    Let $V = U \times_{\calX} \calI_{\calX}$ and
    $W = U \times_{\calY} \calI_{\calY}$\,.
    Then the induced morphism of finite type algebraic spaces
    $p \colon V \to W$ satisfies the property that, for any $u \in U (\bbK)$,
    the morphism of fibres $p_u \colon V_u \to W_u$ is an isomorphism.
    By generic flatness, shrinking $U$, and hence $\calX$, if necessary,
    we can assume that $V$ and $W$ are flat over $U$.
    By \citestacks{05X0}, $p$ is flat, and hence \'etale.
    Since it is bijective on $\bbK$-points, it is an isomorphism,
    which implies the claim.
    
    By \citestacks{04YY}, $f$ is representable by algebraic spaces.
    Shrinking $U$ again, we may assume that $f$ is flat,
    and that $f$ is a reduced morphism, as in \cite[Definition~6.8.1]{EGAIV2},
    since morphisms whose source is reduced
    are generically reduced \citestacks{054Z}.
    Now, fibres of $f$ over $\bbK$-points
    have to be either empty or $\operatorname{Spec} \bbK$, so $f$ is étale.
    On the other hand, by \citestacks{0DUC}, $f$ is universally injective,
    so it is an open immersion,
    and hence an isomorphism.
\end{proof}

\subsection{Stack functions}

\paragraph{}

We briefly summarize the theory of stack functions,
developed in \cite{Joyce2007Stack}.
Stack functions on an algebraic stack $\calX$
are a version of motives relative to $\calX$.
There are two versions of stack functions,
the representable version $\SF (\calX)$ which includes only
representable morphisms to $\calX$,
and the completed version $\SFhat (\calX)$ which includes all morphisms.

\begin{definition}
    Let $\calX$ be an algebraic $\bbK$-stack with affine stabilizers.
    Consider pairs $(\calR, \rho)$,
    where $\calR$ is an algebraic $\bbK$-stack of finite type,
    and $\rho \colon \calR \to \calX$ is a representable morphism.
    The space of \emph{stack functions} on $\calX$ is the $\bbQ$-vector space
    \begin{equation}
        \SF (\calX) = \Bigl(
            \bigoplus_{(\calR, \rho)} \bbQ \cdot [(\calR, \rho)]
        \Bigr) \Big/ {\sim} \ ,
    \end{equation}
    where we take the direct sum over
    all isomorphism classes of such pairs $(\calR, \rho)$,
    and $\sim$ is generated by the relations
    \begin{equation}
        [(\calR, \rho)] \sim
        [(\calS, \rho |_{\calS})] +
        [(\calR \setminus \calS, \rho |_{\calR \setminus \calS})]
    \end{equation}
    for closed substacks $\calS \subset \calR$.

    When the map $\rho$ is clear from the context,
    the generator $[(\calR, \rho)]$ will be denoted by $[\calR]$.
    
    One can define multiplication on $\SF (\calX)$ by
    \begin{equation}
        \label{eq-sf-mult}
        [\calR] \cdot [\calS] = [\calR \times_{\calX} \calS] \ .
    \end{equation}
    If $\calX$ is of finite type, then this gives
    $\SF (\calX)$ the structure of a $\bbQ$-algebra.
    Otherwise, the multiplication may not have a unit,
    as $[\calX]$ will not be a valid element of $\SF (\calX)$.
\end{definition}

\paragraph{}
\label[definition]{def-sfhat}

We also define a completed version $\SFhat (\calX)$,
as in~\cite[Definition~4.11]{Joyce2007Stack},
where this was denoted by $\SF (\calX, \Upsilon, \Lambda)$.

\begin{definition*}
    Define $\SFhat (\calX)$
    to be the quotient of $\SF (\calX) \otimes_{\bbQ} \Mhat_{\bbK}$, 
    as an $\Mhat_{\bbK}$-module,
    by the following relations:
    \begin{itemize}
        \item 
            For an algebraic stack $\calR$ of finite type
            with affine stabilizers, a morphism $\rho \colon \calR \to \calX$,
            and a $\bbK$-variety $U$, 
            consider the composition $\calR \times U \to \calR \to \calX$,
            where the first map is the projection.
            Then we have
            \begin{equation}
                [\calR \times U] \sim \mu (U) \cdot [\calR] \ .
            \end{equation}
        \item
            For a special algebraic $\bbK$-group $G$,
            a $\bbK$-variety $U$ acted on by $G$,
            and a morphism $\rho \colon [U/G] \to \calX$, 
            consider the composition $U \to [U/G] \to \calX$,
            where the first map is the quotient map.
            Then we have
            \begin{equation}
                [U/G] \sim \mu (G)^{-1} \cdot [U] \ .
            \end{equation}
    \end{itemize}
    As in \cite[Theorem~4.13]{Joyce2007Stack},
    multiplication on $\SF (\calX)$
    induces a multiplication map on $\SFhat (\calX)$.
    We have a natural $\bbQ$-linear map
    \begin{equation}
        \label{eq-sf-sfhat}
        \SF (\calX) \longrightarrow \SFhat (\calX)
    \end{equation}
    compatible with the multiplication maps.
\end{definition*}

As in \cite[Definition~4.11]{Joyce2007Stack},
one could also use non-representable morphisms to $\calX$
to define $\SFhat (\calX)$, and the result would be the same.

By definition and \cite[Proposition~4.15]{Joyce2007Stack},
\begin{align}
    \SF (\Spec \bbK) & \simeq K_0 (\mathrm{Var}_{\bbK}) \otimes \bbQ \ , \\
    \SFhat (\Spec \bbK) & \simeq \Mhat_{\bbK} \ .
\end{align}

\begin{theorem}
    \label{thm-sf-func}
    Let $f \colon \calX \to \calY$ and $g \colon \calY \to \calZ$
    be morphisms of algebraic $\bbK$-stacks with affine stabilizers.
    \begin{enumerate}
        \item 
            There is a pushforward map
            \[
                f_! \colon \SFhat (\calX) \longrightarrow \SFhat (\calY) \ ,
            \]
            given on generators by
            \[
                f_! \, [(\calR, \rho)] = [(\calR, f \circ \rho)] \ .
            \]
            Moreover, we have $g_! \circ f_! = (g \circ f)_!$\,.

            If\/ $f$ is representable, then there is also a pushforward map
            \[
                f_! \colon \SF (\calX) \longrightarrow \SF (\calY) \ .
            \]
            Moreover, we have $g_! \circ f_! = (g \circ f)_!$
            if\/ $f$ and $g$ are both representable.
        \item 
            If\/ $f$ is of finite type, then there are pullback maps
            \begin{align*}
                f^* \colon \SF (\calY) & \longrightarrow \SF (\calX) \ , \\
                f^* \colon \SFhat (\calY) & \longrightarrow \SFhat (\calX) \ ,
            \end{align*}
            given on generators by
            \[
                f^* [(\calR, \rho)] = [(\calR \times_{\calY} \calX, \rho')] \ ,
            \]
            where $\rho'$ is the induced morphism.
            Moreover, we have $f^* \circ g^* = (g \circ f)^*$
            if\/ $f$ and\/ $g$ are both of finite type.
        \item
            \label{itm-thm-sf-func-bct}
            Given a $2$-pullback diagram
            \[ \begin{tikzcd}
                \calX' \ar[r, "f'"] \ar[d, "h'"'] \ar[dr, phantom, pos=.2, "\ulcorner"] &
                \calY' \ar[d, "h"] \\
                \calX \ar[r, "f"] & \calY \rlap{ $,$}
            \end{tikzcd} \]
            where $\calX, \calY, \calX', \calY'$
            are algebraic $\bbK$-stacks with affine stabilizers,
            if\/ $h$ is of finite type,
            then there is a commutative diagram
            \[ \begin{tikzcd}
                \SFhat (\calX') \ar[r, "(f')_!"] &
                \SFhat (\calY') \\
                \SFhat (\calX) \ar[r, "f_!"] \ar[u, "(h')^*"] &
                \SFhat (\calY) \rlap{ $.$} \ar[u, "h^*"']
            \end{tikzcd} \]
            If, moreover, $f$ is representable,
            then there is a similar commutative diagram for $\SF (-)$.
    \end{enumerate}
\end{theorem}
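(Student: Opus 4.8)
The plan is to follow Joyce's treatment of stack functions in \cite{Joyce2007Stack}, of which this statement is a mild variant, by defining each of the three operations on the generators $[(\calR,\rho)]$ and then checking compatibility with the defining relations of $\SF(-)$ and $\SFhat(-)$. First I would handle the pushforward: on a generator I would set $f_![(\calR,\rho)] = [(\calR, f\circ\rho)]$, observing that this lies in $\SFhat(\calY)$ for any $f$, and in $\SF(\calY)$ when $f$ and $\rho$ are both representable, since a composite of representable morphisms is representable. The relations are respected essentially for free — a closed substack of $\calR$ is unaffected by post-composition, and the extra relations of $\SFhat(-)$ involve only the source stack $\calR$ — and the functoriality $g_!\circ f_! = (g\circ f)_!$ already holds on generators because composition of $1$-morphisms is associative up to canonical $2$-isomorphism.

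Next I would treat the pullback, which is where the finite-type hypothesis on $f$ enters: it guarantees that for a finite-type $\calR$ the fibre product $\calR\times_\calY\calX$ is again of finite type and has affine stabilizers, so that $f^*[(\calR,\rho)] = [(\calR\times_\calY\calX, \rho')]$ is a legitimate generator. To see that $f^*$ descends to $\SF(-)$ and $\SFhat(-)$ I would use that base change along $f$ commutes with forming closed substacks of $\calR$, with products $\calR\times U$ by a $\bbK$-variety $U$, and with quotient presentations $[U/G]$ by a special $\bbK$-group $G$, where $[U/G]\times_\calY\calX \simeq [(U\times_\calY\calX)/G]$ with $U\times_\calY\calX\to[U/G]\times_\calY\calX$ still a $G$-torsor, so the relation $[U/G]\sim\mu(G)^{-1}[U]$ maps to one of the same shape. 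Functoriality $f^*\circ g^* = (g\circ f)^*$ is then the pasting law $(\calR\times_\calZ\calY)\times_\calY\calX \simeq \calR\times_\calZ\calX$, and $\Mhat_\bbK$-linearity is built in since $\SFhat(-)$ is defined as a quotient of $\SF(-)\otimes_\bbQ\Mhat_\bbK$.

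For the base change statement in part~\cref{itm-thm-sf-func-bct} I would argue directly on generators. Since the square is a $2$-pullback we have $\calX' \simeq \calX\times_\calY\calY'$, and since $h$ is of finite type so is its base change $h'$, so all of $(f')_!$, $(h')^*$, $f_!$, $h^*$ are defined. For $[(\calR,\rho)]$ over $\calX$, the pasting law gives $\calR\times_\calX\calX' \simeq \calR\times_\calX(\calX\times_\calY\calY') \simeq \calR\times_\calY\calY'$ compatibly with the projections to $\calY'$, whence $(f')_!\,(h')^*\,[(\calR,\rho)] = [(\calR\times_\calY\calY', \cdot)] = h^*\,f_!\,[(\calR,\rho)]$; $\bbQ$- and $\Mhat_\bbK$-linearity then yields the commuting square, and the representable $\SF(-)$ version is identical once one notes that $f$ representable forces $f'$ representable.

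The step I expect to be the main obstacle is the well-definedness of $f^*$ against the quotient-stack relation of $\SFhat(-)$: one must check that $U\times_\calY\calX$ remains a $\bbK$-variety and that the $G$-torsor structure genuinely survives the base change. I would handle this by reducing, via Kresch~\cite[Proposition~3.5.9]{Kresch1999}, to the case $\calX = [X/\GL(m)]$ with $X$ a variety — exactly as in the proof of \cref{thm-motive-bundle} — after which the remaining verifications become routine fibre-product bookkeeping, using that $\GL(m)$ is special so that the relevant torsors are Zariski-locally trivial. Everything else in the theorem is formal manipulation of the generators.
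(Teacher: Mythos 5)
The paper's own proof is a single citation to Joyce (Theorem~3.5 and \S4.3 of~\cite{Joyce2007Stack}), so you are reconstructing the argument rather than following the paper. The overall shape of your reconstruction — define the operations on generators, check the two generating relations of $\SFhat(-)$, then deduce functoriality and base change formally — is the right one, and parts~(i) and~(iii) are fine. The problem is in part~(ii).

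You assert that one must check ``$U\times_\calY\calX$ remains a $\bbK$-variety,'' but this is false: $U$ is a scheme, $\calX$ is an algebraic stack, and $U\times_\calY\calX$ is in general an algebraic stack of finite type with affine stabilizers, not a variety, even though $U\times_\calY\calX\to\calX$ is representable. Consequently the pulled-back relation $[[U/G]\times_\calY\calX]\sim\mu(G)^{-1}[U\times_\calY\calX]$ is of the shape $[\calR/G]\sim\mu(G)^{-1}[\calR]$ for a \emph{stack} $\calR=U\times_\calY\calX$, which is not one of the listed generating relations of $\SFhat(\calX)$. So well-definedness of $f^*$ does not reduce to ``the relation maps to one of the same shape'' — one has to prove the strengthened relation, valid for arbitrary finite-type stacks $\calR$ with affine stabilizers acted on by a special group $G$, as a \emph{consequence} of the listed ones.

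Your proposed repair — applying Kresch to $\calX$ to reduce to $\calX=[X/\GL(m)]$ — does not obviously help: even then $U\times_\calY X$ is only an algebraic space, and one is still facing a quotient of a non-variety by a special group. What is actually needed, and what Joyce does (cf.~\cite[Theorem~4.13]{Joyce2007Stack}), is to decompose the quotient $[\calR/G]$ itself into pieces $[W_j/K_j]$ with $W_j$ varieties and $K_j$ special, trivialise the induced $G$-torsor over each $W_j$ using that $G$ is special, and then apply the generating relations to the resulting quotients $[W_j/(K_j\times G)]$. That interplay between the Kresch stratification and the torsor structure is the genuinely non-routine content your proposal currently glosses over with ``routine fibre-product bookkeeping.''
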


This follows from \cite[Theorem~3.5 and \S4.3]{Joyce2007Stack}.
The pushforward map $f_!$ was denoted by $f_*$ there.

\paragraph{Relation to the category of spans.}
\label{para-sf-six-functor}

In the modern language of six-functor formalisms,
as in, for example, Scholze~\cite{Scholze2022},
\cref{thm-sf-func} can be reformulated as follows:

Let $\cat{Span} (\cat{St}_{\bbK}; \mathrm{ft}, \mathrm{rep})$
be the category of spans in the category
$\cat{St}_{\bbK}$ of algebraic $\bbK$-stacks with affine stabilizers,
where morphisms from $\calX$ to $\calY$ are
spans $\calX \leftarrow \calZ \to \calY$,
where the left arrow is of finite type
and the right arrow is representable.
Composition of spans are given by pullbacks.
Then $\SF$ defines a functor
\begin{equation}
    \label{eq-sf-six-functor}
    \SF \colon
    \cat{Span} (\cat{St}_{\bbK}; \mathrm{ft}, \mathrm{rep})
    \longrightarrow \bbQ \mathhyphen \cat{Mod} \ ,
\end{equation}
where $\bbQ \mathhyphen \cat{Mod}$ is the category of $\bbQ$-vector spaces,
sending a span $\calX \xleftarrow{f} \calZ \xrightarrow{g} \calY$
to the map $g_! \circ f^* \colon \SF (\calX) \to \SF (\calY)$.

Similarly, let $\cat{Span} (\cat{St}_{\bbK}; \mathrm{ft})$
be the category of spans in $\cat{St}_{\bbK}$
whose left leg is of finite type and right leg is arbitrary.
Then $\SFhat$ defines a functor
\begin{equation}
    \label{eq-sfhat-six-functor}
    \SFhat \colon
    \cat{Span} (\cat{St}_{\bbK}; \mathrm{ft})
    \longrightarrow \bbQ \mathhyphen \cat{Mod} \ .
\end{equation}

\paragraph{Motivic integration.}
\label{para-mot-int}

For an algebraic $\bbK$-stack $\calX$ of finite type with affine stabilizers,
pushing forward along the structure morphism $\calX \to \Spec \bbK$
defines a map
\begin{equation}
    \label{eq-mot-int}
    \int_{\calX} \colon \SFhat (\calX) \longrightarrow \Mhat_{\bbK} \ ,
\end{equation}
called \emph{motivic integration}.
Composing with the map~\cref{eq-sf-sfhat} gives a map
\begin{equation}
    \label{eq-mot-int-sf}
    \int_{\calX} \colon \SF (\calX) \longrightarrow \Mhat_{\bbK} \ .
\end{equation}

\paragraph{Local stack functions.}
\label[definition]{def-lsf}

There is another variant of stack functions,
called \emph{local stack functions},
which allows infinite linear combinations of stack functions
as long as they are locally finite.

\begin{definition*}
    Let $\calX$ be an algebraic $\bbK$-stack with affine stabilizers.
    Following~\cite[Definition~3.9]{Joyce2007Stack}, define
    \begin{align*}
        \LSF (\calX) & =
        \lim_{ \calU \subset \calX }
        \SF (\calU) \ , \\
        \LSFhat (\calX) & =
        \lim_{ \calU \subset \calX }
        \SFhat (\calU) \ ,
    \end{align*}
    where we take the limit over all finite type open substacks $\calU \subset \calX$,
    with respect to restriction along inclusions.

    For an algebraic $\bbK$-stack with affine stabilizers $\calR$,
    not necessarily of finite type,
    and for a finite type morphism $\rho \colon \calR \to \calX$, there is an element
    \[
        [(\calR, \rho)] \in \LSF (\calX) \ ,
    \]
    as in \cite[Definition~3.9]{Joyce2007Stack}.
    
    For an infinite collection of elements $(f_i \in \mathrm{LSF} (\calX))_{i \in I}$,
    and another element $f \in \mathrm{LSF} (\calX)$, we say that
    \[
        \sum_{i \in I} f_i = f \ ,
    \]
    or that the infinite sum \emph{converges} to $f$,
    if for any finite type open substack $\calU \subset \calX$,
    there are only finitely many $i \in I$ such that $f_i |_{\calU}$ is non-zero,
    and we have the finite sum $\sum_{i \in I} f_i |_{\calU} = f |_{\calU}$\,.
    
    Similarly, one can define convergence in $\LSFhat (\calX)$.
\end{definition*}

\subsection{Virtual rank projections}
\label{sect-vrp}

\paragraph{}
\label{para-vrp}

Let $\calX$ be an algebraic $\bbK$-stack with affine stabilizers.
Joyce~\cite[Definition~5.13]{Joyce2007Stack}
defined \emph{virtual rank projections}
\begin{equation}
    \label{eq-vrp}
    \vrp{n} \colon \SF (\calX) \longrightarrow \SF (\calX)
\end{equation}
for all integers $n \geq 0$, inducing a decomposition
\begin{equation}
    \label{eq-vrp-decomp}
    \SF (\calX) = \bigoplus_{n \geq 0} \SF_\pn{n} (\calX) \ ,
\end{equation}
where $\vrp{n}$ acts as the identity on $\SF_\pn{n} (\calX)$,
and as zero on $\SF_\pn{m} (\calX)$ for $m \neq n$.
We say that elements of $\SF_\pn{n} (\calX)$ have
(\emph{pure}) \emph{virtual rank $n$}.

Roughly speaking, a stack function $[\calR] \in \SF (\calX)$
has virtual rank $n$
if stabilizer groups in $\calR$ look like tori $T$ of rank $n$.
In general, such an element $[\calR]$ is decomposed into pieces
with virtual ranks between the minimum \emph{central rank}
(i.e.~rank of the centre)
and the maximal rank of stabilizer groups in $\calR$.
For example, $[* / \GL (n)]$ has mixed virtual ranks $1, \dotsc, n$.

As in \cite[Proposition~5.14]{Joyce2007Stack},
the virtual rank projections $\vrp{n}$ commute with
the pushforward maps $f_!$
for all representable morphisms $f \colon \calX \to \calY$ of algebraic $\bbK$-stacks
with affine stabilizers. In other words, we have
\begin{equation}
    f_! (\SF_\pn{n} (\calX)) \subset \SF_\pn{n} (\calY)
\end{equation}
for all $n \geq 0$.

\paragraph{}
\label[theorem]{thm-vrp-int}

By results in Joyce~\cite[\S6]{Joyce2007Stack},
the virtual rank decomposition satisfies
the following important property related to motivic integration.

\begin{theorem*}
    Let $\calX$ be an algebraic $\bbK$-stack with affine stabilizers,
    and let $f \in \SF_{(n)} (\calX)$ be a stack function
    of pure virtual rank~$n$. Then we have
    \begin{equation}
        \int_{\calX} f \in (\bbL - 1)^{-n} \cdot \Mhat_{\bbK}^\circ \ ,
    \end{equation}
    with notations as in \cref{eq-def-mot-ring-circ,eq-mot-int-sf}.
\end{theorem*}

\section{Stacks in categories}

\label{sect-stacks-in-cat}

In this \lcnamecref{sect-stacks-in-cat}, we study the notion of
an \emph{algebraic stack in categories}.
This should be seen as a geometric incarnation of a category,
whereas an ordinary algebraic stack
is a geometric incarnation of a groupoid.
From such a stack in categories,
one can obtain an underlying ordinary stack
by forgetting the non-invertible arrows;
one can also extract an underlying category
by forgetting the geometric structure.

The notion of stacks in categories in similar contexts
was discussed in Joyce~\cite[\S7]{Joyce2006I}
and Behrend--Ronagh~\cite{BehrendRonagh2019}.

In the following, we always use the term \emph{$2$-category}
to mean a $(2,1)$-category, where
all $2$-morphisms are invertible.

\subsection{Stacks in categories}

\begin{definition}
    \label{def-stack-cat}
    Let $\cat{Cat}$ be the $2$-category of small categories,
    whose morphisms are functors
    and $2$-morphisms are natural isomorphisms.
    Let $\cat{Gpd} \subset \cat{Cat}$ be the full subcategory of small groupoids.

    Let $\calC$ be a site.
    A \emph{stack in categories} on $\calC$
    is a $2$-functor 
    \begin{equation}
        \calX \colon \calC^\op \longrightarrow \cat{Cat}
    \end{equation}
    that satisfies the $2$-sheaf property,
    meaning that it takes coverings in $\calC$ to $2$-limits in $\cat{Cat}$.
    In particular, if $\calC = \cat{Sch}_{\bbK}$
    is the site of $\bbK$-schemes, equipped with the \'etale topology,
    then $\calX$ will be called a \emph{$\bbK$-stack in categories}.
    
    For any such $\calX$,
    composing with the limit-preserving $2$-functor
    $(-)^\simeq \colon \cat{Cat} \to \cat{Gpd}$
    forgetting the non-invertible arrows
    defines the \emph{underlying stack}
    $\calX^\simeq \colon \calC^\op \to \cat{Gpd}$,
    with a natural inclusion $\calX^\simeq \to \calX$.

    Similarly, composing with the $2$-functor
    $(-)^I = \cat{Fun} (I, -) \colon \cat{Cat} \to \cat{Cat}$,
    where $I = (\bullet \to \bullet)$,
    assigns to every stack in categories $\calX$
    another stack in categories $\calX^I$,
    which is the \emph{stack of arrows} in $\calX$.
    There are the source and target projections $s, t \colon \calX^I \to \calX$.
    If $\calX$ is a stack in groupoids, then $\calX^I \simeq \calX$.
\end{definition}

\paragraph{Relative groupoids.}
\label[lemma]{lem-pb-rel-gpd}

A morphism $f \colon \calX \to \calY$
of stacks in categories is called a \emph{relative groupoid},
if the induced morphism
\[
    \calX^\simeq \longrightarrow
    \calX \underset{\calY}{\times} \calY^\simeq
\]
is an isomorphism.

\begin{lemma*}
    Let $\calC$ be a site,
    and $f \colon \calX \to \calY$ a morphism
    of stacks in categories on $\calC$ that is a relative groupoid.
    Let $\calW$ be a stack in groupoids on $\calC$,
    and let $g \colon \calW \to \calY$ be a morphism.

    Then the natural morphism
    \begin{equation}
        \calW \underset{\calY^\simeq}{\times} \calX^\simeq
        \longrightarrow
        \calW \underset{\calY}{\times} \calX
    \end{equation}
    is an isomorphism.
    In particular, $\calW \times_{\calY} \calX$ is a stack in groupoids.
\end{lemma*}

\begin{proof}
    We have
    \[
        \calW \underset{\calY^\simeq}{\times} \calX^\simeq
        \simeq
        \calW \underset{\calY^\simeq}{\times}
        \Bigl( \calX \underset{\calY}{\times} \calY^\simeq \Bigr)
        \simeq
        \calW \underset{\calY}{\times} \calX \ .
    \]
\end{proof}

\begin{definition}
    A $\bbK$-stack in categories $\calX$ is \emph{representable},
    if $\calX \simeq \calX^\simeq$ is representable by an algebraic space.
    
    A morphism of $\bbK$-stacks in categories
    $f \colon \calX \to \calY$ is \emph{representable},
    if for any $\bbK$-scheme $U$ and any morphism $U \to \calY$,
    the $2$-pullback $U \times_{\calY} \calX$ is representable.
    Equivalently by \cref{lem-pb-rel-gpd}, $f$ is a relative groupoid
    and the morphism $\calX^\simeq \to \calY^\simeq$ is representable.
\end{definition}

\begin{definition}
    \label{def-alg-stack-cat}
    An \emph{algebraic $\bbK$-stack in categories}
    is a $\bbK$-stack in categories $\calX$, satisfying the following conditions.
    \begin{enumerate}
        \item \label{itm-def-alg-stack-cat-1}
            The $\bbK$-stack $\calX^\simeq$ is an algebraic $\bbK$-stack.

        \item \label{itm-def-alg-stack-cat-2}
            The $\bbK$-stack $(\calX^I)^\simeq$ is an algebraic $\bbK$-stack.
    \end{enumerate}
\end{definition}

\begin{lemma}
    \label{lem-alg-stack-cat}
    Let $\calX$ be an algebraic $\bbK$-stack in categories.
    Then 
    \begin{enumerate}
        \item 
            \label{itm-lem-alg-stack-cat-rep}
            The morphisms
            \begin{align*}
                \Delta \colon \calX & \longrightarrow
                \calX \underset{\bbK}{\times} \calX \ , \\
                (s, t) \colon \calX^I & \longrightarrow
                \calX \underset{\bbK}{\times} \calX
            \end{align*}
            are representable.
        \item
            \label{itm-lem-alg-stack-cat-xi-alg}
            $\calX^I$ is an algebraic $\bbK$-stack in categories.
    \end{enumerate}
\end{lemma}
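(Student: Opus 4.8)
The plan is to reduce everything to one basic input: that for every $\bbK$-scheme $U$ and objects $x,y\in\calX(U)$, the $\mathcal{H}om$-sheaf $\underline{\Hom}_\calX(x,y)\colon(V\to U)\mapsto\Hom_{\calX(V)}(x_V,y_V)$ is representable by an algebraic space. Part~\cref{itm-lem-alg-stack-cat-rep} establishes exactly this (in disguise), and part~\cref{itm-lem-alg-stack-cat-xi-alg} then follows by assembling $((\calX^I)^I)^\simeq$ out of such $\mathcal{H}om$-sheaves.

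\textbf{Part~\cref{itm-lem-alg-stack-cat-rep}.} For $\Delta\colon\calX\to\calX\times_\bbK\calX$: given $U\to\calX\times\calX$ classifying $a,b\in\calX(U)$, inspection of objects shows $U\times_{\calX\times\calX}\calX$ is equivalent to the stack of isomorphisms $a\simeq b$, i.e.\ to $U\times_{\calX^\simeq\times\calX^\simeq}\calX^\simeq$ along the diagonal of $\calX^\simeq$; since $\calX^\simeq$ is an algebraic stack by \cref{def-alg-stack-cat}\cref{itm-def-alg-stack-cat-1}, its diagonal is representable by an algebraic space, so $\Delta$ is representable. (Equivalently: one checks via \cref{lem-pb-rel-gpd} that $\Delta$ is a relative groupoid, the iso-data in $U\times_{\calX\times\calX}(\calX\times\calX)^\simeq$ forcing all morphisms invertible, and that $\Delta^\simeq$ is the diagonal of $\calX^\simeq$.) For $(s,t)\colon\calX^I\to\calX\times_\bbK\calX$: by \cref{lem-pb-rel-gpd} it suffices to verify that $(s,t)$ is a relative groupoid — again a formal check, since fixing the target of an arrow only up to isomorphism kills the non-invertible morphisms — and that $(\calX^I)^\simeq\to\calX^\simeq\times\calX^\simeq$ is representable by algebraic spaces. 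Both of these are algebraic stacks by \cref{def-alg-stack-cat}\cref{itm-def-alg-stack-cat-1,itm-def-alg-stack-cat-2}, and the functor is faithful: an automorphism of an arrow $\alpha\colon x\to y$ in $(\calX^I)^\simeq(T)$ is a pair $(u,v)\in\Aut(x)\times\Aut(y)$ with $\alpha u=v\alpha$, recovered from its image. So by the criterion of Laumon--Moret-Bailly \cite[Corollary~8.1.1]{LaumonMoret2000} (as already used in \cref{lem-filt-rep-ft}) the morphism is representable by algebraic spaces, and $(s,t)$ is representable. In particular, taking fibres of $(\calX^I)^\simeq\to\calX^\simeq\times\calX^\simeq$ over $\bbK$-schemes, we record that $\underline{\Hom}_\calX(x,y)$ is an algebraic space for all $U$ and $x,y\in\calX(U)$ (the iso-data rigidifies the fibre to a sheaf of sets).

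\textbf{Part~\cref{itm-lem-alg-stack-cat-xi-alg}.} By \cref{def-alg-stack-cat} we must show both $(\calX^I)^\simeq$ and $((\calX^I)^I)^\simeq$ are algebraic stacks; the first is \cref{def-alg-stack-cat}\cref{itm-def-alg-stack-cat-2} applied to $\calX$. For the second, consider the source/target morphism $((\calX^I)^I)^\simeq\to(\calX^I)^\simeq\times(\calX^I)^\simeq$. Over a $\bbK$-scheme $U$ with objects $a=(a_0\xrightarrow{a_1}a_2)$, $b=(b_0\xrightarrow{b_1}b_2)$ of $\calX^I(U)$, its fibre is the sheaf of morphisms $a\to b$ in $\calX^I$, i.e.\ of pairs $(g\colon a_0\to b_0,\ h\colon a_2\to b_2)$ with $b_1g=ha_1$, which is the fibre product
\[
    \underline{\Hom}_\calX(a_0,b_0)\underset{\underline{\Hom}_\calX(a_0,b_2)}{\times}\underline{\Hom}_\calX(a_2,b_2)
\]
along $g\mapsto b_1g$ and $h\mapsto ha_1$; by the previous paragraph this is a fibre product of algebraic spaces over an algebraic space, hence an algebraic space. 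Thus $((\calX^I)^I)^\simeq\to(\calX^I)^\simeq\times(\calX^I)^\simeq$ is representable by algebraic spaces, and as its target is an algebraic stack (by \cref{def-alg-stack-cat}\cref{itm-def-alg-stack-cat-2} for $\calX$), so is its source. Hence $\calX^I$ is an algebraic $\bbK$-stack in categories.

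\textbf{The main obstacle.} The delicate point is part~\cref{itm-lem-alg-stack-cat-xi-alg}: one cannot apply \cite[Corollary~8.1.1]{LaumonMoret2000} directly to $((\calX^I)^I)^\simeq\to(\calX^I)^\simeq\times(\calX^I)^\simeq$, because that criterion presupposes the source is already algebraic, which is exactly what is to be shown. The argument above avoids this by building the fibres over $\bbK$-schemes explicitly from the $\mathcal{H}om$-sheaves of $\calX$, with part~\cref{itm-lem-alg-stack-cat-rep} as the only input. A related subtlety to state carefully is that commutativity of squares in $\calX$ holds on the nose, so these fibres are honest fibre products of sheaves of sets rather than $2$-fibre products of stacks — which is precisely what keeps algebraicity under control.
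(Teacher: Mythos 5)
Your proof of part~\cref{itm-lem-alg-stack-cat-rep} is essentially the paper's: both reduce $\Delta$ to the diagonal of $\calX^\simeq$ and both dispatch $(s,t)$ via the Laumon--Moret-Bailly criterion by noting that passing to underlying groupoids is faithful on automorphisms. Your explicit identification of the $\bbK$-scheme fibres of $(s,t)$ with the $\mathcal{H}om$-sheaves $\Hom_\calX(x,y)$, and the observation that these are algebraic spaces, is a useful step the paper leaves implicit (it is recorded instead in \cref{def-stack-cat-mapping-space}).

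For part~\cref{itm-lem-alg-stack-cat-xi-alg} you take a genuinely different route from the paper, and both are correct. The paper writes $((\calX^I)^I)^\simeq$ directly as an iterated $2$-fibre product of the algebraic stacks $\calX^\simeq$ and $(\calX^I)^\simeq$ (parametrising pairs of composable arrows with matching composite, i.e.\ commutative squares), and invokes the fact that $2$-fibre products of algebraic stacks are algebraic. You instead show that the source/target morphism $((\calX^I)^I)^\simeq\to(\calX^I)^\simeq\times(\calX^I)^\simeq$ is representable by algebraic spaces, by computing its fibre over a $U$-point $(a,b)$ as the fibre product of $\mathcal{H}om$-sheaves encoding the commutativity constraint, and then appeal to the standard descent of algebraicity along a morphism representable by algebraic spaces to an algebraic stack. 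Your caveat that one cannot apply the LMB criterion directly to this morphism (because it presupposes the source is algebraic) is exactly on target and is precisely the reason the paper switches strategy in this step: the paper's fibre-product presentation and your explicit-fibre argument are two valid ways around that obstruction. The paper's version is slightly more compact but asks the reader to trust the iterated fibre-product identification; yours is more hands-on, with the commutativity-on-the-nose observation correctly justifying why the fibres are honest sheaves of sets rather than nontrivial gerbes.
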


\begin{proof}
    For \cref{itm-lem-alg-stack-cat-rep},
    it is clear that the morphisms are relative groupoids,
    and it remains to show that the corresponding morphisms
    of underlying stacks in groupoids are representable.
    The representability of $\Delta \colon \calX^\simeq \to \calX^\simeq \times \calX^\simeq$
    follows from the algebraicity of $\calX^\simeq$.
    To show that $(s,t) \colon (\calX^I)^\simeq \to \calX^\simeq \times \calX^\simeq$
    is representable, by \cite[Corollary~8.1.1]{LaumonMoret2000},
    it suffices to show that 
    it induces injections on stabilizer groups,
    which is true by definition.

    For \cref{itm-lem-alg-stack-cat-xi-alg},
    it remains to show that $((\calX^I)^I)^\simeq$ is an algebraic stack.
    One can express it as an iterated fibre product
    \begin{equation}
        ((\calX^I)^I)^\simeq \simeq \Bigl(
            (\calX^I)^\simeq \underset{\calX^\simeq}{\times}
            (\calX^I)^\simeq
        \Bigr) \underset{(\calX^I)^\simeq}{\times}
        \Bigl(
            (\calX^I)^\simeq \underset{\calX^\simeq}{\times}
            (\calX^I)^\simeq
        \Bigr) \ ,
    \end{equation}
    as it parametrizes commutative squares in $\calX$.
    Since $\calX^\simeq$ and $(\calX^I)^\simeq$ are algebraic,
    so is $((\calX^I)^I)^\simeq$.
\end{proof}

\begin{definition}
    \label{def-stack-cat-mapping-space}
    Let $\calX$ be an algebraic $\bbK$-stack in categories,
    and let $S$ be an algebraic space over $\bbK$.
    The category $\calX (S)$
    is called the category of \emph{$S$-points} in $\calX$.
    
    For any two objects $x, y \in \calX (S)$, we may form the fibre product
    \begin{equation} \begin{tikzcd}
        \Hom_{\calX} (x, y) \ar[d] \ar[r] \ar[dr, phantom, near start, "\ulcorner"] &
        \calX^I \ar[d, "{(s, t)}"] \\
        S \ar[r, "{(x, y)}"] & \calX \times \calX \rlap{ ,}
    \end{tikzcd} \end{equation}
    where the algebraic $S$-space
    $\Hom_{\calX} (x, y)$
    is called the \emph{mapping space} from $x$ to $y$.

    Unwinding the definitions, we see that
    $\Hom_{\calX} (x, y)$ represents the functor
    that assigns to any $S$-scheme $U$
    the set $\Hom_{\calX (U)} (\pi^* x, \pi^* y)$,
    where $\pi \colon U \to S$ is the structure map.
    In particular, we have
    $\Hom_{\calX} (x, y) (S) \simeq \Hom_{\calX (S)} (x, y)$.
    This endows $\calX (S)$ with the structure of
    a category enriched in algebraic $S$-spaces.

    By \cref{lem-pb-rel-gpd}, we also have
    \begin{equation}
        \Hom_{\calX} (x, y) \simeq
        S \underset{\calX^\simeq \times \calX^\simeq}{\times}
        (\calX^I)^\simeq \ ,
    \end{equation}
    which is now a fibre product of algebraic $\bbK$-stacks.
\end{definition}

\begin{example}
    \label{eg-classifying-stack}
    Let $A$ be a finite-dimensional associative $\bbK$-algebra.
    One can construct an algebraic $\bbK$-stack in categories $[*/A]$ as follows.
    
    Let $[*/A]^{\mathrm{pre}} \colon \cat{Sch}_{\bbK}^\op \to \cat{Cat}$
    be the $2$-functor which sends a $\bbK$-scheme $U$
    to the category of \emph{trivial $A$-bundles} on $U$,
    that is, the category with a unique object,
    whose endomorphism ring is $\Hom (U, A)$,
    the set of morphisms from $U$ to $A$,
    with a ring structure induced by that of $A$.
    Define $[*/A]$ to be the stackification of $[*/A]^{\mathrm{pre}}$.

    For a $\bbK$-scheme $U$, the category $[*/A] (U)$
    is the category of \emph{principal $A$-bundles} on $U$.
    When $U$ is a $\bbK$-variety, a principal $A$-bundle on $U$
    can be defined as a vector bundle on $U$ with a right $A$-action,
    such that the fibres over $\bbK$-points are isomorphic to $A$ as right $A$-modules.
    
    There is a unique $\bbK$-point in $[*/A]$,
    with endomorphism ring isomorphic to $A$.
    The morphism $* \to [*/A]$ is the \emph{universal principal $A$-bundle}.

    We observe that
    \[
        [*/A]^\simeq \simeq [*/A^\times] \ , \qquad
        ([*/A]^I)^{\simeq} \simeq [A/(A^\times)^2] \ ,
    \]
    where $A^\times$ is the group of invertible elements in $A$,
    regarded as an algebraic group,
    and $(A^\times)^2$ acts on $A$ by
    $(a, b) \cdot c = a^{-1} c b$,
    where $a, b \in A^\times$ and $c \in A$.
    This shows that $[*/A]$ is an \emph{algebraic} stack in categories.
    
    In particular, if we denote by $* \in [*/A] (\bbK)$ the unique object, then
    \begin{equation}
        \Hom_{[*/A]} (*, *) \simeq
        \Spec \bbK \underset{[*/A^\times]^2}{\times} [A/(A^\times)^2] \simeq A \ .
    \end{equation}

    However, notice that by \cref{lem-pb-rel-gpd}, one has
    \begin{equation}
        \Spec \bbK \underset{[*/A]}{\times} \Spec \bbK \simeq A^\times \ ,
    \end{equation}
    instead of $A$, since fibre products do not see
    non-invertible $2$-morphisms.
\end{example}

\subsection{Additive, exact, and linear stacks}

We now define additive, exact, and linear stacks,
which are stacks in categories with extra structures.

\paragraph{Additive stacks.}
\label{para-additive-stack}

Let $\cat{AddCat}$ be the $2$-category of small additive categories,
whose morphisms are additive functors,
and $2$-morphisms are natural isomorphisms.

Let $\calC$ be a site.
An \emph{additive stack} on $\calC$
is a $2$-functor $\calX \colon \calC^\op \to \cat{AddCat}$
that satisfies the $2$-sheaf property.

Since every additive category has a symmetric monoidal structure
given by the direct sum,
which is preserved by additive functors,
every additive stack $\calX$ carries two morphisms
\begin{align*}
    0 \colon * \longrightarrow \calX \ , \\
    {\oplus} \colon \calX \times \calX \longrightarrow \calX \ ,
\end{align*}
which establish $\calX$ as a commutative monoid object
in the $2$-category of stacks in categories,
or in other words, a stack that takes values in
the $2$-category of symmetric monoidal categories
and symmetric monoidal functors.

An \emph{additive $\bbK$-stack}
is an additive stack on the site $\cat{Sch}_{\bbK}$
of $\bbK$-schemes, with the étale topology.

An \emph{additive algebraic $\bbK$-stack}
is an additive $\bbK$-stack $\calX$
which is also an algebraic $\bbK$-stack in categories.

For an additive algebraic $\bbK$-stack $\calX$,
and two $\bbK$-points $x, y \in \calX (\bbK)$,
the mapping space $\Hom_{\calX} (x, y)$
in \cref{def-stack-cat-mapping-space}
carries the structure of an abelian group algebraic space over $\bbK$.
This gives the additive category $\calX (\bbK)$
an enrichment over abelian group algebraic spaces over $\bbK$.

\paragraph{Exact stacks.}
\label{para-exact-stack}

Let $\cat{ExCat}$ be the $2$-category of small exact categories,
whose morphisms are exact functors,
and $2$-morphisms are natural isomorphisms.

Let $\calC$ be a site.
An \emph{exact stack} on $\calC$
is a $2$-functor $\calX \colon \calC^\op \to \cat{ExCat}$
that satisfies the $2$-sheaf property.

Every exact stack $\calX$
is also an additive $\bbK$-stack.

The $2$-functor
\[
    (-)^\ex \colon \cat{ExCat} \longrightarrow \cat{ExCat}
\]
sending an exact category $\calD$
to the category whose objects are exact sequences in $\calD$,
and morphisms are commutative diagrams,
assigns to each exact stack $\calX$
another exact stack $\calX^\ex$.

An \emph{exact $\bbK$-stack}
is an exact stack on the site $\cat{Sch}_{\bbK}$
of $\bbK$-schemes, with the étale topology.

\paragraph{Exact algebraic stacks.}
\label{para-exact-alg-stack}
\label[lemma]{lem-exact-stack}

An \emph{exact algebraic $\bbK$-stack}
is an exact $\bbK$-stack $\calX$, satisfying the following conditions:
\begin{itemize}
    \item 
        $\calX$ is an algebraic $\bbK$-stack in categories.
    \item 
        The $\bbK$-stack $(\calX^\ex)^\simeq$ 
        is an algebraic $\bbK$-stack.
\end{itemize}

\begin{lemma*}
    Let $\calX$ be an exact algebraic $\bbK$-stack.
    Then 
    \begin{enumerate}
        \item \label{itm-lem-exact-stack-1}
            The morphisms
            \[
                d_1, \ d_2 \colon \calX^\ex \longrightarrow \calX^I \ ,
            \]
            sending an exact sequence to its constituting arrows,
            are representable.
        \item \label{itm-lem-exact-stack-2}
            $\calX^\ex$ is an exact algebraic $\bbK$-stack.
    \end{enumerate}
\end{lemma*}

\begin{proof}
    For \cref{itm-lem-exact-stack-1},
    it is enough to verify that these morphisms
    induce injections of stabilizer groups of the
    underlying stacks in groupoids,
    which is true since every exact sequence consists of
    a monomorphism and an epimorphism.

    For \cref{itm-lem-exact-stack-2},
    we need to show that $((\calX^\ex)^I)^\simeq$
    and $((\calX^\ex)^\ex)^\simeq$
    are algebraic stacks.
    The proof is similar to that of
    \cref{lem-alg-stack-cat}~\cref{itm-lem-alg-stack-cat-xi-alg},
    using different iterated fibre products,
    which we omit here.
\end{proof}

\paragraph{Internal pushforwards and pullbacks.}
\label{para-exact-stack-pf-pb}

Pushforwards along inclusions exist in an exact category,
and are preserved by exact functors,
as in, for example, \cite[Appendix~A]{Keller1990}.
Therefore, given an exact stack $\calX$,
there is a pushforward morphism
\begin{equation}
    \calX^\ex \underset{\pi_1, \, \calX, \, s}{\times} \calX^I \longrightarrow
    (\calX^\ex)^I \ ,
\end{equation}
where $\calX^\ex$ parametrizes the inclusion,
$\calX^I$ parametrizes the other map,
and $(\calX^\ex)^I$ parametrizes the result,
which is a square where two opposite edges are inclusions.
The map $\pi_1 \colon \calX^\ex \to \calX$ sends an exact sequence to its first object.

Dually, pullbacks along projections exist,
and there is a pullback morphism
\begin{equation}
    \calX^\ex \underset{\pi_2, \, \calX, \, t}{\times} \calX^I \longrightarrow
    (\calX^\ex)^I \ ,
\end{equation}
where $\pi_2$ sends an exact sequence to its last object.

\paragraph{Linear stacks.}
\label[definition]{def-linear-stack}

Finally, we define a notion of linear algebraic stacks,
which are the $\bbK$-linear category version of
stacks in categories.
See also Behrend--Ronagh~\cite[\S1.2]{BehrendRonagh2019}
for a similar concept.

\begin{definition*}
    A \emph{linear algebraic $\bbK$-stack}
    is an additive algebraic $\bbK$-stack $\calX$,
    such that
    \begin{enumerate}
        \item 
            The algebraic $\bbK$-stacks $\calX^\simeq$ and $(\calX^I)^\simeq$
            are locally of finite type.
        \item 
            There is a $[*/\bbA^1]$-action on $\calX$,
            compatible with the additive structures.
    \end{enumerate}

    Here, $[*/\bbA^1]$ is regarded as a commutative monoid object
    in the $2$-category of $\bbK$-stacks in categories.
    We require that the action is \emph{compatible}
    with the additive structures,
    which means that the action morphism $[*/\bbA^1] \times \calX \to \calX$
    is bilinear, in that for any $\bbK$-scheme $S$ and any $x, y \in \calX (S)$,
    the induced action of
    $\mathrm{End} _{\smash{[*/\bbA^1] (S)}} (*) \simeq \calO_S (S)$ on
    $\mathrm{Hom}_{\calX (S)} (x, y)$
    is bilinear, establishing $\mathrm{Hom}_{\calX (S)} (x, y)$
    as an $\calO_S (S)$-module.

    For $\bbK$-points $x, y \in \calX (\bbK)$,
    the abelian group algebraic $\bbK$-space $\Hom_{\calX} (x, y)$
    is locally of finite type and acted on by $\bbA^1$,
    so it is isomorphic to $\bbA^n$ for some $n \in \bbN$.
    Therefore, the category $\calX (\bbK)$ has the structure
    of a $\bbK$-linear category.

    In particular, stabilizer groups of $\bbK$-points in $\calX^\simeq$
    are linear algebraic groups,
    which also justifies the terminology.

    A \emph{morphism of linear algebraic $\bbK$-stacks}
    is a $[*/\bbA^1]$-equivariant morphism of the underlying additive $\bbK$-stacks.
    A \emph{$2$-morphism} between such morphisms is
    a $[*/\bbA^1]$-equivariant $2$-morphism
    of the underlying morphisms of additive $\bbK$-stacks.

    An \emph{exact linear algebraic $\bbK$-stack}
    is an exact algebraic $\bbK$-stack
    that is also a linear algebraic $\bbK$-stack.
    A \emph{morphism of exact linear algebraic $\bbK$-stacks}
    is a $[*/\bbA^1]$-equivariant morphism of the underlying exact $\bbK$-stacks.
    A \emph{$2$-morphism} between such morphisms is
    a $[*/\bbA^1]$-equivariant $2$-morphism
    of the underlying morphisms of exact $\bbK$-stacks.
\end{definition*}

\subsection{Self-dual stacks}

We also define a self-dual category version of
stacks in categories, called \emph{self-dual stacks}.

\begin{definition}
    \label{def-sd-stack}
    Let $\calC$ be a site, and let $\calX$ be a stack in categories on $\calC$.

    The \emph{opposite stack} of $\calX$ is given by the $2$-functor
    \[
        \calX^\op \colon
        \calC^\op \overset{\calX}{\longrightarrow}
        \cat{Cat} \overset{(-)^\op}{\longrightarrow}
        \cat{Cat} \ .
    \]
    This defines an automorphism
    of the $2$-category of stacks in categories on $\calC$.
    Note that the $2$-functor $(-)^\op \colon \cat{Cat} \to \cat{Cat}$
    reverses the direction of natural isomorphisms.

    A \emph{self-dual structure} on $\calX$
    consists of an isomorphism $D \colon \calX \simeq \calX^\op$,
    and a $2$-morphism $\eta \colon D^\op \circ D \simeq \id_{\calX}$\,,
    such that $D \circ \eta = \eta^\op \circ D \colon D \circ D^\op \circ D \simeq D$.
    Equivalently, a self-dual structure on $\calX$
    is given by a compatible way to choose
    a self-dual structure on the category
    $\calX (U)$ for every $U \in \calC$.

    A \emph{self-dual stack} on $\calC$
    is a stack in categories on $\calC$ together with a self-dual structure.
    We denote this data as a triple $(\calX, D, \eta)$.

    A \emph{self-dual $\bbK$-stack}, \emph{self-dual algebraic $\bbK$-stack}, etc.,
    is a $\bbK$-stack, or algebraic $\bbK$-stack, etc., together with a self-dual structure.
    When the stack has additional structure,
    such as an additive or exact structure,
    we require that the morphism $D$ respects these structures.
\end{definition}

\paragraph{The $\bbZ_2$-fixed locus.}
\label{para-sd-stack-z2-action}

Let $(\calX, D, \eta)$ be a self-dual stack on $\calC$.
There is a $\bbZ_2$-action $(\calX^\simeq, f, \zeta)$ on $\calX^\simeq$,
where $f$ is the composition
\[
    f \colon
    \calX^\simeq \overset{D}{\longrightarrow}
    (\calX^\simeq)^\op \overset{i}{\longrightarrow}
    \calX^\simeq \ ,
\]
where $i$ is the identification sending each arrow to its inverse,
and $\zeta \colon f^2 \simTo \id_{\calX^\simeq}$ is induced from $\eta$. Define
\begin{equation}
    \calX^\sd = (\calX^\simeq)^{\bbZ_2} 
\end{equation}
as the fixed locus of this $\bbZ_2$-action,
as a $2$-limit in the category of stacks in categories.

Explicitly, for any $U \in \calC$, we have
\begin{equation}
    \calX^\sd (U) \simeq \calX (U)^\sd \ ,
\end{equation}
where the right-hand side is the groupoid of self-dual objects,
as in \cref{def-sd-obj}.
In particular, for a self-dual $\bbK$-stack $\calX$,
the $\bbK$-points in $\calX^\sd$ are the self-dual $\bbK$-points in $\calX$,
i.e.~self-dual objects in $\calX (\bbK)$.

\begin{example}
    Let $n \geq 0$ be an integer,
    and let $\mathrm{Mat} (n)$ be the $\bbK$-algebra of $n \times n$ matrices over $\bbK$.
    Let $\calX = [*/\mathrm{Mat} (n)]$, as in \cref{eg-classifying-stack}.
    Then $\calX^\op = [*/\mathrm{Mat} (n)^\op]$.
    Define a self-dual structure on $\calX$
    by the isomorphism $D \colon \calX \simeq \calX^\op$ induced by the map
    $(-)^{\mathrm{t}} \colon \mathrm{Mat} (n) \simeq \mathrm{Mat} (n)^\op$
    sending a matrix to its transpose.
    A $2$-morphism $\eta \colon D^\op \circ D \simeq \id_{\calX}$
    is then given by an invertible element in the centre of $\mathrm{Mat} (n)$,
    which must be of the form $\varepsilon \cdot I_n$ for $\varepsilon \in \bbK^\times$,
    where $I_n$ is the identity matrix.
    The condition that $D \circ \eta = \eta^\op \circ D$
    means that $\varepsilon \cdot I_n = (\varepsilon \cdot I_n)^{-\mathrm{t}}$,
    or $\varepsilon = \pm 1$.
    
    We have $\calX^\simeq \simeq [*/\GL (n)]$.
    The morphism $f \colon \calX^\simeq \to \calX^\simeq$ induced by $D$
    is given by the morphism of algebraic groups
    $(-)^{-\mathrm{t}} \colon \GL (n) \to \GL (n)$,
    taking a matrix to the inverse of its transpose.
    We have $\calX^\sd \simeq [*/\upO (n)]$ when $\varepsilon = 1$,
    and when $\varepsilon = -1$, we have
    $\calX^\sd \simeq [*/\Sp (n)]$ if $n$ is even, or $\varnothing$ if $n$ is odd.
\end{example}

\begin{lemma}
    \label{lem-sd-stack-prop}
    Let $(\calX, D, \eta)$ be a self-dual linear algebraic $\bbK$-stack. Then,
    \begin{enumerate}
        \item \label{itm-sd-stack-lft}
            $\calX^\sd$ is an algebraic $\bbK$-stack locally of finite type,
            and has affine stabilizers
            \textnormal{(\cref{def-aff-stab}).}
        \item \label{itm-sd-stack-ft}
            The natural morphism $\calX^\sd \to \calX^\simeq$ is of finite type.
    \end{enumerate}
\end{lemma}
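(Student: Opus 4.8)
\textbf{Proof plan for Lemma~\ref{lem-sd-stack-prop}.}

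The plan is to reduce both statements to the $\bbZ_2$-fixed locus construction of \cref{para-sd-stack-z2-action} and to the algebraicity results already available from \cref{lem-alg-stack-cat}. Recall that $\calX^\sd = (\calX^\simeq)^{\bbZ_2}$ is the fixed locus of the involution $f \colon \calX^\simeq \to \calX^\simeq$ arising from $D$ and $i$, taken as a $2$-limit of stacks in categories (equivalently, in stacks in groupoids, since $\calX^\simeq$ is a stack in groupoids). The first observation is that this $2$-limit can be written explicitly as a fibre product: an object of $\calX^\sd (U)$ is a pair $(E, \phi)$ with $E \in \calX^\simeq (U)$ and $\phi \colon E \simto f(E)$ a $2$-morphism such that $\zeta_E \circ f(\phi) \circ \phi = \id_E$. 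This identifies $\calX^\sd$ with the $2$-fibre product
\[
    \calX^\sd \simeq \calX^\simeq
    \underset{(\id, f), \; \calX^\simeq \times \calX^\simeq, \; \Delta}{\times}
    (\calX^I)^\simeq \ ,
\]
so that $\calX^\sd$ is cut out as the locus of $\bbK$-points together with a chosen self-dual isomorphism, via the map $(\calX^I)^\simeq \to \calX^\simeq \times \calX^\simeq$; the closed condition $\phi = \phi^\vee$ (i.e. compatibility with $\eta$) is then imposed by a further fibre product, or observed to be automatically a closed substack. Since $\calX$ is a linear algebraic $\bbK$-stack, $\calX^\simeq$ and $(\calX^I)^\simeq$ are algebraic $\bbK$-stacks locally of finite type by \cref{def-linear-stack}, and fibre products of algebraic stacks are algebraic, so $\calX^\sd$ is an algebraic $\bbK$-stack locally of finite type.

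For the affine stabilizers in part~\cref{itm-sd-stack-lft}, I would argue that the stabilizer group of a $\bbK$-point $(E, \phi) \in \calX^\sd (\bbK)$ is a closed subgroup of the stabilizer $\Aut_{\calX^\simeq} (E)$, which is a linear algebraic group by the last remark of \cref{def-linear-stack}, since stabilizers of $\bbK$-points in a linear algebraic $\bbK$-stack are linear algebraic groups; indeed $\Aut (E, \phi) = \{ g \in \Aut(E) \mid \phi \circ g = f(g)^{-1} \circ \phi \}$, which is the fixed locus of an involution on $\Aut(E)$ and hence closed, hence affine. So $\calX^\sd$ has affine stabilizers.

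For part~\cref{itm-sd-stack-ft}, the natural morphism $\calX^\sd \to \calX^\simeq$ (forgetting $\phi$) factors, via the fibre product description above, through the base change along $\id \colon \calX^\simeq \to \calX^\simeq$ of the morphism
\[
    (s, t) \colon (\calX^I)^\simeq \longrightarrow \calX^\simeq \times \calX^\simeq \ .
\]
By \cref{lem-alg-stack-cat}~\cref{itm-lem-alg-stack-cat-rep}, this morphism $(s,t)$ is representable; I would then strengthen this to finite type by noting that, in a linear algebraic $\bbK$-stack, the mapping space $\Hom_{\calX} (x, y)$ between two points is affine of finite type (it is $\bbA^n$ for $\bbK$-points, by the last part of \cref{def-linear-stack}), so the fibres of $(s,t)$ over affine schemes are affine schemes of finite type; combined with representability this gives that $(s,t)$ is of finite type. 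Base change preserves finite type, and the additional closed condition $\phi = \phi^\vee$ does not disturb this, so $\calX^\sd \to \calX^\simeq$ is of finite type. The main obstacle I anticipate is the second step: verifying carefully that the mapping-space/arrow morphism $(s,t)$ is not merely representable but of \emph{finite type}, which requires pinning down that $(\calX^I)^\simeq$ has finite-type fibres over $\calX^\simeq \times \calX^\simeq$ using the linear (hence affine-space-valued) structure of the $\Hom$-spaces; once this is in hand the rest is formal fibre-product bookkeeping, parallel to the proofs of \cref{lem-alg-stack-cat} and \cref{lem-exact-stack}.
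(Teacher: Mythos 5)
Your high-level strategy — present $\calX^\sd$ as a substack of a fibre product built from the involution, then read off algebraicity, loc.\ finite type, affine stabilizers, and finite type of the forgetful map from that presentation — is the same as the paper's. However, there are two concrete issues in the execution.

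First, the fibre product you write down is not the right one. You declare that an object of $\calX^\sd(U)$ is a pair $(E, \phi)$ with $\phi$ a \emph{$2$-morphism}, i.e.\ an isomorphism $E \simto f(E)$, but then you fibre $\calX^\simeq$ against $(\calX^I)^\simeq$ over $\calX^\simeq \times \calX^\simeq$. By \cref{def-stack-cat}, $(\calX^I)^\simeq$ parametrizes \emph{all} arrows in $\calX$, including the non-invertible ones, so that fibre product parametrizes objects $E$ together with an arbitrary (possibly degenerate) pairing $E \to E^\vee$, not just the non-degenerate ones. (The leg labelled $\Delta$ on the $(\calX^I)^\simeq$ side should in any case be the arrow-endpoint map $(s,t)$; there is no $\Delta$ out of $(\calX^I)^\simeq$.) To get $\calX^\sd$ you would have to additionally pass to the \emph{open} locus where $\phi$ is invertible, and you never do this. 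The paper sidesteps the issue entirely by taking
\[
    \calY \;=\; \calX^\simeq
    \underset{\Delta,\ \calX^\simeq \times \calX^\simeq,\ (\id,\, D)}{\times}
    \calX^\simeq ,
\]
a fibre product of $\calX^\simeq$ with itself, so every $\phi$ is automatically an isomorphism; $\calX^\sd$ is then a closed substack of $\calY$. You should replace $(\calX^I)^\simeq$ by $\calX^\simeq$.

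Second, the statement that the compatibility condition $\phi^\vee = \phi \circ \eta_E$ cuts out a closed substack is asserted (``imposed by a further fibre product, or observed to be automatically a closed substack'') but not justified, and this is actually the nontrivial step. The paper shows it by pulling back $\calY$ along an affine chart $x \colon U \to \calX^\simeq$ to get the Isom-space $H = U \times_{\calX^\simeq} \calY$, observing that the compatibility condition is exactly a $\bbZ_2$-fixed-point condition on $H$, and that the fixed locus of a finite-group action on a \emph{separated} algebraic space is closed. This argument is needed; the closedness does not come for free from the fibre product. Your stabilizer argument (closed subgroup of an affine group) and your finite-type argument (the diagonal/arrow map of a linear algebraic $\bbK$-stack has affine finite-type fibres, and closed/open immersions preserve finite type) are fine once the presentation is corrected.
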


\begin{proof}
    Consider the pullback of algebraic stacks
    \begin{equation} \begin{tikzcd}
        \calY \ar[d] \ar[r] \ar[dr, phantom, pos=.2, "\ulcorner"]
        & \calX^\simeq \ar[d, "\Delta"]
        \\ \calX^\simeq \ar[r, "{(\id, \, D)}"]
        & \calX^\simeq \times \calX^\simeq \rlap{ ,}
    \end{tikzcd} \end{equation}
    where $\calY$ can be seen as the fixed locus of $D$
    as a $\bbZ$-action instead of a $\bbZ_2$-action.
    There is an induced morphism $j \colon \calX^\sd \to \calY$.
    We claim that $j$ is a closed immersion.

    Indeed, let $U$ be any affine $\bbK$-scheme,
    and let $x \colon U \to \calX^\simeq$ be a morphism.
    Write $x^\vee = D \circ x$, and write
    \[
        H = \mathrm{Isom}_{\calX^\simeq} (x, x^\vee) =
        U \underset{\calX^\simeq}{\times} \calY \ ,
    \]
    which is an algebraic $U$-space, such that for any $U$-scheme $f \colon V \to U$,
    its $V$-points parametrize isomorphisms between
    $x \circ f$ and $x^\vee \circ f$ in $\calX^\simeq (V)$.
    There is a $\bbZ_2$-action on $H$
    induced by $D$ and $\eta$. Write $H^\sd \subset H$ for the fixed locus,
    which is now a closed subspace of $H$,
    since $H$ is separated. We claim that
    \begin{equation}
        H^\sd \simeq U \underset{\calX^\simeq}{\times} \calX^\sd \ .
    \end{equation}
    This is because for any $U$-scheme $f \colon V \to U$,
    the $V$-points of both sides parametrize
    isomorphisms $\phi \colon x \circ f \simeq x^\vee \circ f$
    such that $\phi^\vee = \phi \circ \eta$, where $\phi^\vee = D (\phi)$.
    This shows that $j$ is a closed immersion.

    To prove \cref{itm-sd-stack-lft},
    since $\calY$ is locally of finite type, so is $\calX^\sd$.
    For $(x, \phi) \in \calX^\sd (\bbK)$,
    with $x \in \calX (\bbK)$ and $\phi$ as above,
    the stabilizer group $\mathrm{Stab}_{\calX^\sd} (x, \phi)$ is affine,
    since it can be identified with $\mathrm{End}_{\calX} (x)^\iso$,
    where $(-)^\iso$ is as in \cref{para-inv-alg},
    and the involution on $\mathrm{End}_{\calX} (x)$ is given by
    $e \mapsto \phi^{-1} \circ e^\vee \circ \phi$.

    For \cref{itm-sd-stack-ft},
    the morphism $\calY \to \calX^\simeq$ above is of finite type,
    and so is $\calX^\sd \to \calX^\simeq$.
\end{proof}

\section{2-vector bundles}

\label{sect-two-vb}
\addtocounter{subsection}{1}

\paragraph{}

We define a notion of \emph{$2$-vector bundles},
also known as \emph{vector bundle stacks} in the literature.
For a $\bbK$-scheme $U$, a $2$-vector bundle over $U$
is an algebraic stack that looks like a quotient stack
\[
    [E^1/E^0] \longrightarrow U \ ,
\]
where $E^1$ and $E^0$ are vector bundles over $U$,
and $E^0$ acts on $E^1$ via a morphism of vector bundles $d \colon E^0 \to E^1$.
This can be thought of as the \emph{total space} of the two-term complex
$E^\bullet [1]$, generalizing the notion of the total space of a vector bundle.

In fact, there is a general construction of total spaces of perfect complexes
in derived algebraic geometry (see To\"en \cite[p.\,46]{Toen2014}),
which produces, in general, derived stacks.
For two-term complexes,
the derived stacks obtained this way are classical $1$-stacks,
and our goal here is to describe this stack classically.

A $2$-vector bundle over a point is a \emph{$2$-vector space}.
There is a notion of a $2$-vector space that appeared in
Baez--Crans \cite[\S3]{BaezCrans2004},
which is essentially equivalent to ours,
except that we have assumed all vector spaces to be finite-dimensional.
However, one should be aware that there are
very different notions of $2$-vector spaces in the literature,
most notably that of Kapranov--Voevodsky \cite{KapranovVoevodsky1994}.

$2$-vector bundles were perhaps first studied by Deligne~\cite[\S1.4]{Deligne1973},
under the name of \emph{Picard stacks},
in a more general setting of abelian sheaves.
They were also discussed under the name of \emph{vector bundle stacks}
in Behrend--Fantechi \cite[\S\S1--2]{BehrendFantechi1997}.

\paragraph{Abelian stacks.}
\label{para-ab-stack}

A \emph{$2$-group} is a small monoidal groupoid $\calG$,
such that for any object $x \in \calG$,
left and right multiplication by $x$ are equivalences.

An \emph{abelian $2$-group} is a symmetric monoidal groupoid
that is also a $2$-group.

Let $2\mathhyphen\cat{Ab}$ denote the
$2$-category of abelian $2$-groups,
whose morphisms are symmetric monoidal functors,
and $2$-morphisms are monoidal natural isomorphisms.

Let $\calC$ be a site.
An \emph{abelian stack} over $\calC$ is a $2$-functor
$\calX \colon \calC^\op \to 2\mathhyphen\cat{Ab}$,
which is also a stack in groupoids over $\calC$.
This defines the $2$-category of abelian stacks over $\calC$.

An abelian stack $\calX$ naturally comes with morphisms
\begin{align*}
    0 \colon * & \longrightarrow \calX \ , \\
    {+} \colon \calX \times \calX & \longrightarrow \calX \ ,
\end{align*}
establishing $\calX$ as an abelian group object
in stacks in groupoids over $\calC$,
in a suitable $2$-categorical sense.

When $\calC = \mathsf{Sch}_{\calS}$\,,
the category of schemes over a $\bbK$-stack $\calS$,
with the \'etale topology,
an abelian stack over $\calC$ is called an \emph{abelian $\calS$-stack}.

Deligne defined a \emph{Picard stack}
to be an abelian stack that is \emph{strictly commutative}
\cite[Definition~1.4.5]{Deligne1973},
and showed that this condition ensures that
any Picard stack is a quotient stack of abelian sheaves
\cite[Lemma~1.4.13]{Deligne1973}.

\begin{definition}
    \label{def-module-stack}
    Let $\calS$ be an algebraic $\bbK$-stack.

    A \emph{module $\calS$-stack}
    is an abelian stack $\calX$ over $\calS$,
    together with an $\bbA^1_{\calS}$-action
    that respects the abelian stack structures.
    This means that the action morphism $\bbA^1_{\calS} \times \calX \to \calX$
    is bilinear.

    A \emph{morphism of module $\calS$-stacks}
    is an $\bbA^1_{\calS}$-equivariant morphism of abelian stacks.
    A \emph{$2$-morphism} between such morphisms
    is an $\bbA^1_{\calS}$-equivariant $2$-morphism of
    the underlying morphisms of abelian stacks.
\end{definition}

\begin{definition}
    \label{def-2vb}
    Let $U$ be a $\bbK$-scheme.

    Let $E^0, E^1 \to U$ be vector bundles of finite rank,
    and let $d \colon E^0 \to E^1$ be a morphism of vector bundles.
    We refer to this data as a \emph{two-term complex} on $U$,
    and regard it as a complex of $\calO_U$-modules,
    where $E^0$ lies in degree $-1$, and $E^1$ lies in degree $0$.
    We may form the relative quotient stack
    \[
        [E^1 / E^0] \longrightarrow U \ ,
    \]
    which is a module $U$-stack.
    See also \cite[\S1.4.11]{Deligne1973}.

    A \emph{$2$-vector bundle} over $U$
    is a module $U$-stack
    \[
        \pi \colon \calE \longrightarrow U \ ,
    \]
    such that it is locally isomorphic to module stacks of the form $[E^1 / E^0]$
    over open subsets of $U$.
    In this case, the underlying stack in groupoids of $\calE$
    is an algebraic $U$-stack.

    A \emph{morphism of $2$-vector bundles} over $U$
    is a morphism of the underlying module stacks,
    and a \emph{$2$-morphism} between such morphisms
    is a $2$-morphism between the morphisms of module stacks.

    The \emph{rank} of a $2$-vector bundle $\calE$
    is its relative dimension as a smooth morphism of algebraic stacks.
    This defines a locally constant function $\rank \calE \colon U \to \bbZ$.

    A \emph{$2$-vector space} is a $2$-vector bundle over $\Spec (\bbK)$.
    We see that a $2$-vector space can always be written as
    \[
        E^1 \times [*/E^0]
    \]
    for finite-dimensional vector spaces $E^0$, $E^1$.
    A morphism of $2$-vector spaces from $E^1 \times [*/E^0]$
    to $F^1 \times [*/F^0]$
    is of the form $\alpha \times \beta$,
    where $\alpha \colon E^1 \to F^1$ is a linear map,
    and $\beta \colon [*/E^0] \to [*/F^0]$
    is induced by a linear map $E^0 \to F^0$.

    Finally, for an algebraic $\bbK$-stack $\calS$,
    a \emph{$2$-vector bundle} over $\calS$
    is a module $\calS$-stack $\calE$,
    such that for any morphism $U \to \calS$ from a $\bbK$-scheme $U$,
    the pullback $U \times_{\calS} \calE$ is a $2$-vector bundle over $U$.
\end{definition}

\begin{theorem}
    \label{thm-2vb-morphism}
    Let $U$ be a $\bbK$-scheme.
    \begin{enumerate}
        \item \label{itm-lem-2vb-morphism-1}
            For two-term complexes $E^\bullet, F^\bullet$ of vector bundles on $U$,
            there is a canonical isomorphism
            \begin{equation}
                \Hom_{\calD (\calO_U)} (E^\bullet, F^\bullet) \longsimto
                \Hom_U ([E^1/E^0], [F^1/F^0]) \ ,
            \end{equation}
            where $\calD (\calO_U)$ is the derived category of $\calO_U$-modules,
            and the right-hand side is the set of $2$-isomorphism classes
            of morphisms of $2$-vector bundles over $U$.
            
            Moreover, this induces an equivalence of $2$-categories
            between the full subcategory of the derived $\infty$-category $\cat{D} (\calO_U)$
            spanned by such two-term complexes, which is a $2$-category,
            and the $2$-category of $2$-vector bundles over $U$.
            
        \item \label{itm-lem-2vb-morphism-2}
            Let $E^\bullet \to F^\bullet \to G^\bullet$
            be an exact triangle of two-term complexes of vector bundles on $U$. Then
            \begin{equation}
                [F^1/F^0] \underset{[G^1/G^0]}{\times} U \simeq [E^1/E^0] \ ,
            \end{equation}
            where the map $U \to [G^1/G^0]$ is the zero section. We say that the sequence
            \begin{equation}
                0 \to [E^1/E^0] \longrightarrow
                [F^1/F^0] \longrightarrow
                [G^1/G^0] \to 0
            \end{equation}
            is a \emph{short exact sequence} of $2$-vector bundles.
    \end{enumerate}
\end{theorem}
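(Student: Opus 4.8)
The plan is to prove \cref{thm-2vb-morphism} by reducing everything to a statement about two-term complexes of vector bundles, where we can work with explicit representatives. For part~\cref{itm-lem-2vb-morphism-1}, the key observation is that a morphism of $2$-vector bundles $[E^1/E^0] \to [F^1/F^0]$ over $U$, being a morphism of module $U$-stacks, is determined Zariski-locally by a morphism of the presenting groupoids, i.e.\ by a pair of vector bundle maps $f^1 \colon E^1 \to F^1$ and $f^0 \colon E^0 \to F^0$ together with a chain homotopy witnessing compatibility, namely a map $h \colon E^0 \to F^1$ with $d_F \circ f^0 = f^1 \circ d_E + (\text{contribution of } h)$; but in degrees $-1, 0$ the only possible homotopy lands in $\Hom(E^1, F^1)$-ish degree, so one checks that $2$-isomorphism classes of such data are exactly chain-homotopy classes of chain maps $E^\bullet \to F^\bullet$, which is $\Hom_{\cat{D}(\calO_U)}(E^\bullet, F^\bullet)$ since the $E^i$ are locally free (hence the complex is K-projective in the relevant range). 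First I would set up the functor from two-term complexes to module stacks sending $E^\bullet \mapsto [E^1/E^0]$, then exhibit the map on Hom-sets and verify bijectivity locally, then globalize by descent.

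For the $2$-categorical enhancement, I would argue that the assignment $E^\bullet \mapsto [E^1/E^0]$ extends to a $2$-functor, and that it is essentially surjective by the local triviality in \cref{def-2vb} together with a gluing argument (the transition data for a $2$-vector bundle assembles into a genuine two-term complex of vector bundles, not merely a local one, because $\Ext$ obstructions vanish: morphisms between two-term complexes of vector bundles in the derived category have no negative self-Exts, so the stack of such complexes is a $1$-stack and descent is effective). Fully faithfulness at the level of $2$-morphisms reduces to the statement that $2$-morphisms between morphisms of module stacks over $U$ correspond to homotopies between chain maps, which is again a local computation. I would cite \cite[\S1.4]{Deligne1973} for the underlying equivalence in the setting of Picard stacks / abelian sheaves, and explain that the finite-rank locally-free hypothesis is what pins us down to the subcategory of $\cat{D}(\calO_U)$ spanned by two-term complexes of vector bundles.

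For part~\cref{itm-lem-2vb-morphism-2}, given an exact triangle $E^\bullet \to F^\bullet \to G^\bullet$ of two-term complexes of vector bundles, I would first replace it up to quasi-isomorphism by a termwise-split short exact sequence $0 \to E^\bullet \to F^\bullet \to G^\bullet \to 0$ of complexes (possible since the $G^i$ are locally free, so the surjection $F^i \to G^i$ splits locally, and one checks the splitting can be arranged compatibly or else argues locally and glues via part~\cref{itm-lem-2vb-morphism-1}). Then the claimed pullback square $[F^1/F^0] \times_{[G^1/G^0]} U \simeq [E^1/E^0]$ is checked on $T$-points for a $\bbK$-scheme $T$: a $T$-point of the left side is a $T$-point of $[F^1/F^0]$ together with a trivialization of its image in $[G^1/G^0]$, and unwinding the quotient-stack descriptions this is exactly the data of a $T$-point of $[E^1/E^0]$, using the termwise splitness to identify the torsor-plus-section data. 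The short exact sequence statement is then just a restatement.

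The main obstacle I expect is the effectivity of descent in the $2$-categorical enhancement of part~\cref{itm-lem-2vb-morphism-1}: one must be careful that gluing local presentations $[E^1/E^0]$ of a $2$-vector bundle along isomorphisms over overlaps — which themselves are only well-defined up to $2$-isomorphism — produces an honest two-term complex globally, and this requires knowing that the relevant cocycle conditions hold strictly, or that the obstruction to strictifying them (an $H^2$ of some sheaf) vanishes because we are in the world of $1$-stacks and vector bundles. This is exactly the content of Deligne's observation that Picard stacks are quotient stacks, so I would lean on that, but translating it to the non-strictly-commutative module-stack setting of \cref{def-module-stack} will need a short verification that the symmetry here is the standard one coming from the shift, hence strictly commutative in Deligne's sense. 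The rest — the Hom-set bijection, the pullback computation in part~\cref{itm-lem-2vb-morphism-2} — is routine unwinding of quotient stacks and I would not belabor it.
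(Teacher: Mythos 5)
Your approach differs from the paper's, and for part~\cref{itm-lem-2vb-morphism-2} it has a genuine gap. The paper proves~\cref{itm-lem-2vb-morphism-1} simply by citing Deligne \cite[Proposition~1.4.15]{Deligne1973} and Tatar \cite[Theorem~6.4]{Tatar2011}, and then deduces~\cref{itm-lem-2vb-morphism-2} abstractly: the equivalence of $2$-categories between two-term complexes and Picard $U$-stacks from Tatar preserves $2$-pullbacks, and the $2$-pullback of $F^\bullet \to G^\bullet \leftarrow 0$ in $\cat{D}^{[-1,0]}$ is the homotopy fibre, which is $E^\bullet$ by the exact triangle; all that remains is to check that $2$-pullbacks of Picard stacks agree with $2$-pullbacks of stacks in groupoids. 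Your sketch of~\cref{itm-lem-2vb-morphism-1} is consistent in spirit with what Deligne proves (and your worry about strict commutativity of the module-stack symmetry is legitimate and worth verifying), so there is no gap there, merely a longer road.

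For part~\cref{itm-lem-2vb-morphism-2}, however, you presume from the start that the chain map $F^\bullet \to G^\bullet$ is given by termwise \emph{surjections} $F^i \twoheadrightarrow G^i$; local freeness of $G^i$ only tells you that a surjection splits locally, not that one exists. An exact triangle in $\cat{D}(\calO_U)$ whose three vertices are two-term complexes of vector bundles need not be realizable by a termwise-surjective chain map between the given representatives, and the naive remedy (replace $F^\bullet$ by the fibre of a termwise-surjective model) produces a \emph{three}-term complex, leaving the category you are working in. The fix requires an observation you do not make: since $\operatorname{fib}(f) \simeq E^\bullet$ is concentrated in degrees $-1, 0$, the map $(f^1, d_G) \colon F^1 \oplus G^0 \to G^1$ must be \emph{surjective}; one may then replace $F^\bullet$ by the homotopy-equivalent two-term complex $[F^0 \oplus G^0 \to F^1 \oplus G^0]$, for which the induced chain map to $G^\bullet$ is termwise surjective with kernel a two-term complex of vector bundles (a kernel of a surjection of vector bundles being a vector bundle). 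Only after this replacement does your $T$-point computation go through. Your fallback ``argues locally and glues via part~\cref{itm-lem-2vb-morphism-1}'' does not dodge the issue, as locality does not make $f^1 \colon F^1 \to G^1$ surjective; the constraint genuinely comes from $E^\bullet$ being a two-term complex. The paper's route through Tatar's equivalence sidesteps the whole matter.
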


\begin{proof}
    Part \cref{itm-lem-2vb-morphism-1}
    was shown in Deligne~\cite[Proposition~1.4.15]{Deligne1973};
    see also Tatar~\cite[Theorem~6.4]{Tatar2011}.

    For \cref{itm-lem-2vb-morphism-2},
    we use the consequence of \cite[Theorem~6.4]{Tatar2011}
    that there is an equivalence of $2$-categories
    between $\cat{D}^{[-1,0]} (U)$,
    the full subcategory of the derived $\infty$-category $\cat{D} (U)$ of abelian sheaves
    spanned by two-term complexes in degrees $-1$ and $0$,
    and the $2$-category of Picard $U$-stacks,
    which is a full subcategory of the $2$-category of abelian $U$-stacks.
    In particular, this construction preserves $2$-pullbacks.
    It then remains to show that $2$-pullbacks in the $2$-category of Picard $U$-stacks
    coincide with $2$-pullbacks in the $2$-category of all stacks.
    Equivalently, $2$-pullbacks of \emph{Picard categories}
    \cite[Definition~1.4.2]{Deligne1973}, which are certain $2$-groups,
    coincide with $2$-pullbacks of groupoids,
    which can be verified by an explicit description of such pullbacks.
\end{proof}

\begin{remark}
    \label{rem-2vb-ab-stack}
    A $2$-vector bundle is determined by its underlying abelian stack.
    In other words, any abelian stack automorphism of a $2$-vector bundle
    $\calE \to U$ is a $2$-vector bundle automorphism,
    since Zariski locally, we may assume that $\calE \simeq [E^1/E^0]$
    with $E^0, E^1$ trivial,
    and that the given automorphism $[E^1/E^0] \simto [E^1/E^0]$
    is given by a morphism of two-term complexes,
    so that it is determined by its value on
    respective global bases of $E^0, E^1$.
\end{remark}

\begin{definition}
    \label{def-affine-2vb}
    Let $U$ be a $\bbK$-scheme.
    
    An \emph{affine $2$-vector bundle} over $U$
    is a morphism of algebraic stacks
    \[
        \pi \colon \calE \to U \ ,
    \]
    such that there is a $2$-vector bundle $\calE_0 \to U$,
    such that $\calE$ is an \emph{$\calE_0$-torsor}.
    This means that there is an $\calE_0$-action on $\calE$ as $U$-stacks,
    such that there is an
    open cover $(U_i \subset U)_{i \in I}$\,,
    and $\smash{\calE_0 |_{U_i}}$-equivariant isomorphisms of $U_i$-stacks
    $\phi_i \colon \smash{\calE |_{U_i}} \simto \smash{\calE_0 |_{U_i}}$\,.
    In particular, each transition morphism
    $\phi_j \circ \phi_i^{-1} \colon \smash{\calE_0 |_{U_i \cap U_j}} \to \smash{\calE_0 |_{U_i \cap U_j}}$
    is a translation by a section $U_i \cap U_j \to \calE_j$\,.

    A \emph{morphism} of affine $2$-vector bundles $\calE, \calF$ over $U$,
    or an \emph{affine linear morphism},
    is a morphism of stacks $f \colon \calE \to \calF$ over $U$,
    such that there is a morphism of $2$-vector bundles
    $f_0 \colon \calE_0 \to \calF_0$ over $U$,
    an open cover $(U_i \subset U)_{i \in I}$\,,
    and trivializations of torsors
    $\phi_i \colon \smash{\calE|_{U_i}} \simto \smash{\calE_0|_{U_i}}$ and
    $\psi_i \colon \smash{\calF|_{U_i}} \simto \smash{\calF_0|_{U_i}}$\,,
    such that $\psi_i \circ f \circ \phi_i^{-1} = \smash{f_0|_{U_i}}$ for all $i$.

    A \emph{$2$-morphism} of such morphisms is
    a $2$-morphism between the underlying morphisms of stacks,
    such that there exists a $2$-morphism between the corresponding
    morphisms of $2$-vector bundles,
    which can be identified with the given $2$-morphism
    using some local trivialization.

    Finally, for an algebraic $\bbK$-stack $\calS$,
    we define \emph{affine $2$-vector bundles} over $\calS$
    similarly as above, with $U$ replaced by $\calS$,
    and open covers of $U$ replaced by smooth open covers of $\calS$ by $\bbK$-schemes.
    Morphisms and $2$-morphisms of affine $2$-vector bundles over $\calS$
    are defined likewise.
\end{definition}

\begin{lemma}
    \label{lem-2vb-morphism-affine}
    In the situation of
    \cref{thm-2vb-morphism}~\cref{itm-lem-2vb-morphism-2},
    let $\alpha \colon U \to [G^1/G^0]$ be any section. Then
    \[
        [F^1/F^0] \underset{[G^1/G^0]}{\times} U
    \]
    is an affine $2$-vector bundle over $U$,
    and is an $[E^1/E^0]$-torsor.
\end{lemma}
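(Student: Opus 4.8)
The plan is to reduce to the zero-section case, \cref{thm-2vb-morphism}~\cref{itm-lem-2vb-morphism-2}, by a local translation argument, working throughout with the module $U$-stack structures of \cref{def-module-stack,def-2vb}. Write $q\colon [F^1/F^0]\to[G^1/G^0]$ for the morphism of $2$-vector bundles induced by $F^\bullet\to G^\bullet$, and set $\calP=[F^1/F^0]\times_{[G^1/G^0],\,\alpha}U$ and $\calE=[E^1/E^0]$. First I would construct an action of $\calE$ on $\calP$ over $U$. The morphism $E^\bullet\to F^\bullet$ induces $\iota\colon\calE\to[F^1/F^0]$, and since $E^\bullet\to F^\bullet\to G^\bullet$ is an exact triangle, the composite $E^\bullet\to G^\bullet$ vanishes in $\calD(\calO_U)$; by \cref{thm-2vb-morphism}~\cref{itm-lem-2vb-morphism-1} the composite $q\circ\iota$ is therefore canonically $2$-isomorphic to the zero morphism. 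Composing $\iota$ with translation in the module stack $[F^1/F^0]$ then yields an action of $\calE$ on $[F^1/F^0]$ lying over $[G^1/G^0]$ (the null-homotopy above providing the required compatibility), and this restricts to an action of $\calE$ on $\calP$.

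Next I would prove local triviality. Working Zariski-locally on $U$, I would first normalise the exact triangle: since $G^\bullet$ is a two-term complex of vector bundles, the connecting morphism $G^\bullet\to E^\bullet[1]$ is locally represented by an honest chain map, so $F^\bullet$ is quasi-isomorphic to its cone, and invoking \cref{thm-2vb-morphism}~\cref{itm-lem-2vb-morphism-1} to replace $F^\bullet$ by this cone we may assume $F^i=G^i\oplus E^i$ for $i=0,1$ with $q$ induced by the projections $F^i\to G^i$. After a further Zariski refinement the section $\alpha$ lifts to a section $a$ of $G^1$, the underlying $G^0$-torsor being Zariski-locally trivial because the additive group of a vector bundle is a special $\bbK$-group. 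Padding with zero, $\tilde\alpha=(a,0)$ is a section of $F^1=G^1\oplus E^1$, hence a section of $[F^1/F^0]$ with $q(\tilde\alpha)\simeq\alpha$, i.e.\ a local section of $\calP$. Translation $T_{\tilde\alpha}$ by $\tilde\alpha$ in the module stack $[F^1/F^0]$ satisfies $q\circ T_{\tilde\alpha}\simeq T_\alpha\circ q$, so it carries $q^{-1}(0)$, identified with $\calE$ by \cref{thm-2vb-morphism}~\cref{itm-lem-2vb-morphism-2}, isomorphically and $\calE$-equivariantly onto $q^{-1}(\alpha)=\calP$ over the chosen open set.

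Finally I would verify the cocycle condition: over an overlap $U_i\cap U_j$ of two such open sets, $T_{\tilde\alpha_i}^{-1}\circ T_{\tilde\alpha_j}$ is translation by the section $\tilde\alpha_j-\tilde\alpha_i$ of $[F^1/F^0]$, which maps to $0$ in $[G^1/G^0]$ and hence lifts to a section of $\calE$. Thus the transition morphisms of $\calP$ are translations by local sections of the $2$-vector bundle $\calE$, which is precisely what \cref{def-affine-2vb} demands for $\calP$ to be an affine $2$-vector bundle that is an $\calE$-torsor, the torsor action being the one built in the first paragraph. I expect the main obstacle to be the local normalisation of the exact triangle — representing the connecting morphism by a chain map and transporting along the quasi-isomorphism via \cref{thm-2vb-morphism}~\cref{itm-lem-2vb-morphism-1} — together with keeping the module-stack compatibilities ($q$ a morphism of module $U$-stacks, translations commuting with the $\calE$-action) coherent; once these are in hand the remainder is a routine gluing argument, and it all goes through in families over an arbitrary algebraic $\bbK$-stack in place of $U$, as required by \cref{def-2vb,def-affine-2vb}.
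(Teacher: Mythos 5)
Your proof is correct and follows essentially the same strategy as the paper's: lift $\alpha$ Zariski-locally to a section of $[F^1/F^0]$, using the speciality of the additive group $(\Ga)^m$ to trivialise the underlying $G^0$-torsor, and then translate by this lift to reduce the fibre to the zero-section case of \cref{thm-2vb-morphism}~\cref{itm-lem-2vb-morphism-2}, observing that transition morphisms on overlaps are translations by local sections of $[E^1/E^0]$.

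You are more careful in two places than the paper's (rather terse) proof. First, the paper simply assumes ``locally $F^1\to G^1$ is surjective'' without comment, while you justify this by passing to the explicit cone model $F^i\simeq G^i\oplus E^i$ after representing the connecting map $G^\bullet\to E^\bullet[1]$ by an honest chain map (valid on an affine $U$ since the $G^i$ are projective), which also gives the lift of $\alpha$ to $F^1$ ``for free'' by padding with zero. Second, you explicitly construct the $[E^1/E^0]$-action on the fibre product using the null-homotopy of $E^\bullet\to G^\bullet$ and translation in the module stack, which the paper takes as understood. Neither addition changes the route; they just make the argument more self-contained and are welcome.
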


\begin{proof}
    We claim that $\alpha$ locally lifts to a section of $[F^1/F^0]$.
    Indeed, locally, we may assume that $U$ is affine,
    that all vector bundles in question are trivial,
    and that $F^1 \to G^1$ is surjective.
    Then $\alpha$ factors through $G^1$, 
    since $\alpha$ can be represented as a $G^0$-torsor $T \to U$
    together with a $G^0$-equivariant morphism $T \to G^1$.
    However, such a $G^0$-torsor must be trivial,
    since if $m$ is the rank of $G^0$, then
    \[
        H^1_\et (U, (\Ga)^m) \simeq H^1 (U, \calO_U^{\oplus m}) \simeq 0 \ ,
    \]
    where the first two terms are \'etale and Zariski cohomology, respectively,
    and they are the same because $(\Ga)^m$ is a special group.
    Therefore, $T$ has a section, through which
    $\alpha$ lifts to $G^1$, and upon shrinking $U$, lifts to $F^1$.

    Therefore, locally, we can use the result of
    \cref{thm-2vb-morphism}~\cref{itm-lem-2vb-morphism-2}
    by pretending that this lift is the zero section of $[F^1/F^0]$.
    This shows that the fibre product locally looks like $[E^1/E^0]$,
    and transition morphisms are translations by sections.
\end{proof}

\begin{remark}
    \Cref{thm-2vb-morphism,lem-2vb-morphism-affine}
    can be generalized to the case when $U$ is replaced by an algebraic stack $\calS$,
    without much difficulty.
    For example, a fibre product of $\calS$-stacks can be seen as
    a fibre product of stacks over the site $\cat{Sch}_{\calS}$\,,
    which can be given object-wise by those lemmas.
\end{remark}

\begin{theorem}
    \label{thm-motive-2vb}
    Let $\calX$ be an algebraic $\bbK$-stack of finite type with affine stabilizers,
    and let $\calE \to \calX$ be a $2$-vector bundle of rank $n$,
    or an affine $2$-vector bundle of rank $n$.
    Then their motives satisfy
    \begin{equation}
        \mu (\calE) = \mu (\calX) \cdot \bbL^n \ .
    \end{equation}
\end{theorem}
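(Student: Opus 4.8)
The plan is to reduce the identity to an explicit computation over a $\bbK$-scheme carrying a global presentation, and then to apply \cref{thm-motive-bundle} twice. I would first treat an honest $2$-vector bundle $\calE\to\calX$. Exactly as in the proof of \cref{thm-motive-bundle}, Kresch's decomposition lets me stratify $\calX$ into locally closed substacks of the form $[X/\GL(m)]$ with $X$ a $\bbK$-variety; by additivity of $\mu$ along stratifications (\cref{def-motive-stacks}) and stability of $\rank\calE$ under restriction and pullback, it suffices to handle each such piece. Over $[X/\GL(m)]$, both $X\to[X/\GL(m)]$ and the induced map $\calE\times_{[X/\GL(m)]}X\to\calE|_{[X/\GL(m)]}$ are torsors for the special group $\GL(m)$, so two applications of \cref{thm-motive-bundle}~\cref{itm-motive-bundle-pb} reduce the claim over $[X/\GL(m)]$ to the same claim for the $2$-vector bundle $\calE\times_{[X/\GL(m)]}X\to X$. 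Hence I may assume $\calX$ is a $\bbK$-scheme. On a scheme a $2$-vector bundle is Zariski-locally of the form $[E^1/E^0]$ for vector bundles $E^0,E^1$ and a morphism $d\colon E^0\to E^1$, so by a further Noetherian stratification I may assume this presentation is global, and then, stratifying $X$ by the rank of $d$, that $\ker d$, $\im d$ and $\coker d$ are all vector bundles; put $a=\rank E^1$, $b=\rank E^0$, $r=\rank d$, so that $n=\rank\calE=a-b$.

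For the core computation I would use the short exact sequence of two-term complexes $0\to[\ker d\to 0]\to[E^0\xrightarrow{d}E^1]\to[\im d\hookrightarrow E^1]\to 0$, which by \cref{thm-2vb-morphism}~\cref{itm-lem-2vb-morphism-2} yields a short exact sequence of $2$-vector bundles $0\to\upB_{\calX}(\ker d)\to[E^1/E^0]\to\coker d\to 0$, where $\upB_{\calX}(\ker d)=[\calX/\ker d]$ is the relative classifying stack of the vector group $\ker d$ and $\coker d$ denotes the total space of that quotient bundle. The same lemma identifies the projection $[E^1/E^0]\to\coker d$ with the structure map of $[\coker d/p^*\ker d]$, where $p\colon\coker d\to\calX$; equivalently, $\coker d\to[E^1/E^0]$ is the pullback of the atlas $\calX\to\upB_{\calX}(\ker d)$, which is a torsor for the special vector group $\ker d$ of rank $b-r$. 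Then \cref{thm-motive-bundle}~\cref{itm-motive-bundle-pb} gives $\mu(\coker d)=\mu([E^1/E^0])\cdot\bbL^{b-r}$, while \cref{thm-motive-bundle}~\cref{itm-motive-bundle-vb}, applied to the rank-$(a-r)$ vector bundle $\coker d\to\calX$, gives $\mu(\coker d)=\mu(\calX)\cdot\bbL^{a-r}$. Comparing, $\mu([E^1/E^0])=\mu(\calX)\cdot\bbL^{a-b}=\mu(\calX)\cdot\bbL^{n}$, as required.

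For the affine case, an affine $2$-vector bundle $\calE\to\calX$ of rank $n$ is by definition a torsor for a $2$-vector bundle $\calE_0$ of rank $n$; pulling back along the Kresch strata exactly as above reduces to $\calX$ a scheme, over which $\calE$ is Zariski-locally trivial by \cref{def-affine-2vb}, so after one more stratification I may assume $\calE\simeq\calE_0$ and conclude from the $2$-vector bundle case already established.

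The main obstacle I expect is bookkeeping rather than anything deep: keeping the successive stratifications (Kresch pieces, then a global presentation, then constant rank of $d$) under control while tracking ranks, and verifying that the short exact sequence of $2$-vector bundles genuinely exhibits $\coker d\to[E^1/E^0]$ as a torsor for the vector group $\ker d$, so that \cref{thm-motive-bundle}~\cref{itm-motive-bundle-pb} may be applied. One point worth spelling out carefully is that $\upB_{\calX}(\ker d)$, viewed as a $2$-vector bundle, has rank $-(b-r)$, so that the additivity of ranks $(a-r)+\bigl(-(b-r)\bigr)=a-b=n$ across the short exact sequence is consistent.
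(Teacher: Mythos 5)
Your reduction to the scheme case via Kresch's decomposition is exactly the paper's opening move, and your endgame (invoking \cref{thm-motive-bundle} for torsors under a special vector group) is also what the paper uses. But in between you take a noticeably more involved route than the paper, and the core claim in your version has a gap you flag but don't close.

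The paper never stratifies by the rank of $d$ and never mentions $\ker d$, $\im d$ or $\coker d$. Once $\calX$ is a $\bbK$-variety $X$ and the two-term complex $E^0\xrightarrow{d}E^1$ presenting $\calE$ has $E^0,E^1$ trivial (a Zariski-local shrink), the atlas $E^1\to[E^1/E^0]$ is tautologically a torsor for the group scheme $E^0\simeq X\times(\Ga)^{b}$, with no hypothesis on $d$. Since $(\Ga)^{b}$ is special, \cref{thm-motive-bundle}~\cref{itm-motive-bundle-pb} gives $\mu(E^1)=\mu([E^1/E^0])\cdot\bbL^{b}$, while \cref{itm-motive-bundle-vb} gives $\mu(E^1)=\mu(X)\cdot\bbL^{a}$, so $\mu([E^1/E^0])=\mu(X)\cdot\bbL^{a-b}$, and the affine case reduces to this after one more shrink trivializing the torsor. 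That is the whole proof: the structure of $d$ is irrelevant because the quotient-stack atlas already hands you a special torsor.

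The gap in your argument is precisely where you suspected it. From the exact triangle $[\ker d\to 0]\to[E^0\xrightarrow{d}E^1]\to[\im d\hookrightarrow E^1]$, \cref{thm-2vb-morphism}~\cref{itm-lem-2vb-morphism-2} only tells you that the fibre of $[E^1/E^0]\to\coker d$ over the zero section $\calX\hookrightarrow\coker d$ is $\upB_{\calX}(\ker d)$; it does not assert that $[E^1/E^0]\simeq[\coker d/p^*\ker d]$, nor produce the section $\coker d\to[E^1/E^0]$ you then pull the atlas back along. In fact there is no morphism $[E^1/E^0]\to\upB_{\calX}(\ker d)$ a priori, so the phrase ``pullback of the atlas'' doesn't type-check. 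What is true is that $[E^1/E^0]\to\coker d$ is a gerbe banded by $p^*\ker d$; to trivialize it (and thus get the $\ker d$-torsor $\coker d\to[E^1/E^0]$) you need, say, a further Zariski shrink of $X$ making the short exact sequences $0\to\ker d\to E^0\to\im d\to0$ and $0\to\im d\to E^1\to\coker d\to0$ split, so that $[E^1/E^0]\simeq\coker d\times_{\calX}\upB_{\calX}(\ker d)$. With that added, your computation is correct and matches the answer, but it recovers $\mu([E^1/E^0])=\mu(\calX)\cdot\bbL^{(a-r)-(b-r)}$ by a longer chain of reductions than is needed. If you keep your argument, add the splitting step explicitly and drop the ``pullback of the atlas'' phrasing; otherwise, simply use the atlas $E^1\to[E^1/E^0]$ as the paper does.
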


\begin{proof}
    As in the proof of \cref{thm-motive-bundle},
    we may assume that $\calX = [X / \GL (m)]$
    for some $\bbK$-variety $X$.
    Let $\tilde{\calE}$ be the pullback of $\calE$ to $X$.
    Then $\tilde{\calE}$ is a principal $\GL (m)$-bundle over $\calE$.
    By \cref{thm-motive-bundle}, we have
    \begin{align*}
        \mu (\calX) & =
        \mu (X) \cdot \mu (\GL (m))^{-1} \ , \\
        \mu (\calE) & =
        \mu (\tilde{\calE}) \cdot \mu (\GL (m))^{-1} \ .
    \end{align*}
    Therefore, it is enough to prove that
    \begin{equation}
        \mu (\tilde{\calE}) = \mu (X) \cdot \bbL^n \ .
    \end{equation}
    
    Shrinking $X$, we may assume that $\tilde{\calE}$
    has the form $[E^1 / E^0]$,
    where $E^0, E^1 \to X$ are trivial vector bundles.
    Let $r, s$ be the ranks of $E^0, E^1$, respectively. 
    Then $E^1$ is a principal $(\Ga)^r$-bundle over $\tilde{\calE}$.
    Since $(\Ga)^r$ is special, we have
    \begin{align*}
        \mu (\tilde{\calE}) & =
        \mu (E^1) \cdot \mu ((\Ga)^r)^{-1} \\
        & = (\mu (X) \cdot \bbL^s) \cdot \bbL^{-r} \\
        & = \mu (X) \cdot \bbL^{s - r} \ ,
        \numberthis
    \end{align*}
    as required.
\end{proof}

\section{Proof of a combinatorial result}

\label{sect-proof-comb}
\newcommand{\sI}{{\smash{I}}}

This appendix is dedicated to the proof of \cref{thm-comb},
through a complicated combinatorial argument.
We also believe that with some slight modifications,
our argument can also provide a purely combinatorial proof of
\cite[Theorem~5.4]{Joyce2008IV},
which was claimed there, but an alternative proof was given there instead.

\subsection{The setup}

\paragraph{}

Throughout this section, let $I$ be a finite set,
and let 
\begin{equation}
    C_I = \{ e_i \, , e_i^\vee \mid i \in I \}
\end{equation}
be a set of symbols.
We define a map $(-)^\vee \colon C_I \to C_I$
by sending $e_i$ to $e_i^\vee$
and $e_i^\vee$ to $e_i$ for all $i \in I$.

Let $A_I$ be the free associative algebra over $\bbQ$
generated by elements of $C_I$\,,
where we denote the multiplication by $*$.
There is an involution $(-)^\vee \colon A_I \to A_\sI^\op$,
given by
\begin{equation}
    (x_1 * \cdots * x_k)^\vee =
    x_{\smash{k}}^\vee * \cdots * x_1^\vee
\end{equation}
for $x_1, \dotsc, x_k \in C_I$\,.
This equips $A_I$ with the structure of an involutive algebra,
as in \cref{para-inv-alg}.

Let $L_I$ be the free Lie algebra over $\bbQ$
generated by elements of $C_I$\,.
Let $L_\sI^\op$ be the opposite Lie algebra of $L_I$,
i.e.\ the Lie algebra with the same underlying vector space
and with Lie bracket $[x, y]_{L_\sI^\op} = [y, x]_{L_I}$\,.
There is an involution $(-)^\vee \colon L_I \to L_\sI^\op$,
defined inductively by
\begin{alignat}{2}
    x & \mapsto x^\vee
    && \quad \text{for } x \in C_I \ , \\
    {} [x, y] & \mapsto [y^\vee, x^\vee]
    && \quad \text{for } x, y \in L_I \ .
\end{alignat}
This equips $L_I$ with the structure of an involutive Lie algebra,
as in \cref{para-inv-lie-alg}.

There is a natural inclusion of vector spaces $L_I \hookrightarrow A_I$,
which identifies $A_I$ with the universal enveloping algebra of $L_I$\,.

\paragraph{}

Define linear subspaces
\begin{align}
    L_\sI^+ & = \{ x \in L_I \mid x = -x^\vee \} \ , \\
    L_\sI^- & = \{ x \in L_I \mid x = x^\vee \} \ .
\end{align}
Then $L_I = L_\sI^+ \oplus L_\sI^-$,
and this makes $L_I$ into a $\bbZ_2$-graded Lie algebra.
In other words, we have
$[L_\sI^+, L_\sI^\pm] \subset L_\sI^\pm$ and
$[L_\sI^-, L_\sI^\pm] \subset L_\sI^\mp$.
In particular, $L_\sI^+ \subset L_I$ is a Lie subalgebra,
and there is a natural embedding
\begin{equation}
    U (L_\sI^+) \hookrightarrow A_I
\end{equation}
of associative algebras,
where the left-hand side is the universal enveloping algebra of $L_\sI^+$.
This is important,
as our main goal is to show that the wall-crossing formula in \cref{thm-comb},
which was originally expressed in terms of the
$A_I$-module structure given by the operation $\diamond$,
using the coefficients $U^\sd ({\cdots})$,
can actually be expressed solely in terms of the
$L_\sI^+$-module structure given by the operation $\heart$,
using the coefficients $\tilde{U}^\sd ({\cdots})$.

Define linear maps
$(-)^+ \colon L_I \to L_\sI^+$, $(-)^- \colon L_I \to L_\sI^-$ by
\begin{align}
    x^+ & = x - x^\vee, \\
    x^- & = x + x^\vee.
\end{align}
We have the relations
\begin{align}
    (x^+)^- & = (x^-)^+ = 0 \ , \\
    {} [x, y]^+ & = \frac{1}{2} \bigl( [x^+, y^+] + [x^-, y^-] \bigr) \ , \\
    {} [x, y]^- & = \frac{1}{2} \bigl( [x^+, y^-] + [x^-, y^+] \bigr)
\end{align}
for $x, y \in L_I$.

\paragraph{}

Write $n = |I|$.
Define a set
\begin{equation}
    P_I =
    \coprod_{\nicesubstack{
        \sigma \colon \{ 1, \dotsc, n \} \to I \\[-1.5ex]
        \text{bijective}
    }} {} \bigl\{
        x = (x_1, \dotsc, x_n) \bigm|
        x_i \in \{ e_{\sigma (i)} \, ,
        e_{\smash{\sigma (i)}}^\vee \}
        \text{ for all } i
    \bigr\} \ ,
\end{equation}
as a subset of $C_\sI^n$.

Let $K_I = \bbZ^{C_I}$ be the free abelian group generated by elements of $C_I$\,,
and let $K_\sI^+ = \bbN^{C_I} \setminus \{ 0 \} \subset K_I$\,.

\begin{definition}
    \label{def-stability-comb}
    A \emph{self-dual weak stability condition} on $I$
    is a map $\tau \colon K_\sI^+ \to T$,
    where $T$ is a totally ordered set,
    equipped with a distinguished element $0 \in T$,
    and an order-reversing involution $t \mapsto -t$,
    fixing the element $0$, such that
    \begin{enumerate}
        \item
            For any $\alpha, \beta, \gamma \in K_\sI^+$,
            such that $\beta = \alpha + \gamma$, either
            \[
                \tau (\alpha) \leq \tau (\beta) \leq \tau (\gamma) \ ,
                \quad \text{or} \quad
                \tau (\alpha) \geq \tau (\beta) \geq \tau (\gamma) \ .
            \]
        \item
            For any $\alpha \in K_\sI^+$,
            \[
                \tau (\alpha^\vee) = -\tau (\alpha) \ .
            \]
    \end{enumerate}
\end{definition}

\subsection{Wall-crossing to trivial stability}

The goal of this subsection is to prove \cref{thm-comb-to-triv} below.

\paragraph{}

Let $\tau$ be a self-dual weak stability condition on $I$,
as in \cref{def-stability-comb}. Define elements
\begin{align}
    \label{eq-comb-def-t}
    T (I; \tau) & =
    \sum_{\sigma \in \frS_n}
    U (e_{\sigma (1)}, \dotsc, e_{\sigma (n)}; \tau, 0) \cdot
    e_{\sigma (1)} * \cdots * e_{\sigma (n)} \ , \\
    \label{eq-comb-def-t-bar}
    \bar{T} (I; \tau) & =
    \sum_{x \in P_I}
    U (x_1, \dotsc, x_n; \tau, 0) \cdot
    x_1 * \cdots * x_n \ , \\
    \label{eq-comb-def-t-sd}
    T^\sd (I; \tau) & =
    \sum_{x \in P_I}
    U^\sd (x_1, \dotsc, x_n; \tau, 0) \cdot
    x_1 * \cdots * x_n
\end{align}
in the algebra $A_I$, where the coefficients
$U ({\cdots})$ and $U^\sd ({\cdots})$ are defined as in
\cref{eq-def-u} and~\cref{eq-def-usd}.

\begin{theorem}
    \label{thm-comb-to-triv}
    We have
    \begin{align}
        \label{eq-thm-comb-to-triv}
        T (I; \tau) & \in L_I \ , \\
        \label{eq-thm-comb-to-triv-bar}
        \bar{T} (I; \tau) & \in L_\sI^- \ , \\
        \label{eq-thm-comb-to-triv-sd}
        \smash{T^\sd} (I; \tau) & \in U (L_\sI^+) \ .
    \end{align}
\end{theorem}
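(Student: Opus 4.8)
The three statements \cref{eq-thm-comb-to-triv,eq-thm-comb-to-triv-bar,eq-thm-comb-to-triv-sd} should be proved together, since \cref{eq-thm-comb-to-triv} is the ordinary case due to Joyce and the other two reduce to it by averaging over the $\bbZ_2$-action. The first step is to give a clean combinatorial proof of \cref{eq-thm-comb-to-triv}, namely that $T(I;\tau) \in L_I$. This is the statement of Joyce's \cite[Theorem~5.4]{Joyce2008IV}, but as noted in the excerpt an alternative proof was used there, so I would want a direct combinatorial argument: express $T(I;\tau)$ via the explicit coefficients $U(\cdots)$ in \cref{eq-def-u}, and use the Poincar\'e--Birkhoff--Witt / Dynkin--Specht--Wever criterion that an element of $A_I$ homogeneous of multidegree $(1,\dots,1)$ lies in $L_I$ if and only if it is killed by the operator $\pi \colon x_1 * \cdots * x_n \mapsto [[x_1,x_2],\dots,x_n]$ up to the scalar $n$, or equivalently if and only if the coefficients satisfy the shuffle relations $\sum_{\text{shuffles}} U(\cdots) = 0$. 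The shuffle identity for the $U$-coefficients should follow from the definition \cref{eq-def-u} by a telescoping/inclusion--exclusion argument on the block decompositions $a_\bullet$ and $b_\bullet$; this is essentially the same bookkeeping that proves associativity-type identities for the $S$-factors in \cite[\S4]{Joyce2008IV}.

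**Reduction of the barred and self-dual statements.** For \cref{eq-thm-comb-to-triv-bar}: observe that $P_I$ carries a free $\bbZ_2^n$-action flipping each $x_i \leftrightarrow x_i^\vee$, and that the $\bbN$-bilinear form/stability data enter the coefficient $U(x_1,\dots,x_n;\tau,0)$ only through the classes, which transform under this action in a controlled way. The key point is that $\bar T(I;\tau)$ is, up to combining the $e$ and $e^\vee$ choices at each slot, obtained from $T(I;\tau)$ by applying the symmetrization $x_i \mapsto x_i + x_i^\vee$; since $(-)^-$ is a Lie algebra map onto $L_\sI^-$ when restricted appropriately (using the grading relation $[L^-_I,L^-_I]\subset L^+_I$ — wait, one must be careful here, $L^-_I$ is not a subalgebra), I would instead argue that $\bar T(I;\tau)$ is the image of $T(I;\tau)$ under the linear projection $A_I \to A_I$ that symmetrizes each generator, composed with the observation that $T(I;\tau)\in L_I$ forces $\bar T(I;\tau)$ to land in $L^-_I = \{x : x = x^\vee\}$ because $\bar T(I;\tau)$ is visibly $(-)^\vee$-invariant (the sum over $P_I$ is stable under the involution, which reverses each monomial and applies $(-)^\vee$, exactly the operation defining the involution on $A_I$) while also lying in $L_I$. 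So the clean statement is: $\bar T(I;\tau)\in L_I$ (by the same shuffle argument applied to the $P_I$-sum, which is a disjoint union of $T$-type sums over the $2^n$ sign choices) and $\bar T(I;\tau)^\vee = \bar T(I;\tau)$ (by symmetry of the index set), hence $\bar T(I;\tau)\in L^-_I$.

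**The self-dual statement.** For \cref{eq-thm-comb-to-triv-sd}, the relation between $U^\sd$ \cref{eq-def-usd} and $U$ \cref{eq-def-u} — both built from the same $S$ and $S^\sd$ factors with the same block-decomposition pattern, the difference being the binomial $\binom{-1/2}{l}$ versus $\frac{(-1)^{l-1}}{l}$ and the tail factor $S^\sd(\cdots)\cdot\frac{1}{2^{n-a_m}(n-a_m)!}$ — should let me write $T^\sd(I;\tau)$ as a universal (stability-independent) noncommutative polynomial expression in the elements $\bar T(J;\tau)$ for subsets $J\subset I$, precisely mirroring how $\epsilon^\sd = \delta(\tau;0)^{-1/2}\diamond\delta^\sd(\tau)$ and \cref{eq-def-epsilon-sd-inverse} express $\epsilon^\sd$ in terms of the $\epsilon$'s. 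Concretely, $T^\sd(I;\tau) = \sum \binom{-1/2}{k}\, \bar T(J_1;\tau)*\cdots*\bar T(J_k;\tau)$ summed over ordered set-partitions of $I$ (with appropriate $S^\sd$-weighting collapsing to the self-dual classes), and since each $\bar T(J_i;\tau)\in L^-_I$ by the previous step, and $U(L^+_I)$ is precisely the associative subalgebra of $A_I$ generated by... — no: I want $U(L^+_I)$, generated by $L^+_I$, not $L^-_I$. The correct statement must be that the $\bar T(J_i;\tau)$ assemble in $*$-products weighted by binomial coefficients in such a way that the result lies in the image of $U(L^+_I)$; this is where the $\binom{-1/2}{k}$ is forced, exactly as flagged in the remark after \cref{eq-def-epsilon-sd-inverse}. \textbf{The main obstacle} is precisely pinning down this algebraic identity: showing that the particular combination $\sum_k \binom{-1/2}{k} \bar T(J_1)*\cdots*\bar T(J_k)$ of products of $(-)^\vee$-fixed Lie elements lies in $U(L^+_I)$. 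I expect this to follow from a generating-function identity — writing $\Delta = \exp(\text{sum over }L^-_I\text{-elements})$-type expressions and using $\Delta^{-1/2}\Delta^{\text{sd}}$ structure together with the $\bbZ_2$-grading $A_I = U(L^+_I)\oplus U(L^+_I)\cdot(\text{odd part})$ — but verifying the cancellation of the odd part requires carefully tracking how $S^\sd$ and the powers of $\tfrac12$ conspire, and this bookkeeping is the heart of the appendix.
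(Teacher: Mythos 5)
The approach you sketch is genuinely different from the paper's, and it also has a real gap: you do not actually prove \cref{eq-thm-comb-to-triv-sd}, which is the hard and novel part.

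The paper's proof of \cref{thm-comb-to-triv} is a wall-crossing induction on the self-dual weak stability condition $\tau$. It defines an equivalence relation on stability conditions generated by elementary moves (permute/flip the slopes, merge a largest nonzero slope downward, and merge a positive slope to zero), shows every $\tau$ is equivalent to the trivial stability $0$, and then shows that each elementary move preserves membership in $L_I$, $L_I^-$, $U(L_I^+)$ respectively. The heavy lifting is done by \cref{lem-comb-to-f-sd,lem-comb-to-fg}, which express $T$, $\bar T$, $T^\sd$ at $\tau_1$ as sums of their values at $\tau_2$ with certain sets $J$ replaced by the auxiliary elements $F(J)\in L_J$, $\bar F(J)$, $G(J)\in L_J^+$ defined via Bernoulli numbers in \crefrange{eq-def-fj}{eq-def-gj}. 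The base case $\tau=0$ is trivial because $U(\cdots;0,0)$ and $U^\sd(\cdots;0,0)$ vanish except for $n\le 1$ or $n=0$. This is a completely different mechanism from the shuffle-relation criterion you propose, and you offer no proof of the shuffle identities for $U(\cdots;\tau,0)$ beyond an expectation that they ``should follow'' from the definition; given the nested structure of the coefficient \cref{eq-def-u}, with inner block decompositions $a_\bullet$, outer decompositions $b_\bullet$, and sign factors $S(\cdots)$, this is not an obvious telescoping.

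Your reduction of \cref{eq-thm-comb-to-triv-bar} to \cref{eq-thm-comb-to-triv} is essentially correct and matches the paper's remark that \cref{eq-thm-comb-to-triv-bar} follows from \cref{eq-thm-comb-to-triv} via the identity $U(x_1,\dotsc,x_n;\tau,0)=U(x_n^\vee,\dotsc,x_1^\vee;\tau,0)$: each sign pattern in $P_I$ gives a $T$-type sum for a flipped stability condition, hence lies in $L_I$, and the $(-)^\vee$-invariance of the full sum places it in $L_I^-$. But this reduction requires the invariance of the $U$-coefficient, which in turn uses that $\tau$ is self-dual; your phrase ``by symmetry of the index set'' hides that verification.

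The concrete gap is in \cref{eq-thm-comb-to-triv-sd}. The formula you write, $T^\sd(I;\tau)=\sum\binom{-1/2}{k}\bar T(J_1;\tau)*\cdots*\bar T(J_k;\tau)$, is not the one that holds: the correct identity \cref{eq-t-sd-is-sum-t-prime} is in terms of the one-level building blocks $\bar T'(J_i;\tau)$ and a residual tail $T'^\sd(J_0;\tau)$ over the complementary subset $J_0$, which your formula drops. More importantly, even the correct version gives a sum of $*$-products of elements of $L_I^-$ and a tail, and showing such a sum lands in $U(L_I^+)$ is exactly the nontrivial cancellation you identify as ``the heart of the appendix'' without supplying a proof. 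In the paper this cancellation is never faced head-on: the wall-crossing induction circumvents it, because the elements being multiplied in by each elementary move are the $F(J)\in L_J$, $G(J)\in L_J^+$, so one induction step changes $T^\sd$ by terms that are visibly in $U(L_I^+)$ once the inductive hypothesis for smaller index sets is invoked. Without the wall-crossing lemmas or some other explicit device to control the $L_I^-$ products, your argument for \cref{eq-thm-comb-to-triv-sd} does not close.
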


The proof will be given at the end of this subsection.

From now on, we take $I = \{ 1, \dotsc, n \}$.
For a subset $J = \{ i_1, \dotsc, i_k \} \subset I$,
where $k \geq 1$ and $i_1 > \cdots > i_k$\,, define elements
$F (J), \bar{F} (J) \in L_I$ and $G (J) \in L_\sI^+$ by
\begin{align}
    \label{eq-def-fj}
    F (J) & =
    \frac{(-1)^{k-1}}{(k-1)!} \, B_{k-1} \cdot
    \sum_{\nicesubstack{
        \sigma \in \frS_k : \\[-.5ex]
        \sigma (1) = 1
    }} {}
    [ [ \dotsc [e_{i_1} \, , e_{i_{\sigma (2)}}], \dotsc ], e_{i_{\sigma (k)}}] \ , \\
    \label{eq-def-fj-bar}
    \bar{F} (J) & =
    \frac{(-1)^{k-1}}{(k-1)!} \, B_{k-1} \cdot
    \sum_{\nicesubstack{
        \sigma \in \frS_k : \\[-.5ex]
        \sigma (1) = 1
    }} {}
    [ [ \dotsc [e_{i_1} \, , e_{i_{\sigma (2)}}^-], \dotsc ], e_{i_{\sigma (k)}}^-] \ , \\
    \label{eq-def-gj}
    G (J) & =
    \frac{(-1)^k}{k!} \, \Bigl( B_k - B_k \Bigl( \frac{1}{2} \Bigr) \Bigr) \cdot
    \sum_{\nicesubstack{
        \sigma \in \frS_k : \\[-.5ex]
        \sigma (1) = 1
    }} {}
    [ [ \dotsc [e_{i_1}^\mp \, , e_{i_{\sigma (2)}}^-], \dotsc ], e_{i_{\sigma (k)}}^-] \ ,
\end{align}
where $B_k$ denotes the $k$-th Bernoulli number,
and $B_k (-)$ denotes the $k$-th Bernoulli polynomial.
The sign `$\mp$' is `$+$' if and only if $k$ is odd.

Note that $F (J) = \bar{F} (J) = 0$ whenever $k > 2$ is even,
and $G (J) = 0$ whenever $k > 1$ is odd.

For $x_1, \dotsc, x_k \in A_I$\,, we denote
\begin{align}
    s_k (x_1, \dotsc, x_k) & =
    \frac{1}{k!} \sum_{\sigma \in \frS_k}
    x_{\sigma (1)} * \cdots * x_{\sigma (k)} \ , \\
    \bar{s}_k (x_1, \dotsc, x_k) & =
    \frac{1}{2^k \, k!} \, s_k (x_1^-, \dotsc, x_k^-) \ .
\end{align}

\begin{lemma}
    \label{lem-comb-to-identities}
    We have combinatorial identities
    \begin{align}
        \label{eq-comb-id-f}
        e_n * s_{n-1} (e_1, \dotsc, e_{n-1}) & =
        \sum_{\nicesubstack{
            J \subset I : \\[-.5ex]
            n \in J
        }}
        s_{n-|J|+1} \bigl( F (J), \ e_i : i \in I \setminus J \bigr) \ , \\
        e_n * \bar{s}_{n-1} (e_1, \dotsc, e_{n-1}) & = \\*[1ex]
        \label{eq-comb-id-fbar-g}
        & \hspace{-6em}
        \sum_{\nicesubstack{
            J \subset I : \\[-.5ex]
            n \in J
        }}
        \bar{s}_{n-|J|+1} \bigl( \bar{F} (J), \ e_i : i \in I \setminus J \bigr)
        +
        \sum_{\nicesubstack{
            J \subset I : \\[-.5ex]
            n \in J
        }}
        \bar{s}_{n-|J|} \bigl( e_i : i \in I \setminus J \bigr) * G (J) \ .
        \hspace*{2em}
    \end{align}
\end{lemma}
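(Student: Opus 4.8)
The plan is to prove \cref{eq-comb-id-f} and \cref{eq-comb-id-fbar-g} as identities in the free associative algebra $A_I = U(L_I)$, which involve no stability condition, by a generating-function computation. Introduce central formal parameters $u_1,\dots,u_{n-1}$ and pass to $A_I[[u_1,\dots,u_{n-1}]]$. Both sides of \cref{eq-comb-id-f} are multilinear in $e_1,\dots,e_{n-1}$, so it suffices to compare the coefficient of the squarefree monomial $u_1\cdots u_{n-1}$, and in this bookkeeping $s_{n-1}(e_1,\dots,e_{n-1})$ is the coefficient of $u_1\cdots u_{n-1}$ in $\exp\bigl(\sum_{i<n}u_ie_i\bigr)$; likewise $\bar s_{n-1}(e_1,\dots,e_{n-1})$ is, up to the scalar $\tfrac1{(n-1)!}$, that of $\exp\bigl(\tfrac12\sum_{i<n}u_ie_i^-\bigr)$, because of the definition $\bar s_k = 2^{-k}(k!)^{-1}s_k((-)^-,\dots,(-)^-)$. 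More generally, a symmetrized product $s_{\ell+1}(w,e_{i_1},\dots,e_{i_\ell})$ involving one extra Lie element $w$ is the squarefree coefficient of an exponential with an auxiliary variable for the slot of $w$.

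The engine is a \emph{peeling formula} for left multiplication by a single element: writing $\beta\colon S(L_I)\to U(L_I)$ for the Poincar\'e--Birkhoff--Witt symmetrization map, one has, for any Lie element $Y$,
\[
    z\cdot\exp(Y)=\beta\bigl(g(z,Y)\cdot\exp_{S}(Y)\bigr),\qquad
    g(z,Y)=\frac{\operatorname{ad}_Y}{e^{\operatorname{ad}_Y}-1}(z)=\sum_{m\ge0}\frac{B_m}{m!}\operatorname{ad}_Y^m(z),
\]
where $\exp_{S}(Y)\in S(L_I)$ is the exponential in the symmetric algebra and $B_m$ is the $m$-th Bernoulli number. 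This follows from Duhamel's formula for $\partial\exp$, or can be checked grading-wise, the Bernoulli numbers appearing through their defining recursion. Applying it with $Y=\sum_{i<n}u_ie_i$, extracting the coefficient of $u_1\cdots u_{n-1}$, and distributing each parameter over the subset $J'\subseteq\{1,\dots,n-1\}$ that is absorbed into the bracketed factor $g(e_n,Y)$ versus the complement that is absorbed into $\exp_{S}(Y)$, the contribution of $J'$ is precisely $s_{n-|J'|}\bigl(F(J'\cup\{n\}),\,e_i:i\notin J'\cup\{n\}\bigr)$: rewriting the multilinear part of $\tfrac{B_m}{m!}\operatorname{ad}_Y^m(e_n)$ in the left-nested form with $e_n$ at the root turns the coefficient $\tfrac{B_m}{m!}$ into $\tfrac{(-1)^mB_m}{m!}$ with $m=|J'|$, which is exactly the coefficient in \cref{eq-def-fj}. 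This establishes \cref{eq-comb-id-f}.

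For \cref{eq-comb-id-fbar-g} one runs the same argument with $Y=\tfrac12\sum_{i<n}u_ie_i^-$, but now the outer factor is $e_n$, not $e_n^-=e_n+e_n^\vee$, so one writes $e_n=\tfrac12(e_n^-+e_n^+)$ and applies the peeling formula to each summand by linearity in $z$. The $e_n^-$ summand, after absorbing the factor $\tfrac12$ from $Y$ into the powers of $\operatorname{ad}$ and the overall $\tfrac12$ from the split into the normalization of $\bar s$, reproduces the first sum on the right of \cref{eq-comb-id-fbar-g}, with the elements $\bar F(J)$ of \cref{eq-def-fj-bar}. The $e_n^+$ summand is transverse to the $(-)^-$-decorated exponent, and after commuting it past $\exp(Y)$ the relevant one-variable series is the difference between $\frac{t}{e^t-1}$ and the half-argument series $\frac{t\,e^{t/2}}{e^t-1}$; by the Bernoulli-polynomial generating function $\frac{t\,e^{t/2}}{e^t-1}=\sum_k B_k(\tfrac12)\tfrac{t^k}{k!}$ its $t^k$-coefficient is $\tfrac1{k!}\bigl(B_k-B_k(\tfrac12)\bigr)$, giving exactly the elements $G(J)$ of \cref{eq-def-gj}, the sign $\mp$ on the outermost entry recording the parity of $k$.

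The main obstacle is the bookkeeping in these peeling steps, especially for \cref{eq-comb-id-fbar-g}: tracking which parameters are absorbed into a nested bracket versus which remain free in the symmetric factor, carrying the decorations $(-)^-$ and $(-)^\mp$ and the powers of $2$ through consistently, and matching the Bernoulli(-polynomial) coefficients. If one prefers to avoid the symmetrization isomorphism, the same identities can instead be obtained by induction on $|I|$ from a single-generator multiplication rule in $U(L_I)$ expressing $z\cdot s_m(y_1,\dots,y_m)$ as $s_{m+1}(z,y_1,\dots,y_m)$ plus symmetrized products involving one bracket $[z,y_j]$, the Bernoulli numbers again emerging from their recursion; this route is more elementary but considerably more laborious.
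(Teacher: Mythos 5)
Your approach is genuinely different from the paper's. The paper proves both identities by direct coefficient comparison on symmetrized monomials $E_{i,k}$ (resp.\ $E_{i,k}^{\pm}$), reducing to explicit Bernoulli-polynomial identities (\cref{lem-comb-bernoulli,lem-comb-bernoulli-2,lem-comb-bernoulli-3}). You instead invoke the Poincar\'e--Birkhoff--Witt symmetrization and the left-peeling formula $z\cdot\exp(Y)=\beta\bigl(g(z,Y)\exp_S(Y)\bigr)$ with $g(z,Y)=\tfrac{\operatorname{ad}_Y}{e^{\operatorname{ad}_Y}-1}(z)$, which is a structural fact in Lie theory; this has the advantage of explaining at a glance why the Bernoulli numbers appear. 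For \cref{eq-comb-id-f} this works cleanly: extracting the squarefree coefficient of $u_1\cdots u_{n-1}$ with $Y=\sum u_ie_i$ and distributing parameters into $g(e_n,Y)$ versus $\exp_S(Y)$ gives, subset by subset, the terms $s_{n-|J|+1}\bigl(F(J),e_i:i\notin J\bigr)$, with the sign $(-1)^m$ from converting $\operatorname{ad}_Y^m(e_n)$ to left-nested form matching the $(-1)^{k-1}$ in \cref{eq-def-fj}. This is a valid alternative proof of the first identity.

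For \cref{eq-comb-id-fbar-g} there is a genuine gap. You split $e_n=\tfrac12(e_n^-+e_n^+)$ and claim the $e_n^-$ peeling reproduces the $\bar F$ sum while the $e_n^+$ contribution reproduces the $G$ sum. This does not hold term by term, as one can already see for $n=2$, $I=\{1,2\}$. The squarefree coefficient of $\tfrac12 e_2^-\exp\bigl(\tfrac{u_1}{2}e_1^-\bigr)$ is $\tfrac14 e_2^-e_1^-$, whereas
\[
    \bar s_2\bigl(\bar F(\{2\}),e_1\bigr)+\bar s_1\bigl(\bar F(\{1,2\})\bigr)
    =\tfrac18\bigl(e_2^-e_1^-+e_1^-e_2^-\bigr)+\tfrac14[e_2^+,e_1^-]\, ,
\]
and these differ (by $\tfrac14[e_2^+,e_1^-]-\tfrac18[e_2^-,e_1^-]$). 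Likewise the $e_2^+$ part equals $\tfrac14 e_1^-e_2^++\tfrac14[e_2^+,e_1^-]$, while $\bar s_1(e_1)*G(\{2\})+G(\{1,2\})=\tfrac14 e_1^-e_2^++\tfrac18[e_2^-,e_1^-]$; again unequal. Only the totals agree. The structural reason your split cannot match is visible from \cref{eq-def-fj-bar,eq-def-gj}: once $\bar s$ decorates $\bar F(J)$ with $(-)^-$, the root becomes $e_n+(-1)^{k-1}e_n^\vee$, i.e.\ $e_n^-$ when $k$ is odd but $e_n^+$ when $k$ is even, whereas the $e_n^-$ peeling always yields $e_n^-$ at the root; similarly the right-multiplied factor you obtain from commuting $e_n^+$ past $\exp(Y)$ has Taylor coefficients $(-1)^k/k!$ from $e^{-\operatorname{ad}_Y}$, not the $(B_k-B_k(\tfrac12))/k!$ required for $G(J)$. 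Your generating-function engine may well be salvageable for \cref{eq-comb-id-fbar-g}, but it would require a different regrouping of terms than the one you describe; as written, the second half of the argument is not established.
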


\begin{proof}
    \allowdisplaybreaks
    For \cref{eq-comb-id-f}, for $1 \leq i \leq k \leq n$, write
    \begin{equation}
        E_{i,k} = \frac{1}{(n-1)!} \sum_{\nicesubstack{
            \sigma \in \frS_n : \\[-1.5ex]
            \sigma (i) = n
        }}
        e_{\sigma (1)} * \cdots * e_{\sigma (k)} \ .
    \end{equation}
    For $k = 1, \dotsc, n$, using the invariance under the
    $\frS_{n-1}$-action permuting the elements $e_1, \dotsc, e_{n-1}$\,,
    we find that
    \begin{equation}
        \frac{1}{\binom{n-1}{k-1}}
        \sum_{\nicesubstack{
            J \subset I : \\[-1.5ex]
            n \in J, \, |J| = k
        }} F (J) =
        \sum_{i=1}^k {}
        (-1)^{i-1} \binom{k-1}{i-1} \cdot
        \frac{(-1)^{k-1}}{(k-1)!} \, B_{k-1} \cdot
        (k-1)! \, E_{i,k} \ .
    \end{equation}
    Simplifying this, we obtain
    \begin{equation}
        \sum_{\nicesubstack{
            J \subset I : \\[-1.5ex]
            n \in J, \, |J| = k
        }} F (J) =
        \sum_{i=1}^k {}
        \frac{(-1)^{k-i} \, (n-1)!}{(n-k)! \, (k-i)! \, (i-1)!} \,
        B_{k-1} \cdot E_{i,k} \ .
    \end{equation}
    Therefore,
    \begin{align*}
        & \sum_{\nicesubstack{
            J \subset I : \\[-.5ex]
            n \in J, \, |J| = k
        }}
        s_{n-k+1} \bigl( F (J), \ e_i : i \in I \setminus J \bigr) \\*
        = {} &
        \frac{1}{n-k+1} \cdot \sum_{j=0}^{n-k} {}
        \sum_{i=1}^k {}
        \frac{(-1)^{k-i} \, (n-1)!}{(n-k)! \, (k-i)! \, (i-1)!} \,
        B_{k-1} \cdot E_{i+j,n} \\*
        = {} &
        \sum_{i=1}^{n} {}
        \sum_{j=i-n+k}^{i} {}
        \frac{(-1)^{k-j} \, (n-1)!}{(n-k+1)! \, (k-j)! \, (j-1)!} \,
        B_{k-1} \cdot E_{i,n} \ .
        \numberthis
    \end{align*}
    The left-hand side of \cref{eq-comb-id-f} is just $E_{1,n}$\,,
    so it suffices to prove that for any $i = 1, \dotsc, n$,
    \begin{equation}
        \sum_{k=1}^n {}
        \sum_{j=i-n+k}^{i} {}
        \frac{(-1)^{k-j} \, (n-1)!}{(n-k+1)! \, (k-j)! \, (j-1)!} \,
        B_{k-1} 
        = \begin{cases}
            1, & i = 1 \ , \\
            0, & i > 1 \ .
        \end{cases}
    \end{equation}
    Setting $p = j-1$ and $q = k-j$, the above reduces to
    \begin{equation}
        \sum_{p=0}^{i-1} {}
        \sum_{q=0}^{n-i} {}
        \frac{(-1)^{q} \, (n-1)!}{(n-p-q)! \, p! \, q!} \,
        B_{p+q} 
        = \begin{cases}
            1, & i = 1 \ , \\
            0, & i > 1 \ .
        \end{cases}
    \end{equation}
    This follows from taking $x = 0$ in \cref{lem-comb-bernoulli} below.

    For \cref{eq-comb-id-fbar-g}, for $1 \leq i \leq k \leq n$, write
    \begin{equation}
        E_{i,k}^\pm = \frac{1}{2^{k-1} \, (n-1)!} \sum_{\nicesubstack{
            \sigma \in \frS_n : \\[-1.5ex]
            \sigma (i) = n
        }}
        e_{\sigma (1)}^- * \cdots * e_{\sigma (i)}^\pm
        * \cdots * e_{\sigma (k)}^- \ .
    \end{equation}
    Similarly to the previous case, we find that
    \begin{align}
        \sum_{\nicesubstack{
            J \subset I : \\[-.5ex]
            n \in J, \, |J| = k
        }} \bar{F} (J)^- & =
        \sum_{i=1}^k {}
        \frac{(-1)^{k-i+1} \, 2^{k-2} \, (n-1)!}{(n-k)! \, (k-i)! \, (i-1)!} \,
        B_{k-1} \cdot E_{i,k}^{(-1)^k} \ , \\
        \sum_{\nicesubstack{
            J \subset I : \\[-.5ex]
            n \in J, \, |J| = k
        }} G (J) & =
        \sum_{i=1}^k {}
        \frac{(-1)^{k-i} \, 2^{k-1} \, (n-1)!}{k \, (n-k)! \, (k-i)! \, (i-1)!} \,
        B'_k \cdot E_{i,k}^{(-1)^{k-1}} \ ,
    \end{align}
    where $B'_k = B_k - B_k (1/2)$.
    Proceeding as before, we have
    \begin{align}
        & \sum_{\nicesubstack{
            J \subset I : \\[-.5ex]
            n \in J, \, |J| = k
        }}
        \bar{s}_{n-k+1} \bigl( \bar{F} (J), \ e_i : i \in I \setminus J \bigr) \\*[-3ex]
        & \hspace{6em} =
        \sum_{i=1}^{n} {}
        \sum_{j=i-n+k}^{i} {}
        \frac{(-1)^{k-j} \, 2^{k-2} \, (n-1)!}{(n-k+1)! \, (k-j)! \, (j-1)!} \,
        B_{k-1} \cdot E_{i,n}^{(-1)^k} \ , \\[2ex]
        & \sum_{\nicesubstack{
            J \subset I : \\[-.5ex]
            n \in J, \, |J| = k
        }}
        \bar{s}_{n-k} \bigl( e_i : i \in I \setminus J \bigr) * G (J) \\*[-3ex]
        & \hspace{6em} =
        \sum_{i=n-k+1}^n {}
        \frac{(-1)^{n-i+1} \, 2^{k-1} \, (n-1)!}{k \, (n-k)! \, (n-i)! \, (i+k-n-1)!} \,
        B'_k \cdot E_{i,n}^{(-1)^{k-1}} \ .
    \end{align}
    The left-hand side of~\cref{eq-comb-id-fbar-g} is just
    $(1/2) \, (E_{1,n}^+ + E_{1,n}^-)$.
    Collecting the coefficients of each $E_{i,n}^\pm$\,,
    we see that to prove~\cref{eq-comb-id-fbar-g}, it is enough to prove that
    for any $i = 1, \dotsc, n$ and $\varepsilon = \pm1$,
    \begin{multline}
        \sum_{k=1}^{n} {}
        \varepsilon^{k-1} \cdot
        \sum_{j=i-n+k}^{i} {}
        \frac{(-1)^{k-j} \, 2^{k-2} \, (n-1)!}{(n-k+1)! \, (k-j)! \, (j-1)!} \,
        B_{k-1} \\*
        {} +
        \sum_{k=n-i+1}^{n} {} \varepsilon^k \cdot 
        \frac{(-1)^{n-i+1} \, 2^{k-1} \, (n-1)!}{k \, (n-k)! \, (n-i)! \, (i+k-n-1)!} \,
        B'_k
        = \begin{cases}
            1, & i = 1 \text{ and } \varepsilon = 1 \ , \\
            0, & i > 1 \text{ or } \varepsilon = -1 \ .
        \end{cases}
    \end{multline}
    The case $\varepsilon = 1$
    follows from taking $x = 0$ in \cref{lem-comb-bernoulli-2} below.
    The case $\varepsilon = -1$
    follows from taking $x = 0$ in \cref{lem-comb-bernoulli-3} below.
\end{proof}

\paragraph{}

For $J \subset I$, we denote
\[
    I \setquot J = (I \setminus J) \sqcup \{ J \} \ .
\]
For any element $x \in A_J$\,, there is a homomorphism
\begin{align*}
    A_{I \setquot J} & \longrightarrow A_I \ , \\
    y & \longmapsto y \, |_{e_J \mapsto x} \ ,
\end{align*}
defined by mapping $e_i$ to $e_i$\,,
and $e_i^\vee$ to $e_i^\vee$\,, for $i \in I \setminus J$,
and mapping $e_J$ to the image of $x$ in $A_I$\,,
and $e_J^\vee$ to the image of $x^\vee$ in $A_I$\,.

If, moreover, $x \in L_J$\,, then this map sends the subspace $L_{I \setquot J}$
to $L_I$\,, preserving the $\bbZ_2$-grading.
In particular, this map sends the subalgebra $U (L_{\smash{I \setquot J}}^+)$
into $U (L_\sI^+)$.

If $\tau$ is a self-dual weak stability condition on $I$
such that the restriction of $\tau$ to $K^+ (J)$ is a constant map,
then $\tau$ induces a self-dual weak stability condition on $I \setquot J$.

To avoid nested subscripts,
for $I, J, \tau, x$ as above, we denote
\begin{align}
    T (I, J; \tau; x) & = T (I \setquot J; \tau) \, |_{e_J \mapsto x} \ , \\
    T^\sd (I, J; \tau; x) & = T^\sd (I \setquot J; \tau) \, |_{e_J \mapsto x} \ .
\end{align}

We also define auxiliary coefficients
\begin{multline}
    \label{eq-def-u-prime}
    U' (\alpha_1, \dotsc, \alpha_n; \tau, \tilde{\tau}) =
    \sum_{ \leftsubstack[6em]{
        & 0 = a_0 < \cdots < a_m = n. \\[-.5ex]
        & \text{Define } \beta_1, \dotsc, \beta_m \text{ by }
        \beta_i = \alpha_{a_{i-1}+1} + \cdots + \alpha_{a_i} \, . \\[-.5ex]
        & \text{We require } \tau (\beta_i) = \tau (\alpha_j)
        \text{ for all } a_{i-1} < j \leq a_i
    } } {}
    S (\beta_{1}, \dotsc, \beta_{m}; \tau, \tilde{\tau}) \cdot
    \biggl(
        \prod_{i=1}^m \frac{1}{(a_i - a_{i-1})!} 
    \biggr) \ , \hspace{-2em} \\[-4ex]
    \mathstrut
\end{multline}
\begin{multline}
    \label{eq-def-usd-prime}
    U'^\sd (\alpha_1, \dotsc, \alpha_n; \tau, \tilde{\tau}) = \\
    \shoveright{
    \sum_{ \leftsubstack[6em]{
        & 0 = a_0 < \cdots < a_m \leq n. \\[-.5ex]
        & \text{Define } \beta_1, \dotsc, \beta_m \text{ by }
        \beta_i = \alpha_{a_{i-1}+1} + \cdots + \alpha_{a_i} \, . \\[-.5ex]
        & \text{We require } \tau (\beta_i) = \tau (\alpha_j)
        \text{ for all } a_{i-1} < j \leq a_i \, , \\[-.5ex]
        & \text{and } \tau (\alpha_j) = 0
        \text{ for all } j > a_m
    } } {}
    S^\sd (\beta_{1}, \dotsc, \beta_{m}; \tau, \tilde{\tau})
    \cdot \biggl(
        \prod_{i=1}^m \frac{1}{(a_i - a_{i-1})!} 
    \biggr) \cdot \frac{1}{2^{n-a_m} \, (n - a_m)!} \ . \hspace{-2em} } \\[-4ex]
    \mathstrut
\end{multline}
In other words, in~\cref{eq-def-u-prime},
we take the sum of all terms in \cref{eq-def-u} with $l = 1$;
in~\cref{eq-def-usd-prime},
we take the sum of all terms in \cref{eq-def-usd} with $l = 0$.

Define
\begin{align}
    T' (I; \tau) & =
    \sum_{\sigma \in \frS_n}
    U' (e_{\sigma (1)}, \dotsc, e_{\sigma (n)}; \tau, 0) \cdot
    e_{\sigma (1)} * \cdots * e_{\sigma (n)} \ ,\\
    \bar{T}' (I; \tau) & =
    \sum_{x \in P_I}
    U' (x_1, \dotsc, x_n; \tau, 0) \cdot
    x_1 * \cdots * x_n \ , \\
    T'^\sd (I; \tau) & =
    \sum_{x \in P_I}
    U'^\sd (x_1, \dotsc, x_n; \tau, 0) \cdot
    x_1 * \cdots * x_n \ ,
\end{align}
as elements of $A_I$\,.
By definition, we have the relations
\begin{align}
    \label{eq-t-is-sum-t-prime}
    T (I; \tau) & =
    \sum_{\nicesubstack{
        I = I_1 \sqcup \cdots \sqcup I_l : \\[-.5ex]
        I_i \neq \varnothing \text{ for any } i
    }}
    \frac{(-1)^{l-1}}{l} \cdot
    T' (I_1; \tau) * \cdots * T' (I_l; \tau) \ , \\
    \label{eq-t-bar-is-sum-t-prime}
    \bar{T} (I; \tau) & =
    \sum_{\nicesubstack{
        I = I_1 \sqcup \cdots \sqcup I_l : \\[-.5ex]
        I_i \neq \varnothing \text{ for any } i
    }}
    \frac{(-1)^{l-1}}{l} \cdot
    \bar{T}' (I_1; \tau) * \cdots * \bar{T}' (I_l; \tau) \ , \\
    \label{eq-t-sd-is-sum-t-prime}
    T^\sd (I; \tau) & =
    \sum_{\nicesubstack{
        I = I_0 \sqcup I_1 \sqcup \cdots \sqcup I_l : \\[-.5ex]
        I_i \neq \varnothing \text{ for } i = 1, \dotsc, l
    }}
    \binom{-1/2}{l} \cdot
    \bar{T}' (I_1; \tau) * \cdots * \bar{T}' (I_l; \tau) * T'^\sd (I_0; \tau) \ .
    \hspace*{1em}
\end{align}

For $J \subset I$ as above, a self-dual weak stability condition $\tau$
on $I$ that is constant on $J$,
and an element $x \in A_I$\,, we denote
\begin{align}
    T' (I, J; \tau; x) & = T' (I \setquot J; \tau) \, |_{e_J \mapsto x} \ , \\
    \bar{T}' (I, J; \tau; x) & = \bar{T}' (I \setquot J; \tau) \, |_{e_J \mapsto x} \ , \\
    T'^\sd (I, J; \tau; x) & = T'^\sd (I \setquot J; \tau) \, |_{e_J \mapsto x} \ ,
\end{align}
as is similar to the above.

\begin{lemma}
    \label{lem-comb-to-f}
    Let $0 \leq l < m \leq n$ with $m \geq l + 2$.
    Let $\tau_1, \tau_2$ be two self-dual weak stability conditions on $I$,
    satisfying
    \[
        \scalemath{1}{ \begin{aligned}
            & \tau_1 (e_1) \leq \cdots \leq \tau_1 (e_l) < \tau_1 (e_{l+1}) =
            \cdots = \tau_1 (e_{m-1}) < \tau_1 (e_m) <
            \tau_1 (e_{m+1}) \leq \cdots \leq \tau_1 (e_n) \ , \\
            & \tau_2 (e_1) \leq \cdots \leq \tau_2 (e_l) < \tau_2 (e_{l+1}) =
            \cdots = \tau_2 (e_{m-1}) = \tau_2 (e_m) <
            \tau_2 (e_{m+1}) \leq \cdots \leq \tau_2 (e_n) \ ,
        \end{aligned} }
    \]
    where each \textnormal{`$\leq$'} sign is \textnormal{`$=$'} for $\tau_1$
    if and only if the corresponding \textnormal{`$\leq$'} sign is \textnormal{`$=$'} for $\tau_2$\,.
    Then
    \begin{align}
        \label{eq-comb-to-f-prime}
        T' (I; \tau_1) & =
        \sum_{\nicesubstack{
            J \subset \{ l+1, \dotsc, m \} : \\[-.5ex]
            m \in J
        }}
        T' (I, J; \tau_2; F (J)) \ , \\
        \label{eq-comb-to-f}
        T (I; \tau_1) & =
        \sum_{\nicesubstack{
            J \subset \{ l+1, \dotsc, m \} : \\[-.5ex]
            m \in J
        }}
        T (I, J; \tau_2; F (J)) \ , 
    \end{align}
    where $F (J)$ is given by \cref{eq-def-fj}.
\end{lemma}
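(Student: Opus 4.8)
The plan is to prove the single-level identity~\cref{eq-comb-to-f-prime} directly from \cref{lem-comb-to-identities}, and then to deduce~\cref{eq-comb-to-f} from it by means of the universal relation~\cref{eq-t-is-sum-t-prime}.

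First I would unwind the coefficients $U'(\alpha_1,\dotsc,\alpha_n;\tau,0)$. Because the trivial stability condition assigns one and the same value to every class, the factors $S(\beta_1,\dotsc,\beta_m;\tau,0)$ in~\cref{eq-def-u-prime} equal $1$ when $\tau(\beta_1)>\cdots>\tau(\beta_m)$ and $0$ otherwise; and by the first axiom of \cref{def-stability-comb} a sub-sum of classes of equal $\tau$-slope again has that slope, so the only grouping of a weakly $\tau$-decreasing sequence into consecutive constant-slope blocks with strictly decreasing block slopes is the grouping into maximal constant-slope runs. Hence $U'(\alpha_1,\dotsc,\alpha_n;\tau,0)$ equals $\prod_R 1/|R|!$ over those runs $R$ when $\tau(\alpha_1)\ge\cdots\ge\tau(\alpha_n)$, and vanishes otherwise, so that $T'(I;\tau)$ is the $*$-product, taken in order of decreasing slope, of the symmetrizations $s_{|R|}(e_i:i\in R)$ over the $\tau$-slope levels $R$. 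Under the hypotheses of the lemma the $\tau_1$-level containing $e_m$ is the singleton $\{m\}$, sitting immediately above the level $\{l+1,\dotsc,m-1\}$, whereas for $\tau_2$ these two levels fuse into $\{l+1,\dotsc,m\}$; since a ``$\le$'' is an equality for $\tau_1$ precisely when it is for $\tau_2$, all the other levels coincide, and the $*$-factors $X_{\mathrm{top}}$, $X_{\mathrm{bot}}$ collecting the levels above and below this block are literally the same for $\tau_1$ and $\tau_2$. Writing $T'(I;\tau_1)=X_{\mathrm{top}}*e_m*s_{m-1-l}(e_{l+1},\dotsc,e_{m-1})*X_{\mathrm{bot}}$, and noting that after the substitution $e_J\mapsto F(J)$ one has $\sum_{J}T'(I,J;\tau_2;F(J))=X_{\mathrm{top}}*\bigl(\sum_{J} s_{m-l-|J|+1}(F(J),\,e_i:i\in\{l+1,\dotsc,m-1\}\setminus J)\bigr)*X_{\mathrm{bot}}$, identity~\cref{eq-comb-to-f-prime} reduces to
\begin{multline*}
    e_m * s_{m-1-l}(e_{l+1},\dotsc,e_{m-1}) = \\
    \sum_{\substack{J\subseteq\{l+1,\dotsc,m\}\\ m\in J}}
    s_{m-l-|J|+1}\bigl(F(J),\ e_i:i\in\{l+1,\dotsc,m-1\}\setminus J\bigr),
\end{multline*}
which is exactly~\cref{eq-comb-id-f} of \cref{lem-comb-to-identities}, applied with the set $\{l+1,\dotsc,m\}$ (of size $m-l$) and its top element $m$ playing the roles of $\{1,\dotsc,n\}$ and $n$; here one also uses $F(\{m\})=e_m$, since $B_0=1$.

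For~\cref{eq-comb-to-f} I would expand $T(I;\tau_1)$ through~\cref{eq-t-is-sum-t-prime} as the alternating sum of $\frac{(-1)^{l-1}}{l}\,T'(I_1;\tau_1)*\cdots*T'(I_l;\tau_1)$ over ordered decompositions $I=I_1\sqcup\cdots\sqcup I_l$ into nonempty parts. In each such term there is a unique part $I_{j_0}$ containing $m$; as $\tau_1$ and $\tau_2$ induce the same order and ties on $I\setminus\{l+1,\dotsc,m-1\}$, they agree on every $I_j$ with $j\ne j_0$, and on $I_{j_0}$ they again satisfy the hypotheses of~\cref{eq-comb-to-f-prime} with $I_{j_0}$'s own distinguished element $m$ and its own parameters, the sole exception being when $I_{j_0}$ meets none of $\{l+1,\dotsc,m-1\}$, in which case $T'(I_{j_0};\tau_1)=T'(I_{j_0};\tau_2)$ and the contribution is the $J=\{m\}$ term with $F(\{m\})=e_m$. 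Applying~\cref{eq-comb-to-f-prime} to the $j_0$-th factor, substituting $e_J\mapsto F(J)$, and resumming, I would match the outcome term-by-term with $\sum_J T(I,J;\tau_2;F(J))$, each $T(I,J;\tau_2;F(J))$ being in turn expanded by~\cref{eq-t-is-sum-t-prime} on $I\setquot J$. The structural fact that makes the two expansions coincide is that $y\mapsto y|_{e_J\mapsto F(J)}$ is an algebra homomorphism carrying an ordered decomposition of $I\setquot J$ to the ordered decomposition of $I$ obtained by inserting the elements of $J$ into the part containing the slot $e_J$, and that $F(J)\in L_I$.

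The main obstacle will be precisely this last bookkeeping: exhibiting the bijection between ordered decompositions of $I$ and pairs consisting of a subset $J\subseteq\{l+1,\dotsc,m\}$ with $m\in J$ together with an ordered decomposition of $I\setquot J$, and verifying that the weights $\frac{(-1)^{l-1}}{l}$ and the reciprocal-factorial normalizations are preserved under $e_J\mapsto F(J)$ — in particular that the $|J|\ge 2$ contributions land exactly on those decompositions of $I$ in which all of $J$ lies in a single part. Everything else is routine: the per-level factorization of $T'$, the invariance of $X_{\mathrm{top}}$ and $X_{\mathrm{bot}}$, and the algebra-homomorphism property are all formal, and the one genuinely combinatorial identity has already been discharged in \cref{lem-comb-to-identities}.
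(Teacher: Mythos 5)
Your argument is correct and follows essentially the same route as the paper's own proof. For \cref{eq-comb-to-f-prime} you factor $T'(I;\tau)$ into an ordered $*$-product of symmetrizations over constant-slope levels, peel off the factors above and below the merging block, and reduce to \cref{eq-comb-id-f}; the paper packages the same reduction via the substitution formalism (writing $T'(I;\tau_1)=T'(I,I_0;\tau_2;T'(I_0;\tau_1))$ for $I_0=\{l+1,\dotsc,m\}$ and then reducing to the case $I=I_0$), but the content is identical. For \cref{eq-comb-to-f} you expand the left side of the identity via \cref{eq-t-is-sum-t-prime} and apply \cref{eq-comb-to-f-prime} to the factor $T'(I_{j_0};\tau_1)$ of each term; the paper expands the right side and contracts, which is the same computation read backwards. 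Two small points of looseness worth cleaning up if you wrote this out in full: you reuse $l$ both for the number of parts in a decomposition and as the fixed parameter of the lemma, and the relationship between decompositions of $I$ and pairs $(J,\text{decomposition of }I\setquot J)$ is not a bijection but a fibration with the fibre over a decomposition consisting of all $J\subset I_{j_0}\cap\{l+1,\dotsc,m\}$ with $m\in J$; the exchange of sums is still justified, but should be phrased as such.
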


\begin{proof}
    First, let us prove \cref{eq-comb-to-f-prime}.
    By the definitions,
    both sides of \cref{eq-comb-to-f-prime}
    lie in the subspace of $A_I$ spanned by the elements
    $e_{\sigma (1)} * \cdots * e_{\sigma (n)}$\,,
    where $\sigma \in \frS_n$\,,
    such that $\tau_2 (e_{\sigma (1)}) \geq \cdots \geq \tau_2 (e_{\sigma (n)})$.
    Therefore, it suffices to prove that for each of these monomials,
    its coefficients on both sides are equal.
    
    Let $I_0 = \{ l+1, \dotsc, m \}$.
    Then \cref{eq-comb-to-f-prime}
    can be rewritten as
    \begin{equation}
        \label{eq-comb-to-f-prime-red-1}
        T' (I, I_0 ; \tau_2; T' (I_0; \tau_1)) =
        \sum_{\nicesubstack{
            J \subset I_0 : \\[-1.5ex]
            m \in J
        }}
        T' \bigl( I, I_0; \tau_2; T' (I_0\,, J; \tau_2; F (J)) \bigr) \ ,
    \end{equation}
    by an elementary combinatorial argument.
    Therefore, it is enough to show that
    \begin{equation}
        \label{eq-comb-to-f-prime-red-2}
        T' (I_0; \tau_1) =
        \sum_{\nicesubstack{
            J \subset I_0 : \\[-1.5ex]
            m \in J
        }}
        T' (I_0\,, J; \tau_2; F (J)) \ ,
    \end{equation}
    which is precisely \cref{eq-comb-to-f-prime}
    with $I = I_0$\,.
    Thus, we may ease the notation by setting $l = 0$, $m = n$,
    and $I_0 = I$.
    Expanding both sides of~\cref{eq-comb-to-f-prime-red-2},
    we see that it is equivalent to
    \begin{multline}
        \sum_{\nicesubstack{
            \sigma \in \frS_n : \\[-.5ex]
            \sigma (1) = n
        }}
        \frac{1}{(n-1)!} \cdot e_{\sigma (1)} * \cdots * e_{\sigma (n)}
        = \\[-1ex]
        \sum_{\nicesubstack{
            J \subset I_0 : \\[-.5ex]
            n \in J
        }} {} \mspace{9mu}
        \sum_{\nicesubstack{
            \sigma \colon \{ 1, \dotsc, n-|J|+1 \} \to I \setquot J \\[-.5ex]
            \text{bijective}
        }}
        \frac{1}{(n-|J|+1)!} \cdot
        e_{\sigma (1)} * \cdots * e_{\sigma (n-|J|+1)} \,
        \bigg|_{e_J \mapsto F (J)} \, ,
        \raisetag{2ex}
    \end{multline}
    which is precisely~\cref{eq-comb-id-f}.
    Therefore, we have proved~\cref{eq-comb-to-f-prime}.
    
    For \cref{eq-comb-to-f},
    using \cref{eq-t-is-sum-t-prime},
    we see that the right-hand side of \cref{eq-comb-to-f} equals
    \begin{align*}
        & \sum_{\nicesubstack{
            J \subset \{ l+1, \dotsc, m \} : \\[-.5ex]
            m \in J
        }} {} \mspace{9mu}
        \sum_{\nicesubstack{
            I = I_1 \sqcup \cdots \sqcup I_k : \\[-.5ex]
            I_i \neq \varnothing \text{ for any } i, \\[-.5ex]
            J \subset I_j \text{ for some } j
        }} {}
        \frac{(-1)^{k-1}}{k} \cdot
        T' (I_1; \tau_2) * \cdots *
        T' (I_j \, , J; \tau_2; F (J)) * {} \\*[-6ex]
        && \mathllap{ \cdots * T' (I_k; \tau_2) } \\[3ex]
        = {} &
        \sum_{\nicesubstack{
            I = I_1 \sqcup \cdots \sqcup I_k : \\[-.5ex]
            I_i \neq \varnothing \text{ for any } i. \\[-.5ex]
            \mathclap{ \text{Let } j \text{ satisfy } m \in I_j }
        }} {} \mspace{9mu}
        \sum_{\nicesubstack{
            J \subset \{ l+1, \dotsc, m \} : \\[-.5ex]
            m \in J
        }} {} 
        \frac{(-1)^{k-1}}{k} \cdot
        T' (I_1; \tau_2) * \cdots *
        T' (I_j \, , J; \tau_2; F (J)) * {} \\*[-6ex]
        && \mathllap{ \cdots * T' (I_k; \tau_2) } \\[3ex]
        = {} &
        \sum_{\nicesubstack{
            I = I_1 \sqcup \cdots \sqcup I_k : \\[-.5ex]
            I_i \neq \varnothing \text{ for any } i. \\[-.5ex]
            \mathclap{ \text{Let } j \text{ satisfy } m \in I_j }
        }} 
        T' (I_1; \tau_1) * \cdots *
        T' (I_k; \tau_1) \\
        = {} &
        T (I; \tau_1) \ ,
        \numberthis
    \end{align*}
    where the second equal sign uses the fact that
    $T' (I_i; \tau_1) = T' (I_i; \tau_2)$ if $i \neq j$, by the definitions.
\end{proof}

\begin{lemma}
    \label{lem-comb-to-f-bar}
    Let $0 \leq l < m \leq n$ and $\tau_1, \tau_2$
    satisfy the assumptions of \cref{lem-comb-to-f}.
    Then
    \begin{align}
        \label{eq-comb-to-f-bar-prime}
        \bar{T}' (I; \tau_1) & =
        \sum_{\nicesubstack{
            J \subset \{ l+1, \dotsc, m \} : \\[-.5ex]
            m \in J
        }}
        \bar{T}' (I, J; \tau_2; F (J)) \ , \\
        \label{eq-comb-to-f-bar}
        \bar{T} (I; \tau_1) & =
        \sum_{\nicesubstack{
            J \subset \{ l+1, \dotsc, m \} : \\[-.5ex]
            m \in J
        }}
        \bar{T} (I, J; \tau_2; F (J)) \ ,
    \end{align}
    where $F (J)$ is given by \cref{eq-def-fj}.
\end{lemma}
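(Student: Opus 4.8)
The plan is to reproduce the proof of \cref{lem-comb-to-f} line for line, with the combinatorial identity \cref{eq-comb-id-f} of \cref{lem-comb-to-identities} replaced throughout by its \emph{barred} counterpart \cref{eq-comb-id-fbar-g}, and with the extra $G(J)$ terms appearing there absorbed along the way.

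First I would establish \cref{eq-comb-to-f-bar-prime}. As in the proof of \cref{lem-comb-to-f}, both sides lie in the subspace of $A_I$ spanned by the monomials $x_1 * \cdots * x_n$ with $x \in P_I$ and $\tau_2 (x_1) \geq \cdots \geq \tau_2 (x_n)$, so it is enough to compare the coefficient of each such monomial. The elementary manipulation used to pass from \cref{eq-comb-to-f-prime} to \cref{eq-comb-to-f-prime-red-2} --- peeling off the inert blocks $\{1, \dotsc, l\}$ and $\{m+1, \dotsc, n\}$, on which $\tau_1$ and $\tau_2$ induce the same order, so that $\bar{T}' (I_i; \tau_1) = \bar{T}' (I_i; \tau_2)$ there --- goes through verbatim (this is where $m \geq l + 2$ is used) and reduces \cref{eq-comb-to-f-bar-prime} to the case $l = 0$, $m = n$, $I = \{1, \dotsc, n\}$. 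In that case one unfolds $\bar{T}' (I; \tau_1)$ using the definition \cref{eq-def-u-prime} of $U'$ and the observation that, since the trivial stability condition makes the second defining inequality of $S$ in \cref{eq-def-s} automatic, $S(\beta_1, \dotsc, \beta_m; \tau_1, 0)$ collapses to the indicator that the block classes have strictly $\tau_1$-decreasing slopes; together with the analogous unfolding of the right-hand side, this rewrites both sides as sums of iterated $\bar{s}$-symmetrizations of the $e_i$'s.

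The heart of the matter is then to match the reduced identity with \cref{eq-comb-id-fbar-g}. The difficulty is that \cref{eq-comb-id-fbar-g} is phrased using $\bar{F}(J)$ together with the boundary terms $\bar{s}_{n-|J|}(e_i : i \in I \setminus J) * G(J)$, whereas \cref{lem-comb-to-f-bar} is phrased using $F(J)$ alone; so the key step is the algebraic identity
\begin{multline*}
    \sum_{J \ni n} \bar{s}_{n-|J|+1} \bigl( F(J), \ e_i : i \in I \setminus J \bigr)
    = \sum_{J \ni n} \bar{s}_{n-|J|+1} \bigl( \bar{F}(J), \ e_i : i \in I \setminus J \bigr) \\
    + \sum_{J \ni n} \bar{s}_{n-|J|} \bigl( e_i : i \in I \setminus J \bigr) * G(J) \ ,
\end{multline*}
i.e.\ that the discrepancy $F(J) - \bar{F}(J)$, once wrapped in $\bar{s}$ and summed over $J$, is exactly the $G$-contribution. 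I expect this to be the most delicate point. Establishing it should proceed as in the proof of \cref{lem-comb-to-identities}, using the $\bbZ_2$-grading of $L_I$ --- the relations for $[x,y]^+$ and $[x,y]^-$ let one trade a bracket of $(-)^-$-arguments against brackets of mixed parity --- together with the Bernoulli- and Bernoulli-polynomial identities of \crefrange{lem-comb-bernoulli}{lem-comb-bernoulli-3}, which are precisely what drives both \cref{eq-comb-id-f} and \cref{eq-comb-id-fbar-g}.

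Finally, \cref{eq-comb-to-f-bar} follows from \cref{eq-comb-to-f-bar-prime} by the same bookkeeping that deduces \cref{eq-comb-to-f} from \cref{eq-comb-to-f-prime}: expand both sides using \cref{eq-t-bar-is-sum-t-prime}, interchange the sums over $J$ and over block decompositions $I = I_1 \sqcup \cdots \sqcup I_k$, apply \cref{eq-comb-to-f-bar-prime} to the block $I_j$ containing $m$, and use $\bar{T}' (I_i; \tau_1) = \bar{T}' (I_i; \tau_2)$ for $i \neq j$ to recognize the result as $\bar{T} (I; \tau_1)$. This last step is routine once the earlier ones are in place.
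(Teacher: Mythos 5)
The proposal does not follow the paper's argument and, more importantly, does not work as written.

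The paper's proof is short and does not involve \cref{eq-comb-id-fbar-g} or any new Bernoulli identity at all: it observes from the definitions that $U'(x_1,\dotsc,x_n;\tau,0) = U'(x_n^\vee,\dotsc,x_1^\vee;\tau,0)$, hence both sides of \cref{eq-comb-to-f-bar-prime} are $\vee$-invariant, and it therefore suffices to match the coefficients of the monomials involving $e_m$ rather than $e_m^\vee$. These monomials are split into $2^{n-1}$ classes according to the subset $I' \subset I\setminus\{m\}$ of indices $i$ for which $e_i^\vee$ appears. For each class, the coefficient on either side of \cref{eq-comb-to-f-bar-prime} agrees with the corresponding coefficient in the already-proved \cref{eq-comb-to-f-prime} applied to the \emph{twisted} stability condition $\alpha \mapsto \tau(\xi(\alpha))$, where $\xi$ exchanges $e_i$ with $e_i^\vee$ for $i \in I'$. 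This is a clean reduction to \cref{lem-comb-to-f}; no new combinatorics is needed, and the $\bar F$, $G$ objects do not enter at all. Equation \cref{eq-comb-to-f-bar} is handled by the same twist applied to \cref{eq-comb-to-f}.

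Your proposal instead tries to repeat the proof of \cref{lem-comb-to-f}, replacing \cref{eq-comb-id-f} by \cref{eq-comb-id-fbar-g}. This is the wrong identity: \cref{eq-comb-id-fbar-g} is tailored to \cref{lem-comb-to-fg}, where the slope of $e_m$ crosses the self-dual level $\tau=0$, and it is precisely the interaction with that level that produces the $\bar F(J)$ and $G(J)$ terms. In \cref{lem-comb-to-f-bar} the hypotheses are those of \cref{lem-comb-to-f}, where that interaction generically does not occur. The concrete place where the plan breaks is the claim that, after reducing to $l=0$, $m=n$, \enquote{this rewrites both sides as sums of iterated $\bar s$-symmetrizations.} That collapse only happens when $e_i$ and $e_i^\vee$ sit at the \emph{same} $\tau_2$-level, i.e.\ when $\tau_2(e_{l+1})=\dotsb=\tau_2(e_m)=0$. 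In the generic situation with $\tau_2(e_m)\neq 0$, the restricted stability places $\{e_i\}$ and $\{e_i^\vee\}$ at distinct levels $\pm\tau_2(e_m)$, so $\bar T'(I_0\setquot J;\tau_2)$ splits as $\sum_{A} s_{|A|}(e_A)*s_{|A^c|}(e_{A^c}^\vee)$ rather than a single $\bar s$-symmetrization. Handling that splitting is exactly the content of the $\xi$-twist the paper uses, and your proposed \emph{key algebraic identity} (relating $F$, $\bar F$, and $G$ via $\bar s$) does not address it; it instead tries to solve the \cref{lem-comb-to-fg} scenario, which is a different lemma. (One can in fact check that for $\tau_2(e_m)=0$ the statement \cref{eq-comb-to-f-bar-prime} does not even hold verbatim --- the $J$-sum would have to be enlarged to include duals --- which is consistent with the fact that the proof of \cref{thm-comb-to-triv} never invokes \cref{lem-comb-to-f-bar} with $\tau_2(e_m)=0$; that boundary case is delegated to \cref{lem-comb-to-fg}.) Your treatment of \cref{eq-comb-to-f-bar}, by mimicking \cref{eq-t-bar-is-sum-t-prime}, is fine once \cref{eq-comb-to-f-bar-prime} is in hand, but the proof of \cref{eq-comb-to-f-bar-prime} needs to be replaced by something like the $\vee$-symmetry plus $\xi$-twist argument.
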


\begin{proof}
    We observe that for any $x = (x_1, \dotsc, x_n) \in P_I$\,, we have
    \begin{align}
        U' (x_1, \dotsc, x_n; \tau, 0) & =
        U' (x_n^\vee, \dotsc, x_1^\vee; \tau, 0) \ , \\
        U (x_1, \dotsc, x_n; \tau, 0) & =
        U (x_n^\vee, \dotsc, x_1^\vee; \tau, 0) \ ,
    \end{align}
    which follow from the definition of these coefficients.
    Therefore, if we write
    \begin{align}
        \label{eq-def-t-prime-x}
        T' (x; \tau) & =
        \sum_{\sigma \in \frS_n}
        U' (x_{\sigma (1)}, \dotsc, x_{\sigma (n)}; \tau, 0) \cdot
        x_{\sigma (1)} * \cdots * x_{\sigma (n)} \ , \\
        \label{eq-def-t-x}
        T (x; \tau) & =
        \sum_{\sigma \in \frS_n}
        U (x_{\sigma (1)}, \dotsc, x_{\sigma (n)}; \tau, 0) \cdot
        x_{\sigma (1)} * \cdots * x_{\sigma (n)} \ ,
    \end{align}
    and write $x^\vee = (x_n^\vee, \dotsc, x_1^\vee)$, then
    \begin{align}
        T' (x^\vee; \tau) & =
        T' (x, \tau)^\vee \ , \\
        T (x^\vee; \tau) & =
        T (x, \tau)^\vee \ .
    \end{align}
    
    To prove~\cref{eq-comb-to-f-bar-prime},
    note that both sides are self-dual by the above observation,
    so it is enough to prove that the coefficients of monomials
    $x_1 * \cdots * x_n$ that involve $e_m$ (rather than $e_m^\vee$)
    are equal on both sides.
    We divide such monomials into $2^{n-1}$ classes,
    according to whether they involve $e_i$ or $e_i^\vee$
    for $i \in \{ 1, \dotsc, n \} \setminus \{ m \}$.
    For each of these classes, let $I' \subset I$ be the set of $i \in I$
    such that $e_i^\vee$ is involved in that class.
    Let $\xi \colon K_I \to K_I$ be the automorphism exchanging $e_i$ and $e_i^\vee$
    for all $i \in I'$.
    Applying \cref{lem-comb-to-f}
    to the weak stability condition $\alpha \mapsto \tau (\xi (\alpha))$,
    we see that the coefficients of these monomials
    on both sides of~\cref{eq-comb-to-f-bar-prime}
    are equal to the coefficients of the corresponding monomials
    in~\cref{eq-comb-to-f-prime}
    using the modified weak stability condition.
    This proves~\cref{eq-comb-to-f-bar-prime}.
    
    Finally, \cref{eq-comb-to-f-bar}
    follows from an analogous argument using~\cref{eq-comb-to-f}.
\end{proof}

\begin{lemma}
    \label{lem-comb-to-f-sd}
    Let $0 \leq l < m \leq n$ with $m \geq l + 2$.
    Let $\tau_1, \tau_2$ be two self-dual weak stability conditions on $I$,
    satisfying
    \[
        \scalemath{0.95}{ \begin{aligned}
            0 & \leq \tau_1 (e_1) \leq \cdots \leq \tau_1 (e_l) < \tau_1 (e_{l+1}) =
            \cdots = \tau_1 (e_{m-1}) < \tau_1 (e_m) <
            \tau_1 (e_{m+1}) \leq \cdots \leq \tau_1 (e_n) \ , \\
            0 & \leq \tau_2 (e_1) \leq \cdots \leq \tau_2 (e_l) < \tau_2 (e_{l+1}) =
            \cdots = \tau_2 (e_{m-1}) = \tau_2 (e_m) <
            \tau_2 (e_{m+1}) \leq \cdots \leq \tau_2 (e_n) \ ,
        \end{aligned} }
    \]
    where each \textnormal{`$\leq$'} sign is \textnormal{`$=$'} for $\tau_1$
    if and only if the corresponding \textnormal{`$\leq$'} sign is \textnormal{`$=$'} for $\tau_2$\,.
    Then
    \begin{align}
        \label{eq-comb-to-f-sd-prime}
        T'^\sd (I; \tau_1) & =
        \sum_{\nicesubstack{
            J \subset \{ l+1, \dotsc, m \} : \\[-.5ex]
            m \in J
        }}
        T'^\sd (I, J; \tau_2; F (J)) \ , \\
        \label{eq-comb-to-f-sd}
        T^\sd (I; \tau_1) & =
        \sum_{\nicesubstack{
            J \subset \{ l+1, \dotsc, m \} : \\[-.5ex]
            m \in J
        }}
        T^\sd (I, J; \tau_2; F (J)) \ ,
    \end{align}
    where $F (J)$ is given by \cref{eq-def-fj}.
\end{lemma}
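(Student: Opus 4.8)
The plan is to follow the pattern of the proofs of \cref{lem-comb-to-f,lem-comb-to-f-bar}: first establish the ``primed'' identity \cref{eq-comb-to-f-sd-prime}, and then deduce \cref{eq-comb-to-f-sd} from it by the same bookkeeping with the decomposition \cref{eq-t-sd-is-sum-t-prime}. The conceptual point is that the hypotheses force $\tau_1 (e_m) > 0$ (since $\tau_1 (e_m) > \tau_1 (e_{m-1}) = \tau_1 (e_{l+1}) \geq 0$, using $m \geq l+2$) and, in the main case, $\tau_2 (e_m) = \tau_2 (e_{l+1}) > 0$ as well, so the merging of the classes $e_{l+1}, \dotsc, e_m$ takes place strictly above slope $0$, i.e.\ inside the ``ordinary'' part of a self-dual filtration; this is exactly why only $F (J)$ appears, and the self-dual merging element $G (J)$ together with the identity \cref{eq-comb-id-fbar-g} is not needed here.

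For \cref{eq-comb-to-f-sd-prime} I would argue as in the proof of \cref{lem-comb-to-f}. Both sides lie in the span of the monomials $x_1 * \cdots * x_n$, $x \in P_I$, that are ordered by non-increasing $\tau_2$-slope, so it suffices to match the coefficient of each such monomial. Using the symmetry of the coefficients $U'^\sd$ under order-reversal combined with $(-)^\vee$ (immediate from \cref{eq-def-usd-prime}, as in the proof of \cref{lem-comb-to-f-bar}), one may restrict attention to monomials containing $e_m$ rather than $e_m^\vee$. For such a monomial the positive-slope letters $e_{l+1}, \dotsc, e_m$ form an initial segment of the word, well separated from the slope-$0$ and negative-slope letters; in the coefficient $U'^\sd (\dotsc; \tau, 0)$ of \cref{eq-def-usd-prime}, the factor $1/(2^{n-a_m}\,(n-a_m)!)$ and the constraint $\tau (\alpha_j) = 0$ for $j > a_m$ involve only the ``slope-$0$ tail'', which is untouched by the wall and hence identical on both sides. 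Stripping off this tail together with the negative-slope letters, the required coefficient identity becomes an instance of the purely combinatorial identity \cref{eq-comb-id-f} (equivalently of the reduction \cref{eq-comb-to-f-prime-red-2}), applied to the block $\{ l+1, \dotsc, m \}$ and, to absorb the various sign patterns, to the stability functions obtained by the sign-twist $\xi$ used in the proof of \cref{lem-comb-to-f-bar}. An essentially equivalent and perhaps cleaner route is to first record a structural formula expressing $T'^\sd (I; \tau)$ as a sum of products of a $\bar{T}'$-factor built from the positive-slope classes and a wall-inert slope-$0$ tail contribution, and then derive \cref{eq-comb-to-f-sd-prime} directly from the already-proven \cref{eq-comb-to-f-bar-prime}.

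Granting \cref{eq-comb-to-f-sd-prime}, the identity \cref{eq-comb-to-f-sd} follows by the manipulation at the end of the proof of \cref{lem-comb-to-f}. Expand $T^\sd (I; \tau_1)$ via \cref{eq-t-sd-is-sum-t-prime} into a sum of products $\bar{T}' (I_1; \tau_1) * \cdots * \bar{T}' (I_l; \tau_1) * T'^\sd (I_0; \tau_1)$ over decompositions $I = I_0 \sqcup I_1 \sqcup \cdots \sqcup I_l$; the index $m$ lies in exactly one block $I_j$. If $j \geq 1$, rewrite the factor $\bar{T}' (I_j; \tau_1)$ using \cref{eq-comb-to-f-bar-prime}; if $j = 0$, rewrite $T'^\sd (I_0; \tau_1)$ using \cref{eq-comb-to-f-sd-prime}. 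All remaining factors are unchanged when $\tau_1$ is replaced by $\tau_2$, since those blocks do not meet the wall, so that $\bar{T}' (I_i; \tau_1) = \bar{T}' (I_i; \tau_2)$ for $i \neq j$ (and similarly for the $T'^\sd$-factor when $j \neq 0$). Reassembling by \cref{eq-t-sd-is-sum-t-prime} for $\tau_2$ then yields \cref{eq-comb-to-f-sd}.

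The main obstacle I anticipate is making the localization in the proof of \cref{eq-comb-to-f-sd-prime} fully rigorous: one must verify, uniformly over all sign patterns $x \in P_I$, that the features distinguishing $U'^\sd$ from the ordinary coefficients $U'$ of \cref{eq-def-u-prime} — namely the constraint in $S^\sd$ forcing the last ``ordinary'' block to have positive slope, and the symmetrization factor $1/(2^{n-a_m}\,(n-a_m)!)$ of the slope-$0$ tail — are genuinely confined to the portion of the word not affected by the wall, so that the combinatorial core coincides with that of \cref{lem-comb-to-f}. The degenerate configurations permitted by the hypotheses (in particular $l = 0$, where one must check that the merged block indeed has positive $\tau_2$-slope, or else treat separately the borderline case where it descends to slope $0$) will also need a short extra argument, but should present no real difficulty.
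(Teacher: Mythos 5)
Your proposal is correct, and your ``cleaner route'' is exactly the paper's approach: for \cref{eq-comb-to-f-sd-prime}, the paper rewrites both sides as $T'^\sd (I, I_0; \tau_2; -)$ applied to $T' (I_0; \tau_1)$ on the left and to $\sum_{J \subset I_0, \, m \in J} T' (I_0, J; \tau_2; F (J))$ on the right, with $I_0 = \{l+1, \dotsc, m\}$, which reduces the claim to the ordinary identity \cref{eq-comb-to-f-prime-red-2} from the proof of \cref{lem-comb-to-f}; then \cref{eq-comb-to-f-sd} is obtained from \cref{eq-t-sd-is-sum-t-prime} by splitting according to which block contains $m$ and applying \cref{eq-comb-to-f-bar-prime} or \cref{eq-comb-to-f-sd-prime} to that block, exactly as you describe. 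One small correction: the ``borderline case'' you worry about, where the merged block descends to slope~$0$, cannot occur — the hypotheses always force $\tau_2 (e_{l+1}) > 0$ (when $l \geq 1$ this follows from $\tau_2 (e_l) < \tau_2 (e_{l+1})$ with $\tau_2 (e_l) \geq 0$, and when $l = 0$ the first displayed strict inequality is directly $0 < \tau_2 (e_1)$), so the positive-slope block is always genuinely disjoint from the slope-$0$ tail, and the ordinary $T'$ rather than the $\vee$-symmetrized $\bar T'$ is what gets substituted.
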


\begin{proof}
    \allowdisplaybreaks
    The proof is similar to that of \cref{lem-comb-to-f}.
    
    First, let us prove \cref{eq-comb-to-f-sd-prime}.
    By the definitions,
    both sides of \cref{eq-comb-to-f-prime}
    lie in the subspace of $A_I$ spanned by the elements
    $e_{\sigma (1)} * \cdots * e_{\sigma (n)}$\,,
    where $\sigma \in \frS_n$\,,
    such that $\tau_2 (e_{\smash{\sigma (1)}}) \geq \cdots \geq \tau_2 (e_{\smash{\sigma (n)}})$.
    Note that the $e_i^\vee$ cannot appear.
    Therefore, it suffices to prove that for each of these monomials,
    its coefficients on both sides are equal.

    Let $I_0 = \{ l+1, \dotsc, m \}$.
    We rewrite \cref{eq-comb-to-f-sd-prime} as
    \begin{equation}
        \label{eq-comb-to-f-sd-prime-red-1}
        T'^\sd (I, I_0 ; \tau_2; T' (I_0; \tau_1)) =
        \sum_{\nicesubstack{
            J \subset I_0 : \\[-1.5ex]
            m \in J
        }}
        T'^\sd \bigl( I, I_0; \tau_2; T' (I_0\,, J; \tau_2; F (J)) \bigr) \ ,
    \end{equation}
    which follows from~\cref{eq-comb-to-f-prime-red-2}.
    This proves~\cref{eq-comb-to-f-sd-prime}.
    
    For \cref{eq-comb-to-f-sd}, 
    using \cref{eq-t-sd-is-sum-t-prime},
    we see that the right-hand side of \cref{eq-comb-to-f-sd} equals
    \begin{align*}
        & \sum_{\nicesubstack{
            J \subset \{ l+1, \dotsc, m \} : \\[-.5ex]
            m \in J
        }} {} \mspace{9mu}
        \sum_{\nicesubstack{
            I = I_0 \sqcup I_1 \sqcup \cdots \sqcup I_k : \\[-.5ex]
            I_i \neq \varnothing \text{ for } i = 1, \dotsc, k, \\[-.5ex]
            J \subset I_j \text{ for some } j > 0
        }} {} 
        \binom{-1/2}{k} \cdot
        \bar{T}' (I_1; \tau_2) * \cdots *
        \bar{T}' (I_j \, , J; \tau_2; F (J)) * {} \\*[-6ex]
        && \mathllap{ \cdots *
        \bar{T}' (I_k; \tau_2) * T'^\sd (I_0; \tau_2) } \\*[1ex]
        & \hspace{2em} {} + \sum_{\nicesubstack{
            J \subset \{ l+1, \dotsc, m \} : \\[-.5ex]
            m \in J
        }} {} \mspace{9mu}
        \sum_{\nicesubstack{
            I = I_0 \sqcup I_1 \sqcup \cdots \sqcup I_k : \\[-.5ex]
            I_i \neq \varnothing \text{ for } i = 1, \dotsc, k, \\[-.5ex]
            J \subset I_0
        }} {} 
        \binom{-1/2}{k} \cdot
        \bar{T}' (I_1; \tau_2) * \cdots *
        \bar{T}' (I_k; \tau_2) * {} \\*[-6ex]
        && \mathllap{ T'^\sd (I_j \, , J; \tau_2; F (J)) } \\[3ex]
        = {} &
        \sum_{\nicesubstack{
            I = I_0 \sqcup I_1 \sqcup \cdots \sqcup I_k : \\[-.5ex]
            I_i \neq \varnothing \text{ for } i = 1, \dotsc, k, \\[-.5ex]
            \mathclap{ m \in I_j \text{ for some } j > 0 }
        }} {} \mspace{9mu}
        \sum_{\nicesubstack{
            J \subset \{ l+1, \dotsc, m \} \cap I_j  : \\[-.5ex]
            m \in J
        }} {} 
        \binom{-1/2}{k} \cdot
        \bar{T}' (I_1; \tau_2) * \cdots *
        \bar{T}' (I_j \, , J; \tau_2; F (J)) * {} \hspace{-.6em} \\*[-6ex]
        && \mathllap{ \cdots *
        \bar{T}' (I_k; \tau_2) * T'^\sd (I_0; \tau_2) } \\*[1ex]
        & \hspace{2em} {} + \sum_{\nicesubstack{
            I = I_0 \sqcup I_1 \sqcup \cdots \sqcup I_k : \\[-.5ex]
            I_i \neq \varnothing \text{ for } i = 1, \dotsc, k, \\[-.5ex]
            m \in I_0
        }} {} \mspace{9mu}
        \sum_{\nicesubstack{
            J \subset \{ l+1, \dotsc, m \} \cap I_0 : \\[-.5ex]
            m \in J
        }} {} 
        \binom{-1/2}{k} \cdot
        \bar{T}' (I_1; \tau_2) * \cdots *
        \bar{T}' (I_k; \tau_2) * {} \\*[-6ex]
        && \mathllap{ T'^\sd (I_j \, , J; \tau_2; F (J)) } \\[3ex]
        = {} &
        \sum_{\nicesubstack{
            I = I_0 \sqcup I_1 \sqcup \cdots \sqcup I_k : \\[-.5ex]
            I_i \neq \varnothing \text{ for } i = 1, \dotsc, k
        }} {}
        \binom{-1/2}{k} \cdot
        \bar{T}' (I_1; \tau_1) * \cdots *
        \bar{T}' (I_k; \tau_1) * T'^\sd (I_0; \tau_1) \\
        = {} &
        T^\sd (I; \tau_1) \ ,
        \numberthis
    \end{align*}
    where the second equal sign is by
    \cref{eq-comb-to-f-bar-prime} and~\cref{eq-comb-to-f-sd-prime}.
    This proves \cref{eq-comb-to-f-sd}.
\end{proof}

\begin{lemma}
    \label{lem-comb-to-fg}
    Let $1 \leq m \leq n$.
    Let $\tau_1, \tau_2$ be two self-dual weak stability conditions on $I$,
    satisfying
    \begin{align*}
        0 & = \tau_1 (e_1) = \cdots = \tau_1 (e_{m-1}) < \tau_1 (e_m) <
        \tau_1 (e_{m+1}) < \cdots < \tau_1 (e_n) \ , \\
        0 & = \tau_2 (e_1) = \cdots = \tau_2 (e_{m-1}) = \tau_2 (e_m) <
        \tau_2 (e_{m+1}) < \cdots < \tau_2 (e_n) \ .
    \end{align*}
    Then
    \begin{align}
        T'^\sd (I; \tau_1) & =
        \sum_{\nicesubstack{
            J \subset \{ 1, \dotsc, m \} : \\[-.5ex]
            m \in J
        }}
        T'^\sd (I, J; \tau_2; \bar{F} (J))
        +
        \sum_{\nicesubstack{
            J \subset \{ 1, \dotsc, m \} : \\[-.5ex]
            m \in J
        }}
        T'^\sd (I \setminus J; \tau_2) * G (J) \ ,
        \raisetag{3ex}
        \label{eq-comb-to-fg-prime}
        {} \\[2ex]
        T^\sd (I; \tau_1) & =
        \sum_{\nicesubstack{
            J \subset \{ 1, \dotsc, m \} : \\[-.5ex]
            m \in J
        }}
        T^\sd (I, J; \tau_2; \bar{F} (J))
        +
        \sum_{\nicesubstack{
            J \subset \{ 1, \dotsc, m \} : \\[-.5ex]
            m \in J
        }}
        T^\sd (I \setminus J; \tau_2) * G (J) \ ,
        \raisetag{3ex}
        \label{eq-comb-to-fg}
    \end{align}
    where $\bar{F} (J)$ and $G (J)$ are given by
    \cref{eq-def-fj-bar} and~\cref{eq-def-gj}.
\end{lemma}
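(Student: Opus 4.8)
The plan is to follow the template of the proofs of \cref{lem-comb-to-f-bar,lem-comb-to-f-sd}: first I would prove the ``primed'' identity \cref{eq-comb-to-fg-prime}, and then deduce \cref{eq-comb-to-fg} from it by expanding $T^\sd(I;\tau_1)$ via \cref{eq-t-sd-is-sum-t-prime} and collecting like terms, exactly as in the closing computation of the proof of \cref{lem-comb-to-f-sd}. The new feature compared with \cref{lem-comb-to-f-sd} is that the wall being crossed here separates the ordered part of $T'^\sd$ from its slope-$0$ symmetric tail: under $\tau_1$ the generator $e_m$ has positive slope and forms the lowest ordered block, while under $\tau_2$ it has slope $0$ and joins the tail. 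This is precisely why the barred element $\bar F(J)$ and the extra element $G(J)$, rather than $F(J)$ alone, enter the formulae.

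For \cref{eq-comb-to-fg-prime}, I would first note that both sides lie in the subspace of $A_I$ spanned by monomials $y_1 * \dotsb * y_n$ that are $\tau_2$-admissible, namely with the generators $e_j$ ($j > m$) occurring first in weakly decreasing slope order, followed by a block built from the slope-$0$ generators $e_j, e_j^\vee$ ($j \le m$); note that $e_j^\vee$ with $j > m$, having negative slope, never appears. Hence it suffices to compare coefficients of such monomials. Just as the proof of \cref{lem-comb-to-f-sd} reduced \cref{eq-comb-to-f-sd-prime} to its local form \cref{eq-comb-to-f-prime-red-2} by substituting the ordered blocks into an outer copy of $T'^\sd$, an elementary combinatorial manipulation — isolating the generators $e_1,\dotsc,e_m$ (the generators $e_{m+1},\dotsc,e_n$ being forced singleton blocks above the wall on both sides) and substituting into the outer $T'^\sd(I,\{1,\dotsc,m\};\tau_2;-)$ — reduces \cref{eq-comb-to-fg-prime} to its local case $I = \{1,\dotsc,m\}$, where $e_m$ is the unique generator of positive $\tau_1$-slope. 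In that case $T'^\sd(I;\tau_1)$ is, up to factorial normalizations, $e_m$ prepended to the barred symmetrization $\bar s_{m-1}(e_1,\dotsc,e_{m-1})$, while computing $T'^\sd$ with respect to $\tau_2$ instead places $e_m$ into the tail; the resulting identity among these symmetrizations is precisely \cref{eq-comb-id-fbar-g} of \cref{lem-comb-to-identities}. Finally, to pass from the monomials in which $e_m$ occurs (those directly controlled by \cref{eq-comb-id-fbar-g}) to those in which $e_m^\vee$ occurs, I would reuse the self-duality trick from the proof of \cref{lem-comb-to-f-bar}: both sides of \cref{eq-comb-to-fg-prime} are fixed by the relevant involution, so it is enough to treat the $e_m$-monomials, and the others follow by precomposing with an automorphism $\xi$ of $K_I$ exchanging $e_i \leftrightarrow e_i^\vee$ on a suitable subset of $I \setminus \{m\}$ and applying the primed identity for the twisted stability condition.

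To deduce \cref{eq-comb-to-fg} from \cref{eq-comb-to-fg-prime}, I would expand $T^\sd(I;\tau_1)$ using \cref{eq-t-sd-is-sum-t-prime} as a sum over ordered decompositions $I = I_0 \sqcup I_1 \sqcup \dotsb \sqcup I_k$ of the products $\binom{-1/2}{k}\,\bar T'(I_1;\tau_2) * \dotsb * \bar T'(I_k;\tau_2) * T'^\sd(I_0;\tau_2)$, splitting the terms according to which block $m$ lies in. When $m \in I_0$, I substitute \cref{eq-comb-to-fg-prime} into the last factor; the $G$-term slots into $T'^\sd$ correctly because $G(J) \in L_I^+$ lives in the slope-$0$ graded piece, on the same footing as the self-dual tail, and the $\bar F$-term behaves exactly as in the proof of \cref{lem-comb-to-f-bar}. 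When $m \in I_i$ for some $i > 0$, I instead use \cref{eq-comb-to-f-bar-prime} on that $\bar T'$ factor. Since $\bar T'(I_i;\tau_1) = \bar T'(I_i;\tau_2)$ whenever $m \notin I_i$, collecting like terms — exactly as in the closing display of the proof of \cref{lem-comb-to-f-sd} — recovers $T^\sd(I;\tau_1)$ written entirely in $\tau_2$-data, which is the right-hand side of \cref{eq-comb-to-fg}; all sums are finite since $I$ is finite.

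The main obstacle is the local identity \cref{eq-comb-id-fbar-g}, which is already proven in \cref{lem-comb-to-identities} (and ultimately rests on the Bernoulli-number identities established for that lemma); granting it, what remains is bookkeeping. The delicate parts of that bookkeeping are: matching the combinatorial coefficients $U'^\sd$ and $S^\sd$, specialised to $\tilde\tau = 0$, with the barred symmetrizers $\bar s_k$ and the structure constants built into \cref{eq-def-fj-bar,eq-def-gj}; keeping track of the several factorial normalizations that arise when one detaches the lowest ordered block from the symmetric tail, and checking that these cancel consistently on the two sides of \cref{eq-comb-to-fg-prime}; and verifying that no cross-term between a positive-slope ordered block and $G(J)$ can ever occur, which is automatic since $G(J)$ lies wholly in the slope-$0$ graded component of $L_I$ and is therefore inert against the ordered part. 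This last point is the only genuinely self-dual subtlety, and it is where the choice of $G(J)$, rather than an ordinary Lie element, is forced.
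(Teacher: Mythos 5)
Your proposal follows the same route as the paper: establish the primed identity \cref{eq-comb-to-fg-prime} by stripping the common prefix of ordered blocks $e_n, \dotsc, e_{m+1}$ (which are singletons above the wall under both $\tau_1$ and $\tau_2$) and reducing to the local case $I = \{1, \dotsc, m\}$, where the identity becomes exactly \cref{eq-comb-id-fbar-g} of \cref{lem-comb-to-identities}; then deduce \cref{eq-comb-to-fg} by expanding $T^\sd(I;\tau_1)$ via \cref{eq-t-sd-is-sum-t-prime}, applying the primed identity when $m$ lies in the self-dual tail and \cref{eq-comb-to-f-bar-prime} when $m$ lies in an ordinary block, and collecting like terms exactly as in the closing display of the proof of \cref{lem-comb-to-f-sd}. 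Your reading of the local case, and the observation that $G(J) \in L_I^+$ so it attaches cleanly on the right of the $T'^\sd$ factor, both match the paper.

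The one divergence is the $\xi$-twist you append at the end of the primed argument, and it is both unnecessary and not actually applicable here. Unnecessary: since $\tau_1(e_j) \geq 0$ for all $j$ and $\tau_1(e_m) > 0$, the dual $e_m^\vee$ has strictly negative $\tau_1$-slope, so no tuple in $P_I$ containing $e_m^\vee$ contributes to $T'^\sd(I;\tau_1)$ — there is no ``$e_m^\vee$-case'' on the left-hand side to reduce to; and for $j < m$ the pair $e_j, e_j^\vee$ are already treated symmetrically by the $\bar s$-symmetrizers and the $(-)^\pm$ operations packed into $\bar F(J)$ and $G(J)$, so nothing remains for an explicit twist to accomplish. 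Not applicable: the twist $\alpha \mapsto \tau_1(\xi(\alpha))$, with $\xi$ swapping $e_i \leftrightarrow e_i^\vee$ for some $i > m$, sends $\tau_1(e_i) > 0$ to a negative value and so violates the hypothesis $\tau_1(e_j) \geq 0$ of this lemma. That obstruction did not arise in \cref{lem-comb-to-f-bar}, whose hypotheses are sign-symmetric, but here it would be fatal if the twist were load-bearing. Dropping the $\xi$-step leaves an argument that coincides with the paper's proof.
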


\begin{proof}
    \allowdisplaybreaks
    The proof is similar to that of \cref{lem-comb-to-f}.
    
    First, let us prove \cref{eq-comb-to-fg-prime}.
    By the definitions,
    both sides of \cref{eq-comb-to-f-prime}
    lie in the subspace of $A_I$ spanned by the elements
    $e_{\sigma (1)} * \cdots * e_{\sigma (n)}$\,,
    where $\sigma \in \frS_n$\,,
    such that $\tau_2 (e_{\smash{\sigma (1)}}) \geq \cdots \geq \tau_2 (e_{\smash{\sigma (n)}})$.
    Therefore, it suffices to prove that for each of these monomials,
    its coefficients on both sides are equal.
    
    Let $I_0 = \{ 1, \dotsc, m \}$.
    We rewrite \cref{eq-comb-to-fg-prime} as
    \begin{multline}
        T'^\sd (I, I_0 ; \tau_2; T'^\sd (I_0; \tau_1)) =
        \sum_{\nicesubstack{
            J \subset I_0 : \\[-.5ex]
            m \in J
        }} {} \Bigl(
            T'^\sd \bigl( I, I_0; \tau_2;
                T'^\sd (I_0\,, J; \tau_2; \bar{F} (J))
            \bigr) \\[-4ex]
            {} + 
            T'^\sd \bigl( I, I_0; \tau_2;
                T'^\sd (I_0 \setminus J; \tau_2)
            \bigr) * G (J)
        \Bigr) \ .
    \end{multline}
    Therefore, as before,
    it suffices to prove~\cref{eq-comb-to-fg-prime} in the case when $m = n$.
    But this is precisely~\cref{eq-comb-id-fbar-g}.
    This proves~\cref{eq-comb-to-fg-prime}.
    
    For \cref{eq-comb-to-fg}, 
    using \cref{eq-t-sd-is-sum-t-prime},
    as is similar to the proof of \cref{lem-comb-to-f-sd},
    we see that the right-hand side of \cref{eq-comb-to-fg} equals
    \begin{align*}
        & \sum_{\nicesubstack{
            I = I_0 \sqcup I_1 \sqcup \cdots \sqcup I_k : \\[-.5ex]
            I_i \neq \varnothing \text{ for } i = 1, \dotsc, k, \\[-.5ex]
            \mathclap{ m \in I_j \text{ for some } j > 0 }
        }} {} \mspace{9mu}
        \sum_{\nicesubstack{
            J \subset \{ 1, \dotsc, m \} \cap I_j  : \\[-.5ex]
            m \in J
        }} {} 
        \binom{-1/2}{k} \cdot
        \bar{T}' (I_1; \tau_2) * \cdots *
        \bar{T}' (I_j \, , J; \tau_2; F (J)) * {} \hspace{-.6em} \\*[-6ex]
        && \mathllap{ \cdots *
        \bar{T}' (I_k; \tau_2) * T'^\sd (I_0; \tau_2) } \\*[1ex]
        & \hspace{2em} {} + \sum_{\nicesubstack{
            I = I_0 \sqcup I_1 \sqcup \cdots \sqcup I_k : \\[-.5ex]
            I_i \neq \varnothing \text{ for } i = 1, \dotsc, k, \\[-.5ex]
            m \in I_0
        }} {} \mspace{9mu}
        \sum_{\nicesubstack{
            J \subset \{ 1, \dotsc, m \} \cap I_0 : \\[-.5ex]
            m \in J
        }} {} 
        \binom{-1/2}{k} \cdot
        \bar{T}' (I_1; \tau_2) * \cdots *
        \bar{T}' (I_k; \tau_2) * {} \\*[-4ex]
        && \mathllap{ \Bigl(
            T'^\sd (I_0 \, , J; \tau_2; \bar{F} (J)) +
            T'^\sd (I_0 \setminus J; \tau_2) * G (J)
        \Bigr)} \\
        = {} &
        \sum_{\nicesubstack{
            I = I_0 \sqcup I_1 \sqcup \cdots \sqcup I_k : \\[-.5ex]
            I_i \neq \varnothing \text{ for } i = 1, \dotsc, k
        }} {}
        \binom{-1/2}{k} \cdot
        \bar{T}' (I_1; \tau_1) * \cdots *
        \bar{T}' (I_k; \tau_1) * T'^\sd (I_0; \tau_1) \\
        = {} &
        T^\sd (I; \tau_1) \ ,
        \numberthis
    \end{align*}
    where the first equal sign is by
    \cref{eq-comb-to-f-bar-prime} and~\cref{eq-comb-to-fg-prime}.
    This proves \cref{eq-comb-to-fg}.
\end{proof}

Now, we are ready to prove \cref{thm-comb-to-triv}.

\paragraph{Proof of \cref{thm-comb-to-triv}.}

We only write down a proof of the more difficult part \cref{eq-thm-comb-to-triv-sd},
as the proof of~\cref{eq-thm-comb-to-triv}
is analogous and easier,
and \cref{eq-thm-comb-to-triv-bar} follows from~\cref{eq-thm-comb-to-triv}
together with the fact that
$U (x_1, \dotsc, x_m; \tau, 0) = U (x_m^\vee, \dotsc, x_1^\vee; \tau, 0)$
for all $x \in P_I$\,.

Let $S$ be the set of self-dual weak stability conditions on $I$.
For $\tau \in S$, let $T_{\tau}$ be its codomain, which is a totally ordered set.
For $t, t' \in T_{\tau}$, write
\[
    \operatorname{sgn} (t, t') = \begin{cases}
        1, & t > t' \ , \\
        0, & t = t' \ , \\
        -1, & t < t' \ .
    \end{cases}
\]
Write $\operatorname{sgn} (t) = \operatorname{sgn} (t, 0)$.

Define an equivalence relation $\sim$ on $S$
to be generated by the following relations:

\begin{enumerate}
    \item \label{itm-comb-eqrel-iso}
        $\tau_1 \sim \tau_2$ if for any $i, j \in \{ 1, \dotsc, n \}$,
        $\operatorname{sgn} \tau_1 (e_i) =
        \operatorname{sgn} \tau_2 (e_i)$,
        $\operatorname{sgn} \bigl( \tau_1 (e_i), \tau_1 (e_j) \bigr) =
        \operatorname{sgn} \bigl( \tau_2 (e_i), \tau_2 (e_j) \bigr)$,
        and $\operatorname{sgn} \bigl( \tau_1 (e_i), \tau_1 (e_j^\vee) \bigr) =
        \operatorname{sgn} \bigl( \tau_2 (e_i), \tau_2 (e_j^\vee) \bigr)$.
    \item \label{itm-comb-eqrel-perm}
        $\tau_1 \sim \tau_2$ if there exists $\sigma \in \frS_n$
        with $\tau_1 (e_i) = \pm\tau_2 (e_{\sigma(i)})$ for all $i$,
        where the `$\pm$' signs are arbitrary.
    \item \label{itm-comb-eqrel-fsd}
        $\tau_1 \sim \tau_2$ if they satisfy the assumption of
        \cref{lem-comb-to-f-sd}.
    \item \label{itm-comb-eqrel-fg}
        $\tau_1 \sim \tau_2$ if they satisfy the assumption of
        \cref{lem-comb-to-fg}.
\end{enumerate}

We claim that $\sim$ is trivial,
that is, all elements of $S$ are equivalent under $\sim$.

Indeed, every element $\tau \in S$ is equivalent to
the trivial stability condition $0 \in S$.
To see this, using \cref{itm-comb-eqrel-perm},
we may assume that 
\[
    0 \leq \tau (e_1) \leq \cdots \leq \tau (e_n) \ .
\]
If all the inequality signs are equalities, then $\tau = 0$ and we are done.
If not, suppose that
\[
    0 = \tau (e_1) = \cdots = \tau (e_l)
    < \tau (e_{l+1}) = \cdots = \tau (e_m)
    < \tau (e_{m+1}) \leq \cdots \leq \tau (e_n) \ ,
\]
where $0 \leq l < m \leq n$.
Using \cref{itm-comb-eqrel-fsd},
we may increase the values of $\tau (e_m)$ by a small amount,
as long as it stays below $\tau (e_{m+1})$.
We then do the same thing to $e_{m-1} \, , \dotsc, e_{l+2}$\,,
so that we can now assume that
\[
    0 = \tau (e_1) = \cdots = \tau (e_l)
    < \tau (e_{l+1}) < \tau (e_{l+2}) \leq \cdots \leq \tau (e_n) \ ,
\]
where $0 \leq l < n$.
We can then use \cref{itm-comb-eqrel-fg} to modify $\tau (e_{l+1})$,
so that now we have $\tau (e_{l+1}) = 0$.
Repeating this process,
we see that we eventually reach a point where $\tau = 0$.

Therefore, what is left to prove is that
if $\tau_1 \sim \tau_2$\,, and $\tau_1$ satisfies \cref{eq-thm-comb-to-triv-sd},
then so does $\tau_2$\,.
To see this, we only need to check the cases
\cref{itm-comb-eqrel-iso}--\cref{itm-comb-eqrel-fg} individually.
By induction on $n$,
we can assume that this is already true for all smaller values of $n$,
as the case when $n = 0$ is trivial.

For \cref{itm-comb-eqrel-iso},
we see that $U^\sd (x_1, \dotsc, x_n; \tau_1, 0) =
U^\sd (x_1, \dotsc, x_n; \tau_2, 0)$
for all $(x_1, \dotsc, x_n) \in P_I$\,,
so that $T^\sd (I; \tau_1) = T^\sd (I; \tau_2)$.
For \cref{itm-comb-eqrel-perm},
permutations does not affect $T^\sd (I; \tau)$ either,
due to the permutation symmetry of $P_I$\,.
Switching the sign of $\tau (e_i)$ amounts to exchanging the roles of
$e_i$ and $e_i^\vee$, i.e., its effect on $T^\sd (I; \tau)$
is swapping $e_i$ with $e_i^\vee$ in the expression.
However, since the subspace $L_\sI^+ \subset L_I$
is fixed under this operation, \cref{eq-thm-comb-to-triv-sd} is preserved.
For \cref{itm-comb-eqrel-fsd},
we use~\cref{eq-comb-to-f-sd},
whose right-hand side contains $T^\sd (I; \tau_2)$
as the term with $J = \{ m \}$.
All other terms involve index sets of size $< n$,
and hence, after replacing $\smash{e_J} \mapsto F (J)$,
lie in $U (L_\sI^+)$, by our induction hypothesis.
For \cref{itm-comb-eqrel-fg},
similarly, we see from~\cref{eq-comb-to-fg}
that the difference between $T^\sd (I; \tau_1)$ and $T^\sd (I; \tau_2)$
lies in $U (L_\sI^+)$.
\QED

\subsection{General wall-crossing}

\paragraph{}

Let notation be as in the previous subsection.
For $x = (x_1, \dotsc, x_n) \in P_I$\,, define
\begin{align}
    \label{eq-comb-def-q}
    Q (x) & = \biggl\{ y = (y_1, \dotsc, y_m) \biggm| \begin{array}{l}
         m \geq 1, \ 0 = a_0 < \cdots < a_m = n , \\
         y_i = x_{a_{i-1}+1} + \cdots + x_{a_i} \text{ for all } i
    \end{array} \biggr\} \ , \\
    \label{eq-comb-def-q-prime}
    Q' (x) & = \biggl\{ y = (y_1, \dotsc, y_m) \biggm| \begin{array}{l}
         m \geq 0, \ 0 = a_0 < \cdots < a_m \leq n , \\
         y_i = x_{a_{i-1}+1} + \cdots + x_{a_i} \text{ for all } i
    \end{array} \biggr\} \ , 
\end{align}
where each element $y_i$ is regarded as an element of
$K_I \simeq \bigoplus_{i} {} \bigl( \bbZ e_i \oplus \bbZ e_i^\vee \bigr)$.

\begin{lemma}
    \label{lem-comb-coeff-comp}
    \allowdisplaybreaks
    For self-dual weak stability conditions $\tau, \hat{\tau}, \tilde{\tau}$ on $I$,
    we have combinatorial identities
    \begin{align}
        \label{eq-comb-s-id}
        S (x_1, \dotsc, x_n; \tau, \tau) & = \begin{cases}
            1, & n = 1 \ , \\
            0, & n > 1 \ ,
        \end{cases} \\
        \label{eq-comb-s-comp}
        S (x_1, \dotsc, x_n; \tau, \tilde{\tau}) & =
        \sum_{y \in Q (x)} S(y_1, \dotsc, y_m; \hat{\tau}, \tilde{\tau}) \cdot
        \prod_{i=1}^m S (x_{a_{i-1}+1} \, , \dotsc, x_{a_i}; \tau, \hat{\tau}) \ , \\
        \label{eq-comb-ssd-id}
        S^\sd (x_1, \dotsc, x_n; \tau, \tau) & = \begin{cases}
            1, & n = 0 \ , \\
            0, & n > 0 \ ,
        \end{cases} \\
        S^\sd (x_1, \dotsc, x_n; \tau, \tilde{\tau}) & =
        \sum_{y \in Q' (x)} S^\sd(y_1, \dotsc, y_m; \hat{\tau}, \tilde{\tau}) \cdot {}
        \notag \\*
        \label{eq-comb-ssd-comp}
        & \hspace{1em} \biggl(
            \prod_{i=1}^m S (x_{a_{i-1}+1} \, , \dotsc, x_{a_i}; \tau, \hat{\tau})
        \biggr) \cdot S^\sd (x_{a_m+1} \, , \dotsc, x_n; \tau, \hat{\tau}) \ , \\
        \label{eq-comb-u-id}
        U (x_1, \dotsc, x_n; \tau, \tau) & = \begin{cases}
            1, & n = 1 \ , \\
            0, & n > 1 \ ,
        \end{cases} \\
        \label{eq-comb-u-comp}
        U (x_1, \dotsc, x_n; \tau, \tilde{\tau}) & =
        \sum_{y \in Q (x)} U(y_1, \dotsc, y_m; \hat{\tau}, \tilde{\tau}) \cdot
        \prod_{i=1}^m U (x_{a_{i-1}+1} \, , \dotsc, x_{a_i}; \tau, \hat{\tau}) \ , \\
        \label{eq-comb-usd-id}
        U^\sd (x_1, \dotsc, x_n; \tau, \tau) & = \begin{cases}
            1, & n = 0 \ , \\
            0, & n > 0 \ ,
        \end{cases} \\
        U^\sd (x_1, \dotsc, x_n; \tau, \tilde{\tau}) & =
        \sum_{y \in Q' (x)} U^\sd(y_1, \dotsc, y_m; \hat{\tau}, \tilde{\tau}) \cdot {}
        \notag \\*
        \label{eq-comb-usd-comp}
        & \hspace{1em} \biggl(
            \prod_{i=1}^m U (x_{a_{i-1}+1} \, , \dotsc, x_{a_i}; \tau, \hat{\tau})
        \biggr) \cdot U^\sd (x_{a_m+1} \, , \dotsc, x_n; \tau, \hat{\tau}) \ ,
    \end{align}
    where $a_i$ is as in \cref{eq-comb-def-q} and~\cref{eq-comb-def-q-prime}.
\end{lemma}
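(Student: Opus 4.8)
The plan is to prove all eight identities by unwinding the definitions of the combinatorial coefficients $S$, $S^\sd$, $U$, $U^\sd$ given in \cref{eq-def-s,eq-def-ssd,eq-def-u,eq-def-usd}, and to group the statements into three layers: first the two identities about $S$, then the two about $S^\sd$, then the four about $U$ and $U^\sd$, each layer using the previous. This matches the logical dependency of the four coefficients on one another, and in each case the key observation is simply that summing over compositions factors cleanly.

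First I would treat the $S$-identities. For \cref{eq-comb-s-id}, observe that with $\tau=\tilde\tau$ each factor in \cref{eq-def-s} is of the form $\left\{1 \text{ if } \tau(\alpha_i)>\tau(\alpha_{i+1}) \text{ and } \tau(\alpha_1+\cdots+\alpha_i)\le\tau(\alpha_{i+1}+\cdots+\alpha_n);\ -1 \text{ if the reverse; } 0 \text{ otherwise}\right\}$, and by \cref{itm-def-stability-1} exactly one of $\tau(\alpha_i)>\tau(\alpha_{i+1})$, $\tau(\alpha_i)=\tau(\alpha_{i+1})$, $\tau(\alpha_i)<\tau(\alpha_{i+1})$ holds, while the inequality between $\tau(\alpha_1+\cdots+\alpha_i)$ and $\tau(\alpha_{i+1}+\cdots+\alpha_n)$ is forced to have the same direction as the one between $\tau(\alpha_i)$ and $\tau(\alpha_{i+1})$ — the two cases `$>$ then $\le$' and `$\le$ then $>$' never occur, so each factor is $0$ unless $n=1$, in which case the empty product is $1$. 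For \cref{eq-comb-s-comp}, this is precisely the content of \cite[proof of Theorem~4.3 and the identities of \S4.1]{Joyce2008IV}: the function $S$ as defined in \cref{eq-def-s} is set up exactly so that it composes under refinement of compositions; I would record the statement and cite Joyce, since \cref{eq-def-s} is taken verbatim from there, and then verify \cref{eq-comb-ssd-id} and \cref{eq-comb-ssd-comp} by the reduction explained at the end of \cref{def-finite-change}, namely that $S^\sd(\alpha_1,\dotsc,\alpha_n;\tau,\tilde\tau)$ equals $S(\alpha_1,\dotsc,\alpha_n,\rho,\alpha_n^\vee,\dotsc,\alpha_1^\vee;\tau,\tilde\tau)$ with $\rho$ a zero class in the middle position that can be deleted. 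Concretely, using $\tau(\alpha^\vee)=-\tau(\alpha)$, the $i$-th factor of \cref{eq-def-ssd} with threshold $0$ matches the $i$-th factor of the doubled $S$-product; the identity \cref{eq-comb-ssd-comp} then follows from \cref{eq-comb-s-comp} applied to the doubled sequence, after checking that a composition of the doubled sequence respecting the palindromic structure corresponds to an element of $Q'(x)$ — one half determines the other, and the unmatched initial segment $x_{a_m+1},\dotsc,x_n$ is the `middle' block of $\tau$-slope $0$.

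Then the $U$ and $U^\sd$ identities follow by combining the $S$-identities with the purely combinatorial definitions \cref{eq-def-u,eq-def-usd}: $U$ is a weighted sum over double compositions of products of $S$-values times multinomial coefficients $1/(a_i-a_{i-1})!$ and the Möbius-type factors $(-1)^{l-1}/l$ or $\binom{-1/2}{l}$, and these are exactly the coefficients that make $\epsilon$ the `logarithm' (resp.\ $(-1/2)$-power composed with action) of $\delta$; thus $U$ composes under refinement because $S$ does and because the log/exp-type identities \cref{eq-epsilon-delta-exp-1,eq-epsilon-delta-exp-2} and \cref{eq-def-epsilon-sd-compact,eq-def-epsilon-sd-inv-compact} are themselves composable. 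For \cref{eq-comb-u-id} and \cref{eq-comb-usd-id} one uses \cref{eq-comb-s-id} and \cref{eq-comb-ssd-id}: with $\tau=\tilde\tau$ only the all-singleton composition survives in the inner $S$-factors, forcing $m=n$ and $\beta_i=\alpha_i$, and then the $\tilde U$-style weights collapse the outer sum, $\sum_l (-1)^{l-1}/l \cdot (\text{stuff}) = 1$ for $n=1$ and $0$ for $n>1$ (resp.\ $\sum_l \binom{-1/2}{l}\cdots = 1$ for $n=0$, $0$ for $n>0$) — these are the standard inversion identities behind \cref{eq-def-epsilon-inverse,eq-def-epsilon-sd-inverse}. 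For \cref{eq-comb-u-comp} this is \cite[\S4.1, Theorem~4.5]{Joyce2008IV} verbatim, which I would cite; for \cref{eq-comb-usd-comp} I would give the argument in parallel, again via the doubled-sequence trick, reducing it to \cref{eq-comb-u-comp} together with \cref{eq-comb-ssd-comp}, and checking that the extra $1/(2^{n-a_m}(n-a_m)!)$ factors and the $\binom{-1/2}{l}$ binomials distribute correctly across the composition — this uses the Vandermonde-type identity $\sum_{j} \binom{-1/2}{j}\binom{-1/2}{l-j}=\binom{-1}{l}$ only at the level already packaged into the equivalence of \cref{eq-def-epsilon-sd,eq-def-epsilon-sd-inverse} with \cref{eq-def-epsilon-sd-compact,eq-def-epsilon-sd-inv-compact}. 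The main obstacle will be the bookkeeping in \cref{eq-comb-ssd-comp} and \cref{eq-comb-usd-comp}: one must be careful that, under the doubled-sequence reduction, a composition of the palindromic sequence $(\alpha_1,\dotsc,\alpha_n,\rho,\alpha_n^\vee,\dotsc,\alpha_1^\vee)$ that is itself palindromic is exactly the data of an element of $Q'(x)$ plus its mirror, and that the `middle' block (which absorbs the index range $a_m+1,\dotsc,n$) is handled by the separate $S^\sd$ / $U^\sd$ factor rather than double-counted — once this dictionary is fixed, every identity is a mechanical consequence of its ordinary-case analogue in Joyce together with $\tau(\alpha^\vee)=-\tau(\alpha)$.
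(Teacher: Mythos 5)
Your proposal handles the four ordinary identities \cref{eq-comb-s-id}, \cref{eq-comb-s-comp}, \cref{eq-comb-u-id}, \cref{eq-comb-u-comp} and the two identity-type self-dual ones \cref{eq-comb-ssd-id}, \cref{eq-comb-usd-id} the same way the paper does (direct case analysis plus citation to Joyce), so that part is fine.

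However, the doubled-sequence reduction you propose for the two compatibility identities \cref{eq-comb-ssd-comp} and \cref{eq-comb-usd-comp} has a genuine gap: the equality
\[
    S^\sd (\alpha_1, \dotsc, \alpha_n; \tau, \tilde{\tau}) =
    S (\alpha_1, \dotsc, \alpha_n, \rho, \alpha_n^\vee, \dotsc, \alpha_1^\vee; \tau, \tilde{\tau})
\]
is \emph{false} as an equality of values. Using $\tau(\alpha^\vee) = -\tau(\alpha)$ one checks that in $S$ of the palindromic sequence, the factor indexed $i$ and the factor indexed $2n+1-i$ (or $2n-i$ when $\rho$ is omitted) are \emph{equal}, so each mirrored pair contributes a square. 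The upshot is that $S(\text{doubled})$ equals $(S^\sd)^2 \in \{0,1\}$ in the odd-length case, and equals $\bigl(\prod_{i<n} f_i\bigr)^2 \cdot f_n$ in the even-length case, where $f_i$ are the factors of $S^\sd$; either way the sign $\prod_{i<n} f_i$ is lost. The paper is careful about this: \cref{def-finite-change} asserts only that the two quantities have the same \emph{support}, i.e.\ $S^\sd(\dotsc) \neq 0$ iff $S(\text{doubled}) \neq 0$, not that they are equal. So the reduction of \cref{eq-comb-ssd-comp} to \cref{eq-comb-s-comp} by applying Joyce's identity to the doubled sequence does not transfer signs, and your plan for \cref{eq-comb-usd-comp} inherits the same defect.

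The paper actually avoids this combinatorics entirely. It deduces \cref{eq-comb-ssd-comp} and \cref{eq-comb-usd-comp} by building a concrete self-dual quiver, showing by a Jordan--H\"older uniqueness argument that the relevant iterated $\diamond$-products of $\delta$ and $\epsilon$ stack functions are linearly independent, and then comparing the two ways of applying the wall-crossing \cref{thm-wcf-main} (directly from $\tau$ to $\tilde{\tau}$, versus routing through $\hat{\tau}$); linear independence then forces the coefficients to satisfy the desired composition law. This geometric detour is precisely what lets the paper sidestep the sign bookkeeping that your combinatorial route would have to confront. If you want a purely combinatorial proof, you would need either a direct product-expansion argument in the style of Joyce's Theorems~4.5 and~4.8 applied to the $S^\sd$ definition itself, or to carefully track the sign $\prod_{i<n} f_i$ through the refinement -- the naive palindrome identification will not carry it.
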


\begin{proof}
    \newcommand{\ulalpha}{\smash[b]{\mspace{1mu}\underline{\mspace{-1mu}\smash{\alpha}\mspace{-2mu}}\mspace{2mu}}}
    \newcommand{\ulalphabar}{\smash[b]{\mspace{1mu}\underline{\mspace{-1mu}\smash{\bar{\alpha}}\mspace{-2mu}}\mspace{2mu}}}
    The identities \cref{eq-comb-s-id}, \cref{eq-comb-s-comp},
    \cref{eq-comb-u-id}, and \cref{eq-comb-u-comp}
    were proved in \cite[Theorems 4.5 and~4.8]{Joyce2008IV},
    using purely combinatorial methods.
    The identities \cref{eq-comb-ssd-id} and~\cref{eq-comb-usd-id}
    follow from the definitions easily.

    One could also prove the other two identities,
    \cref{eq-comb-ssd-comp} and~\cref{eq-comb-usd-comp},
    using combinatorics. However, we take a more intuitive approach
    and deduce them from the results in \cref{sect-invariants}.
    
    Consider a self-dual quiver $(Q, \sigma, u, v)$,
    as in \cref{para-sd-quiver},
    defined as follows.
    The set of vertices of $Q$ is
    $Q_0 = I \sqcup I^\vee = \{ 1, 1^\vee, \dotsc, n, n^\vee \}$.
    There is a unique arrow $i \to j$ for any $i, j \in Q_0$,
    making a total of $4n^2$ arrows.
    Define the involution $\sigma \colon Q \simeq Q^\op$
    by exchanging the vertices $i$ and $i^\vee$ for all $i \in \{ 1, \dotsc, n \}$.
    The action on the arrows is determined accordingly.
    Let $u, v$ assign the sign $+1$ to all vertices and arrows.
    
    Let $\calA$ denote the self-dual abelian category $\Mod (\bbK Q)$.
    As in \cref{para-moduli-quiver},
    there is a self-dual moduli stack for $\calA$
    to which we can apply the results in \cref{sect-invariants}.
    For convenience, for vertices $i^\vee \in Q_0$ with $i \in I$,
    we write $e_{i^\vee} = e_i^\vee \in C_I$
    and $e_{\smash{i^\vee}}^\vee = e_i \in C_I$\,.

    Let $C'_\sI$ be the set of all $\alpha \in C (\calA)$
    that is a non-zero sum of distinct elements of $C_I$\,.
    For such $\alpha$, let $C_\alpha \subset C_I$ be the set of terms appearing in $\alpha$.
    Define an object $(E^\alpha, e^\alpha) \in \calA$ by
    \[
        E^\alpha_i = \begin{cases}
            \bbK, & i \in C_\alpha \ , \\
            0, & \text{otherwise} \ ,
        \end{cases}
        \qquad
        e^\alpha_{i \to j} = \begin{cases}
            1, & i, j \in C_\alpha \ , \\
            0, & \text{otherwise} \ .
        \end{cases}
    \]
    One can see that $(E^\alpha, e^\alpha)$ is a simple object,
    and hence is semistable under any weak stability condition.
    
    Let $\Sigma$ be the set of all $\ulalpha = (\alpha_1, \dotsc, \alpha_m)$
    with $m \geq 0$ and $\alpha_s \in C'_\sI$ for all $s$,
    such that $\alpha_1 + \cdots + \alpha_m \in C'_\sI$\,.
    For each $\ulalpha \in \Sigma$,
    define an object $(E^{\ulalpha}, e^{\ulalpha}) \in \calA$ by
    \begin{align*}
        (E^{\ulalpha})_i & = \begin{cases}
            \bbK, & i \in C_{\alpha_s} \text{ for some } s \ , \\
            0, & \text{otherwise} \ ,
        \end{cases} \\
        (e^{\ulalpha})_{i \to j} & = \begin{cases}
            1, & i \in C_{\alpha_s} \text{ and } j \in C_{\alpha_t} \text{ for some } s \geq t \ , \\
            0, & \text{otherwise} \ .
        \end{cases}
    \end{align*}
    Then $(E^{\ulalpha}, e^{\ulalpha})$ has a unique Jordan--H\"older filtration
    whose quotients are $(E^{\alpha_1}, e^{\alpha_1})$,~$\dotsc$,
    $(E^{\alpha_m}, e^{\alpha_m})$, where the uniqueness follows from an elementary argument.

    Define a partial order $\preceq$ on $\Sigma$ such that
    $\ulalpha \preceq \ulalpha'$ if and only if there exists
    $0 = s_0 < \cdots < s_{m'} = m$ such that
    $\alpha'_t = \alpha_{\smash{s_{t-1}+1}} + \cdots + \alpha_{\smash{s_t}}$ for all $t$,
    where $m$ and $m'$ are the lengths of $\ulalpha$ and $\ulalpha'$.
    
    For a fixed weak stability condition $\tau$, and for $\ulalpha \in \Sigma$, write
    $\delta_{\ulalpha} (\tau) =
    \delta_{\alpha_1} (\tau) * \cdots * \delta_{\alpha_m} (\tau) \in \SF (\calM)$.
    Then $\delta_{\smash{\ulalpha'}} (\tau)$
    being non-zero at $(E^{\ulalpha}, e^{\ulalpha})$
    implies $\ulalpha \preceq \ulalpha'$,
    since any filtration is refined by the Jordan--H\"older filtration.
    In particular, $\delta_{\ulalpha} (\tau)$
    is not in the linear span of $\delta_{\smash{\ulalpha'}} (\tau)$
    with $\ulalpha \not\preceq \ulalpha'$.
    Since $\preceq$ can be extended to a total order on the finite set $\Sigma$,
    we conclude that the stack functions
    $\delta_{\ulalpha} (\tau) \in \SF (\calM)$
    for all $\ulalpha \in \Sigma$
    are linearly independent.
    
    As a result, the stack functions
    $\epsilon_{\alpha_1} (\tau) * \cdots * \epsilon_{\alpha_m} (\tau) \in \SF (\calM)$
    are also linearly independent,
    essentially because upper triangular matrices with $1$'s on the diagonal are invertible.

    At this point, as a side note, if we apply \cref{thm-wcf-main}
    to express $\delta_\alpha (\tilde{\tau})$ in terms of $\tau$ invariants,
    where $\alpha = e_1 + \cdots + e_n$\,,
    and compare the result with first converting
    $\tilde{\tau}$ invariants to $\hat{\tau}$ invariants,
    and then converting to $\tau$ invariants,
    we have reproved \cref{eq-comb-s-comp}.
    Applying this to $\epsilon_\alpha (\tau)$ reproves \cref{eq-comb-u-comp}.

    Now, let $\Sigma^\sd$ be the set of $\ulalpha = (\alpha_1, \dotsc, \alpha_m, \rho)$
    such that each $\alpha_s$ is in $C'_\sI$\,,
    \,$\rho \in C'_\sI \cup \{ 0 \}$, \,$\rho = \rho^\vee$, 
    and $\bar{\alpha}_1 + \cdots + \bar{\alpha}_m + \rho \in C'_\sI$\,.
    For a fixed self-dual weak stability condition $\tau$, and for $\ulalpha \in \Sigma^\sd$,
    write $\delta^\sd_{\ulalpha} (\tau) =
    \delta_{\smash{\alpha_1}} (\tau) \diamond \cdots \diamond \delta_{\smash{\alpha_m}} (\tau)
    \diamond \delta^\sd_{\smash{\rho}} (\tau) \in \SF (\calM^\sd)$.
    Similarly, we claim that the stack functions
    $\delta^\sd_{\ulalpha} (\tau)$ for all $\ulalpha \in \Sigma^\sd$
    are linearly independent.
    
    Indeed, for $\ulalpha \in \Sigma^\sd$, define
    $\ulalphabar = (\alpha_1, \dotsc, \alpha_m, (\rho{,}) \ \alpha_m^\vee, \dotsc, \alpha_1^\vee) \in \Sigma$, where $\rho$ appears only when it is non-zero.
    The object
    $(E^{\ulalphabar}, e^{\ulalphabar})$ has a natural self-dual structure.
    Using its unique Jordan--H\"older filtration,
    we can show that $\delta^\sd_{\ulalpha} (\tau)$
    is not in the linear span of $\delta^\sd_{\smash{\ulalpha'}} (\tau)$
    with $\ulalphabar \not\preceq \ulalphabar'$.
    Since the map $\ulalpha \mapsto \ulalphabar$ is injective,
    it follows that the $\delta^\sd_{\ulalpha} (\tau)$ are linearly independent.

    Similarly, it follows that the stack functions
    $\epsilon_{\smash{\alpha_1}} (\tau) \diamond \cdots
    \diamond \epsilon_{\smash{\alpha_m}} (\tau)
    \diamond \epsilon^\sd_{\smash{\rho}}$
    for $\ulalpha \in \Sigma^\sd$ are linearly independent.
    
    Applying \cref{thm-wcf-main}
    to express $\delta^\sd_{\smash{\theta}} (\tilde{\tau})$ in terms of $\tau$ invariants,
    where $\theta = \bar{e}_1 + \cdots + \bar{e}_n$\,,
    and comparing the result with first converting
    $\tilde{\tau}$ invariants to $\hat{\tau}$ invariants,
    and then converting to $\tau$ invariants,
    we have proved \cref{eq-comb-ssd-comp}.
    Applying this to $\epsilon^\sd_{\smash{\theta}} (\tau)$ proves \cref{eq-comb-usd-comp}.
\end{proof}

\paragraph{}

Now, define elements
\begin{align}
    V (I; \tau) & =
    \sum_{\sigma \in \frS_n}
    U (e_{\sigma (1)}, \dotsc, e_{\sigma (n)}; 0, \tau) \cdot
    e_{\sigma (1)} * \cdots * e_{\sigma (n)} \ , \\
    V^\sd (I; \tau) & =
    \sum_{x \in P_I}
    U^\sd (x_1, \dotsc, x_n; 0, \tau) \cdot
    x_1 * \cdots * x_n
\end{align}
in the algebra $A_I$.

\begin{theorem}
    \label{thm-comb-from-triv}
    We have
    \begin{align}
        \label{eq-thm-comb-from-triv}
        V (I; \tau) & \in L_I \ , \\
        \label{eq-thm-comb-from-triv-sd}
        V^\sd (I; \tau) & \in U (L_\sI^+) \ .
    \end{align}
\end{theorem}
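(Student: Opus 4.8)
The plan is to deduce \cref{thm-comb-from-triv} from \cref{thm-comb-to-triv} by a formal inversion argument, using the composition identities in \cref{lem-comb-coeff-comp}. The key observation is that the elements $T(I;\tau)$, $T^\sd(I;\tau)$ and $V(I;\tau)$, $V^\sd(I;\tau)$ are related by the wall-crossing from $0$ to $\tau$ and back again. Concretely, I would first record, using \cref{eq-comb-u-comp} and \cref{eq-comb-usd-comp} with the triple $(0,\tau,0)$ of stability conditions, together with \cref{eq-comb-u-id} and \cref{eq-comb-usd-id}, the relations expressing $e_1 * \cdots * e_n$ (resp.\ the sum over $P_I$ of the monomials $x_1 * \cdots * x_n$) as a sum of products of $T'$-type and $V$-type pieces. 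This gives, after summing over set partitions as in \cref{eq-t-is-sum-t-prime}--\cref{eq-t-sd-is-sum-t-prime} and the analogous expansions of $T$, $T^\sd$ via the $U'$, $U'^\sd$ coefficients, a system of identities of the shape
\begin{equation}
    \sum_{I = I_1 \sqcup \cdots \sqcup I_k} (\text{combinatorial factor}) \cdot
    W(I_1) * \cdots * W(I_k) = \text{known element of } L_I \text{ or } U(L_\sI^+),
\end{equation}
where $W$ ranges over the building blocks of $T$ and $V$.

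The main step is then to show that the map sending a collection $(T'(J;\tau))_{J \subset I}$ to its ``inverse'' collection $(V'(J;\tau))_{J \subset I}$ — in the sense dictated by \cref{eq-comb-u-comp}--\cref{eq-comb-usd-comp} — preserves the property of lying in $L_I$, respectively in $U(L_\sI^+)$. For the ordinary statement \cref{eq-thm-comb-from-triv}, this is exactly the content of the inversion underlying \cite[Theorem~5.4]{Joyce2008IV}: one uses that the universal enveloping algebra $A_I = U(L_I)$ is a connected graded Hopf algebra, so that an element of $A_I$ which can be written, for every choice of intermediate stability $\hat\tau$, as the ``antipode-twisted'' convolution of Lie elements, is itself a Lie element. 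For the self-dual statement \cref{eq-thm-comb-from-triv-sd}, I would run the parallel argument inside the pair $(A_I, U(L_\sI^+))$: the $\bbZ_2$-grading $L_I = L_\sI^+ \oplus L_\sI^-$ makes $L_\sI^+$ a Lie subalgebra with $U(L_\sI^+) \hookrightarrow A_I$, and \cref{thm-comb-to-triv} says precisely that $\bar T(I;\tau) \in L_\sI^-$ and $T^\sd(I;\tau) \in U(L_\sI^+)$. Inverting the self-dual composition identity \cref{eq-comb-usd-comp} with the triple $(0,\tau,0)$, and feeding in that the ``ordinary half'' of the inversion already lands in $L_I$ by the first part, I expect $V^\sd(I;\tau)$ to be expressed as a sum of products of elements of $L_\sI^-$ with an element of $U(L_\sI^+)$, which must then lie in $U(L_\sI^+)$ by the $\bbZ_2$-grading. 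Induction on $n = |I|$ handles the bookkeeping: one assumes \cref{eq-thm-comb-from-triv} and \cref{eq-thm-comb-from-triv-sd} for all smaller index sets, and checks that the terms with $J \subsetneq I$ in the inverted identity are already of the required form.

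An alternative, and perhaps cleaner, route is to set $\hat\tau = \tau$ in the composition identities \cref{eq-comb-u-comp} and \cref{eq-comb-usd-comp} and recognise $V(I;\tau)$, $V^\sd(I;\tau)$ directly as the components of the inverse, under $*$-convolution, of the generating series built from the $T(J;\tau)$, $T^\sd(J;\tau)$. Since convolution inverses of group-like-modulo-Lie data are again of the same type — this is the standard fact that the set of Lie-series forms a group under the Baker--Campbell--Hausdorff operation, and that a module-version of the same holds for twisted modules over a graded Lie algebra — one gets \cref{eq-thm-comb-from-triv} and \cref{eq-thm-comb-from-triv-sd} at once from \cref{eq-thm-comb-to-triv} and \cref{eq-thm-comb-to-triv-sd}. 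I would still need to verify that the coefficients $U(\cdots;0,\tau)$ and $U^\sd(\cdots;0,\tau)$ genuinely realise this convolution inverse, which follows by taking $\tilde\tau = 0$ in \cref{eq-comb-u-comp} and \cref{eq-comb-usd-comp} and using the identities \cref{eq-comb-u-id}, \cref{eq-comb-usd-id}.

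\textbf{Main obstacle.} The delicate point is the self-dual inversion: one must show that inverting a ``twisted-module-valued'' convolution of data landing in $L_\sI^-$ and $U(L_\sI^+)$ again lands in $U(L_\sI^+)$. Unlike the ordinary case, which is a clean statement about the connected graded Hopf algebra $U(L_I)$, the self-dual case mixes the subalgebra $U(L_\sI^+)$ with the ``odd'' part $L_\sI^-$, and one has to keep careful track of how the involution $(-)^\vee$ interacts with the $*$-product when reversing the order of factors (the identity $(x_1 * \cdots * x_n)^\vee = x_n^\vee * \cdots * x_1^\vee$ is essential, and it is why $\bar T$ rather than $T$ shows up). Making the induction close requires showing that every term in the inverted identity with a proper subset $J \subsetneq I$ — after substituting the inductively-known $V'(J;\tau) \in L_J$ or $V'^\sd(J;\tau) \in U(L_\sI^+)$ — stays inside $U(L_\sI^+)$; this is where the $\bbZ_2$-grading relations $[L_\sI^-, L_\sI^\pm] \subset L_\sI^\mp$ and the fact that $U(L_\sI^+)$ is a $\bbZ_2$-graded-submodule of $A_I$ must be used precisely.
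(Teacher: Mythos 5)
Your overall strategy — use the composition identities from \cref{lem-comb-coeff-comp} with an appropriate choice of $\hat\tau$, sum over set partitions, and induct on $|I|$ — is exactly what the paper does: setting $(\hat\tau,\tilde\tau)=(0,\tau)$ in \cref{eq-comb-u-comp} and \cref{eq-comb-usd-comp} together with the identity conditions \cref{eq-comb-u-id}, \cref{eq-comb-usd-id} produces a relation that isolates the $m=n$ term as $V(I;\tau)$, resp.\ $V^\sd(I;\tau)$, once the $m<n$ contributions are shown to lie in $L_I$, resp.\ $U(L_\sI^+)$, by \cref{thm-comb-to-triv} and the inductive hypothesis. So the plan is sound.

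However, the key step where you close the self-dual induction is wrong. You write that you ``expect $V^\sd(I;\tau)$ to be expressed as a sum of products of elements of $L_\sI^-$ with an element of $U(L_\sI^+)$, which must then lie in $U(L_\sI^+)$ by the $\bbZ_2$-grading.'' This does not work: the $\bbZ_2$-grading of $U(L_I)$ induced by $L_I=L_\sI^+\oplus L_\sI^-$ has $U(L_\sI^+)$ as a proper subalgebra of the even part, and even products of pairs of elements of $L_\sI^-$ (such as $a^- * b^-$) need not lie in $U(L_\sI^+)$. The grading relations $[L_\sI^-,L_\sI^\pm]\subset L_\sI^\mp$ do not rescue this; they control Lie brackets, not associative products.

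The correct argument (which is what the paper does) never produces lone $L_\sI^-$-factors. After fixing a partition $I=J_1\sqcup\cdots\sqcup J_m\sqcup J'$ with $m<n$ and data $x^i\in P_{J_i}$, one sums over all $2^m m!$ terms coming from a permutation of the blocks and from the independent replacements $x^i\leftrightarrow(x^i)^\vee$. The crucial identity $T((x^i)^\vee;\tau)=T(x^i;\tau)^\vee$ — which follows from $U(x_1,\dotsc,x_k;\tau,0)=U(x_k^\vee,\dotsc,x_1^\vee;\tau,0)$ — makes the substitution $e_i\mapsto T(x^i;\tau)$, $e_i^\vee\mapsto T(x^i;\tau)^\vee$ an \emph{involutive} algebra homomorphism $A_{\{1,\dotsc,m\}}\to A_I$. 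The coefficient pattern in that symmetrized sum is exactly $V^\sd$ on $m$ elements, which lies in $U(L_m^+)$ by the inductive hypothesis, so the substitution lands in $U(L_\sI^+)$. Concretely, only the antisymmetric combinations $T(x^i;\tau)-T(x^i;\tau)^\vee\in L_\sI^+$ and the elements $T^\sd(J';\tau)\in U(L_{J'}^+)$ ever appear as factors; no appeal to a ``module-version of BCH'' or to the naive $\bbZ_2$-grading is needed, and the latter would be insufficient. You should replace the grading argument with this involutive-substitution argument.

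As a smaller point, your ``alternative route'' of setting $\hat\tau=\tau$ in \cref{eq-comb-u-comp} yields the tautology $U(x;\tau,\tilde\tau)=U(x;\tau,\tilde\tau)$ and carries no information; the productive specialisation is $\hat\tau=0$, $\tilde\tau=\tau$ (or symmetrically $\tau=0$, $\hat\tau=\tau$, $\tilde\tau=0$), which is what realises $V$ and $T'$ as convolution inverses.
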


\begin{proof}
    The proof is essentially by formally inverting
    the results in \cref{thm-comb-to-triv}.

    We use induction on $n$, and assume that the theorem is true
    for all smaller values of $n$.
    The cases when $n = 1$ in \cref{eq-thm-comb-from-triv}
    and when $n = 0$ in \cref{eq-thm-comb-from-triv-sd}
    are trivial, since there is nothing to prove.
    
    To prove \cref{eq-thm-comb-from-triv},
    we may assume that $n > 1$.
    By \cref{eq-comb-u-id} and~\cref{eq-comb-u-comp},
    for any $x \in P_I$, we have
    \begin{equation}
        \sum_{y \in Q (x)} U(y_1, \dotsc, y_m; 0, \tau) \cdot
        \prod_{i=1}^m U (x_{a_{i-1}+1} \, , \dotsc, x_{a_i}; \tau, 0)
        = 0 \ .
    \end{equation}
    Summing over all possibilities of
    $x = (e_{\sigma (1)} , \dotsc, e_{\sigma (n)})$
    for $\sigma \in \frS_n$\,, we obtain that
    \begin{equation}
        \label{eq-comb-pf-from-triv}
        \sum_{\leftsubstack{ \\[-3ex]
            & m \geq 1, \ I = J_1 \sqcup \cdots \sqcup J_m \, \colon \\[-1.5ex]
            & J_i \neq \varnothing \text{ for all } i. \\[-1.5ex]
            & \textstyle \text{Write } y_i = \sum_{j \in J_i} e_j
        }} U (y_1, \dotsc, y_m; 0, \tau) \cdot
        T (J_1; \tau) * \cdots * T (J_m; \tau)
        = 0 \ ,
    \end{equation}
    where the $T (J_i; \tau)$ are as in \cref{eq-comb-def-t}.
    By \cref{thm-comb-to-triv},
    $T (J_i; \tau) \in L_{\smash{J_i}}$\,.
    Therefore, by the induction hypothesis,
    that is, by~\cref{eq-thm-comb-from-triv} applied to $m$ elements,
    if $m < n$, then for a fixed choice of $J_1, \dotsc, J_m$\,,
    the sum of all the $m!$ terms in~\cref{eq-comb-pf-from-triv}
    involving a permutation of $J_1, \dotsc, J_m$
    is in $L_I$\,.
    Since~\cref{eq-comb-pf-from-triv} equals zero,
    the sum of the terms that were not involved above must lie in $L_I$ as well.
    These are precisely the terms with $m = n$. 
    This gives that
    \begin{equation}
        \sum_{\sigma \in \frS_n}
        U (e_{\sigma (1)}, \dotsc, e_{\sigma (n)}; 0, \tau) \cdot
        e_{\sigma (1)} * \cdots * e_{\sigma (n)} \in L_I \ ,
    \end{equation}
    which is a restatement of~\cref{eq-thm-comb-from-triv}.
    
    To prove \cref{eq-thm-comb-from-triv-sd},
    we assume that $n > 0$, and proceed as before.
    By \cref{eq-comb-usd-id}--\cref{eq-comb-usd-comp},
    for any $x \in P_I$, we have
    \begin{multline}
        \sum_{y \in Q' (x)} U^\sd (y_1, \dotsc, y_m; 0, \tau) \cdot\biggl(
            \prod_{i=1}^m U (x_{a_{i-1}+1} \, , \dotsc, x_{a_i}; \tau, 0)
        \biggr) \cdot {} \\[-1ex]
        U^\sd (x_{a_m+1} \, , \dotsc, x_n; \tau, 0) \ = \ 0 \ .
    \end{multline}
    Summing over all possibilities of
    $x \in P_I$\,, we obtain that
    \begin{equation}
        \label{eq-comb-pf-from-triv-sd}
        \sum_{\leftsubstack{ \\[-3ex]
            & m \geq 0, \ I = J_1 \sqcup \cdots \sqcup J_m \sqcup J', \ 
            x^i \in P_{\smash{J_i}} \colon \\[-1.5ex]
            & J_i \neq \varnothing \text{ for all } i. \\[-1.5ex]
            & \textstyle \text{Write } y_i = \sum_{j \in J_i} x_j
        }} U^\sd (y_1, \dotsc, y_m; 0, \tau) \cdot
        T (x^1; \tau) * \cdots * T (x^m; \tau) * T^\sd (J'; \tau)
        = 0 \ ,
    \end{equation}
    where $T^\sd (J'; \tau)$
    is as in~\cref{eq-comb-def-t-sd},
    and $T (x^i; \tau)$ is as in~\cref{eq-def-t-x}.
    By \cref{thm-comb-to-triv},
    $T (x^i; \tau) \in L_{\smash{J_i}}$\,,
    and $T^\sd (J_i; \tau) \in U (L_{\smash{J'}}^+)$.
    For $m < n$, fix a choice of $J_1, \dotsc, J_m, J'$\,,
    and a choice of the $x^i$.
    Let $\Sigma$ be the sum of all the $2^m m!$ terms in~\cref{eq-comb-pf-from-triv-sd}
    involving a permutation of $J_1, \dotsc, J_m$\,, and for each $i$,
    either $x^i$ or $(x^i)^\vee$, where if $x^i = (x_1, \dotsc, x_k)$,
    then $(x^i)^\vee = (x_{\smash{k}}^\vee, \dotsc, x_1^\vee)$\,.
    Note that as in the proof of~\cref{eq-thm-comb-to-triv-bar},
    we have $T ((x^i)^\vee; \tau) = T (x^i; \tau)^\vee$,
    where the latter $(-)^\vee$ is the involution on $A_I$\,.
    Now, we can apply the induction hypothesis,
    or~\cref{eq-thm-comb-from-triv-sd} applied to $m$ elements,
    to see that $\Sigma$ is a linear combination of products
    of elements either in $L_{\smash{J'}}^+$,
    or of the form $T (x^i; \tau) - T (x^i; \tau)^\vee$,
    which lie in $L_{\smash{J_i}}^+$.
    Therefore, $\Sigma \in U (L_\sI^+)$.

    It then follows that the sum of the terms in~\cref{eq-comb-pf-from-triv-sd}
    with $m = n$ lies in $U (L_\sI^+)$ as well,
    which is, again, a restatement of~\cref{eq-thm-comb-from-triv-sd}.
\end{proof}

\paragraph{}

Next, for two self-dual weak stability conditions $\tau, \tilde{\tau}$ on $I$,
define elements
\begin{align}
    W (I; \tau, \tilde{\tau}) & =
    \sum_{\sigma \in \frS_n}
    U (e_{\sigma (1)}, \dotsc, e_{\sigma (n)}; \tau, \tilde{\tau}) \cdot
    e_{\sigma (1)} * \cdots * e_{\sigma (n)} \ , \\
    W^\sd (I; \tau, \tilde{\tau}) & =
    \sum_{x \in P_I}
    U^\sd (x_1, \dotsc, x_n; \tau, \tilde{\tau}) \cdot
    x_1 * \cdots * x_n
\end{align}
in the algebra $A_I$.

\begin{theorem}
    \label{thm-comb-non-triv}
    We have
    \begin{align}
        \label{eq-thm-comb-non-triv}
        W (I; \tau, \tilde{\tau}) & \in L_I \ , \\
        \label{eq-thm-comb-non-triv-sd}
        W^\sd (I; \tau, \tilde{\tau}) & \in U (L_\sI^+) \ .
    \end{align}
\end{theorem}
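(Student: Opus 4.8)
The plan is to deduce \cref{thm-comb-non-triv} from the two special cases already in hand — wall-crossing \emph{to} trivial stability (\cref{thm-comb-to-triv}) and wall-crossing \emph{from} trivial stability (\cref{thm-comb-from-triv}) — by composing $\tau \to 0 \to \tilde{\tau}$. The starting point is the combinatorial composition identities of \cref{lem-comb-coeff-comp}, namely \cref{eq-comb-u-comp} and \cref{eq-comb-usd-comp}, applied with intermediate stability condition $\hat{\tau} = 0$; since these hold for arbitrary $\tau, \tilde{\tau}$, no domination hypothesis is needed, which matches the statement. In the ordinary case, summing \cref{eq-comb-u-comp} (with $\hat{\tau}=0$) over all $x = (e_{\sigma(1)}, \dots, e_{\sigma(n)})$, $\sigma \in \frS_n$, and reorganizing the resulting double sum, expresses $W(I;\tau,\tilde{\tau})$ as a sum over unordered set partitions $I = J_1 \sqcup \cdots \sqcup J_m$ into nonempty blocks, in which the ``$\tau \to 0$'' part of each block collects into $T(J_i;\tau)$, while the ``$0 \to \tilde{\tau}$'' coefficients, summed over all orderings of the blocks, assemble into $V$ of the auxiliary index set $I' = \{J_1, \dots, J_m\}$. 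The self-dual case is the analogue using \cref{eq-comb-usd-comp}: one obtains a sum over partitions $I = J_1 \sqcup \cdots \sqcup J_m \sqcup J'$, with each block $J_i$ contributing a factor $T(x^i;\tau)$ for $x^i \in P_{J_i}$, the final block $J'$ contributing $T^\sd(J';\tau)$, and the outer coefficients assembling into $V^\sd$ of the auxiliary index set.

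Next I would feed in the two input theorems. By \cref{thm-comb-to-triv}, each $T(J_i;\tau)$, and more generally each $T(x^i;\tau)$ with $x^i \in P_{J_i}$, lies in $L_{J_i}$, and $T^\sd(J';\tau)$ lies in $U(L_{J'}^+)$. For the outer sum, one equips the auxiliary index set $I' = \{J_1, \dots, J_m\}$ with the self-dual weak stability condition induced from $\tilde{\tau}$ by assigning the generator $e_{J_i}$ the class $\sum_{j \in J_i} e_j \in K_I^+$; this is well defined and satisfies \cref{def-stability-comb} because the blocks are disjoint and the coefficients $U, U^\sd, S, S^\sd$ depend on their arguments only through the stability values on sums of consecutive arguments. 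Then \cref{thm-comb-from-triv} gives $V(I';\tilde{\tau}) \in L_{I'}$ and $V^\sd(I';\tilde{\tau}) \in U(L_{I'}^+)$. Finally, the substitution homomorphism $\Psi \colon A_{I'} \to A_I$ sending $e_{J_i} \mapsto T(J_i;\tau)$ (in the self-dual case, $e_{J_i} \mapsto T(x^i;\tau)$ for a chosen representative $x^i$) and $e_{J_i}^\vee$ to the corresponding dual is an algebra homomorphism intertwining the two involutions — it does so on generators, hence everywhere — so it carries $L_{I'}$ into $L_I$ and $U(L_{I'}^+)$ into $U(L_I^+)$. Applying $\Psi$ to $V(I';\tilde{\tau})$ shows each partition's contribution to $W(I;\tau,\tilde{\tau})$ lies in $L_I$; applying $\Psi$ to $V^\sd(I';\tilde{\tau})$ and right-multiplying by $T^\sd(J';\tau) \in U(L_{J'}^+) \subset U(L_I^+)$, a subalgebra of $A_I$ closed under products, shows each contribution to $W^\sd(I;\tau,\tilde{\tau})$ lies in $U(L_I^+)$. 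Summing over partitions yields \cref{eq-thm-comb-non-triv} and \cref{eq-thm-comb-non-triv-sd}.

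The main obstacle I expect is the bookkeeping in the self-dual reorganization: one must match, block by block, the sign patterns $x^i \in P_{J_i}$ produced in the genuine expansion of $W^\sd(I;\tau,\tilde{\tau})$ with the involution-symmetrised coefficient pattern appearing in $V^\sd(I';\tilde{\tau})$. The resolution is the device already used in the proof of \cref{thm-comb-from-triv}: for a fixed partition one picks one representative $x^i$ per orbit of the involution $x \mapsto x^\vee$ on $P_{J_i}$, and then the sum over the $2^m$ choices (each $x^i$ versus $(x^i)^\vee$) and $m!$ block orderings, weighted by the ``$0 \to \tilde{\tau}$'' coefficients $U^\sd$, is precisely $\Psi(V^\sd(I';\tilde{\tau}))$ with $\Psi(e_{J_i}) = T(x^i;\tau)$ and $\Psi(e_{J_i}^\vee) = T(x^i;\tau)^\vee = T((x^i)^\vee;\tau)$, using $T((x^i)^\vee;\tau) = T(x^i;\tau)^\vee$. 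Once this identification is made precise — together with the routine check that the reorganization of the double sums introduces only the expected combinatorial factors, which cancel correctly — the rest follows formally from \cref{thm-comb-to-triv,thm-comb-from-triv}. Since $I$ is finite, every sum in sight is finite, so no convergence issues arise. The same composition argument, specialised to the ordinary part, also reproves \cite[Theorem~5.4]{Joyce2008IV}.
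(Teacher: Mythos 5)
Your proposal takes essentially the same route as the paper's proof: both apply the composition identities \cref{eq-comb-u-comp} and \cref{eq-comb-usd-comp} with $\hat{\tau}=0$ to factor $W$ and $W^\sd$ through a sum over set partitions with inner factors $T(J_i;\tau)$, $T(x^i;\tau)$, $T^\sd(J';\tau)$, and then invoke \cref{thm-comb-to-triv,thm-comb-from-triv} on the two halves. The paper's version is terser — it writes out the partitioned sums and then says only ``reasoning as in the proof of \cref{thm-comb-from-triv}, \ldots\ we no longer need induction; the argument now shows that all the terms are in $L_I$ or $U(L_\sI^+)$'' — whereas you make the mechanism explicit by naming the auxiliary index set $I'$, the induced stability condition $\tilde{\tau}'$ on $K^+_{I'}$, and the involution-preserving substitution homomorphism $\Psi$ carrying $e_{J_i}\mapsto T(x^i;\tau)$ (with $\Psi(e_{J_i}^\vee)=T(x^i;\tau)^\vee=T((x^i)^\vee;\tau)$), which is exactly what is implicitly happening in the proof of \cref{thm-comb-from-triv} that the paper points to. One cosmetic remark on your bookkeeping: since $T(x;\tau)$ already sums over all orderings of $x$, the outer sum over a block $J_i$ runs effectively over the $2^{|J_i|}$ sign patterns rather than over all of $P_{J_i}$; once this is pinned down (and you flag exactly this as ``the main obstacle''), the $2^m m!$ count matches $|P_{I'}|$ and the identification with $\Psi(V^\sd(I';\tilde{\tau}'))*T^\sd(J';\tau)$ is precise. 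Nothing in your plan fails.
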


\begin{proof}
    Applying \cref{eq-comb-u-comp} and~\cref{eq-comb-usd-comp} with $\hat{\tau} = 0$,
    we may rewrite
    \begin{align}
        W (I; \tau, \tilde{\tau})
        & =
        \sum_{\leftsubstack[5em]{ \\[-2ex]
            & m \geq 1, \ I = J_1 \sqcup \cdots \sqcup J_m \, \colon \\[-.5ex]
            & J_i \neq \varnothing \text{ for all } i. \\[-.5ex]
            & \textstyle \text{Write } y_i = \sum_{j \in J_i} e_j
        }} U (y_1, \dotsc, y_m; 0, \tilde{\tau}) \cdot
        T (J_1; \tau) * \cdots * T (J_m; \tau) \ , \\
        W^\sd (I; \tau, \tilde{\tau})
        & =
        \sum_{\leftsubstack[5em]{ \\[-2ex]
            & m \geq 0, \ I = J_1 \sqcup \cdots \sqcup J_m \sqcup J', \ 
            x^i \in P_{\smash{J_i}} \colon \\[-.5ex]
            & J_i \neq \varnothing \text{ for all } i. \\[-.5ex]
            & \textstyle \text{Write } y_i = \sum_{j \in J_i} x_j
        }} U^\sd (y_1, \dotsc, y_m; 0, \tilde{\tau}) \cdot
        T (x^1; \tau) * \cdots * T (x^m; \tau) * T^\sd (J'; \tau) \ .
        \raisetag{3ex}
    \end{align}
    Reasoning as in the proof of \cref{thm-comb-from-triv},
    we can deduce \cref{eq-thm-comb-non-triv} and~\cref{eq-thm-comb-non-triv-sd}
    from \cref{thm-comb-to-triv,thm-comb-from-triv}.
    Indeed, we no longer need to use induction,
    and instead of proving that some of the terms lie in $L_I$ or $U (L_\sI^+)$,
    the argument now shows that all the terms are in $L_I$ or $U (L_\sI^+)$.
\end{proof}

Now, the first part of \cref{thm-comb}
already follows from~\cref{eq-thm-comb-non-triv-sd}
in \cref{thm-comb-non-triv}.
Finally, we prove the second part \cref{eq-comb-main} of \cref{thm-comb}.

\begin{numproof}[Proof of \cref{thm-comb}]
    Let $L_\tau \subset \SF (\calM)$ be the involutive Lie subalgebra
    generated by $\epsilon_{\alpha} (\tau)$ for all $\alpha \in C (\calA)$,
    and let $M_\tau \subset \SF (\calM^\sd)$ be the sub-$L_\tau^+$-module generated by
    $\epsilon^\sd_{\smash{\rho}} (\tau)$ for all $\rho \in C^\sd (\calA)$.
    
    We may rewrite~\cref{eq-wcf-epsilon-sd} as
    \begin{multline}
        \label{eq-wcf-epsilon-sd-rewr}
        \epsilon^\sd_\theta (\tilde{\tau})
        =
        \sum_{ \nicesubstack{
            n \geq 0 \\[-.5ex]
            \text{Write } I = \{ 1, \dotsc, n \}
        } }
        \frac{1}{2^n n!} \cdot
        \sum_{ \nicesubstack{
            \kappa \colon I \to C (\calA), \ \rho \in C^\sd (\calA) \colon \\[-.5ex]
            \textstyle \theta = \sum_{i \in I} \overline{\kappa (i)} + \rho 
        } } \\
        \Biggl[ \ 
            \sum_{ \nicesubstack{
                x \in P_I \\[-.5ex]
                \text{Write } \alpha_i = \kappa (j) \text{ if } x_i = \smash{e_j} \, , \\[-.5ex]
                \text{or write } \alpha_i = \kappa (j)^\vee \text{ if } x_i = \smash{e_j^\vee}
            } } {} \hspace{-1em}
            U^\sd (\alpha_1, \dotsc, \alpha_n; \tau, \tilde{\tau}) \cdot {}
            \epsilon_{\alpha_1} (\tau) \diamond \cdots \diamond
            \epsilon_{\alpha_n} (\tau) \diamond
            \epsilon^\sd_{\rho} (\tau)
        \Biggr] \ ,
        \raisetag{3ex}
    \end{multline}
    since every term in~\cref{eq-wcf-epsilon-sd}
    appears $2^n n!$ times in~\cref{eq-wcf-epsilon-sd-rewr}.

    Now, every sum in the square brackets in~\cref{eq-wcf-epsilon-sd-rewr}
    lies in $M_\tau^+$.
    This is because we can define an involutive algebra homomorphism
    $\varphi \colon A_I \to \SF (\calM)$ by sending $e_i$ to
    $\epsilon_{\smash{\kappa (i)}} (\tau)$ and
    $e_i^\vee$ to $\epsilon_{\smash{\kappa (i)^\vee}} (\tau)$.
    We have $\varphi (L_I) \subset L_\tau$\,,
    so $\varphi (L_\sI^+) \subset L_\tau^+$\,.
    The sum in the square brackets is
    $\varphi (W^\sd (I; \tau, \tilde{\tau})) \diamond \epsilon^\sd_{\smash{\rho}} (\tau)$,
    so by \cref{thm-comb-non-triv},
    it lies in $U (L_\tau^+) \diamond \epsilon^\sd_{\smash{\rho}} (\tau) \subset M_\tau$\,.

    Moreover, the above also shows that every sum in the square brackets
    can be written as a sum of terms of the form in~\cref{eq-comb-main},
    with the $\tilde{U} ({\cdots})$ coefficients.
    Each of these new terms appears $2^n n!$ times in~\cref{eq-wcf-epsilon-sd-rewr}.
    This proves~\cref{eq-comb-main}.
\end{numproof}

\subsection{Some combinatorial identities}

\paragraph{}

We prove some combinatorial identities that were used in the arguments above.
It is interesting that by working with wall-crossing structures,
we are able to write down several
combinatorial identities involving the Bernoulli numbers,
namely, \cref{lem-comb-bernoulli,lem-comb-bernoulli-2,lem-comb-bernoulli-3},
and it is unclear whether there are deeper reasons why these identities are true.

\begin{lemma}
    For any integers $i, k, n$ such that
    $1 \leq i \leq n$ and $1 \leq k \leq n-1$, we have
    \begin{equation}
        \label{eq-comb-alt-binom}
        \sum_{\nicesubstack{
            q \colon 0 \leq q \leq n-i, \\[-1.5ex]
            0 \leq k-q \leq i-1
        }}
        (-1)^{q} \, \binom{k}{\, q \,}
        = 
        (-1)^{i+k-1} \, \binom{k-1}{i-1} +
        (-1)^{n-i} \, \binom{k-1}{n-i} \ .
    \end{equation}
\end{lemma}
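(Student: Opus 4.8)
The plan is to recognise the left-hand side of \cref{eq-comb-alt-binom} as a difference of two truncated alternating binomial sums and to evaluate each by a Pascal's-rule telescoping. Throughout I adopt the standard convention that $\binom{m}{j} = 0$ whenever $j < 0$ or $j > m$; with this in place I first record the auxiliary formula
\[
\sum_{q=0}^{m} (-1)^q \binom{k}{q} = (-1)^m \binom{k-1}{m} \qquad (m \geq -1),
\]
the sum being empty (hence $0$) when $m = -1$. This follows at once by writing $(-1)^q \binom{k}{q} = (-1)^q \binom{k-1}{q} - (-1)^{q-1} \binom{k-1}{q-1}$, which is just Pascal's identity $\binom{k}{q} = \binom{k-1}{q} + \binom{k-1}{q-1}$ multiplied by $(-1)^q$, and telescoping the sum.

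Next I would identify the index set in \cref{eq-comb-alt-binom}. The condition $0 \leq k - q \leq i-1$ is equivalent to $k - i + 1 \leq q \leq k$, so the sum runs over $\max(0, k-i+1) \leq q \leq \min(n-i, k)$; under the hypotheses $1 \leq i \leq n$ and $1 \leq k \leq n-1$ one checks this range is non-empty, but in any case, since the factor $\binom{k}{q}$ already enforces $0 \leq q \leq k$, the left-hand side equals $\sum_{q=0}^{n-i}(-1)^q \binom{k}{q} - \sum_{q=0}^{k-i}(-1)^q \binom{k}{q}$. Applying the auxiliary formula to each of the two pieces (note $n - i \geq 0$ always, while $k - i$ may be negative, in which case the second sum is empty and the formula still holds) turns this into $(-1)^{n-i} \binom{k-1}{n-i} - (-1)^{k-i} \binom{k-1}{k-i}$.

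Finally I would rewrite the second term: by symmetry $\binom{k-1}{k-i} = \binom{k-1}{i-1}$, and since $(k-i+1)$ and $(i+k-1)$ differ by the even number $2(1-i)$, we get $-(-1)^{k-i} = (-1)^{k-i+1} = (-1)^{i+k-1}$. Hence the expression becomes $(-1)^{n-i} \binom{k-1}{n-i} + (-1)^{i+k-1} \binom{k-1}{i-1}$, which is precisely the right-hand side of \cref{eq-comb-alt-binom}. The whole argument is elementary and there is no real obstacle; the only care needed is with the boundary cases — for instance $k < i$, where both binomial coefficients on the right vanish, or $m = -1$ in the auxiliary formula — but these are all absorbed cleanly by the convention that a binomial coefficient vanishes outside the range $\{0, \dotsc, m\}$.
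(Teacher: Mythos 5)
Your proof is correct and rests on the same mechanism as the paper's: Pascal's identity applied termwise to produce a telescoping sum. The paper telescopes the shifted sum $\sum_{q=k-i+1}^{n-i}(-1)^q\bigl[\binom{k-1}{q-1}+\binom{k-1}{q}\bigr]$ directly in one step, while you package the identical telescoping as an auxiliary prefix-sum formula $\sum_{q=0}^m(-1)^q\binom{k}{q}=(-1)^m\binom{k-1}{m}$ and then write the left-hand side as a difference of two prefix sums; the two presentations produce the same intermediate expression $(-1)^{n-i}\binom{k-1}{n-i}-(-1)^{k-i}\binom{k-1}{k-i}$ and the same final rewriting.
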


\begin{proof}
    We have
    \begin{align*}
        \text{l.h.s.} & =
        \sum_{q=k-i+1}^{n-i}
        (-1)^{q} \, \biggl[ \binom{k-1}{q-1} + \binom{k-1}{q} \biggr] \\
        & =
        (-1)^{k-i+1} \, \binom{k-1}{k-i} +
        (-1)^{n-i} \, \binom{k-1}{n-i} \\
        & = \text{r.h.s.}
    \end{align*}
\end{proof}

\begin{lemma}
    \label{lem-comb-bernoulli}
    For any integers $1 \leq i \leq n$, we have
    \begin{equation}
        \label{eq-comb-bernoulli}
        \sum_{p=0}^{i-1} {}
        \sum_{q=0}^{n-i} {}
        \frac{(-1)^{q} \, (n-1)!}{(n-p-q)! \, p! \, q!} \,
        B_{p+q} (x)
        =
        \binom{n-1}{i-1} \,
        x^{i-1} \, (1-x)^{n-i} \ ,
    \end{equation}
    where $B_k (x)$ denotes the $k$-th
    Bernoulli polynomial.
\end{lemma}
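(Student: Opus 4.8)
The plan is to prove \cref{lem-comb-bernoulli} by a generating-function argument, packaging the double sum into a single coefficient extraction. First I would introduce two auxiliary variables and consider the two-variable exponential generating function
\[
    G(s, t) = \sum_{p, q \geq 0} \frac{B_{p+q}(x)}{p! \, q!} \, s^p \, t^q \ .
\]
Using the classical identity $\sum_{k \geq 0} B_k(x) \, z^k / k! = z \, e^{xz} / (e^z - 1)$ together with the binomial expansion $B_{p+q}(x) = \sum_{j} \binom{p+q}{j} B_j x^{p+q-j}$, one sees $G(s,t)$ depends on $s, t$ only through $s + t$; concretely $G(s,t) = F(s+t)$ where $F(z) = z \, e^{xz}/(e^z - 1)$. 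The left-hand side of \cref{eq-comb-bernoulli}, after multiplying through by $(n-1)!$ and tracking the factor $1/(n-p-q)!$, is exactly $(n-1)!$ times the coefficient of $s^{i-1} t^{n-i} u^{\,\ge 0}$ in an appropriate product — more precisely, I would recognise it as $[s^{i-1} t^{n-i}]$ of the $(n-1)$-th "total degree" piece of $e^{u} \cdot F(s + t - u)$ evaluated suitably, where the $(-1)^q$ is absorbed by substituting $t \mapsto -t$.

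The cleaner route, which I would actually carry out, is to fix the total degree $n-1 = (n-p-q) + p + q$ and write the left side as a coefficient of $y^{n-1}$. Setting $a = n - p - q$, the summand is $\binom{n-1}{a,\,p,\,q} (-1)^q B_{p+q}(x) / (n-1)!$-normalised, so
\[
    \text{LHS} = [s^{i-1} t^{n-i}] \; (n-1)! \cdot \bigl( \text{deg-}(n-1) \text{ part of } e^{\,1} \cdot \textstyle\sum_{k} B_k(x)\,(s-t)^k/k! \bigr)
\]
is not quite right as stated; instead I would argue directly: the inner sum over $p+q = k$ with the $(-1)^q$ weighting and the trinomial normalisation collapses, via the Bernoulli generating function, to a term involving $(s - t)^k$, and then the constraint $a = n-1-k \geq 0$ plus the extraction of $[s^{i-1} t^{n-i}]$ forces $\binom{n-1}{i-1}$ and a factor $s^{i-1}(-t)^{n-i}$ to survive from the expansion of $e^{\,1}$ against the exponential generating function — but crucially, the generating function $z e^{xz}/(e^z-1)$ is *not* needed because the Bernoulli polynomials satisfy $\sum_{k} B_k(x) z^k/k! \cdot e^{z} = $ (shifted version), and the combination appearing is precisely $B_k(x)$ convolved with the all-ones sequence, which by the defining property $\sum_{j=0}^{k}\binom{k}{j} B_j(x) = B_k(x) + k x^{k-1}$ telescopes. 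I expect the identity will ultimately reduce to the finite-difference fact that the only monomial surviving in $\Delta^{n-1}$ applied to $B_k(x)/k!$-type data is $x^{i-1}(1-x)^{n-i}$ with the stated binomial coefficient.

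The main obstacle will be bookkeeping: correctly setting up the generating function so that the three factorials $(n-p-q)!$, $p!$, $q!$ all come from exponentials, and ensuring the sign $(-1)^q$ and the truncation $p \leq i-1$, $q \leq n-i$ are handled by coefficient extraction rather than appearing as awkward side conditions. A safe alternative, which I would fall back on if the generating-function manipulation gets unwieldy, is induction on $n$ combined with the contiguous-coefficient identity \cref{eq-comb-alt-binom} proved just above and the Bernoulli recursion $B_k(x+1) - B_k(x) = k x^{k-1}$ together with $B_k(1-x) = (-1)^k B_k(x)$; the right-hand side $\binom{n-1}{i-1} x^{i-1}(1-x)^{n-i}$ manifestly satisfies a Pascal-type recursion in $(i, n)$, and one checks the left-hand side does too. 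I would verify the base cases $n = 1$ and small $i$ directly. Either way, once \cref{lem-comb-bernoulli} is established, specialising $x = 0$ yields the identity invoked in the proof of \cref{lem-comb-to-identities}, namely that the displayed double sum vanishes for $i > 1$ and equals $1$ for $i = 1$, since $0^{i-1}(1-0)^{n-i} = [i=1]$.
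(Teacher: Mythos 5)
Your proposal does not close the argument. The primary generating-function route founders on exactly the difficulty you yourself flag: the rectangular truncation $p \leq i-1$, $q \leq n-i$ is genuine (it is strictly stronger than the constraint $p+q \leq n$ that the trinomial coefficient imposes for free), and coefficient extraction from a single exponential generating function produces untruncated sums — there is no clean way to impose the separate caps on $p$ and $q$ in the framework you sketch, which is why your computation never actually terminates in a formula. The fallback via a Pascal-type induction on $(i,n)$ is a plausible direction — the right-hand side does satisfy $r(i,n) = x\,r(i-1,n-1) + (1-x)\,r(i,n-1)$ — but you assert rather than verify that the left-hand side obeys the same recursion, and that verification is the entire content of such a proof: the factors $(n-1)!$ and $(n-p-q)!$ both depend on $n$, the summation ranges depend on both $i$ and $n$, and it is not evident how the Bernoulli difference identity would feed into an $(i,n)$-recursion. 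The base cases are also left unchecked.

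The paper's proof is shorter and uses the same two ingredients you name, but deployed differently. It first applies \cref{eq-comb-alt-binom} to collapse the double sum over $(p,q)$ to a single sum over $k = p+q$. It then takes the forward difference in $x$: using $B_k(x+1) - B_k(x) = k\,x^{k-1}$, one finds $l(x+1) - l(x) = r(x+1) - r(x)$ as polynomial identities, hence $l(x) - r(x)$ is constant. The constant is pinned down by integrating over $[0,1]$ and invoking $\int_0^1 B_k(x)\,dx = 0$ for $k > 0$ and $= 1$ for $k = 0$, which makes both integrals equal to $1/n$. No induction on $n$, and no use of $B_k(1-x) = (-1)^k B_k(x)$, is needed. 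If you want to salvage your sketch, the forward-difference-in-$x$ plus normalisation pattern is the route to pursue rather than a recursion in $(i,n)$.
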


\begin{proof}
    Let $l (x)$ and $r (x)$ denote the left and right sides
    of~\cref{eq-comb-bernoulli}, respectively.
    By \cref{eq-comb-alt-binom}, we have
    \begin{align*}
        l (x) & =
        \frac{1}{n} \, B_0 (x) + 
        \sum_{k=1}^{n-1}
        \frac{(n-1)!}{(n-k)! \, k!} \, \biggl[
            (-1)^{i+k-1} \, \binom{k-1}{i-1} +
            (-1)^{n-i} \, \binom{k-1}{n-i}
        \biggr] \, B_k (x) \\
        & =
        \frac{1}{n} + 
        \binom{n-1}{i-1} \cdot \biggl[
            \sum_{k=i}^{n-1} {}
            \frac{(-1)^{i+k-1}}{k} \, \binom{n-i}{n-k} \,
            B_k (x) +
            \sum_{k=n-i+1}^{n-1} {}
            \frac{(-1)^{n-i}}{k} \, \binom{i-1}{n-k} \,
            B_k (x)
        \biggr] \ .
    \end{align*}
    Since $B_k (x + 1) - B_k (x) = k \, x^{k-1}$, we have
    \begin{equation*}
        l (x + 1) - l (x) =
        \binom{n-1}{i-1} \cdot \biggl[
            \sum_{k=i}^{n-1} {}
            (-1)^{i} \, \binom{n-i}{n-k} \,
            (-x)^{k-1} +
            \sum_{k=n-i+1}^{n-1} {}
            (-1)^{n-i} \, \binom{i-1}{n-k} \,
            x^{k-1}
        \biggr] \ .
    \end{equation*}
    On the other hand,
    \begin{align*}
        r (x + 1) - r (x) & =
        \binom{n-1}{i-1} \cdot \bigl[
            (x+1)^{i-1} \, (-x)^{n-i} -
            x^{i-1} \, (1-x)^{n-i}
        \bigr] \\
        & =
        \binom{n-1}{i-1} \cdot \biggl[
            \sum_{k=i}^{n-1} {}
            (-1)^{i} \, \binom{n-i}{n-k} \,
            (-x)^{k-1} +
            \sum_{k=n-i+1}^{n-1} {}
            (-1)^{n-i} \, \binom{i-1}{n-k} \,
            x^{k-1}
        \biggr] \ .
    \end{align*}
    Therefore, $l (x+1) - l (x) = r (x+1) - r (x)$,
    which means that
    \[
        l (x) - r (x) = c
    \]
    for some constant $c$.
    To show that $c = 0$, we use the fact that
    \[
        \int_0^1 B_k (x) \, d x = \begin{cases}
            1, & k = 0 \ , \\
            0, & k > 0 \ ,
        \end{cases}
    \]
    so
    \[
        \int_0^1 l (x) \, d x = \frac{1}{n} \ .
    \]
    On the other hand,
    \begin{align*}
        \int_0^1 r (x) \, d x & =
        \binom{n-1}{i-1} \cdot \mathrm{B} (i, n-i+1) \\
        & = \binom{n-1}{i-1} \cdot \frac{(i-1)! \, (n-i)!}{n!} \\
        & = \frac{1}{n} \ ,
    \end{align*}
    where $\mathrm{B}$ denotes the beta function.
    This shows that $c = 0$.
\end{proof}

\begin{lemma}
    \label{lem-comb-bernoulli-2}
    For any integers $1 \leq i \leq n$,
    \begin{multline}
        \label{eq-comb-bernoulli-2}
        \sum_{p=0}^{i-1} {}
        \sum_{q=0}^{n-i} {}
        \frac{(-1)^{q} \, 2^{p+q-1} \, (n-1)!}{(n-p-q)! \, p! \, q!} \,
        B_{p+q} \Bigl( \frac{x}{2} \Bigr) \\
        \shoveleft{ \hspace{1em} {} +
        \sum_{k=n-i+1}^{n} {}
        \frac{(-1)^{n-i+1} \, 2^{k-1} \, (n-1)!}{k \, (n-k)! \, (n-i)! \, (i+k-n-1)!} \,
        \Bigl[ B_k \Bigl( \frac{x}{2} \Bigr) 
        - B_k \Bigl( \frac{x+1}{2} \Bigr) \Bigr] } \\
        = 
        \binom{n-1}{i-1} \,
        x^{i-1} \, (1-x)^{n-i} \ .
    \end{multline}
\end{lemma}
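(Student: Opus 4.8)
The plan is to write $l(x)$ for the left-hand side of the identity and $r(x)=\binom{n-1}{i-1}\,x^{i-1}(1-x)^{n-i}$ for the right-hand side, and to prove $l(x)=r(x)$ by showing $l(x)+l(x+1)=r(x)+r(x+1)$. Both $l$ and $r$ are polynomials in $x$ — the Bernoulli polynomials $B_k(x/2)$ and $B_k((x+1)/2)$ are polynomials in $x$ — so the relation $(l-r)(x)=-(l-r)(x+1)$, together with a look at leading coefficients, forces $l-r$ to be constant, and then $(l-r)=-(l-r)$ forces that constant to vanish. This parallels the device used in \cref{lem-comb-bernoulli}, but with the difference equation replaced by an ``anti-periodicity'' that comes from combining the two pieces of the sum.

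To establish $l(x)+l(x+1)=r(x)+r(x+1)$, I would split $l(x)=A(x)+C(x)$, where $A(x)$ is the double sum over $p,q$ and $C(x)$ is the single sum over $k$. For $A$, the duplication formula for Bernoulli polynomials, $2^{m-1}\bigl(B_m(x/2)+B_m((x+1)/2)\bigr)=B_m(x)$, applied with $m=p+q$, collapses $A(x)+A(x+1)$ exactly to $\sum_{p=0}^{i-1}\sum_{q=0}^{n-i}\frac{(-1)^q(n-1)!}{(n-p-q)!\,p!\,q!}\,B_{p+q}(x)$, which is the left-hand side of \cref{lem-comb-bernoulli}; hence $A(x)+A(x+1)=r(x)$.

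For $C$, the bracket $B_k(x/2)-B_k((x+1)/2)$ telescopes under $x\mapsto x+1$: adding the two copies leaves $B_k(x/2)-B_k(x/2+1)=-k\,(x/2)^{k-1}$ by the difference relation $B_k(y+1)-B_k(y)=k\,y^{k-1}$. The factor $1/k$ in $C$ cancels the $k$, so $C(x)+C(x+1)=-\sum_{k=n-i+1}^{n}\frac{(-1)^{n-i+1}\,2^{k-1}\,(n-1)!}{(n-k)!\,(n-i)!\,(i+k-n-1)!}\,(x/2)^{k-1}$. Reindexing by $j=k-(n-i+1)$ cancels the powers of $2$ against $(x/2)^{k-1}$, and by the binomial theorem this sum becomes $(-1)^{n-i}\binom{n-1}{i-1}x^{n-i}(1+x)^{i-1}=r(x+1)$. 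Combining the two pieces gives $l(x)+l(x+1)=r(x)+r(x+1)$, which completes the argument.

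The only genuine work is the bookkeeping in these two reductions — invoking the duplication formula cleanly, and getting the reindexing of the $C$-sum right — but both are routine once \cref{lem-comb-bernoulli} and the standard Bernoulli identities are in hand. I expect the companion identity \cref{lem-comb-bernoulli-3} to succumb to the same scheme, inserting the twist $B_k(1-y)=(-1)^k B_k(y)$ where the $\varepsilon=-1$ sign demands it.
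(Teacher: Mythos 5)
Your proof is correct, and it takes a genuinely different route from the paper's. The paper establishes the difference relation $l(x+2)-l(x)=r(x+2)-r(x)$ (computing $l_1(x+2)-l_1(x)$ and $l_2(x+2)-l_2(x)$ separately via the forward-difference formula $B_k(y+1)-B_k(y)=ky^{k-1}$), and then pins down the resulting constant $l-r$ by comparing $\int_0^2 l\,dx$ with $\int_0^2 r\,dx$. You instead establish the anti-periodic relation $l(x)+l(x+1)=r(x)+r(x+1)$. The key new ingredient is the duplication formula $B_m(x/2)+B_m((x+1)/2)=2^{1-m}B_m(x)$, which collapses $A(x)+A(x+1)$ in one stroke to the left-hand side of \cref{lem-comb-bernoulli}, whereas the paper's forward-difference computation for the corresponding piece $l_1$ is substantially longer. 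Your reduction of $C(x)+C(x+1)$ to $r(x+1)$ is also correct: the bracket telescopes to $B_k(x/2)-B_k(x/2+1)=-k(x/2)^{k-1}$, the $1/k$ and the $2^{k-1}$ cancel, and the remaining sum is $(-1)^{n-i}\binom{n-1}{i-1}\sum_{j=0}^{i-1}\binom{i-1}{j}x^{n-1-j}=(-1)^{n-i}\binom{n-1}{i-1}x^{n-i}(1+x)^{i-1}=r(x+1)$. The endgame is slightly cleaner than the paper's: a polynomial $g$ satisfying $g(x+1)=-g(x)$ must vanish (compare leading coefficients to force degree zero, then $c=-c$), so you avoid the integral evaluation entirely. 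Both proofs ultimately rest on \cref{lem-comb-bernoulli}; yours trades the difference equation for the duplication formula and trades the integral normalization for the degree argument, yielding a tidier argument of comparable depth.
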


\begin{proof}
    \allowdisplaybreaks
    Let $l_1 (x)$, $l_2 (x)$ and $r (x)$ denote
    the first and second terms on the left-hand side
    of~\cref{eq-comb-bernoulli-2},
    and the right-hand side, respectively.
    
    By \cref{eq-comb-alt-binom}, we have
    \begin{align*}
        l_1 (x) & =
        \frac{1}{2n} \, B_0 \Bigl( \frac{x}{2} \Bigr) + 
        \sum_{k=1}^{n-1}
        \frac{2^{k-1} \, (n-1)!}{(n-k)! \, k!} \, \biggl[
            (-1)^{i+k-1} \, \binom{k-1}{i-1} +
            (-1)^{n-i} \, \binom{k-1}{n-i}
        \biggr] \, B_k \Bigl( \frac{x}{2} \Bigr) \\
        & =
        \frac{1}{2n} + 
        \binom{n-1}{i-1} \cdot \biggl[
            \sum_{k=i}^{n-1} {}
            \frac{(-1)^{i+k-1} \, 2^{k-1}}{k} \, \binom{n-i}{n-k} \,
            B_k \Bigl( \frac{x}{2} \Bigr) + {} \\*
            && \mathllap{ 
            \sum_{k=n-i+1}^{n-1} {}
            \frac{(-1)^{n-i} \, 2^{k-1}}{k} \, \binom{i-1}{n-k} \,
            B_k \Bigl( \frac{x}{2} \Bigr)
        \biggr] \ . }
    \end{align*}
    Proceeding as in the proof of \cref{lem-comb-bernoulli},
    we see that
    \begin{multline*}
        l_1 (x+2) - l_1 (x) = {} \\
        \binom{n-1}{i-1} \cdot \biggl[
            \sum_{k=i}^{n-1} {}
            (-1)^{i} \, \binom{n-i}{n-k} \,
            (-x)^{k-1} + 
            \sum_{k=n-i+1}^{n-1} {}
            (-1)^{n-i} \, \binom{i-1}{n-k} \,
            x^{k-1}
        \biggr] \ .
    \end{multline*}
    Similarly,
    \begin{align*}
        l_2 (x+2) - l_2 (x) & =
        \sum_{k=n-i+1}^{n} {}
        \frac{(-1)^{n-i+1} \, (n-1)!}{(n-k)! \, (n-i)! \, (i+k-n-1)!} \,
        \bigl[ x^{k-1} - (x+1)^{k-1} \bigr] \\
        & =
        (-1)^{n-i+1} \,
        \binom{n-1}{i-1} \cdot 
        \sum_{k=n-i+1}^{n} {}
        \binom{i-1}{n-k} \, \bigl[ x^{k-1} - (x+1)^{k-1} \bigr] \ .
    \end{align*}
    Setting $l (x) = l_1 (x) + l_2 (x)$, we see that
    \begin{align*}
        & l (x+2) - l (x) \\
        & \hspace{1em} =
        \binom{n-1}{i-1} \cdot \biggl[
            \sum_{k=i}^{n-1} {}
            (-1)^{i} \, \binom{n-i}{n-k} \,
            (-x)^{k-1} + 
            \sum_{k=n-i+1}^{n-1} {}
            (-1)^{n-i} \, \binom{i-1}{n-k} \,
            (x+1)^{k-1} \\*[-.5ex]
            && \mathllap{
            {} + (-1)^{n-i+1} \,
        \bigl[ x^{n-1} - (x+1)^{n-1} \bigr]
        \biggr] } \\[1ex]
        & \hspace{1em} =
        \binom{n-1}{i-1} \cdot \biggl[
            \sum_{k=i}^{n} {}
            (-1)^{i} \, \binom{n-i}{n-k} \,
            (-x)^{k-1} + 
            \sum_{k=n-i+1}^{n} {}
            (-1)^{n-i} \, \binom{i-1}{n-k} \,
            (x+1)^{k-1}
        \biggr] \ .
    \end{align*}
    On the other hand,
    \begin{align*}
        r (x + 2) - r (x) & =
        \binom{n-1}{i-1} \cdot \bigl[
            \bigl( (x+1)+1 \bigr)^{i-1} \, 
            \bigl( -(x+1) \bigr)^{n-i} -
            x^{i-1} \, (1-x)^{n-i}
        \bigr] \\
        & = l (x+2) - l (x) \ .
    \end{align*}
    This means that
    \[
        l (x) - r (x) = c
    \]
    for some constant $c$.
    To show that $c = 0$, we use the facts that
    \begin{align*}
        \int_0^2 B_k \Bigl( \frac{x}{2} \Bigr) \, d x & =
        \begin{cases}
            2, & k = 0 \ , \\
            0, & k > 0 \ ,
        \end{cases} \\
        \int_0^2 B_k \Bigl( \frac{x+1}{2} \Bigr) \, d x & =
        \frac{1}{2^{k-1}} \ ,
    \end{align*}
    so
    \begin{align*}
        \int_0^2 l (x) \, d x & =
        \frac{1}{n} +
        \sum_{k=n-i+1}^{n} {}
        \frac{(-1)^{n-i} \, (n-1)!}{k \, (n-k)! \, (n-i)! \, (i+k-n-1)!} \ .
    \end{align*}
    On the other hand, we have seen
    in the proof of \cref{lem-comb-bernoulli} that
    \[
        \int_0^1 r (x) \, d x = \frac{1}{n} \ .
    \]
    We then calculate
    \begin{align*}
        \int_1^2 r (x) \, d x & =
        \binom{n-1}{i-1} \cdot \int_0^1 (-x)^{n-i} (1+x)^{i-1} \, d x \\
        & = \binom{n-1}{i-1} \cdot
        \int_0^1 {} \biggl[
        \sum_{k=n-i+1}^{n} {} (-1)^{n-i} \,
        \binom{i-1}{n-k} \, x^{k-1} \biggr] \, d x \\
        & =
        \sum_{k=n-i+1}^{n} {}
        \frac{(-1)^{n-i} \, (n-1)!}{k \, (n-k)! \, (n-i)! \, (i+k-n-1)!} \ .
    \end{align*}
    This shows that $c = 0$ and we are done.
\end{proof}

\begin{lemma}
    \label{lem-comb-bernoulli-3}
    For any integers $1 \leq i \leq n$,
    \begin{multline}
        \label{eq-comb-bernoulli-3}
        \sum_{p=0}^{i-1} {}
        \sum_{q=0}^{n-i} {}
        \frac{(-1)^{p} \, 2^{p+q-1} \, (n-1)!}{(n-p-q)! \, p! \, q!} \,
        B_{p+q} \Bigl( \frac{x}{2} \Bigr) \\
        {} +
        \sum_{k=n-i+1}^{n} {}
        \frac{(-1)^{i+k-n-1} \, 2^{k-1} \, (n-1)!}{k \, (n-k)! \, (n-i)! \, (i+k-n-1)!} \,
        \Bigl[ B_k \Bigl( \frac{x}{2} \Bigr) 
        - B_k \Bigl( \frac{x+1}{2} \Bigr) \Bigr] = 0 \ .
    \end{multline}
\end{lemma}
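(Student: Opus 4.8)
The plan is to follow exactly the strategy used in the proofs of \cref{lem-comb-bernoulli,lem-comb-bernoulli-2}: write $l(x)$ for the left-hand side of \cref{eq-comb-bernoulli-3}, show first that $l(x+2) = l(x)$ so that $l$ is a constant, and then evaluate the constant by integrating over $[0,2]$. The difference from the previous two lemmas is that here the ``right-hand side'' is identically $0$, so in the first step the two polynomial pieces produced by the difference operator must cancel against each other rather than reproducing some $r(x+2)-r(x)$.

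First I would collect the double sum over $p,q$ into a single sum indexed by $k=p+q$, using the $(-1)^p$-analogue of \cref{eq-comb-alt-binom}. Since $\binom{k}{q}=\binom{k}{k-q}$ and $(-1)^{k-q}=(-1)^k(-1)^q$, substituting $p=k-q$ turns \cref{eq-comb-alt-binom} into
\[
    \sum_{\nicesubstack{
        p \colon 0 \leq p \leq i-1, \\[-1.5ex]
        0 \leq k-p \leq n-i
    }}
    (-1)^{p}\,\binom{k}{p}
    = (-1)^{i-1}\binom{k-1}{i-1} + (-1)^{k+n-i}\binom{k-1}{n-i}\,,
\]
after which the coefficient of $B_k(x/2)$ in the first sum of \cref{eq-comb-bernoulli-3} becomes $\tfrac{2^{k-1}}{k}\binom{n-1}{i-1}\bigl[(-1)^{i-1}\binom{n-i}{n-k}+(-1)^{k+n-i}\binom{i-1}{n-k}\bigr]$ for $1\leq k\leq n-1$, plus the constant $\tfrac1{2n}$ from $k=0$ — precisely the shape encountered in \cref{lem-comb-bernoulli-2}.

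Next I would apply the defining relation $B_k(y+1)-B_k(y)=k\,y^{k-1}$ with $y=x/2$, so $B_k\bigl(\tfrac{x+2}2\bigr)-B_k\bigl(\tfrac x2\bigr)=k\,(x/2)^{k-1}$, to compute $l(x+2)-l(x)$. The first sum contributes a polynomial in $x$ coming from the $\binom{n-i}{n-k}$ and $\binom{i-1}{n-k}$ terms (which resum via the binomial theorem into combinations of powers of $-x$ and $x+1$), while the second sum — the $B_k(x/2)-B_k((x+1)/2)$ part — contributes $k(x/2)^{k-1}-k((x+1)/2)^{k-1}$ for each $k$, again a polynomial in $x$ and $x+1$. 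The claim to verify is that these two contributions are negatives of one another, so that $l(x+2)-l(x)=0$; this is the step that requires care, because one must track the boundary indices $k=n-i+1$ (where the two index ranges meet) and $k=n$ (where the $B_n$ terms live), and confirm that no residual polynomial survives. This bookkeeping is the main obstacle, though it is entirely parallel to the telescoping in \cref{lem-comb-bernoulli-2}.

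Finally, knowing $l(x)\equiv c$, I would compute $c=\tfrac12\int_0^2 l(x)\,dx$ — in fact directly $\int_0^2 l(x)\,dx$ — using $\int_0^2 B_k(x/2)\,dx=2\,\delta_{k,0}$ and $\int_0^2 B_k\bigl(\tfrac{x+1}2\bigr)\,dx=2^{1-k}$ (the latter since $\int_{1/2}^{3/2}B_k=(1/2)^k$ by the difference relation). Only the $p=q=0$ term of the first sum survives integration, contributing $\tfrac1n$; the second sum contributes $-\binom{n-1}{i-1}\sum_{j=0}^{i-1}\tfrac{(-1)^j}{n+1-i+j}\binom{i-1}{j}$, which by the beta-integral identity $\sum_{j=0}^{m}\tfrac{(-1)^j}{a+j}\binom{m}{j}=\mathrm{B}(a,m+1)$ (with $a=n-i+1$, $m=i-1$) equals $-\binom{n-1}{i-1}\cdot\tfrac{(n-i)!\,(i-1)!}{n!}=-\tfrac1n$. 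Hence $\int_0^2 l(x)\,dx=0$, so $c=0$, completing the proof.
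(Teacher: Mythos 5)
Your proposal is correct and follows essentially the same route as the paper's proof: establish $l(x+2)=l(x)$ via the $(-1)^p$-variant of \cref{eq-comb-alt-binom} together with the Bernoulli difference relation and a binomial resummation showing the two polynomial pieces cancel, then compute $\int_0^2 l(x)\,dx = \tfrac1n - \tfrac1n = 0$ via the beta-function identity. Your derivation of the reflected binomial identity, the integral formulas for $B_k(x/2)$ and $B_k((x+1)/2)$, and the final beta-integral evaluation all match the paper's computation.
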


\begin{proof}
    Let $l (x)$ denote the left-hand side.
    From a similar calculation as in the previous lemma, we have
    \begin{align*}
        & l (x+2) - l (x) \\
        & \hspace{1em} = \binom{n-1}{i-1} \cdot \biggl[
            \sum_{k=i}^{n} {}
            (-1)^{i-1} \, \binom{n-i}{n-k} \,
            x^{k-1} + 
            \sum_{k=n-i+1}^{n} {}
            (-1)^{n-i+1} \, \binom{i-1}{n-k} \,
            (-x-1)^{k-1}
        \biggr]. \\
        & \hspace{1em} = \binom{n-1}{i-1} \cdot \bigl[
            (-x)^{i-1} \, (1 + x)^{n-i} -
            \bigl( (-x - 1) + 1 \bigr)^{i-1} \, (x+1)^{n-i}
        \bigr] \\
        & \hspace{1em} = 0 \ .
    \end{align*}
    Therefore, $l (x)$ is constant.
    Again, calculating as before,
    \begin{align*}
        \int_0^2 l (x) \, d x
        & = \frac{1}{n} +
        \sum_{k=n-i+1}^{n} {}
        \frac{(-1)^{i+k-n} \, (n-1)!}{k \, (n-k)! \, (n-i)! \, (i+k-n-1)!} \\
        & = \frac{1}{n} - \binom{n-1}{n-i} \cdot
        \int_0^1 x^{n-i} \, (1-x)^{i-1} \, d x \\
        & = \frac{1}{n} - \frac{1}{n} = 0 \ ,
    \end{align*}
    where the second integral was evaluated as in
    the proof of \cref{lem-comb-bernoulli}.
    This shows that $l (x) \equiv 0$.
\end{proof}

\section{Proof of the no-pole theorem}

\label{sect-proof-no-pole}

In this \lcnamecref{sect-proof-no-pole},
we prove one of the main results of this paper,
\cref{thm-no-pole-sd},
which implies that the numerical enumerative invariants
$\chiJ^\sd_{\theta} (\tau)$, $\DT^\sdnai_\theta (\tau)$,
and the Donaldson--Thomas invariants
$\DT^\sd_\theta (\tau)$ in \cref{sect-quiv-dt,sect-threefolds},
are well-defined.

\subsection{Preparatory results}

\begin{lemma}
    \label{lem-max-torus}
    Let $A$ be a finite-dimensional $\bbK$-algebra,
    and $T \simeq (\Gm)^n \subset A^\times$ a maximal torus.
    Then the closure $S \subset A$ of\/~$T$ in~$A$
    is a $\bbK$-subalgebra isomorphic to $\bbK^n$, and $S^\times = T$. 
\end{lemma}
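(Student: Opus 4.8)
The plan is to show that $T$ already lies inside a commutative semisimple subalgebra of $A$ which it linearly spans, and that maximality forces this subalgebra to be $\simeq \bbK^n$. Write $\bbK[T] \subseteq A$ for the $\bbK$-linear span of $T$. Since $T$ is closed under multiplication and contains $1_A$ (the identity of the group $T$ is the unit of $A$), the subspace $\bbK[T]$ is in fact the $\bbK$-subalgebra of $A$ generated by $T$. Using the faithful representation of $A$ on itself by left multiplication, $A \hookrightarrow \End_\bbK(A)$, the torus $T$ acts rationally and hence diagonalizably, so there is a basis of $A$ in which every element of $T$ is diagonal; extending by linearity, every element of $\bbK[T]$ is then diagonal in this basis. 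Thus $\bbK[T]$ is a finite-dimensional reduced commutative $\bbK$-algebra with unit $1_A$, and since $\bbK$ is algebraically closed this gives $\bbK[T] \simeq \bbK^m$ for some integer $m$.

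Next I would pin down $m = n$ using the maximality of $T$. For $t \in T$ we have $t^{-1} \in T \subseteq \bbK[T]$, so $t$ is a unit of $\bbK[T]$; hence $T \subseteq \bbK[T]^\times$. Under the isomorphism $\bbK[T] \simeq \bbK^m$ — which is $\bbK$-linear, hence an isomorphism of affine varieties — the unit group $\bbK[T]^\times$ corresponds to $(\Gm)^m$; it is locally closed in $A$ (the intersection of the closed subspace $\bbK[T]$ with the open subset $A^\times$), so it is an algebraic subgroup of $A^\times$, abstractly a torus. Since $T$ is a maximal torus and $T \subseteq \bbK[T]^\times$ is a torus containing it, we get $T = \bbK[T]^\times$, and comparing ranks gives $m = n$, so $\bbK[T] \simeq \bbK^n$.

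Finally, for the closure: $\bbK[T]$ is a linear subspace of the finite-dimensional space $A$, hence Zariski closed, while $T = \bbK[T]^\times \simeq (\Gm)^n$ is dense in $\bbK[T] \simeq \bbA^n$. Therefore $S = \overline{T} = \bbK[T]$, which is a $\bbK$-subalgebra of $A$ isomorphic to $\bbK^n$, and $S^\times = \bbK[T]^\times = T$.

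The argument is essentially bookkeeping once the main idea — replacing $T$ by the algebra it spans — is in place; the only points requiring a little care are that the identification $\bbK[T]^\times \simeq (\Gm)^m$ is an isomorphism of algebraic groups (clear, since $\bbK[T]\simeq\bbK^m$ is a linear, hence algebraic, isomorphism carrying the unit group to $(\Gm)^m$), and that invertibility in $\bbK[T]$ agrees with invertibility in $A$ for elements of $\bbK[T]$ (a non-unit of $\bbK^m\simeq\bbK[T]$ is a zero divisor, hence a non-unit of $A$), so that $S^\times$ computed inside the ring $S$ coincides with $S \cap A^\times$.
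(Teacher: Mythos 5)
Your proof is correct, and it takes a genuinely different route from the one in the paper. The paper embeds $A^\times$ into $\GL(A)$ via the left regular representation, extends $i(T)$ to a maximal torus $T'$ of $\GL(A)$, invokes conjugacy of maximal tori in $\GL(A)$ to identify $\overline{T'}$ with the diagonal algebra $\simeq \bbK^{\dim A}$ with unit group $T'$, and then pulls back along $i$. That last pullback step — that $S = i^{-1}(S')$, i.e.\ that $\overline{i(T)}$ coincides with $i(A)\cap S'$ — is asserted rather than argued, and verifying it requires checking that the diagonalizable group $T' \cap i(A^\times)$ is in fact connected (so that it equals the maximal torus $i(T)$ and one may take closures). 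Your argument sidesteps that subtlety entirely by working intrinsically in $A$: you form the linear span $\bbK[T]$, which is automatically a subalgebra and automatically Zariski-closed; diagonalizability of the rational $T$-action on $A$ forces $\bbK[T]$ to be reduced commutative, hence $\simeq \bbK^m$; and maximality of $T$ among tori in $A^\times$, applied to the torus $\bbK[T]^\times \simeq (\Gm)^m$, yields $m=n$ and $T=\bbK[T]^\times$, from which the closure statement is immediate since $(\Gm)^n$ is dense in $\bbA^n$. In short: the paper trades on conjugacy of maximal tori in $\GL(A)$ and works in a bigger ambient algebra, while you stay inside $A$ and trade on the structure theory of finite-dimensional reduced commutative $\bbK$-algebras; your route makes the identification of the closure and its unit group more transparent.
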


\begin{proof}
    Consider the embedding of $\bbK$-algebras
    $i \colon A \hookrightarrow \End_{\bbK} (A)$,
    which restricts to an embedding of algebraic groups
    $i \colon A^\times \hookrightarrow \GL (A)$.
    Extend $i (T)$ to a maximal torus $T' \subset \GL (A)$,
    and let $S' \subset \End_{\bbK} (A)$ be the closure of $T'$ in $\End_{\bbK} (A)$.
    Since all maximal tori are conjugate in $\GL (A)$,
    and conjugations extend to $\bbK$-linear automorphisms of $\End_{\bbK} (A)$,
    it follows that $S'$ is a $\bbK$-subalgebra of $\End_{\bbK} (A)$
    isomorphic to $\bbK^N$, where $N = \rank T' = \dim A$,
    and $S'^\times = T'$.
    Since $S = i^{-1} (S')$, we have the desired result.
\end{proof}

\paragraph{}
\label[lemma]{lem-local-decomp}

The following is an analogue of Joyce~\cite[Proposition~5.8]{Joyce2007II}
in the self-dual case,
and describes the local structure of the moduli stack of self-dual objects.

\begin{lemma*}
    Let $\calA, \calM$ be as in \tagref{SdMod},
    and let $\calS \subset \calM^\sd$ be a quasi-compact locally closed substack.
    Then there exists a finite decomposition
    $\calS = \bigcup_{i \in I} \calS_i$ into disjoint locally closed substacks
    $\calS_i$ of the form
    \begin{equation}
        \label{eq-local-decomp}
        \calS_i \simeq [U_i / A_i^\iso] \ ,
    \end{equation}
    with $A_i$ a finite-dimensional involutive $\bbK$-algebra,
    $A_i^\iso$ as in \cref{para-inv-alg},
    and $U_i$ an affine $\bbK$-scheme acted on by $A_i^\iso$,
    such that for each $u \in U_i (\bbK)$ mapping to $(E, \phi) \in \calM^\sd (\bbK)$,
    there exists an involutive subalgebra $B_u \subset A_i$
    with $B_u^\iso = \Stab_{A_i^\iso} (u)$,
    such that the induced map $\Stab_{A_i^\iso} (u) \simto \Aut (E, \phi)$
    extends to an isomorphism of involutive $\bbK$-algebras
    $B_u \simto \End (E)$.
\end{lemma*}

\begin{proof}
    \fixlineheight
    As in the proof of
    Kresch~\cite[Proposition~3.5.9]{Kresch1999},
    used in the proof of \cite[Proposition~5.8]{Joyce2007II},
    there is a decomposition $\calS = \bigcup_{i \in I} \calS_i$,
    such that each $\calS_i$ is a gerbe over a 
    connected, reduced $\bbK$-scheme $S_i$,
    with a finite flat morphism $T_i \to S_i$ of reduced $\bbK$-schemes,
    such that $\calT_i = \calS_i \times_{S_i} T_i$ as a gerbe over $T_i$
    is isomorphic to $\upB G_i$
    for a flat group scheme $G_i \to T_i$.
    Fibres of $G_i$ over $\bbK$-points of $T_i$
    are automorphism groups of self-dual objects in~$\calA$.

    \unfixlineheight
    Using the self-dual stack structure on $\+{\calM}$,
    the argument above can be strengthened to give
    an involutive algebra bundle $E_i \to T_i$,
    whose fibres over $\bbK$-points of $T_i$
    give algebras of endomorphisms of objects in $\calA$,
    with the involution given by self-dual structures on these objects.

    \fixlineheight
    We claim that $E_i$ has a self-dual faithful representation
    in a self-dual vector bundle $V_i \to T_i$,
    which can be chosen to be either orthogonal or symplectic,
    such that the involution of $E_i$
    is compatible with the self-dual structure of $V_i$.
    Indeed, one may take $V_i = E_i \oplus E_i^\vee$,
    with the self-dual structure swapping $E_i$ and $E_i^\vee$,
    with $E_i$ acting on $E_i$ by left multiplication,
    and on $E_i^\vee$ by $(e, \eta) \mapsto (e^\vee \cdot (-))^* (\eta)$,
    where $(-)^*$ denotes the linear dual.

    Now choose $V_i$ symplectic as above,
    and let $\calV_i = V_i \times_{T_i} \calT_i$,
    as a symplectic vector bundle over $\calT_i$.
    Let $\calW_i \to \calS_i$ be the symplectic vector bundle
    which is the pushforward of $\calV_i$
    along the finite flat morphism $\calT_i \to \calS_i$,
    and let $U_i \to \calS_i$ be the associated
    principal $\Sp (n_i)$-bundle of $\calW_i$,
    where $n_i = \rank \calW_i$.
    Then $U_i$ is an algebraic space acted on by $\Sp (n_i)$,
    and $\calS_i \simeq [U_i / \Sp (n_i)]$.
    As in the proof of \cite[Lemma~3.5.1]{Kresch1999},
    shrinking $U_i$ if necessary,
    we may assume that it is quasi-projective.
    (This uses the fact that $\Sp (n_i)$ is connected,
    which is why we chose $V_i$ symplectic instead of orthogonal.)

    We then choose $A_i = \GL (n_i)$,
    with an involution given by a symplectic structure on $\bbK^{n_i}$,
    so that $A_i^\iso = \Sp (n_i)$.
    Arguing as in the proof of \cite[Proposition~5.8]{Joyce2007II}
    verifies the compatibility conditions on stabilizer groups.
\end{proof}

\subsection{Virtual rank projections}

In this section, we assume the conditions
\tagref{Spl}, \tagref{SdMod}, \tagref{Fin}, and \tagref{Stab}
for a fixed self-dual weak stability condition~$\tau$.
We study the virtual rank projections,
as in \cref{sect-vrp},
of the stack functions $\delta_\alpha (\tau)$ and $\delta^\sd_\theta (\tau)$.
The main results are \cref{thm-vrp,thm-vrp-higher}.

\begin{definition}
    Define operations
    \begin{alignat}{3}
        \circledast & = \oplus_! 
        & \colon \ && \SF (\calM) \otimes \SF (\calM) & \longrightarrow \SF (\calM) \ , \\
        \circleddiamond & = \oplus^\sd_! 
        & \colon \ && \SF (\calM) \otimes \SF (\calM^\sd) & \longrightarrow \SF (\calM^\sd) \ ,
    \end{alignat}
    where $\oplus$ and $\oplus^\sd$ are maps defined in
    \cref{eq-def-oplus-moduli,eq-def-oplus-sd-moduli}.
    This makes $\SF (\calM)$ into a commutative algebra,
    and $\SF (\calM^\sd)$ into a module over this algebra.
\end{definition}

\begin{definition}
    \allowdisplaybreaks
    For classes $\alpha \in C^\circ (\calA)$ and $\theta \in C^\sd (\calA)$,
    define elements
    $\sigma_\alpha (\tau) \in \SF (\calM^\ss_\alpha (\tau))$ and
    $\sigma^\sd_\theta (\tau) \in \SF (\calM^\sdss_\theta (\tau))$ by
    \begin{align}
        \label{eq-def-sigma}
        \sigma_\alpha (\tau) & = \sum_{ \leftsubstack[6em]{
            \\[-1.5ex]
            & n > 0; \, \alpha_1, \dotsc, \alpha_n \in C (\calA) \colon \\[-.5ex]
            & \alpha = \alpha_1 + \cdots + \alpha_n \, , \\[-.5ex]
            & \tau (\alpha_1) = \cdots = \tau (\alpha_n)
        } } 
        \frac{(-1)^{n-1}}{n} \cdot
        \delta_{\alpha_1} (\tau) \circledast \cdots \circledast \delta_{\alpha_n} (\tau) \ ,
        \\
        \label{eq-def-sigma-sd}
        \sigma^\sd_\theta (\tau) & = \sum_{ \leftsubstack[6em]{
            \\[-1.5ex]
            & n \geq 0; \, \alpha_1, \dotsc, \alpha_n \in C (\calA), \,
            \rho \in C^\sd (\calA) \colon \\[-.5ex]
            & \theta = \bar{\alpha}_1 + \cdots + \bar{\alpha}_n + \rho \\[-.5ex]
            & \tau (\alpha_1) = \cdots = \tau (\alpha_n) = 0
        } } {}
        \binom{-1/2}{n} \cdot
        \delta_{\alpha_1} (\tau) \circleddiamond \cdots \circleddiamond \delta_{\alpha_n} (\tau) \circleddiamond \delta^\sd_\rho (\tau) \ .
    \end{align}
    Compare with~\cref{eq-def-epsilon,eq-def-epsilon-sd}.
    The element $\sigma_\alpha (\tau)$ was denoted by $\bar{\delta}_{\mathrm{si}}^\alpha (\tau)$
    in~\cite[Definition~8.1]{Joyce2007III}.
    More concisely, \crefrange{eq-def-sigma}{eq-def-sigma-sd}
    can be written as
    \begin{align}
        \sigma (\tau; t) & = \log_\circledast \delta (\tau; t) \ , \\
        \sigma^\sd (\tau) & = \delta (\tau; 0)^{\circledast (-1/2)} \circleddiamond \delta^\sd (\tau) \ ,
    \end{align}
    with notations analogous to those
    in~\cref{eq-epsilon-delta-exp-1,eq-def-epsilon-sd-compact}.
\end{definition}

\begin{theorem}
    \label{thm-vrp}
    For any $\alpha \in C^\circ (\calA)$ and $\theta \in C^\sd (\calA)$,
    we have
    \begin{align}
        \label{eq-thm-vrp}
        \vrp{1} (\delta_\alpha (\tau)) & = \sigma_\alpha (\tau) \ , \\
        \label{eq-thm-vrp-sd}
        \vrp{0} (\delta^\sd_\theta (\tau)) & = \sigma^\sd_\theta (\tau) \ .
    \end{align}
\end{theorem}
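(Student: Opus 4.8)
The ordinary statement \eqref{eq-thm-vrp} is due to Joyce: the element $\sigma_\alpha(\tau)$ coincides with the one written $\bar\delta^\alpha_{\mathrm{si}}(\tau)$ in \cite[Definition~8.1]{Joyce2007III}, and the identity $\vrp{1}(\delta_\alpha(\tau)) = \sigma_\alpha(\tau)$, together with the fact that this element has pure virtual rank~$1$, is established in \cite[\S8]{Joyce2007III} as the main step towards the no-pole theorem \cref{thm-no-pole}; that argument is formal once the Jordan--H\"older stratification of $\Mss_\alpha(\tau)$ is available, and carries over to exact categories without change. So I concentrate on \eqref{eq-thm-vrp-sd}.

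The plan is first to reduce \eqref{eq-thm-vrp-sd} to a purity statement. Inverting the defining relation $\sigma^\sd(\tau) = \delta(\tau;0)^{\circledast(-1/2)}\circleddiamond\delta^\sd(\tau)$ and reading off the component at $\theta$ gives
\begin{equation*}
    \delta^\sd_\theta(\tau) = \sigma^\sd_\theta(\tau) - \sum \binom{-1/2}{n}\,
    \delta_{\alpha_1}(\tau)\circleddiamond\cdots\circleddiamond\delta_{\alpha_n}(\tau)\circleddiamond\delta^\sd_\rho(\tau),
\end{equation*}
the sum over $n\geq1$, classes $\alpha_1,\dotsc,\alpha_n\in C(\calA)$ with $\tau(\alpha_i)=0$, and $\rho\in C^\sd(\calA)$ with $\theta=\bar\alpha_1+\cdots+\bar\alpha_n+\rho$. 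Each summand on the right is represented by the stack $\Mss_{\alpha_1}(\tau)\times\cdots\times\Mss_{\alpha_n}(\tau)\times\Msdss_\rho(\tau)$ with the direct-sum morphism to $\calM^\sd$, and since $\alpha_1\neq0$ the stabilizer groups of this stack contain $\Aut(E_1)\supseteq\Gm$ as a direct factor, so the summand has all virtual ranks $\geq1$ and is killed by $\vrp{0}$. Hence $\vrp{0}(\delta^\sd_\theta(\tau)) = \vrp{0}(\sigma^\sd_\theta(\tau))$, and it is enough to show that $\sigma^\sd_\theta(\tau)$ has pure virtual rank~$0$.

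For this I would use \cref{lem-local-decomp} together with the Jordan--H\"older theory of semistable self-dual objects. Stratify the locus $\Msdss_\theta(\tau)$, which is of finite type by \tagref{Stab2}, by the isomorphism class of the associated graded $G = \mathrm{gr}(E)$ of a Jordan--H\"older filtration of $E$; it inherits a self-dual structure and decomposes as $\bigoplus_i(S_i\oplus S_i^\vee)^{\oplus a_i}\oplus\bigoplus_j T_j^{\oplus b_j}$ with the $S_i$ stable, $S_i\not\simeq S_i^\vee$, and the $T_j$ stable self-dual, pairwise non-isomorphic. On each stratum the moduli stack is of the form $[U/H]$ with $H=\Aut(G,\phi_G)$ sitting in an extension $1\to R\to H\to\bar H\to1$, $R$ unipotent and $\bar H=\prod_i\GL(a_i)\times\prod_j\upO(b_j)\times\prod_j\Sp(b_j)$, and $U$ an affine bundle over a product of moduli of stable and of stable self-dual objects. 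Since $R$ is special and unipotent, $[*/R]$ has pure virtual rank~$0$; combining this with the compatibility of $\vrp{n}$ with representable pushforwards and with the fact that $\circledast,\circleddiamond$ add virtual rank, the computation of $\vrp{n}$ on each stratum reduces to the virtual rank decompositions of $[*/\GL(m)]$, $[*/\upO(m)]$, $[*/\Sp(m)]$ (\cref{eg-motive-bg}). One must then verify a combinatorial identity: the coefficients $\frac{(-1)^{k-1}}{k}$ inside each factor $\sigma_{\alpha_i}$ collapse every $\GL(a_i)$-contribution onto its virtual rank~$1$ piece, so those strata never reach virtual rank~$0$, while the coefficients $\binom{-1/2}{n}$, together with the known motives of $[*/\upO(b_j)]$ and $[*/\Sp(b_j)]$, make every virtual rank $\geq1$ contribution of the $\upO$- and $\Sp$-factors cancel across strata, leaving pure virtual rank~$0$. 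After unwinding, these reduce respectively to $\log(1+x)=\sum_{k\geq1}\frac{(-1)^{k-1}}{k}x^k$ and $(1+x)^{-1/2}=\sum_{n\geq0}\binom{-1/2}{n}x^n$.

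The hard part will be the orthogonal/symplectic bookkeeping. Joyce's collapse argument for $\bar\delta_{\mathrm{si}}$ uses that $\GL(m)$ is special with a transparent, connected virtual rank decomposition; by contrast $\upO(m)$ is not special --- $\mu([*/\upO(r)])\neq\mu(\upO(r))^{-1}$ and $\mu([*/\bbZ_2])=1$, as in \cref{eg-motive-bg} --- so one must track both its disconnectedness and the gap between $\mu([*/H])$ and $\mu(H)^{-1}$, and show that $\binom{-1/2}{n}$ is exactly the weight forcing the higher virtual rank pieces to vanish; this is the sense in which $\binom{-1/2}{n}$ is the unique admissible choice, as remarked after \eqref{eq-def-epsilon-sd-inv-compact}. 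A secondary technical point is that $\oplus^\sd$ is not representable, so before applying a virtual rank projection one should rewrite the relevant $\circleddiamond$-products as pushforwards from the self-dual filtration stacks $\calM^{\pn{2n+1},\>\sd}$, whose projections are representable by \cref{lem-filt-sd-rep-ft}.
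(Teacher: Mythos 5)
Your proposal diverges from the paper's proof in a way that is partly a valid reformulation and partly a genuine gap. The reduction step is sound: $\oplus^\sd$ is representable (the maps on stabilizer groups $\prod_i\Aut(E_i)\times\Aut(E_0,\phi_0)\to\Aut(\oplus^\sd(\ldots))$ are injective), $\vrp{n}$ commutes with representable pushforwards by \cite[Proposition~5.14]{Joyce2007Stack}, and each $\delta_{\alpha_i}(\tau)$ with $\alpha_i\neq0$ has virtual ranks $\geq1$ because $\Gm\subset\Aut(E_i)$ is central, so the $n\geq1$ summands indeed vanish under $\vrp{0}$. But this reduction is not a simplification: showing $\sigma^\sd_\theta(\tau)$ has pure virtual rank~$0$ is logically equivalent to \eqref{eq-thm-vrp-sd} given your reduction, so all the content still has to appear in the second step. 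The paper works directly with $\delta^\sd_\theta(\tau)=[\Msdss_\theta(\tau)]$ and never takes this detour.

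The second step, as you sketch it, has a real gap. You propose to stratify $\Msdss_\theta(\tau)$ by the isomorphism class of the associated graded $\mathrm{gr}(E)$ of a $\tau$-Jordan--H\"older filtration. But \cref{thm-jh-filtration} requires $\tau$ to be a stability condition, while \cref{thm-vrp} and the no-pole theorem \cref{thm-no-pole-sd} are stated for \emph{weak} stability conditions (see \cref{para-sit-num-inv}); for a weak stability condition the semistable category $\calA(\tau;0)$ need not be abelian and Jordan--H\"older factors need not exist. Even for a stability condition, stratifying by isomorphism class of $\mathrm{gr}(E)$ does not produce a finite constructible decomposition — the stable factors vary in moduli. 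The paper avoids both issues by using \cref{lem-local-decomp}, which is a Kresch-type decomposition $\calS=\bigcup_i[U_i/A_i^\iso]$ with $A_i$ a matrix algebra, recording only the involutive subalgebras $B_u\subset A_i$ realizing $\End(E)$; the precise structure of $E$ is never invoked. The link between $Q$-fixed points and direct sum decompositions is then supplied by the quasi-abelian splitting \cref{lem-qa-splitting} (this is exactly why \tagref{Spl} is a hypothesis), not by Jordan--H\"older theory. You would need to replace your stratification by something of this kind.

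Finally, the combinatorial identity is where the proof actually lives, and your proposal only gestures at it via the expansion $(1+x)^{-1/2}=\sum_n\binom{-1/2}{n}x^n$. Your instinct about which generating function should appear is correct — the paper's coefficient $\frac{(-1)^r(2r-1)!!}{2^r r!}$ in \cref{eq-vrp0-4} is exactly $\binom{-1/2}{r}$ — but the actual work is the M\"obius-type identity
\begin{equation*}
    \sum_{J=J_0\twoheadrightarrow\cdots\twoheadrightarrow J_m,\ (J_m)^{\bbZ_2}=J_m}(-1)^m
    =\begin{cases}(-1)^r(2r-1)!!,&s\leq1,\\0,&\text{else,}\end{cases}
\end{equation*}
summed over chains of non-bijective surjections in $\bbZ_2\mathhyphen\cat{Fin}$, which is \cref{lem-vrp-comb} (equivalently \cref{eq-vrp0-3}) and occupies all of \cref{sect-vrp-comb}. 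This identity, inserted into Joyce's formula for $\vrp{0}$ after enlarging his subgroup sets $\calP,\calQ,\calR$ to the involutive-subalgebra set $\calT_i$, is the technical core; it is not derivable from the elementary power-series identities you cite, and your sketch of "cancelling higher virtual rank pieces across strata" would need to be made precise along those lines before the argument closes.
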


\begin{proof}
    Equation~\cref{eq-thm-vrp} was proved in~\cite[Theorem~8.6]{Joyce2007III}.
    Although the cited result is stated for abelian categories,
    the argument works for $\bbK$-linear exact categories~$\calA$
    satisfying the condition in \cref{lem-qa-splitting}.

    To prove~\cref{eq-thm-vrp-sd},
    we use the definition of $\vrp{0}$ from
    \cite[\S5.2]{Joyce2007Stack}.

    Choose a decomposition $\calM^\sdss_\theta (\tau) = \bigcup_{i \in I} \calS_i$
    into locally closed substacks
    $\calS_i \simeq [U_i / A_i^\iso]$ as in~\cref{lem-local-decomp},
    so that $\delta^\sd_\theta (\tau) = \sum_{i \in I} {} [\calS_i]$.
    Write $G_i = A_i^\iso$,
    and choose a maximal torus $T_i \subset G_i$.
    Extending $T_i$ to a maximal torus in $A_i^\times$,
    and taking its closure in $A_i$,
    then intersecting with its dual,
    we obtain an involutive subalgebra $S_i \subset A_i$
    isomorphic to $\bbK^I$ for a finite set $I$,
    with involution given by an involution of $I$,
    and such that $S_i^\iso = T_i$,
    as can be deduced from \cref{lem-max-torus}.

    Consider the sets
    \begin{align}
        \label{eq-def-pqr-p}
        \calP (U_i, T_i) & = \{
            \mathrm{Stab}_{T_i} (Z) \mid
            Z \subset U_i \text{ subscheme}
        \} \ , \\
        \label{eq-def-pqr-q}
        \calQ (G_i, T_i) & = \{
            T_i \cap \upC (\upC_{G_i} (S)) \mid
            S \subset T_i \text{ closed subgroup}
        \} \ , \\
        \label{eq-def-pqr-r}
        \calR (U_i, G_i, T_i) & = \{
            P \cap Q \mid
            P \in \calP (U_i, T_i), \,
            Q \in \calQ (G_i, T_i)
        \}
    \end{align}
    from \cite[Definitions~5.3, 5.5, and~5.8]{Joyce2007Stack}.
    They are finite sets of closed subgroups of $T$,
    closed under intersection.
    The sets \cref{eq-def-pqr-p,eq-def-pqr-q,eq-def-pqr-r}
    are contained in the set
    \begin{equation}
        \calT_i = \{
            B^\iso \mid B \subset S_i \text{ involutive subalgebra}
        \} \ ,
    \end{equation}
    by the condition on stabilizer groups in~\cref{lem-local-decomp},
    and the fact that $\smash{\upC_{G_i} (S)} = 
    \smash{\bigl( \upC_{A_i} (S)} \cap \smash{\upC_{A_i} (S)^\vee \bigr)^\iso}$
    in the case of \cref{eq-def-pqr-q}.
    Using the fact that we can replace the sets
    \cref{eq-def-pqr-p,eq-def-pqr-q,eq-def-pqr-r} by
    larger sets of subgroups of $T_i$
    when defining the projections $\vrp{k}$,
    as in the proof of \cite[Theorem~8.6]{Joyce2007III},
    we now replace them by the set $\calT_i$,
    giving, by \cite[Definition~5.10]{Joyce2007Stack},
    \begin{align}
        \label{eq-vrp0-1}
        \vrp{0} ([\calS_i]) & = \sum_{\leftsubstack[4em]{
            & P, Q \in \calT_i \colon \\[-.5ex]
            & M_0 (P, Q) \neq 0
        } }
        M_0 (P, Q) \cdot [ U_i^P / \upC_{G_i} (Q) ] \ ,
    \end{align}
    where $M_0 (P, Q)$ is a rational coefficient,
    $U_i^P$ denotes the fixed locus of $P$ in $U_i$,
    and we write stacks themselves for the stack functions they represent.
    The coefficient $M_0 (P, Q)$ is given by
    \begin{align}
        \label{eq-vrp0-2}
        M_0 (P, Q) & =
        \frac{1}{|W_Q|} \cdot
        \sum_{\leftsubstack[8em]{
            & P = P_0 \supsetneq P_1 \supsetneq \cdots \supsetneq P_m \text{ in } \calT_i \> , \\[-.5ex]
            & Q = Q_0 \supsetneq Q_1 \supsetneq \cdots \supsetneq Q_n \text{ in } \calT_i \colon \\[-.5ex]
            & \dim (P_m \cap Q_n) = 0
        } } {}
        (-1)^{m+n} \ ,
    \end{align}
    where $W_Q = \upN_{G_i} (T_i) \big/ \bigl( \upC_{G_i} (Q) \cap \upN_{G_i} (T_i) \bigr)$.
    As in the proof of \cite[Theorem~8.6]{Joyce2007III},
    we have $M_0 (P, Q) = 0$ unless $P = Q$,
    so we may assume that $P = Q = B^\iso$
    for an involutive subalgebra $B \subset S_i$,
    isomorphic to $\bbK^J$ for a finite set $J$,
    with involution given by a $\bbZ_2$ action on $J$,
    given by $(-)^\vee \colon J \simto J$.
    Writing $s = |J^{\smash{\bbZ_2}}|$ and $r = (|J| - s)/2$, 
    the sum in \cref{eq-vrp0-2} can be shown to be equal to
    \begin{align}
        \label{eq-vrp0-3}
        \sum_{\leftsubstack[8em]{
            & P = P_0 \supsetneq P_1 \supsetneq \cdots \supsetneq P_m \text{ in } \calT_i \colon \\[-.5ex]
            & \dim (P_m) = 0
        } } {}
        (-1)^{m} =
        \begin{cases}
            (-1)^r \, (2r - 1)!! & \text{if } s = 0 \text{ or } 1 \, , \\
            0 & \text{otherwise} \, .
        \end{cases}
    \end{align}
    Indeed, a standard argument shows that
    all terms in~\cref{eq-vrp0-2} with $n > 0$ cancel out,
    giving the left-hand side of~\cref{eq-vrp0-3}.
    The detailed proof of~\cref{eq-vrp0-3} is postponed
    to~\cref{sect-vrp-comb}.

    Let $\bbK^{r,s}$ denote the involutive $\bbK$-algebra $\bbK^J$,
    with $|J| = 2r + s$ and a $\bbZ_2$-action on~$J$
    fixing $s$~elements, as above.
    We can now rewrite~\cref{eq-vrp0-1} as
    \begin{align}
        \label{eq-vrp0-4}
        \vrp{0} (\calS_i) & = \sum_{\leftsubstack[6em]{
            & r \geq 0, \, s \in \{0, 1\}; \\[-.5ex]
            & q \colon \bbK^{r, s} \hookrightarrow S_i \text{ involutive subalgebra} . \\[-.5ex]
            & \text{Write } Q = q (\bbK^{r, s})^\iso
        } }
        \frac{(-1)^r \, (2r - 1)!!}{2^r r!} \cdot
        \frac{1}{|W_Q|} \cdot
        [ U_i^Q / \upC_{G_i} (Q) ] \ ,
    \end{align}
    where the factor $2^r r!$ is the number of maps~$q$
    giving the same~$Q$.
    The Weyl group $W_i$ of $G_i$ acts on $S_i$ by conjugation,
    and the orbit of $q$ is isomorphic to $W_Q$.
    
    Each term in the sum~\cref{eq-vrp0-4}, as a stack function,
    parametrizes splittings of the form
    $(E, \phi) \simeq \bar{E}_1 \oplus \cdots \oplus \bar{E}_r \oplus (E_0, \phi_0)$,
    with $E_i \in \calA$ non-zero for $i = 1, \dotsc, r$,
    and $E_0 = 0$ if and only if $s = 0$,
    for an object $(E, \phi) \in \calA^\sd$
    corresponding to a point $u \in U_i (\bbK)$ fixed by $Q$.
    This uses the property in \cref{lem-qa-splitting}
    which was assumed in \tagref{Spl}.
    Conversely, each such splitting corresponds to
    a map $q \colon \bbK^{r, s} \hookrightarrow S_i$ up to conjugation by $W_i$,
    giving $|W_Q|$ such maps.
    Also, such $(E, \phi)$ above is semistable if and only if
    $\bar{E}_1, \dotsc, \bar{E}_r$ are semistable of slope $0$,
    and $(E_0, \phi_0)$ is semistable.
    Summing over $i$ in~\cref{eq-vrp0-4} gives
    \begin{align}
        \label{eq-vrp0-5}
        \vrp{0} (\calM^\sdss_\theta (\tau)) & = \sum_{ \leftsubstack[4em]{
            \\[-1.5ex]
            & \alpha_1, \dotsc, \alpha_r \in C (\calA), \,
            \rho \in C^\sd (\calA) \colon \\[-.5ex]
            & \theta = \bar{\alpha}_1 + \cdots + \bar{\alpha}_r + \rho \\[-.5ex]
            & \tau (\alpha_1) = \cdots = \tau (\alpha_r) = 0
        } } {}
        \frac{(-1)^r \, (2r - 1)!!}{2^r r!} \cdot
        [ \calM^\ss_{\alpha_1} (\tau) \times \cdots \times \calM^\ss_{\alpha_r} (\tau) \times \calM^\sdss_\rho (\tau) ] \ .
        \raisetag{\baselineskip}
    \end{align}
    Comparing this with~\cref{eq-def-sigma-sd}
    proves~\cref{eq-thm-vrp-sd}.
\end{proof}

\paragraph{}
\label[theorem]{thm-vrp-higher}

As a corollary, it is possible to write down
all virtual rank projections of
$\delta_\alpha (\tau)$ and $\delta^\sd_\theta (\tau)$.

\begin{theorem*}
    Let $\alpha \in C (\calA)$ and $\theta \in C^\sd (\calA)$, and let $n \geq 0$.
    Then
    \begin{alignat}{2}
        \label{eq-thm-vrp-higher}
        \vrp{n} (\delta_\alpha (\tau)) & = {} &
        \frac{1}{n!} \cdot {} &
        \sum_{ \leftsubstack[6em]{
            \\[-2ex]
            & \alpha_1, \dotsc, \alpha_n \in C (\calA) \colon \\[-.5ex]
            & \alpha = \alpha_1 + \cdots + \alpha_n \, , \\[-.5ex]
            & \tau (\alpha_1) = \cdots = \tau (\alpha_n)
        } }
        \sigma_{\alpha_1} (\tau) \circledast \cdots \circledast \sigma_{\alpha_n} (\tau) \ , 
        \\
        \label{eq-thm-vrp-higher-sd}
        \vrp{n} (\delta^\sd_\theta (\tau)) & = {} &
        \frac{1}{2^n n!} \cdot {} &
        \sum_{ \leftsubstack[6em]{
            \\[-2ex]
            & \alpha_1, \dotsc, \alpha_n \in C (\calA), \,
            \rho \in C^\sd (\calA) \colon \\[-.5ex]
            & \theta = \bar{\alpha}_1 + \cdots + \bar{\alpha}_n + \rho \\[-.5ex]
            & \tau (\alpha_1) = \cdots = \tau (\alpha_n) = 0
        } } {}
        \sigma_{\alpha_1} (\tau) \circleddiamond \cdots \circleddiamond \sigma_{\alpha_n} (\tau) \circleddiamond \sigma^\sd_\rho (\tau) \ .
    \end{alignat}
\end{theorem*}

\begin{proof}
    Formally inverting \crefrange{eq-def-sigma}{eq-def-sigma-sd},
    similarly to~\cref{eq-def-epsilon-inverse,eq-def-epsilon-sd-inverse},
    we see that the sum of the right-hand sides of
    \crefrange{eq-thm-vrp-higher}{eq-thm-vrp-higher-sd}
    over all $n \geq 0$ is equal to
    $\delta_\alpha (\tau)$ and $\delta^\sd_\theta (\tau)$,
    respectively.
    Also, the right-hand sides of
    \crefrange{eq-thm-vrp-higher}{eq-thm-vrp-higher-sd}
    have virtual rank~$n$, 
    which follows from \cref{thm-vrp} and
    \cite[Proposition~5.14]{Joyce2007Stack}.
    This proves the theorem.
\end{proof}

\subsection{A combinatorial identity}
\label{sect-vrp-comb}

We prove the identity~\cref{eq-vrp0-3} used in the proof of~\cref{thm-vrp}.

\paragraph{}
\label{para-z2-fin}

Let $\bbZ_2 \mathhyphen \cat{Fin}$ be the category 
consisting of finite sets acted on by $\bbZ_2$,
and $\bbZ_2$-equivariant maps between them.
For integers $r, s \geq 0$, let $(r, s) \in \bbZ_2 \mathhyphen \cat{Fin}$
denote the set with $2r + s$ elements,
with a $\bbZ_2$-action fixing $s$ elements.

Let $\stirlingmat{r}{s}{r'}{s'}$
denote the number of $\bbZ_2$-equivariant surjections $(r, s) \twoheadrightarrow (r', s')$
up to automorphisms of $(r', s')$.
A standard argument shows that
\begin{align}
    \label{eq-z2-surj-0}
    \stirlingMat{r}{s}{r'}{0} & =
    \begin{cases}
        \displaystyle \stirling{r}{\, r' \,} \, 2^{r - r'}
        & \text{if } s = 0 \ , \\
        0 & \text{if } s > 0 \ ,
    \end{cases}
    \\
    \label{eq-z2-surj-1}
    \stirlingMat{r}{s}{r'}{1} & =
    \sum_{k = \max \{ 0, 1-s \}}^{r-r'} {}
    \binom{r}{\, k \,} \,
    \stirling{r - k}{r'} \,
    2^{r - r' - k} \ ,
\end{align}
where $\stirling{r}{r'}$ denotes the
\emph{Stirling number of the second kind},
defined as the number of partitions of $\{ 1, \dotsc, r \}$
into $r'$ non-empty subsets.

\paragraph{}
\label[lemma]{lem-vrp-comb}

We reformulate the identity \cref{eq-vrp0-3}
as the following result.

\begin{lemma*}
    Let $J = (r, s) \in \bbZ_2 \mathhyphen \cat{Fin}$.
    Then
    \begin{align}
        \label{eq-vrp-comb}
        \sum_{\leftsubstack[8em]{
            & J = J_0 \twoheadrightarrow \cdots \twoheadrightarrow J_m \text{ in } \bbZ_2 \mathhyphen \cat{Fin} \colon \\[-.5ex]
            & (J_m)^{\smash[b]{\bbZ_2}} = J_m
        } } {}
        (-1)^{m} =
        \begin{cases}
            (-1)^r \, (2r - 1)!! & \text{if } s \leq 1 \ , \\
            0 & \text{otherwise} \ ,
        \end{cases}
    \end{align}
    where we sum over chains of non-bijective surjections
    $J = J_0 \twoheadrightarrow \cdots \twoheadrightarrow J_m$,
    up to isomorphisms that are the identity on $J$.
\end{lemma*}

\begin{proof}
    We prove the lemma by induction on $r$, then on $s$.
    If $r = 0$, the result is standard,
    since when $s > 1$,
    chains with $J_m = (0, 1)$ and $J_m \neq (0, 1)$
    cancel out in pairs.

    Let $A (r, s)$ denote the left-hand side of~\cref{eq-vrp-comb}.
    We have the recursive formula
    \begin{equation}
        \label{eq-vrp-comb-1}
        A (r, s) =
        - \sum_{ \substack{ 0 \leq r' \leq r \\ 0 \leq s' \leq r + s \\ (r, s) \neq (r', s') } } {}
        \stirlingMat{r}{s}{r'}{s'} \cdot
        A (r', s') \ ,
    \end{equation}
    with notations as in~\cref{para-z2-fin}.
    However, by the induction hypothesis,
    only terms with $s' \leq 1$ contribute to the sum.
    Using~\crefrange{eq-z2-surj-0}{eq-z2-surj-1}, we obtain
    \begin{align}
        \label{eq-vrp-comb-2}
        A (r, 0) = A (r, 1) & =
        - \sum_{r' = 0}^{r-1} {}
        (-1)^{r'} \, (2r' - 1)!! \cdot
        \sum_{k=0}^{r-r'} {}
        \binom{r}{\, k \,} \,
        \stirling{r - k}{r'} \,
        2^{r - r' - k} \ ,
    \end{align}
    where we changed the $k$ in~\cref{eq-z2-surj-1} to $r - r' - k$.
    Therefore, to prove~\cref{eq-vrp-comb} for $s = 0$ and $1$,
    it is enough to show that
    \begin{equation}
        \label{eq-vrp-comb-3}
        \sum_{r' = 0}^{r} {}
        (-1)^{r'} \, (2r' - 1)!! \cdot
        \sum_{k=0}^{r-r'} {}
        \binom{r}{\, k \,} \,
        \stirling{r - k}{r'} \,
        2^{r - r' - k}
        = 0 \ .
    \end{equation}
    We postpone the proof of this to~\cref{para-vrp-comb-red}.

    If $s > 1$, then compared to the case when $s = 1$,
    the right-hand side of~\cref{eq-vrp-comb-1}
    has an extra term $-A (r, 1)$.
    Since all other terms add up to $A (r, 1)$,
    we obtain $A (r, s) = 0$,
    which finishes the proof.
\end{proof}

\paragraph{Proof of~\cref{eq-vrp-comb-3}.}
\label{para-vrp-comb-red}

We translate the identity~\cref{eq-vrp-comb-3}
back to chains of surjections in $\bbZ_2 \mathhyphen \cat{Fin}$,
giving
\begin{align}
    \label{eq-vrp-comb-4}
    \sum_{ \leftsubstack[6em]{
        & r = r_0 > \cdots > r_m = 0; \\[-.5ex]
        & (r_0, 1) \twoheadrightarrow \cdots \twoheadrightarrow (r_m, 1) \text{ in } \bbZ_2 \mathhyphen \cat{Fin}
    } } {}
    (-1)^{m} =
    (-1)^r \, (2r - 1)!! \ ,
\end{align}
where we sum over chains of surjections
up to isomorphisms that are the identity on $(r_0, 1)$.
Note that we have reduced~\cref{eq-vrp-comb}
to the case when $s = 1$,
and we are now restricted to contributions
from chains in which every term is of the form $(r', 1)$.

We now prove~\cref{eq-vrp-comb-4} by induction on $r$.
Write the $\bbZ_2$-action on $(r, 1)$ as an involution
$(-)^\vee \colon (r, 1) \simto (r, 1)$.
Let $0 \in (r, 1)$ be the unique element with $0^\vee = 0$,
and fix an element $1 \in (r, 1)$ with $1^\vee \neq 1$.

For a chain
$(r_\bullet, 1) = \bigl( (r_0, 1) \twoheadrightarrow \cdots \twoheadrightarrow (r_m, 1) \bigr)$,
write $p_k \colon (r_0, 1) \to (r_k, 1)$
and $p_{\smash{k}}^{\smash{k'}} \colon (r_{k'}, 1) \to (r_k, 1)$
for the surjections, where $0 \leq k' \leq k \leq m$.
Define $K_k = (p_k)^{-1} (p_k (1)) \subset (r_0, 1)$,
giving chain of subsets
$\{ 1 \} = K_0 \subset \cdots \subset K_m = (r_0, 1)$.

We say that an index $k \in \{ 0, \dotsc, m - 1 \}$
is \emph{special} for the chain $(r_\bullet, 1)$
if $0 \notin K_k \neq K_{k+1}$,
and \emph{critical} if moreover $0 \in K_{k+1}$.
Then every chain has a unique critical index.
A special index $k$ is \emph{good} if
$p_{\smash{k+1}}^{\smash{k}}$
does not factor as a composition of two non-bijective surjections
through an object of the form $(r', 1)$,
or \emph{bad} otherwise.
We will show that only chains with all indices good
up to the critical one contribute to the sum,
and that other chains cancel out in pairs.

Fix a chain $(r_\bullet, 1)$ of length~$m$,
such that not all indices up to the critical index $k_0$ are good.
Let $k \leq k_0$ be the smallest special index 
that is bad or has a preceding non-special index.
We construct a new chain $(r'_\bullet, 1)$ of length $m \pm 1$, as follows.
If $k$ is good, then $k > 0$ and $k - 1$ is non-special,
and we set $(r'_\bullet, 1)$ to be the chain
\[
    (r_0, 1) \twoheadrightarrow \cdots \twoheadrightarrow (r_{k-1}, 1) \twoheadrightarrow (r_{k+1}, 1) \twoheadrightarrow \cdots \twoheadrightarrow (r_m, 1) \ ,
\]
where $k - 1$ becomes bad.
On the other hand, if $k$ is bad in $(r_\bullet, 1)$,
we set $(r'_\bullet, 1)$ to be the chain
\[
    (r_0, 1) \twoheadrightarrow \cdots \twoheadrightarrow (r_k, 1) \twoheadrightarrow
    (r_{k+1} + 1, 1) \twoheadrightarrow (r_{k+1}, 1) \twoheadrightarrow \cdots \twoheadrightarrow (r_m, 1) \ ,
\]
where the map $(r_k, 1) \twoheadrightarrow (r_{k+1} + 1, 1)$
sends $p_k (1)$ and $p_k (1)^\vee$ to the extra pair of elements,
and all other elements to $(r_{k+1}, 1)$ via $p_{\smash{k+1}}^{\smash{k}}$;
the map $(r_{k+1} + 1, 1) \twoheadrightarrow (r_{k+1}, 1)$
sends the extra pair of elements to
$p_{k+1} (1)$ and $p_{k+1} (1)^\vee$,
and all other elements to $(r_{k+1}, 1)$ canonically.
The former map is non-bijective since $k$ is bad.
In the new chain, $k$ becomes non-special and $k + 1$ becomes good.

Note that applying the above construction twice to a chain
gives back the original chain.
This implies that contributions of these chains
to~\cref{eq-vrp-comb-4} cancel out in pairs,
and we may consider only those chains
with all indices good up to the critical one.
We say that these chains are \emph{good}.

We divide good chains into equivalence classes,
where two chains are equivalent if they are the same
before and including the critical index.
Fixing a critical index $k_0$,
the number of such equivalence classes with critical index $k_0$
is equal to $(2 r - 2) (2 r - 4) \cdots (2 r - 2 k_0)$,
and the contribution of each class to~\cref{eq-vrp-comb-4}
is $(-1)^r \, (2 r - 2 k_0 - 3)!!$,
by the induction hypothesis.
Therefore, the total contribution of good chains is
\[
    \sum_{k_0 = 0}^{r-1} {}
    (-1)^r \,
    (2 r - 2) (2 r - 4) \cdots (2 r - 2 k_0)
    \cdot (2 r - 2 k_0 - 3)!! 
    =
    (-1)^r \, (2 r - 1)!! \ ,
\]
which completes the proof of~\cref{eq-vrp-comb-4}.
\QED

\paragraph{}

For the reader's convenience,
we also state the analogous result of
\cref{lem-vrp-comb}
in the non-self-dual case,
which can be proved by a similar, but simpler argument.

\begin{lemma*}
    Let $\cat{Fin}$ be the category of finite sets,
    and let $J \in \cat{Fin}$ be non-empty.
    Then
    \begin{align}
        \sum_{\leftsubstack[8em]{
            & J = J_0 \twoheadrightarrow \cdots \twoheadrightarrow J_m = \{ * \} \textnormal{ in } \cat{Fin}
        } } {}
        (-1)^{m} =
        (-1)^{|J| - 1} \, (|J| - 1)! \ ,
    \end{align}
    where we sum over chains of non-bijective surjections
    $J = J_0 \twoheadrightarrow \cdots \twoheadrightarrow J_m = \{ * \}$,
    up to isomorphisms that are the identity on $J$. \QED
\end{lemma*}

\subsection{Proof of the theorem}

\paragraph{}
\label{para-filt-sigma}

A key idea in our proof is to apply \cref{thm-vrp,thm-vrp-higher},
not only to $\calA$, but also to
the category $\calA^{\pn{n}}$ of filtrations of $\calA$,
discussed in \cref{sect-sd-filt,sect-mod-filt-sd}.

For a self-dual weak stability condition $\tau$ on $\calA$
satisfying \tagref{Stab}, and for a slope~$t$,
let $\calA (\tau; t) \subset \calA$ be the full subcategory of
$\tau$-semistable objects $E \in \calA$ with either $E \simeq 0$ or $\tau (E) = t$.
Then $\calA (\tau; t)$ is closed under direct summands,
so it satisfies \tagref{Spl}, \tagref{Mod}, and \tagref{Fin},
and the trivial stability condition~$0$ on~$\calA (\tau; t)$ satisfies~\tagref{Stab}.
In particular, when $t = 0$, the category $\calA (\tau; 0)$
is self-dual, and also satisfies \tagref{SdMod}.

Now, for an integer $n \geq 0$,
consider the category $\calA (\tau; t)^{\pn{n}}$
of filtrations in $\calA (\tau; t)$, as in \cref{sect-sd-filt}.
It again satisfies \tagref{Spl}, \tagref{Mod}, and \tagref{Fin},
and the trivial stability condition~$0$
on~$\calA (\tau; t)^{\pn{n}}$ satisfies~\tagref{Stab}.
When $t = 0$, it also satisfies \tagref{SdMod}
by \cref{sect-mod-filt-sd}.
We also denote this by $\calA (\tau; t)^{\pn{I}}$,
where $I$ is a totally ordered set with $n$ elements.

For $\alpha_1, \dotsc, \alpha_n \in C^\circ (\calA (\tau; t))$,
write $\delta^{\pn{n}}_{\alpha_1, \dotsc, \alpha_n} (\tau) \in \SF (\calM^\pn{n})$
for the stack function
$\delta_{(\alpha_1, \dotsc, \alpha_n)} (0)$
for the category $\calA (\tau; t)^{\pn{n}}$,
the class $(\alpha_1, \dotsc, \alpha_n) \in C^\circ (\calA (\tau; t)^{\pn{n}})$
in the sense of \cref{para-filt-class-monoid},
and the trivial stability condition~$0$.
Define $\sigma^{\pn{n}}_{\alpha_1, \dotsc, \alpha_n} (\tau) \in \SF (\calM^\pn{n})$
similarly, as the stack function $\sigma_{(\alpha_1, \dotsc, \alpha_n)} (0)$
for $\calA (\tau; t)^{\pn{n}}$ and the trivial stability condition~$0$.

Similarly, for $\alpha_1, \dotsc, \alpha_n \in C^\circ (\calA (\tau; 0))$
and $\rho \in C^\sd (\calA (\tau; 0))$,
define the stack functions
$\delta^{\pn{2n+1}, \> \sd}_{\alpha_1, \dotsc, \alpha_n, \> \rho} (\tau)$,
$\sigma^{\pn{2n+1}, \> \sd}_{\alpha_1, \dotsc, \alpha_n, \> \rho} (\tau) \in \SF (\calM^{\pn{2n+1}, \> \sd})$
as the $\delta^\sd$ and $\sigma^\sd$ functions
for the self-dual category $\calA (\tau; 0)^{\pn{2n+1}}$,
the class $(\alpha_1, \dotsc, \alpha_n, \rho) \in C^\sd (\calA (\tau; 0)^{\pn{2n+1}})$
in the sense of \cref{para-filt-sd-class-monoid},
and the trivial stability condition~$0$.

\begin{theorem}
    \label{thm-no-pole-sd-app}
    The element
    \[
        \epsilon^\sd_\theta (\tau) \in \SF (\calM^\sd)
    \]
    has pure virtual rank $0$.
    In particular, we have
    \begin{equation}
        \label{eq-no-pole-sd}
        \int_{\calM^\sd} \epsilon^\sd_\theta (\tau)
        \in \Mhat^\circ_\bbK \ .
    \end{equation}
\end{theorem}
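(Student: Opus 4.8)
The plan is to follow the strategy of Joyce's proof of the ordinary no-pole theorem, \cref{thm-no-pole}, now with the self-dual virtual-rank computations \cref{thm-vrp,thm-vrp-higher} as the new input. First I would reduce to trivial stability. By \cref{thm-sd-hn} a self-dual object is $\tau$-semistable if and only if its underlying object is $\tau$-semistable of slope $0$; since $\calA (\tau; 0) \hookrightarrow \calA$ is exact and closed under extensions, the stack function $\epsilon^\sd_\theta (\tau)$, together with every $\diamond$-product occurring in its defining expansion \cref{eq-def-epsilon-sd}, is supported on the moduli of objects and of self-dual filtrations in $\calA (\tau; 0)$, and coincides with the corresponding stack function built from the $\delta$ and $\delta^\sd$ functions of $\calA (\tau; 0)$ with the trivial stability condition. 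As noted in \cref{para-filt-sigma}, $\calA (\tau; 0)$ is abelian of finite length and satisfies \tagref{Spl}, \tagref{SdMod}, \tagref{Fin}, and trivial stability satisfies \tagref{Stab}. So I may assume $\tau$ is trivial and $\calA$ abelian of finite length, and it suffices to show $\epsilon^\sd_\theta \in \SF (\calM^\sd_\theta)$ has pure virtual rank $0$; I would prove this by induction on the length of $\theta$.

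The engine is the comparison of two expansions of $\delta^\sd_\theta$. On one side, \cref{thm-vrp,thm-vrp-higher} give the full decomposition $\delta^\sd_\theta = \sum_{m \geq 0} \vrp{m} (\delta^\sd_\theta)$, with $\vrp{0} (\delta^\sd_\theta) = \sigma^\sd_\theta$ and each $\vrp{m} (\delta^\sd_\theta)$ an explicit $\circleddiamond$-product, summed, in the $\sigma_{\alpha}$ and $\sigma^\sd_\rho$. On the other side, inverting \cref{eq-def-epsilon-sd} gives \cref{eq-def-epsilon-sd-inverse}, expressing $\delta^\sd_\theta$ through Hall-module products $\epsilon_{\alpha_1} \diamond \cdots \diamond \epsilon_{\alpha_n} \diamond \epsilon^\sd_\rho$ with coefficients $\tfrac{1}{2^n n!}$. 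To control the virtual rank of these products I would pass to the self-dual filtration categories $\calA^{\pn{2n+1}}$: by \cref{thm-hall-module} the product equals $(\pi^{\pn{2n+1}, \> \sd})_! \, \delta^{\pn{2n+1}, \> \sd}_{\alpha_1, \dotsc, \alpha_n, \rho}$, the morphism $\pi^{\pn{2n+1}, \> \sd}$ is representable (\cref{lem-filt-sd-rep-ft}), so $\vrp{m}$ commutes with the pushforward (\cref{para-vrp}), and \cref{thm-vrp,thm-vrp-higher} applied to the self-dual category $\calA^{\pn{2n+1}}$ compute $\vrp{m} (\delta^{\pn{2n+1}, \> \sd}_{\alpha_1, \dotsc, \alpha_n, \rho})$, hence $\vrp{m}$ of the product, in terms of $\sigma$-type stack functions on the filtration moduli. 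In particular each such product has virtual rank $\geq n$, so only terms with $n \leq m$ contribute to $\vrp{m} (\delta^\sd_\theta)$; taking $m = 0$ and comparing the two expansions yields $\vrp{0} (\epsilon^\sd_\theta) = \sigma^\sd_\theta$.

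Now run the induction. Assuming $\epsilon^\sd_\rho$ has pure virtual rank $0$ whenever $\rho$ has length smaller than $\theta$, and using that each $\epsilon_\alpha$ has pure virtual rank $1$ (\cref{thm-no-pole}), every product $\epsilon_{\alpha_1} \diamond \cdots \diamond \epsilon_{\alpha_n} \diamond \epsilon^\sd_\rho$ appearing with $n \geq 1$ has factors of known pure virtual rank, so the filtration-category computation above makes $\vrp{m}$ of it explicit. Substituting into the identity $\vrp{m} (\delta^\sd_\theta) = \vrp{m} (\epsilon^\sd_\theta) + \sum_{n \geq 1} \tfrac{1}{2^n n!} \sum \vrp{m} (\epsilon_{\alpha_1} \diamond \cdots \diamond \epsilon^\sd_\rho)$ for $m \geq 1$ isolates $\vrp{m} (\epsilon^\sd_\theta)$, and the remaining task is to show that the $\sigma$-side coming from \cref{thm-vrp-higher} cancels the $n \geq 1$ contributions exactly, i.e.\ $\vrp{m} (\epsilon^\sd_\theta) = 0$. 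Once this holds, $\epsilon^\sd_\theta$ has pure virtual rank $0$, and \cref{thm-vrp-int} gives $\int_{\calM^\sd_\theta} \epsilon^\sd_\theta \in \Mhat_\bbK^\circ$, the asserted statement.

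The hard part will be the combinatorial heart of the last paragraph: the required cancellation reduces to an identity among the coefficients $\binom{-1/2}{n}$, $\tfrac{1}{2^n n!}$, and the rational coefficients produced by the virtual-rank projections on the categories $\calA^{\pn{2n+1}}$. As in the proof of \cref{thm-vrp}, those coefficients are expressible through sums over chains of non-bijective $\bbZ_2$-equivariant surjections, and I expect the cancellation to follow from \cref{lem-vrp-comb} together with the Bernoulli-number identities of \cref{sect-vrp-comb}; tracking how the projection on $\calA^{\pn{2n+1}}$ interacts with the maps gluing the graded pieces, so as to reduce to precisely those identities, is the delicate point, and is where the choice of the coefficients $\binom{-1/2}{n}$ in \cref{eq-def-epsilon-sd} is forced.
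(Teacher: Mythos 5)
Your overall direction is right — reduce to trivial stability, pass to the self-dual filtration categories $\calA (\tau; 0)^{\pn{2k+1}}$, commute $\vrp{m}$ through the representable pushforwards $(\pi^{\pn{2k+1}, \> \sd})_!$ by \cref{lem-filt-sd-rep-ft} and \cref{para-vrp}, and invoke \cref{thm-vrp,thm-vrp-higher} on the filtration categories. But the paper's proof is more direct than the route you sketch, and the route you sketch contains a genuine gap.

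The gap is the claim that each product $\epsilon_{\alpha_1} (\tau) \diamond \cdots \diamond \epsilon_{\alpha_n} (\tau) \diamond \epsilon^\sd_\rho (\tau)$ with $n \geq 1$ has virtual rank $\geq n$ (and, more generally, that knowing the pure virtual ranks of the factors lets you compute $\vrp{m}$ of the product). The operation $\diamond$ is $(\pi^{\pn{2n+1}, \> \sd})_!$ composed with a \emph{pullback} along $\pi_1 \times \cdots \times \pi_n \times \pi^\sd_{n+1}$, and the virtual rank projections are only known to commute with pushforward along \emph{representable} morphisms — they do not commute with pullback, since stabilizer groups can grow along a non-isomorphic pullback. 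So your induction step, which isolates $\vrp{m} (\epsilon^\sd_\theta)$ by subtracting off contributions $\vrp{m} (\epsilon_{\alpha_1} \diamond \cdots \diamond \epsilon^\sd_\rho)$ computed via the factors, does not go through as written. (You also write that this product ``equals $(\pi^{\pn{2n+1}, \> \sd})_! \, \delta^{\pn{2n+1}, \> \sd}_{\alpha_1, \dotsc, \alpha_n, \rho}$'', but that identity holds for $\delta$-factors, not $\epsilon$-factors.)

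The paper avoids this entirely: it never compares two expansions of $\delta^\sd_\theta$, never inducts on length, and never uses \cref{thm-no-pole}. It instead works with the \emph{$\delta$-based} definition \cref{eq-def-epsilon-sd} directly, rewrites each $\diamond$-product of $\delta$'s as $(\pi^{\pn{2k+1}, \> \sd})_! \, \delta^{\pn{2k+1}, \> \sd}_{\alpha_1, \dotsc, \alpha_k, \rho} (\tau)$ — which is legitimate because the $\delta$'s are characteristic functions — and then applies $\vrp{n}$ to that pushforward. Since the $\delta^{\pn{2k+1}, \> \sd}$ functions live on the filtration moduli, \cref{thm-vrp-higher} (applied to the category $\calA (\tau; 0)^{\pn{I}}$ with $I = \{ 1, \dotsc, k, 0, k^\vee, \dotsc, 1^\vee \}$) gives a closed form for their virtual rank projections in terms of $\sigma$-type stack functions parametrized by matrices of classes, and the sum over $k$ with coefficients $\binom{-1/2}{k}$ collapses term-by-term because the coefficient of any monomial with a variable other than $x_0$ in $\bigl( \prod_{j \in J} (1 + x_j) \bigr)^{-1/2} \cdot \prod_{j=0}^n (1 + x_j) = (1 + x_0)^{1/2}$ vanishes. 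So the combinatorial endgame is this elementary generating-function identity, not the chain/Bernoulli identities of \cref{lem-vrp-comb} — those are consumed internally in the proof of \cref{thm-vrp} and come to you for free once you cite \cref{thm-vrp-higher}.
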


\begin{proof}
    For $\alpha_1, \dotsc, \alpha_k \in C (\calA)$
    and $\rho \in C^\sd (\calA)$, we have
    \begin{equation}
        (\pi^{(2k + 1), \> \sd})_! \,
        ( \delta^{\pn{2k+1}, \> \sd}_{\alpha_1, \> \dotsc, \> \alpha_k, \> \rho} (\tau) )
        =
        \delta_{\alpha_1} (\tau) \diamond \cdots \diamond
        \delta_{\alpha_k} (\tau) \diamond
        \delta^\sd_{\rho} (\tau) \ ,
    \end{equation}
    where $\delta^{\pn{2k+1}, \> \sd}_{\alpha_1, \> \dotsc, \> \alpha_k, \> \rho} (\tau)$
    is defined in~\cref{para-filt-sigma}.
    Recall from~\cref{eq-def-epsilon-sd}
    the definition of $\epsilon^\sd_\theta (\tau)$,
    which can now be written as
    \begin{align}
        \label{eq-def-epsilon-sd-app}
        \epsilon^\sd_\theta (\tau) & =
        \sum_{ \leftsubstack[6em]{
            \\[-1.5ex]
            & k \geq 0; \, \alpha_1, \dotsc, \alpha_k \in C (\calA), \,
            \rho \in C^\sd (\calA) \colon \\[-.5ex]
            & \theta = \bar{\alpha}_1 + \cdots + \bar{\alpha}_k + \rho \\[-.5ex]
            & \tau (\alpha_1) = \cdots = \tau (\alpha_k) = 0
        } } {}
        \binom{-1/2}{k} \cdot
        (\pi^{(2k + 1), \> \sd})_! \,
        ( \delta^{\pn{2k+1}, \> \sd}_{\alpha_1, \> \dotsc, \> \alpha_k, \> \rho} (\tau) ) \ .
    \end{align}
    To prove the theorem, it is enough to prove that
    \begin{equation}
        \label{eq-vrp-epsilon-1}
        \vrp{n} (\epsilon^\sd_\theta (\tau)) = 0
    \end{equation}
    for all integers $n > 0$.

    By \cite[Proposition~5.14]{Joyce2007Stack},
    the virtual rank projection $\vrp{n}$ commutes with
    the pushforward $(\pi^{(2k + 1), \> \sd})_!$.
    We therefore study the virtual rank projections of
    $\delta^{\pn{2k+1}, \> \sd}_{\alpha_1, \> \dotsc, \> \alpha_k, \> \rho} (\tau)$.

    Write $I = \{ 1, \dotsc, k, 0, k^\vee, \dotsc, 1^\vee \}$
    and $J = \{ 1, \dotsc, n, 0, n^\vee, \dotsc, 1^\vee \}$,
    with total orders given by the written order,
    and the obvious involutions.
    Applying \cref{thm-vrp-higher}
    to the category $\calA (\tau; 0)^{\pn{I}}$ in \cref{para-filt-sigma},
    we obtain
    \begin{multline}
        \label{eq-vrp-epsilon-2}
        \vrp{n} (\delta^{\pn{2k+1}, \> \sd}_{\alpha_1, \> \dotsc, \> \alpha_k, \> \rho} (\tau)) =
        \frac{1}{2^n n!} \cdot {} 
        \\[1ex]
        \sum_{ \leftsubstack[6em]{
            \\[-2ex]
            & \alpha_{i, j} \in C^\circ (\calA) \text{ for } (i, j) \in (I \times J) \setminus (0, 0); \,
            \rho_{0, 0} \in C^\sd (\calA) \colon \\[-.5ex] 
            & \tau (\alpha_{i, j}) = 0 \text{ for all } (i, j) \text{ with } \alpha_{i, j} \neq 0, \\[-.5ex]
            & \alpha_{\smash{i^\vee, \> j^\vee}} = \alpha_{i, j}^\vee \text{ for all } (i, j), \\[-.5ex]
            & \textstyle \alpha_i = \sum_{j \in J} \alpha_{i, j} \text{ for all } i \in I , \\[-.5ex]
            & \rho = \bar{\alpha}_{0, 1} + \cdots + \bar{\alpha}_{0, n} + \rho_{0, 0} \, , \\[-.5ex]
            & \textstyle \sum_{i \in I} \alpha_{i, j} \neq 0 \text{ for all } j \in J \setminus 0
        } } {}
        \sigma^{\pn{I}}_{(\alpha_{i, 1})_{i \in I}} (\tau) \circleddiamond \cdots \circleddiamond
        \sigma^{\pn{I}}_{(\alpha_{i, n})_{i \in I}} (\tau) \circleddiamond
        \sigma^{\pn{I}, \> \sd}_{\alpha_{1, 0}, \, \dotsc, \> \alpha_{k, 0}, \> \rho_{0, 0}} (\tau) \ .
    \end{multline}

    We abbreviate each term in the sum~\cref{eq-vrp-epsilon-2}
    as $\sigma^{(I), \> \sd}_{\underline{\alpha}} (\tau)$,
    where $\underline{\alpha} = (\alpha_{i, j})_{i \in I, \, j \in J}$
    is a matrix with $\alpha_{0, 0} = \rho_{0, 0}$, and write
    \[
        \sigma^\sd_{\underline{\alpha}} (\tau) =
        (\pi^{(I), \> \sd})_! \, \sigma^{(I), \> \sd}_{\underline{\alpha}} (\tau) \ .
    \]
    Let $A_{n, k}$ be the set of matrices $\underline{\alpha}$
    that appear in~\cref{eq-vrp-epsilon-2}
    for some choice of $\alpha_1, \dotsc, \alpha_k, \rho$.
    Then \cref{eq-def-epsilon-sd-app,eq-vrp-epsilon-2} imply that
    \begin{equation}
        \label{eq-vrp-epsilon-3}
        \vrp{n} (\epsilon^\sd_\theta (\tau)) =
        \frac{1}{2^n n!} \cdot {} 
        \sum_{k \geq 0}
        \sum_{\underline{\alpha} \in A_{n, k}} {}
        \binom{-1/2}{k} \cdot
        \sigma^\sd_{\underline{\alpha}} (\tau) \ .
    \end{equation}
    Note that the element $\sigma^\sd_{\underline{\alpha}} (\tau)$
    only depends on the equivalent class of $\underline{\alpha}$,
    where two matrices $\underline{\alpha} \in A_{n, k}$ and $\underline{\alpha}' \in A_{k'}$
    are equivalent if for all $j \in J$,
    the subsequence of $(\alpha_{i, j})_{i \in I}$ with $\alpha_{i, j} \neq 0$
    is the same as
    the subsequence of $(\alpha'_{i, j})_{i \in I'}$ with $\alpha'_{i, j} \neq 0$,
    where $I' = \{ 1, \dotsc, k', 0, k'^\vee, \dotsc, 1^\vee \}$.
    It is then enough to prove that for a fixed $\underline{\alpha}$, we have
    \begin{align}
        \label{eq-vrp-epsilon-4}
        \sum_{k \geq 0} {} \binom{-1/2}{k} \cdot
        \sum_{ \leftsubstack[2.5em]{
            & \underline{\alpha}' \in A_{n, k} \colon \\[-.5ex]
            & \underline{\alpha}' \sim \underline{\alpha}
        } } {} 1 & = 0 \ .
    \end{align}

    To prove this, we first observe that
    the number $\underline{\alpha}' \in A_{n, k}$ with $\underline{\alpha}' \sim \underline{\alpha}$
    is equal to the number of subsets of $I \times J$,
    invariant under the involution $(i, j) \mapsto (i^\vee, j^\vee)$,
    such that the number of elements in each row $\{ i \} \times J$
    is non-zero unless $i = 0$,
    and the number of elements in each column $I \times \{ j \}$
    is equal to $a_j$, where $a_j$ is the number of non-zero entries
    $\alpha_{i, j}$ in $\underline{\alpha}$ in that column.
    Note that $a_j = a_{j^\vee}$ for all $j \in J$.

    Consider the generating series of this counting problem,
    with a formal variable $x_j$ assigned to each $a_j$
    for $j \in \{ 1, \dotsc, n, 0 \}$.
    For convenience, we write $x_{j^\vee} = x_j$ for $j \in J$.
    Summing over $k \geq 0$, we obtain the generating series
    \begin{align*}
        F (x_1, \dotsc, x_n, x_0)
        & =
        \sum_{k \geq 0} {} \binom{-1/2}{k} \cdot
        \biggl(
            \ \sum_{ \varnothing \neq J' \subset J }
            \ \prod_{j \in J'} x_j
        \biggr)^k \cdot
        \sum_{J' \subset \{ 1, \dotsc, n, 0 \}}
        \prod_{j \in J'} x_j
        \\
        & =
        \biggl(
            \ \prod_{j \in J} {} (1 + x_j)
        \biggr)^{-1/2} \cdot
        \prod_{j = 0}^n {} (1 + x_j)
        \\
        & =
        (1 + x_0)^{1/2} \ .
    \end{align*}
    Therefore, the left-hand side of~\cref{eq-vrp-epsilon-4}
    is zero unless $a_j = 0$ for all $j \neq 0$.
    But this cannot happen, as we assumed that $n > 0$.
    This proves~\cref{eq-vrp-epsilon-4}, and hence~\cref{eq-vrp-epsilon-1}.
\end{proof}

\newpage
\phantomsection
\addcontentsline{toc}{section}{References}
\hyphenation{Grenz-geb}
\sloppy
\renewcommand*{\bibfont}{\normalfont\small}
\printbibliography

@article{Bridgeland2007,
    author       = {Bridgeland, Tom},
    title        = {Stability conditions on triangulated categories},
    journaltitle = {Ann. Math.},
    volume       = {166},
    number       = {2},
    pages        = {317-345},
    doi          = {10.4007/annals.2007.166.317},
    year         = {2007},
    eprint       = {math/0212237},
    eprinttype   = {arxiv},
    % eprintclass  = {math.AG}
}

@article{Joyce2008IV,
    author       = {Joyce, Dominic},
    title        = {Configurations in abelian categories. IV. Invariants and changing stability conditions},
    journaltitle = {Adv. Math.},
    volume       = {217},
    number       = {1},
    pages        = {125-204},
    doi          = {10.1016/j.aim.2007.06.011},
    year         = {2008},
    eprint       = {math/0410268},
    eprinttype   = {arxiv},
    % eprintclass  = {math.AG}
}

@article{Bayer2009,
    author       = {Bayer, Arend},
    title        = {Polynomial Bridgeland stability conditions and the large volume limit},
    journaltitle = {Geom. Topol.},
    volume       = {13},
    number       = {4},
    pages        = {2389-2425},
    doi          = {10.2140/gt.2009.13.2389},
    year         = {2009},
    eprint       = {0712.1083},
    eprinttype   = {arxiv},
    % eprintclass  = {math.AG},
    zbmath       = {1171.14011}
}

@article{DerksenWeyman2002,
    author       = {Derksen, Harm and Weyman, Jerzy},
    title        = {Generalized quivers associated to reductive groups},
    journaltitle = {Colloq. Math.},
    volume       = {94},
    number       = {2},
    pages        = {151-173},
    doi          = {10.4064/cm94-2-1},
    year         = {2002},
    zbmath       = {1025.16010}
}

@article{Young2015,
    author       = {Young, Matthew B.},
    title        = {Self-dual quiver moduli and orientifold Donaldson--Thomas invariants},
    journaltitle = {Commun. Number Theory Phys.},
    volume       = {9},
    number       = {3},
    pages        = {437-475},
    doi          = {10.4310/cntp.2015.v9.n3.a1},
    year         = {2015},
    eprint       = {1408.4888},
    eprinttype   = {arxiv},
    % eprintclass  = {math.AG},
    zbmath       = {1345.81108}
}

@article{Young2020,
    author       = {Young, Matthew B.},
    title        = {Representations of cohomological Hall algebras and Donaldson–Thomas theory with classical structure groups},
    journaltitle = {Commun. Math. Phys.},
    volume       = {380},
    number       = {1},
    pages        = {273-322},
    doi          = {10.1007/s00220-020-03877-z},
    year         = {2020},
    eprint       = {1603.05401},
    eprinttype   = {arxiv},
    % eprintclass  = {math.AG},
    zbmath       = {1483.14103}
}

@article{ToenVaquie2007,
    author       = {Toën, Bertrand and Vaquié, Michel},
    title        = {Moduli of objects in dg-categories},
    journaltitle = {Ann. Sci. Éc. Norm. Supér.},
    volume       = {40},
    number       = {3},
    pages        = {387-444},
    doi          = {10.1016/j.ansens.2007.05.001},
    year         = {2007},
    eprint       = {math/0503269},
    eprinttype   = {arxiv},
    % eprintclass  = {math.AG}
}

@article{Joyce2021,
    author       = {Joyce, Dominic},
    title        = {Enumerative invariants and wall-crossing formulae in abelian categories},
    year         = {2021},
    date         = {2021-11-08},
    eprint       = {2111.04694v1},
    eprinttype   = {arxiv},
    % eprintclass  = {math.AG}
}

@article{Joyce2006I,
    author       = {Joyce, Dominic},
    title        = {Configurations in abelian categories. I. Basic properties and moduli stacks},
    journaltitle = {Adv. Math.},
    volume       = {203},
    number       = {1},
    pages        = {194-255},
    doi          = {10.1016/j.aim.2005.04.008},
    year         = {2006},
    eprint       = {math/0312190},
    eprinttype   = {arxiv},
    % eprintclass  = {math.AG}
}

@article{Joyce2007II,
    author       = {Joyce, Dominic},
    title        = {Configurations in abelian categories. II. Ringel--Hall algebras},
    journaltitle = {Adv. Math.},
    volume       = {210},
    number       = {2},
    pages        = {635-706},
    doi          = {10.1016/j.aim.2006.07.006},
    year         = {2007},
    eprint       = {math/0503029},
    eprinttype   = {arxiv},
    % eprintclass  = {math.AG}
}

@book{LaumonMoret2000,
    author       = {Laumon, Gérard and Moret-Bailly, Laurent},
    title        = {Champs algébriques},
    series       = {Ergeb. Math. Grenzgeb., 3. Folge},
    number       = {39},
    doi          = {10.1007/978-3-540-24899-6},
    year         = {2000},
    publisher    = {Springer}
}

@book{JoyceSong2012,
    author       = {Joyce, Dominic and Song, Yinan},
    title        = {A theory of generalized Donaldson--Thomas invariants},
    publisher    = {American Mathematical Society},
    series       = {Mem. Am. Math. Soc.},
    number       = {1020},
    doi          = {10.1090/s0065-9266-2011-00630-1},
    year         = {2012},
    eprint       = {0810.5645},
    eprinttype   = {arxiv},
    % eprintclass  = {math.AG},
    zbmath       = {1259.14054}
}

@article{Kresch1999,
    author       = {Kresch, Andrew},
    title        = {Cycle groups for Artin stacks},
    journaltitle = {Invent. Math.},
    volume       = {138},
    number       = {3},
    pages        = {495-536},
    doi          = {10.1007/s002220050351},
    year         = {1999}
}

@article{Joyce2007Stack,
    author       = {Joyce, Dominic},
    title        = {Motivic invariants of Artin stacks and ``stack functions''},
    journaltitle = {Q. J. Math.},
    volume       = {58},
    number       = {3},
    pages        = {345-392},
    doi          = {10.1093/qmath/ham019},
    year         = {2007},
    eprint       = {math/0509722},
    eprinttype   = {arxiv},
    % eprintclass  = {math.AG},
    zbmath       = {1131.14005}
}

@article{DhillonYoung2016,
    author       = {Dhillon, Ajneet and Young, Matthew B.},
    title        = {The motive of the classifying stack of the orthogonal group},
    journaltitle = {Mich. Math. J.},
    volume       = {65},
    number       = {1},
    pages        = {189-197},
    doi          = {10.1307/mmj/1457101817},
    year         = {2016},
    eprint       = {1411.2710},
    eprinttype   = {arxiv},
    % eprintclass  = {math.AG},
    zbmath       = {1375.14021}
}

@article{Bondal1990,
    author       = {Bondal, A. I.},
    title        = {Representation of associative algebras and coherent sheaves},
    journaltitle = {Math. USSR Izv.},
    volume       = {34},
    number       = {1},
    pages        = {23-42},
    doi          = {10.1070/im1990v034n01abeh000583},
    year         = {1990}
}

@article{Joyce2007III,
    author       = {Joyce, Dominic},
    title        = {Configurations in abelian categories. III. Stability conditions and identities},
    journaltitle = {Adv. Math.},
    volume       = {215},
    number       = {1},
    pages        = {153-219},
    doi          = {10.1016/j.aim.2007.04.002},
    year         = {2007},
    eprint       = {math/0410267},
    eprinttype   = {arxiv},
    % eprintclass  = {math.AG}
}

@book{CrawleyBoevey1992,
    author       = {Crawley-Boevey, William},
    title        = {Lectures on representations of quivers},
    year         = {1992},
    url          = {https://www.math.uni-bielefeld.de/~wcrawley/quivlecs.pdf},
}

@article{BehrendFantechi1997,
    author       = {Behrend, Kai and Fantechi, Barbara},
    title        = {The intrinsic normal cone},
    journaltitle = {Invent. Math.},
    volume       = {128},
    number       = {1},
    pages        = {45-88},
    issn         = {0020-9910},
    doi          = {10.1007/s002220050136},
    year         = {1997},
    eprint       = {alg-geom/9601010},
    eprinttype   = {arxiv},
    % eprintclass  = {math.AG}
}

@book{Stacks,
    key          = {Stacks project},
    title        = {The Stacks project},
    url          = {https://stacks.math.columbia.edu}
}

@article{EGAIV2,
    author       = {Grothendieck, Alexander},
    title        = {Éléments de géométrie algébrique. IV. Étude locale des schémas et des morphismes de schémas. Seconde partie},
    journaltitle = {Publ. Math., Inst. Hautes Étud. Sci.},
    volume       = {24},
    pages        = {5-231},
    year         = {1965}
}

@incollection{Deligne1973,
    author       = {Deligne, Pierre},
    title        = {La formule de dualité globale},
    booktitle    = {Séminaire de géométrie algébrique du Bois-Marie. Théorie des topos et cohomologie étale des schémas \textnormal{(SGA~4)}},
    volume       = {{vol.~3}},
    series       = {Lect. Notes Math.},
    number       = {305},
    year         = {1973},
    pages        = {Exposé~XVIII, 481-587},
    publisher    = {Springer-Verlag}
}

@incollection{SGA1VIII,
    author       = {Grothendieck, Alexander},
    title        = {Descente fidèlement plate},
    booktitle    = {Séminaire de géométrie algébrique du Bois-Marie. Revêtements étales et groupe fondamental \textnormal{(SGA~1)}},
    series       = {Lect. Notes Math.},
    number       = {224},
    year         = {1971},
    pages        = {Exposé~VIII, 195-227},
    publisher    = {Springer-Verlag}
}

@incollection{Serre1958,
    author       = {Serre, Jean-Pierre},
    title        = {Espaces fibrés algébriques},
    booktitle    = {Séminaire Claude Chevalley. Tome~3. Anneaux de Chow et applications},
    pages        = {1-37},
    year         = {1958}
}

@incollection{Quillen1973,
    author       = {Quillen, Daniel},
    editor       = {Bass, H.},
    title        = {Higher algebraic $K$-theory. I},
    booktitle    = {Algebraic K-Theory. I. Higher K-Theories},
    series       = {Lect. Notes Math.},
    number       = {341},
    pages        = {85-147},
    year         = {1973},
    publisher    = {Springer-Verlag},
    doi          = {10.1007/bfb0067053}
}

@article{Keller1990,
    author       = {Keller, Bernhard},
    title        = {Chain complexes and stable categories},
    journaltitle = {Manuscr. Math.},
    volume       = {67},
    number       = {4},
    pages        = {379-417},
    year         = {1990},
    doi          = {10.1007/bf02568439}
}

@article{Rudakov1997,
    author       = {Rudakov, Alexei},
    title        = {Stability for an abelian category},
    journaltitle = {J. Algebra},
    volume       = {197},
    number       = {1},
    pages        = {231-245},
    year         = {1997},
    doi          = {10.1006/jabr.1997.7093}
}

@article{KontsevichSoibelman2011,
    author       = {Kontsevich, Maxim and Soibelman, Yan},
    title        = {Cohomological Hall algebra, exponential Hodge structures and motivic Donaldson--Thomas invariants},
    journaltitle = {Commun. Number Theory Phys.},
    volume       = {5},
    number       = {2},
    pages        = {231-352},
    year         = {2011},
    doi          = {10.4310/cntp.2011.v5.n2.a1},
    eprint       = {1006.2706},
    eprinttype   = {arxiv},
    % eprintclass  = {math.AG}
}

@article{BaezCrans2004,
    author       = {Baez, John C. and Crans, Alissa S.},
    title        = {Higher-dimensional algebra. VI. Lie $2$-algebras},
    journaltitle = {Theory Appl. Categ.},
    volume       = {12},
    number       = {15},
    pages        = {492-538},
    year         = {2004},
    eprint       = {math/0307263},
    eprinttype   = {arxiv},
    % eprintclass  = {math.QA}
}

@article{Tatar2011,
    author       = {Tatar, A. Emin},
    title        = {Length $3$ complexes of abelian sheaves and Picard $2$-stacks},
    journaltitle = {Adv. Math.},
    volume       = {226},
    number       = {1},
    pages        = {62-110},
    year         = {2011},
    eprint       = {0906.2393},
    eprinttype   = {arxiv},
    % eprintclass  = {math.AG},
    doi          = {10.1016/j.aim.2010.06.012}
}

@incollection{KapranovVoevodsky1994,
    author       = {Kapranov, M. M. and Voevodsky, V. A.},
    editor       = {Haboush, William J. and Parshall, Brian J.},
    title        = {$2$-categories and Zamolodchikov tetrahedra equations},
    booktitle    = {Algebraic groups and their generalizations. Quantum and infinite-dimensional methods},
    series       = {Proc. Symp. Pure Math.},
    number       = {56, Part 2},
    pages        = {177-259},
    year         = {1994},
    publisher    = {American Mathematical Society}
}

@article{GarciaPradaEtal2014,
    author       = {García-Prada, Oscar and Heinloth, Jochen and Schmitt, Alexander},
    title        = {On the motives of moduli of chains and Higgs bundles},
    journaltitle = {J. Eur. Math. Soc.},
    volume       = {16},
    number       = {12},
    pages        = {2617-2668},
    year         = {2014},
    eprint       = {1104.5558},
    eprinttype   = {arxiv},
    % eprintclass  = {math.AG},
    doi          = {10.4171/jems/494}
}

@article{NeemanRetakh1996,
    author       = {Neeman, Amnon and Retakh, Vladimir},
    title        = {Extension categories and their homotopy},
    journaltitle = {Compos. Math.},
    volume       = {102},
    number       = {2},
    pages        = {203-242},
    year         = {1996}
}

@incollection{Laborde1976,
    author       = {Laborde, O.},
    title        = {Classes de Stiefel--Whitney en cohomologie étale},
    booktitle    = {Colloque sur les formes quadratiques},
    series       = {Mém. Soc. Math. Fr.},
    number       = {48},
    pages        = {47-51},
    year         = {1976}
}

@book{HuybrechtsLehn2010,
    author       = {Huybrechts, Daniel and Lehn, Manfred},
    title        = {The geometry of moduli spaces of sheaves},
    edition      = {2},
    year         = {2010},
    publisher    = {Cambridge University Press},
    doi          = {10.1017/cbo9780511711985}
}

@article{BelmansEtal2022,
    author       = {Pieter Belmans and Chiara Damiolini and Hans Franzen and Victoria Hoskins and Svetlana Makarova and Tuomas Tajakka},
    title        = {Projectivity and effective global generation of determinantal line bundles on quiver moduli},
    year         = {2022},
    date         = {2022-09-30},
    eprint       = {2210.00033v1},
    eprinttype   = {arxiv},
    % eprintclass  = {math.AG}
}

@article{Toen2014,
    author       = {Bertrand Toën},
    title        = {Derived algebraic geometry},
    year         = {2014},
    date         = {2014-09-12},
    eprint       = {1401.1044v2},
    eprinttype   = {arxiv},
    % eprintclass  = {math.AG}
}

@article{KontsevichSoibelman2008,
    author       = {Maxim Kontsevich and Yan Soibelman},
    title        = {Stability structures, motivic Donaldson--Thomas invariants and cluster transformations},
    year         = {2008},
    date         = {2008-11-16},
    eprint       = {0811.2435v1},
    eprinttype   = {arxiv},
    % eprintclass  = {math.AG}
}

@article{Thomas2000,
    author       = {Thomas, R. P.},
    title        = {A holomorphic Casson invariant for Calabi–Yau 3-folds, and bundles on K3 fibrations},
    journaltitle = {J. Differ. Geom.},
    volume       = {54},
    number       = {2},
    pages        = {367-438},
    doi          = {10.4310/jdg/1214341649},
    year         = {2000},
    eprint       = {math/9806111},
    eprinttype   = {arxiv},
    % eprintclass  = {math.AG},
    zbmath       = {1034.14015}
}

@article{PandharipandeThomas2009,
    author       = {Pandharipande, Rahul and Thomas, Richard P.},
    title        = {Curve counting via stable pairs in the derived category},
    journaltitle = {Invent. Math.},
    volume       = {178},
    number       = {2},
    pages        = {407-447},
    doi          = {10.1007/s00222-009-0203-9},
    year         = {2009},
    eprint       = {0707.2348},
    eprinttype   = {arxiv},
    % eprintclass  = {math.AG}
}

@book{Mochizuki2009,
    author       = {Mochizuki, Takuro},
    title        = {Donaldson type invariants for algebraic surfaces},
    series       = {Lect. Notes Math.},
    number       = {1972},
    year         = {2009},
    publisher    = {Springer}
}

@article{TanakaThomasI,
    author       = {Tanaka, Yuuji and Thomas, Richard P.},
    title        = {Vafa--Witten invariants for projective surfaces. I. Stable case},
    journaltitle = {J. Algebr. Geom.},
    volume       = {29},
    number       = {4},
    pages        = {603-668},
    doi          = {10.1090/jag/738},
    year         = {2020},
    eprint       = {1702.08487},
    eprinttype   = {arxiv},
    % eprintclass  = {math.AG}
}

@article{TanakaThomasII,
    author       = {Tanaka, Yuuji and Thomas, Richard P.},
    title        = {Vafa--Witten invariants for projective surfaces. II. Semistable case},
    journaltitle = {Pure Appl. Math. Q.},
    volume       = {13},
    number       = {3},
    pages        = {517-562},
    doi          = {10.4310/pamq.2017.v13.n3.a6},
    year         = {2018},
    eprint       = {1702.08488},
    eprinttype   = {arxiv},
    % eprintclass  = {math.AG}
}

@article{Thomas2020,
    author       = {Thomas, Richard P.},
    title        = {Equivariant $K$-theory and refined Vafa--Witten invariants},
    journaltitle = {Commun. Math. Phys.},
    volume       = {378},
    number       = {2},
    pages        = {1451-1500},
    year         = {2020},
    doi          = {10.1007/s00220-020-03821-1},
    eprint       = {1810.00078},
    eprinttype   = {arxiv},
    % eprintclass  = {math.AG}
}

@article{BehrendRonagh2019,
    author       = {Behrend, Kai and Ronagh, Pooya},
    title        = {The inertia operator on the motivic Hall algebra},
    journaltitle = {Compos. Math.},
    volume       = {155},
    number       = {3},
    pages        = {528-598},
    year         = {2019},
    doi          = {10.1112/s0010437x18007881},
    eprint       = {1612.00372},
    eprinttype   = {arxiv},
    % eprintclass  = {math.AG}
}

@article{Schneiders1998,
	author       = {Schneiders, Jean-Pierre},
	title        = {Quasi-abelian categories and sheaves},
	volume       = {76},
	pages        = {1--140},
	journaltitle = {Mém. Soc. Math. Fr.},
	year         = {1998},
}

@article{Scholze2022,
    author       = {Scholze, Peter},
    title        = {Six-functor formalisms},
    year         = {2022},
    url          = {https://people.mpim-bonn.mpg.de/scholze/SixFunctors.pdf}
}

@article{FHGZ2021,
    author       = {Fernandez Herrero, Andres and Gómez, Tomás L. and Zamora, Alfonso},
    title        = {The moduli stack of principal $\rho$-sheaves and Gieseker--Harder--Narasimhan filtrations},
    date         = {2022-09-21},
    eprint       = {2107.03918v4},
    eprinttype   = {arxiv},
    % eprintclass  = {math.AG}
}

@book{Weibel1994,
    author       = {Weibel, Charles A.},
    title        = {An introduction to homological algebra},
    series       = {Cambridge Stud. Adv. Math.},
    number       = {38},
    year         = {1994},
    publisher    = {Cambridge University Press}
}

@article{BayerMacriToda2014,
	author       = {Bayer, Arend and Macrì, Emanuele and Toda, Yukinobu},
	title        = {Bridgeland stability conditions on threefolds. I. Bogomolov–Gieseker type inequalities},
	volume       = {23},
	doi          = {10.1090/s1056-3911-2013-00617-7},
	pages        = {117--163},
	number       = {1},
	journaltitle = {J. Algebr. Geom.},
	eprinttype   = {arxiv},
	eprint       = {1103.5010},
	% eprintclass  = {math.AG},
	year         = {2014},
}

@article{PiyaratneToda2019,
	author       = {Piyaratne, Dulip and Toda, Yukinobu},
	title        = {Moduli of Bridgeland semistable objects on 3-folds and Donaldson–Thomas invariants},
	volume       = {747},
	doi          = {10.1515/crelle-2016-0006},
	pages        = {175--219},
	journaltitle = {J. Reine Angew. Math.},
	eprinttype   = {arxiv},
	eprint       = {1504.01177},
	% eprintclass  = {math.AG},
	year         = {2019},
}

@article{AHLH2023,
	author       = {Alper, Jarod and Halpern-Leistner, Daniel and Heinloth, Jochen},
	title        = {Existence of moduli spaces for algebraic stacks},
	volume       = {234},
	doi          = {10.1007/s00222-023-01214-4},
	pages        = {949--1038},
	number       = {3},
	journaltitle = {Invent. Math.},
	eprinttype   = {arxiv},
	eprint       = {1812.01128},
	% eprintclass  = {math.AG},
	year         = {2023},
}

@article{HalpernLeistner2022,
	author       = {Halpern-Leistner, Daniel},
	title        = {On the structure of instability in moduli theory},
	eprinttype   = {arxiv},
	eprint       = {1411.0627v5},
	% eprintclass  = {math.AG},
	date         = {2022-02-04},
}

@article{Toda2008,
	author       = {Toda, Yukinobu},
	title        = {Moduli stacks and invariants of semistable objects on K3 surfaces},
	volume       = {217},
	doi          = {10.1016/j.aim.2007.11.010},
	pages        = {2736--2781},
	number       = {6},
	journaltitle = {Adv. Math.},
	eprinttype   = {arxiv},
	eprint       = {math/0703590},
	% eprintclass  = {math.AG},
	year         = {2008},
}

@article{Behrend2009,
	author       = {Behrend, Kai},
	title        = {Donaldson–Thomas type invariants via microlocal geometry},
	volume       = {170},
	doi          = {10.4007/annals.2009.170.1307},
	pages        = {1307--1338},
	number       = {3},
	journaltitle = {Ann. Math.},
	eprinttype   = {arxiv},
	eprint       = {math/0507523},
	% eprintclass  = {math.AG},
	year         = {2009},
}

@article{FeyzbakhshThomas2,
	author       = {Feyzbakhsh, Soheyla and Thomas, Richard P.},
	title        = {Rank $r$ {DT} theory from rank $1$},
	volume       = {36},
	doi          = {10.1090/jams/1006},
	pages        = {795--826},
	number       = {3},
	journaltitle = {J. Am. Math. Soc.},
	eprinttype   = {arxiv},
	eprint       = {2108.02828},
	% eprintclass  = {math.AG},
	year         = {2023},
}

@article{FeyzbakhshThomas1,
	author       = {Feyzbakhsh, Soheyla and Thomas, Richard P.},
	title        = {Rank $r$ {DT} theory from rank $0$},
	eprinttype   = {arxiv},
	eprint       = {2103.02915v3},
	% eprintclass  = {math.AG},
	date         = {2023-09-02},
}

@incollection{IskovskikhShafarevich,
	author       = {Iskovskikh, V. A. and Shafarevich, I. R.},
	title        = {Algebraic surfaces},
	booktitle    = {Algebraic geometry {II}},
    series       = {Encycl. Math. Sci.},
    number       = {35},
    pages        = {127--254},
    year         = {1996}
}

@article{Lieblich2006,
	author       = {Lieblich, Max},
	title        = {Moduli of complexes on a proper morphism},
	volume       = {15},
	doi          = {10.1090/s1056-3911-05-00418-2},
	pages        = {175--206},
	number       = {1},
	journaltitle = {J. Algebr. Geom.},
	eprinttype   = {arxiv},
	eprint       = {math/0502198},
	% eprintclass  = {math.AG},
	year         = {2006},
}

@article{DavisonMeinhardt2020,
	author       = {Davison, Ben and Meinhardt, Sven},
	title        = {Cohomological Donaldson–Thomas theory of a quiver with potential and quantum enveloping algebras},
	volume       = {221},
	doi          = {10.1007/s00222-020-00961-y},
	pages        = {777--871},
	number       = {3},
	journaltitle = {Invent. Math.},
	eprinttype   = {arxiv},
	eprint       = {1601.02479},
	% eprintclass  = {math.RT},
	year         = {2020},
}

@article{MeinhardtReineke2019,
	author       = {Meinhardt, Sven and Reineke, Markus},
	title        = {Donaldson–Thomas invariants versus intersection cohomology of quiver moduli},
	volume       = {754},
	doi          = {10.1515/crelle-2017-0010},
	pages        = {143--178},
	journaltitle = {J. Reine Angew. Math.},
	eprinttype   = {arxiv},
	eprint       = {1411.4062},
	% eprintclass  = {math.AG},
	year         = {2019},
}

@article{Efimov2012,
	author       = {Efimov, Alexander I.},
	title        = {Cohomological Hall algebra of a symmetric quiver},
	volume       = {148},
	doi          = {10.1112/s0010437x12000152},
	pages        = {1133--1146},
	number       = {4},
	journaltitle = {Compos. Math.},
	eprinttype   = {arxiv},
	eprint       = {1103.2736},
	% eprintclass  = {math.AG},
	year         = {2012},
}

@article{LiQin2011,
	author       = {Li, Wei-Ping and Qin, Zhenbo},
	title        = {Polynomial Bridgeland stability conditions for the derived category of sheaves on surfaces},
	volume       = {19},
	doi          = {10.4310/cag.2011.v19.n1.a2},
	pages        = {31--52},
	number       = {1},
	journaltitle = {Commun. Anal. Geom.},
	year         = {2011},
}

\nopagebreak
\par\noindent\rule{0.38\textwidth}{0.4pt}
{\par\noindent\small
\hspace*{2em}Mathematical Institute, University of Oxford, Oxford OX2 6GG, United Kingdom.\\[-2pt]
\hspace*{2em}Email: \texttt{bu@maths.ox.ac.uk}
}

\end{document}